\newcommand{\er}{{\Diamond}}
\newcommand{\ui}{{\underline i}}
\newcommand{\uj}{{\underline j}}
\newcommand{\ux}{{\underline x}}
\newcommand{\uy}{{\underline y}}
\newcommand{\ug}{{\underline g}}
\newcommand{\uh}{{\underline h}}
\newcommand{\uk}{{\underline k}}
\newcommand{\uv}{{\underline v}}
\newcommand{\ue}{{\underline e}}
\newcommand{\BVG}{{\bB_V^{{\raise.6ex\hbox{\tiny{$\less G$}}}}}}
\newcommand{\GVG}{{\bG_V^{{\raise.6ex\hbox{\tiny{$\less G$}}}}}}
\newcommand{\Br}{{\rm Br}}
\newcommand{\cl}{{\rm cl}}
\newcommand{\Fr}{{\rm Fr}}
\newcommand{\Zz}{{\mathcal Z}}
\newcommand{\Mor}{{\rm Mor}}
\newcommand{\Obj}{{\rm Obj}}
\newcommand{\TQ}{{\Tilde Q}}
\renewcommand{\Hat}{\widehat}
\newcommand{\imm}{{\rm im\,}}
\newcommand{\PSL}{{\rm PSL}}
\newcommand{\Cc}{{\mathcal C}}
\newcommand{\Kk}{{\mathcal K}}
\newcommand{\Jj}{{\mathcal J}}
\newcommand{\Rr}{{\mathcal R}}
\newcommand{\im}{{\rm im\,}}
\newcommand{\less}{{\smallsetminus}}
\newcommand{\supp}{{\rm supp\,}}
\newcommand{\p}{{\partial}}
\newcommand{\al}{{\alpha}}
\newcommand{\be}{{\beta}}
\newcommand{\eps}{{\varepsilon}}
\newcommand{\de}{{\delta}}
\newcommand{\ga}{{\gamma}}
\newcommand{\Ga}{{\Gamma}}
\newcommand{\io}{{\iota}}
\newcommand{\ka}{{\kappa}}
\newcommand{\la}{{\lambda}}
\newcommand{\La}{{\Lambda}}
\newcommand{\si}{{\sigma}}
\newcommand{\Si}{{\Sigma}}
\newcommand{\bY}{{\bf Y}}
\newcommand{\bW}{{\bf W}}
\newcommand{\Uu}{{\mathcal U}}
\newcommand{\Bb}{{\mathcal B}}
\newcommand{\Ww}{{\mathcal W}}
\newcommand{\Oo}{{\mathcal O}}
\newcommand{\Mm}{{\mathcal M}}
\newcommand{\Ss}{{\mathcal S}}
\newcommand{\oMm}{{\overline {\Mm}}}
\newcommand{\ov}{\overline}
\newcommand{\wh}{\widehat}
\newcommand{\id}{{\rm id}}
\newcommand{\rd}{{\rm d}}
\newcommand{\rD}{{\rm D}}
\newcommand{\rT}{{\rm T}}
\renewcommand{\Tilde}{\widetilde}
\newcommand{\TV}{{\Tilde V}}
\newcommand{\coker}{{\rm coker\,}}
\newcommand{\Ee}{{\mathcal E}}
\newcommand{\Ii}{{\mathcal I}}
\newcommand{\Qq}{{{\mathcal Q}}}
\newcommand{\HatQq}{{\wh{\Qq}}}
\newcommand{\Yy}{{\mathcal Y}}
\newcommand{\Vv}{{\mathcal V}}
\newcommand{\Xx}{{\mathcal X}}
\newcommand{\N}{{\mathbb N}}
\newcommand{\Q}{{\mathbb Q}}
\newcommand{\R}{{\mathbb R}}
\newcommand{\C}{{\mathbb C}}
\newcommand{\E}{{\mathbb E}}
\newcommand{\F}{{\mathbb F}}
\newcommand{\Z}{{\mathbb Z}}
\newcommand{\Hom}{{\rm Hom}}
\newcommand{\Pp}{{\mathcal P}}
\newcommand{\uo}{{\underline{o}}}
\newcommand{\uw}{{\underline{w}}}
\newcommand{\bB}{{\bf B}}
\newcommand{\bG}{{\bf G}}
\newcommand{\bE}{{\bf E}}
\newcommand{\bX}{{\bf X}}
\newcommand{\bQ}{{\bf Q}}
\newcommand{\Aut}{{\rm Aut}}
\newcommand{\s}{{\mathfrak s}}
\newcommand{\fo}{{\mathfrak o}}
\newtheorem{theorem}{Theorem}[subsection]
\newtheorem{thm}[theorem]{Theorem}
\newtheorem{cor}[theorem]{Corollary}
\newtheorem{lemma}[theorem]{Lemma}
\newtheorem{prop}[theorem]{Proposition}
\newtheorem{definition}[theorem]{Definition}
\newtheorem{defn}[theorem]{Definition}
\newtheorem{example}[theorem]{Example}
\newtheorem{remark}[theorem]{Remark}
\newtheorem{rmk}[theorem]{Remark}
\numberwithin{figure}{subsection}
\numberwithin{equation}{subsection}
\newcommand{\MS}{{\medskip}}
\newcommand{\NI}{{\noindent}}
\newcommand{\Ti}{\widetilde}
\newcommand{\pr}{{\rm pr}}
\newcommand{\lm}{\Lambda^{\rm max}\,}
\newcounter{qcounter}
\newenvironment{enumlist}
  { \begin{list} {\rm(\alph{qcounter})}{\usecounter{qcounter} 
         \setlength{\topsep}{.5ex}  \setlength{\itemsep}{.5ex} \setlength{\leftmargin}{4ex} } }
   { \end{list} }
\newcounter{qicounter}
\newenvironment{enumilist}
   { \begin{list} {\rm (\roman{qicounter})}{\usecounter{qicounter}
     \setlength{\itemsep}{.5ex} \setlength{\leftmargin}{4ex} } }
   { \end{list} }
\newcounter{nqicounter}
\newenvironment{nenumilist}
   { \begin{list} {\rm (\roman{nqicounter})}{\usecounter{nqicounter}
     \setlength{\itemsep}{.5ex} \setlength{\leftmargin}{0.5ex} } }
   { \end{list} }
\newenvironment{itemlist}
   { \begin{list} {$\bullet$}
         { \setlength{\topsep}{.5ex}  \setlength{\itemsep}{.5ex} \setlength{\leftmargin}{2.5ex} } }
   { \end{list} }
\newcommand{\leftsub}[2]{{\vphantom{#2}}_{#1}{#2}}
\newcommand\quotient[2]{
        \mathchoice
            {
                \text{\raise1ex\hbox{$#1$}\Big/\lower1ex\hbox{$#2$}}
            }
            {
                #1\,/\,#2
            }
            {
                #1\,/\,#2
            }
            {
                #1\,/\,#2
            }
    }
\newcommand\quot[2]{
                \text{\raise1ex\hbox{$#1$}/\lower1ex\hbox{$\scriptstyle#2$}}
  }
\newcommand\qu[2]{
                \text{\raise.8ex\hbox{$\scriptstyle#1\!$}/\lower.8ex\hbox{$\!\scriptstyle#2$}}
  }
\newcommand\qq[2]{
                \text{\raise.8ex\hbox{$#1\!$}/\lower.8ex\hbox{$#2$}}
}
 \title[Polyfold fundamental classes and globally structured perturbations]{Polyfold fundamental classes and globally structured multivalued perturbations}
  \author{Dusa McDuff}
 \address{Department of Mathematics,
 Barnard College, Columbia University}
\email{dusa@math.columbia.edu}
 \author{Katrin Wehrheim}
  \address{Department of Mathematics,
University of California, Berkeley}
\email{katrin@math.berkeley.edu}
\thanks{This material is based upon work supported by the National Science Foundation under Grants No.DMS-1708916, and DMS-1440140 while the first author was in residence at the Mathematical Sciences Research Institute in Berkeley, California, during the Spring 2018 semester.}
\keywords{polyfold, sc-Fredholm section, virtual fundamental class, Fredholm stabilization}
\subjclass{46,53,58}
\date{June 21, 2024}
\begin{document}

\maketitle
\tableofcontents

\begin{abstract}  Work of Hofer--Wysocki--Zehnder  has shown that many spaces of pseudoholomorphic curves that arise when studying  symplectic manifolds may be described as the zero set of a polyfold Fredholm section.  This framework has many analytic advantages. However the methods they develop to extract useful topological information from it are rather cumbersome. 
This paper develops a general construction of a finite dimensional space of multivalued perturbations of a polyfold Fredholm section 
such that almost all elements are regularizing. These perturbation are globally structured and explicitly described, and, in cases where the moduli space has no formal boundary,  permit a transparent definition of its (rational \v{C}ech) fundamental class.
\end{abstract}

\section{Introduction}

Polyfold theory, initiated by Hofer-Wysocki-Zehnder 
in the early 2000s, generalizes the notion of a smooth section of an orbibundle to obtain a nonlinear Fredholm theory for functors between groupoids that promises to make the use of pseudoholomorphic curves in symplectic geometry more widely accessible by combining the following features:

\begin{enumlist}
\item The theory applies to compactified moduli spaces $\oMm$ of pseudoholomorphic curves in the sense that $\oMm \simeq \sigma^{-1}(0)$ is homeomorphic to the solution set of 
a so-called scale-smooth (or sc-) Fredholm section $\sigma:\Bb\to \Ee$ of a strong polyfold bundle $\Ee\to \Bb$.
\footnote{While expected for all known types of compact moduli spaces in symplectic geometry, only Gromov-Witten moduli spaces currently have a published description \cite{HWZgw}
of this form.
There is work in progress \cite{FHsft,jiayong} on the moduli spaces of SFT, Floer theories, and Fukaya categories. 
}

\item The theory provides a transparent approach to regularizing such moduli spaces by (multivalued) perturbations of the section $\sigma$ that achieve transversality while preserving compactness 
and most geometric characteristics 
of the perturbed solution set.\footnote{This theory is fully developed and documented in \cite{TheBook} and earlier publications cited therein.}
\end{enumlist}

Two key points about this theory contrast sharply with the usual treatments of Fredholm operators on Banach spaces.\footnote{For an introduction to scale calculus -- the notion of differentiability on Banach spaces used by \cite{TheBook} -- see e.g.\ \cite{usersguide}.} 

$\bullet$ It is based on a new notion of sc-smoothness, which is designed firstly so that there is an appropriate analog of Fredholm theory, and secondly so that reparametrization groups (such as $\PSL(2,\C)$ act sc-smoothly on spaces $E$ of sc-smooth maps $u:\C P^1\to M$.  This is in sharp contrast to the action of such groups on Banach spaces.  For example the rotation group $S^1$ does not act even continuously on the space of $L^2$-maps $S^1\to \R$.

$\bullet$  The fact that $M$-polyfolds are modelled on the image $r(U)$ of an open subset $U\subset E$ under a scale smooth retraction (rather than simply on the open set $U$ itself) allows one to describe a neighbourhood of a nodal curve that includes stable maps with topologically different domains  as an open subset of a single $M$-polyfold.

Thus  polyfold theory provides good local models for the moduli space (i.e. the solution set  of the given operator), and the challenge is to understand the global structure.  There are other theories with analogous features for different notions of ``Fredholm section'', in particular various versions of Kuranishi theory \cite{fooo,MWiso,pardon} which combines local finite dimensional models into a global section functor $\sigma$ between categories (that are generally neither \'etale nor groupoids).  
Although the current ideas may be used to construct a  (tame) smooth Kuranishi atlas that models the zero set of $\si$,
in this paper, we work only in the infinite dimensional setting, constructing a global stabilization of the operator $\si$, namely a finite dimensional family of multi-sections  such that a generic element is a transverse perturbation.
\MS

\MS

Beyond Gromov-Witten theory, most moduli spaces in symplectic geometry  have formal boundary, and in fact come in `coherent families' whose boundary strata are Cartesian or fiber products of other moduli spaces in the family. Extracting algebraic invariants -- such as Floer homology, or a well defined SFT algebra -- from such families of moduli spaces requires `coherent perturbations' that preserve the identifications of boundary strata. The corresponding challenge of extending multisections prescribed on boundaries is present in both polyfold and Kuranishi theory, and is resolved in \cite{TheBook} by a class of so-called `structurable multisections'. 
However, this involves rather elaborate definitions and constructions, and at present does not yield a transparent general existence result for coherent perturbations. 

This problem of constructing coherent multivalued perturbations is simplified by our construction of a global Fredholm stabilization in Theorem~\ref{thm:globstab}.
This provides a finite dimensional family of perturbations that is sufficiently large to achieve transversality, yet is multivalued in the simplest possible way: each perturbation is the orbit of a single-valued section under a global finite group action. 
This type of multisection has natural extensions from boundaries and corners, so that a general existence proof of coherent perturbations reduces to the project  of extending the global stabilization construction to  coherent setups. 

\subsection{Overview of constructions}\label{ss:overview}

 As defined in~\cite[\S16]{TheBook}, a polyfold $\Bb$ is a topological space that is homeomorphic to the realization of a  topological groupoid $\Xx$ that is equipped with a special kind of smooth structure called a sc-smooth structure.
 A local model (without formal boundary) for a global Fredholm section $\sigma:\Bb\to\Ee$ with zero set $\sigma^{-1}(0)=\oMm$ is given by the quotient by a finite group $G$ of a $G$-equivariant scale-smooth section 
$$
U \to \Rr=\bigcup_{x\in U} \{x\}\times \Pi_x(\F), x \mapsto (x, f(x)).
$$ 
Its base $U=r(\Ti U)$ is the image of a scale-smooth retraction $r:\Ti U\to\Ti U$, $r\circ r = r$, on an open subset $\Ti U\subset\E$ of a scale-Banach space. The total space $\Rr$ 
is the image of a scale-smooth retraction $\Ti U\times\F \to \Ti U\times\F, (x,y) \mapsto (r(x), \Pi_x(y))$ given by linear projections $\Pi_x:\F\to\F$ on another scale-Banach space. The polyfold-Fredholm property of this section requires the existence of a scale-Fredholm\footnote{The Fredholm property in scale calculus implies classical Fredholmness of the linearizations, but due to the lack of classical differentiability requires a `contraction germ normal form'. In practice \cite{HWZgw,w:fred} it follows from `classical differentiability in all but finitely many directions'.
} 
extension $\Ti f: \Ti U\to \F$ with $$
\Ti f^{-1}(0)=f^{-1}(0),\quad \ker\rd_x \Ti f = \ker\rd_x f, \quad \qu{\Pi_x(\F)}{\im\rd_x f} \simeq \qu{\F}{\im\rd_x\Ti f}\,,
$$
 and the resulting local chart for the moduli space is an embedding $f^{-1}(0)/G \subset \sigma^{-1}(0)=\oMm$. 

The crucial construction that yields both transverse perturbations of the polyfold-Fredholm section $\sigma$ and its finite dimensional reductions (Kuranishi charts) is that of a {\bf local Fredholm stabilization} near $x\in\sigma^{-1}(0)$.  Here we construct a finite dimensional vector space $E$ as the product of  
a representative of the cokernel $\qu{\Pi_x(\F)}{\im\rd_x f}$ with the group $G$ that acts on $U$ as above, so that $G$ acts diagonally on 
$U \times E$.  We then define a $G$-equivariant bundle map 
$\tau : U \times E \to \Rr$ covering the identity,  so that $\sigma-\tau$ is transverse to the zero section of $\Rr$ near $(x,0)\in U\times E$.  (For details see Lemma~\ref{lem:localtau}.)
Each $e\in E$ then yields a {\bf local multivalued perturbation} $U \ni x \mapsto \bigl( \tau(x, g e ) \bigr)_{g\in G}\subset \Rr$, which yields $G$-equivariant transversality of $\sigma|_U - \bigl(\tau(\cdot,ge)\bigr)_{g\in G}$ for regular values of the map $(\sigma|_U-\tau)^{-1}(0)\to E$. Alternatively, the same construction can be viewed as giving a {\bf local finite dimensional reduction} 
\begin{align*}
f_\tau \,: &\; U_\tau:= (\sigma|_U-\tau)^{-1}(0)  \to E,\quad  (e,x) \mapsto e ,\\
& \quad \text{with}\quad f_\tau^{-1}(0) / G  \simeq (\sigma^{-1}(0) \cap U)/ G \subset \oMm. 
\end{align*}
\NI
Once this local structure is understood, the main difficulty with both the Kuranishi atlas and polyfold approaches  is to patch such local perturbations into a globally transverse perturbation, while preserving compactness of the perturbed solution space.

The key idea of the current paper  is to model the base polyfold $\Bb$  by a groupoid $\Xx_\Vv$ whose structure is so transparent  that the pullback of the  Fredholm section $\sigma:\Bb\to\Ee$  has a natural family of globally transverse perturbations. Here are the main constructions: 

\begin{enumilist}
\item
Given finitely many local polyfold charts $U_i/G_i\hookrightarrow\Bb$ covering a compact subset $\oMm\subset \bigcup_i U_i/G_i$, Theorem~\ref{thm:reduce} constructs an \'etale groupoid $\Xx_\Vv$ 
with explicitly described object and morphism spaces and
 whose realization $|\Xx_\Vv|$ is equivalent as polyfold to an open neighbourhood $|V|\subset\Bb$ of $\oMm$. 

Moreover, Theorem~\ref{thm:reduceFred} constructs a Fredholm section functor $f_\Vv:\Xx_\Vv\to\Ww_\Vv$ such that the perturbation theory for $|f_\Vv^{-1}(0)|\simeq\oMm$ is equivalent to that for $\sigma$ in the sense that appropriate perturbations for each section have naturally identified solution sets. 

\item
Given data as in (i) and local Fredholm stabilizations $\tau_i : U_i \times E_i \to \Ee|_{U_i}$, Theorem~\ref{thm:globstab} constructs a global Fredholm stabilization as follows: 
There is an \'etale (but generally not proper) subgroupoid $\Xx_\Vv ^{\less G}$ of $\Xx_\Vv$ obtained by removing all morphisms that give rise to isotropy. 
It carries an action of $G:=\prod G_i$ such that $|\Xx_\Vv ^{\less G}|\,/G \simeq |\Xx_\Vv|$. 
We then construct a finite dimensional vector space $E:=\prod_i E_i$ and a $G$-equivariant functor
$\tau: \Xx_\Vv^{\less G}\times E \to \Ww_\Vv$  so that $f_\Vv-\tau$ is transverse
to the zero section.

\item 
Given data as in (ii), Corollary~\ref{cor:multisection} shows that $\tau$ applied to generic, sufficiently small  $e\in E$ induces canonically structured sc$^+$-multisection functors $\La_{\Vv,e}: \Ww_\Vv\to \Q^{\geq 0}$ and $\La_{\ue} : \Ww|_{V}\to \Q^{\ge 0}$ that satisfy the requirements of the polyfold perturbation scheme and have 
easily described solution sets. 

\item
When $\oMm$ has no formal boundary, Theorem~\ref{thm:polyVFC} defines a polyfold fundamental class in the rational \c{C}ech homology of $\oMm$ as the inverse limit of the fundamental classes of the perturbed solution sets. 
In this setting, (iii) identifies the polyfold fundamental classes $[\oMm]_{f_\Vv}=[\oMm]_{\sigma}$, where the latter depends on the initial  polyfold Fredholm section $\si:\Bb\to \Ee$.\footnote
{
The book \cite{TheBook} does not attempt to define such a fundamental class, instead showing in this situation that there is a well defined notion of integration of sc-smooth forms over $\oMm$.}
\end{enumilist}

As outlined in \cite[Rmk.1.3.8]{Mfund},
the current results  easily adapt to construct a Kuranishi atlas
\footnote{Another abstract construction of this type, without identification of perturbations, can be found in \cite{Yang}.} 
$\Kk_\tau$ for $\oMm$ whose perturbation theory is equivalent to the polyfold perturbation theory
for $f_\Vv$ and thus for $\sigma$.
\MS

\NI{\bf Note to the reader:} This paper contains many technical details that  readers should not attempt to master until the outlines of the construction are clear.  The main theorems are clearly stated in  \S\ref{ss:main}. The language here is very formal; readers can consult 
\S\ref{ss:scale} and \S \ref{ss:basic} for background definitions.  We recommend that 
the reader goes next to \S\ref{ss:mainres}, where the main constructions are defined in an abstract setting.  This section uses the language developed in  \S \ref{ss:basic}--\ref{ss:groupact} concerning
 \'etale categories and groupoid completions.   In \S\ref{sec:strict}, we first show in Theorem~\ref{thm:polyVFC}  that the perturbed zero sets of polyfold Fredholm sections carry a fundamental class in rational \v{C}ech homology, and then complete the proofs of the main theorems by applying the abstract constructions developed in \S\ref{sec:genconstr} to the polyfold setting.
\MS

\NI {\bf Acknowledgement:} This paper is the joint work of Dusa McDuff and Katrin Wehrheim. However this final version was prepared by the first named author, who takes responsibility for all inaccuracies and omissions. She warmly thanks Zhengyi Zhou for his many careful comments on an earlier version.

\section{Terminology and main results} \label{ss:terminology}

\subsection{Statement of main theorems}\label{ss:main}  
A sc-Fredholm section $\sigma:\Bb\to\Ee$ of a strong polyfold bundle 
 is a
continuous map between topological spaces together with an equivalence class of functors $f:\Xx\to \Ww$ between topological groupoids, whose realization $|f|:|\Xx|\to|\Ww|$ together with homeomorphisms $|\Xx|\simeq\Bb$, $|\Ww|\simeq\Ee$ induces $\sigma$. 
Throughout this paper, we will work with a fixed representative $f:\Xx\to \Ww$ that is a sc-Fredholm section functor of a strong bundle $\Pp:\Ww\to \Xx$ over an ep-groupoid $\Xx$ modeled on M-polyfolds.\footnote{Questions of equivalence between such representatives are addressed in \cite[\S10-11]{TheBook}.} 
To summarize these notions we use conventions of \cite{TheBook} in denoting object and morphism spaces as $\Obj_\Xx=X$ and $\Mor_\Xx=\bX$; but we will differentiate between object space $X$ and groupoid $\Xx=(X,\bX)$.   In particular, the realization $|\Xx| = X/\!\!\sim\,$ 
is the quotient of $X$ by the equivalence relation generated by the morphisms $\bX$, equipped with the quotient topology.
The following generalizes the notions of orbifolds, orbibundles, and smooth sections to infinite dimensions and scale calculus. The underlying analytic notions of M-polyfold, sc-Fredholm, etc.\ are reviewed in \S\ref{ss:scale} along with the finite dimensional cases.

\begin{definition} \label{def:poly}
\begin{nenumilist}
\item
An {\bf ep-groupoid} as in \cite[Def.7.1.3]{TheBook} is a groupoid $\Xx=(X,\bX)$ equipped with M-polyfold structures\footnote{
As in other regularization constructions of polyfold theory, we assume that $X$
admits sc-smooth partitions of unity. This can be guaranteed by working with M-polyfold charts in Hilbert spaces; see \cite[\S5.5]{TheBook}. In classical PDE applications that means working with Sobolev spaces of class $L^p$ with $p=2$.
Moreover, the boundaries of $X$ and $\bX$ are assumed to be tame in the sense of \cite[\S2.5]{TheBook}.
In the known applications, this is guaranteed by retracts of splicing form; see \cite[Lemma~2.5.5]{TheBook}.} on the object and morphism sets so that 
\begin{itemize}
\item 
source and target maps $s,t:\bX\to X$ are local sc-diffeomorphisms and the unit $X\to \bX, q\mapsto\id_q$, inverse $\bX \to\bX, m\mapsto m^{-1}$, and composition $\bX \leftsub{t}{\times}_s \bX \to \bX, (m,m')\mapsto m\circ m'$ are sc-smooth maps (i.e.\ the groupoid is {\rm \'etale}); 
\item 
every $x\in X$ has a neighbourhood $N(x)$ so that $t:s^{-1}\bigl( {\rm cl}_X(N(x))\bigr) \to X$ is {\rm proper}.
\end{itemize}
In addition, we will throughout assume that the realization $|\Xx|$ is paracompact so that $|\Xx|$ is metrizable by 
 \cite[Thm.7.3.1]{TheBook}.
We call an ep-groupoid $\Xx=(X,\bX)$ {\bf nonsingular} if the {\bf isotropy groups} $G_x:=s^{-1}(x)\cap t^{-1}(x)\subset \bX$ are trivial for all $x\in X$.
Finally, we will say that $\Xx$ has no formal boundary if the boundary $\partial X$ of its object set as defined in \cite[\S2.4]{TheBook} is empty. 

\item
A {\bf strong bundle} as in 
\cite[Def.8.3.1]{TheBook}\footnote{This summarizes the discussion following \cite[Def.8.3.1]{TheBook}; for more details see Definition~\ref{def:bundle}.} 
over the ep-groupoid $\Xx$ is a pair $(P,\mu)$ of a strong bundle $P:W\to X$ and a strong bundle map $\mu:\bX\leftsub{s}{\times}_P W\to W$ 
so that 
$P$ lifts to a functor $\Pp:\Ww\to\Xx$ given by $\Pp(w)=P(w)$, $\Pp(m,w)=m$.
Here $\Ww$ is the ep-groupoid given by 
$\Obj_\Ww=W$, $\Mor_\Ww= \bX \leftsub{s}{\times}_P W$, 
$s(m,w)=w$, $t(m,w)=\mu(m,w)$, $\id_{w}= (\id_{P(w)},w)$, and 
$(m,w)\circ (m',w')= \bigl(m\circ m' , w\bigr)$ 
if $w' = \mu(m,w)$.

We will thus often denote strong bundles by $(\Pp:\Ww\to \Xx, \mu)$ or simply $\Pp:\Ww\to\Xx$.  

\item
A  {\bf sc-Fredholm section functor} of the strong bundle $\Pp:\Ww\to \Xx$ 
as in \cite[Def.8.3.4]{TheBook} is a functor $f:\Xx\to \Ww$ that is sc-smooth on object and morphism spaces, satisfies $\Pp\circ f = \id_\Xx$, and is such that $f:X\to W$ is sc-Fredholm on the M-polyfold $X$.

\item
An {\bf orientation} of a sc-Fredholm section functor $f$ as in \cite[Def.12.5.4]{TheBook} is a continuous section $\fo: X_\infty\to\Oo_f$ of the orientation bundle over the dense subset of smooth objects $X_\infty\subset X$, which is compatible with morphisms as specified in \S\ref{ss:orient}.
\end{nenumilist}
\end{definition}

Given a sc-Fredholm section $f: \Xx\to \Ww$ that describes a compact moduli space $\oMm\simeq|f^{-1}(0)|$, Lemma~\ref{lem:localtau} constructs local Fredholm stabilizations near any $x\in f^{-1}(0)$, as explained in \S\ref{ss:overview}.
Now a finite covering of $|f^{-1}(0)|$ by such local stabilizations can be used in two ways: 
On the one hand, the various fattened solution sets resulting from each local stabilization can be related by appropriate transition data to form a Kuranishi atlas for $\oMm$.
On the other hand, after an appropriate refinement of the polyfold model, we can combine a covering of $\oMm\simeq|f^{-1}(0)|$ by local Fredholm stabilizations into a global finite dimensional stabilization. 
This is our first main result (proven in \S\ref{ss:Vdata}--\ref{ss:stabilize}):

\begin{thm}[{\bf Global stabilization}]\label{thm:globstab}  
Let $f:\Xx\to \Ww$ be a sc-Fredholm section functor 
with compact solution set,
Then choices of local uniformizers and Fredholm stabilizations 
$(U_i,G_i,\tau_i:X\times E_i\to \Ww)_{i\in \{1,\ldots,N\}}$ covering $|f^{-1}(0)|$, a partition of unity,  compatible cover reductions as specified in Theorem~\ref{thm:stabilize}, and suitable 
compactness controlling data $(N,\Uu)$ as in Definition~\ref{def:control-compact}
 induce 
 \begin{itemize}\item [-] an open subset $V\subset X$  such that 
$|V| \subset |\Xx|$ is an open neighbourhood of $|f^{-1}(0)|$ that contains $|\cl_X(\Uu)|$,
\item[-] a finite group $G:=\prod_{i=1}^N G_i$ and 
a finite dimensional vector space $E:=\prod_{i=1}^N E_i$ with $G$-action, and
and 
\item[-] a commutative diagram of \'etale groupoids and functors 
\begin{align}\label{diag:11}
\xymatrix
{
\Xx_\Vv^{\less G}\times E  \ar@{->}[d]^{\pr}\ar@{->}[r]^{ \tau\;\;\;} & \Ww_\Vv: =  \psi^*\Ww \ar@{->}[d]_{\Pp_\Vv}\ar@{->}[r]^{\;\;\;\Psi}& \Ww|_{V} \; \subset\; \Ww\ar@{->}[d]_{\Pp}\\
\Xx_\Vv^{\less G} 
 \ar@{->}[r]^{\io} &   \Xx_\Vv  \ar@/_1pc/[u]_{f_\Vv}\ar@{->}[r]^{\psi} & \Xx|_{V} \;\subset\;  \Xx\ar@/_1pc/[u]_{f}
}
\end{align}
with the following properties:
\end{itemize}
\begin{nenumilist}
\item
$\Xx|_{V}$ and $\Ww|_{V}$ are the full subcategories of $\Xx$ resp.\  $\Ww$ with objects $V$ resp.\ $W|_V = P^{-1}(V)$ as in Lemma~\ref{lem:bundle}. 
In particular their realizations are naturally identified with $|\Xx|_{V}|\simeq |V|\subset |\Xx|$ and $|\Ww|_{V}|\simeq |\Pp|^{-1}(|V|)\subset |\Ww|$. 

\item  
$\Xx_\Vv$ is an ep-groupoid constructed in Theorem~\ref{thm:reduce}, covered by finitely many local uniformizers $(V_I,G_I)_{I\subset\{1,\ldots,N\}}$ so that 
$\Obj_{\Xx_\Vv} = \bigsqcup_I V_I$ and $\Mor_{\Xx_\Vv}(V_I,V_I)=G_I\times V_I$ with $G_I=\prod_{i\in I} G_i$. 
It is equipped with an inner action of $G$ in the sense of Definition~\ref{def:inngpact},
in particular lifting the trivial action on $|\Xx_\Vv|$. 
Further, the action on each $V_I\subset\Obj_{\Xx_\Vv}$ is given by the natural projection from $G\times V_I$
 to the local uniformizer morphisms
 $G_I\times V_I=\Mor_{\Xx_\Vv}(V_I,V_I)$ induced by the projection $G\to G_I$. 
 
The functor $\psi: \Xx_\Vv \to \Xx|_{V}$, given by natural maps $V_{I=\{i_1<\cdot\cdot\}} \to  U_{i_1}$, is finite-to-one, surjective on objects, and
an equivalence of ep-groupoids in the sense of \cite[Def.10.1.1]{TheBook}, in particular inducing a homeomorphism
${|\psi|: |\Xx_\Vv| \stackrel{\simeq}\to |\Xx|_{V}| =  |V|}$. 

\item
The bundle $\Pp_\Vv: \Ww_\Vv: =\psi^*\Ww \to \Xx_\Vv$ and its section $f_\Vv=\psi^* f$ are the pullbacks in the sense of Lemma~\ref{lem:bundle} of $\Pp$ and $f$, as spelled out in Theorem~\ref{thm:reduceFred}.
The bundle $\Ww_\Vv$ is equipped with the natural lift of the inner $G$ action on $\Xx_\Vv$ from Lemma~\ref{lem:actW}, with respect to which both $\Pp_\Vv$ and $f_\Vv$ are equivariant. 
The natural lift of $\psi$ to the pullback bundle functor $\Psi: \Ww_\Vv \to \Ww|_{V}$ of Lemma~\ref{lem:bundle}~(iii) is a strong bundle equivalence 
in the sense of \cite[Def.10.4.1]{TheBook}, and $|\psi|$ restricts to a homeomorphism $|f_\Vv^{-1}(0)|  \stackrel{\simeq}\to |f^{-1}(0)|$. 
The section $f_\Vv$ is sc-Fredholm, and the strong bundle equivalence $\Psi$ pulls back any choice of orientation of $f$ to an orientation of $f_\Vv$. 

\item  
The \'etale groupoid $\Xx_\Vv ^{\less G}$ constructed in Proposition~\ref{prop:Hcomplet} 
is a (nonproper) nonsingular  subgroupoid of $\Xx_\Vv$ with the same objects $\Obj_{\Xx_\Vv^{\less G}}=\bigsqcup_I V_I$ but where enough morphisms have been removed so that $\Xx_\Vv ^{\less G}$ has at most one morphism between any two objects.\footnote
{
Although we do not attempt to formulate a sense in which $\Xx_\Vv ^{\less G}$ is unique, it has a very simple structure:   It is the groupoid completion of an \'etale poset 
${\Qq}$, that is, the object space 
$\bigsqcup_I V_I$ has a natural partial order $\preccurlyeq$ such that  $\Mor_{{\Qq}}(x,y)$ is nonempty exactly if $x\preccurlyeq y$; see Lemma~\ref{lem:complet}.}

The removed morphisms are parts of the group action.
Indeed, $\Xx_\Vv ^{\less G}$
is equipped with an action of $G$ so that the inclusion functor $\io:  \Xx_\Vv ^{\less G}\to  \Xx_\Vv $ is $G$-equivariant in the sense of Definition~\ref{def:gpact} and induces a homeomorphism $|\io|: |\Xx_\Vv ^{\less G}|\,/G 
\stackrel{\simeq}\to 
|\Xx_\Vv|$.

\item 
The category $\Xx_\Vv^{\less G}\times E$ is the product of $\Xx_\Vv^{\less G}$ with the category with object space $E$ and only identity morphisms. 
Then
the functor $\tau: \Xx_\Vv^{\less G}\times E \to \Ww_\Vv$ constructed in Theorem~\ref{thm:stabilize}
is $G$-equivariant, a strong bundle map in the sense of \cite[Def.2.6.1]{TheBook}, and a compactness-controlled Fredholm stabilization in the following sense: 

The data $\bigl(N_\Vv:=N\circ\Psi, \Uu_\Vv:=\psi^{-1}(\Uu)\bigr)$  controls compactness of $f_\Vv$ and there exists a neighbourhood $O_\tau\subset E$ of $0$ such that the family of maps $\bigl(\tau_\ue: \Obj_{\Xx_\Vv}  \to \Obj_{\Ww_\Vv},  \uo\mapsto \tau(\uo,\ue) \bigr)_{\ue\in E}$ -- while generally not functorial -- is $(N_\Vv,\Uu_\Vv)$-regular as follows: 
\begin{itemize}
\item[(1)]
$N_\Vv(\tau_\ue(\uo))< 1$ for all $\ue\in O_\tau$, $\uo\in  \Obj_{\Xx_\Vv} = \Obj_{\Xx_\Vv^{\less G}}$;
\item[(2)]
$\supp\tau_\ue = {\rm cl}\{ \uo\in  \Obj_{\Xx_\Vv} \,|\, \tau_\ue(\uo)\ne 0_\uo \} \subset \Uu_\Vv$ for all $\ue\in O_\tau$; 
\item[(3)]
the map $\Obj_{\Xx_\Vv}\times O_\tau \to \Obj_{\Ww_\Vv},  (\uo,\ue) \mapsto f_\Vv(\uo) - \tau(\uo,\ue)$ is transverse to the zero section of $\Ww_\Vv$ and in general position over the boundary. 
\end{itemize}
\end{nenumilist}
\end{thm}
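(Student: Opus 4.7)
The plan is to orchestrate the auxiliary constructions already developed (and cited in the statement itself) into a coherent package. First I would fix the input data: by compactness of $|f^{-1}(0)|$ and Lemma~\ref{lem:localtau}, I can choose a finite collection of local uniformizers $(U_i,G_i)_{i=1}^N$ together with local Fredholm stabilizations $\tau_i:U_i\times E_i\to\Ee$ whose images of cokernels span the relevant obstruction spaces on a neighbourhood of $|f^{-1}(0)|$. I would then pass to a ``shrinking'' cover reduction (as in the compatibility conditions of Theorem~\ref{thm:stabilize}) producing precompact nested opens whose closures still cover $|f^{-1}(0)|$, choose a sc-smooth $G_i$-invariant partition of unity subordinate to this refined cover, and fix compactness-controlling data $(N,\Uu)$ as in Definition~\ref{def:control-compact} so that $\Uu$ is contained in the inner layer of the reduction.

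Next I would invoke Theorem~\ref{thm:reduce} to build the \'etale groupoid $\Xx_\Vv$ with $\Obj_{\Xx_\Vv}=\bigsqcup_I V_I$ and isotropy $G_I\times V_I$, together with the equivalence $\psi:\Xx_\Vv\to\Xx|_V$ onto an open neighbourhood $|V|$ of $|f^{-1}(0)|$ containing $|\cl_X(\Uu)|$; this establishes (i) and (ii). For (iii) I would apply Theorem~\ref{thm:reduceFred}, which gives the pullback bundle $\Ww_\Vv=\psi^*\Ww$, the pulled-back section $f_\Vv=\psi^*f$, and the strong bundle equivalence $\Psi$, and I would transport the orientation via $\Psi$ as in \S\ref{ss:orient}. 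The inner $G$-action on $\Xx_\Vv$ is produced by sending $(g_1,\ldots,g_N)\in G$ to the morphism in $G_I\times V_I$ obtained by projecting $G\to G_I$ on each chart, and its lift to $\Ww_\Vv$ follows from Lemma~\ref{lem:actW}.

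For (iv) I would apply Proposition~\ref{prop:Hcomplet} to extract the nonsingular subgroupoid $\Xx_\Vv^{\less G}\subset\Xx_\Vv$ obtained as the groupoid completion of the natural poset $\Qq$ on $\bigsqcup_I V_I$, ordered by inclusion of index sets. The key structural fact is that the morphisms removed in passing from $\Xx_\Vv$ to $\Xx_\Vv^{\less G}$ are exactly those given by the nontrivial elements of the $G$-action, so $\iota:\Xx_\Vv^{\less G}\to\Xx_\Vv$ is $G$-equivariant and $|\iota|/G$ is a homeomorphism onto $|\Xx_\Vv|$. For (v), I would apply Theorem~\ref{thm:stabilize} to assemble the local stabilizations into the global functor $\tau$: because $\Xx_\Vv^{\less G}$ has at most one morphism between any two objects, the local $G_i$-equivariant maps $\tau_i$ can be combined by the partition of unity into a globally well-defined $G$-equivariant bundle map on $\Xx_\Vv^{\less G}\times E$. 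Conditions (1) and (2) of the regularity clause then follow immediately from the partition of unity bound and the support shrinking, while (1) additionally uses the smallness of the neighbourhood $O_\tau\subset E$.

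The hard part, as usual in stabilization arguments, will be transversality condition (3). Here I would argue pointwise: over any object $\uo\in V_I$, the fiber direction generated by the factors $\{E_i\}_{i\in I}$ surjects onto the cokernel of $\rd f_\Vv$ by construction of the local $\tau_i$ (Lemma~\ref{lem:localtau}), and the partition-of-unity weights appearing in $\tau$ are strictly positive on a neighbourhood of $\supp\tau_i$, so the total derivative of $(\uo,\ue)\mapsto f_\Vv(\uo)-\tau(\uo,\ue)$ in the $E$-direction still surjects onto the cokernel. General position over the boundary follows from choosing the local stabilizations tangent to boundary strata, as in the standard construction. Combining all five steps and verifying that the resulting diagram \eqref{diag:11} commutes completes the proof.
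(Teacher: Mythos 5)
Your overall architecture coincides with the paper's: Theorem~\ref{thm:globstab} is proved there precisely by delegating (i)--(iv) to Theorems~\ref{thm:reduce} and \ref{thm:reduceFred} and (v) to Theorem~\ref{thm:stabilize}, so your organization of the input data and the citations for parts (i)--(iv) are fine. The genuine gap is in the one place where you supply an argument, namely condition (3) of (v). Lemma~\ref{lem:localtau} only gives surjectivity of the (reduced) linearizations at \emph{unperturbed} solutions, i.e.\ at points of $f_\Vv^{-1}(0)\times\{0\}$, whereas (3) asserts transversality of $(\uo,\ue)\mapsto f_\Vv(\uo)-\tau(\uo,\ue)$ at \emph{every} zero with $\ue$ ranging over a neighbourhood $O_\tau\subset E$; such zeros are in general not zeros of $f_\Vv$. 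Passing from the unperturbed solution set to an open set $O_\tau$ is the main analytic step of Theorem~\ref{thm:stabilize}~(iv)(3): one first obtains surjectivity on a neighbourhood of $f_\Vv^{-1}(0)\times\{0\}$ from the sc-implicit function theorem, and then argues by contradiction that no sequence $\ue_k\to 0$ can carry non-transverse solutions, extracting a convergent subsequence of solutions by means of the bounds (1),(2), the compactness-controlling data $(N_\Vv,\Uu_\Vv)$ and the properness of $\Xx_\Vv$. Your proposal treats $O_\tau$ as needed only for the norm bound (1) and omits this openness-plus-compactness step entirely.

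Two further points in your pointwise surjectivity sketch need repair. The assertion that ``the partition-of-unity weights appearing in $\tau$ are strictly positive on a neighbourhood of $\supp\tau_i$'' is neither true in general nor the relevant fact; what makes the argument work is the compatibility of the cover reduction with the supports of the partition of unity (Lemma~\ref{lem:cov0}~(iv)): for $\uo\in V_I$ the weights $\beta_j$ with $j\notin I$ vanish at $|\uo|\in F'_I$, hence $\sum_{i\in I}\beta_i=1$ there and at least one $\beta_{i_n}$ with $i_n\in I$ is strictly positive; the cokernel-spanning property of $\tau_{i_n}$ at the object $x_{i_n}$ must then be transported to the fiber over $\psi(\uo)=x_{i_1}$ via the bundle isomorphisms $\mu(m^{-1})$ and the functoriality of $f$, a step your sketch glosses over. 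Finally, general position at the boundary is not obtained by ``choosing the local stabilizations tangent to boundary strata'' (the perturbations are sections of a strong bundle, so tangency to boundary strata has no meaning here); it is obtained because Lemma~\ref{lem:localtau}~(3) already makes the \emph{reduced} linearizations surjective (the cokernel complement can be taken inside the reduced tangent space), and it is this property that Theorem~\ref{thm:stabilize} propagates to the global stabilization on the boundary and corner strata.
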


Full details and proofs are given in Theorems~\ref{thm:reduce}, \ref{thm:reduceFred}, and \ref{thm:stabilize}. 
The construction of such a global finite dimensional stabilization is the common technical work behind two major points of view on how to construct the ``virtual'' fundamental class of a compact moduli space $\oMm\simeq|f^{-1}(0)|$ without formal boundary: 
Moreover, as sketched in \cite[Rmk.1.3.8]{Mfund},
the global stabilization leads directly to the construction of a Kuranishi atlas as well as other finite dimensional models.
%
On the other hand the following Corollary shows how the stabilization also
yields a collection of multivalued perturbations whose solution sets are compact weighted branched orbifolds and can be used for an inverse limit construction of $[\oMm]$ in Theorem~\ref{thm:polyVFC}.

This construction of transverse perturbations from a global stabilization is the same line of argument as in both general polyfold theory and more classical regularization approaches such as \cite{MS}.\footnote{
For example, the stabilization in the trivial isotropy case is the map $\R^m\times X\to W$ in the proof of \cite[Thm.5.3.5]{TheBook}, and is the section $\Bb\times\Jj\to\Ee$ in the proof of  \cite[Prop.3.2.1]{MS}.}
In polyfold theory (and any other abstract construction of perturbations) the main technical complications arise from nontrivial isotropy. This is resolved locally by working with multivalued perturbations, but their patching and extension from the boundary is surprisingly nontrivial; see  \cite[\S14]{TheBook}. 
We avoid the technicalities of patching or extending multivalued perturbations by following the strategy of \cite[\S3.2]{MWiso}: Our technical work is in the construction of an appropriate atlas, given by the \'etale groupoid $\Xx_\Vv ^{\less G}$ that is both reduced (to organize overlaps of charts) and pruned (to remove isotropy), and the global action of a group $G$ that captures all isotropy. 
Then the global stabilization $\tau: \Xx_\Vv^{\less G}\times E \to \Ww_\Vv$  gives rise to a family of single-valued sections $$
f_\Vv-\tau(\cdot,\ue): X_\Vv:=\Obj_{\Xx_\Vv}\to \Obj_{\Ww_\Vv}
$$
 of the topological bundle formed by the object spaces of the bundle $\Ww_\Vv\to \Xx_\Vv$, which fail functoriality only in their lack of $G$-equivariance.\footnote{
Note here that, while we construct the stabilization functor $\tau$ to be $G$-equivariant, 
the single-valued perturbation $f_\Vv-\tau(\cdot,\ue)$ is typically not itself $G$-equivariant because $G$ acts nontrivially on $E$.
} 
Taking the $G$-orbit of such a section yields a transverse multivalued perturbation of $f_\Vv$ which is more globally symmetric and structured than the complicated hierarchy of \cite[\S13]{TheBook}. 
Our notion of a globally structured multisection is given in Definition~\ref{def:multisdef0}.
This notion should allow considerable simplification of the intricate extension constructions in \cite[\S14]{TheBook}. 

The following is a detailed description of these perturbations and perturbed zero sets. 
Here we denote by  $\Q^{\ge 0}$ the category with objects $\Q\cap[0,\infty)$ and only identity morphisms.\footnote{
This category is denoted $\Q^+$ in \cite{TheBook}.}

\begin{cor} \label{cor:multisection}
Let $\tau: \Xx_\Vv^{\less G}\times E \to \Ww_{\Vv}$ be a stabilization functor as in Theorem~\ref{thm:globstab}.  

\begin{nenumilist}
\item 
Each $\ue\in E$ induces sc$^+$-multisection functors $\La_{\Vv,\ue} : \Ww_\Vv\to \Q^{\ge 0}$ given by $\La_{\Vv,\ue}(w):= \tfrac{1}{\# G} \ \# \{ \uh\in G \,|\, \tau(P_\Vv(w) ,\uh*\ue)= w  \}$
and $\La_{\ue}:= \Psi_*\La_{\Vv,\ue}: \Ww|_{V}\to \Q^{\ge 0}$ given by pushforward \cite[Lemma~11.5.2]{TheBook}.  
Their support (i.e.\ ``graph") is
\begin{align*}
\supp \La_{\Vv,\ue}& = G * \tau(X_\Vv \times \{\ue\} ) = {\textstyle \bigcup_{\ug\in G}} \; \tau(X_\Vv \times \{\ug*\ue\} ),\\
 \supp \La_\ue & = \Psi(\supp \La_{\Vv,\ue}) .
\end{align*}
These multisections are structurable in the sense of \cite[\S13.3]{TheBook}, and $\La_{\Vv,\ue}$ 
is globally structured in the sense of Definition~\ref{def:multisdef0}.  
%

\item 
There is a comeagre (and thus dense) subset $O^\pitchfork_\tau\subset O_\tau$ such that 
for all $\ue\in O^\pitchfork_\tau$ the functors 
$\Theta_{\Vv,\ue}: =\La_{\Vv,\ue}\circ f_\Vv : \Xx_\Vv\to\Q^{\ge 0}$ and $\Theta_\ue: =\La_{\ue}\circ f : \Xx|_V\to\Q^{\ge 0}$ are transversal and in general position over the boundary in the sense of \cite[\S15.2]{TheBook}. 
This gives both the structure of compact, tame branched ep$^+$-subgroupoids 
in the sense of \cite[\S9.1]{TheBook}.

\item 
If $f$ is oriented, then for every $\ue\in O^\pitchfork_\tau$ both $\Theta_{\Vv,\ue}$
and $\Theta_\ue$ inherit natural orientations.


\item 
Suppose, in addition to (iii), that $f$ has constant Fredholm index $d\in\Z$, 
and $\psi_f : |f^{-1}(0)|\to \oMm$ is a homeomorphism to a compact 
Hausdorff space $\oMm$. 
Then the perturbed solution sets 
\begin{align*}
& |\supp\Theta_{\Vv,\ue}| = |\{\ux\in X_\Vv \,|\, \Theta_{\Vv,\ue}(\ux)>0\}|\subset |\Xx_\Vv|,\\
& |\supp\Theta_\ue| = |\{x\in X|_V \,|\, \Theta_\ue(x)>0\}|\subset |V|
\end{align*}
carry natural fundamental classes denoted $[\Theta_{\Vv,\ue}], [\Theta_\ue]$ that belong to
$\check{H}_d(\oMm;\Q)$ if $\oMm$ has no formal boundary, and to the relative group 
$\check{H}^\infty_d(\oMm\less\p\oMm;\Q)$ if $\oMm$ has formal boundary $\p\oMm$.\footnote
{See \eqref{eq:Afundc} for notation.}
Further, if there is no formal boundary $\psi_{f_\Vv}:= \psi_f \circ |\psi|$ is a homeomorphism  $|f_\Vv^{-1}(0)|\to \oMm$, and 
the perturbations in (ii) induce the same fundamental classes as defined in Theorem~\ref{thm:polyVFC},
$$
[\oMm]_f
\;=\; 
(\psi_f)_* \;\underset{\leftarrow}\lim \, 
(\psi_k)_* \bigl[ \Theta_{\ue_k} \bigr] 
\;=\; 
(\psi_{f_\Vv})_*  \; \underset{\leftarrow }\lim \, (\psi_{\Vv,k})_*
\bigl[ \Theta_{\Vv,\ue_k} \bigr] \;=\; [\oMm]_{f_\Vv}
 \;\in\; \check H_d(\oMm;\Q) . 
$$

\item

In particular, in the situation of (iv) there is a well defined notion of 
integration over the perturbed solution sets, which satisfies Stokes' Theorem in the sense of \cite[Thm.15.4.3]{TheBook}, and the results can be identified via the homeomorphism $|\psi| \,|_{|\supp\Theta_{\Vv,\ue}|}: |\supp\Theta_{\Vv,\ue}| \to |\supp\Theta_\ue|$. 
%
\end{nenumilist}
\end{cor}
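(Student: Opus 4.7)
For part (i), I would first verify that $\La_{\Vv,\ue}$ is a well-defined sc$^+$-multisection functor. The weight at $w$ depends only on $w$ and $P_\Vv(w)$, and because the inner $G$-action on $\Xx_\Vv$ lifts to $\Ww_\Vv$ (Lemma~\ref{lem:actW}) compatibly with the equivariance of $\tau$, a morphism $w \to w'$ bijects the summation sets by composition with $\uh$, making $\La_{\Vv,\ue}$ a functor. Its local single-valued branches $\uo \mapsto \tau(\uo, \uh*\ue)$ are sc$^+$ because $\tau$ is a strong bundle map and $E$ is finite-dimensional, so each section lands in a finite-dimensional subbundle. The support formula is an immediate unwinding of the definition. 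Pushforward under the strong bundle equivalence $\Psi$ then defines $\La_\ue$ by \cite[Lemma~11.5.2]{TheBook}. Structurability in the sense of \cite[\S13.3]{TheBook} follows directly from the explicit $G$-orbit presentation of the branches, and the globally structured condition of Definition~\ref{def:multisdef0} reflects that all branches arise from the single global equivariant section $\tau$ rather than from a hierarchy of locally patched data.

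For part (ii), the plan is to apply Sard--Smale to the parametrized zero set. Property (v)(3) of Theorem~\ref{thm:globstab} says that $(\uo,\ue) \mapsto f_\Vv(\uo) - \tau(\uo,\ue)$ is transverse on $\Obj_{\Xx_\Vv} \times O_\tau$, so its vanishing locus is an M-polyfold whose projection to $E$ is sc-Fredholm of index equal to the index of $f_\Vv$. The sc-smooth Sard--Smale theorem yields a comeagre set of regular values, which I would then intersect with its finitely many $G$-translates to obtain the $G$-invariant comeagre subset $O^\pitchfork_\tau$ on which all sections $f_\Vv - \tau(\cdot, \ug \ue)$ are simultaneously transverse, hence $\Theta_{\Vv,\ue}$ is transverse as a multisection. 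General position over the boundary follows by applying the same argument stratum-by-stratum, using tameness of $X$. Conditions (v)(1)--(2) of Theorem~\ref{thm:globstab} together with $(N_\Vv,\Uu_\Vv)$ being compactness-controlling give compactness of $\supp\Theta_{\Vv,\ue}$, and the polyfold package in \cite[\S9.1, \S15.2]{TheBook} then upgrades this to the compact tame branched ep$^+$-subgroupoid structure. The corresponding statements for $\Theta_\ue$ follow by pushforward through $\Psi$.

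For parts (iii)--(v), I would propagate structure through $\Psi$ and invoke already-developed results. An orientation $\fo$ of $f$ pulls back through the strong bundle equivalence $\Psi$ to an orientation of $f_\Vv$ (Theorem~\ref{thm:globstab}(iii)), which in turn induces an orientation on each transverse branch $f_\Vv - \tau(\cdot, \uh*\ue)$ by the standard Fredholm determinant construction, with compatibility across $G$-orbits ensured by equivariance of $\tau$. For (iv), a sequence $\ue_k \to 0$ in $O^\pitchfork_\tau$ produces a family of perturbations satisfying the hypotheses of Theorem~\ref{thm:polyVFC}, whose conclusion yields fundamental classes in $\check H_d(\oMm;\Q)$ or in the relative group $\check H^\infty_d(\oMm\less\p\oMm;\Q)$ when there is formal boundary. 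The homeomorphism $|\psi|$ restricts to a bijection $|\supp\Theta_{\Vv,\ue_k}| \to |\supp\Theta_{\ue_k}|$ that respects the branched orbifold structure and orientations, so after pushforward by $\psi_f$ the two inverse limits coincide, yielding $[\oMm]_f = [\oMm]_{f_\Vv}$. Statement (v) then follows by applying \cite[Thm.15.4.3]{TheBook} in either category, with agreement across $\Psi$ being naturality of integration under sc-smooth equivalences.

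The main obstacle will be the structurability and global-structurability claims in (i). Definition~\ref{def:multisdef0} together with the axioms in \cite[\S13]{TheBook} impose coherence of local branch data along all morphisms of $\Xx_\Vv$, and the interplay between the local uniformizer morphisms $G_I \times V_I$ from Theorem~\ref{thm:globstab}(ii) and the inner $G$-action -- which restricts on each $V_I$ to the projection $G \to G_I$ -- must be unwound carefully to check that the $G$-orbit local model of the branches glues consistently across the partially ordered decomposition of $\Obj_{\Xx_\Vv}$ by the $V_I$. Once these coherence identifications are in place, the remaining parts reduce to applications of results already proved in this paper or cited from \cite{TheBook}.
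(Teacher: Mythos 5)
Your overall strategy coincides with the paper's: part (i) via the global $G$-orbit section structure $\s_{\uh}=\tau(\cdot,\uh*\ue)$ and pushforward under $\Psi$, part (ii) via regular values of the projection from the universal zero set $\{(\uo,\ue)\,|\,f_\Vv(\uo)=\tau(\uo,\ue)\}$ to $E$, and parts (iii)--(v) by transfer through the equivalence $\Psi$ and citation of Theorem~\ref{thm:polyVFC} and Theorem~\ref{thm:reduceFred}~(iv). Two points in your sketch would not survive execution as written, however.

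First, there is no ``sc-smooth Sard--Smale theorem'' to invoke: sc-smooth maps are not classically $C^1$, so the Banach-space Sard--Smale argument does not transfer to scale calculus. What saves the argument -- and what the paper actually does -- is that Theorem~\ref{thm:globstab}~(v)(3) already gives transversality of the \emph{parametrized} section, so by the sc-implicit function theorem the universal zero set $\Ti S$ is a \emph{finite-dimensional} manifold (of dimension $\mathrm{ind}(f)+\dim E$) and the projection $\Pi:\Ti S\to O_\tau$ is an ordinary smooth map to which classical Sard applies, stratum by stratum for the boundary/corner claims. Even then one must check that $\Ti S$ admits a countable atlas, which is not automatic for polyfold object spaces; the paper extracts this from the compactness-controlling data $(N_\Vv,\Uu_\Vv)$ together with the fact that $\pi_{\Xx_\Vv}$ is globally finite-to-one (Remark~\ref{rmk:wonderful}). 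Your proposal omits this second-countability step entirely. (Your alternative of intersecting $G$-translates of the regular-value set is fine; the paper instead observes that the set of regular values is automatically $G$-invariant by equivariance of $f_\Vv$ and $\tau$.)

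Second, you correctly identify structurability in (i) as the crux but leave it unexecuted. In the paper this is not a direct unwinding: one first constructs a good system of neighbourhoods on $\Xx_\Vv$ (Lemma~\ref{lem:neigh}, which uses the separation properties of the cover reduction), then verifies the correspondence axioms for $\ka(I,J,\uy,\ug)(\uh)=\ug*\uh$ via Lemmas~\ref{lem:struct2}--\ref{lem:struct3} and Proposition~\ref{prop:structmulti}; the multisection property itself is obtained as a pushforward $(\tau^G)_*$ of the tautological multisection of $\Xx_\Vv^{\less G}\times E\to\Xx_\Vv^{\less G}$ using Lemma~\ref{lem:pushLa}~(ii) and the factorization of $\tau$ through $\Xx_\Vv^E$ from Proposition~\ref{prop:MMGaE}. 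The sc$^+$ property is then inherited from the choice of the local $\tau_i$ in Lemma~\ref{lem:localtau}, as you say.
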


In this Corollary, the identity for the support of the multisection in (i) follows from $G$-equivariance of the functor $\tau$;  the rest of the proof is given at the end of \S\ref{ss:stabilize}.

\subsection{Scale Calculus} \label{ss:scale}

The purpose of this subsection is to give a very brief summary of the key notions of scale calculus (M-polyfold, strong bundle, and sc-Fredholm) that were developed in \cite{TheBook} to replace the notions of Banach manifolds, Banach bundles, and Fredholm sections -- which turned out not to be good fits for global descriptions of moduli spaces of pseudoholomorphic curves. For a more detailed exposition see \cite{usersguide}; for an application-centered introduction see \cite{WISCON}.

An {\bf M-polyfold} without boundary, as defined in \cite[Def.2.3.4]{TheBook}, is analogous to the notion of a Banach manifold: While the latter are locally homeomorphic to open subsets of a Banach space, an M-polyfold is locally homeomorphic to the image of a retract $r:U\to U$ of an open subset $U\subset E$ of a Banach space $E$.\footnote{To ensure sc-smooth partitions of unity, we assume $E$ to be a Hilbert space as in \cite[Cor.5.5.17]{TheBook}. } While $r$ is generally not classically differentiable, it is required to be scale-smooth (or \lq sc-smooth') with respect to a scale structure (or \lq sc-structure') on $E$ -- see \cite[\S 2.2]{usersguide} for an introduction. 

Unless otherwise specified, we will allow M-polyfolds to have ``tame" boundary, arising from local models in $[0,\infty)^k\times E$ satisfying the tameness condition \cite[Def.2.5.2]{TheBook}. Each such M-polyfold\footnote{Any (M-)polyfold in this paper is a ``tame (M-)polyfold" in the language of \cite{TheBook}. See \cite[\S 5.3]{usersguide} for an explanation of the additional condition -- previously called ``neatness''.}
carries a well defined degeneracy index $d_X:X\to\N_0$ given in local models by counting the number of coordinates in $[0,\infty)^k$ that equal to zero. With that, the {\bf boundary} of an M-polyfold is defined as $\partial X:=\{x\in X \,|\, d_X(x)\geq 1\}$, and each connected component $Y\subset \{x\in X \,|\, d_X(x)= 1\}$ inherits the structure of an M-polyfold with degeneracy $d_Y=d_X-1$ by \cite[Thm.2.5.10]{TheBook}. We will call such $Y\subset\partial X$ a {\bf boundary stratum}, while its closure is called a face in \cite{TheBook}. For $l\geq 2$, the connected components of $\{x\in X \,|\, d_X(x)= l\}$  -- called {\bf corner strata} -- then inherit sc-smooth structures by local descriptions as intersections of local faces as in \cite[Prop.2.5.16]{TheBook}. 

As with boundary and corners, most other notions of scale calculus arise by generalizing classical differential geometry with the two notions of sc-calculus and retracts. For example, a sc-smooth map $f:X\to Y$ between M-polyfolds $X,Y$ is called {\bf local sc-diffeomorphism} if for every $x\in X$ and sufficiently small open neighbourhood $O\subset X$ of $x$ (we can assume a retract chart $O\simeq r(U)$)  the image $f(O)\subset Y$ is open and there is a sc-smooth map $f_O^{-1}:f(O)\to O$ that is inverse to $f$, i.e.\ $f_O^{-1}\circ f = \id_{O}$ and $f\circ f_O^{-1}=\id_{f(O)}$.

Given the notion of ep-groupoid in Definition~\ref{def:poly}, we can also view M-polyfolds as the realizations $|\Xx|$ of  nonsingular ep-groupoids $\Xx$. 
On the other hand, the following remark specializes Definition~\ref{def:poly} to scale calculus on finite dimensional Banach spaces.

\begin{remark} \rm
The classical notion of manifolds (with boundary and corners)
 is the class of M-polyfolds whose local models use finite dimensional Banach spaces $E$. In that case scale calculus coincides with classical calculus and retracts are necessarily trivial by \cite[Prop.2.1.2]{TheBook}.
 
Thus the definition of an {\bf orbifold} as the realization $|\Xx|$ of an \'etale proper groupoid $\Xx$ is the special case of Definition~\ref{def:poly}~(i) in which the object space $X$ and morphism space $\bX$ are finite dimensional manifolds.
Indeed, sc-smoothness on finite dimensional manifolds is classical smoothness by \cite[Prop.1.2.1]{TheBook}), 
the \'etale property implies that unit, inverse, and composition are sc-diffeomorphisms by Lemma~\ref{lem:etale}, 
and the properness condition on locally compact object spaces is equivalent to the classical condition of $s\times t: \bX \to X \times X$ being proper by Lemma~\ref{lem:prop1}.

Similarly, the definition of an orbibundle as the realization $|\Pp|:|\Ww|\to|\Xx|$ of a smooth functor between \'etale proper groupoids is the special case of Definition~\ref{def:poly}~(ii) where $W\to X$ is a finite rank vector bundle over a manifold. Here finite rank bundles are the special case of the strong bundle notion below for which the fiber ambient space $F$ is finite dimensional, and hence a smooth family of projections on $F$ has images of constant dimension.

In this special case of an orbibundle $|\Pp|:|\Ww|\to |\Xx|$, the sc-Fredholm sections in Definition~\ref{def:poly}~(iii), as specified below, are precisely the classically smooth sections.
\hfill$\er$
\end{remark}

Back to trivial isotropy but general M-polyfolds, a {\bf strong bundle}, as defined in \cite[Def.2.6.5]{TheBook}, is a sc-smooth surjection $P:W\to X$ between M-polyfolds with linear structures on each fiber $W_x=P^{-1}(x)$ for $x\in X$, and an equivalence class of compatible strong bundle charts. 
Here the notion of `strong' encodes a subbundle $W\supset W[1]\to X$ equipped with another topology and sc-smooth structure such that the fiber inclusions $W[1]_x\hookrightarrow W_x$ are dense and compact in the following sense: 
Local trivializations of $W$ are of the form $\bigcup_{u\in O} \im\Gamma(u) \to O$, where $O=r(U)\subset E$ is a retract as above and $\Gamma(u):F\to F$ is a family of projections on another Banach space $F$.  
Scale-smoothness of $(u,h)\mapsto \Gamma(u)h$ is required with respect to the product sc-structure on $E\times F$, but also with respect to a shifted sc-structure on $E\times F_1$, where $F_1\subset F$ is a compact dense embedding of another Banach space (part of the sc-structure on $F$). 
In this local trivialization, $W[1]\subset W$ is the subset $\bigcup_{u\in O} \Gamma(u)F_1$, equipped with the topology induced by its inclusion in $E\times F_1$. 

Local trivializations are compatible if their transition maps are strong bundle maps. Here a sc-smooth map $\Phi:W\to W'$ between two strong bundles (for example local models as above) is called a {\bf strong bundle map}\footnote{This notion generalizes \cite[Def.2.6.1]{TheBook} as described on \cite[p.67]{TheBook}.} if it covers a sc-smooth map on the base, is linear on fibers, and restricts to a sc-smooth map $W[1]\to W'[1]$.

Finally, a {\bf sc-Fredholm section} $f:X\to W$ of a strong bundle over an M-polyfold as in \cite[Def.3.1.16]{TheBook} is a sc-smooth map satisfying $P\circ f=\id_X$ and sc-Fredholm properties which encode elliptic regularity and a nonlinear contraction property \cite[Def.3.1.6, 3.1.7, 3.1.11]{TheBook}. The latter is a stronger condition than the classical notion of the linearized operator being Fredholm,
and is crucial to ensure an implicit function theorem as shown in \cite{counterex} -- since the differentials of a sc-smooth section with Fredholm linearizations may not vary continuously with the basepoint.

Now an {\bf M-polyfold model for a compact moduli space} $\oMm$ (which we generalize to allow for nontrivial isotropy in Definition~\ref{def:PolyfoldModel}) would consist of a sc-Fredholm section $f: X\to W$ and a homeomorphism $\psi: f^{-1}(0) \to \oMm$. The regularization of such (relatively simple) moduli spaces is achieved in \cite[\S5.3]{TheBook} by showing that different transverse perturbations of $f$ have cobordant solution spaces $(f-s)^{-1}(0)\sim (f-s')^{-1}(0)$ when perturbations are restricted to compactness-controlled (see \S\ref{ss:polybundle}) sc$^+$-sections $s,s': X\to W$. 
This serves as introduction to the last crucial scale calculus notion: An {\bf sc$^+$-section} of a strong bundle $W\to X$ is a section that is sc-smooth $X\to W[1]$ relative to the shifted sc-structure on the dense subbundle $W[1]\subset W$. Since the fibers of $W[1]$ embed compactly into $W$, these sections can be viewed as compact perturbations of the sc-Fredholm section $f$. In particular, adding them does not affect the Fredholm index by \cite[Thm.3.1.18]{TheBook}.

\subsection{Orientation Conventions} \label{ss:orient}

The goal of this subsection (which can be omitted on first reading) is to spell out the notion of an orientation in \cite[Def.12.5.4]{TheBook} for a sc-Fredholm section functor $f:\Xx\to\Ww$ over an ep-groupoid in sufficient detail to make sense of the constructions in this paper. 
This will mainly be a matter of constructing the orientation bundle over the M-polyfold of objects $X$, which will only be possible over a dense subset of so-called ``smooth points":  

M-polyfolds inherit from their sc-smooth structure a filtration $X=X_0\supset X_1 \supset \ldots \supset X_\infty=\bigcap_{i\in\N_0} X_i$ of subsets with their own (metrizable, paracompact) topology \cite[Thm.2.3.10]{TheBook}. Viewed as subsets, each of them are dense in $X$ \cite[p.289]{TheBook}.\footnote{This density holds by definition \cite[Def.1.1.1]{TheBook} for $X=\E=(E_m)_{m\in\N_0}$ a sc-Banach space. In a local chart $X\supset O\simeq r(U)$ it holds since every $x\in O$ is an image $x= r(y)$ of a map $r:U\to U$ on an open subset $U\subset E_0$ of a sc-Banach space, where $r$ is sc-continuous in the sense that $r:U\cap E_m\to U \cap E_m$ is continuous for all $m\in\N_0$. 
Now an approximating sequence $E_m\ni y_i \to y$ lies in $U$ for sufficiently large $i$ and yields an approximation $E_m\cap r(U)\ni r(y_i) \to r(y)=x$.}
The same is true for any polyfold -- the realization $|\Xx|$ of an ep-groupoid -- since the equivalence relation by sc-diffeomorphisms preserves the filtration on the object set, resulting in a filtration $|\Xx|=|X_0|\supset |X_1| \supset \ldots \supset |X_\infty|=\bigcap_{i\in\N_0} |X_i|$ of dense subsets.

Now the {\bf orientation bundle} of a sc-Fredholm section $f:X\to W$ on object level -- where $W$ is a strong bundle over an M-polyfold $X$ -- is constructed in \cite[\S6, \S12.5]{TheBook} as a local homeomorphism $\Oo_f\to X_\infty$ between Hausdorff spaces, where the fiber of $\Oo_f$ over $x\in X_\infty$ consists of the two possible orientations of the line bundle ${\rm DET}(f,x)\to {\rm LIN}(f,x)$ over a convex set of admissible linearizations of $f$ at $x$ \cite[Def.6.2.1]{TheBook}. 

Here the fiber of ${\rm DET}(f,x)$ over each admissible linearization $S:\rT_xX\to W_x$ is the classical determinant line $\det(S) =\lm \ker S \otimes \bigl( \lm \bigl( \qu{W_x}{\im S} \bigr) \bigr)^*$. 
When $f(x)=0$ then the canonical linearization $S={\rm D} f (x):\rT_xX\to W_x$ is admissible, and all other admissible linearizations differ by a compact operator (see the discussion of sc$^+$-operators after \cite[Def.1.1.5]{TheBook}). 
When $f(x)\ne 0$ then all admissible linearizations can be expressed as canonical linearizations $S={\rm D} (f-s) (x):\rT_xX\to W_x$ for some nonlinear sc$^+$-section $s:X\to W$ with $s(x)=f(x)$ (see \cite[Def.6.1.8]{usersguide} for the notion of sc$^+$-section). 

Next, each ${\rm DET}(f,x)$ is given the  structure of a line bundle by viewing it as the restriction of the classical determinant line bundle to a subset of the Fredholm operators $\rT_xX\to W_x$. From the abundance of more or less natural conventions for topologizing this determinant bundle (as discussed in \cite{zinger}) we use the conventions in  \cite[Prop.6.5.8]{TheBook}: 
Given a Fredholm operator $T:E\to F$ between Banach spaces $E,F$, we can topologize the determinant line bundle $\bigcup_{\|S-T\| <\eps} \det(S)$ in a neighbourhood of $T$ as follows: 
The choice of any good left-projection $P:F\to F$ -- satisfying in particular $\im PT = \im P$, see \cite[Def.6.4.3]{TheBook} -- induces an exact sequence in \cite[(6.9)]{TheBook},  
$$
E_{(T,P)} \; : \quad 0 \to \ker(T) \to \ker(PT) \to \qu{F}{\im PT} \to \qu{F}{\im T} \to 0.
$$ 
Then the isomorphism $\Phi_E$ of \cite[Lemma 6.3.3]{TheBook} applied to $E=E_{(T,P)}$ yields an isomorphism $\ga_T^P: \det(T)\to \det(PT)$ as in \cite[(6.12)]{TheBook}. 
This extends to isomorphisms $\ga_S^P: \det(S)\to \det(PS)$ for $\|S-T\|\leq \eps$ guaranteed by \cite[Prop.6.5.1~(1)]{TheBook}, and these are used to pull back the local determinant bundle structure of $\bigcup_{\|S-T\| <\eps} \{S\}\times \det(PS)$ constructed in \cite[Prop.6.5.4]{TheBook}. Independence of choices and continuity of transition maps is proven in \cite[\S6.4-5]{TheBook}.\footnote
{
Here we must take into account the fact that the sign in \cite[Prop.6.4.15]{TheBook} needs to be corrected to $\det( c_1,...,c_m, - r_1, ..., - r_l )$.
 This correction makes this proposition coherent with the rest of \cite[\S6.4]{TheBook} as well as \cite[(8.1.9)]{MW2}.}

This defines the orientation bundle $\Oo_f$ as a set with a 2-to-1 map to $X_\infty$. Topologizing it requires a notion of continuous extension of orientations when the basepoint $x\in X_\infty$ varies, which is given in \cite[Thm.6.6.11]{TheBook}. 

Now an {\bf orientation} of a sc-Fredholm section functor $f:\Xx\to\Ww$ over an ep-groupoid $\Xx$ is a continuous section $\fo:X_\infty\to\Oo_f, x\mapsto \fo_x$ of the orientation bundle over the smooth objects $X_\infty$, that is compatible with the morphisms of $\Xx$ in the sense that $\phi_*\fo_{s(\phi)}=\fo_{t(\phi)}$ for all $\phi\in\bX$. 
Here $\phi_*$ is the second of the following conventions worth spelling out:

\medskip
\noindent 
{\bf General isomorphism convention:}
Any commutative diagram 
\begin{align*}
\xymatrix
{
E  \ar@{->}[d]_{I}\ar@{->}[r]^{S} & F \ar@{->}[d]^{J}   \\
E'  \ar@{->}[r]^{S'}  &   F' 
}
\end{align*}
of Fredholm operators $S:E\to F,S':E'\to F'$ and isomorphisms $I: E \to E'$, $J: F \to F'$ 
induces a commutative diagram for each choice of good left-projection $P:F\to F$ for $S$ 
\begin{align*}
\xymatrix
{
0 \ar@{->}[r] & 
\ker(S)  \ar@{->}[d]_{I}\ar@{->}[r] &  
\ker(PS)  \ar@{->}[d]_{I}\ar@{->}[r]^{S} &
\qu{F}{\im PS}  \ar@{->}[d]^{J} \ar@{->}[r]^{{\rm Id} - P} & 
\qu{F}{\im S} \ar@{->}[d]^{J}  \ar@{->}[r]  & 0 
\\
0 \ar@{->}[r] & 
\ker(S') \ar@{->}[r] &  
\ker(P'S')  \ar@{->}[r]^{S'} &
\qu{F'}{\im P'S'} \ar@{->}[r]^{{\rm Id} - P'} & 
\qu{F'}{\im S'} \ar@{->}[r]  & 0 
}
\end{align*}
Here $P':=JPJ^{-1}$ is the corresponding good left-projection for $S'$, which combines the exact sequences $E_{(S,P)}$ and $E_{(S',P')}$ of type \cite[(6.9)]{TheBook} into a diagram of type \cite[(6.5)]{TheBook}. 
Now \cite[Lemma~6.3.4]{TheBook} induces a natural commutative diagram intertwining the isomorphisms $\gamma^P_S: \det(S)\to \det(PS)$ and $\gamma^{P'}_{S'}: \det(S')\to \det(P'S')$ used in the construction of the determinant bundles near $S$ and $S'$. 
We denote the natural isomorphism $\det(S) \to \det(S')$ in this diagram by 
\begin{align*}
(I,J)_* : \;  \det(S) &\to \det(S'), \;\; \\
(e_1\wedge\ldots e_k)\otimes (f_1\wedge\ldots f_c)^* & \mapsto (I(e_1)\wedge\ldots I(e_k))\otimes (J(f_1)\wedge\ldots J(f_c))^* . 
\end{align*}
Note that this map is well defined since we assumed $J S = S' I$, and hence $I$ resp.\ $J$ restrict to isomorphisms between kernels resp.\ cokernels of the two Fredholm operators.

\medskip
\noindent 
{\bf Morphism convention:}
Since source and target maps $\bX\to X$ are local sc-diffeomorphisms, every morphism $\phi\in\bX$ extends to a local sc-diffeomorphism $\phi:U_x\to U_y$ between neighbourhoods $U_x,U_y\subset X$ of $x=s(\phi)$ and $y=t(\phi)$ as in \cite[\S7.1]{TheBook}. Its differential $\rd\phi(x):\rT_xX\to\rT_yX$ is a sc-isomorphism for smooth points $x\in X_\infty$. 
At smooth points, $\phi$ also lifts to a strong bundle isomorphism $\mu(\phi,\cdot): W|_{U_x} \to W|_{U_y}$, and functoriality of $f$ implies $f\circ\phi = \mu(\phi,f)$ on $U_x$.
Now we can combine these maps to obtain a bijection between admissible linearizations $$
\phi_*: {\rm LIN}(f,x)\to {\rm LIN}(f,y), S \mapsto \mu(\phi,\cdot) \circ S\circ (\rd\phi(x))^{-1}.
$$  
Since these linearizations satisfy the commuting diagram relation $\mu(\phi,\cdot) \circ S = \phi_* S \circ \rd\phi(x)$ with isomorphisms $I=\rd\phi(x)$ between their domains and $J=\mu(\phi,\cdot)$ between their target spaces, we can use the general isomorphism convention to lift $\phi_*$ to a bundle isomorphism $\phi_*: {\rm DET}(f,x)\to {\rm DET}(f,y)$ given by $(I,J)_* : \det(S) \to \det(\phi_* S)$ on each fiber. 
This in turn induces a bijection of orientations $\phi_*: (\Oo_f)_x \to (\Oo_f)_y$ given by restricting the isomorphisms $I=\rd\phi(x)$ and $J:=\mu(\phi,\cdot)$ to the kernels and cokernels of the linearizations. 

\medskip
\noindent 
{\bf Equivalence convention:}
If $\psi:\Yy\to\Xx|_V$ is an equivalence of ep-groupoids \cite[Def.10.1.1]{TheBook} and $f:\Xx\to\Ww$ is sc-Fredholm, then 
any orientation of $f$ has a natural pullback to an orientation of the pullback section $\Ti f:=\psi^*f:\Yy\to\psi^*\Ww$ as follows: 

The section $\Ti f$ satisfies $\Psi\circ \Ti f = f \circ \psi$ by Lemma~\ref{lem:bundle} and is sc-Fredholm by Lemma~\ref{lem:pullback}. Here the pullback bundle functor $\Psi:  \psi^*\Ww \to \Ww$ is a strong bundle equivalence, in particular restricts to isomorphisms between the fibers $\Psi_y: (\psi^*\Ww)_y \to \Ww_{\psi(y)}$, whereas $\psi$ is a local sc-diffeomorphism, hence induces isomorphisms $\rd\psi(y):\rT_y Y \to \rT_{\psi(y)}X$ between the domains of the linearizations of $\Ti f$ and $f$. 
For each $y\in Y$ these induce a bijection between the sets of admissible linearizations $$
\psi^*: {\rm LIN}(f,\psi(y))\to {\rm LIN}(\Ti f,y), S \mapsto \Psi_y^{-1}\circ S \circ \rd\psi(y).
$$ 
Indeed, given a sc$^+$-section $s$ with $s(\psi(y))=f(\psi(y))$, its pullback $\Ti s:= \psi^*s$ is a sc$^+$-section with $\Ti s(y)=\Ti f(y)$, and the corresponding linearizations satisfy
$$\Psi_y\circ \rD_y(\Ti f - \Ti s) = \rD_{\psi(y)} (f-s) \circ \rd\psi(y).
$$

Since this pullback of linearizations satisfies the commuting diagram relation $\Psi_y\circ \psi^*S = S \circ \rd\psi(y)$
with isomorphisms $I=\rd\psi(y)$ between their domains and $J=\Psi_y$ between their target spaces, the general isomorphism convention lifts $\psi^*$ to a bundle isomorphism $\psi^*: {\rm DET}(f,\psi(y))\to {\rm DET}(\Ti f,y)$ given by ${(I,J)_*}^{-1} : \det(S) \to \det(\psi^* S)$ on each fiber. 
This then induces a bijection of orientations $\psi^*: (\Oo_f)_{\psi(y)} \to (\Oo_{\Ti f})_y$ given by restricting the isomorphisms $I=\rd\psi(y)$ and $J=\Psi_y$ to the kernels and cokernels of the linearizations. 

Now any orientation $\fo:X_\infty\to\Oo_f$ of $f$ induces a section $$
\psi^*\fo:= \psi^* \circ \fo\circ \psi : Y_\infty \to \Oo_{\Ti f},$$
 which is an orientation of $\Ti f$ by \cite[Thm.12.5.7~(2)]{TheBook} -- called the pullback orientation.

\medskip
\noindent 
{\bf Solution set convention:}
Given an oriented sc-Fredholm section functor $f:\Xx\to\Ww$ and a suitably transverse -- possibly multi-valued -- perturbation of $f$, the perturbed solution set inherits a natural orientation. 
In the ep-groupoid context this is formulated precisely and proven in \cite[\S15.4]{TheBook}, with local orientations constructed from the following single-valued, trivial isotropy case in \cite[Thm.6.6.18]{TheBook}\footnote{Here we generalize the conventions of \cite[\S6.6]{TheBook} as indicated in the beginning of \cite[\S15.4]{TheBook}.}: 

Suppose $f: X\to W$ is an oriented sc-Fredholm section over an M-polyfold, $s: X\to W$ is a sc$^+$-section, and $x\in (f-s)^{-1}(0)\subset X_\infty$ is a perturbed solution.\footnote{The inclusion of perturbed solution sets in $X_\infty$ is guaranteed by the regularizing property of sc-Fredholm sections, which is preserved by sc$^+$-perturbations.} If $f-s$ is transverse at $x$, that is if the linearization $\rD_x(f-s): \rT_x X \to W_x$ is surjective, then $\ker\rD_x(f-s)$ inherits a natural orientation since $S:=\rD_x(f-s) \in {\rm LIN}(f,x)$ is an admissible linearization and so the given orientation of $$
\det(S)=\lm \ker S \otimes (\lm \qu{W_x}{\im S})^* \cong \lm \ker\rD_x(f-s)
$$
 transfers to the kernel by canonical identification. 
If $(f-s)^{-1}(0)$ also inherits a manifold structure (which requires transversality at all perturbed solutions and good or general position for solutions at the boundary), then the induced pointwise orientations of its tangent spaces $\rT_x (f-s)^{-1}(0)=\ker\rD_x(f-s)$ form a continuous orientation of the tangent bundle since the orientation propagation that topologizes $\Oo_f\to X_\infty$ in \cite[Thm.6.6.11]{TheBook} agrees with classical parallel transport on the tangent bundle over $(f-s)^{-1}(0)$. 

More precisely, let $\phi:[0,1]\to (f-s)^{-1}(0)$ be a smooth path along which $f-s$ is transverse (for boundary considerations see \cite{TheBook}). Then the polyfold construction of orientation propagation $\phi_*:(\Oo_f)_{\phi(0)}\to (\Oo_f)_{\phi(1)}$ in \cite[Lemma~6.6.4-6]{TheBook} simplifies: 
\begin{itemlist}
\item[-]
We can choose the sc$^+$-section $[0,1]\times X \to W, (t,x)\mapsto s(x)$ since $s(\phi(t))=f(\phi(t))$. 
\item[-]
The section $F:[0,1]\times X\to W, (t,x)\mapsto f(x) - s(x)$ is transverse along the graph of $\phi$. 
\item[-]
On the line bundle $\lm L = \bigcup_{t\in[0,1]} \{t\} \times \lm L_t$ with $L_t= \ker\rD_{\phi(t)} (f-s)$ the parallel transport is the classical orientation propagation along the path $\phi:[0,1]\to (f-s)^{-1}(0)$. 
\item[-]
The identification of $\det(\ker\rD_{\phi(t)} (f-s)) \to \lm L_t \otimes (\lm \R^k)^*= \lm L_t$ in \cite[(6.28)]{TheBook} is the identity map since $k=0$. 
\end{itemlist}
So the polyfold orientation propagation $\phi_*$ in this setting agrees with classical orientation propagation on $(f-s)^{-1}(0)$, as claimed. Thus an orientation of $f$ induces an orientation of any appropriately transverse perturbed solution set $(f-s)^{-1}(0)$.

\begin{remark}\label{rmk:equiv-orient} \rm 
The orientation conventions for equivalences and solution sets interact as follows: 
Consider a sc-Fredholm section functor $f:\Xx\to\Ww$ with orientation $\fo:X_\infty\to\Oo_f$. 
Then any local perturbed solution set  $(f-s)^{-1}(0)\cap U$ inherits the structure of an oriented manifold when  $s: X\to W$ is a (not necessarily functorial) sc$^+$-section such that $(f-s)|_U$ is transverse over an open subset $U\subset X$.  

Now consider an equivalence of ep-groupoids $\psi:\Yy\to\Xx|_V$, and the pullback section $\Ti f:=\psi^*f:\Yy\to\psi^*\Ww$ with pullback orientation $\psi^*\fo= \psi^* \circ \fo\circ \psi : Y_\infty \to \Oo_{\Ti f}$.  Suppose that the above perturbation is given on a set $U=\psi(\Ti U) \subset V$ that is the image of a sufficiently small open subset $\Ti U\subset Y$ on which $\psi$ restricts to a sc-diffeomorphism. Then $\Ti s := \psi^*s: \Yy\to \psi^*\Ww$ is a sc$^+$-section with $(\Ti f - \Ti s)|_{\Ti U}$ transverse, and $\psi$ restricts to an diffeomorphism between the manifolds $(\Ti f-\Ti s)^{-1}(0)\cap \Ti U \to (f-s)^{-1}(0)\cap U$ that preserves orientations. 

Indeed, the pullback construction guarantees that $\psi$ maps solutions $$
y\in (\Ti f-\Ti s)^{-1}(0)\cap \Ti U\subset Y_\infty
$$
 to solutions $$
 x:=\psi(y)\in (f-s)^{-1}(0)\cap U\subset X_\infty,
 $$
  sc-diffeomorphisms restrict to diffeomorphisms in finite dimensions, and its linearization $\rd\psi(y): \rT_y (\Ti f-\Ti s)^{-1}(0) \to \rT_x (f-s)^{-1}(0)$ is compatible with the orientations induced by $\psi^*\fo$ and $\fo$ as follows: 

In the absence of cokernels, $\rT_x (f-s)^{-1}(0) = \ker S$ is oriented by the natural identification $\lm \ker S=\det(S)$ with the determinant of the linearization $S:=\rD_x(f-s) \in {\rm LIN}(f,{\psi(y)})$, which is oriented by $\fo(x)$. 
Similarly, $\rT_y (\Ti f-\Ti s)^{-1}(0) = \ker \psi^*S$ is oriented by the natural identification $\lm \ker \psi^* S =\det(\psi^* S)$ with the determinant of the linearization $\psi^* S= \Psi_y^{-1}\circ S \circ \rd\psi(y) \in {\rm LIN}(\Ti f,y)$, which is oriented by $(\psi^*\fo)(y)= \psi^*(\fo(\psi(y)))$.
Here, due to the trivial cokernel, $\psi^*: (\Oo_f)_{\psi(y)} \to (\Oo_{\Ti f})_y$ is given by restricting the isomorphism $\rd\psi(y)$ to the kernels. So the polyfold-theoretic identification $\psi^*$ between the orientation $\fo(x)$ of $\rT_x (f-s)^{-1}(0)$ and the orientation $(\psi^*\fo)(y)= \psi^*(\fo(x))$ of $\rT_y (\Ti f-\Ti s)^{-1}(0)$ coincides with the differential-geometric identification induced by $\rd\psi(y)$. 
\hfill$\er$
\end{remark}

\medskip
\noindent 
{\bf Product convention:} Given $f:\Xx\to\Ww$ sc-Fredholm with orientation $\fo:X_\infty\to\Oo_f$, its constant extension $\Hat f: [0,1]\times\Xx\to [0,1]\times\Ww, (t,x)\mapsto (t,f(x))$ inherits a natural orientation $\Hat\fo:[0,1]\times X_\infty\to\Oo_{\Hat f}$ specified in \cite[Rmk.6.6.19]{TheBook} as follows: 

The map on linearizations ${\rm LIN}(f,x)\to {\rm LIN}(\Hat f, (t,x)), S \mapsto S \Pi$ given by composition with the projection $\Pi:\rT_{(t,x)}[0,1]\times X = \R\times\rT_x X \to \rT_x X$ lifts to a bundle map $
{\rm DET}(f,x)\to {\rm DET}(\Hat f, (t,x))$ given on each fiber by $$
\det(S) \to \det(S \Pi), (a_1\wedge\ldots \wedge a_k) \otimes (b_1\wedge \ldots \wedge b_c)^* \mapsto (1\wedge a_1\wedge\ldots \wedge a_k) \otimes (b_1\wedge \ldots \wedge b_c)^*,
$$
 where $1\in \R=\rT_t[0,1]$ represents the standard orientation of the interval $[0,1]$.

\medskip
\noindent 
{\bf Boundary convention:} Given an oriented sc-Fredholm section functor $f:\Xx\to\Ww$ and a boundary stratum $Y\subset \partial X$, the restriction $f|_Y$ inherits a natural orientation as follows:  

Each $y\in Y_\infty$, considered as boundary point of $X$ has a local model $(0,0)\in r(U)$, where $r : U\to U$  is a sc-smooth retraction on an open subset $U\subset [0,\infty)\times E$. 
Here tameness guarantees $U_\partial:= U \cap \{0\}\times E = \{u\in U \,|\, d_U(u)=1\}$ to be $r$-invariant, inducing a local model $(0,0)\in r(U_\partial)$ for $y\in Y$ with $\rT_y Y \simeq \rD_{(0,0)} r (\{0\}\times E) \subset \im \rD_{(0,0)} r \simeq \rT_y X$. 
To find an outward vector $\nu\in \rT_y X$ that splits $\rT_y X=\R\nu \oplus \rT_y Y$ we need the second tameness condition: This guarantees a subspace $A\subset\{0\}\times E$ such that $\R\times E = \im\rD_{(0,0)} r \oplus A$, and hence we can define $0\neq \nu\in \rT_y X \simeq \im\rD_{(0,0)} r$ to be the unique vector in the decomposition $(-1,0)= \nu + a$ with $a\in A$. Here a different choice of complement $A'\neq A$ induces another vector $\nu'\in \im\rD_{(0,0)} r$ with $\nu'-\nu \in \im\rD_{(0,0)} r \cap \{0\}\times E \simeq \rT_y Y$.

To restrict the orientation of $f$ to an orientation of $f|_Y$ at the point $y\in Y$, choose a linearization $S=\rD_y(f-s)\in {\rm LIN}(f,y)$ with $\rD_y(f-s)(-\nu)=0$ using Lemma~\ref{lem:sc-ext} below and the fact that the direct sum $\R\times E = \im\rD_{(0,0)} r \oplus A$ is in fact a sc-splitting, so that $(-1,0)\in\R\times E_\infty$ yields $-\nu=(1,a) \in \R\times E_\infty$. 
This linearization induces a restricted linearization $S_\partial:= \rD_y(f-s)|_{\rT_y Y}\in {\rm LIN}(f|_Y,y)$ with $\R\nu \oplus \ker S_\partial = \ker S$ and $\im S_\partial = \im S$, so that we can define an isomorphism $\det S_\partial \to \det S$ by mapping $(a_1\wedge\ldots \wedge a_k) \otimes (b_1\wedge\ldots\wedge b_c)^*$ to  $(\nu\wedge a_1\wedge\ldots \wedge a_k) \otimes (b_1\wedge\ldots\wedge b_c)^*$. 

A different choice of outward vector $\nu'\in \nu+\rT_y Y$ gives rise to a continuous family $\nu_\tau := \nu + \tau(\nu'-\nu)\in(\rT_y X)_\infty$ and the construction of Lemma~\ref{lem:sc-ext} yields a continuous family of sc$^+$-sections $s_\tau$ with $s_\tau|_Y$ independent of $\tau\in[0,1]$ and $\rD_y(f-s_\tau)(\nu_\tau)=0$. 
So we obtain a continuous family of isomorphism $\det S_\partial \to \det S_\tau$ mapping 
$$(a_1\wedge\ldots \wedge a_k) \otimes (b_1\wedge\ldots\wedge b_c)^*$$ to  
$$(\nu_\tau\wedge a_1\wedge\ldots \wedge a_k) \otimes (b_1\wedge\ldots\wedge b_c)^*,
$$ 
and can conclude that the induced (inverse) map from orientations of $f$ to orientations of $f|_Y$ at $y$ is independent of choices -- except for the crucial ordering and outward convention that we fixed here. 
This convention is moreover compatible with the boundary orientation convention in \cite[Def.9.3.8]{TheBook} for solution sets (resulting from multi-valued perturbations). 

An analogous extension construction for the stabilizations in \cite[Lemma~6.6.4-6]{TheBook} shows that this pointwise convention is compatible with orientation propagation, thus induces a well defined orientation of $f|_Y$.

\begin{remark}\label{rmk:interval-orient} \rm 
The orientation conventions for products and boundary restrictions interact as follows: 
Given a sc-Fredholm section functor $f:\Xx\to\Ww$ with orientation $\fo:X_\infty\to\Oo_f$, the restrictions of its constant extension $\Hat f: [0,1]\times\Xx\to\Ww$ to the boundary strata $\Xx\to\{i\}\cong\Xx$ for $i=0,1$ are
$\Hat f |_{\{0\}\times\Xx} \cong f$ with orientation $-\fo$ and $\Hat f |_{\{1\}\times\Xx} \cong f$ with orientation $\fo$.
This identification results from the isomorphisms
$\det(S) \to \det(S \Pi =\Hat S)$ and $\det(\Hat S) \to \det(\Hat S_\partial = S)$ 
mapping
$(a_1\wedge\ldots \wedge a_k) \otimes (b_1\wedge \ldots \wedge b_c)^* \mapsto (1 \wedge a_1\wedge\ldots \wedge a_k) \otimes (b_1\wedge \ldots \wedge b_c)^*$ 
and 
$(\nu \wedge a_1\wedge\ldots \wedge a_k) \otimes (b_1\wedge \ldots \wedge b_c)^* \mapsto (a_1\wedge\ldots \wedge a_k) \otimes (b_1\wedge \ldots \wedge b_c)^*$ 
with outward vector $\nu = -1$ for $i=0\in \partial [0,1]$ and outward vector $\nu = 1$ for $i=1\in \partial [0,1]$.   
 \hfill$\er$
\end{remark}

To state and prove existence of the sc$^+$-sections used in the boundary convention we assume familiarity with \cite[ch.1-2]{TheBook}.

\begin{lemma} \label{lem:sc-ext}
Consider a strong bundle retraction $$
R:U\times F\to U\times F, (u,f)\mapsto (r(u),\Ga(u) f)
$$
 covering a sc-smooth retraction $r:U\to U$ on an open neighbourhood $U\subset[0,1)\times E$ of $(0,0)=r(0,0)$, and a sc-smooth section of this local bundle given by $f: r(U) \to F$ satisfying $f(u)\in\Ga(u)F$. 
Given any $\nu=(1,a)\in \im\rD_{(0,0)} r \cap \R\times E_\infty$ there exists a sc$^+$-section $s$ of this local bundle such that $s(0,0)=f(0,0)$ and $\rD_{(0,0)} (f-s)(\nu) = 0$.

\end{lemma}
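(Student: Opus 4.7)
The plan is to use an affine-linear ansatz in the first coordinate direction. I would first set $v := \rD_{(0,0)} f(\nu) \in F$. In the intended applications the basepoint $(0,0)=y$ lies in the smooth stratum $Y_\infty$, so $f(0,0)\in F_\infty$, and sc-smoothness of $f$ combined with $\nu\in\R\times E_\infty$ forces $v\in F_\infty\subset F_1$. With this in hand I propose the candidate
\begin{equation*}
s(u)\;:=\;\Ga(u)\bigl[\,f(0,0) + t(u)\,v\,\bigr],
\end{equation*}
where $t(u)$ denotes the $[0,1)$-coordinate of $u\in U\subset[0,1)\times E$. By construction $s(u)\in\Ga(u)F$, so $s$ is a section of the local bundle, and $s(0,0)=\Ga(0,0)f(0,0)=f(0,0)$ by the hypothesis $f(0,0)\in\Ga(0,0)F$.

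For the sc$^+$ property I would argue that the affine map $u\mapsto f(0,0)+t(u)\,v$ is sc-smooth with values in $F_1$ (both $f(0,0)$ and $v$ lie in $F_1$, and the map is linear in $t$), and that the family of fiber projections $\Ga$ is by the strong-bundle axioms sc-smooth as a map $U\times F_1\to F_1$. Composing these gives $s:r(U)\to F_1$ sc-smooth, which is the definition of sc$^+$.

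The linearization identity then reduces to a product-rule computation. At $(0,0)$ in direction $\nu=(1,a)$, using $\rD_{(0,0)} t(\nu)=1$ for the first coordinate of $\nu$, one obtains
\begin{equation*}
\rD_{(0,0)} s(\nu)\;=\;[\rD_{(0,0)}\Ga](\nu)\cdot f(0,0)\;+\;\Ga(0,0)\cdot v.
\end{equation*}
On the other hand, differentiating the defining identity $f(u)=\Ga(u)f(u)$ at $(0,0)$ in direction $\nu$ produces
\begin{equation*}
\rD_{(0,0)} f(\nu)\;=\;[\rD_{(0,0)}\Ga](\nu)\cdot f(0,0)\;+\;\Ga(0,0)\cdot\rD_{(0,0)} f(\nu).
\end{equation*}
Substituting $v=\rD_{(0,0)}f(\nu)$ shows the two right-hand sides agree, and hence $\rD_{(0,0)}(f-s)(\nu)=0$.

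The only delicate point -- and what I expect to be the main obstacle -- is verifying that $v=\rD_{(0,0)}f(\nu)$ really lies in the shifted fiber $F_1$, since otherwise $s$ would only be sc-smooth rather than sc$^+$. This rests on the sc-calculus fact that the sc-differential of a sc-smooth map at a smooth basepoint sends smooth tangent vectors to smooth fiber vectors, as developed in \cite[Ch.~1-2]{TheBook}. In the boundary-convention application that motivates the lemma, the basepoint automatically lies in $X_\infty$, so this subtlety is essentially free; the remaining steps are then an application of the product rule for sc-smooth maps.
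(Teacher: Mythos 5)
Your argument is correct, but it takes a genuinely different route from the paper's. You build $s$ from the first-order Taylor model of $f$ at the basepoint, $s(u)=\Ga(u)\bigl[f(0,0)+t(u)\,v\bigr]$ with $v=\rD_{(0,0)}f(\nu)$, and verify $\rD_{(0,0)}(f-s)(\nu)=0$ by the sc-chain rule applied to the strong bundle map $(u,h)\mapsto\Ga(u)h$ together with differentiation of the projection identity $f=\Ga f$; the analytic input is that $f(0,0)$ and $v$ are smooth vectors, which is correct because the sc-tangent map preserves levels (and note that $(0,0)$ is automatically a smooth point of $[0,1)\times E$, so this is not an application-dependent assumption as you suggest). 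The paper instead transports the actual values of $f$ along the distinguished path $\gamma(t)=r(t\nu)$, setting $s(t,e)=\Ga(t,e)\,f(\gamma(t))$, and obtains the vanishing of the linearization from a difference-quotient estimate using $\gamma(t)=t\nu+O(t^2)$ and a $C^1$-bound on $f\circ\gamma$, so that only continuity of $\Ga$ --- never its derivative --- is needed. Your version is shorter and more conceptual, at the price of invoking the chain rule for the fiberwise projection family at the smooth basepoint; the paper's version is more hands-on and avoids differentiating $\Ga$ altogether.

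Two points you should spell out. First, the identity $f(u)=\Ga(u)f(u)$ holds only on the retract $r(U)$, so to differentiate it in the direction $\nu$ (and indeed to make sense of $\rD_{(0,0)}f(\nu)$ and $\rD_{(0,0)}s(\nu)$ at all) you must precompose with $r$ and use that $\rD_{(0,0)}r(\nu)=\nu$, which holds because $\nu\in\im\rD_{(0,0)}r$ and $\rD_{(0,0)}r$ is a projection; this is precisely where the hypothesis on $\nu$ enters your computation. Second, the two occurrences of $[\rD_{(0,0)}\Ga](\nu)\cdot f(0,0)$ must both be read as the partial derivative of $(u,h)\mapsto\Ga(u)h$ in $u$ evaluated at the same fiber point $f(0,0)$ --- which they are, since $t(0,0)=0$ --- so that they genuinely cancel.
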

\begin{proof}
Note that we have $\nu=\dot\gamma(0)$ for the sc-smooth path $\gamma:[0,\eps)\to  r(U), t\mapsto r( t\nu ) $ as in \cite[Lemma 2.4.9]{TheBook} for some $\eps>0$.  Since $[0,\eps)$ consists of smooth points and sc-continuous maps preserve sc-levels, this path takes values in the smooth points, $\gamma(t)=r(t\nu)\in [0,1)\times E_\infty$. 
Moreover, sc-smoothness with this finite dimensional domain is equivalent to smoothness as map $[0,\eps)\to[0,1)\times E_\ell$ for any $\ell\in\N_0$. 
So composing this path with the sc-smooth map $f$ yields a path $[0,\eps)\to F_\infty, t\mapsto f(\gamma(t))$ that is sc-smooth w.r.t.\ any shift of the sc-structure on $F$. 

We now construct a map $s:r(U)\to F$ by $s(t,e):= \Ga( t,e ) f(\gamma(t))$ for $(t,e) \in r(U)$. 
This is a section since $s(u)\in\Ga(u)F$. It is sc-smooth and sc$^+$ since its extension $\Ti s: U \to F, u \mapsto s(r(u))$ can be viewed for each $\ell\in\N_0$ as the composition of sc-smooth maps $r:U\to U$, the linear map $U\to U\times\R, (t,e)\mapsto ((t,e),t)$, 
the map $U\times\R \to U\times F_l, (u,t)\mapsto (u, f(\gamma(t)))$, and $U\times F_l \to F_l, (u,f)\mapsto \Gamma(u)f$.
It satisfies the base point condition $$
s(0,0)= \Ga(0,0) f(\gamma(0))=\Ga(0,0) f(0,0)=f(0,0),
$$
 and we can compute with $g(t):={\pr}_\R(\gamma(t))$ 
\begin{align*}
\rD_{(0,0)}(f-s)(\nu)
&=\tfrac{\rd}{\rd t}\big|_{t=0} (f-s)(\gamma(t)) \\
&= \lim_{t\to 0} \tfrac{ f(\gamma(t)) - s(\gamma(t))}{t}\\
&= \lim_{t\to 0}   \Ga (\gamma(t))  \tfrac{ f(\gamma(t)) - f(\gamma(g(t))  ) }{t} = 0 . 
\end{align*}
To prove the last step, we first claim that $\gamma(t)=r(t\nu)=t\nu + O(t^2) \in \R\times E_0$ and hence $g(t)=t + O(t^2)$. 
To check this note that $r:U\cap \R\times E_1 \to \R\times E_0$ is continuously differentiable so that $r(t\nu)=r(0,0)+t \rD_{(0,0)} r (\nu)+ O(t^2)$. Here $r(0,0)=(0,0)$ and $\rD_{(0,0)} r (\nu) = \nu$ follow from $r\in r(U)$ resp.\ $\nu\in\im\rD_{(0,0)}r$ and the retraction property $r\circ r=r$. 
Next, sc-smoothness of $f\circ\ga$ includes continuous differentiability as map $[0,\frac 12 \eps]\to F_0$ with uniformly bounded derivative. Thus we can estimate for $t\to 0$
$$
t^{-1} \| f(\gamma(t)) - f(\gamma(g(t))  )\|_{F_0} \;\leq\; t^{-1} C | t - g(t) | \;=\; C t^{-1} O(t^2) = O(t)  \;\to\; 0 .
$$ 
Finally, continuity of $\Gamma: U\times F_0 \to F_0$ confirms the last step in $\rD_{(0,0)}(f-s)(\nu)=0$. 
\end{proof}

%
%
%

\section{\'Etale categories and groupoid completions}\label{sec:basic}

This section develops the general theory of \'etale categories in order to have a suitable language in which to discuss the structure and properties of the groupoids $\Xx_\Vv$ and $\Xx_\Vv^{\less G}$ that  appear in Theorem~\ref{thm:globstab}. 
After a general introduction to \'etale categories in \S\ref{ss:basic},
 \S\ref{ss:preunif} defines the notion of preuniformizer, generalizing the local uniformizers considered in \cite{TheBook}.  Lemma~\ref{lem:prop1} gives conditions that imply the properness of a groupoid $\Xx$ and hence, by Corollary~\ref{cor:prop1}, the Hausdorffness of $|\Xx|$.  
This subsection also develops the notion of the groupoid completion of an \'etale category, which is used to construct $\Xx_\Vv^{\less G}$. Then  \S\ref{ss:groupact}  discusses the properties
 of group actions on \'etale categories such as the action of $G$ on $\Xx_\Vv^{\less G}$ in Theorem~\ref{thm:globstab}. 
 The next subsection explains the basic properties of bundles and their sections in more detail, with special mention of the properties of strong bundles in the polyfold setting, that is in \'etale class (d). Globally structured multisections are defined in Definition~\ref{def:multisdef0}, and Proposition~\ref{prop:structmulti} shows that such sections are structurable in the sense of \cite[Def.13.3.6]{TheBook}.
Finally, 
\S\ref{ss:polybundle} concentrates on the polyfold case, summarizing the notions required for ensuring the compactness of perturbed zero sets.

\subsection{\'Etale categories}\label{ss:basic}

Throughout this paper we will work with topological categories. Thus we consider small categories $\Bb$ whose sets 
$B:= \Obj_\Bb$ and $\bB:= \Mor_{\Bb}$ of objects and morphisms   are equipped with topologies such that the source and target maps $s,t: \bB\to B$ as well as unit $u:B\to\bB$ and composition $$
c:\{(m,m')\in\bB\times\bB\,|\, t(m)=s(m')\} \to \bB,\quad (m,m')\mapsto m\circ m',
$$
 are continuous.  Further, any functor between topological categories is assumed continuous on object and morphism spaces.
When working with a groupoid $\Xx = (X,\bX)$ we also assume that the inverse operation $i: \bX\to \bX$ is continuous, i.e.\ $\Xx$ is a topological groupoid. 
For notational clarity we reserve the letter $\Xx$ to denote \'etale groupoids throughout this paper.  
In that case the relation 
$$
x_1\sim_\Xx x_2  \quad :\Longleftrightarrow\quad \Mor_\Xx(x_1,x_2)\neq\emptyset
$$ 
is an equivalence relation, and this defines the {\bf realization} 
$|\Xx|:=\qu{X}{\sim_\Xx}$ as topological space, equipped with the quotient topology. 
Here we denote the set of morphisms between subsets $U_1,U_2\subset X$, for example $U_i=\{x_i\}$, by 
$$
\Mor_\Xx(U_1,U_2) \,:=\; s^{-1}(U_1)\cap t^{-1}(U_2) \;\subset\; \bX .
$$
For more general categories $\Bb$ which may not have inverses,
the realization $|\Bb|:=\qu{B}{\sim_\Bb}$  is defined by the equivalence relation $\sim_\Bb$ on $B$ that is generated by the morphisms.
Again, we equip $|\Bb|$ with the quotient topology such that the quotient map is continuous, 
\begin{align}\label{eq:piB}
\pi_\Bb \, : \; B \; \to\; |\Bb|=\qu{B}{\sim_\Bb} , \qquad x \;\mapsto\; |x| := [x]  .
\end{align}
For any subset $V\subset B$ we denote its image in the realization by $|V|:=\pi_\Bb(V)\subset |\Bb|$, and note that it carries two natural topologies which may not coincide\footnote{
For example, consider $\Bb$ with objects $B = [0,1]\times \{\pm1\}$ and non-identity morphisms
$(x,-1)\mapsto (x,+1)$ and $V = ([0,1/2]\times \{-1\}) \cup ((1/2,1]\times \{1\})\subset B$. 
Then $\pi_\Bb(V)=|\Bb|$ is homeomorphic to the interval $[0,1]$, while $\qu{V}{\sim_\Bb}$ is disconnected in the quotient topology.}: the relative topology from the inclusion into $|\Bb|$ and the quotient topology from the identification $|V| \simeq \qu{V}{\sim_\Bb}$.
However, when $V\subset B$ is open or saturated (that is $\pi_\Bb^{-1}(\pi_\Bb(V))=V$) then the topologies agree by Lemma~\ref{lem:topologies}. 

In the following, we will always work with categories that are \'etale in the sense of the following Definition~\ref{def:etale}. For that purpose we fix a class of metrizable topological spaces and a corresponding class of local homeomorphisms that is closed under compositions and taking local inverses. In particular, all our results will hold for any of the following classes of \'etale spaces and \'etale maps:
\begin{itemize}
\item[(a)] metrizable topological spaces and local homeomorphisms;
\item[(b)]  topological manifolds (assumed to be finite dimensional and second countable -- possibly with specified boundary and corner structure, see below) and local homeomorphisms;
\item[(c)]  smooth manifolds (assumed to be finite dimensional and second countable  -- possibly with boundary and corners, see below) and local diffeomorphisms;
\item[(d)]  M-polyfolds and local sc-diffeomorphisms (see \S\ref{ss:scale}).
\end{itemize}

Unless otherwise specified,  we allow manifolds and M-polyfolds to have boundary and corners, arising from local models in $[0,\infty)^k\times E$ as in e.g.\ \cite[\S 5.3]{usersguide}, giving rise to a degeneracy index $d_X:X\to\N_0$.  
The corresponding \'etale maps are required to restrict to local homeomorphisms resp.\ diffeomorphisms on each {\bf boundary and corner stratum} -- that is connected components of $\{x\in X \,|\, d_X(x)=\ell\}$ for fixed $\ell\geq 1$.

\begin{remark} \label{rmk:open} \rm
\begin{nenumilist}
\item
 A map $\phi:X\to Y$ is called a {\bf local homeomorphism (or diffeomorphism)} if for each $x\in X$ there exist open neighbourhoods\footnote
 {
 We always assume that neighbourhoods are open sets; sometimes, for clarity, we mention this explicitly.}
  $N_x\subset X$ of $x$ and $N_y\subset Y$ of $y:=\phi(x)$ so that the restriction $\phi|_{N_x}:N_x\to N_y$ is a homeomorphism (or diffeomorphism) with respect to the induced topology (or smooth structure) on the open subsets $N_x, N_y$. 

\item
This convention implies that every \'etale map is open. 
Indeed, given an open subset $U\subset X$, we check that its image $\phi(U)\subset Y$ is open by finding for each image $y=\phi(x)$ of some $x\in U$ an open neighbourhood $N'_y\subset Y$ contained in $\phi(U)$. 
We obtain $N'_y:=\phi(N_x\cap U)$ from a choice of open neighbourhoods $N_x\subset X$ of $x$ and $N_y\subset Y$ of $y$ so that $\phi|_{N_x}:N_x\to N_y$ is a homeomorphism. 
Here $N'_y=\phi(N_x\cap U)$ is open in $N_y$ because it is the image under a homeomorphism of the open subset $N_x\cap U \subset N_x$. Finally, openness of $N'_y\subset N_y$ implies openness of $N'_y\subset Y$ since $N_y$ is open. 

\item
The topological categories associated to Kuranishi atlases or structures are almost never \'etale, as the local homeomorphism property of source and target maps rules out overlaps between Kuranishi charts whose domains have different dimension.  \hfill$\er$
\end{nenumilist}
\end{remark}

Once such a class is chosen, we will refer to {\bf \'etale spaces} and {\bf \'etale maps} as the spaces and maps in this class, and define the notion of \'etale category for any chosen class as follows.

\begin{defn}  \label{def:etale}
A topological category $\Bb$ is said to be {\bf \'etale}  
w.r.t.\ a chosen class as above, if its object and morphism spaces $B,\bB$ are \'etale spaces and the structure maps $s,t,c,u$ are \'etale maps.  
If $\Bb$ is a groupoid, then we require in addition that the inverse $i$ is \'etale.  
\end{defn}

\begin{remark}\label{rmk:etale}\rm  
The requirement that composition be \'etale makes sense since each \'etale class is closed under fiber products in the following sense:
Three \'etale spaces $U,V, W$ and two \'etale maps $t:U\to W, s: V\to W$ yield a fiber product 
$U\leftsub{t}{\times}_s V = \big\{(x,y)\in U\times V\ \big| \ t(x) = s(y)\big\}$
that also is an \'etale space in this class. Further, the projections $U\leftsub{t}{\times}_s V \to U$ and $U\leftsub{t}{\times}_s V \to V$ are \'etale maps in the same class. 

Indeed, a neighbourhood of $(x_0,y_0)\in U\leftsub{t}{\times}_s V$ 
consists of points $(x,y)$ where $x\approx x_0$ and $y = s\circ t^{-1}(x)$ for a suitable local   inverse $t^{-1}$ to $t$.
A more detailed statement and proof including boundary and corner structures is \cite[Prop.5.2.20]{TheBook} for the polyfold case. 
 \hfill$\er$
\end{remark}

We will moreover use the following standard notions. 

\begin{defn}\label{def:etale2}   
\begin{nenumilist}
\item
 A functor $f: \Bb\to \Bb'$ between  \'etale categories  is called {\bf \'etale} if the induced maps on object and morphism spaces are  \'etale.  
 \item
 A category $\Bb$ is said to be {\bf nonsingular} if for any two objects $x,y\in B$ there is at most one morphism in $\Mor_\Bb(x,y)$. 
 \item  A nonsingular category $\Bb$ is called a {\bf poset} if the object space $B$ is equipped with a partial order $\preccurlyeq$ such that $x\preccurlyeq y$ iff $\Mor_{\Bb}(x,y)\ne \emptyset$.
\item If for some $x\in B$ the semigroup\footnote
{
A semigroup is a set $S$ with an associative operation $S\times S\to S$ that has an identity element but is not required to have inverses.}
 $\Mor_\Bb(x,x)$ is a group, then we call $G_x:=\Mor_\Bb(x,x)$  the {\bf isotropy group} at $x$. 
 \item
 A topological category $\Bb$ is said to be {\bf locally injective} if $\pi_\Bb: B \to |\Bb|$ is locally injective, i.e.\ every $x\in B$ has a neighbourhood $N_x$ such that $\pi_\Bb: N_x\to |\Bb|$ is injective.
\item 
A groupoid
$\Xx$ is called  {\bf proper} if it satisfies the properness condition of  Definition~\ref{def:poly}(i), i.e.\ every $x\in X$ has a neighbourhood $N(x)$ such that the restricted target map $$
t:s^{-1}\bigl( {\rm cl}_X(N(x))\bigr) \to X
$$
 is proper.\footnote{
A map $f:X\to Y$ is said to be {\bf proper} if $f^{-1}(K)$ is compact for every compact subset $K\subset Y$.}
\item 
An \'etale functor  $f:\Yy\to \Xx$ between groupoids is said to be an {\bf equivalence} if it induces isomorphisms $\Mor_{\Yy} (y,y') \simeq \Mor_{\Xx} (f(y),f(y'))$ between morphism sets and a homeomorphism $|f|:|\Yy|\to |\Xx|$ of the realizations.
 \end{nenumilist}
 \end{defn} 

The various properness notions in the literature are compared in Lemma~\ref{lem:prop1}, and Corollary~\ref{cor:prop1} shows that the realization of  a proper \'etale groupoid $\Xx$ is Hausdorff.

If we consider an \'etale proper groupoid using the class of M-polyfolds and local sc-diffeomorphisms, then the above definition is not quite the same as the notion of ep-groupoid in Definition~\ref{def:poly}, which requires only sc-smoothness of the maps $c, u, i$, rather than local sc-diffeomorphisms. 
However, the next lemma shows that \'etale proper groupoids in this class are the same as ep-groupoids in the sense of polyfold theory \cite[Ch.7]{TheBook}.\footnote
{
For clarity, we will use the word ep-groupoid only if we are working in the 
class of M-polyfolds.}

\begin{lemma}\label{lem:etale}   
Let $\Bb$ be an \'etale category modelled on M-polyfolds in the sense of Definition~\ref{def:poly}.  Then the unit  and composition maps are local sc-diffeomorphisms.  
If in addition $\Bb$ is a groupoid then the inverse map is also a sc-diffeomorphism. 
Hence $\Bb$ is \'etale in the sense of Definition~\ref{def:etale}.  
\end{lemma}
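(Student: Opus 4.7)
The plan is to exploit the fact that $s,t:\bB\to B$ are local sc-diffeomorphisms, together with the functorial identities $s\circ u=\id_B$, $s\circ c=s\circ\pi_1$ (where $\pi_1(m,m')=m$), and, in the groupoid case, $s\circ i=t$.  Each of $u,c,i$ is already sc-smooth (hence continuous) by the hypotheses of Definition~\ref{def:poly}, so the strategy is to exhibit each of them locally as a composition of local sc-diffeomorphisms between open subsets of M-polyfolds.  The main bookkeeping task will be choosing neighborhoods small enough that the relevant local inverses of $s$ and $t$ are defined on the right images; since all the maps involved are continuous, this is always achievable by shrinking.

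For the unit $u:B\to\bB$, fix $x_0\in B$ and choose a neighborhood $N(\id_{x_0})\subset\bB$ on which $s$ restricts to a sc-diffeomorphism onto an open set $s(N(\id_{x_0}))\subset B$.  By continuity of $u$ we may pick a neighborhood $N(x_0)\subset s(N(\id_{x_0}))$ of $x_0$ with $u(N(x_0))\subset N(\id_{x_0})$.  The relation $s\circ u=\id_B$ on $N(x_0)$ forces $u|_{N(x_0)}$ to agree with the local inverse $\bigl(s|_{N(\id_{x_0})}\bigr)^{-1}$ restricted to $N(x_0)$, which is a sc-diffeomorphism onto an open subset of $N(\id_{x_0})$.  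An entirely analogous argument handles the inverse $i:\bB\to\bB$ in the groupoid case: fixing $m_0\in\bB$ and $m_0^{-1}:=i(m_0)$, select neighborhoods $N(m_0), N(m_0^{-1})$ on which $t$ resp.\ $s$ are sc-diffeomorphisms, shrink $N(m_0)$ so that $i(N(m_0))\subset N(m_0^{-1})$ and $t(N(m_0))\subset s(N(m_0^{-1}))$, and conclude from $s\circ i=t$ that $i|_{N(m_0)}=\bigl(s|_{N(m_0^{-1})}\bigr)^{-1}\circ t|_{N(m_0)}$ is a composition of local sc-diffeomorphisms.  Since $i$ is globally bijective (with $i\circ i=\id_\bB$), it is a global sc-diffeomorphism.

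The composition $c:\bB\leftsub{t}{\times}_s\bB\to\bB$ is the most interesting case and is where Remark~\ref{rmk:etale} enters decisively.  Fix $(m_0,m_0')$ in the fiber product and set $n_0=c(m_0,m_0')$.  Choose a neighborhood $N(n_0)\subset\bB$ on which $s$ is a sc-diffeomorphism with local inverse $\sigma$, and a neighborhood $\Om$ of $(m_0,m_0')$ in $\bB\leftsub{t}{\times}_s\bB$ small enough (by continuity of $c$) that $c(\Om)\subset N(n_0)$.  The categorical identity $s\circ c=s\circ\pi_1$ then gives $c|_{\Om}=\sigma\circ s\circ\pi_1$.  By Remark~\ref{rmk:etale} the projection $\pi_1$ from the fiber product of two étale maps is itself étale, so that in the M-polyfold class it is a local sc-diffeomorphism; together with the fact that $s$ and $\sigma$ are local sc-diffeomorphisms, this exhibits $c|_{\Om}$ as a composition of local sc-diffeomorphisms.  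Combining all three cases, the structure maps $s,t,u,c$, and (in the groupoid case) $i$ are local sc-diffeomorphisms, which is exactly the requirement of Definition~\ref{def:etale} in the M-polyfold class.
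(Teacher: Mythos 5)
Your proof is correct and follows essentially the same route as the paper's: the unit and composition maps are recognized locally as (compositions with) local inverses of the source map, using $s\circ u=\id_B$ and $s\circ c=s\circ\pr_1$ together with the fact that $\pr_1$ on the fiber product is \'etale (Remark~\ref{rmk:etale}), which is exactly the paper's argument with the neighborhood bookkeeping spelled out. The only cosmetic difference is the inverse map, which the paper dispatches even more directly by observing that the sc-smooth map $i$ is its own inverse, whereas you additionally factor it locally as $\bigl(s|_{N(m_0^{-1})}\bigr)^{-1}\circ t$ before invoking the same involution fact; both versions are fine.
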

\begin{proof}
It suffices to exhibit local inverses of the  structure maps of $\Bb$.  For the unit $u:B \to \bB, x \mapsto \id_x$ we know that $s\circ u =\id_B$, where $s:\bB \to B$, is a local sc-diffeomorphism; hence $s$  provides the local inverse for $u$. 
For the composition $c: \bB \leftsub{t}{\times}_s \bB \to \bB, (m,m')\mapsto m\circ m'$ we can use the fact that $s\circ c=s \circ \pr_1$. Here $\pr_1: \bB \leftsub{t}{\times}_s \bB \to \bB$ is projection to the first factor, which is a local sc-diffeomorphism by construction of the fiber product $ \bB \leftsub{t}{\times}_s \bB$. Since the source map $s$ also is a local sc-diffeomorphism, this shows that the composition $c$ is a local sc-diffeomorphism. 
Finally, in the case of a  groupoid the inverse $i: \bX \to \bX, m\mapsto m^{-1}$ is its own inverse map since inverses are unique in any groupoid.  \end{proof}

\begin{remark}\label{rmk:restrict}\rm   Let $\Bb= (B, \bB)$ be a category.
For any subset $V\subset B$ we obtain a {\bf restricted category} $\Bb|_V$ by taking the full subcategory of $\Bb$ with objects $V$. 
If $\Bb$ is \'etale and $V$ is open, then Lemma~\ref{lem:topologies} below identifies the realization $|\Bb|_V| = \qu{V}{\sim_\Bb}$ (with the quotient topology) with the subset $|V|=\pi_\Bb(V)\subset|\Bb|$ (with the relative topology) as topological spaces. Moreover, the restriction $\Bb|_V$ to an open subset $V$ is also \'etale, since $\bB|_V:= s^{-1}(V)\cap t^{-1}(V)\subset \bB$ is an open subset, hence inherits \'etale structure, and all structure maps in $\Bb|_V$ are restrictions of the \'etale structure maps in $\Bb$.  
Furthermore, Lemma~\ref{lem:prop1} considers an \'etale proper groupoid  $\Xx$ and shows that the  restriction $\Xx|_V$ to an open subset $V\subset X$ is also an \'etale proper groupoid.

Although the above results apply when $V\subset B$ is any open set, in  applications  such as 
Theorem~\ref{thm:globstab}, $V$ is saturated, i.e.\ of the form $\pi_\Bb^{-1}(\pi_\Bb(V))$. 
 \hfill$\er$
\end{remark}

We will often consider subsets $Z\subset B$ that are not open but {\bf saturated}. For groupoids as in \cite[p.341]{TheBook},
this is equivalent  to requiring that the set is closed under morphisms. (For example, the zero set $Z=f^{-1}(0)\subset X$ of any sc-Fredholm section $f:\Xx\to \Ww$ is saturated.)
In general categories, we call $Z\subset B$ saturated if $Z=\pi_\Bb^{-1}(\pi_\Bb(Z))$.
In that case, as noted above, $|Z|$ carries both the relative topology given by the embedding $|Z|:=\pi_\Bb(Z)\subset|\Bb|$ and the quotient topology given by the quotient map $Z\to \qu{Z}{\sim_\Bb}=|\Zz|$ to the realization of the full subcategory $\Zz$ of $\Bb$ with objects $Z$. 
Fortunately, the two topologies agree in \'etale settings (unlike the Kuranishi situation in \cite[Def.3.1.14]{MW1}).  

\begin{lemma}\label{lem:topologies}  Suppose $Z\subset B$ is an open or saturated subset of the objects in an  \'etale category  $\Bb=(B,\bB)$.
Then the quotient topology on $|Z|=\qu{Z}{\sim_\Bb}$ agrees with the relative topology on $|Z|\subset|\Bb|$.
\end{lemma}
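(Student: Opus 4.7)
The plan is to compare the two topologies on $|Z|$ via the canonical bijection $|Z| \cong \pi_\Bb(Z)\subset|\Bb|$. The ``easy'' direction — that the subspace topology is coarser than the quotient topology — is automatic: the map $q := \pi_\Bb|_Z : Z \to \pi_\Bb(Z)$ is continuous when the target carries the subspace topology (being a restriction of the continuous $\pi_\Bb$), so by the universal property of the quotient, every subspace-open set is quotient-open. What remains is the converse: every $U\subset|Z|$ open in the quotient topology has the form $W\cap|Z|$ for some open $W\subset|\Bb|$.

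The main preliminary step, which is where the \'etale hypothesis enters, is that $\pi_\Bb:B\to|\Bb|$ is an open map. Given $\tilde U\subset B$ open, its $\pi_\Bb$-saturation $\pi_\Bb^{-1}(\pi_\Bb(\tilde U))$ is the increasing union $\bigcup_n \Sat_n(\tilde U)$, where $\Sat_0(\tilde U):=\tilde U$ and $\Sat_{n+1}(\tilde U):=\Sat_n(\tilde U)\cup t(s^{-1}(\Sat_n(\tilde U)))\cup s(t^{-1}(\Sat_n(\tilde U)))$ iteratively records the targets and sources of morphisms attached to the previous stage and so exhausts $\sim_\Bb$. Since $s,t:\bB\to B$ are \'etale and hence open (Remark~\ref{rmk:open}), each $\Sat_n(\tilde U)$ is open, so the saturation — and hence $\pi_\Bb(\tilde U)$ in the quotient topology on $|\Bb|$ — is open.

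With this in hand I treat the two cases separately. When $Z$ is open, a quotient-open $U\subset|Z|$ has $V:=q^{-1}(U)$ open in $Z$ and hence in $B$, so $W:=\pi_\Bb(V)$ is open in $|\Bb|$ by the previous step; since $V\subset Z$ one has $W=q(V)=U\subset|Z|$ and $W\cap|Z|=U$ trivially. When $Z$ is saturated, write $q^{-1}(U)=V\cap Z$ for some open $V\subset B$ and again take $W:=\pi_\Bb(V)$. The identity $W\cap|Z|=U$ now uses saturation crucially: if $p\in W\cap|Z|$, pick $v\in V$ and $z\in Z$ with $\pi_\Bb(v)=p=\pi_\Bb(z)$; then $v\sim_\Bb z\in Z$ forces $v\in Z$, so $v\in V\cap Z=q^{-1}(U)$ and $p=q(v)\in U$, while the reverse inclusion is immediate.

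The sole genuine obstacle is establishing openness of $\pi_\Bb$: this fails for generic topological categories (as illustrated by the footnoted example) and depends precisely on $s$ and $t$ being open, which is exactly what the \'etale assumption delivers. After that, the two cases are bookkeeping tailored to each hypothesis: the open hypothesis lets one shrink the candidate $V$ into $Z$ from the outset, while the saturated hypothesis lets one enlarge back from $V\cap Z$ to all of $V$ without picking up new points in $|Z|$.
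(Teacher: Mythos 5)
Your proof is correct and follows essentially the same route as the paper: the decisive input is openness of the projection $\pi_\Bb$ (the paper's Lemma~\ref{lem:etale1}~(i), which you re-derive inline by an iterated-saturation argument using openness of $s$ and $t$ — the same idea as the paper's zigzag-chain proof), after which the open and saturated cases reduce to the same bookkeeping with $W=\pi_\Bb(V)$, saturation being used exactly where you use it. The only cosmetic difference is that you dispose of the easy direction (relative open $\Rightarrow$ quotient open) once and for all via continuity and the universal property of the quotient, where the paper argues it within each case.
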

\begin{proof}  In both cases, we will identify  $\qu{Z}{\sim_\Bb}\cong\pi_\Bb(Z)$ so that the quotient map $\pr_Z: Z\to\qu{Z}{\sim_\Bb}$ coincides with the restriction of $\pi_\Bb$ to $Z$.
  If $Z$ is open, then by
 Lemma~\ref{lem:etale1} below, every open subset of $Z$ projects to an open subset of $|\Bb|$. 
In particular, $\pi_\Bb(Z)\subset |\Bb|$ is open. Thus the 
openness of $O\subset\pi_\Bb(Z)$ in the relative topology is equivalent to the openness of $O\subset|\Bb|$,
and (using the lemma again) to the openness of  $\pi_\Bb^{-1}(O) \cap Z = \pi_\Zz^{-1}(O)$, and hence
to the openness of $O$ in the quotient topology on $\qu{Z}{\sim_\Bb}$.


In the case of a saturated subset $Z\subset B$,
 openness of $O\subset\pi_\Bb(Z)$ in the relative topology means $O=\pi_\Bb(Z)\cap V$ for some open set $V\subset|\Bb|$. By definition of the quotient topology on $|\Bb|$ that means $\pi_\Bb^{-1}(V)\subset B$ is open, and thus $Z\cap \pi_\Bb^{-1}(V)\subset B$ is an open subset in the relative topology of $Z\subset B$. Moreover, we have $\pr_Z^{-1}(O)=\pi_\Bb^{-1}(\pi_\Bb(Z)\cap V )= Z \cap \pi_\Bb^{-1}(V )$ since $Z$ is saturated. 
This shows that $\pr_Z^{-1}(O)\subset Z$ is open, and thus $O\subset \qu{Z}{\sim_\Bb}$ is open in the quotient topology. 

Conversely, openness of $O\subset \qu{Z}{\sim_\Bb}$ in the quotient topology means $\pr_Z^{-1}(O)\subset Z$ is open. Here $Z\subset B$ is equipped with the relative topology, so $\pr_Z^{-1}(O)= Z\cap U$ for some open subset $U\subset B$. Now $\pi_\Bb(U)\subset|\Bb|$ is open by Lemma~\ref{lem:etale1} below.
Moreover, we have $\pi_\Bb(Z\cap U)=\pi_\Bb(Z)\cap\pi_\Bb(U)$ since $Z$ is saturated.
This identifies $O=\pi_\Bb(\pr_Z^{-1}(O))\subset\pi_\Bb(Z)$ as the intersection $O=\pi_\Bb(Z)\cap\pi_\Bb(U)$ with an open set, thus as open set in the relative topology of $\pi_\Bb(Z)\subset|\Bb|$. 
\end{proof}

Here and throughout, topological considerations in \'etale categories and  \'etale proper groupoids are simplified by the following.

\begin{lemma} \label{lem:etale1} 
 \begin{nenumilist}
\item 
The projection $\pi_\Bb:B\to |\Bb|$ is an open map in any \'etale category.

\item
For any \'etale proper groupoid $\Xx$ we have ${\rm cl}_{|\Xx|}(|U|)=|{\rm cl}_X(U)|$ when $U\subset X$ is saturated or $U\subset X$ is precompact.
\end{nenumilist}  
\end{lemma}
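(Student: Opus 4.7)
I would show that for any open $U\subset B$ the saturation $\pi_\Bb^{-1}(\pi_\Bb(U))$ is open in $B$, which by the definition of the quotient topology on $|\Bb|$ gives $\pi_\Bb(U)$ open. Set $S_0:=U$ and $S_{n+1}:=S_n\cup t(s^{-1}(S_n))\cup s(t^{-1}(S_n))$; then the saturation equals $\bigcup_{n\geq 0} S_n$, since $\sim_\Bb$ is the equivalence relation generated by the morphisms. Each $S_n$ is open by induction: $s,t:\bB\to B$ are \'etale and hence both continuous and open (Remark~\ref{rmk:open}(ii)), so $t(s^{-1}(S_n))$ and $s(t^{-1}(S_n))$ are open whenever $S_n$ is.

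\textbf{Part (ii), easy direction and saturated case.} The inclusion $|{\rm cl}_X(U)|\subseteq{\rm cl}_{|\Xx|}(|U|)$ holds in general by continuity of $\pi_\Xx$: the set $\pi_\Xx^{-1}({\rm cl}_{|\Xx|}(|U|))$ is closed and contains $U$, hence contains ${\rm cl}_X(U)$. For the reverse inclusion when $U$ is saturated, I would first show that ${\rm cl}_X(U)$ is itself saturated. Given $x\in{\rm cl}_X(U)$ with $x_n\to x$, $x_n\in U$, and a morphism $m:x\to y$ in $\bX$, a local sc-diffeomorphism inverse of $s$ near $m$ lifts $x_n$ for large $n$ to morphisms $m_n$ with $s(m_n)=x_n$; then $y_n:=t(m_n)\to y$ by continuity of $t$, and $y_n\in U$ by saturation, so $y\in{\rm cl}_X(U)$. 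Hence $\pi_\Xx^{-1}(|{\rm cl}_X(U)|)={\rm cl}_X(U)$ is closed, so by the definition of the quotient topology $|{\rm cl}_X(U)|$ is closed in $|\Xx|$ and contains $|U|$, forcing ${\rm cl}_{|\Xx|}(|U|)\subseteq|{\rm cl}_X(U)|$.

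\textbf{Part (ii), precompact case.} Take $|x|\in{\rm cl}_{|\Xx|}(|U|)$ with representative $x\in X$. Using metrizability of $|\Xx|$ together with part (i) (so that $|V|$ is a neighborhood of $|x|$ for every neighborhood $V$ of $x$ in $X$), a diagonal argument produces $u_n\in U$ and $v_n\sim_\Xx u_n$ with $v_n\to x$ in $X$. In the groupoid, pick morphisms $m_n:v_n\to u_n$. Properness supplies a neighborhood $N(x)$ for which $t:s^{-1}({\rm cl}_X(N(x)))\to X$ is proper, and for large $n$ we have $v_n\in N(x)$, hence $m_n\in s^{-1}({\rm cl}_X(N(x)))$. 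Precompactness of $U$ extracts a subsequence $u_{n_k}\to y\in{\rm cl}_X(U)$, and $(m_{n_k})$ lies eventually in the compact preimage $t^{-1}(\{u_{n_k}\}_k\cup\{y\})\cap s^{-1}({\rm cl}_X(N(x)))$, yielding a convergent subsubsequence $m_{n_{k_\ell}}\to m$ with $s(m)=x$ and $t(m)=y$; hence $|x|=|y|\in|{\rm cl}_X(U)|$. The main obstacle is exactly this precompact case: one must both convert the abstract convergence $|u_n|\to|x|$ into actual convergence $v_n\to x$ in $X$ (via openness of $\pi_\Xx$ established in part (i)) and extract a limiting morphism witnessing $x\sim_\Xx y$ (via properness of $\Xx$). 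Neither ingredient alone suffices.
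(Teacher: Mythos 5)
Your proposal is correct and follows essentially the same route as the paper: in (i) your inductive construction $S_{n+1}=S_n\cup t(s^{-1}(S_n))\cup s(t^{-1}(S_n))$ is just a repackaging of the paper's pointwise zigzag-chain argument, both resting on the fact that the \'etale maps $s,t$ are continuous and open. For (ii) the paper gives no details, simply citing \cite[Lemma~12.4.4]{TheBook}, and your two-case argument (saturation of ${\rm cl}_X(U)$ via local lifts of morphisms through $s$, and in the precompact case combining openness of $\pi_\Xx$, precompactness, and properness to extract a limiting morphism from $x$ to $y$) supplies exactly the standard details of that cited proof.
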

\begin{proof}  
(i) is a generalization of \cite[Prop.7.1.8]{TheBook}. 
To prove it, let $U$ be an open subset of $B$.  Then $\pi_\Bb(U)$ is open in $|\Bb|$ iff $\pi_\Bb^{-1}(\pi_\Bb(U))$ is open in $B$, i.e.\ iff every $y\in \pi_\Bb^{-1}(\pi_\Bb(U)$ has an open neighbourhood $U_y\subset \pi_\Bb^{-1}(\pi_\Bb(U))$.  For all such $y$ there is a finite chain of morphisms
 $m_1,\cdots, m_k$   in $\Bb$ with $s(m_1) = y,\ t(m_k) = y_k\in U$  as follows
  $$
y=:y_0\; \stackrel{m_1}\rightarrow\; y_1\;\stackrel{m_2} \leftarrow \; y_3\; \cdots \stackrel{m_k}\rightarrow \; y_k \in U.
 $$ 
(To achieve such a zigzag chain from an arbitrary chain,  one can  insert appropriate identity morphisms, and/or compose pairs of morphisms that go in the same direction.)
  Since each $m_i$ extends to an injective \'etale map from a neighbourhood of its source onto a
 neighbourhood of its target,
the end point $y_k=y$ has an open neighbourhood $N_y$ in $B$ consisting of points that are equivalent to some point in $x\in U$. 
Hence $\pi_\Bb(U)$ is open, because every point $y\in \pi_\Bb^{-1}(\pi_\Bb(U))$ 
has a neighbourhood $N_y\subset \pi_\Bb^{-1}(\pi_\Bb(U))$.

(ii) is proven by the same arguments as in \cite[Lemma~12.4.4]{TheBook}. 
\end{proof}

We end this section by explaining the notation used in \cite{TheBook} for bundles.
 This is sufficient  preparation for the discussion of group actions on bundles in \S\ref{ss:groupact}.
A much fuller treatment of the basic definitions is given in \S\ref{ss:bundle}.

\begin{definition}\label{def:bundle0} A  {\bf  bundle} $\Pp:\Ww\to \Xx$ over an \'etale groupoid $\Xx$
is given by an \'etale space $W$ with continuous  surjections
$P: W\to X$ and $\mu:\bX \leftsub{s}{\times}_P W\to W$ such that
\begin{nenumilist} \item
$P$ is a locally trivial vector bundle in the appropriate category.
 \item $\mu$ is  \'etale and is a lift of the target map $t:\bX\to X$ in the sense that $P\circ\mu(\cdot,w)=t$ for all $w\in W$. Further, it is compatible with the relevant linear and groupoidal structures.
\end{nenumilist} 
As in \cite[\S8.3]{TheBook}, these conditions ensure that 
$P$ lifts to a functor $\Pp:\Ww\to \Xx$ given by  
$\Pp(w) = P(w),\; \Pp(m,w) =m$, where $\Ww = (W, \bW)$ is the \'etale groupoid  with
objects $W$ and
\begin{align}\label{eq:Wbundle0}
& \bW= \bX \leftsub{s}{\times}_P W, \quad 
(s\times t)(m,w)=(w, \mu(m,w)), \\ \notag &\id_{w}= (\id_{P(w)},w),\quad 
 (m,w)\circ (m',w')= 
 \bigl(m'\circ m , w\bigr) 
\quad \mbox{ if } w' = \mu(m,w).
\end{align}

A {\bf section} of $(P,\mu)$ is an \'etale map $f:X\to W$ that satisfies $P\circ f=\id_X$, and is compatible with morphisms in the sense that  $f(t(m))=\mu(m,f(s(m)))$ for all $m\in\bX$.\end{definition}

One can check as in Lemma~\ref{lem:bundlemaps} that each section $f:X\to W$ extends to a unique \'etale functor $f:\Xx\to \Ww$.

 \subsection{Preuniformizers, properness, and groupoid completions}\label{ss:preunif}

 An important property of ep-groupoids is the existence of local uniformizers \cite[Def.7.1.20]{TheBook} centered at any $x\in X$ -- roughly speaking, a neighbourhood $\simeq \qu{U}{G_x}$ of $|x|\in|\Xx|$ given as the quotient of a natural action of the isotropy group $G_x: = \Mor_\Xx(x,x)$ on a neighbourhood $U\subset X$ of $x$. 
We will generalize this notion to \'etale categories -- i.e.\ allowing any \'etale class from \S\ref{ss:basic}, and not requiring groupoid structure. 
Throughout, we consider left actions of a group $G$ on a space $U$, that is
$(hg)*x= h*(g*x)$ for $h,g\in G, x\in U$. 
 
\begin{defn}\label{def:preunif}  
A {\bf  preuniformizer} for an \'etale category $\Bb=(B,\bB)$ consists of
\begin{nenumilist}
\item an open  subset $U\subset B$;
\item an \'etale action $G \times U \to U, (g,y)\mapsto g*y$ of a finite  group $G$;
\item an \'etale injection $\Ga: G \times U \to \Mor_\Bb(U,U)\subset \bB$ satisfying the following:  
\begin{enumlist}
 \item 
 $\Ga$ identifies the morphisms in its image with the group action in the sense that\footnote{The inversion of order in the last identity compared with \cite{TheBook} results from our use of categorical notation for the composition $\circ$ of morphisms, while group actions are left actions both here and in \cite{TheBook}.} 
\begin{align}\label{eq:groupact}
 s\bigl(\Ga(g,y)\bigr)=y,\qquad t\bigl(\Ga(g,y)\bigr)=g*y,  \qquad  \Ga(hg,y)= \Ga(g,y)\circ \Ga(h, g*y); 
\end{align}
\item $\Ga(G\times U)$ contains all connected components of $\Mor_\Bb(U,U)$ that have a morphism $m$ with $s(m) = t(m)$;
\item  the induced map $|\Ga|:\qu{U}{G}\to |\Bb|$ is locally injective and has bounded fibers in the sense that 
$\sup_{p\in |U|} \#\{[x] \in \qu{U}{G} \,|\, |x|=p  \} <\infty$. 
\end{enumlist}
\end{nenumilist}
We will  abbreviate this data by $(U,G, \Ga)$ or even $(U,G)$, and also identify it with the induced  faithful functor of the {\bf translation groupoid}\footnote{
A category of the form $(U, G\times U)$ is called a translation groupoid in \cite[p.294 in \S7.1]{TheBook}. }
 $(U, G\times U)$ into the subcategory $(U, \Mor_\Bb(U,U))$ of $\Bb$.   The induced map $|\Ga|: \qu{U}{G}\to |\Bb|$ is called the {\bf footprint map}. 

If $\Xx$ is an \'etale groupoid and $\Ga: G \times U \to \Mor_\Xx(U,U)$ is surjective, then, as in \cite[Def.7.1.20]{TheBook}
$(U,G, \Ga)$ is called a {\bf local uniformizer}.   
A  {\bf local uniformizer around} $x\in X$ is a local uniformizer $(U,G, \Ga)$ such that $x\in U$ and we have $t\circ\Ga(\cdot,x)\equiv x$. In particular, $\Ga(\cdot,x):G\to G_x=\Mor_\Xx(x,x)$ identifies $G$ with the isotropy group of $x$. 
\end{defn}

\begin{rmk}\label{rmk:locunif}\rm  
\begin{nenumilist}
\item
If $\Xx$ is an ep-groupoid, then local uniformizers for $\Xx$  exist near any object $x\in X$ by \cite[Prop.7.1.19]{TheBook}, where the isotropy group $G\simeq \Mor_\Xx(x,x)$ is finite by \cite[Prop.7.1.12]{TheBook}. Moreover, by lifting a neighbourhood basis of $|x| \in |\Xx|$, 
we may choose the domain $U$  to be arbitrarily small.
This proof, although given in the polyfold case, also applies to the \'etale proper groupoids in the other \'etale categories considered here.

\item
Our notion of preuniformizer is significantly more general than that of a local uniformizer; in particular the footprint map $| \Ga|:  \qu{U}{G}\to |\Bb|$ is not in general injective -- though it is assumed locally injective, and is locally surjective by Lemma~\ref{lem:unif}~(ii). 

However, our notion of local uniformizer is the same as in \cite[Def.7.1.20]{TheBook}; in particular, in this case $|\Ga|$ is a homeomorphism to its image by Lemma~\ref{lem:unif}~(iii).
\hfill$\er$
\end{nenumilist}
\end{rmk}

Here are some general properties of preuniformizers and thus local uniformizers.

\begin{lemma} \label{lem:unif}   
The following holds for any preuniformizer $(U,G, \Ga)$ in an \'etale category~$\Bb$.

\begin{nenumilist}
\item
For all $x\in U$, the semigroup\footnote{See the footnote in Definition~\ref{def:etale2}~(iv).} 
$\Mor_\Bb(x,x)$ is isomorphic via $\Ga(\cdot,x)$ 
to the subgroup  $G_x = \{g\in G \, \big|\, t(\Ga(g,x))=x\}\subset G$. 
In particular, each $\Mor_\Bb(x,x)$ is a group, and these groups are isomorphic as $x$ varies in an orbit of $G$.
\item
The subset $|U|\subset |\Bb|$ is open and the induced map $|\Ga|: \qu{U}{G} \to |U|$ is a local homeomorphism
with respect to the quotient topology on $\qu{U}{G}$ and the subspace topology on $|U|\subset |\Bb|$.
 \item   If  $\Bb$ is a groupoid, and $(U,G, \Ga)$ is  a local uniformizer, then
  $|\Ga|: \qu{U}{G} \to |U|$ is injective and hence a homeomorphism.
 \end{nenumilist}
 \end{lemma}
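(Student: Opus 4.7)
The plan is to prove the three parts in order, leveraging the defining properties (a)--(c) in Definition~\ref{def:preunif} together with Lemma~\ref{lem:etale1}~(i).

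For part (i), I would first verify that $G_x$ is indeed a subgroup of $G$: the relations in \eqref{eq:groupact} give $s(\Ga(g,x))=x$ and $t(\Ga(g,x))=g*x$, so $g\in G_x$ iff $g*x=x$, i.e.\ $G_x$ is the stabilizer of $x$, hence a subgroup. Next, $\Ga(\cdot,x)$ maps $G_x$ into $\Mor_\Bb(x,x)$ because $\Ga(g,x)$ has source $x$ and target $g*x=x$ for $g\in G_x$. Injectivity on $G_x$ is immediate from injectivity of $\Ga$. For surjectivity, given $m\in\Mor_\Bb(x,x)$, I note that the connected component of $\Mor_\Bb(U,U)$ containing $m$ contains the self-morphism $m$ itself, so by condition (iii)(b) we have $m=\Ga(g,y)$ for some $(g,y)$; then $y=s(m)=x$ and $g*y=t(m)=x$ force $g\in G_x$. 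The relation $\Ga(hg,x)=\Ga(g,x)\circ\Ga(h,x)$ (reading off \eqref{eq:groupact} at $y=x$ with $g*x=x$) shows $\Ga(\cdot,x)$ is a group (anti-)isomorphism, which in particular endows $\Mor_\Bb(x,x)$ with a group structure. Finally, conjugation by $\Ga(h,x)$ for $y=h*x$ identifies $G_y=hG_xh^{-1}$, giving the isomorphism along $G$-orbits.

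For part (ii), openness of $|U|\subset|\Bb|$ is immediate from Lemma~\ref{lem:etale1}~(i) applied to the open set $U\subset B$. The map $|\Ga|:U/G\to |U|$ is well-defined and surjective by construction; continuity follows because $\pi_\Bb|_U$ factors as the quotient $U\to U/G$ (which is a continuous, open quotient map by standard theory of finite group actions) followed by $|\Ga|$. To see that $|\Ga|$ is open, I take any open $V\subset U/G$, note its preimage $\Ti V\subset U$ is open, and observe $|\Ga|(V)=\pi_\Bb(\Ti V)$ is open in $|\Bb|$, hence in $|U|$, again by Lemma~\ref{lem:etale1}~(i). Combined with the local injectivity assumption (iii)(c), which gives about each $[x]\in U/G$ an open neighbourhood on which $|\Ga|$ is injective, these three facts (continuous, open, locally injective, surjective onto $|U|$) assemble to show $|\Ga|$ is a local homeomorphism.

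For part (iii), assume $\Bb=\Xx$ is a groupoid and $\Ga$ surjects onto $\Mor_\Xx(U,U)$. To prove $|\Ga|$ is injective, suppose $x_1,x_2\in U$ satisfy $|x_1|=|x_2|\in|\Xx|$. Since $\Xx$ is a groupoid, the equivalence relation $\sim_\Xx$ collapses any zigzag to a single morphism, so there exists $m\in\Mor_\Xx(x_1,x_2)$. Because $s(m)=x_1\in U$ and $t(m)=x_2\in U$, we have $m\in\Mor_\Xx(U,U)$, so surjectivity of $\Ga$ yields $(g,y)\in G\times U$ with $m=\Ga(g,y)$; then $y=x_1$ and $g*x_1=x_2$, giving $[x_1]=[x_2]$ in $U/G$. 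Combined with (ii), the resulting continuous bijective local homeomorphism $|\Ga|:U/G\to|U|$ is automatically a homeomorphism.

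The only subtle point is in (iii), where one must be careful that the zigzag defining the equivalence $\sim_\Xx$ might in principle leave $U$; this is resolved by the standard fact that in a groupoid one can contract any zigzag to a single morphism whose endpoints still lie in $U$, placing it in $\Mor_\Xx(U,U)$ where the surjectivity hypothesis applies. All other steps are formal manipulations of the defining axioms and the openness lemma.
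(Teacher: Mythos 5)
Your proof is correct and follows essentially the same route as the paper's: part (i) via condition (iii)(b) of Definition~\ref{def:preunif} and transport of the group structure from the stabilizer $G_x$, part (ii) via the openness of $\pi_\Bb$ from Lemma~\ref{lem:etale1}~(i) together with the local injectivity built into the definition of a preuniformizer, and part (iii) by collapsing the equivalence in the groupoid to a single morphism in $\Mor_\Xx(U,U)$ and invoking surjectivity of $\Ga$. The only (cosmetic) difference is in (ii), where you conclude with the general fact that a continuous, open, locally injective map is a local homeomorphism, while the paper proves this by hand by shrinking to a $G_y$-invariant neighbourhood; both arguments are valid.
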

 
 \begin{proof}   
 Since $\Ga$ intertwines products in $G$ with composition of morphisms, it induces an injective homomorphism $\Ga_x: G_x \to  \Mor_\Bb(x,x) \subset G_x$.
Condition (iii)(b) in Definition~\ref{def:preunif} implies that $\Ga_x: G_x \to  \Mor_\Bb(x,x)$ is surjective and hence an isomorphism, which implies that
$G_x\simeq \Mor_\Bb(x,x)$ for all $x\in U$. 
The second claim in (i) then holds because it holds for the translation groupoid $(U, G\times U)$.   
\MS
 
 To prove (ii), notice first  that $|U|$ is open in $|\Bb|$ because  the subset $U\subset B$ is open by condition (i) in Definition~\ref{def:preunif},
and  the \'etale property of $\Bb$ implies that  $\pi_\Bb$ is an open map 
 by Lemma~\ref{lem:etale1}.   
Further, the quotient map $\si:U\to \qu{U}{G} $ is open.
Since the morphisms in the translation groupoid $(U, G\times U)$  inject into those of $\Bb$, each $G$-orbit maps to a single equivalence class of $\sim_\Bb$.  Hence there is an induced surjective map $|\Ga|:\qu{U}{G} \to |U|$.   To see that $|\Ga|$ is continuous, 
consider $O': = |\Ga|^{-1}(O)$ where $O\subset |U|$ is open.  Then $\si^{-1}(O') = \pi_\Bb^{-1}(O)\cap U$ is an open set, since $\si: U\to\qu{U}{G} $ is an open map, so that  $O' = \si(\si^{-1}(O'))$ is also an open set.  

It remains to show that each $y\in U$ has an open neighbourhood $N$  such that $|\Ga||_{\si(N)}$ is a homeomorphism to its image.  
To prove this, we adapt the arguments in  \cite[Prop.7.1.15,7.1.19]{TheBook}.   
Given $y\in U$ 
choose an open neighbourhood $N\subset U$ such that $|\Ga|: \si(N)\to |U|$ is injective.  By shrinking $N$ we may suppose that it is $G_y$-invariant and disjoint from its images under  all $g\in G\less G_y$.  Then $\si:N\to \si(N)\subset \qu{U}{G}$ factors through a homeomorphism $\qu{N}{G_y} \stackrel{\simeq}\to \si(N)$.
We claim that the continuous map $|\Ga||_{\si(N)}:\si(N)\to |N|$ is  a homeomorphism onto $|N|\subset |U|$ (which is an open set since $\pi_\Bb$ is an open map).  Since $|\Ga||_{\si(N)}$  is injective by construction we need only check that its inverse is continuous.  Thus for each open set $O\subset \si(N)\subset \qu{U}{G}$ we must check that $|\Ga|(O)$ is open.
But the open set $\si^{-1}(O)$ maps onto $O$, so that $|\Ga|(O) = \pi_\Bb (\si^{-1}(O))$ and hence is open because $\pi_\Bb$ is an open map. This completes the proof of (ii).\MS

The hypothesis that $\Bb$ is a groupoid implies that two points $x,y$ in $U$ are equivalent under $\sim_\Bb$ if and only if there is a morphism $m\in \bB$ with $s(m) = x, t(m)=y$.  Since this morphism lies in the full subcategory $\Bb|_U$ which is isomorphic to the translation groupoid $(U,G\times U)$, the map $\qu{U}{G} \to |U|$ is injective.  Hence (iii) follows from (ii).
 \end{proof}
  
We now give an example of an \'etale category $\Bb$ that is the union of two preuniformizers, both with trivial groups $G$.  We show in (ii) that although $|\Bb|$ is not usually Hausdorff, the category $\Bb$ is proper in the sense of Definition~\ref{def:etale2}, which illustrates the fact that properness is not a useful notion  in the context of \'etale categories that are not groupoids.
The setup is slightly more elaborate than is needed here to make our point, but it is a special case  of the constructions discussed in \S\ref{ss:Qq}.

\begin{example}\label{ex:branch}\rm
\begin{nenumilist}
\item
Let $\TV\subset U\subset \R^k$ be nested open subsets,
and suppose that $G$ is  a finite group that acts freely on $\TV$ in such a way that the action extends freely to an open  neighbourhood $U'\subset U$ 
of the closure $\cl( \TV)$ of $\TV$ in $U$.
Let  $\rho:\TV\to V: = \qu{\TV}{G}$ be the quotient map, and 
consider  the \'etale category $\Bb$ with objects $V \sqcup U$ and morphisms consisting of the identity morphisms  together with 
  a component\footnote
  {
  To be consistent with the notation in \cite{MW1,MW2,MWiso} we will interpret these morphisms as going from $V$ to $U$, though in some ways it might be more natural to think of them as going in the other direction.}
   $\Mor_\Bb(V,U)\simeq \TV$ of morphisms from $V$ to $U$ that  \lq\lq quotients out by the action of $G$ on $\TV$."  More precisely, if we denote the elements of $\TV$ by $y$ and suppress the inclusion map $\TV\to U$ 
 we have
  $$
\Mor_\Bb(V,U) \simeq   \TV,\quad   s\times t\,: \TV\to V\times U, \;\;  y\mapsto  \bigl(\rho(y), y \bigr).
  $$
While the projection map $\pi_\Bb|_V$ is injective (where $\pi_\Bb$ is as in \eqref{eq:piB}),
 the map  $\pi_\Bb|_U$   is not, because the points in $|\TV|=|V|\subset |\Bb|$ 
 have $\# G$ preimages in $U$.  
Nevertheless,  the map $\pi_\Bb|_U$ is locally injective because $G$ acts freely on $U'$.  Thus $\Bb$ is covered by two preuniformizers with domains $V, U$ respectively and trivial groups $G = \{\id\}$.

\item
We claim  that if $\Bb$ is as in (i) its realization $|\Bb|$ is not Hausdorff unless $G=\{\id\}$ or the frontier $\Fr_{U}(\TV): = \cl (\TV)\less \TV$ is empty.  To see this, suppose given $x\in \Fr_{U}(\TV)\subset U$ and $\id\ne g\in G$. Then $x, g(x)$ are two distinct points in
$U\less V$, and have distinct images $|x|, |g(x)|$ in $|\Bb|$. Now choose any neighbourhoods $N_{|x|}, N_{|g(x)|}$ of $|x|, |g(x)|$ in $|\Bb|$ and consider their inverse images $N_x$, $N_{g(x)}$ in $U$. Since these sets are open and contain $x, g(x)$ respectively, both $g(N_x)$ and $N_{g(x)}$ are neighbourhoods of $g(x)$ in $U$.  Therefore  $g(N_x)\cap N_{g(x)}$ is a neighbourhood of $g(x)$, and hence must have nonempty intersection with $\TV$. But if $y\in g(N_x)\cap N_{g(x)}\cap \TV$, then
$|y|\in N_{|x|} \cap  N_{|g(x)|}$
because $|N_x| = N_{|x|}$ and  $|N_{g(x}| = N_{|g(x)|}$ by construction.     Therefore 
 $N_{|x|}, N_{|g(x)|}$ are never disjoint.
  
On the other hand, the map $ s\times t\,: \TV\to V\times U$ is proper because the map $\rho: \TV\to V$ is surjective and proper.  Further,  every point $x\in B = V\sqcup U$ does have a neighbourhood $N(x)$ such that the map $t: s^{-1}(\cl_B N(x))\to B$ is   proper. 
 Indeed, if $x\in U$  and we choose $N(x)\subset U$, the only morphisms in $s^{-1}(\cl_{B} N(x))$ are identity maps, and the existence of $N(x)$  is clear.  Further,  since
  $\rho(\TV) = V$ then for any $x\in V$ we may take $N(x)\subset V$ to be compact, so that  $s^{-1}(\cl_{B} N(x)) = (\cl_{V} N(x))\sqcup \rho^{-1}(\cl_{V} N(x))$;  thus   $t: s^{-1}(\cl_{B} N(x)) \to V\sqcup U$ is proper.
  \hfill$\er$
\end{nenumilist}
\end{example}

We next discuss various conditions on a topological groupoid $\Xx: =(X,\Xx)$
 that guarantee that its realization is Hausdorff.  
We will assume that the object space $X$ is first countable, but we do not assume that $\Xx$ is \'etale.

\begin{lemma}\label{lem:prop1} 
Let $\Xx$ be a topological groupoid such that $X$ is first countable.

\begin{nenumilist}
\item 
If the space $X$  is locally compact and 
$s\times t: \bX\to X \times X$ is proper, then $\Xx$ is proper 
in the sense of Definition~\ref{def:etale2}.

\item  
If  $\Xx$ is proper in the sense of Definition~\ref{def:etale2}, then 
the map $s\times t: \bX\to X \times X$ is proper and has closed graph. 
%

\item 
If $\Xx$  is \'etale and has local  uniformizer  $(U,G)$, then for every closed subset $C$ of $ U$ 
such that $|C|$ is closed in $|\Xx|$ the map $t: s^{-1}(C)\to X$ is proper.

\item  
If the map $\pi: X\to |\Xx|$ is open\footnote{For \'etale groupoids this is guaranteed by Lemma~\ref{lem:etale1}~(i).} and $s\times t: \bX\to X\times X$ has closed graph, then $|\Xx|$ is Hausdorff.
\end{nenumilist}
\end{lemma}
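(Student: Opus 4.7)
My plan is to split the argument into three groups corresponding to the four parts. For (i) and the properness assertion in (ii), I will exploit the identity $(s\times t)^{-1}(A\times B) = s^{-1}(A)\cap t^{-1}(B)$. In (i), local compactness of $X$ lets me pick $N(x)$ with compact closure, so that $t^{-1}(K)\cap s^{-1}(\cl_X N(x)) = (s\times t)^{-1}(\cl_X N(x)\times K)$ is compact whenever $K$ is. For the $s\times t$-properness part of (ii), I will cover the compact first projection of a given compact $K\subset X\times X$ by finitely many neighborhoods $N(x_i)$ from the properness hypothesis on $\Xx$, and express $(s\times t)^{-1}(K)$ as a closed subset of the finite union $\bigcup_i s^{-1}(\cl_X N(x_i))\cap t^{-1}(\pi_2(K))$ of compact sets.

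For the ``closed graph'' assertion of (ii), which I read as closedness of the image $R=\{(s(m),t(m)):m\in\bX\}\subset X\times X$ of the equivalence relation (this is the statement that connects naturally to (iv)), and for (iv) itself, I will argue as follows. Using first countability of $X$ I will take a sequence $(s(m_n),t(m_n))\to(x,y)$ with $(x,y)\in\overline{R}$; properness of $t$ on $s^{-1}(\cl_X N(x))$, applied to the compact set $\{t(m_n)\}\cup\{y\}$, confines $\{m_n\}$ eventually to a compact subset of $\bX$, producing a cluster morphism $m^*$ with $s(m^*)=x,\,t(m^*)=y$, so $(x,y)\in R$. For (iv), given $|x|\ne|y|$, the complement of $R$ is open and contains $(x,y)$, hence contains a basic rectangle $U\times V\ni(x,y)$ disjoint from $R$; then $\pi(U)\cap\pi(V)=\emptyset$, and openness of $\pi$ upgrades these to disjoint open neighborhoods of $|x|$ and $|y|$.

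Part (iii) is where the real work lies. Given a sequence $m_n\in t^{-1}(K)\cap s^{-1}(C)$ I will extract a subsequence so that $t(m_n)\to y\in K$; closedness of $|C|$ in $|\Xx|$ forces $|y|\in|C|$, hence $y\sim x$ for some $x\in C\subset U$ via a morphism $\phi_0\colon x\to y$. The \'etale property extends $\phi_0^{-1}$ to a continuous local assignment $y'\mapsto\psi_{y'}$, with $\psi_{y'}$ a morphism from $y'$ to $t(\psi_{y'})\in N_x\subset U$, defined on a neighborhood $N_y$ of $y$. For large $n$ the composition $\psi_{t(m_n)}\circ m_n$ lies in $\Mor_\Xx(U,U)$, so by the local uniformizer property it equals $\Ga(g_n,s(m_n))$ for a unique $g_n\in G$. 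Since $G$ is finite I pass to a further subsequence with $g_n\equiv g$; continuity of $t$ then forces $g\cdot s(m_n) = t(\Ga(g,s(m_n))) = t(\psi_{t(m_n)})\to t(\psi_y)=x$, so $s(m_n)\to g^{-1}\cdot x\in C$ by closedness of $C$. Finally $m_n=\psi_{t(m_n)}^{-1}\circ\Ga(g,s(m_n))$ converges by continuity of composition to a morphism $m^*$ with $s(m^*)=g^{-1}\cdot x\in C$ and $t(m^*)=y\in K$, proving sequential compactness of $t^{-1}(K)\cap s^{-1}(C)$.

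The main obstacle is exactly (iii): one cannot directly control the morphisms issuing from $C$ because their targets may wander outside the uniformizer domain $U$, so the structure theorem $\Mor_\Xx(U,U)=\Ga(G\times U)$ does not immediately apply. The key device is the \'etale extension $\psi_{y'}$ of the inverse of the chosen morphism $\phi_0$, which lets me \emph{retract} the targets back into $U$; after that, convergence of $m_n$ is reduced to a pigeonhole argument on the finite group $G$ combined with continuity of source, target, inverse, and composition.
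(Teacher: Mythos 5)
Your proposal is correct, and on parts (i) and the properness half of (ii) it coincides with the paper's argument (pick $N(x)$ with compact closure and use $(s\times t)^{-1}(A\times B)=s^{-1}(A)\cap t^{-1}(B)$, resp.\ cover $\pr_1(K)$ by finitely many $N(x_i)$). On the remaining parts you take genuinely different routes. For the closed-graph claim in (ii) the paper deduces it from the properness of $s\times t$ just established, by noting that a convergent sequence in the image with limit outside the image would give a compact set with non-compact preimage; your direct cluster-morphism argument inside $s^{-1}(\cl_X N(x))\cap t^{-1}(\{t(m_n)\}\cup\{y\})$ proves the same thing and is no harder, and both arguments implicitly use uniqueness of limits, which is harmless in the paper's metrizable setting. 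For (iv) the paper argues with sequences and first countability, while your rectangle argument ($U\times V$ in the open complement of $\im(s\times t)$, then push forward by the open map $\pi$) is cleaner and dispenses with first countability. The most substantial divergence is (iii): the paper first replaces $C$ by the $G$-saturated set $U\cap\pi^{-1}(|C|)$, uses closedness of $|C|$ plus the homeomorphism $\qu{U}{G}\simeq|U|$ (Lemma~\ref{lem:unif}(iii)) to make the sources $s(\phi_k)$ subconverge, then produces a limit morphism $m_\infty$ and compares $\phi_k$ with nearby lifts $\phi_k'$ through finitely many group elements; you instead fix one morphism $\phi_0:x\to y$ to the limit target, use the \'etale property to extend $\phi_0^{-1}$ to a local section that retracts the targets into $U$, identify the retracted morphisms with $\Ga(g_n,s(m_n))$ via surjectivity and injectivity of $\Ga$, pigeonhole on the finite $G$, and obtain convergence of the sources as a consequence rather than an input. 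Both proofs rest on the same ingredients (finiteness of $G$, the uniformizer bijection $\Ga$, \'etale extension of morphisms, closedness of $C$ and of $|C|$), and both reduce compactness to sequential compactness at the same level of rigor as the paper; your version avoids the preliminary saturation step and the explicit appeal to Hausdorffness of $|U|$. One cosmetic caveat: the paper composes morphisms in diagrammatic order ($m\circ m'$ requires $t(m)=s(m')$), so expressions like $\psi_{t(m_n)}\circ m_n$ would need their factors reversed to match the paper's conventions, but this does not affect the mathematics.
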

\begin{proof}   
To prove (i) suppose that $s\times t: \bX \to X \times X$ is proper. Then, the local compactness assumption on $X$ provides for any $x\in X$ a neighbourhood $N(x)\subset X$ whose closure ${\rm cl}_X(N(x))$ is compact. Now, given a compact subset $K\subset X$, its 
preimage under $t:s^{-1}\bigl( {\rm cl}_X(N(x))\bigr) \to X$ is 
$$
s^{-1}\bigl( {\rm cl}_X(N(x))\bigr) \cap t^{-1}(K) =  (s\times t)^{-1} \bigl( {\rm cl}_X(N(x)) \times K \bigr)
 \subset \bX,
 $$
which is also compact by properness of $s\times t$ and compactness of the Cartesian product ${\rm cl}_X(N(x)) \times K\subset X\times X$. 
\MS

To prove the first claim in (ii), suppose that every $x\in X$ has a neighbourhood $N(x)\subset X$ such that $t:s^{-1}\bigl( {\rm cl}_X(N(x))\bigr) \to X$ is proper, and denote by $(\pr_i)_{i=1,2}$ the projection of $X\times X$ onto its $i$th factor. Then the preimage in $\bX$ of a compact subset $K\subset X\times X$ under $s\times t: \bX \to X \times X$ is compact because 
$s^{-1}\bigl( {\rm cl}_X(N(x))\bigr)\cap t^{-1} \bigl(\pr_2(K)\bigr)$ is compact for every  $x$ by choice of $N(x)$, and the compactness of $\pr_1(K)$ implies that it  has a finite covering by sets of the form $N(x)$.

To prove the second claim in (ii),
 it suffices to observe that if $s\times t$ does not have closed graph, there is a convergent sequence $(x_k,y_k) \to (x_\infty, y_\infty)$ in $X\times X$, with $(x_k,y_k)\in \imm(s\times t)$, but 
$(x_\infty, y_\infty) \notin  \imm(s\times t)$.  Then the compact set $K =\{(x_k,y_k) | k\in\N\} \cup \{ (x_\infty, y_\infty)\}$ does not have compact preimage. 

%
\MS

The statement in (iii) generalizes  \cite[Prop.7.5.10]{TheBook}, which characterizes the  
``proper'' subsets of an ep-groupoid $\Xx$  (subsets $C$ for which the map $t: s^{-1}(C)\to X$ is proper). 
Here we do not assume $\Xx$ to be proper, and so need to give a direct proof.
Let $K\subset X$ be compact.  Since  $U\cap \pi^{-1}(|C|)= \bigcup_{g\in G} gC$, 
it suffices to prove the claim when $C = U\cap \pi^{-1}(|C|)$.    Further, because $X$ is  first countable, it suffices to prove that every sequence $\phi_k\in 
s^{-1}(C)\cap t^{-1}(K) $  has a convergent subsequence.  Since $K$ is compact, we can assume that $y_k: = t(\phi_k)\to y_\infty\in K$.
Consider the sequence $x_k: = s(\phi_k)\in C\cap \pi^{-1}(\pi(K))$.  Since
 $|C|\cap |K|$ is the intersection of a compact set with a closed set and hence compact, and since $\pi:U\to |U|$ quotients out by the action of the finite group $G$, the set $C\cap \pi^{-1}(\pi(K)) = 
C\cap  \pi^{-1}(|C|\cap |K|) $ is also compact.  Hence, without loss of generality we may assume that  $x_k$ also converges, with limit $x_\infty$. 
Since $\pi: C\to |C|$ is continuous and $|x_k| = |y_k|$ by construction, we have $|x_\infty| = \lim |x_k| =  \lim |y_k| = |y_\infty|$, where here we make essential use of the fact that $|U|$ is Hausdorff, which holds because the map $\psi: \qu{U}{G} \to |U|$ is a homeomorphism by Lemma~\ref{lem:unif}~(iii).  Therefore there is an element $m_\infty\in \Mor_\Xx(x_\infty, y_\infty)$, and
 it is now easy to find a convergent subsequence of $(\phi_k)$.  Indeed,
 because  the maps $s,t$ are \'etale, there is a  convergent sequence $\phi_k'\in \bX$ with limit $m_\infty$ and with $t(\phi'_k) = y_k$ and $s(\phi_k')\in U$ for all $k$.  Since $|s(\phi_k')| = |y_k| = |x_k|$ for all $k$, there are elements $g_k\in G$ such that  
$$
x_k = g_k*s(\phi_k') = s\bigl(\Ga(g_k,x)\circ \phi_k'\bigr),
$$
 which implies by the groupoid property of $\Xx$ that there are elements $h_k\in G_{x_k}$ such that
$\phi_k = \Ga(h_k,x)\circ \Ga(g_k,x)\circ \phi'_k$  for all $k$.  Since there are only finitely many possibilities for $h_k,g_k$, the given sequence $\phi_k$ must have a convergent subsequence.  
\MS

To prove (iv)
suppose that $x,y$ are two points in $X$ 
such that every neighbourhood  of $|x|$ intersects every neighbourhood of $|y|$. 
Since the map $\pi:X\to |\Xx|$ is  open 
we may take neighbourhoods of the form $\pi(N_x), \pi(N_y)$ where $N_x,N_y$ are open neighbourhoods of $x,y$ respectively, and hence find  sequences 
 $x_k\to x$ and $y_k\to y$ such that $x_k\sim y_k$. Because $\Xx$ is a groupoid, $x_k\sim y_k$ exactly if there is a morphism $x_k\to y_k$,  i.e.\  exactly if  $(x_k, y_k)\in s\times t (\bX) \subset X\times X$.  Since  $(x_k,y_k)\to (x,y)$, the closed graph condition  implies that  $(x,y)\in 
s\times t\,(\bX)$.  Hence $|x|=|y|$, i.e.\  $|\Xx|$ is Hausdorff. A similar argument applies if $\pi$ is closed, since every point of $X$ has a countable basis of closed neighbourhoods.
\end{proof}

\begin{cor} \label{cor:prop1} 
\begin{nenumilist}
\item 
The realization $|\Xx|$ of any \'etale proper groupoid $\Xx$ is Hausdorff.
\item 
Suppose the \'etale  groupoid $\Xx$ has Hausdorff realization $|\Xx|$, and is covered by a finite number of local uniformizers $(U_I, G_I)_{I\in \Ii}$ in the sense that $|\Xx|=\bigcup_{I\in\Ii} |U_I|$.  
Let $U : = \bigcup_{I\in\Ii} U_I \subset X$, and define $\Xx|_U$ (as in Remark~\ref{rmk:restrict}) to be the full subcategory of $\Xx$ with objects $U$.
Then the categories $\Xx$ and  $\Xx|_U$ are both proper.
 \end{nenumilist}
\end{cor}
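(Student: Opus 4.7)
For part (i), the conclusion follows directly from the preceding results. Since $\Xx$ is \'etale, Lemma~\ref{lem:etale1}(i) ensures that the projection $\pi:X\to |\Xx|$ is open; and since $\Xx$ is proper, Lemma~\ref{lem:prop1}(ii) gives that $s\times t:\bX\to X\times X$ has closed graph. These are the two hypotheses of Lemma~\ref{lem:prop1}(iv), which yields that $|\Xx|$ is Hausdorff.

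For part (ii), the plan is to verify the object-wise properness condition of Definition~\ref{def:etale2}(vi) for $\Xx$ by constructing, for each $x\in X$, a neighbourhood $N(x)$ to which Lemma~\ref{lem:prop1}(iii) applies; the same neighbourhoods then deliver the properness of $\Xx|_U$ by restriction. The construction splits according to whether $x$ lies in $U$ or not.

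If $x\in U$, I would fix $I$ with $x\in U_I$ and choose an open neighbourhood $N(x)\subset U_I$ whose closure $C:=\cl_X(N(x))$ remains in $U_I$ and has $|C|$ closed in $|\Xx|$. Then Lemma~\ref{lem:prop1}(iii) applied to the local uniformizer $(U_I,G_I)$ and the subset $C\subset U_I$ yields that $t:s^{-1}(C)\to X$ is proper, which is exactly the properness requirement for $\Xx$ at $x$. Properness of $\Xx|_U$ at $x$ follows by restriction: for any compact $K\subset U$, the compact set $s^{-1}(C)\cap t^{-1}(K)\subset \bX$ already lies in $\bX|_U=s^{-1}(U)\cap t^{-1}(U)$ since $C,K\subset U$. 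For $x\in X\setminus U$, the hypothesis $|\Xx|=\bigcup_I|U_I|$ produces $y\in U_I$ with $y\sim_\Xx x$ and a morphism $\phi\in\Mor_\Xx(y,x)$. The \'etale property of source and target extends $\phi$ to a local diffeomorphism $\tilde\phi:V_y\to V_x$ of neighbourhoods in $X$, accompanied by a continuous family of morphisms $\phi_{y'}:y'\to\tilde\phi(y')$ for $y'\in V_y$. After shrinking $V_y$ into $U_I$ and choosing $N(y)\subset V_y$ as in the preceding case, I would set $N(x):=\tilde\phi(N(y))$. Postcomposition with the $\phi_{y'}$'s then defines a local diffeomorphism $s^{-1}(\cl_X(N(x)))\to s^{-1}(\cl_X(N(y)))$ intertwining the target maps to $X$, so properness of $t$ over $s^{-1}(\cl_X(N(x)))$ follows from the previous case.

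The main technical point is the selection, in the first case, of $N(x)\subset U_I$ with $|\cl_X(N(x))|$ closed in $|\Xx|$. Since $|\Xx|$ is Hausdorff, it suffices to arrange that $|\cl_X(N(x))|$ be compact, since compact sets in Hausdorff spaces are closed. In the \'etale classes (b) and (c) this is immediate by taking $N(x)$ precompact in $U_I$, using the local compactness of $X$. In the M-polyfold class (d), where local compactness fails in the base topology, I would invoke the standard shrinking construction for local uniformizers (as in \cite[Prop.~7.1.19]{TheBook}) together with the compactness of inclusions between scale levels at smooth points, which guarantees a precompact choice of $N(x)$ whose image $|\cl_X(N(x))|$ is compact; the finiteness of the uniformizer cover ensures that these local selections suffice globally.
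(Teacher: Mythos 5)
Your part (i) is correct and is exactly the paper's argument (openness of $\pi$ from Lemma~\ref{lem:etale1}, closed graph from Lemma~\ref{lem:prop1}~(ii), then Lemma~\ref{lem:prop1}~(iv)). In part (ii) your overall strategy — reduce to Lemma~\ref{lem:prop1}~(iii) by producing, for $x\in U_I$, a closed neighbourhood $C\subset U_I$ with $|C|$ closed in $|\Xx|$, and then transfer properness to points of $X\less U$ through a local \'etale bijection coming from a morphism — is also the paper's. The problem is the one step you flag as ``the main technical point.''

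You propose to obtain $|C|$ closed by making $C=\cl_X(N(x))$ compact, i.e.\ by choosing $N(x)$ precompact in $U_I$. This works in the locally compact classes (b),(c), but it fails precisely in the cases the corollary must cover: class (a) (general metrizable spaces) and, crucially for this paper, class (d). An infinite-dimensional M-polyfold has \emph{no} precompact open subsets in its base topology: the compactness of the scale inclusions $E_{m+1}\hookrightarrow E_m$ says that sets bounded on a higher level are precompact one level down, but an open set in the $X=X_0$ topology is never contained in a bounded subset of a higher level, so neither the shrinking construction of \cite[Prop.7.1.19]{TheBook} nor ``compactness of inclusions between scale levels at smooth points'' can produce a neighbourhood with compact closure. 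So the claimed choice of $N(x)$ does not exist, and the argument breaks exactly where the polyfold case needs it. The fix (which is what the paper does) is to avoid compactness altogether and work in the realization: first show that $|\Xx|$ is metrizable — each $|U_I|\cong U_I/G_I$ inherits metrizability from $U_I$ via the Nagata--Smirnov criterion, $|\Xx|$ is then locally metrizable, paracompact (finite union of open paracompact sets), and Hausdorff by hypothesis, hence metrizable by Smirnov — and therefore normal. Then choose an open $W\ni|x|$ with $\cl_{|\Xx|}(W)\subset|U_I|$ and take $C:=\pi^{-1}(\cl_{|\Xx|}(W))\cap U_I$, which is a closed neighbourhood of $x$ in $U_I$ with $|C|=\cl_{|\Xx|}(W)$ closed in $|\Xx|$; Lemma~\ref{lem:prop1}~(iii) only needs these two properties, not compactness of $C$. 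With that replacement the rest of your argument (restriction to $\Xx|_U$, and the transfer to $x\in X\less U$ via $m\mapsto\phi\circ m$) goes through as you wrote it.
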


\begin{proof}  If $\Xx$ is \'etale, the projection $\pi:X\to |\Xx|$ is open by Lemma~\ref{lem:etale1}, and the map $s\times t: \bX\to X\times X$ has closed graph by Lemma~\ref{lem:prop1}~(ii).  Therefore (i) follows from Lemma~\ref{lem:prop1}~(iv).

Towards (ii), we first show that the realization $|\Xx|$ is metrizable when we assume it to be Hausdorff and covered $|\Xx|=\bigcup_{I\in\Ii} |U_I|$ by a finite number of local uniformizers $(U_I, G_I)_{I\in \Ii}$. 
Note first that each $U_I$ is metrizable since it is a subset of $X$, which is metrizable by 
definition of the \'etale spaces (a)-(d).   By the Nagata--Smirnov metrization theorem (see \cite[Thm.40.3]{Mu}, this is equivalent to saying that $U_I$ is regular and has a  basis for its topology that is a countable union of locally finite collections  of open sets. Both these properties are inherited by the quotient $|U_I| = U_I/G_I$; regularity of the quotient follows from Exercise 7(b) on p 199 of \cite{Mu}, while the second condition holds because the projection $U_I\to |U_I|$ is open and globally finite-to-one so that the image in $|U_I|$ of a  locally finite collection  of open sets in $U_I$  has the same properties. Hence the quotient $|U_I|$ is metrizable.
Thus, because $|U_I|$ is open in $|\Xx|$
by Lemma~\ref{lem:unif}~(ii), the space $|\Xx|$ is locally metrizable. 
Further, because metrizability is equivalent to paracompactness, Hausdorffness 
and local metrizability by Smirnov's metrizability theorem (see~\cite[Thm.42.1]{Mu}),
 each  $|U_I|$ is paracompact.  It follows that  $|\Xx|$, which is the finite union of the open sets $|U_I|$, is also paracompact.  Since $|\Xx|$ is Hausdorff by hypothesis,
  Smirnov's metrizability theorem implies that $|\Xx|$ is metrizable.

To prove properness of $\Xx|_U$ in (ii), we must check  that every $x\in U$ has a neighbourhood $N(x)$ so that $t: s^{-1}\bigl(\cl_X(N(x))\bigr)\to X$ is proper. 
By assumption, we have $x\in U_I$ for some local uniformizer $(U_I,G_I)$, and applying 
 Lemma~\ref{lem:prop1}~(iii) to this uniformizer guarantees properness of $t: s^{-1}(C)\to X$ for any closed subset $C$ subset  of $U_I$ such that $|C|\subset |\Xx|$ is closed.   Thus it suffices to find a closed neighbourhood 
 $C\subset U_I$ of $x$ such that $|C|$ is closed in $|\Xx|$. 
 To this end, consider the closed set $|\Xx|\less |U_I|$.  Since $|x|\notin |\Xx|\less |U_I|$ and $|\Xx|$ is metrizable and hence normal (by \cite[Thm.32.3]{Mu}), the point  $|x|$ (which is a closed set since $|\Xx|$ is Hausdorff) has an open neighbourhood $W\subset|\Xx|$ whose closure $\cl_{|\Xx|}(W)$ is disjoint from $|\Xx|\less |U_I|$ and hence is contained in $|U_I|$.  
 Therefore $C := \pi^{-1}\bigl(\cl_{|\Xx|}(W)\bigr)$ is contained in $U_I$.  Moreover, because
 $\cl_{|\Xx|}(W)$ is 
 a closed neighbourhood of $|x|$ in the open set $|\Xx|$ its pullback $C$ to $U_I$
 is a closed neighbourhood of $x$ in $U_I$.
 
It is now straightforward to check that the category $\Xx$ itself is proper. Indeed, by assumption, for every $y\in X$  there is some  $x\in U_I$  such that $|x| = |y|$.  Hence, as in the proof of Lemma~\ref{lem:etale1}(i) there is a local \'etale bijection $\phi:N'(y)\stackrel{\simeq}\to N'(x)$ from a neighbourhood $N'(y)$  of $y$ onto a
neighbourhood $N'(x)\subset U$ of $x$.  Now choose a neighbourhood $N(x)$ of $X$ such that
$\cl_X(N(x))\subset N'(x)$ and $t: s^{-1}\bigl(\cl_X(N(x))\bigr)\to X$ is proper.  Then 
the \'etale map $$
s^{-1}\bigl(\cl_X(N(x))\bigr) \to s^{-1}\bigl(\cl_X(N(y))\bigr),\quad
m\mapsto \phi\circ m
$$
has an \'etale inverse. It follows that $t: s^{-1}\bigl(\cl_X(N(y))\bigr)\to X$ is proper.

 This completes the proof of the lemma.
\end{proof}

Finally, we discuss groupoid completions of \'etale categories $\Bb$. 
For that purpose recall from Definition~\ref{def:etale2} that the sets $B_x: = \Mor_\Bb(x,x)$ are generally semigroups as they may lack inverses.

\begin{defn}\label{def:grcompl}  
Let $\Bb=(B, \bB)$ be a category. Then a pair $(\Hat\Bb, \io)$ consisting of a groupoid $\Hat\Bb$ together with an injective functor $\io:\Bb\to \Hat{\Bb}$ is called a {\bf groupoid completion} of the category $\Bb$ if  the following holds:
\begin{itemize}
\item[{\rm (a)}]  $\io:B\to \Hat B$ is bijective and induces a bijection $|\Bb|\to |\Hat{\Bb}|$,
\item[{\rm (b)}] for all $x\in B$, the functor $\io$ induces an isomorphism $B_x\to \Hat B_x: = \Mor_{\Hat\Bb}(x,x)$.
 \end{itemize}
 If $\Bb$ is \'etale, then a pair $(\Hat\Bb, \io)$ as above is an {\bf \'etale groupoid completion} if in addition
 \begin{itemize}
 \item[{\rm (c)}]  the category $\Hat\Bb$ has an \'etale structure so that $\iota$ is \'etale.
\end{itemize}
\end{defn}

\begin{rmk}\rm\label{rmk:grcompl0} If $\Bb$ is \'etale and has \'etale groupoid completion $\Hat\Bb$, then by conditions (a) and (c) we can identify the object spaces of both categories.  Then by (a) and the fact that the realization of a category has the quotient topology, the two realization maps $\pi_\Bb:B\to |\Bb|$ and 
$\pi_{\Hat\Bb}:\Hat B\to |\Hat\Bb|$ can also be identified, and have the same topological properties. For example, if one is locally injective, so is the other.
\hfill$\er$
\end{rmk}

The following lemma establishes some useful properties of groupoid completions.

\begin{lemma}\label{lem:gpcomplet} 
\begin{nenumilist}
\item 
If $\Bb$ has a groupoid completion, then each $B_x$ is a group, and these groups are isomorphic as $x$ ranges over an equivalence class with respect to $\sim_\Bb$. 

\item If $\Bb$ is \'etale and there is an injective \'etale functor $f:\Bb\to \Xx$ to an  \'etale groupoid $\Xx$  that induces an isomorphism $f: B_x\stackrel{\simeq}\to G_{f(x)}: = \Mor_\Xx(x,x)$ for all $x\in B$ and an injection $|f|: |\Bb|\to |\Xx|$ that is a homeomorphism to its image, then $\Bb$ has an \'etale  groupoid completion, namely $(\Xx|_{f(B)}, f)$, where $\Xx|_{f(B)}$ is the full subcategory of $\Xx$ with objects $f(B)$.

\item  A  nonsingular  category $\Bb$  has a unique   groupoid completion $\Hat\Bb$ up to functorial isomorphism.  It is necessarily nonsingular, and is \'etale and/or  locally injective  if $\Bb$ is.

\item If the nonsingular category $\Bb$ is a poset such that each equivalence class $[x]: = \{y\in B\ | \ |x|=|y|\}$ in $B$ has a unique minimum with morphisms to all other elements.
Then every functor $f:\Bb\to \Xx$ whose target is a groupoid extends to a functor
 $\Hat f: \Hat\Bb\to \Xx$.  Further   if $\Bb, \Xx$ and $f:\Bb\to \Xx$ are \'etale, then so is the functor $\Hat f$. 
\end{nenumilist}
 \end{lemma}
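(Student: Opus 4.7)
The plan is to handle the four parts in order, with (iii) containing the main technical content and (iv) being a direct consequence of the explicit construction in (iii).

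For (i), observe that any groupoid completion $(\Hat\Bb,\iota)$ satisfies $\iota:B_x\stackrel{\simeq}\to \Hat B_x$ by condition (b), and $\Hat B_x=\Mor_{\Hat\Bb}(x,x)$ is automatically a group because $\Hat\Bb$ is a groupoid; hence $B_x$ is a group. When $x\sim_\Bb y$, the bijection $|\Bb|\to|\Hat\Bb|$ in condition (a) produces a morphism $\phi\in\Mor_{\Hat\Bb}(\iota(x),\iota(y))$, and conjugation by $\phi$ yields a group isomorphism $\Hat B_x\simeq \Hat B_y$, which transports back to $B_x\simeq B_y$ via $\iota$.

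For (ii), I would verify the three conditions of Definition~\ref{def:grcompl} directly for $(\Xx|_{f(B)},f)$. Since $f:B\to \Obj_\Xx$ is injective and \'etale, it is an open embedding in any of the classes (a)--(d), so $f(B)$ is open in $\Obj_\Xx$ and $\Xx|_{f(B)}$ inherits an \'etale structure by Remark~\ref{rmk:restrict}, with respect to which $f$ is manifestly \'etale; this gives (c). Condition (b) is part of the hypothesis. For (a), $f$ is bijective on objects onto $\Obj_{\Xx|_{f(B)}}=f(B)$, and the bijection of realizations follows from the hypothesis that $|f|$ is a homeomorphism onto its image $\pi_\Xx(f(B))$, combined with Lemma~\ref{lem:topologies} identifying $|\Xx|_{f(B)}|\simeq\pi_\Xx(f(B))$ because $f(B)$ is open.

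For (iii), uniqueness is essentially forced: nonsingularity of $\Bb$ gives $B_x=\{\id_x\}$, hence by (b) $\Hat B_x=\{\id_x\}$, and then (a) pins down $\Mor_{\Hat\Bb}(x,y)$ to be a singleton iff $x\sim_\Bb y$ and empty otherwise, with uniquely determined composition, unit, and inverse. For existence I would construct $\Hat\Bb$ explicitly with this forced structure. The \'etale topology on $\Hat{\bB}$ is defined locally as follows: any finite zigzag chain $x=x_0\to x_1\leftarrow x_2\to\cdots\to x_n=y$ in $\bB$ representing the unique morphism $[\phi]:x\to y$ in $\Hat\Bb$ extends, using the \'etale property of each link (source and target are local homeomorphisms by Definition~\ref{def:etale}), to a local \'etale bijection $N_x\stackrel{\simeq}\to N_y$ between open neighbourhoods, which we declare to be a chart on $\Hat{\bB}$ near $[\phi]$. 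The structure maps $s,t,u,c,i$ of $\Hat\Bb$ are then \'etale in these charts by direct inspection, and local injectivity of $\pi_{\Hat\Bb}$ is inherited from that of $\pi_\Bb$ via Remark~\ref{rmk:grcompl0}. The main obstacle lies precisely here: one must verify that two different zigzag chains realizing the same morphism induce compatible local bijections on a common small enough neighbourhood. This reduces to showing invariance under the elementary moves on chains (inserting an identity morphism, or replacing two adjacent morphisms by their composition), which each preserve the induced local bijection by functoriality and \'etaleness of the structure maps of $\Bb$; iterating these moves connects any two chains with matching endpoints after shrinking.

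For (iv), the hypothesis supplies a unique minimum $\bot_{[x]}\in[x]$ together with unique morphisms $\mu_y:\bot_{[x]}\to y$ in $\Bb$ for every $y\in[x]$ (uniqueness from nonsingularity). Every morphism $\phi\in\Mor_{\Hat\Bb}(x,y)$ therefore has the canonical factorization $\phi=\iota(\mu_y)\circ\iota(\mu_x)^{-1}$ in $\Hat\Bb$, so I define
\[
\Hat f(\phi)\;:=\; f(\mu_y)\circ f(\mu_x)^{-1},
\]
which makes sense because $\Xx$ is a groupoid. Functoriality is a one-line cancellation of $f(\mu_y)^{-1}\circ f(\mu_y)=\id_{f(y)}$, and compatibility with units and inverses is immediate. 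When $\Bb,\Xx,f$ are all \'etale, the \'etale property of $\Hat f$ is inherited from that of $f$: in the local charts on $\Hat{\bB}$ constructed in (iii), $\Hat f$ is expressed as a finite composition of \'etale maps of the form $f$ and local inverses of $f$, hence is \'etale.
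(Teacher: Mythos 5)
Your treatment of (i), (ii) and (iv) runs parallel to the paper's proof (conjugation transport for (i), direct verification of Definition~\ref{def:grcompl}(a)--(c) for (ii), and the factorization $\phi=\iota(\mu_y)\circ\iota(\mu_x)^{-1}$ through the minimal element for (iv)), and those parts are fine. The problem is in (iii), at exactly the step you single out. Your claim that any two zigzag chains with the same endpoints are connected by the elementary moves of inserting identities and composing adjacent same-direction arrows is false: those moves (even together with cancelling a backtrack $\to_m\leftarrow_m$) only relate chains that are combinatorially refinements of one another, not arbitrary chains joining the same pair of objects. Worse, the conclusion you want -- that two chains with the same endpoints induce the same germ of local bijection -- is not a formal consequence of nonsingularity plus \'etaleness. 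Take $\Bb$ with objects $A\sqcup B=\R\sqcup\R$ and, besides identities, three morphism families given by the pairwise disjoint graphs of $p(x)=x$ and $q(x)=x+1$ on all of $\R$ and of $r(x)=2x$ on $(2,4)$; this is a nonsingular \'etale category (no nontrivial compositions exist). The chain $x\to_r 2x\leftarrow_q 2x-1\to_p 2x-1\leftarrow_q 2x-2\to_p 2x-2\leftarrow_q 2x-3$ runs from $3$ back to $3$ but induces the germ $x\mapsto 2x-3$, which differs from the germ of the trivial chain. In any would-be \'etale groupoid completion, both $x\mapsto\id_x$ and $x\mapsto(x,2x-3)$ are continuous families of morphisms (built from $\iota$ of morphisms of $\Bb$, inverses and compositions, all \'etale) passing through $\id_{3}$, and since both have source $x$ this contradicts local injectivity of the source map. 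So your chart construction cannot be made consistent by the moves argument, and indeed no argument can close this gap at the stated level of generality.

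What rescues the construction in the paper's setting is local injectivity of $\pi_\Bb$ (cf.\ Remark~\ref{rmk:grcompl0}; note that the categories to which the lemma is applied, such as $\Qq$ in Lemma~\ref{lem:complet}, are nonsingular, locally injective, and in fact posets with a unique minimum in each class, so every chain reduces to the canonical two-step form \eqref{eq:chainJ}). Under local injectivity your flagged compatibility does hold, with a different argument: if two chain germs $\phi_1,\phi_2$ from $x_0$ to $y_0$ disagreed at points arbitrarily close to $x_0$, then $\phi_1(x)\neq\phi_2(x)$ would be distinct $\sim_\Bb$-equivalent points in arbitrarily small neighbourhoods of $y_0$, contradicting local injectivity of $\pi_\Bb$ at $y_0$; hence the germs agree after shrinking and the charts on $\Mor_{\Hat\Bb}$ patch. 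You should either add this hypothesis (and this argument) to your proof of the \'etale claim in (iii), or restrict the \'etale assertion to the locally injective case; the counterexample above shows the stronger statement you attempted to prove is not available. Since your proof of (iv) inherits its \'etale claim from the charts built in (iii), it is repaired by the same modification.
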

 \begin{proof}   
 To prove (i), note first that the isomorphisms $B_x\to \Hat B_x$ in (b) provide inverses for $B_x$.  
Next, in any groupoid $\Xx$ the morphism $m\in \Mor_\Xx(x,y)$ induces an isomorphism 
$$
\phi_m: G_x\to G_y,\quad \al\mapsto \be: = m^{-1}\circ \al\circ m: y\to x\to x\to y.
$$
Since  the above identity may be written $m\circ \be = \al\circ m$, it follows that if $\Bb$ has a  groupoid completion 
the equation $m\circ \be = \al\circ m$ in $\bB$ must have a unique solution $\be\in B_y$ for each $\al\in B_x, m\in \Mor_\Bb(x,y)$,
and moreover that the assignment 
\begin{align}\label{eq:alm}
\phi_m:\quad \al\mapsto \be=:\phi_m(\al),\;\;\mbox{ where } m\circ \be = \al\circ m
\end{align}
 is a group  isomorphism $B_x\to B_y$.   This proves (i).
 \MS
 
 To prove (ii), notice that because $f$ is \'etale, $f(B)$ is an open subset of $X$. Further, 
if $\Xx$ is an \'etale groupoid and $U\subset X$ is open then $\Xx|_U$ is also an \'etale groupoid.  Hence $f: \Bb\to \Xx|_{f(B)}$ satisfies the conditions in Definition~\ref{def:grcompl}.
 \MS
 
 To prove (iii), notice that  by the definition of nonsingularity (see Definition~\ref{def:etale})  we have  $|\Mor_\Bb(x,y)| \le 1$ for all $x,y\in B$.  Therefore  condition  (b) implies that any groupoid completion $\Hat\Bb$ is also nonsingular.  By 
 condition  (a)  we must therefore have $$
 \Mor_{\Hat\Bb}: = \bigl\{(x,y)\in B\times B \ \big| \ x\sim_\Bb y\bigr\},
 $$
  with structure maps $s,t$ given by projection onto the first and second factors respectively.  Then $\Hat\Bb$ with this set of morphisms is a groupoid.  Next note that $ \Mor_{\Hat\Bb}$  can be given the structure of an \'etale space in such a way that all structure maps are \'etale.  But this follows because, as in the proof of Lemma~\ref{lem:etale1}, each added morphism $y_0\to y_k$ in $\Hat\Bb$ is formed from a zigzag chain 
  $y_0\; \stackrel{m_1}\rightarrow\; y_1\;\stackrel{m_2} \leftarrow \; y_3\; \cdots \stackrel{m_k}\rightarrow \; y_k $ of morphisms in $\Bb$.  Finally, the claim about local injectivity holds by Remark~\ref{rmk:grcompl0}.
  \MS
  
In the situation of (iv), denote by $\Hat\Bb$ the unique groupoid completion of $\Bb$ provided by (iii).  
For any pair $x,y\in B$ with $x\sim_\Bb y$,  denote by $x_{\min}\in B$  the root of the equivalence class $[x]$ so that there are unique morphisms $m_x: x_{\min}\to x,   \ m_y: x_{\min}\to y $ in $\Bb$.  If  $m_{xy} \in \Hat \Bb$ 
is the unique morphism in $\Mor_{\Hat\Bb}(x,y)$,  define $\Hat f(m_{xy}) = f (m_x)^{-1}\circ f(m_y)$.  Then $\Hat f$ takes identity maps to identity maps and is compatible with inverses.  To see it is compatible with composition, notice that if $x\sim_\Bb y \sim_\Bb z$, then 
$$
m_{xz} = m_{xy}\circ m_{yz} = \bigl((m_x)^{-1}\circ m_y\bigr)\circ \bigl((m_y)^{-1}\circ m_z\bigr) = 
(m_x)^{-1}\circ m_z.
$$
It follows that 
$$
\Hat f (m_{xz}) = f(m_x)^{-1}\circ f(m_z) = f(m_x)^{-1}\circ f(m_y)\circ f(m_y)^{-1}\circ f(m_z)
=  \Hat f(m_{xy})\circ \Hat f(m_{yz}).
$$
Finally, if $\Bb, \Xx, f$ are \'etale, then the induced map $\Hat f: \bB\to \Hat\bB$ is a composite of \'etale maps and hence itself \'etale.    
\end{proof}

\begin{rmk}\label{rmk:gpcompl}\rm
This notion of groupoid completion is rather weak.  For example, if $\Bb$ is a nonsingular category  and  $\phi:\Bb\to \Xx$ is any functor whose target $\Xx$ is a groupoid, it is not necessarily true that $\phi$ extends to the groupoid completion $\Hat\Bb$.  Suppose for example that $B = \{x_1,x_2,x_3,x_4\}$ and that
there are morphisms  $m_{ij}\in \Mor_\Bb(x_i,x_j)$  for 
$(i,j)=(1,3), (1,4), (2,3), (2,4)$.
Then $\Hat\Bb$ has a unique morphism between any pair of objects.  However, in order for $\phi$ to extend to $\Hat\Bb$ we would need $\phi(m_{13})\circ \phi(m_{23})^{-1} = \phi(m_{14})\circ \phi(m_{24})^{-1} $, a relation that may or may not hold. 
\hfill$\er$
\end{rmk}

The following example shows that groupoid completion generally does not preserve properness, and thus realizations of \'etale categories are often not Hausdorff.

 \begin{example}\label{ex:branch1}\rm  
 Consider the \'etale category $\Bb$ constructed in Example~\ref{ex:branch} with objects $B = V\sqcup U$ and $\Mor_{\Bb}(V,U) \simeq \TV\subset U$.  
This category is nonsingular and $\pi_\Bb: B\to |\Bb| $ is locally injective, hence $\Bb$ has a groupoid completion $\Hat\Bb$ by Lemma~\ref{lem:gpcomplet}~(iii).  
Besides the morphisms in $\Bb$, $\Hat\Bb$ has morphisms from $\TV\subset U$ to $V$ given by inverting the elements of $\Mor_{\Bb}(V,U)$
and also morphisms with both source and target in $U$ given by the action of $G$ on $\TV\subset U$:
\begin{align*}
\Mor_{\Hat\Bb}(U,U)& =  U \sqcup \bigl(\TV\times (G\less  \{\id\})\bigr), \\
s\times t(x,\ga) & = (\ga^{-1}*x, x),\quad x\in \TV, \ga\in G\less \{\id\}.
\end{align*}
Thus $\Hat\Bb$ is \'etale.
However, it follows from Example~\ref{ex:branch}~(ii) that $|\Hat\Bb| = |\Bb| $ is not Hausdorff if $\Fr_U(\TV) \ne \emptyset$ and $G\ne {\id}$. In this case, $\Hat\Bb$ is not  proper, since  properness implies Hausdorffness by Lemma~\ref{lem:prop1}~(ii) and (iv) and Lemma~\ref{lem:etale1}.
\hfill$\er$
 \end{example}

\subsection{Group actions on \'etale categories}\label{ss:groupact}

We begin with a natural notion of the action of a finite group on a category.
Here we denote by $\Aut(\Bb,\Bb)$ the group of invertible functors $\Bb\to \Bb$ on a given category $\Bb$.
If $\Bb$ is topological, then we require functors in $\Aut(\Bb,\Bb)$ to act continuously on object and morphism spaces.
If $\Bb$ is \'etale, then we require functors in $\Aut(\Bb,\Bb)$ to be \'etale.

\begin{defn}\label{def:gpact}
An {\bf action of a finite group $G$ on a (possibly topological or \'etale) category $\Bb$} is a homomorphism $F:G\to \Aut(\Bb,\Bb), g \mapsto F_g$, that is satisfying $F_{hg}= F_g \circ F_h$.

If $G$ acts on two (possibly topological or \'etale) categories $\Bb,\Bb'$ via the homomorphisms $F, F'$, then a (continuous or \'etale) functor $\psi:\Bb\to \Bb'$ is said to be
{\bf $G$-equivariant} if  $F_g\circ \psi = \psi\circ F'_g$ for all $g\in G$.  
\end{defn}

\begin{rmk}\rm Above we write functor composition in categorical notation so that, for example, $(F_g\circ F_h) (x)=F_h(F_g(x))$ for $x\in B$. On the other hand, the action $x\mapsto g*x$  of a group $G$ on a space $X$  satisfies the identity $(hg)*x = h*(g*x)$.
\end{rmk}

\begin{lemma} \label{lem:act} Let $G$ be a finite group.
 \begin{nenumilist}
 \item
 Any action of  $G$ on a topological  category $\Bb$ induces a continuous action of $G$  on the realization $|\Bb|$ of $\Bb$.  

\item
Let $\Bb = (B,\bB)$ be a (possibly topological resp.\ \'etale) category.  Then
$G$ acts on $\Bb$ if and only if $G$ acts
 on the spaces $B, \bB$ by (continuous resp.\ \'etale) maps 
 $F_g: x\mapsto g*x, F_g: m\mapsto g * m$, in such a way that
  for all $x\in B,\; g,h\in G,\; m, m'\in \bB$ we have  
\begin{align}\label{eq:gpident}  g* \id_x = \id_{g*x}, \quad & g*(m\circ m') = (g*m)\circ (g*m'), \\ \notag
& (s\times t)(g * m) = \bigl(g* s(m),g* t(m)\bigr).
\end{align}
\item  
Suppose $\Xx = (X,\bX)$ is a groupoid and $\al:G\times X\to \bX$ is a map with
\begin{align}\label{eq:gpident2}
t(\al(g,x)) = x ,\quad 
\al(\id,x) = \id_x , \quad 
\al(hg,x) = \al(g,s(\al(h,x)))\circ \al(h,x)
\end{align}
for all $g,h\in G, \; x\in X$.
Then this induces an action of $G$ on $\Xx$ as in (ii) with 
\begin{equation}\label{eq:gpident3}
g*x := s(\al(g^{-1},x)), \qquad
g*m  := \al(g^{-1},s(m)) \, \circ\, m\,\circ\, \al(g,g*t(m))
\end{equation}
for $x\in X$ and $m\in \bX$. 
If $\Xx$ is a topological resp.\ \'etale groupoid and $\al$ is continuous resp.\ \'etale, then this construction yields an action of $G$ on the topological resp.\ \'etale category $\Xx$. Further, the induced action on $|\Xx|$ is trivial.

\item[\rm (iii$'$)]
Suppose $\Xx = (X,\bX)$ is a groupoid and $\ov\al:G\times X\to \bX$ is a map with
\begin{align}\label{eq:gpident2'}
s(\ov\al(g,x)) = x ,\quad 
\ov\al(\id,x) = \id_x , \quad 
\ov\al(hg,x) = \ov\al(g,x)\circ \ov\al(h, t(\ov\al(g,x)) )
\end{align}
for all $g,h\in G, \; x\in X$.
Then this induces an action of $G$ on $\Xx$ as in (ii) with 
\begin{equation}\label{eq:gpident3'}
g*x := t(\ov\al(g,x)), \qquad
g*m  := \ov\al(g^{-1},g*s(m)) \, \circ\, m\,\circ\, \ov\al(g,t(m))
\end{equation}
for $x\in X$ and $m\in \bX$. 
If $\Xx$ is a topological resp.\ \'etale groupoid and $\al$ is continuous resp.\ \'etale, then this construction yields an action of $G$ on the topological resp.\ \'etale category $\Xx$.
Further, the induced action on $|\Xx|$ is trivial.

\item 
Let $\Bb=(B,\bB)$ be a category that is nonsingular and locally injective. 
Suppose that $G$ is a finite group that acts on the object set $B$ and preserves the equivalence relation $x\sim_\Bb y \Longrightarrow g*x\sim_\Bb g*y$ for all $x,y,g$.   
Then this action extends uniquely to $G$ actions on the category $\Bb$ and its groupoid completion $\Hat\Bb$. 

Moreover, if $\Bb$ is a topological resp.\ \'etale category and the action of $G$ on $B$ is contiunuous resp.\ \'etale, then this extension is a $G$-action on the  topological resp.\ \'etale category and its  topological resp.\ \'etale groupoid completion.  
\end{nenumilist}
\end{lemma}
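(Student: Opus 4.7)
The plan is to handle (i)--(iv) in sequence, noting that (i) and (ii) are essentially unpackings of the functor and quotient definitions while (iii), (iii$'$), and (iv) require nontrivial verifications. For (i), each functor $F_g\in\Aut(\Bb,\Bb)$ sends morphisms to morphisms, hence preserves $\sim_\Bb$, so the object-level action descends to a continuous map on $|\Bb|$ by the universal property of the quotient topology, and the homomorphism $F_{hg}=F_g\circ F_h$ passes to realizations to yield a $G$-action on $|\Bb|$. For (ii), the data of a functor on $\Bb$ is exactly a pair of maps on $B$ and $\bB$ compatible with identities, composition, and source/target, which is the content of \eqref{eq:gpident}; the \'etale and continuity conditions from Definition~\ref{def:gpact} transfer directly between the two descriptions.

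For (iii), I will verify the action axioms directly from the cocycle identity \eqref{eq:gpident2}. The group law $(hg)*x=h*(g*x)$ follows by applying \eqref{eq:gpident2} to $\al(g^{-1}h^{-1},x)$ and extracting sources: $s(\al(g^{-1}h^{-1},x))=s(\al(h^{-1},g*x))=h*(g*x)$. For functoriality of $F_g$, I first use \eqref{eq:gpident2} with $h=g^{-1}$ (so $hg=\id$) to obtain $\al(g,g*x)\circ\al(g^{-1},x) = \id_x$; since left and right inverses agree in a groupoid, also $\al(g^{-1},x)\circ\al(g,g*x)=\id_{g*x}$, whence $g*\id_x=\id_{g*x}$. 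Functoriality on composable $m:a\to b$ and $m':b\to c$ then follows from the telescoping cancellation
\[
(g*m)\circ(g*m')=\al(g^{-1},a)\circ m\circ\bigl(\al(g,g*b)\circ\al(g^{-1},b)\bigr)\circ m'\circ\al(g,g*c),
\]
whose middle bracket collapses to $\id_b$, leaving exactly $g*(m\circ m')=\al(g^{-1},a)\circ(m\circ m')\circ\al(g,g*c)$. Continuity and \'etale-ness of the induced maps on $X$ and $\bX$ inherit from those of $\al$, and the action on $|\Xx|$ is trivial because each $\al(g,x)$ is a morphism $g^{-1}*x\to x$. Part (iii$'$) follows by the dual argument with source and target interchanged, using the mirror identity \eqref{eq:gpident2'}.

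For (iv), uniqueness of any extension is forced by nonsingularity: for $m\in\Mor_\Bb(x,y)$ the image $g*m$ must be the unique element of $\Mor_\Bb(g*x,g*y)$, and the same holds in $\Hat\Bb$. Existence on the groupoid completion $\Hat\Bb$ is then immediate: by Lemma~\ref{lem:gpcomplet}~(iii), $\Hat\Bb$ is nonsingular and $\Mor_{\Hat\Bb}(x,y)$ is a singleton exactly when $x\sim_\Bb y$; the hypothesis that $G$ preserves $\sim_\Bb$ supplies the required morphism $g*x\to g*y$ to serve as $g*m$. Functoriality of this assignment is automatic, and \'etale-ness follows by identifying $\Mor_{\Hat\Bb}$ with a subset of $B\times B$ on which $G$ acts diagonally by the given \'etale action. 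For the extension to $\Bb$ itself, one pulls back along the inclusion functor $\io:\Bb\to\Hat\Bb$ and uses local injectivity of $\pi_\Bb$ together with the \'etale $G$-action on $B$ to exhibit the required morphisms of $\bB$ locally near each $m$ in a $G$-invariant way. The main obstacle I anticipate is precisely this last point: unlike the $\Hat\Bb$-extension, ensuring that the prescribed action stays inside $\Bb$ rather than escaping to $\Hat\bB\setminus\bB$ is not a formal consequence of nonsingularity, and requires careful local analysis combining local injectivity with the \'etale structure of both $\Bb$ and the action.
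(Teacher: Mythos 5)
Your treatment of (i), (ii), (iii) and (iii$'$) is essentially the paper's argument. Two remarks on (iii): you verify the group law only on objects, the identity and composition compatibilities, and triviality on $|\Xx|$, but you never check the morphism-level action law $(hg)*m=h*(g*m)$ (nor state the source/target identity $(s\times t)(g*m)=(g*s(m),g*t(m))$, though that one is read off directly from \eqref{eq:gpident3}). Since (ii) requires genuine $G$-actions on both $B$ and $\bB$, and two functors agreeing on objects need not agree on morphisms in a singular category, the homomorphism property must be checked on $\bX$; it follows by exactly the same cocycle telescoping you already use, so this is a routine but real omission. For (iii$'$) your direct dual argument is fine; the paper instead reduces to (iii) via the opposite groupoid, which buys nothing essential either way.

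The genuine gap is in the final part. You construct the action on the completion $\Hat\Bb$ (using that $\Mor_{\Hat\Bb}(x,y)$ is a singleton exactly when $x\sim_\Bb y$), but for $\Bb$ itself you only sketch a restriction argument and then explicitly concede that you cannot show $g*m$ lands in $\bB$ rather than in $\Hat\bB\less\bB$ -- which is precisely the content of the claim for $\Bb$, so as written the statement is not proved. The paper argues in the opposite order: nonsingularity forces uniqueness of a lift of the object action to $\bB$ satisfying (ii), the required morphism in $\Mor_\Bb(g*x,g*y)$ is what the hypothesis is meant to supply (and does supply in the applications, e.g.\ for $\Qq$ in Lemma~\ref{lem:MMGa}, where equivariance of the $\rho_{IJ}$ produces it), \'etale-ness of $m\mapsto g*m$ is obtained by writing this map locally as a composite of the \'etale object action with local inverses of the structure maps $s,t$ (as in the proof of Lemma~\ref{lem:gpcomplet}~(iii)), and only then does one extend to $\Hat\Bb$, using that an action preserving identities automatically commutes with inversion. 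Note also that your \'etale-ness claim for the $\Hat\Bb$-action, via ``identifying $\Mor_{\Hat\Bb}$ with a subset of $B\times B$ with the diagonal action,'' is too quick: the \'etale structure on $\Mor_{\Hat\Bb}$ is built from zigzag chains and is not the subspace structure from $B\times B$, so here too one should argue through local inverses of $s$ or $t$.
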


\begin{proof}  In (i), by hypothesis, for each $g\in G$ there is a functor $F(g):\Bb\to \Bb$, and we denote 
the induced maps on objects and morphisms by $F_g (x) = g*x, F_g(m) = g*m$.  Because $F(g)$ is a functor, each $m\in \Mor_\Bb(x,y)$ yields $g*m \in \Mor_\Bb(F_g(x),F_g(y))=  \Mor_\Bb(g*x,g*y)$.
To see that the action $(g,x)\mapsto g*x$ of $G$ on $B$ preserves the equivalence relation on $B$,  
 note that $x\sim_\Bb y$ iff there is a chain of morphisms from $x$ to $y$ as follows
$$
x: = x_0 \stackrel{m_1}\rightarrow  x_1 \stackrel{m_2}\leftarrow  x_2  \cdots x_{k-1} \stackrel{m_k}\rightarrow  x_k : = y. 
$$
Because 
$m\in \Mor(x,y) \Longrightarrow g*m\in \Mor(g*x,g*y)$, applying $g\, *$ to this chain gives a similar chain of 
morphisms from $g*x$ to $g*y$.    Therefore $x\sim_\Bb y$ iff $g*x\sim_\Bb g*y$, so that
 the action of $G$ preserves the equivalence relation on $B$ and hence descends to an action $|x|\mapsto g*|x|$ on $|\Bb|$.   This action commutes with the projection $\pi_\Bb: B\to |\Bb|$, that is 
 $\pi_\Bb(g*x) = g*(\pi_\Bb(x))$.  It follows that the action of $G$ on $|\Bb|$ is continuous; indeed for every open subset $V\subset |\Bb|$ and every $g\in G$, 
we have $\pi_\Bb^{-1}(g^{-1}*V) =  g^{-1}*(\pi_\Bb^{-1}(V))$.
This proves (i). \MS

To prove (ii), first note that each functor $F(g)\in\Aut(\Bb,\Bb)$ consists of two maps $F_g: B\to B$, $F_g:\bB\to\bB$ whose continuity or \'etaleness and three algebraic properties in \eqref{eq:gpident} follow from $F(g):\Bb\to\Bb$ being a (continuous resp.\ \'etale) functor. The fact that both are $G$-actions follows from $F$ being a homomorphism: 
$$
(hg)*x = F_{hg}(x) = (F_g \circ F_h) (x) = F_h(F_g(x)) = h*(g*x)$$
 and analogously on morphisms.

Conversely, assume the actions on object and morphism spaces satisfy \eqref{eq:gpident} (and possibly are continuous resp.\ \'etale).  Then we must check that each $g\in G$ induces an invertible functor $F(g)=(F_g,F_g)$ on $\Bb=(B,\bB)$ and that $F(hg) = F(g)\circ F(h)$ holds for all $g,h\in G$. 
Then \eqref{eq:gpident} ensures that each $F(g)$ is a functor. It inherits continuity resp.\ \'etaleness directly from the two actions. 
The homomorphism property is equivalent to the maps on objects and morphisms both satsifying $F_{hg} = F_g\circ F_h$ for all $g,h\in G$, which holds since $G\times X \to X, (g,x)\mapsto F_g(x)$ and $G\times \bX \to \bX, (g,m)\mapsto F_g(m)$ are $G$-actions by hypothesis.
%
This hypothesis also guarantees that $F_{\id}:X\to X$ and $F_{\id}:\bX\to\bX$ are the identity maps and hence $F(\id)$ is the identity functor $\id_\Bb$. Now the homomorphism property guarantees that each $F(g)$ is invertible since $F(g)\circ F(g^{-1})= F(g^{-1} g) =F(\id)=\id_\Bb$ and $F(g^{-1})\circ F(g)= F(g g^{-1}) =F(\id)=\id_\Bb$, which completes the proof of (ii). \MS

To prove (iii) we must check that \eqref{eq:gpident3} defines $G$-actions which satisfy \eqref{eq:gpident}. 
The action property on objects follows from the source identity included in $\al(g^{-1}h^{-1},x) = \al(h^{-1},s(\al(g^{-1},x)))\circ \al(g^{-1},x)$ to obtain
$$
(hg)*x = s \bigl( \alpha ( (hg)^{-1} , x ) \bigr) = s \bigl( \al(h^{-1},s(\al(g^{-1},x))) \bigr) 
= h * (g * x) . 
$$
This guarantees an action $*:G\times X\to X$, and then by definition $\al(g,x)\in \Mor(g^{-1}*x,x)$.
With that we can rewrite the above identity as
$\al(g^{-1}h^{-1},x) = \al(h^{-1},g*x)\circ \al(g^{-1},x)$ 
and use this to check the action property on morphisms: Denoting $x := s(m)$ and $y:=t(m)$ 
\begin{align*}
(hg)*m &= \al((hg)^{-1},x) \, \circ\, m\,\circ\, \al(hg, (hg)* y)\\
& = \al(h^{-1},g*x)\circ \al(g^{-1},x)         \circ\, m\,\circ\ \al(g, h^{-1}*( (hg)*y) ) \circ \al(h, (hg)*y)\\
& = \al(h^{-1},g*x)\circ ( g * m )  \circ \al(h, h*(g*y))\  =  h * ( g*m ).
\end{align*}
To check that these actions satisfy \eqref{eq:gpident}, first use the composition property in the form
$\id_y = \al(g g^{-1},y) = \al(g^{-1},g^{-1}*y)\circ \al(g,y)$ applied to $y=g*x$ to obtain
$$
g* \id_x = \al(g^{-1},x) \circ \id_x \circ \al(g,g*x) =  \al(g^{-1},x) \circ \al(g,g*x) = \id_{g*x}. 
$$ 
Next, the conditions on $(s\times t)(g*m)$ may be checked by inspecting \eqref{eq:gpident3}: 
$$
s(g*m) = s( \al(g^{-1},s(m))  ) = g* m, 
\qquad
t(g*m) = t( \al(g,g*t(m)) ) = g*t(m)) . 
$$
Third, compatibility with composition of morphisms $m,m'$ with $t(m) = s(m')=x$ holds by
\begin{align*}
 g * (m\circ m') & = \al(g^{-1},s(m))\, \circ\, m\,\circ m'\, \,\circ\, \al(g,g*t(m')) \\
&   = 
\Bigl(\al(g^{-1}, s(m))\, \circ\, m\,\circ  \al(g,g*t(m))\Bigr) \circ\Bigl(\al(g^{-1}, s(m'))\, \circ\, m'\, \,\circ\, \al(g,g*t(m'))\Bigr)\\
& =  ( g*m)\circ (g*m'),
\end{align*}
where the second equality holds because $\id_x = \al(g^{-1}g,x) = \al(g,g*x)\circ \al(g^{-1},x)$ by \eqref{eq:gpident2}.

If $\Xx$ is a topological resp.\ \'etale groupoid and $\al$ is continuous resp.\ \'etale, then the maps in \eqref{eq:gpident3} are continuous resp.\ \'etale since they arise by compisition with the continuous resp.\ \'etale structure maps of $\Xx$. 
Hence this yields an action of $G$ on the topological resp.\ \'etale category $\Xx$ in the sense of Definition~\ref{def:gpact}. 
Finally, the induced action on $|\Xx|$ is trivial because $x\sim g*x$ via the morphism $\al(g^{-1},g*x)$. This proves (iii).\MS

We prove 
 (iii$'$) by noting that it is equivalent to (iii). Indeed, an $\al$-action on $\Xx$ in the sense of (iii) is equivalent to an $\ov\al$-action on $\Xx^{op}$ in the sense of  (iii$'$), where $\ov\al(g,x) = \al(g^{-1},x)^{op}$ and vice versa. Now, $\Xx$ and $\Xx^{op}$ are isomorphic by the 
 functor that is the identity on objects and $m\mapsto (m^{-1})^{op}$ on morphisms. Using this isomorphism, an $\al$-action on $\Xx$ induces an $\ov\al$-action on $\Xx^{op}$ by $\ov\al(g,x) = \al(g,x)^{-1}$.
\MS

To prove (iv), 
note first that because the morphisms in $\Bb$ are uniquely  determined by their source and target, the action of $G$ on $B$ lifts to a unique action on the morphism space $\bB$ that satisfies the conditions in (ii).  Moreover, as in the proof of in Lemma~\ref{lem:gpcomplet}~(iii),
 this action  is
  \'etale because locally the map $m\mapsto g*m$ may be written as the composite $m\mapsto t^{-1}\circ \bigl(g* (s(m))\bigr)$, where $t^{-1}$ is a suitable local inverse to $t$.
  Therefore
  the homomorphism $F:G\to \Aut(\Bb,\Bb)$ exists by (i).  Moreover, as in (i), because the $G$-action preserves identity maps it necessarily commutes with  the inverse map.  Hence $F$ extends to a homomorphism $F:G\to \Aut(\Hat\Bb, \Hat\Bb)$.
\end{proof} 

The description of a $G$-action on $\Xx$ by a function $\al: G\times X\to \bX$  as in \eqref{eq:gpident2}, or equivalently by $\ov\al: G\times X\to \bX$ as in \eqref{eq:gpident2'}, means that the $G$-action is induced by appropriate morphisms in $\Xx$.  
Furthermore, we will see below that such actions naturally lift to bundles. 
For the latter purpose, it is much more natural to use a description with $\ov\al$ satisfying $s(\ov\al(g,x)) = x$ because, as in Definition~\ref{def:bundle0},  the structure map  $\mu:\bX\leftsub{s}{\times}_P W\to W$ of a bundle $\Ww\to\Xx$ uses the source of a morphism rather than its target. This motivates the next definitions. 

\begin{defn}\label{def:inngpact}
 \begin{nenumilist}
 \item 
 An {\bf inner action} of $G$ on a (possibly topological or \'etale) groupoid $\Xx$  is 
an action induced by a subset of the morphisms of $\Xx$ as follows:  
There is a (continuous resp.\ \'etale) function $\al:G\times X\to \bX$ that satisfies \eqref{eq:gpident2'}, 
so that the actions on $X, \bX$ are given by \eqref{eq:gpident3'}. 
In this case we also say that $G$ {\bf acts on $\Xx$ by inner automorphisms}.

\item  
If $\Pp:\Ww\to \Xx$ is a bundle over a (possibly topological or \'etale) groupoid, 
then we say that an action of $G$ on $\Xx$ {\bf lifts to} $\Ww$ if $G$ acts on $\Ww$ in such a way that $\Pp$ is $G$-equivariant.
\end{nenumilist}
\end{defn}

\begin{rmk}\label{rmk:inngpact}\rm   
\begin{nenumilist}
\item
If $G$ acts on a topological or \'etale groupoid $\Xx$ by inner automorphisms, then the induced action of $G$ on $|\Xx|$ is trivial by Lemma~\ref{lem:act}~(iii).
\item 
Given a local uniformizer $(U,G,\Ga)$ of an \'etale groupoid $\Xx$ as in Definition~\ref{def:preunif}, the \'etale injection $\Ga: G \times U \to \Mor_\Xx(U,U)\subset \Xx$ induces a local action of $G$ by inner automorphisms on the restricted groupoid $\Xx|_U=(U,\Mor_\Xx(U,U))$. 
Indeed, $\ov \al:=\Ga : G\times U \to \Mor_\Xx(U,U)$ satsfies \eqref{eq:gpident2'} by the conditions on $\Ga$ in \eqref{eq:groupact}:   
$s(\ov\al(g,x)) = s(\Ga(g,x)) = x$ follows directly. 
Compatibility with composition follows from the composition and target property of $\Ga$
\begin{align*}
\ov\al(hg,x)& \,=\, \Ga(hg,x) \,=\, \Ga(g,y) \circ \Ga(h, g*x)\\
& \,=\, \Ga(g,x) \circ \Ga(h, t(\Ga(g,x) ) \,=\, \ov\al(g,x)\circ \ov\al(h, t(\ov\al(g,x)) ). 
\end{align*}
Moreover, this implies $$
\ov\al(\id,x)\circ \ov\al(\id, t(\ov\al(\id,x)) ) = \ov\al(\id\, \id,x) = \ov\al(\id,x),
$$
 where we have $$
 t(\ov\al(\id,x))=t(\Ga(\id,x))=\id * x = x.
 $$ So this implies $\ov\al(\id,x)\circ \ov\al(\id, x ) = \ov\al(\id,x)$, and then composition with $\ov\al(\id, x )^{-1}$ implies $\ov\al(\id,x) = \id_x$. 

So when discussing $G$-actions in local uniformizers, we may write $\Ga$ instead of $\ov\al$. In \cite[\S7.1]{TheBook} this is called the natural representation of $G=G_x$. 
\hfill$\er$
\end{nenumilist}
\end{rmk}

\begin{lemma} \label{lem:actW}  
Let $\Pp:\Ww\to \Xx$ be a bundle with structure map $\mu$ as in Definition~\ref{def:bundle0} over an \'etale groupoid.
Then every inner action of a finite group $G$  on $\Xx$ has a natural lift to an inner action on $\Ww$. 
In fact, let $\ov\al_X:G\times X\to \bX$ be the \'etale map determining the $G$-action on $\Xx$ as in Lemma~\ref{lem:act}~(iii$'$).  Then a natural $G$-action on $\Ww$ is given as in Definition~\ref{def:inngpact}~(i) by 
$$
\ov\al_W: G\times W \to \bW=\bX \leftsub{s}{\times}_P W,
\qquad \ov\al_W(g,w)  := \bigl(\ov\al_X(g,P(w)), w\bigr) .
$$
The induced actions on objects and morphisms as in Lemma~\ref{lem:act}~(ii) are 
$$
g*w =  \mu\bigl( \ov\al_X(g,P(w)) , w \bigr) , \qquad
g*(m,w)  = ( g*m , g*w ) .
$$
Further, the functor $\Pp:\Ww\to\Xx$ and all sections $f:\Xx\to \Ww$ are $G$-equivariant.
\end{lemma}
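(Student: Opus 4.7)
The plan is to apply Lemma~\ref{lem:act}~(iii$'$) with the proposed map $\ov\al_W$; the task then reduces to checking that $\ov\al_W: G\times W\to\bW$ is well-defined and continuous resp.\ \'etale, that it satisfies the three conditions of \eqref{eq:gpident2'} for $\Ww$, and finally that the actions produced by \eqref{eq:gpident3'} match the formulas stated in the lemma. The bookkeeping throughout rests on two facts: the structure of $\bW = \bX \leftsub{s}{\times}_P W$ with $s_W(m,w)=w$, $t_W(m,w)=\mu(m,w)$, $\id_w=(\id_{P(w)},w)$, and the identity $P\circ\mu(m,\cdot)=t(m)$ encoding that $\mu$ lifts the target map.

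First I will note that $\ov\al_W(g,w) = (\ov\al_X(g,P(w)),w)$ indeed lands in $\bW$, since the source condition for $\ov\al_X$ gives $s(\ov\al_X(g,P(w)))=P(w)$; continuity and \'etaleness of $\ov\al_W$ are then inherited from those of $\ov\al_X$ and $P$. The source condition $s_W(\ov\al_W(g,w))=w$ and identity condition $\ov\al_W(\id,w)=(\id_{P(w)},w)=\id_w$ follow immediately from the corresponding properties of $\ov\al_X$. For the composition condition, I will compute $t_W(\ov\al_W(g,w))=\mu(\ov\al_X(g,P(w)),w)$ and use $P\circ\mu=t$ to rewrite the $\bX$-slot of $\ov\al_W(h,t_W(\ov\al_W(g,w)))$ as $\ov\al_X(h,t(\ov\al_X(g,P(w))))$; the composition rule in $\Ww$ then collapses $\ov\al_W(g,w)\circ\ov\al_W(h,t_W(\ov\al_W(g,w)))$ to a single element of $\bW$ whose $\bX$-slot is exactly $\ov\al_X(hg,P(w))$ by the composition identity \eqref{eq:gpident2'} for $\ov\al_X$, and whose $W$-slot is $w$.

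Once \eqref{eq:gpident2'} is verified, the object-level action follows directly from \eqref{eq:gpident3'}, giving $g*w = t_W(\ov\al_W(g,w)) = \mu(\ov\al_X(g,P(w)),w)$. For a morphism $(m,w)\in\bW$ (so $s(m)=P(w)$), applying \eqref{eq:gpident3'} with $s_W(m,w)=w$, $t_W(m,w)=\mu(m,w)$, and the identity $P(g*w)=g*P(w)$ (itself a consequence of $P\circ\mu=t$), the triple composite in $\Ww$ collapses under the bundle composition rule to a single pair whose $W$-slot is $g*w$ and whose $\bX$-slot is $\ov\al_X(g^{-1},g*P(w))\circ m\circ\ov\al_X(g,t(m))$; by \eqref{eq:gpident3'} applied to $m\in\bX$ with $s(m)=P(w)$, this $\bX$-slot is precisely $g*m$, yielding $g*(m,w)=(g*m,g*w)$. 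Equivariance of $\Pp$ is then automatic: on objects $\Pp(g*w)=P(\mu(\ov\al_X(g,P(w)),w))=t(\ov\al_X(g,P(w)))=g*P(w)$, and on morphisms $\Pp(g*(m,w))=\Pp(g*m,g*w)=g*m=g*\Pp(m,w)$.

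For a section $f:\Xx\to\Ww$, functoriality combined with $\Pp\circ f=\id_\Xx$ forces the shape $f(m)=(m,f(s(m)))$ on morphisms. Applying the section compatibility $f(t(m'))=\mu(m',f(s(m')))$ with $m'=\ov\al_X(g,x)$ (and using $P(f(x))=x$) yields $f(g*x) = f(t(\ov\al_X(g,x))) = \mu(\ov\al_X(g,x),f(x)) = g*f(x)$, giving object equivariance; morphism equivariance then follows from $f(g*m)=(g*m,f(g*s(m)))=(g*m,g*f(s(m)))=g*(m,f(s(m)))=g*f(m)$. The main obstacle is nothing deep: the only nontrivial book-keeping lies in verifying the composition axiom for $\ov\al_W$ and in reducing the triple composite that defines $g*(m,w)$, both of which are routine once the identity $P\circ\mu(m,\cdot) = t(m)$ is exploited systematically.
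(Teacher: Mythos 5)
Your proposal is correct and follows essentially the same route as the paper's proof: verify the three conditions of \eqref{eq:gpident2'} for $\ov\al_W$ (with the composition axiom handled via $P\circ\mu(m,\cdot)=t(m)$ and the composition rule in $\Ww$), invoke Lemma~\ref{lem:act}~(iii$'$), read off the induced object and morphism actions from \eqref{eq:gpident3'} by collapsing the triple composite, and then check equivariance of $\Pp$ and of sections using $f(m)=(m,f(s(m)))$, the compatibility $f(t(m'))=\mu(m',f(s(m')))$ with $m'=\ov\al_X(g,x)$, and $s(g*m)=g*s(m)$. No gaps; the only cosmetic difference is the order in which the section-equivariance identities are chained.
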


\begin{proof}
To establish the inner action on $\Ww$ via Lemma~\ref{lem:act}~(iii$'$), note that $\ov\al_W$ is a composition of \'etale maps, hence \'etale. So it remains to check the three properties in \eqref{eq:gpident2'}.  
The source identity follows from the definition of $\ov\al_W$ and the bundle construction \eqref{eq:Wbundle0} 
$$
s(\ov\al_W(g,w)) = s \bigl(\ov\al_X(g,P(w)), w\bigr) = w . 
$$
The identity property requires in addition the identity property of $\ov\al_X$
$$
\ov\al_W(\id,w) = \bigl(\ov\al_X(\id,P(w)), w\bigr) = \bigl( \id_{P(w)} , w\bigr) = \id_w . 
$$
The composition property follows from the composition property of $\ov\al_X$, the composition in \eqref{eq:Wbundle}, and the bundle property $P \mu( m , w ) = t(m)$ applied to $m= \ov\al_X(g,P(w))$ 
\begin{align*}
\ov\al_W(hg,w) &= \bigl(\ov\al_X(hg,P(w)) \,,\, w\bigr)  \\
&= \bigl(  \ov\al_X(g,P(w))\circ \ov\al_X(h, t(\ov\al_X(g,P(w))) )  \,,\, w\bigr)  \\
&=\bigl(\ov\al_X(g,P(w)) \,,\, w\bigr)  \circ \bigl(\ov\al_X(h, P \mu(\ov\al_X(g,P(w)), w ) ) \,,\, t(\ov\al_W(g,w)) \bigr)  \\
&= \ov\al_W(g,w)\circ \ov\al_W(h,  \mu(\ov\al_X(g,P(w)), w ) ) \\
&= \ov\al_W(g,w)\circ \ov\al_W(h, t(\ov\al_W(g,w)) ).
\end{align*}
Now \eqref{eq:gpident3'} and \eqref{eq:Wbundle} provide the induced actions on obects and morphisms: 
\begin{align*}
g*w &= t\bigl( \ov\al_W(g,w) \bigr) =  \mu\bigl( \ov\al_X(g,P(w)) , w \bigr) , \\
g*(m,w) &= \ov\al_W(g^{-1},g*s(m,w)) \, \circ\, (m,w) \,\circ\, \ov\al_W(g,t(m,w)) \\
&= \bigl( \ov\al_X(g^{-1}, P(g*w) ) , g*w \bigr) \, \circ\, (m,w) \,\circ\, \bigl( \ov\al_X(g, t(m)) ,  \mu(m,w) \bigr) \\
&= \bigl( \ov\al_X(g^{-1},  g*s(m) ) \circ m \circ \ov\al_X(g, t(m)) , g*w \bigr) \\
&= ( g*m , g*w )
\end{align*}
using the fact that 
$P(g*w)= P \mu\bigl( \ov\al_X(g,P(w)) , w \bigr) =  t( \ov\al_X(g,P(w)) ) = g*P(w) = g*s(m)$. 

Next, $G$-equivariance of $\Pp:\Ww\to\Xx$ in the sense of Definition~\ref{def:gpact} follows similarly by checking it on objects and morphisms:
\begin{align*}
P(g*w)  
&\,=\; P  \mu\bigl( \ov\al_X(g,P(w)) , w \bigr) \;=\; t \bigl( \ov\al_X(g,P(w)) \bigr) \;=\; g* P(w)
, \\
P(g*(m,w)) 
&\,=\; P( (g*m,g*w) ) \;=\; g * m \;=\; g * P((m,w)) . 
\end{align*}
Finally, any section $f:\Xx\to\Ww$ in the sense of Definition~\ref{def:bundlemaps} 
is given by a map $f:X\to W$ satisfying $P\circ f = \id$ and $f(t(m))=\mu(m,f(s(m)))$. 
It lifts to a functor by Lemma~\ref{lem:bundlemaps}, given on morphisms by $f(m)= \bigl(m,f(s(m))\bigr)$.
These properties are used to check $G$-equivariance on objects and morphisms: 
\begin{align*}
g* f(x) 
&\,=\;  \mu\bigl( \ov\al_X(g, P(f(x))) , f(x) \bigr)  
\;=\;  \mu\bigl( \ov\al_X(g, x ) , f(x) \bigr)  \\
&\,=\;  \mu\bigl(\ov\al_X(g,x) , f(s(\ov\al_X(g,x)))\bigr)
\;=\;  f( t (\ov\al_X(g,x) ))
\;=\; f(g*x) 
, \\
g * f(m)
&\,=\;  g * \bigl(m,f(s(m))\bigr)
\;=\;  \bigl( g *m,  g * f(s(m)) \bigr) \\
&\,=\;  \bigl( g *m,  f(g * s(m)) \bigr)
\;=\;  \bigl( g *m,  f(s(g *m)) \bigr)
\;=\; f(g*m)  , 
\end{align*}
where the last steps used equivariance of $f$ on objects and $s(g *m) = s (\ov\al(g^{-1},g*s(m)) =  g * s(m)$ from \eqref{eq:gpident3'} and \eqref{eq:gpident2'}. 
\end{proof}

\begin{rmk}\label{rmk:actW} \rm 
\begin{nenumilist}
\item
The above constructions are functorial as follows.  Let $ \Psi: \Ww\to \Ww'$ be a bundle map over
an \'etale functor $\psi:\Xx\to \Xx'$ between \'etale groupoids as in Definition~\ref{def:bundlemaps} and Lemma~\ref{lem:bundlemaps}, and suppose given inner actions $\ov \al_X, \ov \al_{X'}$ of $G$ on $\Xx$ and $\Xx'$. 
We say that the functor $\psi:\Xx\to \Xx'$  is  {\bf equivariant with respect to these inner actions} 
(or $(G, \al,\al')$-equivariant) if
$$
\psi({\ov\al}_X(g,x)) = \ov\al_{X'}(g,\psi(x)):\ G\times X\to \bX'.
$$   
It is straightforward to check that in this case  $\Psi$ is $G$-equivariant with respect to the lifted inner actions on the bundles; in other words 
 $\Psi\circ \ov\al_W= \ov\al_{W'}\circ (\id_G\times \Psi) $.
 
\item
Note that the condition in (i) is stronger than the equivariance condition for an arbitrary action since we now assume that $\psi$ takes the morphism ${\ov\al}_X(g,x)$ that implements the action $x\mapsto g*x$ of $G$ on $X$ to that which implements the action $\psi(x)\mapsto g*\psi(x)$
of $G$ on $X'$.  This condition does imply that $\psi$ is $G$-equivariant 
in the sense of Definition~\ref{def:gpact}, but, unless the categories $\Xx, \Xx'$ are effective, the two conditions may not be the same.  

To avoid confusion, unless specific mention is made to the contrary, whenever a functor is said to be $G$-equivariant this should be understood in the weaker sense of Definition~\ref{def:gpact}.
\hfill$\er$
\end{nenumilist}
\end{rmk}

For the final construction of this section we return to general (possibly topological or \'etale) categories.

\begin{lemma} \label{lem:BbGa}
Suppose that the finite group $G$ acts on the category $\Bb$.  

\begin{nenumilist}
\item   
This induces a category $\Bb^{\times G}$ with object space $B$, morphism space $\bB\times G$ and structure maps given for all $x\in B, m\in \bB, g\in G$ by
\begin{align*}
&(s\times t)\, (m,g) = \bigl(g^{-1}*s(m),\, t(m)\bigr), 
\quad 
\id^{\Bb^{\times G}}_x = (\id^\Bb_x,\id),\\
&\qquad\qquad (m,g)\circ (m',h) = \bigl((h*m) \circ m',\, hg\bigr).
\end{align*}

\item   
If $\Bb$ is \'etale and $G$ acts by \'etale maps, then the category $\Bb^{\times G}$ is \'etale, 
 and the identification $\Obj_\Bb=B= \Obj_{\Bb^{\times G}}$ induces  a homeomorphism $|\Bb|/G  \stackrel{\simeq}\to |\Bb^{\times G}|$, where $G$ acts on $|\Bb|$ as in Lemma~\ref{lem:act}~(i).

Moreover, if $\Bb$ is a (possibly \'etale) groupoid, then $\Bb^{\times G}$ is an (\'etale) groupoid with inverse given by
$$
(m,g)^{-1} = \bigl(g^{-1}*m^{-1}, g^{-1}\bigr).
$$

\item  
If $G$ also acts on another category $\Cc$, then any $G$-equivariant functor $\phi: \Bb\to \Cc$ naturally extends to a functor $\phi^{\times G}: \Bb^{\times G}\to \Cc^{\times G}$, given by $\phi^{\times G}(m,g)=(\phi(m),g)$ on morphisms.

\item 
Let $\Bb$ be a nonsingular, locally injective, \'etale category, and consider the
associated  $G$ action on 
its groupoid completion $\Hat\Bb$ constructed in Lemma~\ref{lem:act}~(iv).
Then
 the category $\Hat\Bb^{\times G}$ is an \'etale groupoid completion of $\Bb^{\times G}$.

\item  
If $G$ acts on a groupoid $\Xx$ by inner automorphisms, then the identity functor $\io:\Xx\to \Xx$ extends to a  functor $\io^{\times G}: \Xx^{\times G}\to \Xx$ given on morphisms by
\begin{align}\label{eq:gpident4}
(m,g)\mapsto \al(g,s(m))\circ m \in \Mor_\Xx(g^{-1}*s(m), t(m)),
\end{align}
where $\al$ is as in Lemma~\ref{lem:act}~(iii).
\end{nenumilist}
%
%
%
%
%

\end{lemma}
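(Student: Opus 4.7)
The plan is to work through the five parts sequentially, since each is largely a direct verification from the definitions. For part (i) I would check the three category axioms in turn. Source/target compatibility of $(m,g)\circ (m',h) = ((h*m)\circ m',hg)$ requires that when $t(m,g) = t(m) = h^{-1}*s(m')$, the morphism $h*m$ in $\bB$ has target $h*t(m) = s(m')$ so that $(h*m)\circ m'$ is defined; this follows from the third identity in \eqref{eq:gpident}. Associativity is a direct expansion using distributivity $h*(m\circ m') = (h*m)\circ(h*m')$, and the identity axiom uses $g*\id_x = \id_{g*x}$.

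For part (ii), \'etaleness is immediate: $\bB \times G$ is a disjoint union of $\#G$ copies of the \'etale space $\bB$, and source, target, unit, and composition are built from \'etale maps of $\Bb$ together with the \'etale $G$-action. To identify $|\Bb^{\times G}|$ with $|\Bb|/G$, I would note that the morphism $(\id_x, g) \in \bB\times G$ (with source $g^{-1}*x$, target $x$) forces $g^{-1}*x \sim x$, while $(m,\id)$ gives $s(m)\sim t(m)$; conversely every morphism in $\Bb^{\times G}$ yields a chain of $\sim_\Bb$-equivalences and $G$-identifications. Hence the two equivalence relations on $B$ coincide, and agreement of the quotient topologies follows since $\pi_\Bb$ and $\pi_{\Bb^{\times G}}$ are open by Lemma~\ref{lem:etale1}. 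The inverse formula in the groupoid case is a direct computation: $(m,g)\circ(g^{-1}*m^{-1}, g^{-1}) = ((g^{-1}*m)\circ(g^{-1}*m^{-1}), \id) = (g^{-1}*\id_{s(m)}, \id) = (\id_{g^{-1}*s(m)}, \id)$, which is indeed the identity at $s(m,g)$. For part (iii), setting $\phi^{\times G}(m,g) := (\phi(m), g)$, the source/target conditions reduce to $G$-equivariance of $\phi$ on objects (since $\phi(g^{-1}*s(m)) = g^{-1}*\phi(s(m)) = g^{-1}*s(\phi(m))$), and the composition identity uses $\phi(h*m) = h*\phi(m)$.

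For part (iv) I will apply Lemma~\ref{lem:gpcomplet}~(ii) to the injective \'etale functor $\iota^{\times G}: \Bb^{\times G}\to \Hat\Bb^{\times G}$ obtained from part (iii) applied to the groupoid completion $\iota: \Bb \hookrightarrow \Hat\Bb$. The bijection on objects is inherited from $\iota$, and the homeomorphism on realizations follows by chaining the identifications $|\Bb^{\times G}|\simeq |\Bb|/G$ and $|\Hat\Bb^{\times G}|\simeq|\Hat\Bb|/G$ from (ii) with the fact that $|\iota|:|\Bb|\to|\Hat\Bb|$ is a homeomorphism equivariant with respect to the matching $G$-actions from Lemma~\ref{lem:act}~(iv). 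The isotropy condition at each $x\in B$ decomposes as $\bigsqcup_{g\in G}\Mor_\Bb(g*x,x) \to \bigsqcup_{g\in G}\Mor_{\Hat\Bb}(g*x,x)$ and inherits its bijectivity from the corresponding condition for $\iota$ together with the uniqueness properties of the groupoid completion.

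Part (v) is the most delicate. Setting $\iota^{\times G}(m,g) := \al(g, s(m))\circ m$, the image lies in $\Mor_\Xx(g^{-1}*s(m), t(m))$ since $s(\al(g,s(m))) = g^{-1}*s(m)$ and $t(\al(g,s(m))) = s(m)$ by \eqref{eq:gpident3}; identity preservation uses $\al(\id,x)=\id_x$. The composition identity $\iota^{\times G}((m,g)\circ(m',h)) = \iota^{\times G}(m,g)\circ\iota^{\times G}(m',h)$ expands to
\begin{equation*}
\al(hg, h*s(m))\circ(h*m)\circ m' \;=\; \al(g,s(m))\circ m\circ\al(h,s(m'))\circ m'.
\end{equation*}
To prove this, I would first substitute $h*m = \al(h^{-1},s(m))\circ m\circ\al(h,h*t(m))$ from \eqref{eq:gpident3}, using also $h*t(m) = s(m')$ (which holds because $t(m) = h^{-1}*s(m')$), and then apply the cocycle identity specialized as $\al(h^{-1}\cdot hg, s(m)) = \al(hg, h*s(m))\circ\al(h^{-1},s(m))$, a direct rewriting of the third equation in \eqref{eq:gpident2}. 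After cancellation this reduces both sides to $\al(g,s(m))\circ m\circ\al(h,s(m'))\circ m'$. The main obstacle I anticipate is precisely this composition check in (v), as it requires unwinding both the inner-action formula for $h*m$ and the cocycle identity for $\al$ in the correct order; the bookkeeping in part (iv) for the isotropy decompositions is the secondary technical point.
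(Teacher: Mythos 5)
Your handling of (i)--(iii) and (v) is correct and follows essentially the paper's own route: the source/target and associativity checks in (i), the openness argument plus the conversion of chains in $\Bb^{\times G}$ into chains in $\Bb$ followed by a single $G$-morphism in (ii), the direct verification in (iii), and in (v) the identity $\al(g,s(m)) = \al(hg,h*s(m))\circ\al(h^{-1},s(m))$ combined with the expansion of $h*m$ via \eqref{eq:gpident3} is exactly the paper's computation, just grouped slightly differently (the paper inserts $\al(h,h*x)\circ\al(h^{-1},x)=\id_x$ instead). These parts close.

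The genuine soft spot is the isotropy condition in (iv). Your decomposition of the isotropy of $\Bb^{\times G}$ at $x$ as $\bigsqcup_{g\in G}\Mor_\Bb(g*x,x)$ (and similarly for $\Hat\Bb^{\times G}$) is right, but the claim that bijectivity of each summand is ``inherited from the corresponding condition for $\iota$'' does not follow: condition (b) of Definition~\ref{def:grcompl} for $\io:\Bb\to\Hat\Bb$ only controls the endomorphism sets $\Mor_\Bb(y,y)\to\Mor_{\Hat\Bb}(y,y)$, i.e.\ the summands with $g*x=x$. For $g$ with $g*x\ne x$ the map $\Mor_\Bb(g*x,x)\to\Mor_{\Hat\Bb}(g*x,x)$ is injective but can fail to be surjective, precisely when $g*x\sim_\Bb x$ while $\Bb$ itself has no morphism $g*x\to x$ (the completion adds one); uniqueness of the groupoid completion does not repair this, since it says nothing about these off-diagonal hom-sets. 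The paper argues differently at this point: it invokes nonsingularity to identify the isotropy groups of both $\Bb^{\times G}$ and $\Hat\Bb^{\times G}$ at $x$ with $\{g\in G \mid g*x=x\}$, which is the kind of statement your argument must ultimately produce. So you need an additional structural input showing that $\Mor_{\Hat\Bb}(g*x,x)$ is no larger than $\Mor_\Bb(g*x,x)$ -- for instance, in the poset situation of Lemma~\ref{lem:complet} the relation $\Mor_\Bb(g*x,x)\ne\emptyset$ together with finiteness of $G$ and antisymmetry forces $g*x=x$ -- rather than pure inheritance from the defining properties of $\io$. As written, that one sentence is a gap.
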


\begin{proof}
We first note that the formulas in (i) restrict to the formulas for $\Bb$ when $g=\id$.  
We next assume that the source and target map are as given, and then check that the composition formula is  consistent with the source and target maps, and that it is associative.
To this end, note that $(m,g)\circ (m',h)$ is defined only if $t(m) = t(m,g) = s(m',h) = h^{-1}* s(m')$, which implies
$h* t(m) = s(m')$.  Since $h* t(m) = t(h*m)$ by assumption (i), the composite $(h*m) \circ m'$ is defined.
Further
\begin{align*}
&s\bigl((m,g)\circ (m',h)\bigr) = s(m,g) = g^{-1}* s(m) = (hg)^{-1} h * s(m) = s\bigl((h*m) \circ m'\bigr), \\
&  t\bigl((m,g)\circ (m',h)\bigr) = t(m',h) = t(m') = t\bigl((h*m) \circ m'\bigr).
\end{align*}
Therefore, the composition formula makes sense.  To prove associativity 
one uses the identity $g*(m\circ m') = (g*m)\circ (g*m')$
to check that
\begin{align*}
\bigl( (m,g)\circ (m',h)\bigr)\circ (m'', k) & =  \bigl((h*m) \circ m',\, hg\bigr) \circ (m'', k) \\
& =  \bigl(k*((h*m) \circ m')\circ m'',\, khg\bigr)\\
& =  \bigl(k*h*m \circ (k*m')\circ m'',\, khg\bigr)\\
&  = (m,g)\circ  \bigl((m',h)\circ (m'', k)\bigr).
\end{align*}
This proves that $\Bb^{\times G}$ is a well defined category.  

To check (ii), notice that because
$G$ is a finite group that acts by \'etale maps by assumption, the structure maps in $\Bb^{\times G}$  are \'etale, 
which means that $\Bb^{\times G}$ is \'etale, as claimed.  Moreover, if $\Bb$ is a groupoid, 
one checks the formula for the inverse by noting
 that $$
 (s\times t) \bigl(g^{-1}*m^{-1}, g^{-1}\bigr) = (t\times s)(m,g),
 $$
  and then checking that
the composition formula gives 
$$
(g^{-1}*m^{-1}, g^{-1}\bigr)\circ  (m,g) = \bigl((g*(g^{-1}*m^{-1}))\circ m, \id\bigr) = \bigl(m^{-1}\circ m, \id\bigr) = (\id_{t(m)}, \id).
$$
Next, we prove the claim about the realizations of these categories. 
Note that if $x\sim_\Bb y$ then $x\sim_{\Bb^{\times G}} g*y$ since we can simply adjoin to the above chain of morphisms  in $\Bb$ an extra morphism $(\id_y,g): y\to g*y$.  
Therefore there is a well defined map $|\Bb|/G\to |\Bb^{\times G}|$, which is necessarily surjective.  To see that it is injective,
notice that by using the identities $
(m,g) =  (m,\id)\circ (\id_{t(m)}, g) = (\id_{s(m)}, g) \circ (g*m,\id)$, one can convert any chain of morphisms
$$
x: = x_0 \stackrel{(m_1,g_1)}\rightarrow  x_1 \stackrel{(m_2, g_2)}\leftarrow  x_2  \cdots x_{k-1} \stackrel{(m_k, g_k)}\rightarrow  x_k : = y 
$$
in $\Bb^{\times G}$ to a chain of morphisms in $\Bb$ followed by one morphism of the form $(\id_z,h)$. It follows that
$x\sim_{\Bb^{\times G}} y$ if and only if there is $h\in G$ such that $x\sim_{\Bb} h^{-1}*y$.    Thus $h*|x| = |y|$, which proves injectivity.
\MS

To prove (iii), define $\phi^{\times G}: \Mor_{\Bb\times G}\to \Mor_{\Cc\times G}:\ (m,g)\mapsto (\phi(m),g).$  
This is compatible with source and target maps because
\begin{align*}
(s\times t)  (\phi(m),g) & = \bigl(g^{-1}*s(\phi(m)), t(\phi(m)\bigr) =\bigl(s(g^{-1}*\phi(m)), t(\phi(m))\bigr)\\
& = \bigl(s(\phi(g^{-1}* m)), t(\phi(m))\bigr) =  \phi\bigl(s(g^{-1}*m), t(m)\bigr),
\end{align*}
where in the second equality we use the fact that the source map in $\Cc$ is $G$-equivariant (see \eqref{eq:gpident}), and in the third used the fact that the functor $\phi$ is $G$-equivariant. 
To see that this is compatible with composition, we note:
\begin{align*}
\phi^{\times G}\bigl((m,g)\circ (m',h)\bigr) & = \phi^{\times G} \bigl((h*m) \circ m',\, hg\bigr)\\
& =\bigl( \phi((h*m) \circ m'),\, hg\bigr)  =\bigl( \phi(h*m) \circ \phi(m'),\, hg\bigr) \\ 
& = \bigl((h* \phi(m)) \circ \phi(m'),\, hg\bigr) = (\phi(m), g)\circ (\phi(m'),h)\\
& = \phi^{\times G}(m,g)\circ \phi^{\times G}(m',h),
\end{align*}
where the fourth equality uses the $G$ equivariance of $\phi$.
One can check similarly that if $\Bb, \Cc$ are groupoids,  $\phi^{\times G}$ is compatible with inverses.  This completes the proof of (iii).

\medskip

Now consider (iv).
To check that $\Hat\Bb^{\times G}$ is an \'etale groupoid completion of $\Bb^{\times G}$ we need to see that the conditions in Definition~\ref{def:grcompl} hold.   
Now (a) and (c) hold because the inclusion $\io: \Bb^{\times G}\to \Hat \Bb^{\times G}$ is  an \'etale functor by part (iii) above that induces an homeomorphism on the realization by part (ii) above.  Further, (b) holds because $\Bb$ and $\Hat\Bb$ are nonsingular so that the 
isotropy group 
of $ \Bb^{\times G}$ and 
$\Hat \Bb^{\times G}$ at $x\in B$ both equal $\{g\in G \ | \ g*x = x\}$.   This proves (iv).

\MS

Towards (v), note  first that because $(s\times t)(m,g) = (g^{-1}*s(m), t(m))$ by the definition in (i)
and $(s\times t)(\al(g,x)) = (g^{-1}*x,x)$
 by \eqref{eq:gpident2},
 the morphisms
\begin{align}\label{eq:defG}
(m,g)\quad \mbox{ and } \;\;\;  \io^{\times G}(m,g)= \al(g,s(m)) \circ m
\end{align}
do have the same  the source and target.  
To check compatibility of $\io^{\times G}$ with composition, 
note that
$(m,g)\circ (m',h)$
 is defined when $t(m,g) = t(m) = s(m',h) = h^{-1}*s(m')$.
If we  write $$
x: = s(m), \quad h*y: = h*t(m)= s(m'), 
$$
 then  because
$\al(hg, h* x) = \al(g,x)\circ \al(h,h*x)$ by \eqref{eq:gpident2} we have
\begin{align*}
 \io^{\times G}\bigl( (m,g)\circ (m',h)\bigr) &=
 \io^{\times G}\bigl((h*m)\circ m', hg\bigr)\\
& = \al(hg, s(h* m))\circ  (h*m)\circ m' \;\; \mbox{ by }\eqref{eq:gpident4}\\
& = \al(hg, h*x)\circ  (h*m)\circ m'\\
& = \bigl( \al(g,x)\circ \al(h,h*x)\bigr)  \circ \bigl( \al(h^{-1},x) \circ m \circ \al(h, h*y)\bigr) \circ m',  \;\;\mbox{ by }\eqref{eq:gpident3} \\
& =  \al(g,x)  \circ m \circ  \al(h, h*y) \circ m'\;\;\mbox{ by }\eqref{eq:gpident2}\\
& =  \io^{\times G}(m,g)\circ  \io^{\times G}(m',h) \;\;\mbox{ by }\eqref{eq:defG}.
\end{align*}
Since $ \io^{\times G}$ preserves identity maps, it is a functor as claimed.
\end{proof}

\begin{rmk}\label{rmk:timesG}\rm To make sense of the rather complicated formulas in (v) above, notice that the category $\Xx^{\times G}$  is formed by adding morphisms to $\Xx$ that act essentially by left multiplication by $g\in G$.  More precisely, the morphism $\io^{\times G}(m,g)$ of $\Xx$ first multiplies by $g$ and then acts by $m$.  
Because by assumption $\Xx$ supports an inner action of $G$, there is an action $m\mapsto g*m$ of $G$ on $\Mor_\Xx$ as in \eqref{eq:gpident3}. But this should be thought of as a conjugation action since it acts on both source and target by multiplication by $g$. In fact, it follows readily from \eqref{eq:gpident3} that
$$
g*m = \io^{\times G}(m,g^{-1})\circ \io^{\times G} (\id_{g*t(m)}, g).
$$
In particular, we do not equip the category $\Xx^{\times G}$ with a $G$ action and do not claim that the functor $\io^{\times G}$ is in any sense $G$-equivariant.
 \hfill$\er$
\end{rmk}

\subsection{Bundles, sections and structured multisections}\label{ss:bundle}

We now give a much more thorough treatment of the notion of bundle, as preparation for the discussion of compactness controlling data for polyfold bundles in \S\ref{ss:polybundle}.
We will generalize \cite[Def.8.3.1]{TheBook} to a notion of bundle over any \'etale groupoid.  
For that purpose we need to add notions of admissible fibers, linear structures, local trivializations, etc to the \'etale classes (a)-(d) at the beginning of \S\ref{ss:basic}. However, the only instance used in this paper is strong bundles over \'etale groupoids in the class (d) of M-polyfolds, whose properties were summarized in Definition~\ref{def:poly} and are reviewed in more detail here.
Both this section and the next one can be omitted on first reading.

\begin{defn}\label{def:bundle}
A  {\bf  bundle} $\Pp:\Ww\to \Xx$ over an \'etale groupoid $\Xx$
is given by an \'etale space $W$ with continuous maps\footnote{Both maps are surjections due to their algebraic properties below: $P(0_x)=x$ and $\mu(1_{P(w)},w)=w$.} 
$P: W\to X$ and $\mu:\bX \leftsub{s}{\times}_P W\to W$ and the following additional properties and structures: 
\begin{nenumilist}
\item Each fiber $P^{-1}(x)$ for $x\in X$ carries the structure of a vector space, in particular contains a zero vector $0_x\in W$ with $P(0_x)=x$. Moreover, $P$ is equipped with an equivalence class of compatible local trivializations: 

For each $x\in X$ there is an injection $\Phi:P^{-1}(V)\hookrightarrow O\times F$ covering a chart $\phi: V\overset{\simeq}{\to} O$ of an open neighbourhood $V\subset X$ of $x$, which preserves fibers $\phi\circ P|_{P^{-1}(V)}=\pr_O\circ\Phi$ and restricts to bounded linear isomorphisms on fibers, where we make the following requirements in the different \'etale classes:

\begin{itemize}
\item[{\rm (a)}] $\phi$ is the identity map on a metric space $V=O$, $F$ is a finite rank vector space, $\Phi$ is a homeomorphism to $O\times F$;

\item[{\rm (b,c)}] $\phi$ is a chart of the (topological/smooth) manifold $X$, $F$ is a finite rank vector space, $\Phi$ is a homeomorphism to $O\times F$;

\item[{\rm (d)}] $\phi$ is an M-polyfold chart as in \S\ref{ss:scale}, $F$ is a sc-Banach space, $\Phi$ is a homeomorphism to a strong bundle retract $\bigcup_{u\in O}\im\Ga(u)\subset O\times F$ as in \S\ref{ss:scale} and \cite[Def.2.6.2]{TheBook}.
\end{itemize}
Two such local trivializations $\Phi,\Phi'$ are compatible if $$
\Phi'\circ\Phi^{-1}:\Phi(P^{-1}(V'\cap V))\to \Phi'(P^{-1}(V'\cap V))
$$
 is a (a),(b) homeomorphism, (c) diffeomorphism, (d) sc-diffeomorphism with respect to each of the two sc-structures  $O\times F \to O'\times F'$ and $O\times F_1 \to O'\times F'_1$. 
 
Two coverings of $W$ by atlases of compatible local trivializations are equivalent if all local trivializations in their union are compatible.

\item $\mu$ is a lift of the target map $t:\bX\to X$ in the sense that $P\circ\mu(\cdot,w)=t$ for all $w\in W$, it is \'etale and, in class (d), sc-smooth for each of the two sc-structures on $O\times F$.

\item $\mu$ is compatible with the linear and groupoidal structures in the sense that

\begin{itemize}
\item[$-$]
 $\mu(m,\cdot): P^{-1}(s(m))\to P^{-1}(t(m))$ is linear for each morphism $m\in\bX$;

\item[$-$]
 $\mu(1_x,w)=w$ for all $x\in X$ and $w\in P^{-1}(x)$;

\item[$-$]
$\mu(m\circ m',w)=\mu(m',\mu(m,w))$ for  all $m,m'\in\bX$ with $t(m)=s(m')$, $w\in P^{-1}(s(m))$.\footnote{Comparing with \cite[Def.8.3.1]{TheBook} note that we write composition of morphisms in categorical notation.}  
\end{itemize}
\end{nenumilist}

\MS\NI
As in \cite[\S8.3]{TheBook}, these conditions ensure that 
$P$ lifts to a functor $\Pp:\Ww\to \Xx$ given by  
$\Pp(w) = P(w),\; \Pp(m,w) =m$, where $\Ww = (W, \bW)$ is the \'etale groupoid  with
objects $W$ and
\begin{align}\label{eq:Wbundle}
& \bW= \bX \leftsub{s}{\times}_P W, \quad 
(s\times t)(m,w)=(w, \mu(m,w)), \\ \notag &\id_{w}= (\id_{P(w)},w),\quad 
 (m,w)\circ (m',w')= 
 \bigl(m'\circ m , w\bigr) 
\quad \mbox{ if } w' = \mu(m,w).
\end{align}
\end{defn}

The following definition generalizes the notions of sections and bundle maps from the polyfold context \cite[Def.8.3.2-4]{TheBook} to general \'etale bundles. 

\begin{definition}\label{def:bundlemaps}
Let $(P,\mu)$ be a bundle over an \'etale groupoid $\Xx$ as in Definition~\ref{def:bundle}. 
\begin{nenumilist}
\item 
A {\bf section} of $(P,\mu)$ is a map $f:X\to W$ that satisfies $P\circ f=\id_X$, is compatible with morphisms in the sense that  $f(t(m))=\mu(m,f(s(m)))$ for all $m\in\bX$, and is \'etale in the sense that $f$ is (a,b) continuous, (c) smooth, (d) sc-smooth. 
\item
An {\bf \'etale bundle map} from $(P,\mu)$ to another bundle $(P',\mu')$ covering an \'etale functor $\psi: \Xx\to \Xx'$ to another \'etale groupoid $\Xx'=(X',\bX')$ is a map ${\Psi:W\to W'}$ that restricts to linear maps between fibers $\Psi|_{P^{-1}(x)}: P^{-1}(x) \to {P'}^{-1}(\psi(x))$ and is compatible with morphisms in the sense that $\Psi(\mu(m,w))=\mu'(\psi(m),\Psi(w))$ for all $(m,w)\in\bX\leftsub{s}{\times}_P W$. 
Depending on the \'etale class, we moreover require that $\Psi$ is (a,b) continuous, (c) smooth, (d) sc-smooth with respect to both sc-structures. 
\end{nenumilist}
\end{definition}

\begin{lemma} \label{lem:bundlemaps}
The conditions in Definition~\ref{def:bundlemaps} guarantee extensions to functors: 
\begin{nenumilist}
\item 
Any section $f:X\to W$ lifts to an \'etale functor $f:\Xx\to\Ww$ in the same \'etale class as $f$. The lifted map on morphisms $\bX\to \bX\leftsub{s}{\times}_P W$ is given by $m\mapsto f(m): = \bigl(m,f(s(m))\bigr)$.  
\item
Any \'etale bundle map $\Psi:W\to W'$ lifts to a functor $\Psi:\Ww\to\Ww'$ that covers $\psi$ in the sense that $\Pp'\circ\Psi=\psi\circ\Pp$. The lifted map on morphisms $\Psi:\bX\leftsub{s}{\times}_P W\to\bX'\leftsub{s}{\times}_{P'} W'$ is given by $(m,w)\mapsto(\psi(m),\Psi(w))$. It has the same \'etaleness properties as $\Psi$;  in particular in \'etale class (d) $\Psi$ is a strong bundle map in the sense of  \S\ref{ss:scale}. 
\end{nenumilist}
\end{lemma}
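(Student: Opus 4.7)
Both parts are structural verifications that reduce directly to the compatibility conditions on $f$ and $\Psi$ in Definition~\ref{def:bundlemaps}, combined with the explicit formulas for morphisms of $\Ww$ in \eqref{eq:Wbundle}.

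For (i), I begin by checking that the proposed lift $\bar f : \bX \to \bW$, $m \mapsto (m, f(s(m)))$, lands in $\bW = \bX \leftsub{s}{\times}_P W$; this is the identity $s(m) = P(f(s(m)))$, which follows from $P \circ f = \id_X$. Functoriality is then verified axiom by axiom. Source-preservation is immediate from the definition $s_\Ww(m, w) = w$; unit-preservation follows from $\id_{f(x)} = (\id_x, f(x))$; target-preservation, $t_\Ww(\bar f(m)) = \mu(m, f(s(m))) = f(t(m))$, is exactly the morphism-compatibility assumed of the section $f$. For composable $m, m'$ in $\bX$, the composability condition for $\bar f(m), \bar f(m')$ in \eqref{eq:Wbundle} reads $\mu(m, f(s(m))) = f(s(m'))$, which is again morphism-compatibility applied at $t(m) = s(m')$; the resulting composite then matches $\bar f$ of the composite in $\bX$. \'Etaleness of the lift on morphisms follows because $m \mapsto (m, f(s(m)))$ is the composition of the \'etale maps $m \mapsto (m, s(m))$ and $(m, x) \mapsto (m, f(x))$.

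Part (ii) is analogous for the proposed lift $\bar\Psi : \bW \to \bW'$, $(m, w) \mapsto (\psi(m), \Psi(w))$. Landing in $\bW' = \bX' \leftsub{s'}{\times}_{P'} W'$ reduces to $\psi(s(m)) = \psi(P(w)) = P'(\Psi(w))$, which uses functoriality of $\psi$ together with the fiber-preservation $P' \circ \Psi = \psi \circ P$ built into Definition~\ref{def:bundlemaps}~(ii). Source- and unit-preservation are immediate; target-preservation is precisely the morphism-compatibility $\mu'(\psi(m), \Psi(w)) = \Psi(\mu(m, w))$; and compositional compatibility is a direct consequence of \eqref{eq:Wbundle} combined with the same condition. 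The lifted map inherits its \'etale class from $\psi$ and $\Psi$ since it is built from them as a map of fiber products; in particular, in class (d) the two sc-smoothness conditions on $\Psi$ from Definition~\ref{def:bundlemaps}~(ii) are precisely what is needed for $\bar\Psi$ to be a strong bundle map in the sense of \S\ref{ss:scale}.

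There is no substantive obstacle: the entire proof is bookkeeping, and the compatibility conditions on $f$ and $\Psi$ were designed precisely so that each functoriality axiom reduces to one of them. The only place to be careful is the composition calculation, where one must track how the pair composition in \eqref{eq:Wbundle} interacts with composition in $\bX$; but the section (resp.\ bundle map) compatibility is exactly the identity that makes the fiber-product composability condition match the target of the morphism being lifted.
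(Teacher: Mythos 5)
Your proposal is correct and follows essentially the same route as the paper: a direct axiom-by-axiom verification in which unit, source/target, and composition compatibility each reduce to the section (resp.\ bundle-map) compatibility conditions of Definition~\ref{def:bundlemaps} via the explicit morphism formulas \eqref{eq:Wbundle}, with \'etaleness and the strong-bundle property in class (d) inherited from the data. Your extra checks (that the lifts land in the fiber products, and the explicit factorization showing the lifted map on morphisms in (i) is \'etale) are minor elaborations the paper leaves implicit, not a different argument.
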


\begin{proof}  We prove (i) first note that $f$ takes identity morphisms to identity morphisms since $f(\id_x) = (\id_x, f(x))$ by construction and $(\id_x, f(x)) = \id_{f(x)}$ by \eqref{eq:Wbundle}.  
Further, it follows from the definition of the category $\Ww$  in \eqref{eq:Wbundle} that 
$$
s\times t\bigl(f(m)\bigr) = s\times t \bigl(m,f(s(m))\bigr)  = \bigl(f(s(m)),\ \mu(s(m),f(s(m))\bigr),
$$
Therefore, if $t(m) = s(m')$ we have $$
f(s(m')) = f(t(m)) = \mu\bigl(s(m),f(s(m))\bigr) = t(f(s(m)))
$$
where the second equality holds by the compatibility condition.  Therefore 
the composite $f(m)\circ f(m')$ is defined and by \eqref{eq:Wbundle} we have
\begin{align*}
f(m)\circ f(m') &= \bigl(m,f(s(m))\bigr) \circ \bigl(m',f(s(m'))\bigr) = \bigl(m\circ m', f(s(m))\bigr) \\ 
&= \bigl(m\circ m', f(s(m\circ m'))\bigr)= f(m\circ m').
\end{align*}
This proves (i).
The proof of (ii) is outlined after \cite[Def.8.3.2]{TheBook}.  
We check compatitibility of $\Psi: \bX\leftsub{s}{\times}_P W\to\bX'\leftsub{s}{\times}_{P'} W'$ with source and target maps:
\begin{align*}
&s(\Psi (m,w)) = s (\psi(m), \Psi(w)) = \Psi(w) = \Psi(s(m,w)),\\
 & t (\Psi (m,w)) = t (\psi(m), \Psi(w))  = \mu'(\psi(m),\Psi(w)) =  \Psi(\mu(m,w)) = \Psi(t(m,w)).
\end{align*}
Therefore, the composite $\Psi(m,w)\circ \Psi(m',w')$ is defined whenever $(m,w)\circ (m',w')$ is defined.  
Moreover,
 \begin{align*}
\Psi(m,w)\circ \Psi(m',w') & = (\psi(m), \Psi(w))\circ (\psi(m'), \Psi(w'))
= \bigl(\psi(m)\circ \psi(m'), \Psi(w) \bigr) \\
&  =  \bigl(\psi(m\circ m'), \Psi(w) \bigr) = \Psi(m\circ m', w) = \Psi\bigl((m,w)\circ (m',w')\bigr).
\end{align*}
Finall, $\Psi$ preserves identity maps because
$$
\Psi (\id_w) = \Psi\bigl((\id_{P(w)},w)\bigr) = (\id_{P'(\psi(w))}, \Psi(w)) = \id_{\Psi(w)}.
$$
Hence $\Psi:\Ww\to \Ww'$ is a functor, as claimed. It satisfies $\Pp'\circ\Psi=\psi\circ\Pp$ on objects by the given properties of $\Psi:W\to W'$ and on morphisms by construction.
Moreover, the functor $\Psi$ inherits its \'etaleness properties directly from the \'etale bundle map $\Psi$ and the underlying \'etale functor $\psi$. 
In particular, in the polyfold case (d), the notion of strong bundle map is satisfied since $\Psi$ covers the sc-smooth map $\psi$ on the base, is linear on fibers and restricts to a sc-smooth map $W[1]\to W'[1]$ by Definition~\ref{def:bundlemaps}~(ii).
\end{proof}

\begin{lemma} \label{lem:bundle}
Let $\Pp:\Ww\to \Xx$ be a bundle as in Definition~\ref{def:bundle} given by a pair $(P,\mu)$, and let $f:\Xx\to\Ww$ be a section as in Definition~\ref{def:bundlemaps}. 
\begin{nenumilist}
\item
Every open subset $U\subset X$ induces a {\bf restricted bundle} $\Pp|_U:\Ww|_U\to \Xx|_U$ given by restricting $P$ to $W|_U:=P^{-1}(U)$ and $\mu$ to $\bX|_U\,\leftsub{s}{\times}_P W|_U$, where $\bX|_U:=s^{-1}(U)\cap t^{-1}(U)$.
The resulting bundle category $\Ww|_U$ is the full subcategory of $\Ww$ with objects $W|_U$, and realization $|\Ww|_U| = |P|^{-1}(|U|)$. 

The section $f$ then induces a {\bf restricted section} $f|_U:\Xx|_U \to \Ww|_U$ given by restricting $f|_U:U\to W|_U$. 

\item
Every \'etale functor $\psi: \Yy=(Y,\bY) \to \Xx$ induces a {\bf pullback bundle} $\psi^*\Ww\to \Yy$ given by \begin{align}\label{eq:bundlemu}
P_\Yy :&\; \psi^*W := Y \leftsub{\psi}{\times}_{P} W  \to Y,  \quad (y,w)\mapsto y , \\ \notag
\mu_\Yy : & \;  \bY \leftsub{s}{\times}_{P_\Yy} \psi^*W  \to \psi^*W, \quad
(m, (y,w) ) \mapsto \bigl(t(m), \mu(\psi(m), w )  \bigr) .
\end{align}
In particular, if $\Pp:\Ww\to \Xx$  is a strong polyfold bundle (i.e.\ a bundle in the \'etale category (d), so is the pullback $\psi^*\Ww\to \Yy$.
\item
The pullback bundle in (ii) has a {\bf pullback bundle functor} $\Psi:  \psi^*\Ww \to \Ww$ that is the functor induced by an \'etale bundle map, given on objects and morphisms by 
\begin{align}\label{eq:bundlemap}
 & \psi^*W = \{ (y,w) \,|\, y\in Y , w\in W_{\psi(y)} \} \to W , \quad (y,w)\mapsto w , \\ \notag
& \bY\leftsub{s}{\times}_{P_\Yy} \psi^* W\to \bX\leftsub{s}{\times}_{P}W, \quad (m, (y,w))\mapsto
(\psi(m),w ).
\end{align}
It is a {\bf local bundle isomorphism} in the sense that for open subsets $U\subset Y$ on which $\psi|_U$ has an \'etale inverse, there is an \'etale bundle map $\Psi|_U^{-1}: \Ww|_{\psi(U)} \to \psi^*\Ww|_U$ inverse to $\Psi$.

\item
Given a pullback bundle as in (ii), the section $f: X\to W$ induces a {\bf pullback section} $\psi^*f:\ Y \to \psi^*W$ given  by $\psi^*f(y)=\bigl(y, f(\psi(y)) \bigr)$.   Correspondingly, there is a pullback section functor
$\psi^*f:\Yy \to \psi^*\Ww$ satisfying $\Psi\circ\psi^*f = f\circ\psi$.  

\item
Assume in addition that $\psi$ is an equivalence in the sense of Definition~\ref{def:etale2}~(vii). Then $\Psi$ in (iii) is an \'etale bundle equivalence in the sense of \cite[Def.10.4.1]{TheBook}, that is an \'etale bundle map and local bundle isomorphism (as above) such that $\Psi:  \psi^*\Ww \to \Ww$ is an equivalence of \'etale groupoids in the sense of Definition~\ref{def:etale2}~(vii). 
In the \'etale category (d), it is a strong bundle equivalence in the sense that it also restricts to an equivalence
$\Psi:  \psi^*\Ww[1] \to \Ww[1]$.
Moreover, in (iv) an equivalence $\psi$ induces a homeomorphism $|\psi|: |(\psi^*f)^{-1}(0)| \stackrel{\simeq}\to |f^{-1}(0)|$.
\end{nenumilist}
\end{lemma}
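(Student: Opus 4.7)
The plan is to verify each of the five parts in order, relying on the definitions in \S\ref{ss:bundle} and the standard stability properties of \'etale spaces under fiber products (Remark~\ref{rmk:etale}).

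For (i), the key observation is that $W|_U = P^{-1}(U)\subset W$ is open since $P$ is continuous, and $\bX|_U\,\leftsub{s}{\times}_P W|_U = (\bX|_U\,\leftsub{s}{\times}_P W) \cap (\mu^{-1}(W|_U))$ is open in $\bX\leftsub{s}{\times}_P W$. The restrictions of $P, \mu$ then inherit all structural properties of Definition~\ref{def:bundle} directly, and the restricted section inherits its compatibility with morphisms. The realization identity $|\Ww|_U| = |P|^{-1}(|U|)$ follows because $W|_U \subset W$ is open, so Lemma~\ref{lem:topologies} identifies the quotient and relative topologies on $|\Ww|_U|\cong\pi_\Ww(W|_U)$, and then functoriality $|P|\circ\pi_\Ww = \pi_\Xx\circ P$ gives $\pi_\Ww(W|_U) = |P|^{-1}(|U|)$.

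For (ii) and (iii), note first that $\psi^*W = Y\leftsub{\psi}{\times}_P W$ is an \'etale space in the same class by Remark~\ref{rmk:etale} (since $\psi$ and $P$ are \'etale), and the projections to $Y$ and to $W$ are both \'etale. Then $P_\Yy$ is the first projection, which is \'etale, and fiberwise we have a canonical linear isomorphism $(\psi^*W)_y = \{y\}\times W_{\psi(y)} \cong W_{\psi(y)}$; local trivializations of $P$ near $\psi(y)$ pull back along $\psi$ (restricted to a neighbourhood on which it is an \'etale homeomorphism) to give local trivializations of $P_\Yy$ near $y$, with the same regularity -- in the polyfold class (d) the retract structure is preserved because pullback by an \'etale sc-smooth map preserves strong bundle retracts, see \cite[\S2.6]{TheBook}. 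The formulas defining $\mu_\Yy$ are immediately well defined, \'etale, and fiberwise linear (since $\mu(\psi(m),\cdot)$ is), and the three algebraic compatibilities in Definition~\ref{def:bundle}(iii) transfer from the corresponding identities for $\mu$ by direct computation (using $\psi(\id_y) = \id_{\psi(y)}$ and $\psi(m\circ m') = \psi(m)\circ\psi(m')$). The map $\Psi$ in \eqref{eq:bundlemap} is \'etale because its two components are projections from \'etale fiber products; it is fiberwise linear (the identity), covers $\psi$, and intertwines $\mu_\Yy$ with $\mu$ by inspection. For the local bundle inverse, shrink $U\subset Y$ so that $\psi|_U:U\to\psi(U)$ is an \'etale isomorphism with inverse $\psi_U^{-1}$; then $\Psi|_U^{-1}(w) := (\psi_U^{-1}(P(w)),w)$ manifestly satisfies $\Psi|_U\circ\Psi|_U^{-1} = \id$ and $\Psi|_U^{-1}\circ\Psi|_U = \id$.

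For (iv), $\psi^*f(y) = (y, f(\psi(y)))$ lies in $\psi^*W$ by construction, satisfies $P_\Yy\circ\psi^*f = \id_Y$, and has the same \'etaleness as the composite $f\circ\psi$ and the inclusion into the fiber product. Compatibility with morphisms, $\psi^*f(t(m)) = \mu_\Yy(m,\psi^*f(s(m)))$ for $m\in\bY$, reduces via \eqref{eq:bundlemu} to $f(\psi(t(m))) = \mu(\psi(m), f(\psi(s(m))))$, which holds because $f$ is a section and $\psi$ a functor. Lemma~\ref{lem:bundlemaps}(i) then lifts $\psi^*f$ to a functor, and the identity $\Psi\circ\psi^*f = f\circ\psi$ holds on objects by \eqref{eq:bundlemap} and on morphisms because both sides send $m\in\bY$ to $(\psi(m), f(\psi(s(m))))\in\bW$.

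For (v), the hypothesis that $\psi$ is an equivalence means $\psi$ is \'etale, induces bijections on morphism sets $\Mor_\Yy(y,y')\to\Mor_\Xx(\psi(y),\psi(y'))$, and a homeomorphism $|\psi|:|\Yy|\to|\Xx|$. I must check the same properties for $\Psi:\psi^*\Ww\to\Ww$. The morphism space bijection $\bY\leftsub{s}{\times}_{P_\Yy}\psi^*W \to \bX\leftsub{s}{\times}_{P}W$ over fixed source $w_0 = (y_0,w_0')\in\psi^*W$ and target $w_1$ over some $\psi(y_1)$ sends $(m,(y_0,w_0'))$ to $(\psi(m),w_0')$; bijectivity follows from the corresponding bijection $\Mor_\Yy(y_0,y_1)\to\Mor_\Xx(\psi(y_0),\psi(y_1))$ together with the fact that $\mu$ and $\mu_\Yy$ use the \emph{same} linear action on fibers via the canonical identification. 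For the realization, using that $|\psi|$ is a homeomorphism and that local trivializations of $\Ww$ pull back to those of $\psi^*\Ww$, a standard argument constructs a continuous inverse to $|\Psi|$ chart by chart; in the polyfold class (d) the same argument with the shifted sc-structure yields the equivalence $\psi^*\Ww[1]\to\Ww[1]$ required for a strong bundle equivalence. The main technical obstacle is keeping track of the strong-bundle (i.e.\ two-sc-structure) condition in class (d); I expect this to follow from the fact that the fiber product construction and the pullback along an \'etale sc-diffeomorphism both commute with the shift operation. Finally, the zero sets $(\psi^*f)^{-1}(0)\subset Y$ and $f^{-1}(0)\subset X$ are saturated subsets, and the functor $\psi$ restricts to a bijection between them (since $\psi(y)\in f^{-1}(0) \iff f(\psi(y))=0_{\psi(y)} \iff \psi^*f(y) = 0_y$). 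Applying the homeomorphism $|\psi|$ and Lemma~\ref{lem:topologies} to the saturated subsets yields the claimed homeomorphism $|(\psi^*f)^{-1}(0)|\stackrel{\simeq}\to|f^{-1}(0)|$.
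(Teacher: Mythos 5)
Your plan is correct and follows essentially the same route as the paper's proof: locality of the bundle axioms for (i), pulled-back trivializations along local inverses of $\psi$ for (ii)--(iii), the direct computation with $\mu_\Yy$ for (iv), and the morphism-set bijection plus descent of local inverses of $\Psi$ (and preservation of the shift in class (d)) for (v), with the zero-set identification via $y\in(\psi^*f)^{-1}(0)\Leftrightarrow\psi(y)\in f^{-1}(0)$. The only place the paper works harder than your sketch is in verifying the \'etaleness of $\mu_\Yy$, which it does by lifting the pulled-back trivialization to the morphism bundle and showing $\mu_\Yy$ agrees with $\mu$ in these charts, but this is a matter of detail rather than of approach.
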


\begin{proof}
We saw in Remark~\ref{rmk:restrict} that when $U$ is open
the restricted category $\Xx|_U$ is also \'etale.    Similarly, $\Ww|_U$ is \'etale, and has realization  
$|\Ww|_U| = |P|^{-1}(|U|)$.   All the other properties 
listed in Definition~\ref{def:bundle}~(i) are local in the base space $X$ in the sense that they involve properties of the restriction of $\Pp$ to a neighbourhood of a point $x\in X$, and hence they automatically hold for the map $\Ww|_U\to \Xx|_U$.  Similarly, because the morphisms in $\Xx|_U$ are defined
to consist of those morphisms in $\Xx$ with both source and target in $U$, 
the properties in Definition~\ref{def:bundle}~(ii), (iii) automatically hold for the restriction.  This proves (i). 
\MS

To construct the pullback bundle in (ii), $\psi^*W = Y \leftsub{\psi}{\times}_{P} W$ is topologized as a subset of $Y\times W$. Then continuity of $P_\Yy$ and $\mu_\Yy$ follows from the corresponding properties of $P,\mu$.  Similarly, the algebraic properties of $\mu_\Yy$ in Definition~\ref{def:bundle}~(ii),(iii) follow from the properties of $\mu$ and functoriality of $\psi$.

The fibers $(\psi^*W )_{y} = \{ (y,w) \,|\,  \psi(y)=P(w) \}=\{y\}\times W_{\psi(y)}$ are given vector space structures by identification with the fibers of $W$. These identifications fit together into the globally defined and continuous map $\Psi$ in \eqref{eq:bundlemap}. So $\Psi$ is a bounded isomorphism on fibers by construction. We will use this map to specify the \'etale structure of the fiber products by finding compatible trivializations as in Definition~\ref{def:bundle}~(i) -- before we can establish the \'etale property of $\mu_\Yy$ in (ii). 

Given any $y\in Y$, we can find an open neighbourhood $V_\Yy\subset Y$ so that the restriction $\psi|_{V_\Yy}:V_\Yy\overset{\simeq}{\to} V\subset X$ is a sc-diffeomorphism to its image. We can moreover choose $V_\Yy$ sufficiently small for $V$ to support a local trivialization $\Phi:P^{-1}(V)\hookrightarrow O\times F$ of $W$ that covers an \'etale chart $\phi: V\overset{\simeq}{\to} O$ around $\psi(y)\in X$.
In this neighbourhood, $\Psi|_{\psi^*W|_{V_\Yy}}$ is a homeomorphism since its inverse $w\mapsto (\psi^{-1}(P(w)),w)$ is continuous. (This also provides the local inverse in (iv).)
The composition $\Phi\circ \Psi : \psi^*W|_{V_\Yy} \to O\times F$ then provides a local trivialization of $\psi^*W$, and any two such local trivializations are compatible since $\Psi$ cancels out so that the transition maps are the same as those for $W$. 

To check the \'etale property of $\mu_\Yy$ in Definition~\ref{def:bundle}~(ii) we moreover need to lift the local bundle chart $\Phi\circ \Psi : \psi^*W|_{V_\Yy} \to O\times F$ to a chart for the morphism bundle 
$$
\bY \leftsub{s}{\times}_{P_\Yy} \psi^*W|_{V_\Yy}
=\bigl\{  (m, (y,w) ) \,|\, m\in s^{-1}(y), y\in V_\Yy, w\in W_{\psi(y)} \bigr\}  . 
$$
Near a fixed morphism $(m_0,(y_0,w_0))$, this is done by shrinking $V_\Yy$ further and fixing an inverse of the source map $s_0^{-1}:V_\Yy\to\bY$ centered at $s_0^{-1}(y_0)=m_0$. Then a chart for the pullback morphism bundle is obtained by precomposing the local trivialization of $W|_V$ with the continuous map $\Pi: \bY \leftsub{s}{\times}_{P_\Yy} \psi^*W, (m,(y,w))\mapsto w$, whose local inverse is $w\mapsto(s_0^{-1}(y_w), y_w,w)$ with $y_w:=\psi^{-1}(P(w))$. 
In these local charts, $\mu_\Yy$ takes the form
\begin{align*}
(\Phi\circ \Psi) \circ \mu_\Yy  \circ (\Phi\circ \Pi)^{-1} :  \;  
\Phi(W_V) &\to \bY \leftsub{s}{\times}_{P_\Yy} \psi^*W  \to \psi^*W \to O\times F , \\
\Phi(w) \mapsto \bigl(s_0^{-1}(y_w),y_w, w \bigr) 
&\mapsto \bigl(t(s_0^{-1}(y_w)), \mu(\psi(s_0^{-1}(y_w)), w )  \bigr) \mapsto \Phi\bigl(\mu( s_{\Xx,0}^{-1}(P(w)), w ) \bigr) 
\end{align*}
with $s_{\Xx,0}^{-1}= \psi \circ s_0^{-1}\circ \psi^{-1}: V \to s_\Xx^{-1}(V)$ the local inverse to the source map of $\Xx$ obtained by pullback of $s_0^{-1}$ with $\psi$. 
This coincides with $\mu$ in the local trivialization $\Phi$ (and its lift to morphisms via $s_{\Xx,0}^{-1}$), hence $\mu_\Yy$ inherits all required \'etaleness properties from the assumed \'etaleness properties of $\mu$. 
\MS

To prove (iii), note that $\Psi$ on objects is the bundle map \eqref{eq:bundlemap} which describes the identification of fibers in the construction of the pullback bundle and is used in (ii) to transfer local bundle charts from $W$ to $\psi^*W$. 
So the map $\Psi:\psi^*W\to W$ in the corresponding local charts of $\psi^*W$ and $W$ is simply represented by the identity map -- thus $\Psi$ is linear, a bounded isomorphism on fibers, and \'etale, in particular in class (d) it is sc-smooth in both sc-structures. The same holds for its locally defined inverse -- noting that the restriction  $\Psi: (\psi^*W)|_U \to W|_{\psi(U)}$ has an \'etale inverse if and only if the map $\psi|_U: U\to \psi(U)$  does.
In summary, $\Psi:\psi^*W\to W$ is a local (strong in class (d) bundle isomorphism. 

The same holds on morphisms since -- as with all bundle constructions -- the map on morphisms is obtained from the map on objects by pullback with the source map. This transfers the \'etaleness properties to morphism level. 
In detail, we apply Lemma~\ref{lem:bundlemaps}  after checking compatibility with morphisms in the sense of Definition~\ref{def:bundlemaps}~(ii): We have $\Psi(\mu_\Yy(m,(y,w)))=\mu(y,w))= \mu(\psi(m),\Psi(y,w))$ for all $(m,(y,w))\in\bY\leftsub{s}{\times}_{P_\Yy} \psi^* W$.
Then the lifted map on morphisms is 
$(m,(y,w))\mapsto (\psi(m),\Psi(y,w))=(\psi(m),w)$ as in \eqref{eq:bundlemap}. 

This proves (iii) and begins the proof of the first part of (v), where we assume that $\psi$ is an equivalence in the sense of Definition~\ref{def:etale2}~(vii), i.e.\ it identifies morphisms $\Mor_{\Yy} (y,y') \simeq \Mor_{\Xx} (\psi(y),\psi(y'))$ and induces a homeomorphism $|\psi|:|\Yy|\to |\Xx|$. 

To check that $\Psi: \psi^*\Ww \to \Ww$ (and in the polyfold \'etale class (d), also its restriction $\Psi: \psi^*\Ww[1] \to \Ww[1]$) is an equivalence of \'etale groupoids we first need to verify that for any given $(y,w),(y',w')\in\psi^*W$ (so in class (d), in particular for pairs 
$
(y,w),(y',w')\in\psi^*W[1]$) the map $(m, (y,w))\mapsto (\psi(m),w )$ restricts to a bijection from
\begin{align*}
&\Mor_{\psi^*\Ww}((y,w),(y',w')) =\\
&\qquad   \bigl\{  ( m , (y,w) ) \,\big|\,  m\in\Mor_\Yy, s(m)=y, t(m)=y', \mu(\psi(m),w)=w' \bigr\}
\end{align*}
to $\Mor_{\Ww}(\Psi(y,w),\Psi(y',w')) = \Mor_{\Ww}(w,w')$, where
$$
\Mor_{\Ww}(w,w') =  \bigl\{  ( n , w ) \,\big|\,  n\in \Mor_\Xx, s(n)=\psi(y), t(n)=\psi(y'), \mu(n,w)=w' \bigr\} .
$$
Since $y,y'$ and $w,w'$ are fixed, this is equivalent to $\psi$ mapping 
$$\bigl\{ m \in\Mor_\Yy(y,y') \,| \, \mu(\psi(m),w)=w'  \bigr\}$$
bijectively to $$
\bigl\{  n \in \Mor_\Xx(\psi(y), \psi(y') ) \,|\, \mu(n,w)=w' \bigr\}. $$ 
Here we know that $\psi : \Mor_\Yy(y,y') \to \Mor_\Xx(\psi(y), \psi(y') )$ is a bijection, hence its restriction
$$\bigl\{ m \ \,| \, \mu(\psi(m),w)=w'  \bigr\} \to \bigl\{  n  \,|\, \mu(n,w)=w' \bigr\}$$ is injective. To check that it is surjective, consider $n\in \Mor_\Xx(\psi(y), \psi(y') )$ with $\mu(n,w)=w'$. From the given bijectivity we know $n=\psi(m)$ for some $m\in \Mor_\Yy(y,y')$, and can deduce $\mu(\psi(m),w)=\mu(n,w)=w'$, which confirms surjectivity of the restricted map. 

The second requirement for $\Psi: \psi^*\Ww \to \Ww$ to be an equivalence of \'etale groupoids is that the induced map $|\Psi|: |\psi^*\Ww| \to |\Ww|$ is a homeomorphism. 
This map is continuous maps since $\Psi$ is continuous and a functor. To check that $|\Psi|$ is bijective, note that the fibers of $|\psi^*\Ww|$ resp.\ $|\Ww|$ over points represented by $y\in Y$ and $\psi(y)\in X$ are the quotients of $\{y\}\times W_x$ resp.\ $W_x$ by the morphisms for fixed $y=y'$ that we identified above. So the fact that $\Psi$ restricts to isomorphisms of fibers $\{y\}\times W_x \to W_x, (y,w)\mapsto w$ implies that $|\Psi|$ is a bijection on fibers. It moreover covers the homeomorphism $|\psi|:|\Yy|\to |\Xx|$ and hence is globally bijective.  Finally, $|\Psi|$ is an open map and hence a homeomorphism because the local inverses of $\Psi$ in (iii) descend to local inverses of $|\Psi|$ -- using again the fact that we identified the morphisms above. 

In the polyfold class (d), we additionally need $|\Psi| \bigl( |\psi^*\Ww[1]| \bigr) = |\Ww[1]|$ to ensure that the restriction of $|\Psi|$ induces a homeomorphism $|\psi^*\Ww[1]| \to |\Ww[1]|$.
This is guaranteed by the fact that $\Psi$ is a strong bundle map, i.e.\ preserves the sc-filtration that defines the shift ``$[1]$'' and has local inverses that are strong bundle maps as well. This proves the first assertion in (v). 

Going back to (iv), the map $\psi^*f: Y\to \psi^*W, y\mapsto \bigl(y, f(\psi(y))\bigr)$  satisfies 
$P_\Yy\circ (\psi^*f) = \id_Y$, and is \'etale because the map $y\mapsto f(\psi(y))$ is a composite of \'etale maps and hence \'etale. To complete the proof of (iv) it remains to check that $\psi^*f$ is compatible with morphisms, i.e.\ that for all morphisms $m'$ in $\Yy$ we have 
 $$
(\psi^*f)(t(m')) = \mu_\Yy\bigl(m', (\psi^*f)(s(m'))\bigr) \in \psi^*W.
$$
Since $f$ is compatible with morphisms we know
$f(t(m)) =  \mu_\Xx\bigl(m, f(s(m))\bigr)$ for all morphisms $m$ in $\Xx$, where for clarity we write $\mu_\Xx$ for the operator $\mu$ in the bundle $\Ww\to \Xx$. 
Therefore, using the definition of $\mu_\Yy$ in \eqref{eq:bundlemu}
we have
\begin{align*}
\mu_\Yy\bigl(m', (\psi^*f)(s(m'))\bigr) & = \bigl(t(m'),  \mu_\Xx(\psi(m'),  f(\psi(s(m')))\bigr)\\
& = \bigl(t(m'),  \mu_\Xx(\psi(m'),  f(s(\psi(m')))\bigr)\\
& =  \bigl(t(m'),  \mu_\Xx(m,  f(s(m))\bigr)\quad \mbox{ where } m: = \psi(m')\\
& =  \bigl(t(m'),  f(t(\psi(m')))\bigr) \quad \mbox{ since $f$ is a section}\\
& = (\psi^*f)(t(m')),
\end{align*}
where the last equality uses the fact that $t(\psi(m')) = \psi(t(m'))$ and the definition $\psi^*(f)(y) =
(y, f(\psi(y))$. That proves (iv). 

Finally, we prove the second part of (v). By definition, an equivalence $\psi$ induces a homeomorphism $|\psi|: |\Yy|\to |\Xx|$. So to establish the homeomorphism $|\psi|: |(\psi^*f)^{-1}(0)| \stackrel{\simeq}\to |f^{-1}(0)|$ it remains to show that the image of $|(\psi^*f)^{-1}(0)|$ under $|\psi|$ is $|f^{-1}(0)|$. This is guaranteed by the fact that the pullback construction gives $y\in (\psi^*f)^{-1}(0) \Leftrightarrow \psi(y)\in f^{-1}(0)$.  This completes the proof.
\end{proof}

To end this subsection, we discuss  multisections and their structurability. As we will see, these are generally needed 
to achieve transversely cut out perturbed solution sets
in the presence of isotropy.

\begin{defn} \label{def:multisdef0}
Let $\Pp:\Ww\to \Xx$ be an \'etale bundle.  A {\bf multisection} of  $\Pp$ is a functor 
\begin{align}\label{eq:multisdef0}
\Lambda:\Ww\to\Q^{\geq 0}:=\Q\cap[0,\infty),
\end{align}
that can in local charts $W|_V$ be represented as $$
 \Lambda(w)=\sum_{\{ i \,|\, w=\s_i(P(w))\}} \sigma_i
 $$ 
 in terms of finitely many \'etale sections\footnote
 {
 The $\s_i$ here are simply maps $V\to W$ with no functorial properties, rather than  sections of bundles as in Definition~\ref{def:bundlemaps}.}
  $\s_i:V\to W|_V
 $ and weights $\sigma_i>0$. 

  A {\bf global section structure} for a multisection functor $\La$ of a bundle $\Ww\to\Xx$ is a finite family of sections $(\s_i: X \to W)_{i\in \Ii}$ such that 
\begin{align}\label{eq:global0}
\La(w)=\tfrac 1{|\Ii|} \#\{ i \in \Ii \,|\, \s_i(P(w))=w\}.
\end{align}
A multisection functor $\La$ is called {\bf globally structured} 
if it is equipped with a global section structure $(\s_i)_{i\in \Ii}$ and a global natural correspondence
$$
\ka\, : \; \Mor_{\Xx} \to \{  b: \Ii\to \Ii \; \text{bijection} \}
$$
satisfying
\begin{enumilist}
\item
$\ka$ is  locally constant;
\item for all $m\in\Mor_\Xx$ and $i\in\Ii$.
\begin{align}\label{eq:global1}
\s_{\ka(m)(i)} (t(m)) = \mu(m, \s_i(s(m))) \in W_{t(m)}.
\end{align}
\end{enumilist} 
\end{defn}

\begin{defn}\label{def:struct}\rm 
Let $\Xx$ be an \'etale proper category.   A collection of open neighbourhoods $(U(x))_{x\in X}$ of the points $x\in X$ is called a {\bf good system of neighbourhoods} if
\begin{itemize}\item[-] for every $x\in X$ the target map $t: s^{-1}(\cl_X(U(x))\to X$ is proper;
\item[-] each $U(x)$ is equipped with the natural action of the isotropy group $G_x$ at $x$;
\item[-] for any pair $x,x'\in X$ and each connected component $\Si$ of $\Mor(U(x),U(x'))$  the maps $s:\Si\to U(x)$ and $t:\Si\to U(x')$ are \'etale bijections onto their images.
\end{itemize}
This system is said to be {\bf connected} if each set $U(x)$ is connected.  
\end{defn}

\begin{lemma}\label{lem:struct1} 
  If $(U(x))_{x\in X}$ is a good system of neighbourhoods, and  $U(x')\subset U(x)$ is any 
$G_x$-invariant open subset of $U(x)$, then 
$(U'(x))_{x\in X}$ is a good system of neighbourhoods. In particular,
any \'etale category with a good system of neighbourhoods  also has a good connected  system
 of neighbourhoods.  Further, the intersection $(U(x)\cap U'(x))_{x\in X}$ of any two good systems
 of neighbourhoods is also a good system of neighbourhoods.  \end{lemma}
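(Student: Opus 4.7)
\noindent\textbf{Proof plan for Lemma~\ref{lem:struct1}.}

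The plan is to verify the first assertion in detail; the other two will follow by a short reduction. For the first assertion, I would check the three bullet points of Definition~\ref{def:struct} for the shrunken system $(U'(x))_{x\in X}$. The properness condition is immediate: since $\cl_X(U'(x))\subset\cl_X(U(x))$, the set $s^{-1}(\cl_X(U'(x)))$ is a closed subset of $s^{-1}(\cl_X(U(x)))$, and the restriction of a proper map to a closed subset of its domain is proper. The $G_x$-invariance is part of the hypothesis. For the third bullet, observe that every connected component $\Si'$ of $\Mor(U'(x),U'(x'))$ is contained in some connected component $\Si$ of the larger set $\Mor(U(x),U(x'))$, since $U'(\cdot)\subset U(\cdot)$ implies $\Mor(U'(x),U'(x'))\subset \Mor(U(x),U(x'))$. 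By hypothesis, $s:\Si\to U(x)$ and $t:\Si\to U(x')$ are \'etale bijections onto their images, and \'etale injectivity is preserved under further restriction. Their images $s(\Si')$ and $t(\Si')$ are open in $U'(x)$ and $U'(x')$ because $s|_\Si$ and $t|_\Si$ are local homeomorphisms (hence open maps by Remark~\ref{rmk:open}).

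For the second assertion, starting from a good system $(U(x))$, I would replace each $U(x)$ by the connected component $U''(x)$ of $x$ inside $U(x)$. Since $G_x$ acts on $U(x)$ fixing $x$, it permutes the connected components of $U(x)$ and in particular must fix the one containing $x$; hence $U''(x)$ is automatically $G_x$-invariant. In all of our \'etale classes (a)--(d) the object space is locally connected (or, at worst, one first shrinks $U(x)$ to a $G_x$-invariant chart-like neighbourhood in which this is clear), so $U''(x)$ is an open subset of $U(x)$. Then applying the first assertion to $U''(x)\subset U(x)$ gives a good connected system.

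For the third assertion, I would appeal twice to the first assertion. Given two good systems $(U(x))$ and $(U'(x))$ with their respective $G_x$-actions, the intersection $U(x)\cap U'(x)$ is open, contains $x$, and is $G_x$-invariant with respect to either action; moreover, on the intersection these two $G_x$-actions coincide because each is implemented by the \'etale extension of the same morphisms $\Ga(g,x)\in G_x$ of $\Xx$, and \'etale extensions of a given morphism agree on overlaps. Hence $U(x)\cap U'(x)$ is a $G_x$-invariant open subset of $U(x)$, and the first assertion applies.

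The main obstacle I expect is a careful treatment of the third bullet of Definition~\ref{def:struct} under restriction in the first assertion, specifically ensuring that connected components of the restricted morphism space are indeed contained in components of the original, and that the notion of \'etale bijection onto the image survives the restriction (as opposed to giving only a local \'etale injection). This is the only place that uses more than elementary set theory together with the openness of \'etale maps recorded in Remark~\ref{rmk:open}.
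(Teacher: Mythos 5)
Your proposal is correct and follows essentially the same route as the paper, which dismisses the first and third claims as immediate and proves the second exactly as you do, by passing to the connected component of $x$ in $U(x)$ and noting its $G_x$-invariance. Your write-up simply fills in the routine verifications (properness under restriction to a closed subset, components of the restricted morphism space sitting inside components of the original, agreement of the two $G_x$-actions on the intersection) that the paper leaves implicit.
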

 
 \begin{proof} The first claim  is immediate.  To prove the second claim, given any good system of neighbourhoods $(U(x))_{x\in X}$, define $U'(x)$ to be the connected component of $U(x)$ containing $x$.  Then $U'(x)$ is $G_x$-invariant, and the other conditions are immediate.
\end{proof}

\begin{lemma}\label{lem:struct2}  Let $\Xx$ be an \'etale proper category with a 
good system of neighbourhoods  and let $\La$ be  a multisection  with a global structure provided by
 the sections $(\s_i)_{i\in \Ii}$ and correspondence $\ka$.
   For $x\in X$, let $\Ii_x$  be the quotient of $\Ii$ by the equivalence relation $i\sim_x j$ if $\s_i(y) = \s_j(y)$ for all $y$ near $x$, and denote the quotient map by $\pr_x: \Ii\to \Ii_x$. 
  Then we may arrange that the following conditions hold.
   \begin{itemize}\item[\rm(a)]  Each $U(x)$ is connected and $\Ii_x = \Ii_y$ for all $y\in U(x)$.
   Moreover $i\sim_x j $ exactly if there is $z\in U(x)$ such that $\s_i(z) = \s_j(z)$.
     \item[\rm(b)] 
   For all $x\in X$ and all composable pairs of morphisms $m, m'$ with $s(m)\in U(x)$, we have %
\begin{align}\label{eq:simXI}
    \ka(m\circ m')(i) \sim_{x}  \ka(m')\circ \ka(m)(i), \quad \forall i\in \Ii.
\end{align}
   \item[\rm(c)] 
       The action of the isotropy  group $G_x$ on $\Ii$ given by $\ka$ descends to an action on $\Ii_x$ via the identity
    $$
   g* \ui = \pr_x\bigl(\ka(m_{g})(r_x(i))\bigr)\in \Ii_x,\quad 
   $$
   where $m_{g} \in\Mor(x,x)=G_x$ is the morphism $g\in G_x$, and $r_x:\Ii_x\to \Ii$ is any right inverse to $\pr_x$. In particular, $\pr_x$ is $G_x$-equivariant, 
\end{itemize}
\end{lemma}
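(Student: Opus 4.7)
The plan is to successively shrink the given good system of neighbourhoods to enforce each of the three conditions, starting from a connected system (provided by Lemma~\ref{lem:struct1}). The finiteness of $\Ii$ will be essential throughout, since there are only finitely many index pairs and bijections to control.

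To arrange (a), I would first observe that the multisection functor property forces the coincidence set $C_{ij} := \{y : \s_i(y) = \s_j(y)\}$ to be clopen for each pair $(i,j) \in \Ii\times\Ii$. Closedness is just continuity of $\s_i,\s_j$. For openness: at any point $z \in C_{ij}\setminus \mathrm{int}(C_{ij})$ the multiplicity of the value $\s_i(z)=\s_j(z)$ under the counting sum in \eqref{eq:global0} would strictly exceed the corresponding multiplicity at nearby points, contradicting continuity of the functor $\La:\Ww\to \Q^{\geq 0}$. Since each $C_{ij}$ is a union of connected components, a connected $U(x)$ satisfies either $U(x)\subset C_{ij}$ or $U(x)\cap C_{ij} = \emptyset$. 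In the first case, $i\sim_y j$ in the germ sense holds for every $y\in U(x)$; in the second, no $z\in U(x)$ satisfies $\s_i(z)=\s_j(z)$. Running over the finitely many pairs simultaneously gives the constancy $\Ii_y = \Ii_x$ on $U(x)$ together with the claimed ``iff'' characterization.

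For (b), I will combine the global structure identity \eqref{eq:global1} with functoriality of $\mu$. Applying \eqref{eq:global1} first to $m$ and then to $m'$ (with $j = \ka(m)(i)$), and using $\mu(m',\mu(m,\cdot)) = \mu(m\circ m',\cdot)$, yields
\[
\s_{\ka(m')\circ\ka(m)(i)}(t(m')) \;=\; \mu(m',\mu(m,\s_i(s(m)))) \;=\; \mu(m\circ m',\s_i(s(m\circ m'))) \;=\; \s_{\ka(m\circ m')(i)}(t(m')).
\]
Thus the two sections in question coincide at the point $t(m')$. The main obstacle is to bridge this pointwise equality at $t(m')$ to the germ equivalence $\sim_x$ at $x$: here I use the good-system structure by shrinking $U(x)$ so that every morphism $m$ with $s(m)\in U(x)$ has a connected component of $\Mor(U(x),U(t(m)))$ on which both $\ka$ and $\s_i$ behave coherently, and then invoke the clopen alternative from (a) applied to the target neighbourhood. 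Finally, local constancy of $\ka$ allows shrinking to finitely many cases since $s^{-1}(\cl_X U(x))$ is covered by finitely many components under the properness hypothesis.

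For (c), I simply specialize (b) to morphisms $m_g\in G_x = \Mor_\Xx(x,x)$, where $s(m_g) = t(m_g) = x$ and the relation $\sim_x$ is the natural germ relation at $x$. Since $m_{hg} = m_g\circ m_h$, statement (b) gives $\ka(m_{hg})(i)\sim_x \ka(m_h)\circ\ka(m_g)(i)$, which is exactly the identity needed to verify that $g*\ui := \pr_x(\ka(m_g)(r_x(i)))$ is well defined (independent of the right inverse $r_x$) and defines a group action, with $\pr_x$ automatically $G_x$-equivariant. Any remaining flexibility is absorbed into a final pass of intersecting all previously chosen $U(x)$ with their $G_x$-invariant cores, which remains a good system by Lemma~\ref{lem:struct1}.
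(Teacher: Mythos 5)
There is a genuine gap, and it sits at the foundation of your argument for (a): the claim that each coincidence set $C_{ij}=\{y \,|\, \s_i(y)=\s_j(y)\}$ is clopen. Your openness argument appeals to ``continuity of the functor $\La:\Ww\to\Q^{\geq 0}$'', but no such continuity is available: $\Q^{\geq 0}$ is just a bookkeeping category with only identity morphisms, and a multisection is \emph{not} a continuous $\Q$-valued function. Indeed, the count in \eqref{eq:global0} is expected to jump exactly where two structuring sections cross without locally coinciding -- this is the source of the branch loci of the weighted branched solution sets (cf.\ the local branching structures and branch locus used in Lemma~\ref{lem:epfund}, following \cite[\S9]{TheBook}). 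If coincidence sets were always clopen, then on a connected neighbourhood any two branches would either coincide or be disjoint and there would be no branching at all; already for the sections $\s_g=\tau(\cdot,g*\ue)$ of Proposition~\ref{prop:globsym}, whose pairwise differences are sc$^+$-sections with arbitrary closed zero sets, the dichotomy fails. Since your characterization ``$i\sim_x j$ iff $\s_i(z)=\s_j(z)$ for some $z\in U(x)$'' and the constancy of $\Ii_y$ on $U(x)$ both rest on this dichotomy, part (a) is not established, and your treatment of (b) -- which explicitly invokes ``the clopen alternative from (a) applied to the target neighbourhood'' to pass from the pointwise identity at $t(m')$ to a germ statement -- inherits the same problem.

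For comparison, the paper does not prove any dichotomy about the given sections; it only \emph{arranges} the conclusions by shrinking the good system (using the stability of good systems under passing to $G_x$-invariant open subsets, Lemma~\ref{lem:struct1}): connectedness comes from taking components, and the constancy statements use only finiteness of $\Ii$ together with the openness of the germ relation ($i\sim_x j$ implies $i\sim_y j$ for $y$ near $x$). Your pointwise computation in (b) is correct and agrees with the paper's, but the paper promotes it to an equivalence of germs by a different mechanism: since $\ka$ is locally constant and $s,t$ are \'etale, one may vary the composable pair $(m,m')$ with fixed index values, so the identity
$\s_{\ka(m\circ m')(i)}(t(m'))=\s_{\ka(m')\circ\ka(m)(i)}(t(m'))$
holds on an open family of points, which is exactly the germ statement needed -- no clopen alternative is required. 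Your deduction of (c) from (b) by specializing to $m_g\in G_x$ is fine, but it only becomes a proof once (a) and (b) are repaired along these lines.
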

\begin{proof} First notice that
if $(U(x))_{x\in X}$ is a good system of neighbourhoods, and  $U(x')\subset U(x)$ is any 
$G_x$-invariant open subset of $U(x)$, then 
$(U'(x))_{x\in X}$ is a good system of neighbourhoods. Therefore we may assume that each $U(x)$ is connected by replacing it by its connected component containing $x$.  Further, it
 follows immediately from the definition that if $i\sim_x j$ then $i\sim_y j$ for all $y$ in some open neighbourhood of $x$.
Therefore, because $\Ii$ is a finite set,  the two equivalence relations $\sim_x, \sim_y$  must agree on some $G_x$-invariant neighbourhood of $x$.  Therefore we can shrink the $U(x)$ so that the first claim in (i) holds. The second claim in (i) holds because $\Ii$ is finite, and $i\not\sim_x j$ if and only if there is $z$ near $x$ such that $\s_i(z)\ne \s_j(z)$.

Towards (b), notice first that the equivalence in \eqref{eq:simXI} holds when $s(m) = x$ because
\begin{align*}
\s_{ \ka(m\circ m')(i)}(t(m\circ m'))& =  \mu\bigr(m\circ m', \s_i(s(m\circ m'))\bigl)\\
& = \mu\bigr(m',\mu(m, \s_i(s(m))\bigl)\\
& =  \mu\bigr(m', \s_{\ka(m)i} (t(m))\bigl)\quad \mbox{by }\  \eqref{eq:global1}\\
& = \s_{\ka(m')\circ \ka(m)(i)}(t(m')).
\end{align*}
Since $\ka$ is locally constant by definition and the relation $\Ii_x$ is locally constant by (a), 
this identity continues to  hold when we vary the composable pair $m,m'$ so that $s(m)$ is close to $x$.
This proves (b).

Note finally that (c) holds because \eqref{eq:global1} and part (a) above together imply that
 the $G_x$ action on $\Ii$ preserves the  relation $\equiv_x$.
\end{proof}

\begin{rmk}\rm  (i) If we assume given  $(\s_i)_{i\in \Ii}$ and $\ka$  satisfying the conclusions of 
Lemma~\ref{lem:struct2}, and then
define $\La$ by \eqref{eq:global0},  the fact that $\La$  is a functor follows from 
\eqref{eq:global1}.
\MS

\NI (ii)  In general it is not true that the correspondence $\ka$ in Definition~\ref{def:multisdef0} 
is compatible with composition, that is  there could be a composable pair
  $m,m'$ of morphisms in $\Xx$ such that the two bijections
$\ka(m\circ m'), \ka(m)\circ \ka(m'): \Ii\to \Ii$ are different.  However by
Lemma~\ref{lem:struct2}~(b)  this does hold locally modulo the equivalence relation $\sim_x$.
\hfill$\er$
\end{rmk}

We now rephrase Lemma~\ref{lem:struct2} in the language used in \cite{TheBook}. As above, we denote by $r_x:\Ii_x\to \Ii$ any map satisfying $\pi_x(r_x(\ui)) = \ui, \ \forall\  \ui\in \Ii_x$.

 \begin{lemma}\label{lem:struct3}   In the situation of Lemma~\ref{lem:struct2}, the following holds.
  \begin{itemize}   \item[\rm(a)] 
 $\bigl(U(x), (\s_{r_x(\ui)})_{\ui\in \Ii_x}\bigr)$ is a symmetric section structure for $\La$ in the sense of \cite[Def.13.2.5]{TheBook}.
   \item[\rm(b)]  For each pair $x,x'\in X$ the map
$$
\tau_{x,x'}: 
   \Mor(U(x), U(x')) \to {\rm Biject}(\Ii_x, \Ii_{x'}), \quad m\mapsto \pr_{x'}\circ \ka(m)\circ r_x
   $$
is a correspondence in the sense of \cite[Def.13.3.4]{TheBook} between    the local structures
$\bigl(U(x), (\s_{r_x(\ui)})_{\ui\in \Ii_x}\bigr)$ and $\bigl(U(x'), (\s_{r_{x'}(\uj)})_{\uj\in \Ii_{x'}}\bigr)$.
 \end{itemize} 
\end{lemma}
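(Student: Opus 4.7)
My plan is to unpack the conclusions of Lemma~\ref{lem:struct2} directly into the axioms of symmetric section structure from \cite[Def.13.2.5]{TheBook} and of correspondence from \cite[Def.13.3.4]{TheBook}, with the whole strategy being that the quotient $\pr_x: \Ii \to \Ii_x$ precisely kills the two defects of the global data $((\s_i), \ka)$ \textemdash\ namely the presence of locally coincident sections, and the failure of $\ka$ to be strictly compatible with composition.

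For part (a), I would verify three items. First, the sections $(\s_{r_x(\ui)})_{\ui \in \Ii_x}$ are pairwise distinct on $U(x)$: if $r_x(\ui) \sim_x r_x(\uj)$ then the second clause of Lemma~\ref{lem:struct2}(a) forces $\pr_x(r_x(\ui)) = \pr_x(r_x(\uj))$, i.e.\ $\ui = \uj$. Second, they represent $\La$ on $U(x)$ via the grouping
\[
\La(w) = \tfrac{1}{|\Ii|}\, \#\{i\in\Ii \,|\, \s_i(P(w))=w\} = \sum_{\ui \in \Ii_x} \tfrac{|\pr_x^{-1}(\ui)|}{|\Ii|}\,\mathbf{1}_{\{\s_{r_x(\ui)}(P(w))=w\}},
\]
using Lemma~\ref{lem:struct2}(a) to identify $\Ii_x = \Ii_{P(w)}$ for all $w$ with $P(w)\in U(x)$. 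Third, the $G_x$-symmetry is exactly the descended action of Lemma~\ref{lem:struct2}(c), whose compatibility with the family of sections is \eqref{eq:global1} applied to the morphisms $m_g\in G_x$.

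For part (b), the bijection $\tau_{x,x'}(m) = \pr_{x'}\circ \ka(m)\circ r_x$ is well-defined (independent of the choice of right inverse $r_x$) and bijective once we show that $i\sim_x j$ implies $\ka(m)(i) \sim_{x'} \ka(m)(j)$: if $\s_i$ and $\s_j$ agree on an open $G_x$-invariant neighbourhood of $x$ contained in $U(x)$, then by \eqref{eq:global1} the sections $\s_{\ka(m)(i)}$ and $\s_{\ka(m)(j)}$ agree on the corresponding image open neighbourhood of $t(m)$ in $U(x')$, so they are $\sim_{x'}$-equivalent. Local constancy of $\tau_{x,x'}$ is inherited from Definition~\ref{def:multisdef0}(i) and from the constancy of $\Ii_y$ on $U(x)$ given by Lemma~\ref{lem:struct2}(a). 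The composition identity $\tau_{x',x''}(m') \circ \tau_{x,x'}(m) = \tau_{x,x''}(m\circ m')$ is the content of Lemma~\ref{lem:struct2}(b): the equivalence $\ka(m\circ m')(i) \sim_x \ka(m')\circ \ka(m)(i)$ becomes an equality after $\pr_{x''}$, since $\sim_x$ agrees with $\sim_{x''}$ along the composable chain by Lemma~\ref{lem:struct2}(a). Finally, the compatibility with the local section structures from (a) is again just \eqref{eq:global1}, rewritten as $\s_{r_{x'}(\tau_{x,x'}(m)(\ui))}(t(m)) = \s_{\ka(m)(r_x(\ui))}(t(m)) = \mu(m,\s_{r_x(\ui)}(s(m)))$, where the first equality is equality of germs at $t(m)$ by definition of $\pr_{x'}$.

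The main obstacle I anticipate is purely notational: the proof is essentially a translation between two languages (global sections with an $\Ii$-indexed correspondence versus local symmetric structures with $\Ii_x$-indexed correspondences), and the only conceptual point is to keep careful track of the neighbourhood on which each $\sim_y$ equivalence is controlled. All the substantive work has been done in Lemma~\ref{lem:struct2}; what remains is verification.
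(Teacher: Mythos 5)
Your proposal is correct and follows essentially the same route as the paper: both parts are proved by unpacking \cite[Def.13.2.5]{TheBook} and \cite[Def.13.3.4]{TheBook} and reducing every condition to \eqref{eq:global0}, \eqref{eq:global1} and Lemma~\ref{lem:struct2}, the only cosmetic difference being that the paper states the second condition of \cite[Def.13.3.4]{TheBook} as the $G_x$--$G_{x'}$ equivariance $\tau_{x,x'}(g*m*g')=g'*\tau_{x,x'}(m)*g$, which is the isotropy-morphism special case of the composition identity you verify. One small point of precision: the step from $\ka(m\circ m')(i)\sim_x \ka(m')\circ\ka(m)(i)$ to equality after $\pr_{x''}$ is not literally that ``$\sim_x$ agrees with $\sim_{x''}$'' (Lemma~\ref{lem:struct2}(a) only identifies $\Ii_x$ with $\Ii_y$ for $y\in U(x)$); rather, the computation in the proof of Lemma~\ref{lem:struct2}(b) shows the two sections agree at the single point $t(m')\in U(x'')$, and the ``moreover'' clause of Lemma~\ref{lem:struct2}(a) then upgrades this to $\sim_{x''}$.
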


\begin{proof}
To prove (a), we need to check the two conditions in \cite[Def.13.2.5]{TheBook}. The second condition follows immediately from \eqref{eq:global0} and the fact that  $i\sim_x j$ implies that  $\s_i(y) = \s_j(y)$ for all $y\in U(x)$ so that $\s_{r_x(\ui)}(y)$ is independent of the choice of lift $r_x:\Ii_x\to \Ii$.  The first condition is written  in our notation as 
$$
\s_{g*\ui}(g*y) = \mu(m_{g,y}, \s_{r_x(i)}(y)),\quad\forall\ y\in U(x),\ \ui\in \Ii_x,\ g\in G_x,
$$
where $m_{g,y}\in \Mor(U(x),U(x))$ is the morphism $y\mapsto g*y$.  This is an immediate consequence of \eqref{eq:global1}.

To prove (b), we must check the three conditions in \cite[Def.13.3.4]{TheBook}.  The first condition, i.e. local constancy of $\tau_{x,x'}$, holds because the map $\ka$ in \eqref{eq:global1} is locally constant.\footnote
{
In view of the notation $\tau_{x,x'}$ used in \cite{TheBook}  for these local correspondences, it would have been more natural to write $\tau$ instead of $ \ka$ for the global correspondence in Definition~\ref{def:multisdef0}. However, this might cause confusion with the use of $\tau$ to denote the functor $\Xx_\Vv^{\less G}\times E \to \Ww_\Vv$ in Theorem~\ref{thm:globstab}.
}
 The third condition holds because it is a restatement of \eqref{eq:global1}. The second condition 
states that  $\tau_{x,x'}$ is compatible with the actions of $G_x, G_{x'}$ on $U(x), U(x')$. 
This can be written as
$$
\tau_{x,x'}(g* m *g') = g'* \tau_{x,x'}(m)* g: \Ii_x\to \Ii_{x'},
$$
 for all $m\in \Mor(U(x),U(x')),\ g\in G_x, \ g'\in G_{x'},$
where on the left hand aside $g* m *g'$ is the morphism that first multiplies by $g$ then does $m$ and then multiplies by $g'$,
%
and on the right denote by $g$ (resp. $g'$) the map  $\Ii_x\to \Ii_x: \ui\mapsto g*\ui$
(resp. $\Ii_{x'}\to \Ii_{x'}: \uj\mapsto g'*\uj$). Again, this identity is a straightforward consequence of the 
compatibility of $\ka$ with composition stated in \eqref{eq:simXI}.
\end{proof}

Here is the main result about multisections.

\begin{prop}\label{prop:structmulti}  If $\Xx$ has a good system of neighbourhoods $(U(x))_{x\in X}$, then
every globally structured multisection $(\La, (\s_i)_{i\in \Ii}, \ka)$ is structurable in the sense of \cite[Def.13.3.6]{TheBook}.   Moreover $\La$ determines a unique structured multisection  $[\La, \Uu, \Ss, \tau]$  in the sense of \cite[Def.13.3.8]{TheBook}.
\end{prop}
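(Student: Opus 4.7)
My plan is to assemble the structured multisection data from the pieces already prepared in Lemmas~\ref{lem:struct2} and \ref{lem:struct3}, and then verify the remaining cocycle-style compatibility conditions in \cite[Def.13.3.6]{TheBook} and uniqueness under change of auxiliary choices.

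First, I would start with the given good system of neighbourhoods $(U(x))_{x\in X}$ and shrink it using Lemma~\ref{lem:struct1} to enforce the conclusions of Lemma~\ref{lem:struct2}: each $U(x)$ is connected, the equivalence relation $\sim_x$ is constant on $U(x)$ so that $\Ii_x = \Ii_y$ for $y\in U(x)$, the local correspondences $\ka(m)$ factor modulo $\sim$, and the $G_x$-action on $\Ii$ descends to $\Ii_x$. At this point, for each $x$ I fix an arbitrary right inverse $r_x:\Ii_x\to\Ii$ of the quotient map $\pr_x:\Ii\to\Ii_x$ and take $\Ss = (U(x),(\s_{r_x(\ui)})_{\ui\in\Ii_x})_{x\in X}$ as the candidate local section structures; by Lemma~\ref{lem:struct3}(a) each of these is a symmetric section structure in the sense of \cite[Def.13.2.5]{TheBook}. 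I set $\tau = (\tau_{x,x'})_{x,x'\in X}$ with $\tau_{x,x'}(m)=\pr_{x'}\circ \ka(m)\circ r_x$, which by Lemma~\ref{lem:struct3}(b) is a correspondence in the sense of \cite[Def.13.3.4]{TheBook}.

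The remaining axiom to verify for structurability is the cocycle condition $\tau_{x,x''}(m\circ m') = \tau_{x',x''}(m')\circ \tau_{x,x'}(m)$ for composable morphisms $m\in \Mor(U(x),U(x'))$, $m'\in\Mor(U(x'),U(x''))$. Unpacking the definition, this equality in ${\rm Biject}(\Ii_x,\Ii_{x''})$ reads $\pr_{x''}\circ \ka(m\circ m')\circ r_x = \pr_{x''}\circ \ka(m')\circ r_{x'}\circ \pr_{x'}\circ \ka(m)\circ r_x$. The two sides may differ before applying $\pr_{x''}$, but Lemma~\ref{lem:struct2}(b) gives precisely $\ka(m\circ m')(i)\sim_{x''}\ka(m')\circ\ka(m)(i)$ for all $i\in\Ii$ (applied with the roles reindexed), which kills the discrepancy after projection by $\pr_{x''}$; similarly $r_{x'}\circ\pr_{x'}$ only changes an index within its $\sim_{x'}$-class, and $\ka(m')$ preserves $\sim$ by \eqref{eq:global1} and continuity of $\ka$. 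Together with the identity axiom $\tau_{x,x}(\id_x)=\id_{\Ii_x}$ (immediate from $\ka(\id_x)=\id_\Ii$) and the $G_x\times G_{x'}$-equivariance already established in Lemma~\ref{lem:struct3}(b), this gives that $[\La,\Uu,\Ss,\tau]$ is a structured multisection in the sense of \cite[Def.13.3.8]{TheBook}, hence $\La$ is structurable.

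For uniqueness, I must show that a different choice $r_x'$ of right inverse, or further shrinking of the $U(x)$, produces an equivalent structured multisection in the sense of \cite[Def.13.3.8]{TheBook}. The point is that $r_x$ and $r_x'$ differ by a bijection of $\Ii_x$ lifting $\id_{\Ii_x}$ modulo $\sim_x$, so the family $(\s_{r_x(\ui)})$ and $(\s_{r_x'(\ui)})$ coincide pointwise on $U(x)$ by the very definition of $\sim_x$. The induced correspondences $\tau_{x,x'}$ are therefore identical, and common refinement of two choices of $(U(x))$ is again a good system by Lemma~\ref{lem:struct1}, producing the required equivalence.

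The step I expect to be the main obstacle is the cocycle verification: Lemma~\ref{lem:struct2}(b) provides the identity only modulo $\sim_{x''}$ and only when one of the sources lies in $U(x)$, so some care is needed to propagate the equivalence along the composable pair and to check that the $r_{x'}\circ\pr_{x'}$ insertion in the middle does not move indices out of the relevant $\sim$-class. Once this bookkeeping is written out, the remaining axioms are direct consequences of the functoriality of $\ka$ and the symmetry structure already recorded in Lemma~\ref{lem:struct3}.
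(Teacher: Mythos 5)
Your assembly of the tuple is the same as the paper's: shrink the good system via Lemma~\ref{lem:struct1} so that the conclusions of Lemma~\ref{lem:struct2} hold, take $\Ss$ to be the symmetric section structures $\bigl(U(x),(\s_{r_x(\ui)})_{\ui\in\Ii_x}\bigr)$ from Lemma~\ref{lem:struct3}(a), and take $\tau_{x,x'}=\pr_{x'}\circ\ka(m)\circ r_x$ from Lemma~\ref{lem:struct3}(b); your uniqueness argument (different right inverses $r_x$ give pointwise equal sections by the definition of $\sim_x$, and good systems are compared by common restriction) is also essentially the paper's.

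The divergence is in which remaining axioms of \cite[Def.13.3.6]{TheBook} you verify, and here there is a gap relative to what the paper says the definition requires. According to the paper's proof, structurability demands, beyond Lemma~\ref{lem:struct3}, exactly two further identities: (i) $\tau_{x,x}(m_{g,y})(\ui)=g*\ui$ for all $g\in G_x$, $y\in U(x)$, $\ui\in\Ii_x$, i.e.\ the local correspondence applied to the morphisms implementing the isotropy action recovers the induced $G_x$-action on $\Ii_x$ from Lemma~\ref{lem:struct2}(c); and (ii) $\tau_{x',x}(m)=\bigl(\tau_{x,x'}(m^{-1})\bigr)^{-1}$, compatibility with inverses. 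Both are immediate from the definitions of $\ka$ and of the $G_x$-action on $\Ii_x$, but your proposal does not check either of them: the equivariance you cite from Lemma~\ref{lem:struct3}(b) is the $G_x\times G_{x'}$-compatibility condition of \cite[Def.13.3.4]{TheBook}, which is not the same as (i), and your identity axiom $\tau_{x,x}(\id_x)=\id_{\Ii_x}$ does not give (ii). Conversely, the cocycle condition $\tau_{x,x''}(m\circ m')=\tau_{x',x''}(m')\circ\tau_{x,x'}(m)$ that you single out as the main obstacle is not among the conditions the paper needs to verify; the compatibility with composition is only available, and only needed, modulo the equivalence relations, and that is exactly what Lemma~\ref{lem:struct2}(b) already records. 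So your bookkeeping for the cocycle identity is sound but aimed at the wrong target, and the (trivial) checks (i) and (ii) must be added for the proof to establish structurability in the sense of the cited definition.
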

\begin{proof} The first claim holds provided that the maps $\tau_{x,x'}$ defined in Lemma~\ref{lem:struct3}(b) above  satisfy  the additional identities
\begin{align*}
{\rm (i)}&\qquad \tau_{x,x}(m_{g,y})(\ui)  = g*\ui,\quad \forall g\in G_x, y\in U(x), \ui\in \Ii_x,\\
{\rm (ii)}&\qquad   \tau_{x',x}(m) =  \bigr(\tau_{x,x'}(m^{-1})\bigl)^{-1}, \quad \forall m\in \Mor(U(x'), U(x)),
\end{align*}
where the morphism $m_{g,y}: y\to g*y$ in (i) implements the action of $g$.\footnote
{These identities are simplified forms of those in \cite{TheBook} because we consider $\ka$ and the associated correspondences $\tau_{x,x'}$  to take values in the set of bijections $\Ii\to \Ii$, rather than in the set of correspondences $\Ii\stackrel{a}\leftarrow \Ii\stackrel{b}\to \Ii$ as
in \cite{TheBook}.}
Both identities follow immediately from our definitions. This proves the first claim.

 By \cite[Def.13.3.6]{TheBook} a {\bf structured multisection} is an equivalence class of tuples $(\La, \Uu, \Ss,\tau)$ where $\La$ is a multisection, $\Uu$ is a good system of neighbourhoods, $\Ss$ is a collection of local sections that determine $\La$ via local versions of \eqref{eq:global0}, and $\tau = (\tau_{x,x'})$ is a family of correspondences as in Lemma~\ref{lem:struct3}.
Since any two good systems of neighbourhoods $\Uu, \Uu'$ are equivalent under restriction,
and  the collection $\Ss$ is uniquely determined up to equivalence
by restricting the fixed sections $(\s_i)_{i\in \Ii}$, 
it follows straightforwardly from the definitions in \cite{TheBook} that the equivalence class 
$[\La, \Uu, \Ss,\tau]$ of this tuple is unique.
\end{proof}

\subsection{Compactness control} \label{ss:polybundle}

We specify to the polyfold \'etale class (d) for this subsection.  
Since polyfolds are typically infinite dimensional, we cannot expect to find precompact neighbourhoods of $|f^{-1}(0)|\subset|\Xx|$. Instead, compactness of the perturbed solution sets will be achieved by the following notions from  \cite[\S12.2, \S15.3]{TheBook}.

\begin{defn}\label{def:control-compact}
Let $f:\Xx\to \Ww$ be a sc-Fredholm section functor of a strong bundle ${\Pp:\Ww\to \Xx}$ as in Definition~\ref{def:poly}.
\begin{nenumilist}
\item
An {\bf auxiliary norm} on $\Ww$ is a continuous map $N : W[1] \to [0,\infty)$ compatible with morphisms (i.e. $N(s(m))=N(t(m))$ for $m\in\Mor_\Ww$), which restricts to a complete norm on each fiber $W[1]\cap P^{-1}(x)$, and such that $N(h_k)\to 0, P(h_k)\to x\in X$ implies convergence $h_k\to 0_x$ in the topology of $W[1]$ specified in  \S\ref{ss:scale}.  
\item
A {\bf saturated open neighbourhood of the solution category} is an open subset $\Uu\subset X$ that contains $f^{-1}(0):=\{x\in X \,|\, f(x)=0_x\}$. 
\item
A pair $(N,\Uu)$ of data as above {\bf controls the compactness of $\mathbf f$} if the closure of 
$\bigl| \{ x\in \Uu \,|\, f(x)\in W[1], N(f(x))\leq 1 \} \bigr| \subset |\Xx|$ is compact. 
\end{nenumilist}
\end{defn}

\begin{rmk}\rm \label{rmk:control-compact}
Compactness of $|f^{-1}(0)|\subset|\Xx|$ implies the existence of compactness controlling data $(N,\Uu)$ by \cite[Thm.12.4.2]{TheBook} (since we assume paracompactness of $|\Xx|$ in Definition~\ref{def:poly}). 
This will allow to preserve compactness when perturbing the solution set.

To compare perturbed solution sets arising from different choices of compactness control data it is useful to note the following: 
Given compactness control data $(N,\Uu)$ for $f$, any pair $(N',\Uu')$ of a smaller saturated open set $f^{-1}(0)\subset \Uu'\subset \Uu$ and larger auxiliary norm $N'\geq N$ also controls compactness of $f$. This is since $\{ x\in \Uu' \,|\, N'(f(x))\leq 1 \}$ is contained in $\{ x\in \Uu \,|\, N(f(x))\leq 1 \}$, and hence the closure of $\bigl| \{ x\in \Uu' \,|\, f(x)\in W[1], N'(f(x))\leq 1 \} \bigr|$ is a closed subset of the compact set ${\rm cl}_{|\Xx|}\bigl( \bigl| \{ x\in \Uu \,|\, f(x)\in W[1], N(f(x))\leq 1 \} \bigr|\bigr)$, and thus compact. 

In particular, if $V\subset X$ is a saturated open subset that contains $f^{-1}(0)$, then we can choose compactness controlling data $(N,\Uu')$ for $f$ with $\Uu'\subset V$ by applying the above observation to  any choice of compactness controlling data $(N,\Uu)$ and $\Uu':=\Uu\cap V$. 
 \hfill$\er$
\end{rmk}

The pullback constructions in Lemma~\ref{lem:bundle} are compatible with the sc-Fredholm notion and compactness control as follows.

\begin{lemma}\label{lem:pullback}   
Suppose that $f:\Xx\to \Ww$ is a sc-Fredholm section functor of a strong bundle ${\Pp:\Ww\to \Xx}$ and that $\psi: \Yy\to \Xx|_V$ is an equivalence of ep-groupoids in the sense of Definition~\ref{def:etale}~(vii), where $\Xx|_V$ denotes the restriction to an open (but not necessarily saturated) 
 subset $V\subset X$ as in Lemma~\ref{lem:bundle}~(i).  Then $\psi^* f: \Yy\to \psi^*\Ww$ is sc-Fredholm. 

If, moreover, the zero set $|f^{-1}(0)|$ is compact and contained in $|V|$, then we can find compactness controlling data $(N,\Uu)$ for $f$ as in Definition~\ref{def:control-compact} with $|{\rm cl}_X(\Uu)|\subset |V|$. 
And given such data $(N,\Uu)$, the pullback data $\bigl(N_\Yy: = N\circ \Psi, \ \Uu_\Yy = \psi^{-1}(\Uu)\bigr)$ controls the compactness of $\psi^*f$, where $\Psi:\psi^*\Ww\to\Ww$ is the pullback bundle functor from Lemma~\ref{lem:bundle}~(iii).

\end{lemma}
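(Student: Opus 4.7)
\medskip
\noindent\textbf{Proof proposal for Lemma~\ref{lem:pullback}.}

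The plan is to prove the three claims in turn, exploiting the fact that an equivalence of ep-groupoids is a local sc-diffeomorphism on objects, and that by Lemma~\ref{lem:bundle}~(iii),(v) the pullback bundle functor $\Psi:\psi^*\Ww\to\Ww$ is a local strong bundle isomorphism which under the assumption that $\psi$ is an equivalence restricts to a homeomorphism $|\psi|:|\Yy|\stackrel{\simeq}\to|V|$.

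For the sc-Fredholm claim, the property of being sc-Fredholm is local at each object. Given any $y\in Y$, I will choose a neighbourhood $N_y\subset Y$ on which $\psi|_{N_y}: N_y\to\psi(N_y)\subset V$ is a sc-diffeomorphism (using that $\psi$ is \'etale), and on which $\Psi$ restricts to a strong bundle isomorphism $\psi^*\Ww|_{N_y}\to\Ww|_{\psi(N_y)}$ by Lemma~\ref{lem:bundle}~(iii). The pullback identity $\Psi\circ\psi^*f=f\circ\psi$ from Lemma~\ref{lem:bundle}~(iv) then reads $\psi^*f|_{N_y}=\Psi|_{N_y}^{-1}\circ f\circ \psi|_{N_y}$. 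Since the sc-Fredholm conditions (contraction germ normal form, regularizing property, etc.) are invariant under composition with local sc-diffeomorphisms on the base and local strong bundle isomorphisms on the total space, sc-Fredholmness of $f$ at $\psi(y)$ transfers to sc-Fredholmness of $\psi^*f$ at $y$.

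For the existence of compactness controlling data with $|\cl_X(\Uu)|\subset|V|$, I will use that $|\Xx|$ is metrizable (hence normal) by Definition~\ref{def:poly}~(i), while $|f^{-1}(0)|$ is compact and contained in the open set $|V|\subset|\Xx|$. Normality yields an open set $O\subset|\Xx|$ with $|f^{-1}(0)|\subset O\subset\cl_{|\Xx|}(O)\subset|V|$. Set $V':=\pi^{-1}(O)$, a saturated open subset of $X$. Since $V'$ is saturated, Lemma~\ref{lem:etale1}~(ii) gives $|\cl_X(V')|=\cl_{|\Xx|}(|V'|)=\cl_{|\Xx|}(O)\subset|V|$. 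Starting from any compactness controlling data $(N_0,\Uu_0)$ provided by Remark~\ref{rmk:control-compact} (using compactness of $|f^{-1}(0)|$), the pair $(N_0,\Uu_0\cap V')$ still controls compactness by that same remark, and its closure has the desired property.

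For the compatibility of the pullback data, I will unwind the definitions and use the homeomorphism $|\psi|:|\Yy|\stackrel{\simeq}\to|V|$. First, $\Uu_\Yy=\psi^{-1}(\Uu\cap V)$ is open (preimage of open), saturated in $\Yy$ (by functoriality of $\psi$ and saturatedness of $\Uu$), and contains $(\psi^*f)^{-1}(0)=\psi^{-1}(f^{-1}(0))$, so it qualifies as a saturated open neighbourhood of the solution set. Using that $\Psi$ restricts fibrewise to strong bundle isomorphisms, $\psi^*f(y)\in(\psi^*W)[1]$ is equivalent to $f(\psi(y))\in W[1]$, and $N_\Yy(\psi^*f(y))=N(\Psi(\psi^*f(y)))=N(f(\psi(y)))$. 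Writing
\[
K:=\{x\in\Uu\,|\,f(x)\in W[1],\ N(f(x))\leq 1\}, \qquad K_\Yy:=\{y\in\Uu_\Yy\,|\,\psi^*f(y)\in(\psi^*W)[1],\ N_\Yy(\psi^*f(y))\leq 1\},
\]
this identifies $K_\Yy=\psi^{-1}(K\cap V)$, where $K$ is saturated by morphism-compatibility of $f$ and $N$. Hence $|\psi|(|K_\Yy|)=|K|\cap|V|$, and since $|K|\subset|\cl_X(\Uu)|\subset|V|$, in fact $|\psi|(|K_\Yy|)=|K|$. Taking closures in $|V|$ and using that $\cl_{|\Xx|}(|K|)\subset|\cl_X(\Uu)|\subset|V|$, the subspace closure agrees with the ambient closure and is compact by hypothesis; applying the homeomorphism $|\psi|^{-1}$ yields a compact closure of $|K_\Yy|$ in $|\Yy|$. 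The main obstacle is the topological bookkeeping in this last step: one must carefully separate saturated-in-$X$ from contained-in-$V$ and invoke Lemma~\ref{lem:etale1}~(ii) with the right saturated sets so that the closure operation commutes with both $\pi$ and $|\psi|^{-1}$.
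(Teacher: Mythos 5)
Your proposal follows essentially the same route as the paper's proof: locality of the sc-Fredholm conditions transferred through local sc-diffeomorphisms and the local strong bundle isomorphisms of $\Psi$; normality of the metrizable space $|\Xx|$ plus Remark~\ref{rmk:control-compact} to shrink $\Uu$ into a saturated set whose closure maps into $|V|$; and the identification of the $N_\Yy$-bounded set in $Y$ with $\psi^{-1}$ of the corresponding saturated set in $X$, combined with $|{\rm cl}_X(\Uu)|\subset|V|$ and Lemma~\ref{lem:etale1}~(ii) so that $|\psi|$ transports the compact closure. The one item you skip, which the paper checks explicitly, is that $N_\Yy=N\circ\Psi$ is itself an auxiliary norm in the sense of Definition~\ref{def:control-compact}~(i) --- continuity, morphism-compatibility, fiberwise completeness, and in particular the convergence condition that $N_\Yy(h_k)\to 0$ with converging base points forces $h_k\to 0$ in the $[1]$-topology; this is routine since $\Psi$ is a fiberwise strong bundle isomorphism and $\psi$ is continuous, but it is part of the statement that $(N_\Yy,\Uu_\Yy)$ ``controls compactness'' and should be recorded.
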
 
\begin{proof}  
The sc-Fredholm property \cite[Def.12.1.1]{TheBook} of the functor $\psi^* f$ just requires the sc-Fredholm property \cite[Def.3.1.16]{TheBook} for the section of the object bundle $\psi^*f: Y\to \psi^*W, y\mapsto \bigl(y, f(\psi(y))\bigr)$. The three properties encoded in the sc-Fredholmness of $f$ --- (1) sc-smoothness, (2) regularization $f(x)\in W_{m,m+1} \Rightarrow x\in X_{m+1}$, (3) local coordinates for base and bundle in which the section has a contraction form --- are all local properties on the base. So to check that these properties transfer to $\psi^* f$ it suffices to consider domains on which $\psi$ restricts to a sc-diffeomorphism. On such domains they follow -- as in \cite[Thm.10.4.5]{TheBook} -- from (1) the chain rule in scale calculus, (2) the fact that sc-maps preserve the sc-indices, (3) composing local coordinates for the base with $\psi$.

Now assume that $|f^{-1}(0)|\subset |V|$ is compact. 
To find compactness controlling data with $|{\rm cl}_X(\Uu)|\subset |V|$ recall that $|\Xx|$ is metric and $\im|\psi|=|V|\subset|\Xx|$ is an open subset that contains $|f^{-1}(0)|$. So, since $|\Xx|$ is a normal topological space, we can find an open neighbourhood $N_S\subset|\Xx|$ of the closed subset $S=|f^{-1}(0)|$ whose closure does not intersect the closed subset $|\Xx|\less |V|$. Its preimage $V':=\pi_\Xx^{-1}(N_S)\subset\Obj_\Xx$ is a saturated open neighbourhood of $f^{-1}(0)$, so as in Remark~\ref{rmk:control-compact} we can choose compactness controlling data $(N,\Uu)$ for $f$ with $\Uu\subset V'$. This construction gives the desired closure control -- using continuity of $\pi_\Xx$ --
$$
|{\rm cl}_X(\Uu)| \subset |{\rm cl}_X\bigl(V'=\pi_\Xx^{-1}(N_S)\bigr) |
\subset |{\rm cl}_{|\Xx|} (N_S)| \subset |V|  . 
$$
Next, let $N: \Ww[1]\to [0,\infty)$ be an auxiliary norm. Then we claim that $N_\Yy:=N\circ\Psi$ defines an auxiliary norm $\psi^*\Ww[1]\to [0,\infty)$ as in Definition~\ref{def:control-compact}. 
Indeed, it is continuous since $N$ and $\Psi$ are continuous, compatible with morphisms since $\Psi$ is a functor, and restricts to a complete norm on each fiber since pullback bundle functor $\Psi:\psi^*\Ww\to\Ww$ from Lemma~\ref{lem:bundle}~(iii) restricts to a strong sc-isomorphism on each fiber. 
The convergence property above (and \cite[Def.12.2.1~(2)]{TheBook}) is preserved under pullback: 
Given $(y_n,w_n)\in \Obj_{\psi^*\Ww}[1] = Y \leftsub{\psi}{\times}_{P} W[1]$ with $P_\Yy(y_n,w_n)=y_n\to y \in Y$ and
$N_\Yy(y_n,w_n) = N(w_n) \to 0$ we have $w_n\in \Obj_{\Ww}[1]$ with $P(w_n)= \psi(y_n)\to  \psi(y) \in X$ since $\psi$ is continuous. Then convergence of the auxiliary norm $N(w_n) \to 0$ implies $w_n \to 0_{\psi(y)}$ in $\Obj_{\Ww}[1]$, which implies the required convergence $(y_n,w_n) \to (y,0_{\psi(y)})=0_y$ in $\Obj_{\Ww_\Yy}[1]$. 

Finally, to check that $\Uu_\Yy:=\psi^{-1}(\Uu)\subset Y$ together with $N_\Yy$ controls compactness of $\psi^*f$ as in Definition~\ref{def:control-compact}, first note that $\Uu_\Yy$ is open since $\psi$ is continuous, saturated since $\psi$ is a functor and $\Uu$ is saturated in $\Xx$, and a neighbourhood of $(\psi^*f)^{-1}(0)$ since $|\psi|$ identifies the zero sets and $|f^{-1}(0)|\subset |\Uu|$. Now it remains to show that 
\begin{align*} 
 \bigl|   \bigl\{ y \in \Uu_\Yy \subset Y \,\big|\, N_\Yy(\psi^*f(y))\leq 1  \bigr\}     \bigr|   
&= \bigl|   \bigl\{ y \in  Y \,\big|\, \psi(y)\in\Uu, N(f(\psi(y)))\leq 1  \bigr\}     \bigr|   \\
&= \bigl| \psi^{-1}  \bigl\{ x \in \Uu \subset X \,\big|\,  N(f(x))\leq 1  \bigr\}     \bigr|   \\
&= |\psi|^{-1}   \bigl| \bigl\{ x \in \Uu \subset X \,\big|\,  N(f(x))\leq 1  \bigr\}     \bigr|  
\end{align*}
has compact closure in $|\Yy|$. Since $(N,\Uu)$ controls compactness of $f$, we know that $B_f:= \bigl| \bigl\{ x \in \Uu \subset X \,\big|\,  N(f(x))\leq 1  \bigr\}   \bigr|$ has compact closure in $|\Xx|$. Moreover, $|\psi|:|\Yy|\to |V|\subset|\Xx|$ is a homeomorphism by Definition~\ref{def:etale2}~(vii). 
So $|\psi|$ intertwines closures and maps compact sets to compact sets as long as we can guarantee that the closure of $B_f$ is contained in $|V|$. This is why we chose $\Uu\subset X$ with $|{\rm cl}_X(\Uu)|\subset |V|$, as this guarantees
${\rm cl}_{|\Xx|}(B_f)  \subset {\rm cl}_{|\Xx|}(|\Uu|)  = \bigl|{\rm cl}_X(\Uu)\bigr| \subset |V|$, 
where we used Lemma~\ref{lem:etale1}~(ii) to identify closures. 
Thus $(N_\Yy,\Uu_\Yy)$ controls compactness of $\psi^*f$. 
\end{proof}

To activate the compactness control when perturbing a sc-Fredholm section $f: X\to W$ over an M-polyfold $X$, we can now restrict perturbations to sc$^+$-sections $s:X\to W[1]$ that are bounded by $N(s(x))\leq 1$ (or in fact any other constant by \cite[Thm.12.4.3]{TheBook}), so that 
the perturbed zero set 
$$
\bigl| (f-s)^{-1}(0)\bigr| = \bigl| \{ x\in \Uu \,|\, f(x)=s(x) \}\bigr| \subset
\bigl| \{ x\in \Uu \,|\, f(x)\in W[1], N(f(x))\leq 1 \} \bigr|
$$ is automatically compact.

Next, the presence of isotropy in any \'etale proper groupoid $\Xx$ generally requires the use of multivalued perturbations to achieve transversely cut out perturbed solution sets. 
In the case of a sc-Fredholm section $f:\Xx\to\Ww$ over an ep-groupoid $\Xx$, we additionally need sc$^+$-perturbations, leading to the formalism of \cite[\S13.2]{TheBook}: A {\bf sc$^+$-multisection} is a functor 
\begin{align}\label{eq:multisdef}
\Lambda:\Ww\to\Q^{\geq 0}:=\Q\cap[0,\infty),
\end{align}
 which can in local charts $W|_V$ be represented as $$
 \Lambda(w)=\sum_{\{ i \,|\, w=\s_i(P(w))\}} \sigma_i
 $$ in terms of finitely many sc$^+$-sections $\s_i:V\to W[1]|_V
 $ and weights $\sigma_i>0$. 
The composition of such a multisection $\Lambda:\Ww\to\Q^{\geq 0}$ with $f:\Xx\to\Ww$ yields a functor $\Lambda\circ f : \Xx\to\Q^{\geq 0}$, whose support $$
S_{\Lambda\circ f}:=\{ x\in X \,|\, \Lambda(f(x))>0 \}\subset X
$$ represents the perturbed solution set. 
Indeed, we have $V\cap S_{\Lambda\circ f}=\bigcup_i \{ x\in V \,|\, f(x)=\s_i(x) \}$ locally.
Moreover, each ``branch'' $(f-\s_i)^{-1}(0)$ has a natural weight $\sigma_i$, and these add up to a weighting function $\Lambda\circ f|_{S_{\Lambda\circ f}}$. 
Transversality of these multivalued perturbations as in \cite[\S15.2]{TheBook} requires transversality of each local section $f-\s_i$, and requirements of compactness control and transversality are gathered in the following notion of regularity.

\begin{defn}\label{def:NUregular}
Let $f:\Xx\to \Ww$ be a sc-Fredholm section functor of a strong bundle ${\Pp:\Ww\to \Xx}$ as in Definition~\ref{def:poly}, and suppose $(N,\Uu)$ controls the compactness of $f$ as in Definition~\ref{def:control-compact}.
Then a sc$^+$-multisection $\Lambda:\Ww\to\Q^{\ge 0}$ as in \eqref{eq:multisdef}
is called {\bf $\mathbf(N,\Uu)$-regular} (as a perturbation of $f$) if it satisfies\footnote{This is a special case of $(N,\Uu)$-admissibility \cite[Def.15.3.3]{TheBook} used in \cite[Cor.15.3.10]{TheBook}. }: 
\begin{itemlist}
\item[(1)] 
$N(\Lambda)<1$, which is shorthand for: $N(w)<1$ for all $w\in W$ with $\Lambda(w)>0$. 
\item[(2)]
$\text{dom-supp}(\Lambda)\subset \Uu$, which is shorthand for: 

the closure of the set  $\{ x\in X \,|\, \exists w\in P^{-1}(x)\less\{0\} : \Lambda(w)>0 \}$ is contained in $\Uu$. 
\item[(3)]  
$\rT_{(f,\Lambda)}$ is surjective on $\supp(\Lambda\circ f)$, which is shorthand for: 

each linearization $\rD(f-\s_i)(x)$ at a solution $f(x)=\s_i(x)$ of a local section $\s_i$ 

representing $\Lambda$ is surjective; see \cite[Def.15.2.1]{TheBook}.
\end{itemlist}
If $\Xx$ has nonempty boundary, then we require in addition: 
\begin{itemlist} 
\item[(3a)]  nothing; 
\item[(3b)]  the kernel of each linearization at a solution $x\in\partial X$ is in good position \cite[Def.3.1.24]{TheBook} to the partial quadrant in a chart around $x$;
\item[(3c)]  the kernel of each linearization at a solution $x\in\partial X$ is in general position \cite[Def.5.3.9]{TheBook} to the partial quadrant in a chart around $x$.
\end{itemlist}
\end{defn}

The results in this paper hold for all three versions (3a--c) of boundary transversality when e.g.\ regularity is transferred under pullback. 
Perturbation constructions for a single moduli space can usually achieve the strongest condition (3c), and so do ours. The less strict condition (3b) only becomes relevant for the coherent perturbation of a collection of moduli spaces, whose Cartesian products form each others' boundaries, as discussed in \cite{FHsft}.
Either condition (3b) or (3c) induces a tame smooth structure on the perturbed solution set in the sense of \cite[Def.9.3.6]{TheBook}: The composition of a sc-Fredholm section $f:\Xx\to\Ww$ and a regular sc$^+$-multisection $\Lambda:\Ww\to\Q^{\ge 0}$ is a branched ep$^+$-subgroupoid $\Lambda\circ f:\Xx\to\Q^{\ge 0}$.

\section{General construction principles}\label{sec:genconstr}

The purpose of this section is to develop general constructions of \'etale categories, their groupoid completions, and group actions in the context that will be relevant for the proof of Theorem~\ref{thm:globstab} but is valid for any of the \'etale classes discussed in \S\ref{ss:basic}.  We also discuss the structure of multisections.  Section~\ref{ss:mainres} states the main results and explains their relevance to Theorem~\ref{thm:globstab} 
 and Corollary~\ref{cor:multisection}, while subsequent subsections give the proofs.
The key fact that makes our later  constructions possible  is the simple structure of the category $\Xx_\Vv^{\less G}$.  This is the groupoid completion of a poset in which each equivalence class has a unique element with morphisms to all the other equivalent elements; see Proposition~\ref{prop:Hcomplet}~(i).  This permits the construction of the stabilization functor $\tau$ in Proposition~\ref{prop:MMGaE} and Theorem~\ref{thm:stabilize}.

\subsection{The structure of the categories \texorpdfstring{$\Xx_\Vv$}{X V} and \texorpdfstring{$\Xx_\Vv^{\less G}$}{X V less G.}} \label{ss:mainres}

The \'etale proper groupoid $\Xx_{\Vv}$ in
Theorem~\ref{thm:globstab} is generated by \'etale data of the following type,  called for short {\bf  \'etale data of type $V$}.

 \begin{enumlist} 
\item
$G_1, \ldots, G_N$ is a finite collection of finite groups. 
Then for $I\subset \{1,\dots,N\}=:A$
 we denote $G_I: = \prod_{i\in I} G_i$,  for short writing $G: = G_A: = \prod_{i\in A} G_i$.
For $I\subset J$\footnote{
Here, as elsewhere, the notation $I\subset J$ includes the possibility that $I=J$.} we consider $G_I\subset G_J$ as natural subgroup and write $G_J\to G_I, g\mapsto g|_I$ for the natural projection.
\item 
$V_J$ is a (possibly empty)  \'etale space equipped with an \'etale action of $G_J$ for each $J\subset A$.
\item 
$\TV_{IJ}\subset V_J$ for each $I\subset J$ is a $G_J$-invariant open subset 
on which   $G_{J\less I}$ acts freely,
and $\rho_{IJ}: \TV_{IJ}\to V_I$ is a composite $\TV_{IJ}\to \qu{\TV_{IJ}}{G_{J\less I}} \to V_I$ of the quotient map with an injective \'etale map with open image $V_{IJ}:=\rho_{IJ}(\TV_{IJ})\subset V_I$ with the following properties:
 
\begin{itemlist} 
\item[] {\bf (identity)} For all $I\subset A$ we have $\TV_{II} = V_{II} = V_I$ and $\rho_{II} = \id$. 
\item[] {\bf (separation)}
For $H\subset I, J$ we have $\cl(V_{HI})\cap \cl(V_{HJ} ) = \emptyset$ unless $I\subset J$ or $J\subset I$. \\
\phantom{{\bf (separation)}} For $H,I\subset J$ we have $\cl(\TV_{IJ})\cap \cl(\TV_{HJ} ) = \emptyset$ unless $I\subset H$ or $H\subset I$.
\item[] {\bf (composition)}
For $H\subset I\subset J$ we have  $\TV_{HJ}\cap \TV_{IJ} = \rho_{IJ}^{-1}(\TV_{HI}\cap V_{IJ})$ and $$
\rho_{HJ}|_{\TV_{HJ}\cap \TV_{IJ}} = \rho_{HI}\circ \rho_{IJ}.
$$
\item[] {\bf (equivariance)}
$\rho_{IJ}:\TV_{IJ}\to V_I$ is equivariant with respect to the projection $G_J\to G_I$. 
\item[] {\bf (closed graph)} for all $I\subset J$, ${\rm graph}(\rho_{IJ}) \subset V_J\times  V_I$ is closed.
\end{itemlist}
\end{enumlist}

\MS\noindent
Before explaining how to build the \'etale groupoid $\Xx_\Vv$  in Theorem~\ref{thm:globstab} from data of this kind, we introduce the following notation.
%

Here and throughout, 
we often write $\rho_I: = \rho_{I\bullet}: \TV_{I\bullet}\to V_I$, where $\bullet$ stands for any $J\supset I$.  
For any $I\subset J$, we also write  $\rho_J:\TV_{IJ}\to V_J$ for the inclusion map.
We define:
\begin{align}\label{eq:vee}
I\vee J: = 
\begin{cases}
\max\{I,J\}= J\, , & I \subset J; \\
\max\{I,J\}= I\, , & J \subset I; \\
\emptyset \, ,& \text{else};
\end{cases}
 \quad 
I\wedge J: = 
\begin{cases}
\min\{I,J\}= I \, ,& I \subset J; \\
\min\{I,J\}= J\, , & J \subset I; \\
\emptyset\, , & \text{else};
\end{cases} .
\end{align}
Further, we say that the subsets $I_1,\dots, I_k$ are {\bf nested}  if they may be rearranged into an increasing chain
\begin{align}\label{eq:vee1}
I_{\ell_1}\subset I_{\ell_2}\subset \cdots \subset I_{\ell_k}.
\end{align}
In particular, we write $I\sim J$ if $I,J$ are nested. Note  that 
 the sets $I\wedge J$ and $I\vee J$ are empty unless $I\sim J$.
More generally, for nested sets $I_1,\dots,I_k$ we define
\begin{align}\label{eq:vee1a}
I_1\vee \cdots \vee I_k = \max\{I_1,\dots,I_k\}, \quad I_1\wedge \cdots \wedge I_k = \min\{I_1,\dots,I_k\}.
\end{align}
As above, if $I_1,\dots,I_k$ are not nested both $I_1\vee \cdots \vee I_k$ and $I_1\wedge \cdots \wedge I_k$ are defined to be empty.

\begin{prop}\label{prop:MMGa1}   \'Etale data of type $V$
as in (a)--(c) above
  determines an \'etale groupoid $\Xx_\Vv=(X_\Vv, \bX_\Vv)$ with  
\begin{align*}
X_\Vv &:=  \textstyle  \bigsqcup_{ I\subset A}   V_I , &\qquad 
s (I,J,y,g) &:= (I, g^{-1}*\rho_I(y) ),  \\
\bX_\Vv &:=\textstyle \bigsqcup_{I\sim J} \TV_{(I\wedge J)(I\vee J)} \times G_{I\wedge J}  , 
& \qquad   t(I,J,y,g) &:=(J, \rho_J(y) ) ,\\
\id_{(I,x)}&:= (I,I,x,\id), &\qquad (I,J,y, g)^{-1}&:= (J,I, g^{-1} *y, g^{-1}),
\end{align*}
where we use the notation $(I,x) := x\in V_I$ for objects and $ (I,J,y, g):=(y,g)\in\TV_{IJ}\times G_{I\wedge J}$ for morphisms. 
If $t(I,J,x, g)=s(J,K,y,h)$, then $I,J,K$ are necessarily nested, and we define the composition by
 \begin{align} \label{eq:MMGa3} 
&(I,J,x, g) \circ (J,K,y,h) :=
\bigl( I,K,  \rho_{I\vee K}(v) , k(hg)|_{I\wedge K} \bigr) ,
\end{align}
where $(v,k)\in V_{I\vee J\vee K}\times G_{I\wedge K} $  
is 
given as follows\footnote
{
For an explanation of these formulas see Remark~\ref{rmk:rule}.}:
\begin{itemize}\item if  $I\subset K\subset J$ then $(v,k) = (y,\id)$;
\item if $K\subset I\subset J$ then  $(v,k) = \bigl((g|_{I\less K})^{-1}* y,\id\bigr)$;
\item if $I\subset J\subset K$ then  $(v,k) = (y,\id)$;
\item if $J\subset I\subset K$ then $(v,k) = (y,k)$ where $k\in G_{I\less J}$  is uniquely determined by\; $\rho_I(y) = h*k*x$;
\item if  $K\subset J\subset I$ then $(v,k) = \bigl((g|_{J\less K})^{-1}*h*x,\id\bigr)$;
\item  if   $J\subset K\subset I$ then $(v,k) = (k*h*x,k)$ where  $k\in G_{K\less J}$ is  uniquely determined by\; $
y = h*k*\rho_K(x)$ .
\end{itemize}

Further, this construction has the following properties:
\begin{enumilist}
\item 
$\Xx_\Vv$ is an \'etale proper groupoid whose realization  $|\Xx_\Vv|=\bigcup_{J\subset A} |V_J|$ is the union of the realizations of  the finite set of local uniformizers $(V_J, G_J)$.
\item
$\Xx_\Vv$ is equipped with an inner action of $G$ that is determined via Lemma~\ref{lem:act}~(iii)
by the action of $G$ on $X_\Vv= \sqcup_I V_I$ given by the map
\begin{align}\label{eq:XVgpactY}
\al:  G\times X_\Vv\to \bX_\Vv,\quad (g,(I,x))\mapsto (I,I,x,g|_I), 
\end{align}
which in particular induces the action $g*(I,x) = (I, g|_I*x)$ on objects.
\end{enumilist}
\end{prop}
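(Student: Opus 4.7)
The plan is to verify the groupoid axioms, the étale and properness conditions, the description of the realization, and finally the existence of the inner $G$-action.

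First I would check that the source, target, identity, and inverse maps are well-defined and compatible: $s$ sends $(I,J,y,g)$ with $y\in \TV_{IJ}\subset V_{I\vee J}$ to $g^{-1}*\rho_I(y)\in V_I$ using the group action of $G_{I\wedge J}\subset G_I$ on $V_I$ and the fact that $\rho_I$ maps $\TV_{IJ}$ into $V_I$; similarly for $t$. The inverse formula respects source/target by the equivariance property (c), since $\rho_J(g^{-1}*y)=g^{-1}*\rho_J(y)$ when $y\in\TV_{IJ}$ (which requires noting that $G_{I\wedge J}$ acts on both $V_I$ and $V_J$ via the projections). Composition into the nested cases is well-defined because $I\sim J$ and $J\sim K$ together with $t(I,J,x,g)=s(J,K,y,h)$ forces all three to be nested — indeed, from the common element in $V_J$ one can recover which subset contains which. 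The most delicate step here is verifying the six cases in \eqref{eq:MMGa3} give a point in $\TV_{(I\wedge K)(I\vee K)}$ and that $k\in G_{I\less J}$ or $k\in G_{K\less J}$ is uniquely determined — this uses the freeness of the $G_{J\less I}$-action on $\TV_{IJ}$ from (c).

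Next I would verify that composition is associative. This is the main obstacle: with six cases for each binary composition, there are in principle many sub-cases to check for a triple composition $(I,J,x,g)\circ(J,K,y,h)\circ(K,L,z,k)$. However, once $I,J,K,L$ are all nested (which is forced by composability) one may reduce to the conceptual observation that after applying the appropriate $\rho$-maps and group elements, both associations compute the same element of $\TV_{(I\wedge L)(I\vee L)}$ together with the same element of $G_{I\wedge L}$. The composition law in (c) (identifying $\rho_{HJ}=\rho_{HI}\circ\rho_{IJ}$) together with equivariance provides the key identities. I would arrange the nested subsets as a chain and track what the formulas do relative to this chain, reducing the verification to essentially the associativity of the projection and multiplication in the group $G$ combined with the naturality of the $\rho_{IJ}$.

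Then I would verify that $\Xx_\Vv$ is étale. Object and morphism spaces are finite disjoint unions of étale spaces by construction, so it suffices to check that $s, t$, and hence unit, inverse, and composition are étale maps. The target map $t$ is a composite of the projection $\TV_{IJ}\times G_{I\wedge J}\to \TV_{IJ}\subset V_J$ with the inclusion into $V_J$, both of which are étale. The source map is $\rho_I$ composed with a finite group action on $V_I$, again étale since $G_{J\less I}$ acts freely on $\TV_{IJ}$ and $\rho_{IJ}$ factors as the quotient composed with an étale injection. The inverse and identity are évidently étale, and composition then inherits étaleness from Lemma~\ref{lem:etale}. For properness, I would apply Corollary~\ref{cor:prop1}(ii): the family $(V_I,G_I)$ are local uniformizers and $|\Xx_\Vv|$ is Hausdorff because of the separation and closed-graph conditions in (c) combined with Lemma~\ref{lem:prop1}(iv), applied after checking $s\times t$ has closed graph (the closed graph hypotheses on each $\rho_{IJ}$ and on the separation of the various $V_{HI}$ provide exactly this). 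The identification of $|\Xx_\Vv|$ as $\bigcup_{J\subset A}|V_J|$ then follows since every object lies in some $V_J$ and the morphisms glue these pieces along the maps $\rho_{IJ}$.

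Finally, for (ii), I would verify the three conditions in \eqref{eq:gpident2} for the map $\alpha$ in \eqref{eq:XVgpactY}. The target identity $t(\alpha(g,(I,x)))=t(I,I,x,g|_I)=(I,\rho_I(x))=(I,x)$ holds since $\rho_{II}=\id$. The identity condition $\alpha(\id,(I,x))=(I,I,x,\id)=\id_{(I,x)}$ is immediate. The composition condition $\alpha(hg,(I,x))=\alpha(g,s(\alpha(h,(I,x))))\circ\alpha(h,(I,x))$ reduces, using $s(\alpha(h,(I,x)))=(I,h|_I^{-1}*x)$ and the composition formula \eqref{eq:MMGa3} with $I=J=K$ (the case $I\subset J\subset K$ specializes trivially), to checking that $(hg)|_I=g|_I\cdot h|_I$ in $G_I$, which follows since restriction is a group homomorphism. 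Lemma~\ref{lem:act}(iii) then promotes $\alpha$ to an inner action, and the induced action on objects is $g*(I,x)=t(\alpha(g^{-1},(I,x))^{-1})$ which unwinds via the inverse formula to $(I,g|_I*x)$ as claimed.
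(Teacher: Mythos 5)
Your overall architecture matches the paper's proof closely: forcing nestedness from composability via the separation axiom, a case check that the composition rule is consistent with source and target, étaleness from the local constancy of the group elements, properness via the closed-graph criterion of Lemma~\ref{lem:prop1}(iv) together with Corollary~\ref{cor:prop1}(ii), and the inner action by verifying the identities of Lemma~\ref{lem:act}(iii) for $\al$ (with the case $I=J=K$ of \eqref{eq:MMGa3} reducing to the fact that $g\mapsto g|_I$ is a homomorphism). All of that is sound, modulo a harmless order-of-factors slip in $(hg)|_I=h|_Ig|_I$.

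The genuine gap is the associativity step, which is the technical heart of the proposition and which your sketch asserts rather than proves. The difficulty is not the bookkeeping of nested chains per se, but the two composition cases ($J\subset I\subset K$ and $J\subset K\subset I$) in which the group element $k$ is defined only \emph{implicitly} by an equation such as $\rho_I(y)=h*k*x$, using freeness of the action. When you form a triple composite, the two associations produce two a priori different implicitly defined correction elements, and showing they agree is exactly where "both sides compute the same element" needs an argument. The paper resolves this by first establishing (in \eqref{eq:QQ03}) that every morphism $(I,K,x,g)$ factors \emph{uniquely} as $\ell\circ m$ with $\ell\in G_{H_x}$ and $m$ a morphism of the nonsingular groupoid completion $\wh{\Qq}$, and then proving the commutation identities \eqref{eq:QQ04}--\eqref{eq:QM4} that let one push all group elements to one side; associativity then reduces to associativity in the group $G_{H_x}$ and in the nonsingular (hence automatically associative) category $\wh{\Qq}$. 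Without some organizing device of this kind — or an explicit verification that the implicit $k$'s match in each of the many sub-cases — your "track the formulas along the chain" step does not yet constitute a proof. I would either import the factorization \eqref{eq:QQ03} into your argument or cite the associativity result of \cite{Morb} directly.
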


The resulting groupoid $\Xx_\Vv$ will generally have nontrivial isotropy arising from the $G_J$-action on $V_J$. However, we will be able to extract a nonsingular groupoid $\Xx_\Vv^{\less G}$ from this data by 
 restricting consideration to the free part of the actions.  The next result explains how to construct 
 $\Xx_\Vv^{\less G}$ as the groupoid  completion (see Definition~\ref{def:grcompl}) of a nonsingular \'etale category $\Qq$ that has very simple structure.

\begin{prop}\label{prop:Hcomplet}   
\'Etale data of type $V$ defines a nonsingular, locally injective, \'etale category $\Qq = (Q,{{\bQ}})$ with 
\begin{align*}
Q &:= {\textstyle  \bigsqcup_{ I\subset A} }  V_I , &\qquad 
{{\bQ}} &:={\textstyle \bigsqcup_{I\subset J} \TV_{IJ} } , \\
s\times t \,: \; {{\bQ}} &\to Q\times Q, &\quad (I,J,y)&\mapsto \bigl( (I,\rho_{IJ}(y) ), (J,y )\bigr), \\
\id & \,: \; Q \to {{\bQ}} , &\quad (I,x)&\mapsto (I,I,x); \\
c &\,:\;  {{\bQ}}  \leftsub{t}{\times}_s {{\bQ}} \to {{\bQ}}, &\quad \bigl( (H,I,y) , (I,J,z) \bigr) &\mapsto (H,I,y)\circ (I,J,z) := (H,J,z).
\end{align*}
Further ${\Qq}$ has the following properties.

\begin{enumilist}\item  It is a  {\bf poset}; in other words 
its object space can be equipped with a partial order relation $\preccurlyeq$  such that  
\begin{align}\label{eq:pord} 
(I,x)\preccurlyeq(J,y)
\quad \Longleftrightarrow \quad \Mor_\Qq\bigl((I,x),(J,y)\bigr)\ne \emptyset.
\end{align}
Further, each equivalence class in $Q$ has a unique minimum with morphisms to all other elements. In the equivalence class containing $(J,y)$, that minimum is $(H_y,\rho_{H_y J}(y))$, with $H_y: = \bigcap \{H: y\in \TV_{HJ}\}$.
\item
 Let $\Xx_\Vv^{\less G}: = \wh{\Qq}$ be the \'etale  groupoid completion of  $\Qq$ 
given by Lemma~\ref{lem:gpcomplet}~(iii).
Then there 
are morphisms in $\Xx_\Vv^{\less G}$   between $V_I$ and $V_J$ only if $I,J$ are nested, and  for $I\subset J$ the
 morphism spaces are given as follows :
\begin{align}\label{eq:MorHM}
\Mor_{\Xx_\Vv^{\less G}}(V_I,V_J) &:=
 {\textstyle \bigcup_{\emptyset\ne H\subset I} } \; \bigl(\TV_{IJ}\cap \TV_{HJ}\bigr)\times G_{I\less H},\\ \notag
 &= \bigl\{ (y,g )\in V_J\times G\ \big| \ y\in \TV_{IJ}, g \in G_{I\less H_{y}}\bigr\},
\end{align}
where we denote $H_y: = \min \{H \,|\, y\in \TV_{HJ}\}$.  The structure maps\footnote{
Since $\Xx_\Vv^{\less G}$ is nonsingular, the source and target map determine the inverse and composition maps uniquely. An explicit formula for the composition can be found in \eqref{eq:MMGa3}.
} 
of $\Xx_\Vv^{\less G}$ are given by the same formulas as those of $\Xx_\Vv$ in Proposition~\ref{prop:MMGa1}.
\end{enumilist}
 \end{prop}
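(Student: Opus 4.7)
The plan is threefold: (1) verify that $\Qq$ as defined is a nonsingular, locally injective, \'etale category; (2) establish the poset structure in (i) by identifying a canonical minimum for each $\sim_\Qq$-equivalence class; and (3) apply Lemma~\ref{lem:gpcomplet}~(iii) to realize $\Xx_\Vv^{\less G}=\wh\Qq$ and compute its morphism spaces. The combinatorial engine throughout is the observation that for any $y\in V_J$ the set $\{H\subset J : y\in\TV_{HJ}\}$ is nonempty (it contains $J$) and totally ordered by inclusion (since two incomparable sets would violate the second separation clause $\cl(\TV_{H_1 J})\cap \cl(\TV_{H_2 J})=\emptyset$), hence has a unique minimum $H_y$.

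For step (1), composition $(H,I,y)\circ (I,J,z)=(H,J,z)$ is well-defined because the fiber-product condition $\rho_{IJ}(z)=y\in\TV_{HI}$ combined with the composition axiom forces $z\in\TV_{HJ}$; associativity and unit axioms follow by inspection. The category $\Qq$ is \'etale because $s|_{\TV_{IJ}}=\rho_{IJ}$ is \'etale by assumption, $t|_{\TV_{IJ}}$ is the open inclusion, and composition is a projection after fiber-product identification. Nonsingularity is immediate: a morphism $(I,J,z)\in\TV_{IJ}$ with given source $(I,x)$ and target $(J,y)$ forces $z=y$ and $x=\rho_{IJ}(y)$.

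For step (2), define $(I,x)\preccurlyeq(J,y)$ iff $I\subset J$, $y\in\TV_{IJ}$, and $\rho_{IJ}(y)=x$; reflexivity, transitivity, and antisymmetry follow from the identity, composition, and set-inclusion axioms respectively. Set $R(J,y):=(H_y,\rho_{H_y J}(y))$ and verify directly that any basic move $(J_1,y_1)\leftrightarrow(J_2,y_2)$ preserves $R$, using the composition axiom $\TV_{HJ_2}\cap\TV_{J_1 J_2}=\rho_{J_1 J_2}^{-1}(\TV_{HJ_1}\cap V_{J_1 J_2})$ and its compatibility with the $\rho$'s. By induction on zigzag length, $R$ is an invariant of each equivalence class, and an analogous calculation shows $R(J,y)\preccurlyeq(J',y')$ for every $(J',y')$ in the class, so $R(J,y)$ is a minimum; uniqueness follows from antisymmetry. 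Local injectivity then follows: any $(I,x_2)\sim_\Qq (I,x_1)$ forces $H_{x_2}=H_{x_1}=:H$ and $\rho_{HI}(x_1)=\rho_{HI}(x_2)$, so (since $\rho_{HI}$ factors as $\TV_{HI}\to\TV_{HI}/G_{I\less H}\hookrightarrow V_H$ with the second map injective) $x_2=g*x_1$ for some $g\in G_{I\less H}$; a metric ball in $\TV_{HI}$ around $x_1$ small enough to avoid the finitely many nontrivial orbit translates (possible by freeness) provides an injectivity neighborhood, since any $x_2\in V_I\less\TV_{HI}$ has $H_{x_2}\ne H$ and hence is not $\sim_\Qq$-equivalent to $x_1$.

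For step (3), Lemma~\ref{lem:gpcomplet}~(iii) provides $\wh\Qq$ with exactly one morphism from $(I,x)$ to $(J,y)$ iff they share the root $(H,u)$. Since then $u\in V_{HI}\cap V_{HJ}\ne\emptyset$, the first separation clause forces $I$ and $J$ to be nested. For $I\subset J$ and fixed target $(J,y)$, the root is $(H_y,\rho_{H_y J}(y))$ and the source $(I,x)$ must satisfy $x\in\TV_{H_y I}$ with $\rho_{H_y I}(x)=\rho_{H_y J}(y)$; as $\rho_{H_y I}$ factors through the free $G_{I\less H_y}$-quotient, this fiber is exactly the free $G_{I\less H_y}$-orbit of $\rho_{IJ}(y)$. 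This parametrizes $\Mor_{\Xx_\Vv^{\less G}}(V_I,V_J)$ by pairs $(y,g)$ with $y\in\TV_{IJ}$ and $g\in G_{I\less H_y}$ via $x=g^{-1}*\rho_{IJ}(y)$, and matches the union $\bigcup_{H\subset I}(\TV_{IJ}\cap\TV_{HJ})\times G_{I\less H}$ as subsets of $V_J\times G$ (the constraint $g\in G_{I\less H}$ for some $H\supset H_y$ is equivalent to $g\in G_{I\less H_y}$). Since $\wh\Qq$ is nonsingular, its structure maps are uniquely determined by source and target, so they must agree with the restriction of the formulas from Proposition~\ref{prop:MMGa1}. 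The main obstacle throughout is keeping the combinatorial bookkeeping straight across the various nesting cases, but this is almost entirely absorbed by the root invariance, which reduces equivalence-class analysis to one-step computations.
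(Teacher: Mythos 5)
Your overall route coincides with the paper's own proof (which appears as Lemma~\ref{lem:complet}): the composition axiom makes $\Qq$ a well-defined nonsingular \'etale category, the root $(H_y,\rho_{H_yJ}(y))$ is an invariant of the $\sim_\Qq$-class and serves as its unique minimum, and the morphisms of the completion are then parametrized by the free $G_{I\less H_y}$-orbit of $\rho_{IJ}(y)$. The one place where your argument has a genuine gap is local injectivity of $\pi_\Qq$.

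Local injectivity requires a neighbourhood $N$ of $x_1\in V_I$ on which $\pi_\Qq$ is injective, i.e.\ \emph{any two} equivalent points of $N$ coincide — not merely any point equivalent to $x_1$. Your ball $B\subset\TV_{HI}$ (with $H:=H_{x_1}$) is chosen only so that $B\cap g*B=\emptyset$ for $\id\ne g\in G_{I\less H}$. But $B$ may contain points $x'$ with $H_{x'}=H'\subsetneq H$ (this happens exactly when $x_1$ lies in the frontier of $\TV_{H'I}$, the situation of Example~\ref{ex:branch}). By your own root analysis such an $x'$ is equivalent to $g*x'$ for every $g\in G_{I\less H'}$, in particular for $g\in G_{H\less H'}$, and these are distinct points by freeness on $\TV_{H'I}$. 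Nothing in your choice of $B$ prevents $g*x'$ from also lying in $B$: the abstract axioms only assert freeness of $G_{I\less H'}$ \emph{on} $\TV_{H'I}$, so $g$ may fix $x_1\notin\TV_{H'I}$, in which case $g*x'\to x_1$ as $x'\to x_1$ and every ball around $x_1$ contains such equivalent pairs. The paper closes this hole by first shrinking the neighbourhood, using the second separation property, so that $H_{x'}=H_{x_1}$ for \emph{all} $x'\in N$; only then is the quotient $\rho_{H I}|_N$ made injective. You need to add this step (or otherwise exclude translates by $G_{I\less H'}$ for every $H'\subsetneq H$), and in doing so note that your final clause — that points of $V_I\less\TV_{HI}$ are not equivalent to $x_1$ — does not address the problematic pairs, which lie inside $\TV_{HI}$. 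The remainder of the proof (parts (i) apart from this point, and all of (ii)) is correct and matches the paper's argument.
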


The proof is given in \S\ref{ss:Qq}.  
Note that $| \Xx_\Vv^{\less G}|$  is not in general Hausdorff.  However the next result shows that the category 
$ \Xx_\Vv^{\less G}$ supports a $G$-action and that the quotient $| \Xx_\Vv^{\less G}|/G \simeq |\Xx_\Vv|$  is Hausdorff.

\begin{prop}\label{prop:MMGa2} 
Let $\Xx_\Vv, \Qq,  \Xx_\Vv^{\less G}(=\wh{\Qq})$ be constructed from \'etale data as in Propositions~\ref{prop:MMGa1} and \ref{prop:Hcomplet}. 
Then the inclusion $\io: \Qq\to \Xx_\Vv$ factors as 
$$
\Qq\;\to\; \Xx_\Vv^{\less G}\; \; \to\; \Xx_\Vv.
$$
Further $G$ acts on $\Xx_\Vv^{\less G}$, and the inclusion $\io$ extends 
to functors $\io^{\times G}: \Qq^{\times G}\to (\Xx_\Vv^{\less G})^{\times G}\to  \Xx_\Vv$ that induce identifications 
of the realizations
$$
|\Qq^{\times G}| \; \simeq \; 
|(\Xx_\Vv^{\less G})^{\times G}| \;\simeq\;|\Xx_\Vv^{\less G}|/G\; \stackrel{\simeq}\to\; |\Xx_\Vv| .
$$
\end{prop}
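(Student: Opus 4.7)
The plan is to assemble the proposition from three general tools developed earlier: the groupoid completion of Lemma~\ref{lem:gpcomplet}, the $G$-action extension of Lemma~\ref{lem:act}, and the $\times G$ construction of Lemma~\ref{lem:BbGa}. The natural inclusion $\Qq\hookrightarrow\wh\Qq=\Xx_\Vv^{\less G}$ from Lemma~\ref{lem:gpcomplet}(iii) supplies the first factor of $\io$, and a comparison of the morphism formulas in Proposition~\ref{prop:Hcomplet}(ii) with those of Proposition~\ref{prop:MMGa1} shows that $\Mor_{\Xx_\Vv^{\less G}}(V_I,V_J)$ sits inside $\bX_\Vv$ as the subset where the group component is constrained to $g\in G_{I\less H_y}$; since all structure maps are given by identical formulas, this inclusion is an injective \'etale functor. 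For the $G$-action on $\Xx_\Vv^{\less G}$, I would first verify that the object action $g*(I,x)=(I,g|_I*x)$ preserves $\sim_\Qq$: the elementary morphism $(I,J,z)\in\TV_{IJ}$ from $(I,\rho_{IJ}(z))$ to $(J,z)$ is sent by $g$ to the elementary morphism $(I,J,g|_J*z)$ from $g*(I,\rho_{IJ}(z))$ to $g*(J,z)$, using $G_J$-invariance of $\TV_{IJ}$ together with equivariance of $\rho_{IJ}$ with respect to the projection $G_J\to G_I$. Since $\Qq$ is nonsingular, locally injective, and \'etale (Proposition~\ref{prop:Hcomplet}), Lemma~\ref{lem:act}(iv) then produces unique extensions to $G$-actions on $\Qq$ and on $\Xx_\Vv^{\less G}$.

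Next I would confirm that both inclusions $\Qq\hookrightarrow\Xx_\Vv^{\less G}\hookrightarrow\Xx_\Vv$ are $G$-equivariant in the sense of Definition~\ref{def:gpact}. The two actions agree on objects by construction. On morphisms, the inner action of $G$ on $\Xx_\Vv$---computed from \eqref{eq:gpident3'} with $\ov\al(g,(I,x))=(I,I,g|_I*x,g|_I)$---sends $(I,J,y,h)$ to $(I,J,\,g|_{I\vee J}*y,\,g|_{I\wedge J}hg|_{I\wedge J}^{-1})$; a direct check using $G_{I\vee J}$-invariance of $\TV_{(I\wedge J)(I\vee J)}$ shows this preserves the subcategory $\Xx_\Vv^{\less G}$, whereupon nonsingularity of $\Xx_\Vv^{\less G}$ forces the restriction to coincide with the action of Lemma~\ref{lem:act}(iv). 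Consequently Lemma~\ref{lem:BbGa}(iii) lifts the $G$-equivariant inclusions to functors $\Qq^{\times G}\to(\Xx_\Vv^{\less G})^{\times G}\to\Xx_\Vv^{\times G}$, and composition with the absorbing functor $\Xx_\Vv^{\times G}\to\Xx_\Vv$ of Lemma~\ref{lem:BbGa}(v)---available because $G$ acts on $\Xx_\Vv$ by inner automorphisms (Proposition~\ref{prop:MMGa1}(ii))---yields the two functors $\io^{\times G}$ in the statement. The first two realization identifications follow immediately: $|\Qq^{\times G}|\simeq|(\Xx_\Vv^{\less G})^{\times G}|$ by Lemma~\ref{lem:BbGa}(iv) together with Remark~\ref{rmk:grcompl0} (since $(\Xx_\Vv^{\less G})^{\times G}$ is an \'etale groupoid completion of $\Qq^{\times G}$), and $|(\Xx_\Vv^{\less G})^{\times G}|\simeq|\Xx_\Vv^{\less G}|/G$ by Lemma~\ref{lem:BbGa}(ii).

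For the final homeomorphism $|\Xx_\Vv^{\less G}|/G\stackrel{\simeq}\to|\Xx_\Vv|$, continuity follows from the composed functor $\Xx_\Vv^{\less G}\to\Xx_\Vv$ together with triviality of the inner action on $|\Xx_\Vv|$, and surjectivity on objects is automatic. For injectivity, suppose $(I,x)\sim_{\Xx_\Vv}(J,y)$; then some morphism $(I,J,z,g)\in\bX_\Vv$ exists, so $I,J$ are nested with $x=g^{-1}*\rho_I(z)$ and $y=\rho_J(z)$. The morphism $(I,J,z,\id)$ lies in $\Xx_\Vv^{\less G}$ and realizes $(I,\rho_I(z))\sim_{\Xx_\Vv^{\less G}}(J,y)$, while embedding $g\in G_{I\wedge J}$ into $G$ gives $g*(I,x)=(I,\rho_I(z))$; hence $[(I,x)]=[(J,y)]$ in $|\Xx_\Vv^{\less G}|/G$. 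The converse is immediate since every morphism used is already a morphism of $\Xx_\Vv$. To upgrade this continuous bijection to a homeomorphism I use openness of both quotient maps $X_\Vv\to|\Xx_\Vv^{\less G}|/G$ and $X_\Vv\to|\Xx_\Vv|$, which follows from Lemma~\ref{lem:etale1}(i) combined with openness of finite-group quotient maps. The main obstacle is the bookkeeping for two closely related checks: that the inner action on $\Xx_\Vv$ preserves the subcategory $\Xx_\Vv^{\less G}$, and the injectivity step just described. Both reduce to tracking how the constraint $g\in G_{I\less H_y}$ interacts with the six-case composition formula of Proposition~\ref{prop:MMGa1}, and both rest ultimately on the ``separation'' and ``equivariance'' axioms of \'etale data of type $V$.
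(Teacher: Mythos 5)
Your proposal is correct and follows essentially the same route as the paper's own proof (Step 7 of the proof of Proposition~\ref{prop:MMGa10}, together with Lemma~\ref{lem:MMGa} and Step 4): factor through the groupoid completion, extend the object action by Lemma~\ref{lem:act}(iv), check equivariance against the inner action \eqref{eq:gpactY1}, and invoke Lemma~\ref{lem:BbGa}(ii)--(v) plus the absorbing functor of Lemma~\ref{lem:BbGa}(v). Your explicit decomposition of a morphism of $\Xx_\Vv$ into an $\Xx_\Vv^{\less G}$-morphism and a $G$-action morphism for the final injectivity check is precisely the content the paper compresses into \eqref{eq:QQ03} and its brief remark that the categories share the same object space.
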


We next explain the construction of the bundle $\Xx_\Vv^{\less G} \times E$ and the functor $\tau$ in Theorem~\ref{thm:globstab}.
This bundle is simply the product of the category $\Xx_\Vv^{\less G}$ with the category $E$, whose  object space is a finite dimensional vector space 
$E$ on which $G$ acts and whose  morphisms consist only of identity morphisms.    This category  $\Xx_\Vv^{\less G} \times E$  has a $G$ action as follows.

\begin{lemma}\label{lem:MMGaE}  Let $\Xx_\Vv$  and  $\Qq$ be as in Proposition~\ref{prop:MMGa2} (so that $\Qq$ has groupoid completion $\wh{\Qq}:=\Xx_\Vv^{\less G}$),
and let $E$ be a finite dimensional vector space on which $G$ acts.
Consider the following product category $\Qq\times E$:
\begin{align*}
Q\times E = {\textstyle\bigsqcup_{I}} V_I\times E,&\quad 
{{\bQ}}\times \bE = {\textstyle \bigsqcup_{I\subset J}} \TV_{IJ}\times E,   \\
(s\times t)\, (I,J,y,e) &= \bigl( (I,\rho_{IJ}(y), e), (J,y,e)\bigr),
\end{align*}
with groupoid completion $\wh{\Qq}\times E$.  
Then:
\begin{enumilist}
\item  The category $\Qq\times E$ is \'etale, and $G$ acts on both $\Qq\times E$ and its groupoid completion $\HatQq\times E = (\Xx_\Vv^{\less G})\times E$  by the product action on objects and morphisms. 


\item  There is a bundle $\pr: \Xx_\Vv^E\to \Xx_\Vv$ with fiber $E$ that supports a $G$-action and is 
such that the functorial inclusion $\io: \Xx_\Vv^{\less G} \to \Xx_\Vv$ defined in Proposition~\ref{prop:MMGa1}~(ii) lifts to a $G$-equivariant functor
$\io_E: \Xx_\Vv^{\less G}\times E \to \Xx_\Vv^E$ that induces a homeomorphism
$$
|(\Xx_\Vv^{\less G}\times E)^{\times G}| \;\simeq\; |(\Xx_\Vv^{\less G}\times E)|/ G \simeq 
|\Xx_\Vv^E|/G.
$$
\end{enumilist}
\end{lemma}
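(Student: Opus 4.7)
For part (i), the \'etaleness of $\Qq\times E$ is immediate from the fact that the \'etale classes (a)--(d) are closed under products, with $E$ viewed as the \'etale category whose object space is $E$ and whose only morphisms are identities. For the $G$-action, I would apply Lemma~\ref{lem:act}~(iv) to the nonsingular, locally injective, \'etale category $\Qq$ of Proposition~\ref{prop:Hcomplet}: the action of $G$ on $\Obj_\Qq=\bigsqcup_I V_I$ defined by $g*(I,x):=(I, g|_I*x)$ preserves the equivalence relation $\sim_\Qq$, thanks to the equivariance of $\rho_{IJ}$ built into the $V$-data, and hence extends uniquely to $\Qq$. Combined with the given $G$-action on $E$, the resulting diagonal action on $\Qq\times E$ satisfies the conditions of Lemma~\ref{lem:act}~(ii) (checked componentwise on objects and morphisms), and extends to $\wh\Qq\times E=(\Xx_\Vv^{\less G})\times E$ via Lemma~\ref{lem:BbGa}~(iv).

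For part (ii), I would define $\Xx_\Vv^E:=\Xx_\Vv\times\mathcal E$, where $\mathcal E$ denotes the category with object space $E$ and only identity morphisms, and view it as a bundle over $\Xx_\Vv$ with fiber $E$ in the sense of Definition~\ref{def:bundle} via projection onto the first factor. The $G$-action on $\Xx_\Vv^E$ is the diagonal of the inner $G$-action on $\Xx_\Vv$ (from Proposition~\ref{prop:MMGa1}~(ii), lifted to $\Xx_\Vv^E$ via Lemma~\ref{lem:actW}) and the given $G$-action on $E$; on objects this gives $g*((I,x),e)=((I,g|_I*x),g*e)$. The functor $\io_E$ is the identity on objects and acts by $\io_E(m,e):=(\io(m),(s(\io(m)),e))$ on morphisms, where $\io:\Xx_\Vv^{\less G}\to\Xx_\Vv$ is from Proposition~\ref{prop:MMGa2}; its functoriality and $G$-equivariance follow directly from the corresponding properties of $\io$ and the compatibility of the $G$-actions.

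For the chain of homeomorphisms, the first identification $|(\Xx_\Vv^{\less G}\times E)^{\times G}|\simeq|(\Xx_\Vv^{\less G}\times E)|/G$ is Lemma~\ref{lem:BbGa}~(ii). For the second, I would use that $|\Xx_\Vv^{\less G}\times E|=|\Xx_\Vv^{\less G}|\times E$ and $|\Xx_\Vv^E|=|\Xx_\Vv|\times E$ as topological spaces, since $E$ contributes only identity morphisms. The map induced by $\io_E$ on $G$-quotients fits into a commutative diagram with the quotient maps and the homeomorphism $|\Xx_\Vv^{\less G}|/G\simeq|\Xx_\Vv|$ from Proposition~\ref{prop:MMGa2}, giving a continuous bijection. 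Working locally in the uniformizers $(V_J,G_J)$ of Proposition~\ref{prop:MMGa1}~(i), both sides reduce to the diagonal $G_J$-quotient of $V_J\times E$, and these local identifications glue into the desired global homeomorphism.

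The main technical obstacle is verifying the second homeomorphism, specifically showing that the inverse of this continuous bijection is also continuous. This requires a careful analysis in each local uniformizer, using that the $G_J$-action on $V_J$ captures all identifications present in $\Xx_\Vv$ over the chart $|V_J|\subset|\Xx_\Vv|$, and correspondingly in $\Xx_\Vv^{\less G}$ via the morphisms produced by paths through lower strata $V_H$ with $H\subsetneq J$ as made explicit by Proposition~\ref{prop:Hcomplet}. The compatibility of $\io_E$ with these local structures then forces the inverse of the quotient map to be continuous, completing the identification.
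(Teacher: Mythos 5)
Your part (i) is fine and matches the paper's (one-line) argument: \'etaleness of the product is automatic, and the $G$-action comes from equivariance of the $\rho_{IJ}$ via Lemma~\ref{lem:act}~(iv) and Lemma~\ref{lem:BbGa}. The gap is in part (ii): you define $\Xx_\Vv^E$ as the plain product $\Xx_\Vv\times\mathcal E$, where $\mathcal E$ has only identity morphisms, so the morphisms of your bundle act trivially on the fiber $E$. With that choice the second homeomorphism fails. Indeed $|\Xx_\Vv\times\mathcal E|=|\Xx_\Vv|\times E$, and since the inner $G$-action on $\Xx_\Vv$ is trivial on $|\Xx_\Vv|$, your quotient is $|\Xx_\Vv|\times(E/G)$ (locally $(V_J/G_J)\times(E/G_J)\times\cdots$), whereas $|\Xx_\Vv^{\less G}\times E|/G$ is the quotient of $|\Xx_\Vv^{\less G}|\times E$ by the \emph{diagonal} action, which locally over a chart looks like $(V_J\times E)/G_J$ with $G_J$ acting diagonally. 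These are genuinely different: take $N=1$, $G=G_1=\Z/2$ acting by sign on $V_1=\R$ and on $E=E_1=\R$; then the left-hand side is $\R^2/\{\pm 1\}$ (a cone, a topological $2$-manifold without boundary), while your right-hand side is $(\R/\pm)\times(\R/\pm)\cong[0,\infty)^2$, which has boundary. So your claim that ``both sides reduce to the diagonal $G_J$-quotient of $V_J\times E$'' is false for your $\Xx_\Vv^E$, and no amount of care with continuity of the inverse can repair a map that is not a bijection of the underlying sets.

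The missing idea is that the morphisms of $\Xx_\Vv^E$ must themselves twist the fiber by the group elements: the paper takes $\Obj_{\Xx_\Vv^E}=\bigsqcup_I V_I\times E$ but $\Mor_{\Xx_\Vv^E}=\bigsqcup_{I\sim J}\TV_{(I\wedge J)(I\vee J)}\times G_{I\wedge J}\times E$ with
$(s\times t)(I,J,y,g,e)=\bigl((I,\,g^{-1}*\rho_I(y),\,g^{-1}*e),\,(J,\rho_J(y),e)\bigr)$,
so that over each uniformizer the morphisms implement the \emph{diagonal} $G_J$-action on $V_J\times E$, and only then does $|\Xx_\Vv^E|/G$ match $|\Xx_\Vv^{\less G}\times E|/G$ (this is exactly the point emphasized in the discussion following the lemma and in Remark~\ref{rmk:GXVE}). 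Relatedly, your appeal to Lemma~\ref{lem:actW} is a warning sign: the $G$-action needed on $\Xx_\Vv^E$, given by $g*(I,y,e)=(I,g|_I*y,g*e)$ and $g*(I,J,y,h,e)=(I,J,g|_{I\vee J}*y,\,g|_{I\wedge J}h\,g|_{I\wedge J}^{-1},\,g*e)$, is \emph{not} an inner action and is not the Lemma~\ref{lem:actW} lift of the inner action on $\Xx_\Vv$ (that lift would act trivially on $E$ over each $V_I$ for the factors $G_{A\less I}$). Your statement and verification of $\io_E$ would then also have to be redone against this corrected morphism space, where the compatibility with sources is more delicate than a literal inclusion.
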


The proof is given in Lemma~\ref{lem:MMGaE0} below. This lemma also describes precise conditions under which the quotient $|\Xx_\Vv^E|/G$ is not Hausdorff. The point is this:
The bundle $\Xx_\Vv^E$ in (iii) above is constructed from the spaces $V_J\times E$ by adjoining morphisms given by the {\it diagonal} group action of $G$ on $V_J\times E$. On the other hand, 
the partial group actions that appear as  morphisms in $\Xx_\Vv^{\less G}\times E$  are given by $G_{J\less I}$-actions on $\TV_{IJ}\times E$ that are trivial on the $E$ factor.  We will see in  
Lemma~\ref{lem:MMGaE0}~(ii) below that  the space $|\Xx_\Vv^E|/G $ is not  Hausdorff
when these actions are different and  the covering of $|\Qq|$ given by the sets $|V_I|$ is sufficiently complicated..

The  $G$-equivariant functor 
$$
\tau:  \Xx_\Vv^{\less G} \times E \to \Ww_\Vv
$$
 whose existence is claimed in Theorem~\ref{thm:globstab}~(v) will be constructed in \S\ref{ss:stabilize} 
 from a family of local stabilizations of the original Fredholm section $f: \Xx\to \Ww$.
It has the following formal structure.

\begin{prop}\label{prop:MMGaE}  In the situation of Lemma~\ref{lem:MMGaE}, suppose given 
a bundle $\Pp: \Ww_\Vv\to \Xx_\Vv$  over $\Xx_\Vv$
with induced inner $G$-action as defined in Lemma~\ref{lem:actW}.
 Suppose further that there is a functor  ${\tau}: \Qq \times E \to \Ww_\Vv$  such that
 \begin{itemize}\item[-]  the induced map on object spaces is $G$-equivariant
 \item[-]    its restriction to each fiber $(I,y)\times E$ is linear with kernel  that includes the subspace $E_{A\less H_y}$, where $H_y=  \min \{H \,|\, y\in \TV_{HI}\}$ as in \eqref{eq:MorHM}.
 \item[-] 
 ${\tau}\circ \Pp = \io\circ \pr: \Qq\times E\to \Xx_\Vv$, 
 \end{itemize}
where $\io: \Xx_\Vv^{\less G} \to \Xx_\Vv$ is the  functorial inclusion from Proposition~\ref{prop:MMGa2}.
Then:  
\begin{enumilist}\item ${\tau}$ extends uniquely to a functor $\tau:\Xx_\Vv^{\less G} 
\times E \to \Ww_\Vv$.

\item Further $\tau$ factors as the composite  $\tau= \tau_E\circ \io_E$ in the following 
commutative diagram of functors
\begin{align}\label{diag:13}
\xymatrix
{
\Xx_\Vv^{\less G} \times E\ar@{->}[d]^{\pr}\ar@{->}[r]^{\;\quad \io_{E}}  & \Xx_\Vv^E \ar@{->}[d]^{\pr}\ar@{->}[r]^{\;\;\tau_E }& \Ww_\Vv \ar@{->}[d]_{\Pp}\\
\Xx_\Vv^{\less G}  \ar@{->}[r]^{\io} &   \Xx_\Vv \ar@{->}[r]^{\;\; =} & \Xx_\Vv,
}
\end{align}
where $ \Xx_\Vv^E$ and the $G$-equivariant functor $\io_E$ are  as in Lemma~\ref{lem:MMGaE}~(ii).
Further,  $\tau_E$ is $G$-equivariant, in the sense that it takes the $G$-action on $\Xx_\Vv^E$ defined in Lemma~\ref{lem:MMGaE}~(ii) to that on $\Ww_\Vv$.
\end{enumilist}
\end{prop}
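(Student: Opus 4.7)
For part (i), I will apply Lemma~\ref{lem:gpcomplet}(iv). The product category $\Qq \times E$ is nonsingular and locally injective because $\Qq$ is (by Proposition~\ref{prop:Hcomplet}) and $E$ contributes only identity morphisms. Since every morphism in $\Qq \times E$ leaves the $E$-component unchanged, the equivalence classes in $\Obj_{\Qq \times E}$ take the form $[(I,y)] \times \{e\}$, each inheriting from Proposition~\ref{prop:Hcomplet}(i) the property of having a unique minimum with morphisms to every other element. The same observation canonically identifies the groupoid completion $\wh{\Qq \times E}$ with $\wh{\Qq} \times E = \Xx_\Vv^{\less G} \times E$, since adjoining inverses to a product where one factor is already a groupoid commutes with the product. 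Lemma~\ref{lem:gpcomplet}(iv) then yields the unique \'etale functor extension of $\tau$ to $\tau: \Xx_\Vv^{\less G} \times E \to \Ww_\Vv$, using that $\Ww_\Vv$ is a groupoid.

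For part (ii), I construct $\tau_E$ and verify the factorization. On the common object space $X_\Vv \times E$ (since $\io_E$ is the identity on objects by Lemma~\ref{lem:MMGaE}(ii)), the condition $\tau = \tau_E \circ \io_E$ forces $\tau_E := \tau$ on objects. At the morphism level, $\Xx_\Vv^E$ is by construction a bundle over $\Xx_\Vv$, so its morphisms take the form $(m, \xi)$ with $m \in \bX_\Vv$ and $\xi$ in the fiber over $s(m)$. I extend $\tau_E$ by defining $\tau_E(m, \xi)$ to be the unique lift of $m$ through the bundle $\Pp: \Ww_\Vv \to \Xx_\Vv$ with source $\tau(s(m), \xi)$, i.e.\ $\tau_E(m, \xi) := (m, \tau(s(m), \xi))$ via the identification in \eqref{eq:Wbundle}. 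On morphisms in the image of $\io_E$ (those with $m \in \bX_\Vv^{\less G}$), this reproduces $\tau$ by the functoriality established in part (i). Functoriality of $\tau_E$ reduces to the bundle axioms of Definition~\ref{def:bundle}(iii) combined with the functoriality of $\tau$. The $G$-equivariance of $\tau_E$ follows formally from the $G$-equivariance of $\tau$ on objects, the $G$-equivariance of $\io_E$ from Lemma~\ref{lem:MMGaE}(ii), and the $G$-equivariance of all bundle structure maps of $\Ww_\Vv$ guaranteed by Lemma~\ref{lem:actW}.

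The principal obstacle is verifying that $\tau_E$ is well-defined, namely that the prescribed lift of $m$ really has target equal to $\tau(t(m), \mu_{\Xx_\Vv^E}(m, \xi))$, where $\mu_{\Xx_\Vv^E}$ denotes the bundle morphism action on $\Xx_\Vv^E$. This is precisely where the kernel hypothesis on $\tau$ is indispensable. Comparing the morphism descriptions in Proposition~\ref{prop:MMGa1} and Proposition~\ref{prop:Hcomplet}(ii), the extra morphisms of $\bX_\Vv$ beyond $\bX_\Vv^{\less G}$ arise from the factor $G_{A\less H_y}$ of the isotropy at each point, and the hypothesis that $\tau|_{\{(I,y)\}\times E}$ is linear with kernel containing $E_{A\less H_y}$ guarantees that these extra isotropy morphisms act trivially on $\tau(I,y,e)$; equivalently, the bundle lift $\mu_\Vv(m, \tau(s(m),\xi))$ equals $\tau(t(m), \mu_{\Xx_\Vv^E}(m,\xi))$ on all relevant morphisms. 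Organizing this verification cleanly across the six composition cases in \eqref{eq:MMGa3} is the most delicate part of the proof, but once the kernel condition is invoked in each case the remaining checks are formal consequences of the functoriality of $\tau$ on objects and the bundle axioms for $\mu_\Vv$.
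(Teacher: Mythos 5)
Your part (i) is essentially the paper's own argument: both invoke Lemma~\ref{lem:gpcomplet}~(iv) after observing that $\Qq\times E$ inherits from $\Qq$ the poset property with a unique minimum in each equivalence class, and identify the groupoid completion with $\Xx_\Vv^{\less G}\times E$. For part (ii) your route is genuinely different from the paper's, which restricts to the full subcategory of $\Xx_\Vv^E$ with objects $(I,x,e)$, $e\in E_{H_x}$ (where the product $G$-action coincides with the lifted inner action), extends there using the functoriality of inner-action lifts from Remark~\ref{rmk:actW}, and precomposes with the fiberwise projection killing $E_{A\less H_x}$. Your direct formula $\tau_E(m,\xi)=(m,\tau(s(m),\xi))$ is a viable alternative and correctly reduces everything to the single identity $\mu_\Vv\bigl(m,\tau(s(m),\xi)\bigr)=\tau\bigl(t(m),g*\xi\bigr)$ for a morphism $m=(I,J,y,g)$ of $\Xx_\Vv$.

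That identity, however, is exactly where your proposal has a gap: you assert it but do not prove it, and the reason you sketch is wrong in a way that matters. By \eqref{eq:QI} and \eqref{eq:QQ03}, the morphisms of $\Xx_\Vv$ that are \emph{not} in $\Xx_\Vv^{\less G}$ are those whose group element has a nontrivial $G_{H_y}$-component — not morphisms coming from $G_{A\less H_y}$ — and these are handled by the \emph{object-level $G$-equivariance} of $\tau$, not by the kernel hypothesis. The kernel condition $E_{A\less H_y}\subset\ker\tau((I,y),\cdot)$ is needed for the morphisms that \emph{are} already in $\Xx_\Vv^{\less G}$: in $\Xx_\Vv^{\less G}\times E$ they leave the $E$-coordinate fixed, while in $\Xx_\Vv^E$ they twist it by $g\in G_{(I\wedge J)\less H_y}$; since $g*\xi-\xi\in E_{A\less H_y}$, the kernel condition gives $\tau(t(m),g*\xi)=\tau(t(m),\xi)$, which combined with the functoriality obtained in (i) settles this case. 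For a general $m$, decompose it via \eqref{eq:QQ03} as the inner-action morphism for $h:=g|_{H_y}$ followed by a morphism of $\Xx_\Vv^{\less G}$, apply equivariance to the first factor ($\mu_\Vv(\ov\al(h,\cdot),\tau(s(m),\xi))=\tau(h*s(m),h*\xi)$), functoriality to the second, and the kernel condition to replace $g*\xi$ by $h*\xi$; no case analysis over the six composition rules of \eqref{eq:MMGa3} is needed, and composition-compatibility of $\tau_E$ is then formal, as you say. For the same reason your appeal to "$G$-equivariance of all bundle structure maps of $\Ww_\Vv$" is shaky: the $G$-action on $\Xx_\Vv^E$ is \emph{not} the lifted inner action (Remark~\ref{rmk:GXVE}), so equivariance of $\tau_E$ on objects comes directly from the hypothesis on $\tau$, and on morphisms again from the identity above. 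With these corrections your construction does yield the stated conclusion.
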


For the proof, see Proposition~\ref{prop:MMGaE0}.

Finally we discuss the construction of  multisections of $\Ww_\Vv\to \Xx_\Vv$. For basic definitions, see Definition~\ref{def:multisdef0}.

\begin{prop} \label{prop:globsym}  Let the bundle $\Ww_\Vv\to \Xx_\Vv$, group $G$, and functor $\breve\tau: \Qq\times E \to \Ww_\Vv$ with extension $\tau:\Xx_\Vv^{\less \Ga}\times E\to \Ww_\Vv$ be as
  in Proposition~\ref{prop:MMGaE}.
  \begin{itemlist}
  \item [{\rm (i)}] 
Each $e\in E$ induces a  multisection functor $\La_{\Vv,e} :  \Ww_\Vv\to \Q^{\ge 0}$ 
given by
\begin{align}\label{eq:pushLa300}
 \textstyle
\La_{\Vv,e}(w) &= \tfrac{1}{|G|} \ \# \{ h\in G \,|\, \tau(P_\Vv(w) ,h*e)= w  \},
\end{align}
where $\#$ denotes the number of elements in a finite set. 
\item[{\rm (ii)}]  This multisection $\La$ is globally structured in the sense of Definition~\ref{def:multisdef0} by the sections $(\s_g)_{g\in G}$ and correspondence $\ka: \Mor_{\Xx_\Vv}\to \{b: G\to G, {\rm bijection}\}$ given by
\begin{align}
\s_g(I,x) &= \tau(I,x, g*e),\	\quad 
\ka(I,J,y,g)(h) = g*h.
\end{align}
\item[{\rm (iii)}] 
 The triple $(\La, (\s_i)_{i\in \Ii}, \ka)$ is structurable in the sense of \cite[Def.13.3.6]{TheBook}, and determines a unique structured multisection  $[\La, \Uu, \Ss,\tau]$  in the sense of \cite[Def.13.3.8]{TheBook}, where $(U(x)_{x\in \Xx_\Vv}$ is
the system of good neighbourhoods in  $\Xx_\Vv$ defined in Lemma~\ref{lem:neigh}.
\end{itemlist}
 \end{prop}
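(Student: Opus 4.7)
The plan is to establish (ii) first and deduce (i) as a corollary, then to obtain (iii) by invoking Proposition~\ref{prop:structmulti}. For (ii), I will verify that the data $(\La_{\Vv,e}, (\s_g)_{g\in G}, \ka)$ meets the requirements of Definition~\ref{def:multisdef0}. That each $\s_g:(I,x)\mapsto\tau(I,x,g*e)$ is an \'etale section of $\Pp_\Vv$ is immediate from the \'etaleness of $\tau$ and the identity $\Pp_\Vv\circ\tau = \io\circ\pr$ in Proposition~\ref{prop:MMGaE}; the correspondence $\ka(I,J,y,g)(h):=g\cdot h$ is locally constant because it depends only on the discrete parameter $g\in G_{I\wedge J}$ indexing the connected components $\TV_{(I\wedge J)(I\vee J)}\times\{g\}$ of $\bX_\Vv$; and the formula $\tfrac{1}{|G|}\#\{g:\s_g(P_\Vv(w))=w\}$ reproduces \eqref{eq:pushLa300}.

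The crux of (ii) is the compatibility identity \eqref{eq:global1}, namely
$$
\s_{\ka(m)(h)}(t(m)) \;=\; \mu\bigl(m,\s_h(s(m))\bigr) \qquad \forall\; m\in\bX_\Vv,\; h\in G .
$$
I will prove this by factoring $m=(I,J,y,g)$ into pieces each handled by one structural property of $\tau$. Assume $I\subset J$ (the case $J\subset I$ is symmetric by inversion). Set $m_0:=(I,J,y,\id)$, which lies in $\Mor_\Qq\subset\Mor_{\Xx_\Vv^{\less G}}$, and $\bar m_g:=(I,I,\rho_I(y),g)$, which equals the pure $G$-action morphism $\alpha_X(g,(I,\rho_I(y)))$ in $\Xx_\Vv$; a direct check using \eqref{eq:MMGa3} yields $m = \bar m_g\circ m_0$. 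Functoriality of $\tau:\Xx_\Vv^{\less G}\times E\to\Ww_\Vv$ applied to the morphism $(m_0,\id_{(gh)*e})$ gives
$$
\mu\bigl(m_0,\tau(I,\rho_I(y),(gh)*e)\bigr) \;=\; \tau(J,\rho_J(y),(gh)*e) ,
$$
while $G$-equivariance of $\tau_E$ from Proposition~\ref{prop:MMGaE}~(ii), combined with the identification $\bar\alpha_X(g,\cdot) = \alpha_X(g^{-1},\cdot)^{-1}$ converting form (iii) of Lemma~\ref{lem:act} to form (iii'), yields
$$
\mu\bigl(\bar m_g,\tau(I,g^{-1}*\rho_I(y),h*e)\bigr) \;=\; \tau(I,\rho_I(y),(gh)*e) .
$$
Chaining these via the bundle composition rule $\mu(\bar m_g\circ m_0,w)=\mu(m_0,\mu(\bar m_g,w))$ with $w:=\s_h(s(m))=\tau(I,g^{-1}*\rho_I(y),h*e)$ produces $\mu(m,\s_h(s(m)))=\tau(J,\rho_J(y),(gh)*e)=\s_{gh}(t(m))=\s_{\ka(m)(h)}(t(m))$.

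Part (i) follows at once: identity \eqref{eq:global1} implies that for each morphism $(m,w_0)\in\bW_\Vv$ with $m=(I,J,y,g)$, the map $h\mapsto gh$ is a bijection between the finite sets $\{h\in G:\tau(P_\Vv(w_0),h*e)=w_0\}$ and $\{h\in G:\tau(P_\Vv(\mu(m,w_0)),h*e)=\mu(m,w_0)\}$, using invertibility of $\mu(m,\cdot)$ for the reverse direction; this establishes functoriality $\La_{\Vv,e}(w_0)=\La_{\Vv,e}(\mu(m,w_0))$, and local representation as a weighted sum of \'etale sections is immediate from $(\s_g)_{g\in G}$ with common weight $1/|G|$. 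For (iii), Proposition~\ref{prop:MMGa1}~(i) provides that $\Xx_\Vv$ is \'etale proper, and together with the good system of neighbourhoods from Lemma~\ref{lem:neigh}, Proposition~\ref{prop:structmulti} applies directly to the globally structured multisection from (ii), delivering both the structurability and the unique equivalence class $[\La,\Uu,\Ss,\tau]$.

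The main obstacle is the factorization step in (ii) together with the accompanying reconciliation of the two $G$-action conventions: the $\alpha$-form (iii) used in Proposition~\ref{prop:MMGa1}~(ii) to describe the inner action on $\Xx_\Vv$, and the $\bar\alpha$-form (iii') used in Lemma~\ref{lem:actW} to lift it to $\Ww_\Vv$. Once these conventions are properly aligned, the remainder is essentially bookkeeping with \eqref{eq:MMGa3} and the bundle composition rule.
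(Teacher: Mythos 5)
Your proof is correct, and for part (ii) --- the heart of the proposition --- it coincides with the paper's own argument: both factor a general morphism $(I,J,y,g)$ of $\Xx_\Vv$ into a morphism of $\Xx_\Vv^{\less G}$ and a pure $G$-action morphism (the paper quotes the factorization $(I,J,g^{-1}*y,\id)\circ(J,J,\rho_J(y),g)$, you use the equivalent $(I,I,\rho_I(y),g)\circ(I,J,y,\id)$ from the remark after Lemma~\ref{lem:compos}), and then verify \eqref{eq:global1} on the two factors using, respectively, functoriality of $\tau$ on $\Xx_\Vv^{\less G}\times E$ (Proposition~\ref{prop:MMGaE}~(i)) and its $G$-equivariance (Proposition~\ref{prop:MMGaE}~(iii)); your reconciliation of the two action conventions via $\ov\al_X(g,\cdot)=\al_X(g^{-1},\cdot)^{-1}$ is accurate, and handling $J\subset I$ by inversion is legitimate since \eqref{eq:global1} for $m$ implies it for $m^{-1}$ after applying $\mu(m^{-1},\cdot)$.

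Where you genuinely diverge is part (i). The paper establishes functoriality of $\La_{\Vv,e}$ independently of (ii), by exhibiting it as the pushforward $(\tau^G)_*(\La^{\less G}_e)$ of the obvious $G$-invariant multisection $\La^{\less G}_e$ of the trivial bundle $\Xx_\Vv^{\less G}\times E\to\Xx_\Vv^{\less G}$ under the extension $\tau^G$ of $\tau$ to $(\Xx_\Vv^{\less G}\times E)^{\times G}$, invoking Lemma~\ref{lem:pushLa}~(ii). You instead deduce (i) from (ii): \eqref{eq:global1} shows that $\ka(m)\colon h\mapsto gh$ carries the counting set over $w_0$ into that over $\mu(m,w_0)$, and injectivity of $\mu(m,\cdot)$ (equivalently, \eqref{eq:global1} applied to $m^{-1}$) gives the reverse inclusion, so $\La_{\Vv,e}$ is constant on morphism classes and is locally represented by the sections $(\s_g)_{g\in G}$ with weights $1/|G|$. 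Your route is more self-contained --- it bypasses the pushforward machinery of Lemma~\ref{lem:pushLa} and matches the observation in the remark following Lemma~\ref{lem:struct2} that \eqref{eq:global0} together with \eqref{eq:global1} automatically yields a functor --- whereas the paper's presentation has the advantage of placing $\La_{\Vv,e}$ inside the same pushforward formalism it later uses for $\La_\ue=\Psi_*\La_{\Vv,\ue}$. Part (iii) is handled identically in both, by Proposition~\ref{prop:structmulti} applied with the good system of neighbourhoods of Lemma~\ref{lem:neigh}.
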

 
 The  proof,
 together with a few other results about multisections,  may be found  at the end of 
\S\ref{ss:GaE}.

 \subsection{The category \texorpdfstring{$\Qq$}{Q} and its relation to $\Xx_\Vv$}\label{ss:Qq}

We now construct the categories $\Qq$, $\Xx_\Vv^{\less G}$ and $\Xx_\Vv$, and prove
Propositions~\ref{prop:MMGa1},~\ref{prop:Hcomplet}, and~\ref{prop:MMGa2}.  We begin by
 constructing the categories $\Qq$ and $\Xx_\Vv^{\less G}$, establishing part (i) and most of (ii) of Proposition~\ref{prop:Hcomplet}.
We then construct two \'etale categories that are related to  $\Qq$.
The first new category is $\Qq^{\times G}$ and  is obtained simply by multiplying the morphism space of $\Qq$ by $G$ as in Lemma~\ref{lem:BbGa}, while the second one is the groupoid  $\Xx_\Vv$   
 in Propositions~\ref{prop:MMGa1} and~\ref{prop:MMGa2} that
 incorporates the $G$-action in a more subtle way.  As we explained in \S\ref{ss:mainres} the groupoid $\Xx_\Vv$ 
 has 
the same objects $\bigsqcup_I V_I$ as $\Qq$, but for each $J$ contains the full translation groupoid defined by the action of $G_J$ on $V_J$ rather than the \lq partial' actions of $G_{J\less I}$ on 
$\TV_{IJ}\subset V_J$ that lie in $\Mor_{\Qq}$, and it takes quite a bit of work to establish the  
 composition rule  \eqref{eq:MMGa3}  in $\Xx_\Vv$.  
Finally Lemma~\ref{lem:compos} exhibits $\Xx_\Vv$ as the groupoid completion of a category $\Bb_\Vv$ that only has morphisms from $V_I$ to $ V_J$ when $I\subset J$, and also shows that each functor $\phi:\Bb_\Vv\to \Zz$ (where $\Zz$ is an arbitrary groupoid) has a unique extension to a functor  $\Xx_\Vv\to \Zz$. This result is an essential ingredient in the construction of the functor $\psi: \Xx_\Vv\to \Xx$ in Theorem~\ref{thm:globstab}~(ii).

\begin{lemma}\label{lem:complet}     Given \'etale data of tyye $V$
 let $\Qq = (Q,{{\bQ}})$ be 
the \'etale
category with
\begin{align*}
Q &= {\textstyle  \bigsqcup_{ I} } V_I, \qquad 
{{\bQ}} ={\textstyle \bigsqcup_{I\subset J } \TV_{IJ} } , \\
 \id: &\;\; Q\to {\Qq}, \quad \; (I,y)\mapsto (I,I,y), \\ 
\notag
(s\times t): &\;\;\ {{\bQ}}\to Q\times Q,\quad (I,J,y)\mapsto 
 \bigl( (I,\rho_{IJ}(y) ), (J,y )\bigr), \\
 (I,J,y)&\circ (J,K,z) = (I,K,z),\quad\mbox{ if } I\subset J\subset K,
\end{align*}
Then the following holds.
\begin{enumilist}\item
$\Qq$ is a nonsingular \'etale category such that $\pi_\Qq: Q\to |\Qq|$ is locally injective.  Moreover, $\Qq$ is a poset, i.e.\   the  relation $\preccurlyeq $  on its object space given by  
\begin{align}\label{eq:pord2} 
(I,x)\preccurlyeq(J,y)
 \Longleftrightarrow \Mor_\Qq\bigl((I,x),(J,y)\bigr)\ne \emptyset 
 \end{align}
 is a partial order.  Further, each equivalence class in $Q$ has a unique minimum with morphisms to all other elements. In the equivalence class containing $(J,y)$ that minimum is $(H_y,\rho_{H_y J}(y))$, with $H_y: = \bigcap \{H: y\in \TV_{HJ}\}$.
 
\item 
 Let $\wh{\Qq}$ be the \'etale  groupoid completion of  $\Qq$ (which exists by Lemma~\ref{lem:gpcomplet}~(iii), (iv)).
Then there 
are morphisms in $\wh{\Qq}$   between $V_I$ and $V_J$ only if $I,J$ are nested, and  for $I\subset J$ the
 morphism spaces are given as follows:
\begin{align}\label{eq:MorHM0}
\Mor_{\wh{\Qq}}(V_I,V_J) &  =
 {\textstyle \bigcup_{\emptyset\ne H\subset I} } \; \bigl(\TV_{IJ}\cap \TV_{HJ}\bigr)\times G_{I\less H},\\ \notag
 &= \bigl\{ (I,J, y,g )\in V_J\times G\ \big| \ y\in \TV_{IJ}, g \in G_{I\less H_{y}}\bigr\},
\end{align}
where for  $I\subset J$ we define
\begin{align}\label{eq:MorHM1}
 (s\times t)\,(I,J,y,g _{I\less H_y})& = \bigl(\,(I,\, g _{I\less H_y}^{-1} * \rho_{IJ}(y)), \; (J,y)\bigr). 
 \end{align}
 Further (again for $I\subset J$)
\begin{align}\label{eq:MorHM2}
 \Mor_{\wh{\Qq}}(V_J,V_I) = \bigl\{ (I,J,y,g)^{-1}: = (J,I,g^{-1}*y,g^{-1}) \ | \ (I,J,y,g)\in  \Mor_{\wh{\Qq}}(V_I,V_J)\bigr\}.
 \end{align}
 \end{enumilist}
 \end{lemma}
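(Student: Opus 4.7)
The plan is to first verify that $\Qq$ is a well-defined \'etale category, then exploit the composition and separation properties of the data to extract the poset structure, local injectivity, and the unique minimum of each equivalence class. Finally, (ii) will follow by combining (i) with Lemma~\ref{lem:gpcomplet}(iii), and identifying the morphism space of $\wh\Qq$ by tracing through the explicit form of equivalence classes.

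\textbf{Category and \'etaleness.}  The object and morphism spaces are disjoint unions of \'etale spaces. To check that the composition $(I,J,y)\circ(J,K,z)=(I,K,z)$ defined for $I\subset J\subset K$ is consistent, we first observe that composability forces $\rho_{JK}(z)=y\in \TV_{IJ}\cap V_{JK}$; the \emph{composition} axiom of the \'etale data then yields $z\in \TV_{IK}\cap \TV_{JK}$ and $\rho_{IK}(z)=\rho_{IJ}(\rho_{JK}(z))=\rho_{IJ}(y)$, so the source of $(I,K,z)$ agrees with the source of $(I,J,y)$. Associativity is immediate because triple composites collapse to $(I,L,w)$. Nonsingularity is clear: a morphism $(I,J,y)\in\Mor_\Qq((I,x),(J,y'))$ forces $J=J',\, y=y',\, I\subset J,\, \rho_{IJ}(y)=x$, which determines $(I,J,y)$. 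All structure maps are restrictions of \'etale maps (inclusions $\TV_{IJ}\hookrightarrow V_J$ and the \'etale maps $\rho_{IJ}$), hence $\Qq$ is \'etale.

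\textbf{Poset structure and the minimum $H_y$.}  Reflexivity and transitivity of $\preccurlyeq$ are built-in; antisymmetry follows because a morphism $V_J\to V_I$ exists only when $J\subset I$, so $(I,x)\preccurlyeq (J,y)\preccurlyeq(I,x)$ forces $I=J$ and then the morphism is an identity. For $(J,y)\in V_J$, the \emph{separation} axiom forces the set $\{H\subset J\,|\, y\in \TV_{HJ}\}$ to be a chain under inclusion: if $y\in \TV_{HJ}\cap\TV_{H'J}$ then $H\sim H'$. Hence $H_y:=\bigcap\{H\,|\,y\in \TV_{HJ}\}$ is the chain's minimum, lies in this set, and the morphism $(H_y,J,y)$ shows $(H_y,\rho_{H_yJ}(y))\preccurlyeq(J,y)$. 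To show $(H_y,\rho_{H_yJ}(y))$ sits below every equivalent element, I argue by induction on the length of a zigzag chain: for the elementary case $(J,y)\leftarrow(I,x)\rightarrow(K,z)$ with $\rho_{IJ}(y)=x=\rho_{IK}(z)$, the composition axiom applied to $y\in \TV_{H_yJ}\cap\TV_{IJ}$ gives $\rho_{IJ}(y)\in \TV_{H_yI}$, and then applied in the other direction to $\rho_{IK}(z)=\rho_{IJ}(y)\in \TV_{H_yI}\cap V_{IK}$ yields $z\in \TV_{H_yK}$ with $\rho_{H_yK}(z)=\rho_{H_yJ}(y)$; this gives the direct morphism $(H_y,K,z)$. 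Iterating, any zigzag from $(J,y)$ to $(K,z)$ produces a direct morphism from $(H_y,\rho_{H_yJ}(y))$ to $(K,z)$. Uniqueness of the minimum then follows from antisymmetry.

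\textbf{Local injectivity.}  For $y\in V_J$, I shrink a neighbourhood $N$ of $y$ in $V_J$ so that, for each of the finitely many subsets $H\subset J$: either $N\cap\cl_{V_J}(\TV_{HJ})=\emptyset$ (possible for $H$ not nested with $H_y$ by the separation axiom), or $N\cap \TV_{HJ}$ has pairwise disjoint $G_{J\less H}$-translates (possible using freeness of the $G_{J\less H}$-action on $\TV_{HJ}$ when $H_y\subset H$). Then any two equivalent points in $N$ must share a common minimum $(H,\rho_{HJ}(y_1))=(H,\rho_{HJ}(y_2))$ for some $H$ with $y_1,y_2\in N\cap \TV_{HJ}$, which forces $y_1=y_2$ since $\rho_{HJ}$ factors through the free $G_{J\less H}$-quotient and $N$ was chosen to be disjoint from its nontrivial translates.

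\textbf{The groupoid completion.}  By (i), Lemma~\ref{lem:gpcomplet}(iii) provides a unique \'etale groupoid completion $\wh\Qq$. Its objects coincide with those of $\Qq$ and there is a unique morphism between any two equivalent objects. The morphism identification (ii) then reduces to describing, for $I\subset J$, the set of elements of $V_I$ equivalent to a fixed $(J,y)$. By the minimum analysis in (i), such elements lie in the $G_{I\less H_y}$-orbit of $\rho_{IJ}(y)\in \TV_{H_yI}\subset V_I$ (using the free action), which is parameterized by $g\in G_{I\less H_y}$ via $g^{-1}*\rho_{IJ}(y)$; this yields the description \eqref{eq:MorHM0} and the source/target maps \eqref{eq:MorHM1}. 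The union form $\bigcup_{H\subset I}(\TV_{IJ}\cap \TV_{HJ})\times G_{I\less H}$ arises because $g\in G_{I\less H_y}$ is equivalent to choosing some $H\supset H_y$ in the chain with $g\in G_{I\less H}$. Inversion \eqref{eq:MorHM2} is forced by the groupoid structure, and there are no morphisms between non-nested $V_I, V_J$ because the minimum would have to sit below both.

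\textbf{Main obstacle.}  The technical heart is the inductive step showing that the candidate minimum $(H_y,\rho_{H_yJ}(y))$ is in fact below every element in the equivalence class reached by arbitrarily long zigzags. This is where the full power of the composition axiom is needed: each zigzag step requires one application going ``up'' ($\TV_{HJ}\cap\TV_{IJ}=\rho_{IJ}^{-1}(\TV_{HI}\cap V_{IJ})$) followed by one going ``down'', and keeping the composition $\rho_{HI}\circ\rho_{IJ}=\rho_{HJ}$ consistent along the way. Everything else is bookkeeping.
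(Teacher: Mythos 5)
Your plan follows essentially the same route as the paper's proof: the separation axiom forces the index sets along any zigzag to be nested, the composition axiom collapses chains onto the unique minimum $(H_y,\rho_{H_yJ}(y))$, local injectivity comes from working inside $\TV_{H_yJ}$ together with the freeness of the $G_{J\less H_y}$-action, and part (ii) is then read off from Lemma~\ref{lem:gpcomplet}(iii) plus the observation that the elements of $V_I$ equivalent to $(J,y)$ form a single free $G_{I\less H_y}$-orbit of $\rho_{IJ}(y)$. The only points you defer to ``bookkeeping'' --- maintaining along the zigzag that $H_y$ remains the minimal index (equivalently $H_y\subset I$ for every index $I$ encountered, which is what the downward steps and the case $H\subsetneq H_y$ in your local-injectivity dichotomy require) --- are precisely the points the paper itself treats briefly, and they are settled by the same composition-axiom manoeuvre you already employ.
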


\begin{proof}  It follows from the identity $\rho_{IK} = \rho_{IJ}\circ \rho_{JK}$ for $I\subset J\subset K$  
that $\Qq$ is a well defined, nonsingular  category.  It is \'etale because for each $I\subset J$ the map $\rho_{IJ}$ quotients out by a free action of $G_{J\less I}$.
Moreover, $\preccurlyeq $ is a partial order; in particular it is antisymmetric because the identity property implies 
that for any pair of distinct elements $(I,x), (J,y)$ at most one of the properties $(I,x) \preccurlyeq (J,y)$ and 
 $(J,y) \preccurlyeq (I,x)$ holds.

Next observe that as in the proof of Lemma~\ref{lem:etale1}, two distinct elements   $(I,x), (J,y)$ lie in the same equivalence class of $\sim_\Qq$  if and only if there is a chain from $(I,x)$ to $(J,y)$ of the following type:
\begin{align}\label{eq:chain}
(I,x) =: (I_0,x_0)\preccurlyeq (I_1,x_1) \succcurlyeq  (I_2,x_2)\preccurlyeq \cdots \succcurlyeq (I_k,x_k) := (J,y).
\end{align}
Now, in a triple of the form
 $(I,x) \preccurlyeq  (K,z)\succcurlyeq (J,y)$ we must have $z\in \TV_{IK}\cap \TV_{JK}$ which implies that $I,J,K$ are nested.
Similarly,  the existence of a triple of the  form $(I,x) \succcurlyeq  (K, z)\preccurlyeq (J,y)$ implies that $z \in V_{KI}\cap V_{KJ}$, so that again $I,J,K$ are nested.  Hence  the sets $(I_i)_{0\le i\le k}$ in any chain such as \eqref{eq:chain} must be nested.  
In particular, every $y\in V_J$ lies in a set of the form $\TV_{H_yJ}$ for a unique  minimal set $H_y$, that is contained in all other sets $I$ such that $(J,y)\sim_\Qq (I,y')$. 
Further, because $H_y$ is minimal,  there is a unique element  $z: = \rho_{H_y J}(y)$ in  $V_{H_y}$  in the equivalence class $[(J,y)]$. Hence the equivalence class of $(J,Y)$ has a unique minimum with morphisms to all other elements. 

To see that  $\pi_\Qq: Q\to |\Qq|$ is locally injective, notice that
because the set $\TV_{H_yJ}$ is open, the second separation property implies that $y$ has a neighbourhood $N_y\subset \TV_{H_yJ}\subset V_J$ that  intersects  $\cl(\TV_{FJ})$  only if $H_y\subset F$.  It follows that $H_y = H_{y'}$ for all $y'\in N_y$. In other words, each equivalence 
class $[(J,y')]$ for $y\in N_y$ contains a unique element of the form $(H_y, z')$, where $z': = \rho_{H_y J}(y')$. 
Since $\rho_{H_y J}: \TV_{H_yJ}\to V_{H_y}$ quotients out by a free action of $G_{J\less H_y}$, we may 
shrink $N_y$ so  that the map $\rho_{H_yJ}|_{N_y}$ is injective.   It follows that $\pi_\Qq: N_y\to |\Qq|$ is injective.  This completes the proof of (i).
\MS

To prove (ii), note first that because the completion $\HatQq$ is nonsingular, each morphism is determined by its source and target.  
In particular,
 the formulas \eqref{eq:MorHM0} and \eqref{eq:MorHM1} do describe morphisms from $V_I$ to $V_J$ in $\HatQq$ in the case $I\subset J$.  When $I\subset J$,  we find using \eqref{eq:MorHM2} that
\begin{align*}
 (s\times t)\bigl((I,J,y,g)^{-1}\bigr) &=  (s\times t)(J,I, g^{-1}*y, g^{-1})\\
 &= \bigl((J,y), (I, \rho_{IJ}(g^{-1}*y)\bigr) =  (t\times s)(I,J,y,g)
\end{align*}
 since $\rho_{IJ}$ commutes with the action of $G_I$.  Therefore the description of  $\Mor_{\HatQq}(V_J,V_I)$ in 
   \eqref{eq:MorHM2} is consistent with the source and target maps, as is the formula for the inverse.

It  remains to show that if $I\subset J$, every element in $\Mor_{\HatQq}(V_I,V_J)$ has the form given in \eqref{eq:MorHM0}.
The remarks above imply that
 if $I\subset J$ and $(I,x)\sim_\Qq(J,y)$,  we can replace the chain \eqref{eq:chain} by one of the form 
\begin{align}\label{eq:chainJ}
(I,x) \succcurlyeq \bigl(H_y, z\bigr) \preccurlyeq (J,y),\quad z: = \rho_{H_y I}(x) = \rho_{H_yJ}(y)=\rho_{H_yI}\bigl(\rho_{IJ}(y)\bigr).
\end{align}
Thus the elements $x$ and $\rho_{IJ}(y)$ of $V_I$  have the same image under $\rho_{H_y I}$, so that they must both lie in the same $G_{I\less H_y}$ orbit.  
Hence there is a unique element $g_{I\less H_y}\in G_{I\less H_y}$ such that 
$x = g_{I\less H_y}^{-1}*\rho_{IJ}(y)$.   Thus this morphism in $\HatQq$ from $(I,x)$ to $(J,y)$ has the same source and target as the tuple 
$$
(I,J, y, g_{I\less H_y}), \quad \mbox{ where }\;\;  x = g_{I\less H_y}^{-1}*\rho_{IJ}(y),
$$
and hence may be identified with it since $\wh{\Qq}$ is nonsingular.  This completes the proof.
\end{proof}

\begin{rmk}\label{rmk:complet}\rm
\begin{nenumilist}
\item   
Both $\Qq$ and $\HatQq$ have realizations that are unions of the realizations of finitely many preuniformizers $\bigl((V_I,\id)\bigr)_{I\in \Ii}$; see Definition~\ref{def:preunif}.   But just as in Example~\ref{ex:branch} these are not uniformizers.
 
\item
Above, given $y\in V_J$,  we defined $H_y: =  \min \{H: y\in \TV_{HJ}\}$.  We remark here that
 the set $H_y$ does not depend on the choice of $J$ but only on the equivalence class $[(J,y)]$ of the element $(J,y)$. Indeed by \eqref{eq:chainJ} we could equally well have defined $H_y$ to be the minimum of the sets 
 $I$ that appear in this equivalence class. 
 
 \item 
 As in Example~\ref{ex:branch} the space $|\Qq| = |\wh{\Qq}| =|\Xx_\Vv^{\less G}|$ is not in general Hausdorff.  
\hfill$\er$
\end{nenumilist}
\end{rmk}

\begin{lemma}\label{lem:MMGa}  
The \'etale categories $\Qq$ and $\wh{\Qq}$ constructed in 
Lemma~\ref{lem:complet} support an action of $G$.
\end{lemma}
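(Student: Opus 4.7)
The plan is to apply Lemma~\ref{lem:act}~(iv), which constructs $G$-actions on both a nonsingular locally injective \'etale category and its groupoid completion from a single $G$-action on the object space, provided that the object-level action is \'etale and preserves the equivalence relation $\sim_\Qq$. Since Lemma~\ref{lem:complet}~(i) already establishes that $\Qq$ is nonsingular, locally injective, and \'etale with \'etale groupoid completion $\wh{\Qq}$, it suffices to construct a suitable $G$-action on the object space $Q = \bigsqcup_{I\subset A} V_I$.

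The natural choice is to use the projections $G = G_A \to G_I$ to make $G$ act on each $V_I$ through the given $G_I$-action, namely
\[
g * (I,x) := (I, g|_I * x), \qquad g\in G,\; (I,x)\in V_I.
\]
This is a group action because each projection $G\to G_I$ is a group homomorphism and each $G_I$-action on $V_I$ is itself an action; it is \'etale because each $G_I$ acts by \'etale maps by hypothesis (b).

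The one nontrivial step is to verify that this action preserves the equivalence relation on $Q$. Since $\sim_\Qq$ is generated by the basic relations $(I, \rho_{IJ}(y)) \sim_\Qq (J, y)$ coming from morphisms $(I,J,y)\in \TV_{IJ}\subset \bQ$ for $I\subset J$, it suffices to check that these relations are preserved. Using the equivariance of $\rho_{IJ}$ with respect to $G_J\to G_I$ (property (c)) we compute
\[
g * (I,\rho_{IJ}(y)) = (I, g|_I * \rho_{IJ}(y)) = (I, \rho_{IJ}(g|_J * y)),
\]
and $g|_J * y \in \TV_{IJ}$ because $\TV_{IJ}\subset V_J$ is $G_J$-invariant (property (c)). Hence $(I, \rho_{IJ}(g|_J * y)) \sim_\Qq (J, g|_J*y) = g*(J,y)$, as required.

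With these hypotheses satisfied, Lemma~\ref{lem:act}~(iv) produces the desired \'etale $G$-actions on $\Qq$ and on $\wh{\Qq} = \Xx_\Vv^{\less G}$ as unique extensions; on morphisms of $\Qq$, the extension is forced by nonsingularity to be $g*(I,J,y) = (I,J, g|_J * y)\in \TV_{IJ}$, the unique morphism from $g*(I,\rho_{IJ}(y))$ to $g*(J,y)$. I do not expect any serious obstacle here: the whole point of the compatibility requirements (equivariance and $G_J$-invariance of $\TV_{IJ}$) built into the \'etale data of type $V$ is precisely to make this verification go through cleanly.
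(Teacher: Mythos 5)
Your proposal is correct and follows essentially the same route as the paper: define $g*(I,x)=(I,g|_I*x)$ on objects, note that equivariance of $\rho_{IJ}$ (and $G_J$-invariance of $\TV_{IJ}$) makes this compatible with $\sim_\Qq$, and then invoke Lemma~\ref{lem:act}~(iv) using that $\Qq$ is nonsingular and locally injective. Your explicit check on the generating relations and the formula for the induced action on morphisms are simply spelled-out versions of what the paper leaves implicit.
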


\begin{proof}  We define the action of $G$  on the object space $Q: = \bigsqcup_I V_I$ by setting
$$
g* (I,x) = (I, g_I*x), \quad x\in V_I, \ g\in G,
$$
where $g_I $ denotes the projection of $g: = (g_i)_{i\in \{1,\dots,N\}}\in G$ to $g_I: = (g_i)_{i\in I} \in G_I$. This action preserves the equivalence relation $\sim_{\Qq}$ on $Q$ because we have assumed that $\rho_{IJ}$ is equivariant with respect to the projection  $G_J\to G_I$.  
Therefore,
since the nonsingular category $\Qq$ is locally injective,  
the existence of the $G$ action on  $\Qq$ and hence on $\wh{\Qq}$ follows from Lemma~\ref{lem:act}~(iv).
\end{proof}

We now prove Propositions~\ref{prop:MMGa1} and ~\ref{prop:MMGa2} which we restate for the convenience of the reader.

\begin{prop}\label{prop:MMGa10}    Every collection of \'etale data of type $V$ defines 
 an \'etale groupoid $\Xx_\Vv=(X_\Vv, \bX_\Vv)$ with 
\begin{align}\label{eq:MMGa20}
X_\Vv &:=  \textstyle  \bigsqcup_{ I\subset A}   V_I , &\qquad 
s (I,J,y,g) &:= (I, g^{-1}*\rho_I(y) ),  \\ \notag
\bX_\Vv &:=\textstyle \bigsqcup_{I\sim J} \TV_{(I\wedge J)(I\vee J)} \times G_{I\wedge J}  , 
& \qquad   t(I,J,y,g) &:=(J, \rho_J(y) ) ,\\ \notag
\id_{(I,x)}&:= (I,I,x), &\qquad (I,J,y, g)^{-1}&:= (J,I, g^{-1} *y, g^{-1}).
\end{align}
If $t(I,J,x, g)=s(J,K,y,h)$, then $I,J,K$ are necessarily nested, and we define the composition by
 \begin{align} \label{eq:MMGa30} 
&(I,J,x, g) \circ (J,K,y,h) :=
\bigl( I,K,  \rho_{I\vee K}(v) , k(hg)|_{I\wedge K} \bigr) ,
\end{align}
where $(v,k)\in V_{I\vee J\vee K}\times G_{I\wedge K} $  
is 
given as follows:
\begin{itemize}\item if  $I\subset K\subset J$ then $(v,k) = (y,\id)$;
\item if $K\subset I\subset J$ then  $(v,k) = \bigl((g|_{I\less K})^{-1}* y,\id\bigr)$;
\item if $I\subset J\subset K$ then  $(v,k) = (y,\id)$;
\item if $J\subset I\subset K$ then $(v,k) = (y,k)$ where $k\in G_{I\less J}$  is uniquely determined by\; 
\begin{align}\label{eq:JIK}
\rho_I(y) = h*k*x;
\end{align}
\item if  $K\subset J\subset I$ then $(v,k) = \bigl((g|_{J\less K})^{-1}*h*x,\id\bigr)$;
\item  if   $J\subset K\subset I$ then $(v,k) = (k*h*x,k)$ where  $k\in G_{K\less J}$ is  uniquely determined by\; 
\begin{align}\label{eq:JKI}
y = h*k*\rho_K(x) .
\end{align}
\end{itemize}
Further,

\begin{enumilist}
\item 
$\Xx_\Vv$ is an \'etale proper groupoid whose realization  $|\Xx_\Vv|=\bigcup_{J\subset A} |V_J|$ is the union of the realizations of  the finite set of local uniformizers $(V_J, G_J)$.

\item
$\Xx_\Vv$ supports an inner action of $G$ that is determined as in \eqref{eq:gpident3} by the map
\begin{align}\label{eq:XVgpactY0}
\al: G \times X_\Vv\to \bX_\Vv, \quad (g, (I,x))\mapsto (I,I,x,g|_I).
\end{align}

\item
The  inclusion $\io: \Qq\to \Xx_\Vv$ factors as 
$$
\Qq\;\to\; \HatQq = \Xx_\Vv^{\less G}\; \;\to\; \Xx_\Vv,
$$
 and extends to functors $\io^{\times G}: \Qq^{\times G}\to \HatQq^{\times G}\to  \Xx_\Vv$ that  induce the identification 
 $$
|\Qq^{\times G}| \; \simeq \; 
|\HatQq^{\times G}| \;\simeq\;|Q|/G\; \stackrel{\simeq}\to\; |\Xx_\Vv|
$$
 of the realizations.  
 \end{enumilist}
\end{prop}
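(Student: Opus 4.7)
The plan is to proceed in four stages: (1) verify that the formulas \eqref{eq:MMGa20}, \eqref{eq:MMGa30} define a well-formed category with inverses; (2) establish the \'etale, proper and uniformizing structure together with the computation of $|\Xx_\Vv|$; (3) check that $\al$ in \eqref{eq:XVgpactY0} defines an inner $G$-action; (4) build the factorization of $\iota$ and identify the realizations in (iii).

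\textbf{Step 1 (groupoid axioms).} The first task is to show that the composition \eqref{eq:MMGa30} is well defined. Since a morphism $(I,J,x,g)\in\bX_\Vv$ exists only when $I\sim J$, and similarly for $(J,K,y,h)$, the very existence of the composable pair forces $I\sim J\sim K$; the closed-graph and separation conditions in (c) further imply that then $I,J,K$ form a nested chain, so one of the six orderings applies. In each case I would check (a) that the element $v\in V_{I\vee J\vee K}$ lies in the retract $\TV_{(I\wedge K)(I\vee K)}$, so that $\rho_{I\vee K}(v)\in\TV_{(I\wedge K)(I\vee K)}$ and the output tuple is an element of $\bX_\Vv$; (b) that the source and target of the output match those of $(I,J,x,g)$ and $(J,K,y,h)$, using the composition identity $\rho_{HI}\circ\rho_{IJ}=\rho_{HJ}$ for $H\subset I\subset J$ together with the $G$-equivariance of $\rho_{IJ}$; (c) that in cases (iv) and (vi) the elements $k\in G_{I\less J}$ resp.\ $k\in G_{K\less J}$ determined by \eqref{eq:JIK}, \eqref{eq:JKI} exist and are unique, which follows from the hypothesis that $G_{J\less H}$ acts freely on $\TV_{HJ}$ together with the relation $\rho_J(x)=h^{-1}*\rho_J(y)$ implied by $t(I,J,x,g)=s(J,K,y,h)$. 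Identities and inverses are then checked by substituting $I=J$ or $J=K$ or $I=K$ into the appropriate case; associativity is a longer but similar case analysis, handled by reducing to the basic identities $\rho_{HI}\circ\rho_{IJ}=\rho_{HJ}$ and $g*\rho_{IJ}(x)=\rho_{IJ}(g|_I*x)$.

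\textbf{Step 2 (\'etale, proper, and realization).} The object space $X_\Vv=\bigsqcup_I V_I$ is an \'etale space as a finite disjoint union of open subsets of $V_I$, and similarly $\bX_\Vv$ is \'etale because each summand $\TV_{(I\wedge J)(I\vee J)}\times G_{I\wedge J}$ is the product of an open subset of an \'etale space with a finite discrete group. The source, target, unit and inverse maps are manifestly \'etale, and so is composition since in each of the six cases the map $(x,g,y,h)\mapsto(v,k)$ is given by an \'etale local formula (using an \'etale inverse of the action of the free part of the group in cases (iv) and (vi)). The identity component $\Mor_{\Xx_\Vv}(V_I,V_I)=V_I\times G_I$ exhibits $(V_I,G_I,\Ga_I(g,x):=(I,I,x,g))$ as a local uniformizer in the sense of Definition~\ref{def:preunif}, and these cover $X_\Vv$ by construction, so the realization decomposes as $|\Xx_\Vv|=\bigcup_I|V_I|$. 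To obtain properness, I would verify that $s\times t:\bX_\Vv\to X_\Vv\times X_\Vv$ has closed graph — this is where the closed-graph assumption on $\mathrm{graph}(\rho_{IJ})$ is used — so that $|\Xx_\Vv|$ is Hausdorff by Lemma~\ref{lem:prop1}~(iv), and then appeal to Corollary~\ref{cor:prop1}~(ii) to conclude that $\Xx_\Vv$ itself is proper.

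\textbf{Step 3 (inner $G$-action).} I would verify the three conditions in \eqref{eq:gpident2'} for $\al(g,(I,x))=(I,I,x,g|_I)$: the target identity $t\circ\al(g,(I,x))=(I,\rho_I(x))=(I,x)$ and the identity relation $\al(\mathrm{id},(I,x))=\mathrm{id}_{(I,x)}$ are immediate from \eqref{eq:MMGa20}, while the composition identity reduces to the case $I=J=K$ of \eqref{eq:MMGa30}, where each of the six subcases collapses to $(I,I,(h|_I)^{-1}*x,g|_I)\circ(I,I,x,h|_I)=(I,I,x,(hg)|_I)$. The resulting object-level action $g*(I,x)=(I,g|_I*x)$ is then the one claimed.

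\textbf{Step 4 (factorization and realizations).} The inclusion $\iota:\Qq\to\Xx_\Vv$, defined on morphisms by $(I,J,y)\mapsto(I,J,y,\mathrm{id})$ for $I\subset J$, lands in $\Xx_\Vv^{\less G}$ because it uses only nested pairs with trivial group component, and in fact has image equal to the full subcategory of $\Xx_\Vv^{\less G}$ on nested pairs with $I\subset J$. By Lemma~\ref{lem:complet}~(i), $\Qq$ is a nonsingular locally injective poset in which each equivalence class has a unique minimum, so Lemma~\ref{lem:gpcomplet}~(iv) gives the unique \'etale extension $\iota:\HatQq=\Xx_\Vv^{\less G}\to\Xx_\Vv$. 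The extension to $\iota^{\times G}$ is then obtained from Lemma~\ref{lem:BbGa}~(iii),~(v), using the inner $G$-action built in Step~3 so that $\iota^{\times G}(m,g)=\al(g,s(m))\circ m$. Finally, Lemma~\ref{lem:BbGa}~(ii) identifies $|\Qq^{\times G}|\simeq|\HatQq^{\times G}|\simeq|\Xx_\Vv^{\less G}|/G$, and the remaining homeomorphism $|\Xx_\Vv^{\less G}|/G\stackrel{\simeq}\to|\Xx_\Vv|$ is obtained by observing that every morphism in $\bX_\Vv$ factors as $\al(g,\cdot)\circ m$ with $m\in\iota(\HatQq)$, so that the equivalence relation generated by $\bX_\Vv$ on $X_\Vv$ is exactly the one generated by $\HatQq$ together with the $G$-action.

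\textbf{Main obstacle.} The core difficulty is Step~1, namely the verification that the six-case composition formula is both well defined (correct retracts, existence of $k$, correct source/target) and associative. All subsequent steps are essentially bookkeeping that feeds the composition formula into the general machinery already developed in \S\ref{ss:basic}--\S\ref{ss:groupact}; once composition is under control, the \'etale, proper, inner-action, and realization statements fall out by invoking Lemmas~\ref{lem:act}, \ref{lem:BbGa}, \ref{lem:complet}, \ref{lem:gpcomplet}, \ref{lem:prop1} and Corollary~\ref{cor:prop1}.
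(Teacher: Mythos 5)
Your outline is correct and, for the most part, follows the same architecture as the paper's proof: nestedness of $I,J,K$ from the separation axiom, consistency of \eqref{eq:MMGa30} with source/target and inverses by the six-case analysis, \'etaleness summand by summand, Hausdorffness of $|\Xx_\Vv|$ via the closed-graph criterion of Lemma~\ref{lem:prop1}~(iv) followed by Corollary~\ref{cor:prop1}, the inner action checked exactly as you describe on the case $I=J=K$, and the factorization through $\HatQq$ with Lemma~\ref{lem:BbGa} and Lemma~\ref{lem:act}~(iii) identifying the realizations. (Two small calibrations: only the separation condition, not closed graph, is what forces $I,J,K$ to be nested; closed graph enters later, for Hausdorffness. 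And the relevant identities for $\al$ are those of \eqref{eq:gpident2}, matching the statement's reference to \eqref{eq:gpident3}, rather than \eqref{eq:gpident2'} -- though the verification you sketch is the right one.)

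The genuine divergence is in how associativity is handled, which you correctly flag as the main obstacle but then dispatch as ``a longer but similar case analysis reducing to $\rho_{HI}\circ\rho_{IJ}=\rho_{HJ}$ and equivariance.'' A direct verification over all orderings of a nested quadruple is possible in principle, but it is heavier than those two identities suggest: one must track the correction elements $k$ of \eqref{eq:JIK}, \eqref{eq:JKI} through iterated compositions and repeatedly use that group elements supported on disjoint index sets commute. The paper avoids this by first proving (Step~4 of its argument) that composition preserves the nonsingular subcategory $\HatQq$, then decomposing every morphism uniquely as $\ell\circ m$ with $\ell$ an isotropy element in $G_{H_x}$ and $m\in\Mor_{\HatQq}$ as in \eqref{eq:QQ03}, establishing the commutation identities \eqref{eq:QM1}--\eqref{eq:QM4}, and thereby reducing associativity in $\Xx_\Vv$ to associativity in the group $G_{H_x}$ and in the nonsingular (hence unproblematic) groupoid $\HatQq$. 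That structural reduction is what makes the hardest step tractable; if you pursue your brute-force route, expect to reproduce most of those identities along the way, since in the nonsingular category equality of composites can be checked on source, target and group component alone, which is the real engine of the argument.
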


\begin{rmk}\label{rmk:rule}  \rm The composition rule in \eqref{eq:MMGa30} above is most complicated (i.e.\  $k\ne \id$) when $J$ is minimal among $\{I,J,K\}$, since  that is the case in which the compatibility in $V_J$ required for the composite to be defined gives the least information.
For example, if $J\subset I\subset K$, the composite $(I,J,x,\id)\circ (J,K,y,\id)$ is defined if  
$\rho_J(x) = \rho_J(y)$ (where $x\in \TV_{JI}\subset V_I, y\in \TV_{JK}\subset V_K$).  Therefore by  the composition property in  Lemma~\ref{lem:complet}, the element $\rho_I(y)\in \TV_{JI}$ must be defined and lie in the same $G_{I\less J}$-orbit as $x$, which implies by the freeness of the action of $G_{I\less J}$ on $\TV_{JI}$  that there
is a unique $k\in G_{I\less J}$ such that $\rho_I(y) = k*x$ as in \eqref{eq:JIK}. 
Note also that if $g, h\ne \id$, then $g,h\in G_J$ and so both commute with $k\in G_{I\less J}$.
Further, as $x,y$ vary, $k\in G_{I\less J}$ is a locally constant function, though it need not be constant on the components of the morphism space 
unless the principal $G_{I\less J}$-bundle $\TV_{JI}\to V_{JI}$ is trivial.
\hfill$\er$
\end{rmk}

\begin{proof} [Proof of Proposition~\ref{prop:MMGa10}] \;
 {\bf Step 1:} {\it 
If composition is defined then $I,J,K$ are nested.} 

The composition $(I,J,x, g) \circ (J,K,y,h) $ is defined only if $$
t(I,J,x,g) = \rho_J(x) = s(J,K,y,h) = \rho_J(h^{-1}*y).
$$
Further we know that $I,J$ are nested and $J,K$ are nested.    Hence $I,J,K$ are nested except possibly if  $J$ is either maximal or minimal.
But if $J$ is maximal, then the condition $t(I,J,x,g)= s(J,K,y,h)$ implies that $\TV_{IJ}\cap \TV_{KJ} \ne \emptyset$ so that 
$I,J,K$ are nested by the separation axiom in \S\ref{ss:mainres}~(c).  Similarly, if $J$ is minimal, we have  $V_{JI}\cap V_{JK} \ne \emptyset$
which again implies that $I,J,K$ are nested.
\MS

\NI
 {\bf Step 2:} {\it The
 rule \eqref{eq:MMGa30} is consistent with source and target maps. }
 
We must  check
that
\begin{align*}
& {\rm (A)}:\quad   s(I,J,x,g) = s\bigl( I,K,  \rho_{I\vee K}(v) , k(hg)|_{I\wedge K} \bigr),  \\
& {\rm  (B)}:\;\;\quad 
(K, \rho_K(y)) = t(J,K,y,h) = t\bigl( I,K,  \rho_{I\vee K}(v) , k (hg)|_{I\wedge K} \bigr) = (K, \rho_K(v)),
\end{align*}
when
$
  t(I,J,x,g) = \rho_J(x) = s(J,K,y,h) = \rho_J(h^{-1}*y), 
  $
  i.e.\  when 
  \begin{align}\label{eq:hx=y}&  h*\rho_J(x) = \rho_J(y),\;\;  \mbox{ where } h\in G_{J\wedge K}.
  \end{align}
  We first prove (B).  First notice that
 (B) holds whenever $y = v$, and hence when $K$  is maximal or $I\subset K\subset J$.
 If $K\subset I\subset J$ then $v = (g|_{I\less K})^{-1}*y$ and $h\in G_K$.  Therefore $\rho_K(v) = \rho_K(y)$ since  $\rho_K$ quotients out by the action of $G_{I\less K}$.
  If $I$ is maximal and $K$ is minimal,  then 
$  v = 
 (g|_{J \less K})^{-1} * h* x 
 \in V_I$ and $h\in G_K$.  Therefore $$
 \rho_K(v) = \rho_K(h*x) = h*\rho_K(x) = \rho_K(y)
 $$
   because $\rho_K$ is $G_K$-equivariant  and constant on the orbits of $G_{J\less K}$. 
  Finally if $I$ is maximal and $J$ is minimal, the compatibility condition \eqref{eq:hx=y}  it implies that  $y\in  V_K$ lies in the same $G_{K\less J}$-orbit as $\rho_K(h*x)$.  Hence there is 
  a unique element  $k\in G_{K\less J}$  such that $y = k*h*\rho_K(x)$.
By definition we take  $v = k*h*x$ in this case.  Hence again $\rho_K(v) = y = \rho_K(y)$, as required.  
  Thus (B) holds in all cases.
  \MS
  
Next observe that (A) will hold if
\begin{align}\label{eq:(A)}
g^{-1}*\rho_I(x) =(k(hg)|_{I\wedge K})^{-1}*\rho_I \rho_{I\vee K}(v) = (g|_{I\wedge K})^{-1}* (h|_{I\wedge K})^{-1}* k^{-1}*\rho_I (v).
\end{align}
We first consider the three cases with $v=y$.  
\begin{itemlist}
\item
If $I\subset J\subset K$ then $x\in V_J$, $g\in G_I, h\in G_J$, $k=\id$  and $v = y\in V_K$, so that 
we must check $$g^{-1}*\rho_I(x) = g^{-1}* (h|_I)^{-1}*\rho_I(y).$$
  But because $\rho_I$ is equivariant with respect to the projection $G_J\to G_I$ we have
$(h|_I)^{-1}*\rho_I(y) = \rho_I(h^{-1}*y)$, so that  this follows from the compatibility condition  \eqref{eq:hx=y}.  

\item  If $I\subset K\subset J$, then $v=y \in V_J$, $h\in G_K$ and $k = \id$,
and the same argument holds. 
\item 
  If $J\subset I \subset K$, then $x\in V_I$, $v=y\in V_K$ and by definition of $k$  we have $ \rho_I(y) = h*k*x$ where $h\in G_J, k\in G_{I\less J}$ commute.   Since $h = h|_{I\wedge K}$ and $g= g|_{I\wedge K}$, this equation is equivalent to \eqref{eq:(A)}.

\item If $K\subset I\subset J$ then $h\in G_K$, and $g\in G_I$ decomposes as the product $g = g|_K\, g|_{I\less K}$, where these factors commute.  Further $k = \id$ and  $v=(g_{I\less K})^{-1}*y\in V_J$, while the  compatibility equation \eqref{eq:hx=y} reads $y = h *x$.
  In this case,  \eqref{eq:(A)} can be written as
$$
g^{-1}*\rho_I(x) = (g|_{K})^{-1}* h^{-1}* (g|_{I\less K})^{-1}* \rho_I (y) 
$$
which follows from  $y = h *x$ because  $h\in G_K$ commutes with $g|_{I\less K}$.
\item  If $K\subset J \subset I$, then $v = (g|_{J\less K})^{-1}*h*x \in V_I$ and $k=\id$.   Since $h\in G_K$, $h$ commutes with 
$g|_{J\less K}$.  Hence
$$
  (g|_{K})^{-1}* h^{-1} *v =  (g|_{K})^{-1}* (g|_{J\less K})^{-1}*x =g^{-1}*x
$$
as is required by \eqref{eq:(A)}.
\item Finally, if  $J\subset K\subset I$ then $v = k*h*x\in V_I$ where $k\in G_{K\less J}$ is the unique element such that $y = h*k*\rho_K(x)$. By definition, $g,h\in G_J$ so that $g = g|_{I\wedge K}$ and $h = h|_{I\wedge K}$.  
Therefore \eqref{eq:(A)} is equivalent to  $x = h^{-1}*k^{-1} *v$, which holds by the definition of $v$.
\end{itemlist}
This completes the proof of Step 2.\MS

\NI {\bf Step 3:}  
{\it The formula for the inverse 
is consistent with that for composition.}

To check this, we note that
\begin{align*}
(I,J,x,g)\circ \bigl((I,J,x,g)\bigr)^{-1}&  = (I,J,x,g)\circ(J,I,g^{-1}*x, g^{-1}) \\
& = (I,I, \rho_I(v), k)\;\;\mbox{ with } (v,k) \mbox{ as in } \eqref{eq:MMGa30}\\
&  = (I,I, g^{-1}*\rho_I(x), \id)
\end{align*}
because if $K=I\subset J$ we have $k = \id, \ \rho_I(v) = \rho_I(g^{-1}*x) = g^{-1}*\rho_I(x)$; while if $J\subset I=K$ we have 
$x,y\in V_I $, $v=y$ and $k\in G_{I\less J}$ so that $y = g^{-1}*k*x$, which implies
$k=id$  by \eqref{eq:hx=y}. \MS

\NI {\bf Step 4:}  {\it There is a well defined inclusion $\HatQq \to \Xx_\Vv$; in particular, the structure maps in  $\HatQq$ are given by \eqref{eq:MMGa20}, \eqref{eq:MMGa30}.}

Lemma~\ref{lem:complet} shows that $\Qq$ has a unique nonsingular groupoid completion $\HatQq$ whose structure maps are determined by its source and target maps.   
This lemma also shows that
these maps $s,t$  are given by the formulas in \eqref{eq:MMGa20},  as is the formula for the inverse.
Further,  the composition rule in $\Qq$ is given by the case $I\subset J\subset K$ of \eqref{eq:MMGa30}.  
Hence to check that composition in $\HatQq$ is given by \eqref{eq:MMGa30}, in view of Step 2 it remains to verify that this rule preserves the subset $\Mor_{\HatQq}$ of $\Mor_{\Xx_\Vv}$.  To this end, recall from the
proof of Lemma~\ref{lem:complet} that
 each equivalence class $[(J,y)]$ in $\Obj_{\HatQq}$ contains a unique element of the form $(H_y, z)$ where $H_y$ is the minimal set $I$ such that  $(J,y)\sim (I,x)$ for some $x\in V_I$.  Further,
  the description of $\Mor_{\HatQq}(V_I,V_J)$ in
\eqref{eq:MorHM0} and \eqref{eq:MorHM2} together with  the identity $(I,J,z,\ell)^{-1} = (J,I, \ell^{-1}*z,\ell^{-1})$ implies that
\begin{align}\label{eq:QI}
(I,J,z,\ell)\in 
\Mor_{\HatQq}\;\Longleftrightarrow \; \ell\in G_{I\wedge J\less H_z}.
\end{align} 
Therefore, to see that $\Mor_{\HatQq}$ is closed under the composition rule  \eqref{eq:MMGa30}, it
 remains to check that if $(I,J, x,g), (J,K,y,h)$ both satisfy the condition in \eqref{eq:QI} and 
$t(I,J, x,g) = s(J,K,y,h)$, then the element 
$(I,K,z, \ell)$  given by \eqref{eq:MMGa30}  
also satisfies this condition.  
But in this case, there is equality $H_x = H_y = H_z$ and $H_x\subset I\wedge J\wedge K$.
Further  $\ell = k(hg)|_{I\wedge K}$  where $g\in G_{I\wedge J\less H_z}, h\in G_{J\wedge K\less H_z}$, and $k\in 
 G_{I\wedge K\less H_y}$ by construction.  Therefore $\ell\in  G_{I\wedge K\less H_z}$ as required.
Hence the composition defined in  \eqref{eq:MMGa30} does restrict to $\Mor_{\HatQq}$.  This completes the proof.
\MS

\NI {\bf Step 5:}   {\it The composition rule in  \eqref{eq:MMGa30}  is associative.}

 Although associativity  is proved in  \cite[Prop.2.3]{Morb},\footnote{
 As will become more evident in \S\ref{ss:Vdata}  below, the category $\Xx_\Vv$  is the subcategory of the category $\bG_\Kk$ considered in  \cite[Prop.2.3]{Morb} that corresponds to a reduction $\Vv$ of the covering  given by a strict orbifold atlas.
 }
we give an independent proof here for the sake of completeness.   
We will use the fact that the action of $G_I$ on $V_I$ given as in \eqref{eq:gpident2} by  $$
\al_I(g,x) = (I,I,x,g): g^{-1}*x\mapsto x
$$ 
yields 
actions of
$G_I$ and $G_J$ on the morphism space $\Mor_{\Xx_\Vv}(V_I,V_J)$ as follows: 
\begin{align*}
G_I \times\Mor_{\Xx_\Vv}(V_I,V_J)\; \to \; \Mor_{\Xx_\Vv}(V_I,V_J), \qquad & (g,m) \; \mapsto\;  g\circ m := \al_I(g, s(m) ) \circ m ,
\\ \notag
G_J \times \Mor_{\Xx_\Vv}(V_I,V_J) \; \to\;  \Mor_{\Xx_\Vv}(V_I,V_J), \qquad & (g,m)\;  \mapsto\;  m\circ g := m \circ \al_J(g, g*t(m) ) . 
\end{align*} 
Further,  $G_{I\wedge J}$ acts by conjugation on $\Mor_{\Xx_\Vv}(V_I,V_J)$ via $m\mapsto m^g: = g\circ m \circ g^{-1}$.
Notice that the precise morphism in $\Xx_\Vv$ represented by $g$ in the notation $g\circ m$ (respectively $m\circ g$) depends only on the source $s(m)$ (respectively the target $t(m)$).

Next observe that the  rule \eqref{eq:MMGa30} applied  with $I=J$  implies that every morphism $(I, K, x,g) $ in $\Xx_\Vv$ with $x\in V_{I\vee K}, g\in G_{I\wedge K}$  may be uniquely decomposed as a product
\begin{align}\label{eq:QQ03}
(I,K, x,g) : & = \al_I\bigl(g|_{H_x}, (g|_{I\wedge K\less H_x})^{-1} *\rho_I(x)\bigr)\circ (I,K,x, g|_{I\wedge K\less H_x}),
\\ \notag
& = g|_{H_x}\circ m, \quad m: = (I,K,x, g|_{I\wedge K\less H_x}) \in \Mor_{\HatQq}.
\end{align}
Similarly, taking $J=K$, we have
\begin{align*}
(I,K, x,g) : & = (I,K,(g|_{H_x})^{-1}* x, g|_{I\wedge K\less H_x})\circ \al_K\bigl( g|_{H_x}\rho_K(x)\bigr)\\ \notag
& =  m'\circ g|_{H_x}, \quad m': = (I,K,(g|_{H_x})^{-1}* x, g|_{I\wedge K\less H_x}) \in  \Mor_{\HatQq}.
\end{align*}
Thus, for $m = (I,J,x,g)\in  \Mor_{\HatQq}$ and $\ell \in H_x$, we have
\begin{align}\label{eq:QQ04}
\ell  \circ m = m^{\ell} \circ \ell, \quad \mbox{where }  \quad m^{\ell} = (I,J,\ell^{-1}*x,\ell^{-1}g\ell) \in  \Mor_{\HatQq}.
\end{align}
In particular, this conjugation action takes $\HatQq$ to itself.
Further, 
\begin{align}\label{eq:QM1}
m_1^\ell  \circ  m_2^\ell  = (m_1\circ m_2)^\ell ,\quad \forall\;\; \ell\in  G_{H_x},\;\; x: = s(m_1),
\end{align}
since the morphisms on both sides have the same source and target and
lie in the nonsingular groupoid  $\HatQq$. 
More generally, we claim that if $m \in \Mor_{\HatQq}$ and $\ell_1,\ell_2 \in G_{H_x}$ then
\begin{align}\label{eq:QM2}
(\ell_1 \circ m) \circ \ell_2 = \ell_1 \circ (m_1 \circ \ell_2) = :  \ell_1 \circ m_1 \circ \ell_2 \in \Mor_{\Xx_\Vv}.
\end{align}
This follows easily from the fact  that  a morphism $m = (I,J,x,g)$ in $\Xx_\Vv$ is uniquely determined by its source $s(m) = g^{-1}*\rho_I(x)$,
target $t(m) = \rho_J(x)$, and group element $g\in G_{I\wedge J}$.  (For example, if $J\subset I$ then $x = g*s(m)$.)
A similar argument shows that 
\begin{align}\label{eq:QM3}
(\ell \circ m_1) \circ m_2 = \ell \circ (m_1 \circ m_2), \qquad m_1, m_2 \in \Mor_{\HatQq}, \ell \in G_{H_{s(m_1)}}.
\end{align}
Finally we claim that if $$
m_1 = (I,J,x,g),\quad m_2 = (J,K,y,h)   \in \Mor_{\HatQq},   \qquad \ell\in G_{H_x}
$$
are morphisms in $\wh{\Qq}$ then 
\begin{align}\label{eq:QM4}
(m_1\circ \ell)\circ m_2 = m_1\circ (\ell \circ m_2)= : m_1\circ \ell \circ m_2.
\end{align}
To prove this, notice first that the two sides of \eqref{eq:QM4} clearly have the same source and target.  
 Further the formulas in \eqref{eq:MMGa30} show that if $J$ is not minimal  they have the same group element, namely $(g\ell h)|_{I\wedge K} = (\ell gh)|_{I\wedge K}$,
 since $g\in G_{I\wedge J\less H_x}$ commutes with $\ell$.
  If $J\subset I\subset K$, 
 then the group element 
for $(m_1\circ \ell)\circ m_2$  (resp.\ $m_1\circ (\ell \circ m_2)$) is $k_1(g\ell h)|_J$ (resp.\ $k_2(g\ell h)|_J$, where \eqref{eq:JIK} shows that  $k_1, k_2\in G_{I\less J}$ satisfy 
$$
\rho_I(y) = h*k_1*(\ell*x),\qquad \rho_I(y) = \ell* h*k_2*x.
$$
But these two conditions are the same because $\ell\in G_{H_x}$ commutes with the elements $h,k_i,g \in G_{I\less H_x}$.
A similar argument establishes \eqref{eq:QM4} in the case  $J\subset I\subset K$.

To prove associativity in $\Mor_{\Xx_\Vv}$
\begin{align}\label{eq:QM5}
\Bigl((I,J,x,g)\circ (J,K,y,h)\Bigr)\circ (K,L,z,k) = (I,J,x,g)\circ \Bigl((J,K,y,h)\circ (K,L,z,k)\Bigr),
\end{align}
note that these composites are defined only if the sets $I,J,K,L$ are nested and  $H_x= H_y = H_z \subset I\wedge J\wedge K\wedge L$.  Thus by \eqref{eq:QQ03} it suffices  to prove
$$
\bigl( (\ell_1\circ m_1) \circ (\ell_2\circ m_2)\bigr) \circ (\ell_3\circ m_3) = 
 (\ell_1\circ m_1) \circ \bigl( (\ell_2\circ m_2)\circ (\ell_3\circ m_3)\bigr),
 $$
 where $\ell_i \in G_{H_x}$ and $m_i\in \Mor_{\HatQq}$ for $i=1,2,3$.  But
 \begin{align*} 
\bigl( (\ell_1\circ m_1) \circ (\ell_2\circ m_2)\bigr) \circ (\ell_3\circ m_3) & = 
 \bigl(  (\ell_1\circ  m_1 \circ \ell_2) \circ m_2\bigr) \circ (\ell_3\circ m_3)\;\;\mbox{by } \eqref{eq:QM2}\\ 
& = (\ell_1 \circ \ell_2 \circ m_1^{\ell_2}) \circ m_2\bigr) \circ  (\ell_3\circ m_3)\;\;\mbox{by } \eqref{eq:QQ04}\\ 
& = \bigl((\ell_1 \circ \ell_2) \circ ( m_1^{\ell_2} \circ m_2)\bigr) \circ  (\ell_3\circ m_3)\;\;\mbox{by } \eqref{eq:QM3}\\ 
& =  \bigl((\ell_1 \circ \ell_2) \circ ( m_1^{\ell_2} \circ m_2) \circ  \ell_3\bigr)\circ m_3)\;\;\mbox{by } \eqref{eq:QM2}\\ 
& = (\ell_1\circ \ell_2) \circ \ell_3 \circ (m_1^{\ell_2 \ell_3}\circ m_2^{\ell_3}) \circ m_3 \;\;\mbox{by } \eqref{eq:QM2}.
\end{align*}
A similar  argument shows that 
 $$
(\ell_1\circ m_1) \circ  \bigl( (\ell_2\circ m_2)\circ (\ell_3\circ m_3) \bigr) =
\ell_1\circ (\ell_2 \circ \ell_3) \circ m_1^{\ell_2 \ell_3}\circ (m_2^{\ell_3} \circ m_3).
$$
Since composition in the group $G_{H_x}$ and in the category $\HatQq$  is associative, the
composition operation in $\Xx_\Vv$ is also  associative.  
\MS

\NI {\bf Step 6:}  
{\it $\Xx_\Vv$ is an \'etale, proper groupoid.}

The above steps  show that $\Xx_\Vv$ is a well defined groupoid. Further
the formulas given for composition are \'etale, because the element $k$ belongs to a finite group and so is locally  constant as $x,y$ vary.  (See Remark~\ref{rmk:rule}.)    Therefore the map $\pi:X_\Vv\to |\Xx_\Vv|$ is open by Lemma~\ref{lem:etale1}, and so, by Lemma~\ref{lem:prop1}~(iv) to see that $|\Xx_\Vv|$ is Hausdorff it suffices to show that  the relation $\sim_{\Xx_\Vv}$ on $X_\Vv$ has closed graph in $X_\Vv\times X_\Vv$.   Because this property is preserved by taking   inverses, it suffices to prove this for the components $\Mor_{\Xx_\Vv}(V_I, V_J)$ with $I\subset J$.
But for each $J$ the set $\Mor_{\Xx_\Vv}(V_J, V_J) \simeq V_J\times G_J$ is a finite union of disjoint components indexed by $g\in G_J$,  and for each such component ${\rm graph}(s\times t) = \{(g^{-1}*(y),y) \ | \ y\in V_J\}$, which is closed in $V_J\times V_J$.
Further, if $I\subset J$, every morphism $(I,J, y,g )\in \Mor_{\Xx_\Vv}(V_I,V_J)$  decomposes uniquely as the composite 
$$
(J,J,y,g )\circ (I,J, g ^{-1}*y, \id), \quad  y\in \TV_{IJ},\ g\in G_{J\less I},
$$
 of a morphism $V_J\to V_J$ given by the action of $g\in G_J$ on $V_J$,  followed by a morphism given by the projection $\rho_{IJ}$, where, by hypothesis,  
  ${\rm graph}(\rho_{IJ})\subset V_J\times V_I$ is closed. Hence the graph of 
 $\Mor_{\Xx_\Vv}(V_I,V_J)$ is a finite union (indexed by $G_J$) of closed sets.
Hence  $s\times t:\bX_\Vv\to X\times X$ has closed graph as claimed. 
 \MS

 \NI {\bf Step 7:}  
{\it Completion of the proof.}  

It follows from the explicit description in \eqref{eq:MMGa20} of the morphisms in $\Xx_\Vv$ that each pair $(V_J,G_J)$ is a local uniformizer for $\Xx_\Vv$. Thus $|\Xx_\Vv|$ is the union of the realizations   $|V_J|$  of these local uniformizers. The rest of (i) is proved in Step 6.
Thus it
remains to prove (ii) and (iii).  
\MS

Observe first that $G: = \prod_{i=1}^N G_i$ acts on $X_\Vv = \bigsqcup_I V_I$ via the map 
\begin{align*}
 G \times V_I\to V_I,\quad (g, I,x)\mapsto (I,g|_I*x).
\end{align*}
 Define $\al: G\times X_\Vv \to \bX_\Vv$ by
\begin{align}\label{eq:gpactY}
 \al(g,(I,x)) = (I, I, x, g|_I), \mbox{ so that } (s\times t)(\al(g, (I,x))) = \bigl((I,g|_I^{-1}*x), (I, x)\bigr).
\end{align}
 Then
\begin{align*}
 \al(g,(I, h|_I^{-1}*x)))\circ \al(h,(I, x)) & = (I, I, h|_I^{-1}*x, g|_I)\circ (I,I,  x, h|_I) \\
& = (I,I,  x , h|_Ig|_I) \\
&  = \al(hg, (I,x)) ,
\end{align*}
so that by Lemma~\ref{lem:act}~(iii) there is an associated inner action of $G$ on $\Xx_\Vv$. One can check using \eqref{eq:gpident3} and the formulas in \eqref{eq:MMGa30} that it acts on morphisms by\footnote
{
It is easiest to check this first when $I\subset J$ and then to use the formula for the inverse.}
\begin{align}\label{eq:gpactY1}
g*(I,J,y,h) = \bigl(I,J, g|_{I\vee J}*y, g|_{I\wedge J} h (g|_{I\wedge J})^{-1}\bigr)
\end{align}
  This proves (ii).
\MS

\MS

Finally,  observe that 
the inclusions $\Qq\to \Xx_\Vv$ and $\HatQq\to \Xx_\Vv$,
 are $G$-equivariant in the sense of Definition~\ref{def:gpact}, and hence
induce  functors 
$$
\Qq^{\times G}\to \Xx_\Vv^{\times G},\quad  \HatQq^{\times G}\to \Xx_\Vv^{\times G}
$$
by Lemma~\ref{lem:BbGa}~(iii). 
Composing these with the functor $\Xx_\Vv^{\times G}\to \Xx_\Vv$  of 
Lemma~\ref{lem:BbGa}~(v)  we obtain functors 
 $$
\Qq^{\times G}\to  \HatQq^{\times G}\to \Xx_\Vv.
$$
  Note that in general the functor $ \HatQq^{\times G}\to \Xx_\Vv$  is not   an equivalence of groupoids because the 
isotropy group $\Mor_{\HatQq^{\times G}}((I,x),(I,x))$ of 
the  object $(I,x)\in \HatQq^{\times G}$ contains the whole group $\prod_{j\notin I} G_j$ rather than being a subgroup of $G_I$.    Nevertheless, by Lemma~\ref{lem:BbGa}~(ii,iv),  we know $|\Qq^{\times G}| = |\HatQq^{\times G}| \simeq |\Qq|/G  $ while, because $G$ acts on $\Xx_\Vv$ by inner automorphisms, we have $| \Xx_\Vv^{\times G}|\simeq |\Xx_\Vv|$ by Lemma~\ref{lem:act}~(iii).
Since these categories all have the same object space, $|\io^{\times G}|: |\HatQq^{\times G}|\to |\Xx_\Vv|$ is a homeomorphism.
This proves (iii).  
\end{proof}

\begin{rmk} \label{rmk:QY}\rm 
\begin{nenumilist}
\item   
 The groupoid $\Xx_\Vv$ is {\it not} a groupoid completion of $\io(\Qq^{\times G})$ (cf. Definition~\ref{def:grcompl}) because, as we saw at the very end of the above proof,  the  
isotropy groups
of elements in $\io(\Qq^{\times G})$ are much too large.   \MS

\item
By Step 4 of the above proof,  we may identify $\HatQq$ with the (nonfull!) subcategory $\Xx_\Vv^{\less G}$ of $\Xx_\Vv$ that has the same object space $X_\Vv$ as $\Xx_\Vv$ and that is generated as a groupoid by the projections $\rho_{IJ}: \TV_{IJ}\to V_{IJ}\subset V_I$ that relate the different components of $X_\Vv$.  Thus we have removed the morphisms that depend explicitly on the group actions, except for those needed to form the groupoid completion.
\hfill$\er$
\end{nenumilist}
\end{rmk}

The following lemma shows that $\Xx_\Vv$ is the groupoid completion\footnote
{See Definition~\ref{def:grcompl}} of its subcategory $\Bb_\Vv$ that has the same objects but morphisms between $V_I,V_J$ only for $I\subset J$.  Indeed, it is the groupoid completion in a very strong sense: any functor from $\Bb_\Vv$ to a groupoid $\Zz$ must extend to $\Xx_\Vv$.  (Contrast this with the example in Remark~\ref{rmk:gpcompl}.)  This lemma is an essential ingredient of the proof of  Theorem~\ref{thm:reduce}  below.

\begin{lemma}\label{lem:compos}  Let 
$\Xx_\Vv$ be as in Proposition~\ref{prop:MMGa1}, and denote by
$\Bb_\Vv\subset \Xx_\Vv$ the subcategory with the same object space but morphisms from $V_I$ to $V_J$ only if
$I\subset J$.  (Thus $\Bb_\Vv = \io^{\times G}(\Qq^{\times G})$.)   
Then every functor $\phi: \Bb_\Vv\to \Zz$ from $\Bb_\Vv$ to an
arbitrary groupoid has a unique extension to  a functor $\Xx_\Vv\to \Zz$.
\end{lemma}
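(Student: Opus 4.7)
The key observation is that every morphism $m=(I,J,y,g)$ in $\Xx_\Vv$ with $I\sim J$ lies in $\Bb_\Vv$ precisely when $I\subset J$; when $J\subsetneq I$, the inverse $m^{-1}=(J,I, g^{-1}*y, g^{-1})$ lies in $\Bb_\Vv$. Thus every morphism of $\Xx_\Vv$ is either in $\Bb_\Vv$ or is the inverse of one in $\Bb_\Vv$, and moreover $\Bb_\Vv$ contains all identity morphisms $(I,I,x,\id)$. This dichotomy will drive the entire argument.

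For \emph{uniqueness}, since $\Zz$ is a groupoid, any extension $\Hat\phi:\Xx_\Vv\to\Zz$ must satisfy $\Hat\phi(m)=\phi(m^{-1})^{-1}$ for $m\notin\Bb_\Vv$, which together with $\Hat\phi|_{\Bb_\Vv}=\phi$ determines $\Hat\phi$ uniquely.

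For \emph{existence}, I define $\Hat\phi$ by the above formula. Compatibility with identities is immediate. Compatibility with inverses, $\Hat\phi(m^{-1})=\Hat\phi(m)^{-1}$, holds by construction when $m\notin\Bb_\Vv$, and when both $m,m^{-1}\in\Bb_\Vv$ (which forces $I=J$) it follows from $\phi$ being a functor on $\Bb_\Vv$ applied to $m\circ m^{-1}=\id$. So the entire content of the proof is that $\Hat\phi$ respects composition.

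To verify this, I proceed case-by-case through the six cases of the composition rule \eqref{eq:MMGa3}. The strategy in each case is to rearrange the desired identity $\Hat\phi(m_1\circ m_2)=\Hat\phi(m_1)\circ\Hat\phi(m_2)$ by moving the factors $\Hat\phi(m_i)=\phi(m_i^{-1})^{-1}$ (when $m_i\notin\Bb_\Vv$) across the equality using that $\Zz$ is a groupoid, arriving at an equivalent equation $\phi(n_1)\circ\phi(n_2)=\phi(n_3)$ with $n_1,n_2,n_3\in\Bb_\Vv$ and $n_3=n_1\circ n_2$ computed inside $\Xx_\Vv$. Once such a rearrangement is found, functoriality of $\phi$ on $\Bb_\Vv$ finishes the job — provided one verifies the combinatorial fact that the composition $n_1\circ n_2$ in $\Xx_\Vv$ genuinely lies in $\Bb_\Vv$, i.e.\ that the source–target indices are ordered by inclusion. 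The main obstacle is this combinatorial bookkeeping in cases 4 ($J\subset I\subset K$) and 6 ($J\subset K\subset I$), where the auxiliary element $k$ determined by \eqref{eq:JIK} or \eqref{eq:JKI} enters nontrivially; one must check that the same $k$ appearing in the original composition reappears in the rearranged composition inside $\Bb_\Vv$. As a sample computation for case 4, the rearranged identity reads
\[
\phi\bigl((J,I,g^{-1}*x,g^{-1})\bigr)\circ \phi\bigl((I,K,y,k(hg)|_I)\bigr) \;=\; \phi\bigl((J,K,y,h)\bigr),
\]
and applying \eqref{eq:MMGa3} to the left-hand composition in $\Xx_\Vv$ (which now falls under case 1, $J\subset I\subset K$, and so stays in $\Bb_\Vv$) yields exactly $(J,K,y,h)$, since $k\in G_{I\less J}$ projects trivially to $G_J$. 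The remaining cases are handled by analogous rearrangements, each reducing to a composition inside $\Bb_\Vv$ to which the functoriality of $\phi$ may be applied.
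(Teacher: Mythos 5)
Your proposal is correct and follows essentially the same route as the paper: define the extension on $J\subsetneq I$ morphisms by $\phi(m)^{-1}\mapsto\phi(m^{-1})^{-1}$, note uniqueness is forced, and verify composition case by case by rearranging the identity in the groupoid $\Zz$ until it becomes a relation among morphisms of $\Bb_\Vv$, where functoriality of $\phi$ applies. The only difference is cosmetic: since $\Xx_\Vv$ is already known to be a groupoid, the rearranged relation (e.g.\ $m_1^{-1}\circ(m_1\circ m_2)=m_2$ in your case 4) holds automatically and lies in the subcategory $\Bb_\Vv$, so the explicit re-derivation via \eqref{eq:MMGa3} and the bookkeeping of the auxiliary element $k$ — though carried out correctly in your sample computation — is not actually needed.
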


\begin{proof}  
 Denote an arbitrary element in  $\Mor_{\Xx_\Vv}(V_I,V_J)$ by $m_{IJ}$ and for $I\supsetneq J$ define
\begin{align}\label{eq:phiIJK0}
    \phi(m_{IJ}) : =     \bigl(\phi(m_{IJ}^{-1})\bigr)^{-1}.
\end{align}
 Since $(s\times t) (m^{-1}) = (t\times s)(m)$, it follows from the assumptions that $\phi$ is compatible with the structure maps $s,t$ of $\Xx_\Vv$ as well as preserving identities and inverses.  Hence it remains
to show that if $m_{IJ}\circ m_{JK}=m_{IK}$ in $\Xx_\Vv$ then
\begin{align}\label{eq:phiIJK}
\phi(m_{IJ})\circ \phi(m_{JK}) = \phi(m_{IK}).
\end{align}
We discuss the various cases in turn.
\begin{itemlist}
\item
If $I\subset J\subset K$ then the identity $m_{IJ}\circ m_{JK}=m_{IK}$ is a relation in $\Bb_\Vv$ so that \eqref{eq:phiIJK} holds because $\phi|_{\Bb_\Vv}$ is a functor.
\item
If $K\subset J \subset I$ then $$
m'_{KI}: = m_{IK}^{-1} = m_{JK}^{-1}\circ m_{IJ}^{-1} =: m_{KJ}'\circ m_{JI}'
$$ 
is a relation in $\Bb_\Vv$  and so is preserved by $\phi$.  Therefore because $\phi(m^{-1}) = \bigl(\phi(m)\bigr)^{-1}$ we have
$$
\bigl(\phi(m_{IK})\bigr)^{-1} = \bigl(\phi(m_{JK})\bigr)^{-1}\circ \bigl(\phi(m_{IJ})\bigr)^{-1}
= \bigl(\phi(m_{IJ})\circ \phi(m_{JK})\bigr)^{-1},
$$
where the second equality is a relation in $\Zz$, 
which implies \eqref{eq:phiIJK} in this case as well.
\item If $I\subset K\subset J$ then we can rewrite the relation $m_{IJ}\circ m_{JK}=m_{IK}$
as $m_{IJ} = m_{IK}\circ m_{KJ}'$, where $m_{KJ}' = m_{JK}^{-1}$.   Then, because the latter is a relation in $\Bb_\Vv$ we have
$$
\phi(m_{IJ}) = \phi(m_{IK})\circ \phi(m_{KJ}') = \phi(m_{IK})\circ \phi(m_{JK}^{-1})= \phi(m_{IK})\circ (\phi(m_{JK}))^{-1},
$$
 which easily implies 
\eqref{eq:phiIJK}.
By inverting this relation as in the second bullet, we find that \eqref{eq:phiIJK} also holds if $J\subset K\subset I$.
\item Finally if $J\subset I\subset K$ we rewrite $m_{IJ}\circ m_{JK}=m_{IK}$
as $m_{JK} = m_{JI}'\circ m_{IK}$ where $m_{JI}' = (m_{IJ})^{-1}$, and then argue as above.
By inverting, we may also obtain \eqref{eq:phiIJK} in the case    $K\subset I\subset J$.
\end{itemlist}
This completes the proof.
 \end{proof}

\begin{rmk}\rm
We saw in Proposition~\ref{prop:MMGa10}(iii) that  the functor $\io^{\times G}: \Qq^{\times G}\to \Xx_\Vv$ induces a homeomorphism $|\HatQq|/G\to |\Xx_\Vv|$.  It follows that each morphism $m\in \Xx_\Vv$ decomposes as a product of a morphism in $\HatQq$ with a morphism given by the action of $G$.  This product can be written in either order since
$$
(I,J, x,g) = (I,I, \rho_I(x),g)\circ (I,J, x,\id)= (I,J, g^{-1}*x,\id)\circ (J,J, \rho_J(x), g), \quad g\in G_{I\wedge J}.
$$
In terms of the map $\ov\al_{X_\Vv}: G\times X_\Vv\to \bX_\Vv, (g, (I,x))\mapsto (I,I,g*x,g)$ in Lemma~\ref{lem:act}~(iii$'$) that defines the group action, the above identity can be written
\begin{align}\label{eq:2sidedg}
 \ov\al(g, g^{-1}*\rho_I(x))\circ (I,J, x,\id) = (I,J, g^{-1}*x,\id)\circ \ov\al(g, g^{-1}*\rho_J(x)).
 \end{align}
\hfill$\er$
\end{rmk}

Finally we show that the category $\Xx_\Vv$ has a natural system of good neighbourhoods.


\begin{lemma}\label{lem:neigh} The category $\Xx_\Vv$ carries a natural  system of good neighbourhoods
as in Definition~\ref{def:struct}.
\end{lemma}
\begin{proof} Recall that    
a collection of open neighbourhoods $(U(x))_{x\in X}$ of the points $x\in X$ 
of an \'etale proper category is called a {\bf good system of neighbourhoods} if
\begin{itemize}\item[-] for every $x\in X$ the target map $t: s^{-1}(\cl_X(U(x))\to X$ is proper;
\item[-] each $U(x)$ is equipped with the natural action of the isotropy group $G_x$ at $x$;
\item[-] for any pair $x,x'\in X$ and each connected component $\Si$ of $\Mor(U(x),U(x'))$  the maps $s:\Si\to U(x)$ and $t:\Si\to U(x')$ are \'etale bijections onto their images.
\end{itemize}
 Given $|y|\in |\Xx_\Vv|$, first note that 
if $U(|y|)$ is any neighbourhood of $|y|\in |\Xx_\Vv|$ such that $\cl(U(|y|)) \subset \bigcap_{|y|\in |V_J|} \, |V_J| $,
then the properness condition  holds by Lemma~\ref{lem:prop1}(iii).  
We also suppose that, with $H_{|y|} = \bigcap_{|y|\in |V_J|} J$  as in Lemma~\ref{lem:complet}~(i),  for one (and hence any) lift $x\in  V_{H_{|y|}}$ of $y$ to $H_{|y|}$, the components of the inverse image $\pi^{-1}(U(|y|)\cap V_{H_{|y|}}$ 
  are freely permuted by the group $G_{H_{|y|}}/G_x$, where $G_x$ is the isotropy group at $x$.  It follows that for any lift $y\in V_J$ of $|y|$  the components of $\pi^{-1}(U(|y|)\cap V_J$ are  freely permuted by the group $G_J/G_x$, and we pick $U(y)$
   to be the component of $\pi^{-1}(U(|y|)\cap V_J$ that contains $y$.  Then $U(y)$ is  invariant under the action of the isotropy group $G_y$.  To check the last condition, 
 suppose first that $U(x)\subset V_I$, 
 $U(x')\subset V_J$ where $I\subset J$,  and consider the component of  $\Mor(U(x),U(x'))$ containing $m=(I,J,y,g)$.  Then $t(m) = y\in U(x')\subset \TV_{IJ}$ and $s(m) = g*\rho_I(y)$, where $\rho_I:\TV_{IJ}\to V_I$  quotients by the free action of $G_{J\less I}$.  But by assumption the images of $U(x')$ by $G_{J\less I}$ are disjoint.  Therefore given $s(m)$ and $g$ (both uniquely determined by $m$), there is at most one possible choice for $y$.   Since the argument when $J\subset I$ is similar, this completes the proof.
 \end{proof}
 

\subsection{Bundles over \texorpdfstring{$\Xx_\Vv^{\less G}$}{X V less G} and \texorpdfstring{$\Xx_\Vv$}{X V}}\label{ss:GaE}

In order to model the category $\Xx_\Vv^{\less G}\times E$  in \eqref{diag:11}.
we will consider the product of the category $\Qq$ as above with the trivial category with objects
$E  $ and only identity morphisms.  Here
we suppose that $E_i$ is a vector space on which $G_i$ acts, so that $G: = \prod_{i\in A} G_i$ acts on $E  : = \prod_{i\in A} E_i$ by the product action (where $A:= \{1,\dots, N\}$).    The statement in (ii) below should be contrasted with the fact that, when the maps $\rho_{IJ}$ have closed graph,  the space $|\HatQq^{\times G}| \simeq |\Xx_\Vv|$ is Hausdorff by Proposition~\ref{prop:MMGa1}~(i).  Again, we restate the relevant results from \S\ref{ss:mainres} for the convenience of the reader.

\begin{lemma}\label{lem:MMGaE0} 
 Let $\Xx_\Vv$ and $\Qq$ be as in Proposition~\ref{prop:MMGa10}, and 
 consider the product category $\Qq\times E =(Q\times E, {{\bQ}}\times \bE)  $ 
as follows:
$$
Q\times E  = {\textstyle \bigsqcup_{I\subset A} V_I\times E  ,\quad 
{{\bQ}}\times \bE  =  \bigsqcup_{I\subset J} \TV_{IJ}\times E  ,   }
$$
with groupoid completion $\HatQq\times E =  (\Xx_\Vv^{\less G})\times E$.  Suppose further  that $G$ acts on $E  $ as above.
Then:
\begin{enumilist}\item
 The category $\HatQq\times E $ is \'etale, and  $G$ acts on both $\Qq\times E  $ and its groupoid completion $\HatQq\times E  $  by the product action on objects and morphisms. 
\item  If the action of $G$ in $E$ is trivial, then
$|(\Qq\times E  )^{\times G}|=|(\HatQq\times E )^{\times G}|$ is Hausdorff. On the other hand,
if  $G_j$ acts nontrivially on $E_j$ for some $j$,
and  for some  $J$ with $j\in J$ and some $H\subset (J\less \{j\} )$ there is a point $x\in \Fr(\TV_{HJ})\subset V_J$ that does not lie in $\TV_{FJ}$ for any $F\subset (J\less \{j\} )$, then
 the realization
$|(\Qq\times E )^{\times G}|= |(\HatQq\times E  )^{\times G}|$  is not Hausdorff.

\item  There is a bundle $\pr: \Xx_\Vv^E\to \Xx_\Vv$ with fiber $E$ that supports a $G$-action (given on objects as in (i))
such that the functorial inclusion $\io: \HatQq=\Xx_\Vv^{\less G} \to \Xx_\Vv$ defined in Proposition~\ref{prop:MMGa10}~(iii) lifts to a $G$-equivariant functor
$\io_E: (\Xx_\Vv^{\less G})\times E \to \Xx_\Vv^E$ that induces a homeomorphism
$$
|(\HatQq\times E)^{\times G}| = |(\Xx_\Vv^{\less G}\times E)^{\times G}| \;=\; |\Xx_\Vv^{\less G}\times E|/ G \simeq 
|\Xx_\Vv^E|/G.
$$
Further, under the conditions detailed in (ii) above, $|\Xx_\Vv^E|/G $ is not  Hausdorff.
\end{enumilist}
\end{lemma}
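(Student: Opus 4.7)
The plan is to handle (i) by general category-theoretic principles, adapt Example~\ref{ex:branch}(ii) for the non-Hausdorff half of (ii), and construct the bundle $\Xx_\Vv^E$ directly for (iii).

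For part (i), I first observe that $\Qq \times E$ is étale as the product of the étale category $\Qq$ of Lemma~\ref{lem:complet} with the trivial category on $E$; its groupoid completion $\HatQq \times E$ exists and is étale by Lemma~\ref{lem:gpcomplet}~(iii), using that $\Qq \times E$ inherits nonsingularity and local injectivity from $\Qq$. The product object action $g*(I,x,e) := (I, g|_I * x, g * e)$ preserves $\sim_{\Qq \times E}$ because Lemma~\ref{lem:MMGa} preserves $\sim_\Qq$ while morphisms of $\Qq \times E$ leave the $E$-coordinate fixed, so Lemma~\ref{lem:act}~(iv) supplies the required $G$-actions on both $\Qq \times E$ and its groupoid completion $\HatQq \times E$ extending this action.

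For the Hausdorff half of (ii), trivial $G$-action on $E$ implies that morphisms of $(\Qq \times E)^{\times G}$ preserve the $E$-coordinate, so $|(\Qq \times E)^{\times G}| \cong |\Qq^{\times G}| \times E \cong |\Xx_\Vv| \times E$ by Proposition~\ref{prop:MMGa10}~(iii), which is Hausdorff by Corollary~\ref{cor:prop1}~(i). For the non-Hausdorff half I adapt Example~\ref{ex:branch}~(ii): fix $e \in E_j$ with $g_j * e \neq e$, let $g := (\id, \ldots, g_j, \ldots, \id) \in G$, and set $e^* \in E$ with $j$-th coordinate $e$ and zero elsewhere, so $g * e^* \neq e^*$. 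Consider $p_2 := (J, x, e^*)$ and $p_3 := (J, x, g*e^*)$. Approximating $x$ by $y_k \in \TV_{HJ}$ and using that $g|_H = \id$ acts trivially on $V_H$ (as $j \notin H$) but nontrivially on the $E_j$-component, the chain
\begin{align*}
(J, y_k, e^*) \sim_\Qq (H, \rho_{HJ}(y_k), e^*) \sim_G (H, \rho_{HJ}(y_k), g*e^*) \sim_\Qq (J, y_k, g*e^*)
\end{align*}
identifies $(J, y_k, e^*)$ and $(J, y_k, g*e^*)$ in $|(\Qq \times E)^{\times G}|$ for every $k$, producing a single sequence in the quotient with two distinct limit points $|p_2|, |p_3|$. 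Distinctness $|p_2| \neq |p_3|$ will follow from the hypothesis: any chain from $(J,x,e^*)$ to $(J,x,g*e^*)$ must pass through some $(F, \rho_{FJ}(x))$ with $x \in \TV_{FJ}$, the hypothesis forces $j \in F$, and then returning to $x \in V_J$ imposes $g|_F \in \Stab(\rho_{FJ}(x))$, which for $x$ in the free locus is $\id$ and hence $g_j = \id$, contradicting the choice of $g$.

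For part (iii), I define $\Xx_\Vv^E$ with object space $X_\Vv \times E$ and morphism space obtained by lifting each $m = (I, J, y, h) \in \bX_\Vv$ to act on the $E$-fiber via the embedding $h \in G_{I \wedge J} \hookrightarrow G$; the composition law \eqref{eq:MMGa30} is respected provided the twist factor $k$ acts on $E$ via its natural embedding $G_{I \less J} \hookrightarrow G$ (or $G_{K \less J}$), and this is compatible because $G = \prod G_i$ is a direct product so elements lying in disjoint index subgroups commute. Equip $\Xx_\Vv^E$ with the external $G$-action $g * (I, x, e) := (I, g|_I * x, g * e)$ matching (i) on objects, and let $\io_E$ be identity on objects and inclusion on morphisms, so it is $G$-equivariant by construction. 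For the homeomorphism $|\HatQq \times E|/G \simeq |\Xx_\Vv^E|/G$, each orbit over $[x] \in |\Xx_\Vv|$ is parametrized by $E / \Stab_G(x)$ on both sides --- on the left because the diagonal $G$-action on $\HatQq \times E$ restricted to any $\HatQq$-orbit factors through the isotropy on the $E$-factor, and on the right because the extra $\Xx_\Vv$-morphisms beyond $\HatQq$ are precisely the isotropy morphisms, which act on $E$ via $\Stab_G(x) \hookrightarrow G$ by construction of the bundle (cf.\ identity \eqref{eq:2sidedg}). The non-Hausdorff assertion for $|\Xx_\Vv^E|/G$ is then immediate from (ii) via this homeomorphism.

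The main obstacle is verifying the bundle structure of $\Xx_\Vv^E$ in (iii), specifically that the proposed $E$-fiber action composes correctly under \eqref{eq:MMGa30} with the twist factor $k$; this requires case analysis across all six nested configurations of $I, J, K$, and the compatibility relies essentially on the product structure $G = \prod G_i$ so that components in disjoint index groups commute with one another. A secondary subtlety is the distinctness argument in (ii): one must rule out all conceivable chains in $(\Qq \times E)^{\times G}$ starting from $(J,x,e^*)$ and reaching $(J,x,g*e^*)$, which relies critically on the hypothesis that $x$ lies outside $\TV_{FJ}$ for every $F \subset J \less \{j\}$ to exclude detours through $V_F$ where $g_j$ would act trivially on $V$.
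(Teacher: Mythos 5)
Your overall route coincides with the paper's — product action plus Lemmas~\ref{lem:gpcomplet} and \ref{lem:act} for (i), a sequence in $\TV_{HJ}\times E$ with two limits adapting Example~\ref{ex:branch} for (ii), and an explicit groupoid $\Xx_\Vv^E$ whose morphisms act on the $E$-fibre through their group elements for (iii) — but two of the justifications you supply have genuine gaps.

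In (ii), the distinctness step is flawed. A chain in $(\Qq\times E)^{\times G}$ from $(J,x,e^*)$ to $(J,x,g*e^*)$ need \emph{not} pass through any $(F,\rho_{FJ}(x))$: the morphisms coming from the global $G$-action act at $(J,x)$ itself, and if $x$ has isotropy $s\in\Stab_{G_J}(x)$ then $(J,x,e^*)$ is joined to $(J,s*x,s*e^*)=(J,x,s*e^*)$ by a single such morphism, with no descent to any $V_F$. Moreover ``$x$ in the free locus'' is not available: the freeness hypotheses concern the actions of $G_{J\less F}$ on $\TV_{FJ}$, and your point $x$ lies in no $\TV_{FJ}$ with $F\subset J\less\{j\}$, so nothing in the hypotheses constrains $\Stab_{G_J}(x)$. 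The clean way to organize what must be checked is Lemma~\ref{lem:BbGa}~(ii): two objects of $\Bb^{\times G}$ are equivalent iff they become $\Bb$-equivalent after one global translation by some $h\in G$; since morphisms of $\HatQq\times E$ fix the $E$-coordinate, and $(J,x)\sim_{\HatQq}(J,x')$ forces $x'\in G_{J\less H_x}*x$ with $j\in H_x$, distinctness reduces to a statement about how $\Stab_{G_J}(x)$ acts on $E_j$ — exactly the point your ``free locus'' phrase assumes away. (The paper instead uses the pair $(J,x,e)$, $(J,g*x,e)$ and the absence of $\HatQq$-morphisms $(J,x)\to(J,g*x)$ from \eqref{eq:MorHM0}, and is itself terse about the isotropy point, but it does not rest the argument on the false ``must pass through $(F,\rho_{FJ}(x))$'' claim.)

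In (iii), the verification you defer is where the real content lies, and the reason you offer (components of $G=\prod G_i$ in disjoint index sets commute) does not close it. Take $J\subset I\subset K$ with twist $k\in G_{I\less J}$ determined by $\rho_I(y)=h*k*x$ as in \eqref{eq:JIK}: the composite carries group element $k(hg)|_I$, so its source has $E$-coordinate $(khg)^{-1}*e'$, whereas the source of the first factor has $E$-coordinate $(hg)^{-1}*e'$; commutativity of $k$ with $hg$ only reduces the discrepancy to whether $k$ fixes $e'$, which fails for general $e'\in E$ when $G_{I\less J}$ acts nontrivially on the corresponding factors of $E$. The compatibility does hold once the fibre coordinate is confined to $E_{H_x}$, on which $k$ acts trivially — this is precisely the subcategory $(\Xx_\Vv^E)'$ that the paper introduces in the proof of Proposition~\ref{prop:MMGaE0} — so carrying out your promised case analysis would show that more than commutativity is needed, and the definition of the composition on the full $E$-fibres has to be handled with care. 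Finally, your orbit-counting sketch of $|\Xx_\Vv^{\less G}\times E|/G\simeq|\Xx_\Vv^E|/G$ is looser than the paper's argument, which gets it from the $G$-equivariance of $\io_E$ (the identity on objects), Lemma~\ref{lem:BbGa}, and Proposition~\ref{prop:MMGa10}~(iii).
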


\begin{proof} Part (i) follows from the fact that 
 the projection maps $\rho_{IJ}$ are equivariant 
with respect to the projection $G_J\to G_I$.  

To prove (ii), first note that if the action of $G$ on $E$ is trivial then
$|(\HatQq\times E  )^{\times G}| = (|\HatQq|/G)\times E = |\Xx_\Vv|\times E$ by   Proposition~\ref{prop:MMGa10}~(iii), and so is Hausdorff.
To prove the second claim in (ii), we show that under the stated conditions there is a sequence $(x_k,e)\in 
Q\times E$ whose image in $ |(\HatQq\times E  )^{\times G}|$ has two different limits. 
Consider a point $x\in  V_J$ that lies in the frontier
$\Fr(\TV_{HJ}): = \cl(\TV_{HJ}) \cap \cl(V_J\less \TV_{HJ})$ of $\TV_{HJ}$ but  does not lie in $\TV_{FJ}$ for any $F\subset (J\less \{j\} )$.\footnote
{
For example, we might take  $H = J\less \{j\}$ and $x\in \Fr(\TV_{HJ}) \less
\bigcup_{F\subset H} \TV_{FJ}$ (assuming this set is nonempty).}
By taking $I=J$ in  \eqref{eq:MorHM0},  we see that for $F\subset J$,  $\HatQq$ only contains the partial actions of $G_{J\less F}$ on $\TV_{FJ}$. 
Therefore if $g\in G_j\less \{\id\}$ there is no morphism in $\HatQq$ with source $(J,x)$ and target $(J,g*x)$.  
Thus $(J,x)\not\sim_{\HatQq} (J, g*x)$.  Now
 choose   a convergent sequence $x_k\to x$, where $x_k\in \TV_{HJ}$.
Then we have $(J,x_k)\sim_{\HatQq} (J,g*x_k)$ since the action of $G_{J\less H}$ on $\TV_{HJ}$ is included in the morphisms of $\HatQq$.  
We then have 
 $$
(J, x_k,e)\sim_\Cc (J,g*x_k, e),\quad (J, x_k,e)\sim_\Cc (J,g*x_k, g*e).
$$
But  if we choose $(g,e)\in G_j\times E_j$ so that $g*e\ne e$ we have 
$$
(J, x,e)\not \sim_\Cc (J, g*x, e).
$$
Thus the sequence $|(x_k,e)|$ has two limits   $|(x,e)|$ and $|(g*x,e)|$ in $|\Cc|$, as claimed.
\MS

To prove (iii), consider the category $\Xx_\Vv^E = (X_\Vv^E, \bX_\Vv^E)$ with
\begin{align}\label{eq:XVE}
X_\Vv^E: = {\textstyle \bigsqcup_{I\in \Ii}\, V_I\times E  ,\quad  \ \bX_\Vv^E: =  \bigsqcup_{I,J\in \Ii, I\sim J} }\, \TV_{(I\wedge J)(I\vee J)}\times G_{I\wedge J}\times  E  , 
\end{align}
where 
 \begin{align}\label{eq:MMGaE2}
(s\times t)\,(I,J,y,g,e) &= \bigl(\,(I,\, g^{-1}* \rho_{I}(y), g^{-1}*e ), \; (J,  \rho_{J}(y), e)\bigr)  \\ \notag
\id_{(I,x,e)} :=  (I,I,x, \id,e) ,& \quad (I,J,x, g,e)^{-1}:= (J,I, g^{-1} *x, g^{-1}, g^{-1} *e),
\end{align}
and (where defined) composition is given by
 \begin{align} \label{eq:MMGaE3}
&(I,J,x, g,e) \circ (J,K,y,h, e') :=
\bigl( I,K,  \rho_{I\vee K}(v) , k(hg)|_{I\wedge K}, e' \bigr) 
\end{align} 
where $v,k$ are defined in \eqref{eq:MMGa3}. Then $\pr: \Xx_\Vv^E\to \Xx_\Vv$ is a bundle with projection functor given by
$$
 (I,x,e)\mapsto (I,x),\quad   (I,J,y,g,e)\mapsto (I,J,y,g),
$$
and structural map 
$$
\mu:\bX \,\leftsub{s}{\times}_{\pr} (X\times E) \to X\times E, \quad 
\bigl( (I,J,y,g), (I,g^{-1}*\rho_{IJ}(y),e)\bigr) \mapsto (J,y,g*e). 
$$
Then $G$ acts on the object space $X_\Vv^E = Q\times E$ by the product action as in (i) above, and acts on the morphism space $\bX_\Vv^E$ by the following lift of the action on $\Xx_\Vv$ given in \eqref{eq:gpactY1}:
\begin{align} \label{eq:MMGaE4}
g*(I,y,e) & = (I, g|_I*y, g*e), \\ \notag
g*\bigl(I,J,y,h,e\bigr)
&  =\bigl(I,J, g|_{I\vee J}*y, g|_{I\wedge J}\,h\,g|_{I\wedge J}^{-1},\, g*e\bigr).
\end{align}
We now define $\io_E: \HatQq\times E  \to \Xx_\Vv^E$ to be the identity map on objects and the inclusion on morphisms.
This is equivariant, as claimed.  Hence the final claim in (iii) follows from (ii) and the fact that
$|(\Qq\times E)^{\times G}| \simeq |\Qq\times E|/ G| $ by Lemma~\ref{lem:BbGa}~(iii). 
\end{proof}

\begin{rmk}\label{rmk:GXVE} \rm  
Notice that the $G$-action on $\Xx_\Vv^E$ is {\it not} an inner action, and hence is not lifted from the inner $G$-action on $\Xx_\Vv$ by the process of Lemma~\ref{lem:actW}.   For example,  because the group $G_{A\less I}$ acts trivially on $V_I\subset X_\Vv$, the lifted inner action is also  trivial  on $V_I\times E_A$.  But the action of $G_{A\less I}$ described in \eqref{eq:MMGaE4} need not be trivial on the $E_A$ factor in $V_I\times E_A$. This is the  reason for the conditions on the kernel of the functor $\tau$  in Proposition~\ref{prop:MMGaE0}.
\hfill$\er$
\end{rmk}

We next explain the formal properties of the  $G$-equivariant functor $\tau:  \Xx_\Vv^{\less G}\times E\to \Ww_\Vv$ whose existence is claimed in Theorem~\ref{thm:globstab}~(v).   Recall from Proposition~\ref{prop:Hcomplet}~(ii) that 
$ \Xx_\Vv^{\less G}= \wh{\Qq}$ is the groupoid completion of the nonsingular \'etale category $\Qq$.
It supports an action of $G$ by Lemma~\ref{lem:MMGa}.

\begin{prop}\label{prop:MMGaE0}  In the situation of Lemma~\ref{lem:MMGaE0}, suppose given 
a bundle $\Pp: \Ww_\Vv\to \Xx_\Vv$  over $\Xx_\Vv$
with induced inner $G$-action as in Definition~\ref{def:inngpact}.
 Suppose further that there is a 
 functor  ${\tau}:  \Qq \times E \to \Ww_\Vv$  such that
 \begin{itemize}\item[-]  the induced map on object spaces is $G$-equivariant
 \item[-]    its restriction to each fiber $(I,y)\times E$ is linear with kernel  that includes the subspace $E_{A\less H_y}$, where $H_y = \min \{H\ \big| \ y\in \TQ_{HI}\}$;  
 \item[-] 
 $\Pp\circ {\tau} = \io\circ \pr: \Qq\times E\to \Xx_\Vv$, 
 \end{itemize}
where $\io: \Qq \to \Xx_\Vv$ is the  functorial inclusion from Proposition~\ref{prop:MMGa2}.

Then:  
\begin{enumilist}\item ${\tau}$ extends uniquely to a functor $\tau:\Xx_\Vv^{\less G} 
\times E \to \Ww_\Vv$.

\item  Moreover
 $\tau$
 factors as the composite  $\tau= \tau_E\circ \io_E$ in the following 
commutative diagram of functors
\begin{align*}
\xymatrix
{
(\Xx_\Vv^{\less G})\times E\ar@{->}[d]^{\pr}\ar@{->}[r]^{\;\quad \io_{E}}  & \Xx_\Vv^E \ar@{->}[d]^{\pr}\ar@{->}[r]^{\;\;\tau_E }& \Ww_\Vv \ar@{->}[d]_{\Pp}\\
\Xx_\Vv^{\less G}  \ar@{->}[r]^{\io} &   \Xx_\Vv \ar@{->}[r]^{\;\; =} & \Xx_\Vv,
}
\end{align*}
where $ \Xx_\Vv^E$ is as in Lemma~\ref{lem:MMGaE0}~(iii).

\item
Both functors $\tau_E$ and $\tau =\tau_E\circ \io_E$ are  $G$-equivariant.  In particular, $\tau(I,x,g*e) = g*\tau(I, g^{-1}*x, e).$
\end{enumilist}
 \end{prop}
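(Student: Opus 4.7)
The plan is to handle part (i) by a direct application of the universal extension property of groupoid completions (Lemma~\ref{lem:gpcomplet}~(iv)), then construct $\tau_E$ by extending $\tau$ across the morphisms of $\Xx_\Vv^E$ that implement the additional inner $G$-action, using the kernel hypothesis on $\tau$ to ensure well-definedness, and finally observe that both $G$-equivariance statements in (iii) are essentially built into the construction.

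For (i), I would apply Lemma~\ref{lem:gpcomplet}~(iv) to the product category $\Qq\times E$. By Proposition~\ref{prop:Hcomplet}~(i), $\Qq$ is a nonsingular, locally injective \'etale poset in which each equivalence class has a unique minimum with morphisms to all other elements. Since the category with object space $E$ and only identity morphisms trivially has this property, the product $\Qq\times E$ inherits the nonsingular poset structure together with the same minimum property, because morphisms in the product leave the $E$-component unchanged. Its groupoid completion is therefore $\HatQq\times E = \Xx_\Vv^{\less G}\times E$, and Lemma~\ref{lem:gpcomplet}~(iv) provides the unique extension of $\tau: \Qq\times E \to \Ww_\Vv$ to an \'etale functor $\tau: \Xx_\Vv^{\less G}\times E \to \Ww_\Vv$.

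For (ii), I would define $\tau_E:\Xx_\Vv^E \to \Ww_\Vv$ by setting $\tau_E(I,x,e):=\tau(I,x,e)$ on objects (consistent with $\io_E$ being the identity on objects), and defining it on each morphism $m=(I,J,y,g,e)$ of $\Xx_\Vv^E$ (with $g\in G_{I\wedge J}$) by decomposing $g = g_1 g_2$ with $g_1\in G_{H_y}$ and $g_2\in G_{(I\wedge J)\less H_y}$: the factor $g_2$ produces a morphism in the image of $\io_E$, while $g_1$ is absorbed into the inner $G$-action on $\Ww_\Vv$ from Lemma~\ref{lem:actW}. Explicitly, set $\tau_E(m) := g_1 \ast \tau(I,J,y,g_2,e)$. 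The kernel condition $E_{A\less H_y}\subseteq \ker\tau(I,y,\cdot)$ together with the hypothesis that $\tau$ is $G$-equivariant on objects makes this independent of the splitting, and compatibility with composition is verified using the explicit formulas of \eqref{eq:MMGa3} and the inner-action identities of Lemma~\ref{lem:act}~(iii). By construction, $\tau = \tau_E\circ \io_E$ and $\Pp\circ\tau_E = \pr$, giving the commutative diagram.

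For (iii), the $G$-equivariance of $\tau_E$ is immediate from the decomposition-based definition: acting by $h\in G$ on $m$ produces an extra inner-action factor $h$ which by construction gets pushed onto $\Ww_\Vv$ via the inner $G$-action. The equivariance of $\tau = \tau_E\circ \io_E$ then follows from the $G$-equivariance of $\io_E$ established in Lemma~\ref{lem:MMGaE0}~(iii), and the displayed identity $\tau(I,x,g\ast e) = g\ast \tau(I,g^{-1}\ast x,e)$ is the statement of $G$-equivariance applied at the object $(I,g^{-1}\ast x,e)$ (using that $G_{A\less I}$ acts trivially on $V_I$, so the relation recovers the stated form). The main obstacle will be the verification that the decomposition in (ii) is consistent across different factorizations: this amounts to showing that whenever $g_1\in G_{H_y}$ and the kernel hypothesis places the corresponding $E$-components in $\ker\tau(I,y,\cdot)$, the two candidate values of $\tau_E(m)$ agree. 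This is a careful but formal calculation, parallel to the associativity verification for $\Xx_\Vv$ in Step~5 of the proof of Proposition~\ref{prop:MMGa1}.
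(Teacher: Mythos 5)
Your part (i) is exactly the paper's argument: both apply Lemma~\ref{lem:gpcomplet}~(iv) after noting that $\Qq\times E$ inherits from $\Qq$ the poset-with-unique-minimum property. For (ii)--(iii), however, you take a genuinely different and more computational route than the paper, which never writes a formula for $\tau_E$ on morphisms: it passes to the full subcategory $(\Xx_\Vv^E)'$ of objects $(I,x,e)$ with $e\in E_{H_x}$, observes that on this subcategory the $G$-action \emph{is} the inner action lifted from $\Xx_\Vv$ (this is precisely where the kernel hypothesis enters, via the fiberwise projection with kernel $E_{A\less H_x}$), and then extends across the $G$-action morphisms by the functoriality of the inner-action lift (Lemma~\ref{lem:actW}, Remark~\ref{rmk:actW}), finally setting $\tau_E:=\tau'_E\circ\pr_{\Vv,E}$. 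A direct verification in the spirit you propose could in principle replace this, and you have isolated the right ingredients (kernel condition plus object-level equivariance).

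But as written your execution of (ii) has a real gap. First, the formula $\tau_E(m):=g_1*\tau(I,J,y,g_2,e)$ is not correct under the paper's conventions: the $G$-action on morphisms of $\Ww_\Vv$ coming from an inner action is the conjugation of \eqref{eq:gpident3'}, so $g_1*\tau(I,J,y,g_2,e)$ has source $g_1*\tau(s)$ and target $g_1*\tau(t)$, not the required $\tau(s(m))$ and $\tau(t(m))$; what you would need is composition with the implementing morphism $\ov\al_W(g_1,\cdot)$, as in Lemma~\ref{lem:BbGa}~(v). Second, and more importantly, since $\Pp\circ\tau_E=\pr$ and $\tau_E$ is prescribed on objects, the bundle description \eqref{eq:Wbundle} forces its value on every morphism; the entire content of (ii)--(iii) is therefore the compatibility identity
\begin{align*}
\mu_\Vv\bigl((I,J,y,g),\,\tau(I,\,g^{-1}*\rho_I(y),\,g^{-1}*e)\bigr)\;=\;\tau(J,\rho_J(y),e)
\qquad\text{for all }g\in G_{I\wedge J},
\end{align*}
which is nontrivial exactly because the $G$-action on $\Xx_\Vv^E$ is \emph{not} inner (Remark~\ref{rmk:GXVE}); proving it requires the kernel condition $E_{A\less H_x}\subset\ker\tau(I,x,\cdot)$, the $G$-invariance of the sets $\TV_{HJ}$ (so that $H_{g*x}=H_x$), and the object-level equivariance. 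Your proposal defers precisely this verification as ``a careful but formal calculation,'' while the well-definedness issue you flag instead is vacuous: the factorization $g=g_1g_2$ is canonical because $G_{I\wedge J}=G_{H_y}\times G_{(I\wedge J)\less H_y}$. So the essential step of (ii), and with it the equivariance claims of (iii), is asserted rather than proved; either carry out that identity explicitly or follow the paper's structural detour through $(\Xx_\Vv^E)'$.
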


\begin{proof}  We saw in Lemma~\ref{lem:complet}~(i) that the category ${\Qq}$ satisfies the conditions in Lemma~\ref{lem:gpcomplet}~(iv); namely it is a poset in which each equivalence class has a unique minimum with morphisms to all other elements. Since the product ${\Qq}\times E$ also has this property, the functor ${\tau}$ extends by Lemma~\ref{lem:gpcomplet}~(iv). This proves (i).

Towards (ii), 
consider the full subcategory  
$(\Xx_\Vv^{\less G}\times E )'$ of $\Xx_\Vv^{\less G}\times E  $
with the following object space:
\begin{align}\label{eq:defYE'}
(X_\Vv\times E)'& \ = {\textstyle \bigsqcup_I } \bigl\{(I,x,e) \in V_I\times E
\ \big| \ e\in E_{H_x} \bigr\} \\ \notag
&\ = {\textstyle \bigsqcup_I \; \bigl( \bigcup_{K\subsetneq H\subset I} } \bigl( \TV_{HI} \less \TV_{KI}\bigr) \times E_H \bigr)
\end{align}
where $H_x$ is as in \eqref{eq:MorHM0}.  Thus 
\begin{align}\label{eq:Hx}
H_x = \min \bigl\{H\ | \ |x| \in |V_H|\subset |\Xx_\Vv^{\less G}|\bigr\} =  \min \bigl\{H\ | \ |x| \in |V_H|/G= |\Xx_\Vv^{\less G}|/G\bigr\} = : H_{|x|}
\end{align}
where the above equality holds because the sets $V_H$ are $G_H$-invariant.
Below 
 we denote by $\pr_E'$  the (non continuous) projection map $X_\Vv\times E \to (X_\Vv\times E)' $ that maps the fiber $(I,x)\times E  $ at $(I,x)$ to 
$(I,x)\times E_{H_x}$ with kernel $E_{A\less H_x}$.  Note that this extends to a $G$-equivariant functor  $\pr_E: \Xx_\Vv^{\less G}\times E  \to  (\Xx_\Vv^{\less G}\times E  )'$. 

Denote by  $ (\Xx_\Vv^E)'$ the corresponding full subcategory of $ \Xx_\Vv^E$ with objects $(X_\Vv\times E)'$.
Since the group $G_{A\less I}$ acts trivially on  $\bigl\{(I,x,e) \ \big| \ e\in E_{H_x} \bigr\} \subset V_I\times E_I$, the action of $G$ on $  (\Xx_\Vv^E)'$ is an inner action given by
lifting the inner action on $ \Xx_\Vv$ as follows:
\begin{align}\label{eq:inner'}
\ov\al_{E'}: (X_\Vv\times E)' \times G \to \Mor_{ (\Xx_\Vv^E)'},&\quad  ((I,x,e),g) \mapsto  (I,I,g|_I*x,g|_I, g|_I*e).
\end{align}
Hence, because the only  morphisms in $ (\Xx_\Vv^E)'$ that are not in $(\Xx_\Vv^{\less G}\times E)'$ come from the $G$-action, the functoriality of the lifting construction (see Remark~\ref{rmk:actW}) implies that
 any functor $(\Xx_\Vv^{\less G}\times E)' \to \Ww$  that 
 \begin{itemize}
 \item[(a)]  covers the inclusion $\Xx_\Vv^{\less G}\to \Xx_\Vv$  on the base, and
 \item [(b)] is given by a $G$-equivariant map $(X_\Vv\times E)' \to W$ on objects that is fiberwise linear,
 \end{itemize}
  extends to a functor $ (\Xx_\Vv^E)'\to \Ww$. 

Now consider the statement in the proposition.  Since the categories $\Xx_\Vv^{\less G}\times E  $ and $ \Xx_\Vv^E$ have the same object space, the only claim that requires proof is that the map  $X_\Vv^E \to W$ on object spaces induced  by $\tau$  does extend to a functor $\tau_E$ between these categories.
In other words we have to show that the fact that $\tau$ is $G$-equivariant on objects implies that it respects the additional morphisms in
$ \Xx_\Vv^E$ corresponding to the group action.
But the hypotheses on the functor $\tau$ imply that the induced map on objects factors through the projection $\pr'_E: X_\Vv\times E  \to  (X_\Vv\times E)'$. 
Thus $\tau$  induces a functor $(\Xx_\Vv^{\less G}\times E  )'\to \Ww$ that satisfies the conditions (a), (b) above and hence extends to a 
$G$-equivariant functor
$\tau'_E: (\Xx_\Vv^E)'\to \Ww$.
We now define $\tau_E : = \tau_E'\circ \pr_{\Vv,E}$, where $ \pr_{\Vv,E}: \Xx_\Vv^E\to  (\Xx_\Vv^E)'$ is the extension of $\pr_E: \Xx_\Vv^{\less G}\times E  \to (\Xx_\Vv^{\less G}\times E  )'$ to $\Xx_\Vv^E$. This satisfies both (ii) and (iii).
\end{proof}

\begin{rmk}\rm\label{rmk:Gtau}
 The $G$-equivariance of $\tau$ implies that it extends to a functor
$$
\tau^G:  (\Xx_\Vv^{\less G}\times E)^{\times G}\to \Ww_\Vv,
$$
 that by Proposition~\ref{prop:MMGaE0}
factors as
\begin{align*}
 (\Xx_\Vv^{\less G}\times E)^{\times G}  \longrightarrow (\Xx_\Vv^E)^{\times G}\;
 \longrightarrow ((\Xx_\Vv^E)')^{\times G}
  \stackrel{\io^{\times G}}\longrightarrow (\Xx_\Vv^E)'\;\stackrel{\tau'_E}\longrightarrow\; \Ww_\Vv,
\end{align*}
where $(\Xx_\Vv^E)'$ has objects as in \eqref{eq:defYE'} and
$\io^{\times G}$ is the functor described in Lemma~\ref{lem:BbGa}~(v), 
which exists because, as noted  above,  the $G$ action on $(\Xx_\Vv^E)'$ is an inner action.
We saw in the preceding proposition that $\tau'_E: (\Xx_\Vv^E)'\to \Ww_\Vv$ covers the identity map on the base $\Xx_\Vv$ and that the $G$ action on both spaces is the lift of the inner action on the base.  Therefore, there are enough morphisms $m$ in the base for the  $G$-equivariance of $\tau_E'$ to be captured in the identity
\begin{align}\label{eq:mutauE}
\mu_W\bigl(m,\tau'_E(w)\bigr) = \tau'_E\bigl(\mu_E(m, w)\bigr), \quad m\in \Mor_{\Xx_\Vv},\; w\in \Obj_{(\Xx_\Vv^E)'}, \;P(w) = s(m).
\end{align}
where $\mu_\bullet$ is the structure function of the relevant bundle that expresses how morphisms of the base lift to morphisms on the total space; see Definition~\ref{def:bundle}.
\hfill$\er$
\end{rmk}

We now use the results of Proposition~\ref{prop:MMGaE0} to construct  multisections of the bundle
$\Ww_\Vv\to \Xx_\Vv$ with properties as described in Proposition~\ref{prop:globsym}.
As explained at the beginning of \S\ref{ss:polyVFC}, a multisection $\La$ of a  bundle $\Pp:\Ww\to \Xx$ 
is a functor $\La:\Ww\to \Q^{\ge 0}$ 
with a {\bf local section structure} near each $x_0\in X$ as follows:  for each $x_0\in X$ there is an open set $U(x_0)\subset X$ 
and finitely many sc$^+$-sections $\s_1,\dots,\s_k: U(x_0)\to W$ with associated positive rational numbers $\si_1,\dots,\si_k$  such that $\sum_i \si_i = 1$ and 
$$
\La(w) = {\textstyle \sum_{\{i\in \{1,\dots k\}\, | \,  \s_i(P(w))=w\}} \si_i.}
$$
The support $\supp(\La)$ is given by
$$
\supp(\La) = \{w\in W \ | \ \La(w)>0\}. 
$$
 If $\La$ is a multisection of $\Ww\to\Xx$ and $\psi:\Xx'\to \Xx$ is any functor, then
the formula $\La': = \La\circ \Psi$ defines a  {\bf pullback  multisection} of the pullback bundle 
$\Psi:\psi^*\Ww\to \Xx'$ defined in Lemma~\ref{lem:bundle}~(ii).
The first part of the next lemma shows that there is a notion of the {\bf pushforward of a multisection} of 
$\psi^*\Ww\to \Xx'$, 
provided that the functor $\psi:\Xx'\to \Xx$ is sufficiently well behaved.
(Compare similar results in \cite[Thm.13.4.1]{TheBook}).  The second part is exactly what is needed 
to show that the functor $\tau$ in Proposition~\ref{prop:MMGaE0} can be used to push forward multisections.
 In the applications in \S\ref{ss:reduce} and \S\ref{ss:stabilize} we will, of course, need to
 take into account  the  extra analytic conditions required to control compactness and smoothness.

\begin{lemma}\label{lem:pushLa}   \begin{enumilist}\item
Let $\psi:\Xx'\to \Xx$ be an equivalence,
let $\Ww\to \Xx$ a bundle with pullback  
$\psi^*\Ww\to \Xx'$, and denote by $\Psi:\psi^*\Ww\to\Ww$
the lift of $\psi:\Xx'\to \Xx$ from Lemma~\ref{lem:bundle}.  
If 
$\La': \psi^*\Ww\to \Q^{\ge0}$ 
is a multisection, then the following defines a multisection  of $\Ww\to \Xx$:
\begin{itemlist}
\item[-]  for each $w\in W$ there is $w'\in W'$ and a morphism in $\Ww$  from $w$ to  $\Psi(w')$, and we define 
\begin{align}\label{eq:pushLa}
\psi_*(\La')(w): = \La'(w').
\end{align}
\end{itemlist}
\item
Suppose that $\Ww'\to \Xx'$ and $\Ww\to \Xx$ are two bundles and that the functor $\Psi: \Ww'\to \Ww$  is the lift of  a functor  $\psi:\Xx'\to \Xx$ that is the identity map on $X' = X$ and is such that $\Psi(\Ww')$ is a full subcategory of $\Ww$  that  is invariant under the morphisms $(m,w)\in \bW$ in $\Ww$ in the sense that any morphism in $\Ww$ with source or target in the image $\Psi(W')$ is in fact contained in the image $\Psi(\bW')$.
  Then, given a multisection  
$\La': \Ww'\to \Q^{\ge0}$,
the following formula defines a multisection  of $\Ww\to \Xx$:
\begin{align}\label{eq:pushLa2}
\Psi_*(\La')(w): & =  \sum_{w' \in \Psi^{-1}(w)} \La'(w'),
\end{align}
where the sum is defined to be $0$ when $\Psi^{-1}(w)=\emptyset$. 
\end{enumilist}
\end{lemma}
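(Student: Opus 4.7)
The plan for both parts is to verify the three defining properties in turn: (A) well-definedness of the formula as a function $W\to\Q^{\ge 0}$; (B) compatibility with morphisms (i.e.\ functoriality $\Ww\to\Q^{\ge 0}$); and (C) the existence of a local section representation by sc$^+$-sections. The subtle point in (i) will be (A), while the subtle point in (ii) will be ensuring that the sum is locally finite and that the invariance hypothesis on $\Psi(\Ww')$ gives (B).

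For part (i), I would first observe that since $\psi:\Xx'\to\Xx$ is an equivalence, the induced homeomorphism $|\psi|:|\Xx'|\to|\Xx|$ guarantees that for each $w\in W$ there is some $y\in X'$ with $|\psi(y)|=|P(w)|$, so a choice of morphism $P(w)\to\psi(y)$ in $\Xx$ lifts via $\mu$ to a morphism from $w$ to some element of $\Psi(W')$; this shows that a valid choice of $w'$ always exists. For well-definedness, suppose $w_1',w_2'\in W'$ both give morphisms $w\to\Psi(w_i')$ in $\Ww$. Composing inverses produces a morphism $\Psi(w_1')\to\Psi(w_2')$ in $\Ww$, and since $\psi$ is an equivalence Lemma~\ref{lem:bundle}(v) identifies this as $\Psi$ of a morphism $w_1'\to w_2'$ in $\Ww'$; then functoriality of $\La'$ forces $\La'(w_1')=\La'(w_2')$. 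For (B), given $(m,w)\in\bW$ and a chosen lift $w\to\Psi(w')$, the composed morphism $\mu(m,w)\to\Psi(w')$ makes the same $w'$ a valid choice for $\mu(m,w)$. For (C), since $\Psi$ is a local bundle isomorphism (Lemma~\ref{lem:bundle}(iii),(v)), near any $w_0\in W$ choose $w_0'$ as above and a local $\Psi$-inverse $\Psi^{-1}$ on a neighbourhood; then a local section structure $(\s_i',\si_i)$ of $\La'$ near $w_0'$ pushes forward to the local section structure $(\Psi\circ\s_i'\circ\psi^{-1},\si_i)$ of $\psi_*\La'$, after using $\psi$ to identify local pieces of the bases.

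For part (ii), the key geometric fact is that since $\psi$ is the identity on objects the fiber restriction $\Psi|_{W'_x}:W'_x\to W_x$ is fixed for each $x\in X$, and the invariance hypothesis says: any morphism in $\Ww$ with source or target in $\Psi(W')$ already lies in $\Psi(\bW')$. I would first treat the case $w\notin\Psi(W')$: the invariance condition implies that the entire equivalence class of $w$ under $\sim_\Ww$ is disjoint from $\Psi(W')$, so (B) holds trivially since both $\Psi_*\La'(w)$ and $\Psi_*\La'(\mu(m,w))$ are zero. For $w\in\Psi(W')$ and $(m,w)\in\bW$, lift $m=\Psi(m')$ via the invariance hypothesis; the assignment $w'\mapsto \mu'(m',w')$ gives a bijection $\Psi^{-1}(w)\to\Psi^{-1}(\mu(m,w))$ under which $\La'$ is preserved by functoriality, proving (B). For (A) and (C) together, fix $w_0\in W$ and a local section structure $(\s_j',\si_j)$ for $\La'$ on a neighbourhood of each preimage in $\Psi^{-1}(w_0)$; using that $P\circ\Psi=P'$ on objects (since $\psi=\id$ on objects), each composite $\Psi\circ\s_j'$ is a genuine local section of $W\to X$, and then
$$
\Psi_*\La'(w) \;=\; \sum_j \si_j \,\cdot\, [\,w = \Psi(\s_j'(P(w)))\,]
$$
exhibits the required local section structure with finitely many terms, proving both (A) (local finiteness) and (C) simultaneously. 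The main obstacle will be verifying that $\Psi^{-1}(w_0)$ is locally finite in a sense sufficient to assemble finitely many local sections: this requires carefully cutting down to a neighbourhood on which only finitely many sc$^+$-sections $\s'_j$ of $\La'$ have $\Psi\circ\s'_j$ passing near $w_0$, which follows from the fact that a multisection has only finitely many local branches at any object of $\Xx'$ combined with properness of the relevant restriction of $\Psi$ on objects.
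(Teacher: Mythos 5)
Your arguments for (i)(A), (i)(B) and for part (ii) are essentially the paper's own and are correct. In (ii), your lifting argument is in fact a fuller version of the paper's one-line justification: since $\psi$ is the identity on objects, the $\bX'$-morphism $m'$ underlying a lift of $(m,w)$ has source $s(m)=P'(w')$ for \emph{every} $w'\in\Psi^{-1}(w)$ simultaneously, so $w'\mapsto\mu'(m',w')$ is a $\La'$-preserving bijection $\Psi^{-1}(w)\to\Psi^{-1}(\mu(m,w))$, and your case $w\notin\Psi(W')$ is handled correctly by invariance. Also, the ``main obstacle'' you anticipate in (ii) is not there: because $\psi=\id$ on objects, $\Psi^{-1}(w)$ lies in the single fibre $W'_{P(w)}$, so a local section structure $(\s_j',\si_j)$ of $\La'$ at the object $P(w_0)$ already gives $\Psi_*\La'(w)=\sum_{\{j\,:\,\Psi(\s_j'(P(w)))=w\}}\si_j$ near $w_0$; no properness of $\Psi$ is needed, the finiteness is automatic from the finitely many local branches of $\La'$.

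The one genuine gap is in your step (i)(C). You propose to ``choose a local $\Psi$-inverse on a neighbourhood'' of an arbitrary $w_0\in W$, but $\Psi$ is a local bundle isomorphism only over its image $P^{-1}(\psi(X'))$, and an equivalence $\psi$ need not be surjective on objects; for $w_0$ with $P(w_0)\notin\psi(X')$ there is no local inverse of $\Psi$ (nor of $\psi$) near $w_0$, and ``using $\psi$ to identify local pieces of the bases'' only identifies a neighbourhood of $P'(w_0')$ in $X'$ with one of $\psi(P'(w_0'))$ in $X$, not with one of $P(w_0)$. As written, your construction produces local section structures only over $P^{-1}(\psi(X'))$. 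The missing step -- which is exactly what the paper supplies -- is a transport argument: having already shown that $\psi_*\La'$ is a functor, the chosen morphism $w_0\to\Psi(w_0')$ covers a morphism $P(w_0)\to\psi(P'(w_0'))$ in $\Xx$, which extends to a locally invertible \'etale map of the base with a fibrewise lift via $\mu$, and this carries the pushed-forward local section structure $(\Psi\circ\s_i'\circ\psi^{-1},\si_i)$ near $\psi(P'(w_0'))$ back to a local section structure near $P(w_0)$. With this addition, (i)(C) holds at every point of $X$ and the rest of your argument goes through unchanged.
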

\begin{proof}
Consider (i).  To see that  $\psi_*(\La'): W\to \Q^{\ge 0}$ is well defined, we note first that  every $w\in W$ is equivalent w.r.t. $\sim_{\Ww}$ to some element $\Psi(w')$ since
$|\Psi|:|\Ww'|\to |\Ww|$  is surjective, and second that the injectivity of $|\Psi|$ and the fact that $\La'$ is constant on the equivalence classes of $\sim_{\Ww'}$ imply that $\La(w)$ does not depend on the choice of $w'$ such that
$ \Psi(w')\sim_{\Ww} w.$ 
Further,  $\psi_*(\La)$ is a functor because, by construction, it is constant on the equivalence classes of  $\sim_{\Ww}$.
Finally, because $\psi_*(\La)$ is a functor, every morphism $m: x_0\mapsto x_1$ in $\Xx$ extends to a locally invertible \'etale map and has a family of fiberwise lifts 
that take a local section structure for $\La$  at $x_0$ to one at $x_1$.  Hence,
because $|\psi|:|\Xx'|\to |\Xx|$ is surjective, it suffices to check that there is a 
local section structure near every point $x_0 \in \psi(X')$.  To this end, choose $x_0'\in \psi^{-1}(x_0)$ and $U(x_0')$ so small that 
$\psi'|_{U(x_0')}$ is injective. Then, because $\psi'$ is \'etale, $U(x_0) = \psi'(U(x_0'))$ is a neighbourhood of $x_0$ and 
the local section structure $\bigl(\s_i': U(x_0')\to W', \si_i'\bigr)$ at $x_0'$ pushes forward to the local section structure 
$\bigl(\s_i = \Psi\circ \s_i\circ \psi^{-1}: U(x_0)\to W, \si_i'\bigr)$ at $x_0$.
This completes the proof of (i).

In  (ii), the map on the base is the identity, while the map on the fiber need be  neither injective not surjective.
Nevertheless, the given formula for $\Psi_*(\La')$  defines a functor on $\Ww$ because the image $\Psi(\Mor_{\Ww'})$ contains all morphisms in $\Ww$ with source or target in $\Psi(W)$.  Note also that 
the numbers  $\Psi_*(\La')(w)$ sum to $1$ over each fiber of $\Ww\to \Xx$, and that because $\psi$ is the identity on objects, each family $(\s'_i)$ of local sections at $x$ for $\La'$ gives rise to a family $(\s_i: = \Psi\circ \s_i')$ of local sections for $\La$.  
%
\end{proof}

It remains to prove Proposition~\ref{prop:globsym}  that
we now restate for the convenience of the reader.
 
\begin{prop} \label{prop:globsym0}  Let the bundle $\Ww_\Vv\to \Xx_\Vv$, group $G$, and functor $\breve\tau: \Qq\times E \to \Ww_\Vv$ with extension $\tau:\Xx_\Vv^{\less \Ga}\times E\to \Ww_\Vv$ be as
  in Proposition~\ref{prop:MMGaE}.
  \begin{itemlist}
  \item [{\rm (i)}] 
Each $e\in E$ induces a  multisection functor $\La_{\Vv,e} :  \Ww_\Vv\to \Q^{\ge 0}$ 
given by
\begin{align}\label{eq:pushLa30}
 \textstyle
\La_{\Vv,e}(w) &= \tfrac{1}{|G|} \ \# \{ h\in G \,|\, \tau(P_\Vv(w) ,h*e)= w  \},
\end{align}
where $\#$ denotes the number of elements in a finite set. 
\item[{\rm (ii)}]  This multisection $\La_{\Vv,e}$ is globally structured in the sense of Definition~\ref{def:multisdef0} by the sections $(\s_g)_{g\in G}$ and correspondence $\ka: \Mor_{\Xx_\Vv}\to \{b: G\to G, {\rm bijection}\}$ given by
\begin{align}\label{eq:global2}
\s_g(I,x) &= \tau(I,x, g*e),\	\quad 
\ka(I,J,y,g)(h) = g*h.
\end{align}
\item[{\rm (iii)}] 
 The triple $(\La_{\Vv,e}, (\s_i)_{i\in \Ii}, \ka)$ is structurable in the sense of \cite[Def.13.3.6]{TheBook}, and determines a unique structured multisection  $[\La_{\Vv,e}, \Uu, \Ss, \tau]$  in the sense of \cite[Def.13.3.8]{TheBook}, where $(U(x)_{x\in \Xx_\Vv}$ is
the system of good neighbourhoods in  $\Xx_\Vv$ defined in Lemma~\ref{lem:neigh}.
\end{itemlist}
 \end{prop}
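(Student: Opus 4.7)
\textbf{The plan} is to reduce all three parts of the proposition to a single identity: for every morphism $m = (I, J, y, g) \in \Mor_{\Xx_\Vv}$ with source $x := s(m)$ and target $x' := t(m)$, and every $e' \in E$,
\begin{equation}\label{eq:keyid}
\mu\bigl(m,\, \tau(x, e')\bigr) \;=\; \tau\bigl(x',\, g*e'\bigr),
\end{equation}
where $\mu$ denotes the structural map of the bundle $\Pp: \Ww_\Vv \to \Xx_\Vv$. To establish \eqref{eq:keyid} I will apply the functor $\tau_E: \Xx_\Vv^E \to \Ww_\Vv$ of diagram \eqref{diag:13} to the morphism $(I, J, y, g, g*e') \in \bX_\Vv^E$, whose source-target pair is $((x, e'), (x', g*e'))$ by \eqref{eq:MMGaE2}. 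Commutativity $\Pp \circ \tau_E = \pr$ of the right square in \eqref{diag:13} forces the image of this morphism to lie over $m$ in $\bX_\Vv$; since any morphism in $\Ww_\Vv$ over $m$ starting at some $w \in W_x$ ends at $\mu(m, w)$, and $\tau_E$ agrees with $\tau$ on objects (because $\io_E$ is the identity on objects), comparing sources and targets gives \eqref{eq:keyid}.

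\textbf{For (i)}, granting \eqref{eq:keyid}, the functoriality of $\La_{\Vv, e}$ will follow because for any morphism $(m, w) \in \bW_\Vv$, writing $w' := \mu(m, w)$ and letting $g$ denote the group component of $m$, the bijection $G \to G$, $h \mapsto g*h$, restricts via \eqref{eq:keyid} with $e' = h*e$ to a bijection $\{h \in G : \tau(x, h*e) = w\} \to \{h' \in G : \tau(x', h'*e) = w'\}$, whose inverse $h' \mapsto g^{-1}*h'$ is supplied by \eqref{eq:keyid} applied to $m^{-1}$. Equality of cardinalities then yields $\La_{\Vv, e}(w) = \La_{\Vv, e}(w')$. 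On each local chart $V_I$ the required local section structure is provided by the finite family of sc-smooth maps $\s_g: V_I \to W_\Vv$, $x \mapsto \tau(x, g*e)$, $g \in G$, each carrying weight $1/|G|$, with coincident sections having their weights pooled.

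\textbf{For (ii) and (iii)}, formula \eqref{eq:pushLa30} is exactly the prescription \eqref{eq:global0} with $\Ii := G$ and uniform weights $1/|G|$, so $(\s_g)_{g \in G}$ is a global section structure. The correspondence $\ka(I,J,y,g)(h) := g*h$ is locally constant because the connected components of $\bX_\Vv$ are labeled by the discrete datum $(I, J, g)$ in \eqref{eq:MMGa20}, and the compatibility \eqref{eq:global1}, which unwinds to $\tau(x', (g*h)*e) = \mu(m, \tau(x, h*e))$, is exactly \eqref{eq:keyid} with $e' = h*e$. Consequently $\La_{\Vv, e}$ is globally structured in the sense of Definition~\ref{def:multisdef0}. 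Part (iii) will then follow by invoking Proposition~\ref{prop:structmulti} together with the good system of neighbourhoods on $\Xx_\Vv$ from Lemma~\ref{lem:neigh}, which together make the triple $(\La_{\Vv, e}, (\s_g)_g, \ka)$ structurable and determine a unique structured multisection $[\La_{\Vv, e}, \Uu, \Ss, \tau]$ in the sense of \cite[Def.13.3.8]{TheBook}. The main obstacle in this plan is the careful verification of \eqref{eq:keyid}: morphisms in $\bX_\Vv^E$ entangle the base morphism $m$ with the $E$-twist $e' \mapsto g*e'$ arising from the non-inner $G$-action of Lemma~\ref{lem:MMGaE0}~(iii), and one must track how $\tau_E$ transports this twist into the structural morphism of $\Ww_\Vv$ encoded by $\mu$.
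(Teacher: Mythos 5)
Your argument is correct, and its overall skeleton (reduce everything to the single compatibility identity $\mu(m,\tau(s(m),e'))=\tau(t(m),g*e')$, which is precisely \eqref{eq:global1} for the sections $\s_h=\tau(\cdot,h*e)$, then quote Proposition~\ref{prop:structmulti} with the good neighbourhoods of Lemma~\ref{lem:neigh} for (iii)) matches the paper's proof of (ii)--(iii). Where you diverge is in how the key identity and part (i) are obtained. The paper verifies the identity by decomposing an arbitrary morphism of $\Xx_\Vv$ into a morphism of $\Xx_\Vv^{\less G}$ composed with a $G$-action morphism, and then invokes separately the functoriality of $\tau$ on $\Xx_\Vv^{\less G}\times E$ (Proposition~\ref{prop:MMGaE0}~(i)) and its $G$-equivariance (Proposition~\ref{prop:MMGaE0}~(iii)); you instead apply the single functor $\tau_E:\Xx_\Vv^E\to\Ww_\Vv$ of Proposition~\ref{prop:MMGaE}~(ii) to the morphism $(I,J,y,g,g*e')$, whose source and target you read off correctly from \eqref{eq:MMGaE2}, and use $\Pp\circ\tau_E=\pr$ together with the form \eqref{eq:Wbundle} of morphisms in $\Ww_\Vv$ to identify the image with $(m,\tau(s(m),e'))$. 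This is a clean repackaging: the morphisms of $\Xx_\Vv^E$ already carry both the base morphism and the $E$-twist, so one application of functoriality suffices, though of course the content (functoriality over $\Qq\times E$ plus equivariance) is what went into building $\tau_E$ in the first place; it is essentially the identity \eqref{eq:mutauE} of Remark~\ref{rmk:Gtau}. For (i) the paper takes a different route — it pushes forward the tautological $G$-invariant multisection $\La^{\less G}_e$ on $\Xx_\Vv^{\less G}\times E$ through $\tau^G$ using Lemma~\ref{lem:pushLa}~(ii) and compares \eqref{eq:pushLa2} with \eqref{eq:pushLa30} — whereas you prove functoriality of $\La_{\Vv,e}$ directly from the key identity (your bijection $h\mapsto g*h$ with inverse supplied by the identity for $m^{-1}$ is exactly right) and exhibit the local section structure by hand via the maps $\s_g=\tau(\cdot,g*e)$ with weights $1/|G|$. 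Your version of (i) is more elementary and in fact makes (i) a consequence of (ii), as the remark after Lemma~\ref{lem:struct2} anticipates; the paper's pushforward argument buys a reusable mechanism (Lemma~\ref{lem:pushLa}) that it exploits elsewhere. No gaps.
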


\begin{proof} To prove (i)
we must show that if 
 the bundle $\Ww_\Vv\to \Xx_\Vv$, group $G$, and functor $\tau: \Xx_\Vv^{\less G}\times E \to \Ww_\Vv$ 
are as in Proposition~\ref{prop:MMGaE0}, then each
 $e\in E$ determines a  multisection functor $\La_{\Vv,e} :  \Ww_\Vv\to \Q^{\ge 0}$ 
given by
\begin{align}\label{eq:pushLa3}
 \textstyle
\La_{\Vv,e}(w) &= \tfrac{1}{|G|} \# \{ h\in G \,|\, \tau(P_\Vv(w) ,h*e)= w  \},
\end{align}
where $|G|$ denotes the order of the group $G$. 

For fixed $e\in E$, consider the functor $\La^{\less G}_e: \Xx_\Vv^{\less G}\times E \to \Q^{\ge 0}$ defined by setting
\begin{align*}
\La^{\less G}_e(x, e') & = \begin{cases}  1/|G| \;\mbox{ if } e' \in G*e, \\
   0 \;\mbox{ otherwise}.\end{cases}
\end{align*}
This is a $G$-equivariant functor with support $\Xx_\Vv^{\less G}\times \{G*e\}$, and hence clearly is a multisection of the bundle
$ \Xx_\Vv^{\less G}\times E\to  \Xx_\Vv^{\less G}$.  Because $\La_e^{\less G}$  is $G$-invariant, we may also consider it as a multisection of
the bundle $ (\Xx_\Vv^{\less G}\times E)^{\times G}\to  (\Xx_\Vv^{\less G})^{\times G}$ (see Lemma~\ref{lem:BbGa} and
Proposition~\ref{prop:MMGa10}~(iii)).  Next note that because  it is $G$-equivariant,  $\tau$ extends to a functor
$\tau^{G}:  (\Xx_\Vv^{\less G}\times E)^{\times G}\to \Ww_\Vv$, and it is immediate that its image 
is invariant under the morphisms of $\Ww_\Vv$ in the sense defined in Lemma~\ref{lem:pushLa}~(ii).  Hence
there is a well defined pushforward multisection
$(\tau^G)_*(\La^{\less G}_e)$, and it follows  by comparing  \eqref{eq:pushLa2} with \eqref{eq:pushLa3} that
$$
(\tau^G)_*(\La^{\less G}_e) = \La_{\Vv,e}.
$$
This completes the proof of (i).

To prove (ii), note that because $\ka$ is locally constant, it suffices to check that \eqref{eq:global2} holds; namely we require that that for all morphisms $m$ in $\Xx_\Vv$
$$
\s_{\ka(m)(h)}(t(m)) = \mu\bigl(m,\s_h(s(m))\bigr) \in W_{t(m)}.
$$
With $m = (I,J,y,g)$ we therefore require that 
$\s_{gh}(J,\rho_J(y)) = \mu\bigl(m,\s_h(I, g^{-1}*\rho_I(y))$.  Equivalently, since 
$\s_h(I,g^{-1}*\rho_I(y)) = \tau(I,g^{-1}*\rho_I(y), h*e)$, we require
 that
$$
\tau(J,\rho_J(y),gh*e)  = \mu\bigl((I,J,y,g),\tau(I,g^{-1}*\rho_I(y), h*e)\bigr)  
$$
Since every morphism  in $\Xx_\Vv$ is the composite of a morphism $(I,J,y,\id)$ in 
$\Xx_\Vv^{\less G}$ with a morphism $(J,J,y,g)$ given by the $G$ action in $\Xx_\Vv$, it suffices to check  the above identity for these two types of morphisms.  But this identity holds for the morphisms in $\Xx_\Vv^{\less G}$ since $\tau$ is a functor $\Xx_\Vv^{\less G}\times G \to \Ww_\Vv$ by Proposition~\ref{prop:MMGaE0}~(i), and it holds for the morphisms that give the $G$ action since $\tau$ is $G$-equivariant by 
Proposition~\ref{prop:MMGaE0}~(iii).

Finally the claims in (iii) hold by Proposition~\ref{prop:structmulti}.
\end{proof}

\section{Proofs of Main Theorems}\label{sec:strict}

This section proves Theorem~\ref{thm:globstab} and Corollary~\ref{cor:multisection}. Thus we consider a sc-Fredholm section functor  $f:\Xx\to \Ww$ of a strong bundle ${\Pp:\Ww\to \Xx}$ with compact solution set $S:=|f^{-1}(0)|\subset |\Xx|$. 
We begin in \S\ref{ss:Vdata} by constructing suitable \'etale data as described in  \S\ref{ss:mainres} from a family of local uniformizers for the ep-groupoid $\Xx$ that covers a neighbourhood of the compact zero set  of the polyfold-Fredholm section $f: \Xx\to \Ww$.  We then apply the constructions in \S\ref{sec:genconstr} to obtain the bottom row of the diagram \eqref{diag:11}. 
This is the content of Theorem~\ref{thm:reduce}.     
In Theorem~\ref{thm:reduceFred} we construct the pullback bundle $\Ww_\Vv\to \Xx_\Vv$ of the strong bundle $\Ww\to \Xx$, and establish the existence and properties of the right hand square in \eqref{diag:11}.   Finally, under the assumption that the local uniformizers are chosen so that they extend to local stabilizations as in Lemma~\ref{lem:localtau} that are compatible with suitable compactness controlling data,  Theorem~\ref{thm:stabilize} constructs  the left hand square  in \eqref{diag:11}.
The proof of Theorem~\ref{thm:globstab} is given in \S\ref{ss:stabilize}.
Corollary~\ref{cor:multisection} then follows by applying Proposition~\ref{prop:globsym}.
 
 \subsection{Fundamental classes induced by Fredholm sections over polyfolds}
\label{ss:polyVFC}

In this subsection we consider a compact moduli space $\oMm$ without formal boundary and of expected dimension $d$ (given by the Fredholm index of local models).  For example, $\oMm$ might be the Gromov-Witten moduli space $\oMm_{g,k}(M,J,A)$ of $J$-holomorphic curves in homology class $A\in H_*(M)$ in a closed symplectic manifold $M$, of fixed genus $g\geq 0$ and with $k\in\N_0$ marked points, with $d=\dim M + 2c_1(A)+2k +6g -6$. 
In that setting, a Kuranishi atlas $\Kk$ for $\oMm$ consists of 
an interconnected family of local models without boundary, and we showed in \cite{MWiso} how to construct a fundamental class $[\oMm]_\Kk \in \check H_d(\oMm;\Q)$ by compatibly perturbing the local models.
The main purpose of this section is to construct an analogous rational \v{C}ech homology class\footnote
{See the Appendix for a brief discussion of rational \v{C}ech homology.}
 $[\oMm]_f\in \check H_d(\oMm;\Q)$ for polyfold descriptions of $\oMm$ as in Theorem~\ref{thm:polyVFC}. 
We also briefly discuss the situation when $\oMm$ has nonempty formal boundary.

\begin{remark}\label{rmk:cech}\rm   {\bf (\v{C}ech homology and Fundamental classes)} 
We use rational \v{C}ech  homology here due to its identification with singular homology for finite simplicial complexes and its tautness property explained in \cite[Remark 8.2.4]{MW2} and Appendix~\ref{app:Cech}.
This allows for a perturbative construction of $[\oMm]$ as follows: 
Suppose the moduli space is identified $\oMm\simeq S\subset Y$ with a compact subset $S$ of a metric space $Y$. Then we can construct a fundamental class $[S]\in\check{H}_d(S;\Q)$ and push it forward  to $[\oMm]\in\check{H}_d(\oMm;\Q)$ via  $\phi:S\stackrel{\simeq} \to \oMm$.
To construct $[S]$ we choose a nested sequence of open subsets $S \subset B_k\subset B_{k-1} \subset Y$ such that $\bigcap_{k\in\N} B_k=S$.
For example, the balls $B_k:=\{ p\in Y \,|\, \inf_{s\in S} d(p, s) < 1/k \}$ of radius $1/k$ provide such nested neighbourhoods of $S$. 
Then these inclusions induce a system of maps $\check{H}_d(S;\Q)  \to \check{H}_d(B_{k+1};\Q)\to \check{H}_d(B_k;\Q)$, and the meaning of tautness is that their inverse limit equals the \v{C}ech homology
$\check{H}_d(S;\Q) \;\simeq\; \underset{\leftarrow }\lim\, \check{H}_d(B_k;\Q)$.

We can use this limit to define a fundamental class
$$
[S] := \underset{\leftarrow }\lim\, (\phi_k)_*[Z_k] \;\in\;\check{H}_d(S;\Q)
$$ 
by constructing continuous maps $\phi_k: Z_k \to B_k$ from sets $Z_k$ (such as those in Lemma~\ref{lem:epfund}) that carry natural 
fundamental classes $[Z_k]\in \check{H}_d(Z_k;\Q)$. 
Existence of this limit follows if we can ensure compatibility $(\phi_k)_*[Z_k] = (\iota_{k\ell})_*\bigl((\phi_\ell)_*[Z_\ell]\bigr)\in\check{H}_d(B_k;\Q)$ for $k<\ell$, where $\iota_{k\ell}:B_\ell\to B_k$ is the inclusion map.  
This in turn can be achieved by constructing cobordisms  $Z_{k\ell}$ with boundary $\partial Z_{k\ell}\simeq Z^-_k \sqcup Z_\ell$ and continuous maps $\phi_{k\ell}: Z_{k\ell} \to B_k$ that restrict to $\phi_k$ resp.\ $\iota_{k\ell}\circ\phi_\ell$ on the boundary.
 \hfill$\er$
\end{remark}


\begin{defn} \label{def:PolyfoldModel}
A {\bf polyfold model without boundary} $\Ee=\bigl(f:\Xx\to\Ww, \psi \bigr)$ 
of Fredholm index $d\in\Z$ for a compact Hausdorff space $\oMm$ consists of the following data: 

\begin{itemlist}
\item 
${\Pp:\Ww\to \Xx}$ is a strong bundle over an ep-groupoid $\Xx$ with empty boundary $\partial X=\emptyset$. 
\item
$f:\Xx\to \Ww$ is an oriented sc-Fredholm section functor of constant Fredholm index $d\in\Z$.
\item
$\psi: |f^{-1}(0)| \to \oMm$ is a homeomorphism. 
\end{itemlist}
\end{defn}

Here the homeomorphism $|f^{-1}(0)| \to \oMm$ guarantees that the zero set $|f^{-1}(0)|\subset|\Xx|$ is compact. 
On the other hand, the homeomorphism $|f^{-1}(0)| \to \oMm$ then induces a metrizable topology on $\oMm$ since $|f^{-1}(0)|\subset|\Xx|$ is a subset of the metrizable space $|\Xx|$ by \cite[Thm.7.3.1]{TheBook}. 
Thus rational \v{C}ech homology $\check H_d(\oMm;\Q)$ is well defined, and we will apply its tautness properties as  
explained in \cite[Remark 8.2.4]{MW2} and Appendix~\ref{app:Cech} to construct the  fundamental class $[\oMm]\in \check H_d(\oMm;\Q)$.

\begin{lemma}\label{lem:epfund}
Suppose that $\p\Xx = 0$ and $f:\Xx\to \Ww$ is an oriented sc-Fredholm section functor  of constant Fredholm index $d\in\Z$ with compact zero set $|f^{-1}(0)|$ and $\Lambda:\Ww\to\Xx$ is a sc$^+$-multisection such that $\Lambda\circ f: X \to\Q^{\geq 0}$ is a compact, oriented, tame branched, ep$^+$-subgroupoid as in \cite[Thms.15.3.7, 15.4.1]{TheBook}.
Then the solution set $|S_{\Lambda\circ f}|: = |\supp(\La\circ f)|\subset |\Xx|$
carries a natural fundamental class $[\La\circ f]\in \check{H}_d(|S_{\Lambda\circ f}|;\Q)$. 
In particular, if $M:=|S_{\Lambda\circ f}|$ is an oriented manifold, and $\Lambda\circ f\equiv \theta \in \Q^{\geq 0}$ is constant, then the fundamental class is $[\La\circ f]=\theta [M]$. 

Moreover, consider two such sc$^+$-multisections  $\Lambda^0,\Lambda^1:\Ww\to\Xx$ whose solution sets $|S_{\Lambda^0\circ f}|, |S_{\Lambda^1\circ f}| \subset B$ are contained in a common open subset $B\subset|\Xx|$. Suppose that $\Theta:[0,1]\times \Xx \to\Q^{\geq 0}$ is a compact, oriented, tame branched ep$^+$-subgroupoid with support $|\supp\Theta| \subset [0,1]\times B$ and restrictions $\Theta|_{[0,\eps)\times\Xx}(t,x)=\La^0(f(x))$, $\Theta|_{(1-\eps,1]\times\Xx}(t,x)=\La^1(f(x))$ for some $\eps>0$. 
If the orientation of $\La^1\circ f=\Theta|_{\{1\}\times\Xx}$ given by boundary restriction of $\Theta$ as in \cite[Def.9.3.8]{TheBook} agrees with the orientation induced by $f$ via the solution set convention in \S\ref{ss:orient} (generalized to multisections in \cite[\S9]{TheBook}), and the orientation of $\La^0\circ f=\Theta|_{\{0\}\times\Xx}$ given by restriction of $\Theta$ is opposite to the orientation induced by $f$, then 
$$
(\phi^0)_* [\La^0\circ f] = (\phi^1)_* [\La^1\circ f] \in \check{H}_d(B;\Q) ,
$$
where $\phi^i: |S_{\Lambda^i\circ f}| \to B$ denotes the inclusion map for $i=0,1$. 
\end{lemma}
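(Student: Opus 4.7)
The plan is to realize $|S_{\Lambda\circ f}|$ as a compact oriented $d$-dimensional weighted branched suborbifold of $|\Xx|$ in the sense of \cite[Ch.9]{TheBook}, build a rational singular $d$-cycle representing it, and then pass to \v{C}ech homology via the canonical map $H_d\to\check H_d$. The cobordism statement is then obtained by applying the same construction to $\Theta$ in one dimension higher and using the orientation conventions of \S\ref{ss:orient}.

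For the construction of $[\La\circ f]$, I would work in a finite cover of $|S_{\Lambda\circ f}|$ by good local uniformizers $(U(x),G_x)$ of the ep-groupoid $\Xx$, as provided by Lemma~\ref{lem:struct1}. In each such uniformizer the structurability of the multisection $\La$ (guaranteed here by the local section structure of an sc$^+$-multisection) gives finitely many local branches $\s_1,\dots,\s_k:U(x)\to W$ with positive rational weights $\sigma_1,\dots,\sigma_k$ summing to $1$, and transversality together with index $d$ makes each $(f-\s_i)^{-1}(0)$ an oriented $d$-manifold; the orientation is the one induced from the orientation of $f$ by the solution set convention of \S\ref{ss:orient}. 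Triangulating these local branches and taking a common refinement subordinate to the finite cover produces a rational $d$-chain
\[
c \;=\; \sum_{\Delta} \frac{\sigma_\Delta}{|G_{x_\Delta}|}\,\Delta \;\in\; C_d(|S_{\Lambda\circ f}|;\Q),
\]
with orientation of each simplex $\Delta$ inherited from its branch. Functoriality of $\La$ together with the compatibility of the orientation with morphisms (\S\ref{ss:orient}) makes the choice of weights compatible on overlaps of uniformizers, and tameness of the branched structure \cite[\S9.3]{TheBook} guarantees that along each codimension-one stratum the adjacent weighted simplices cancel, giving $\partial c=0$. I define $[\La\circ f]\in\check H_d(|S_{\Lambda\circ f}|;\Q)$ as the image of $[c]$ under the natural map to \v{C}ech homology (which is an isomorphism on metrizable compacta admitting locally finite simplicial refinements). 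In the special case that $M=|S_{\Lambda\circ f}|$ is a manifold with constant weight $\theta\in\Q^{\ge 0}$, the chain $c$ reduces to $\theta$ times a standard oriented triangulation of $M$, so $[\La\circ f]=\theta[M]$.

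For the cobordism statement, the same construction applied to the $(d+1)$-dimensional compact oriented tame branched ep$^+$-subgroupoid $\Theta$ over $[0,1]\times\Xx$ produces a chain $C\in C_{d+1}(|S_\Theta|;\Q)$. The collar hypotheses $\Theta|_{[0,\eps)\times\Xx}(t,x)=\La^0(f(x))$ and $\Theta|_{(1-\eps,1]\times\Xx}(t,x)=\La^1(f(x))$ identify $|S_\Theta|$ near $t=0,1$ with products $[0,\eps)\times |S_{\La^0\circ f}|$ and $(1-\eps,1]\times|S_{\La^1\circ f}|$ of weighted branched structures. Combining the product and boundary conventions spelled out in Remark~\ref{rmk:interval-orient} with the orientation hypothesis of the lemma (outward vector $-1$ at $t=0$ and $+1$ at $t=1$, agreeing and disagreeing with the $f$-induced orientation respectively) yields
\[
\partial C \;=\; c_1 \;-\; c_0 \;\in\; C_d(|S_{\La^0\circ f}|\sqcup|S_{\La^1\circ f}|;\Q),
\]
where $c_i$ is the cycle representing $[\La^i\circ f]$. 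Since $|S_\Theta|\subset[0,1]\times B$, pushing $C$ forward via the projection $\Pi:[0,1]\times B\to B$ gives $\partial \Pi_* C = \phi^1_* c_1 - \phi^0_* c_0$ in $C_d(B;\Q)$. Hence $(\phi^0)_*[\La^0\circ f] = (\phi^1)_*[\La^1\circ f]$ in $H_d(B;\Q)$, and thus in $\check H_d(B;\Q)$ via the canonical map.

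The main obstacle is the coherent global assembly of the local branched simplicial pieces into a closed rational cycle in the presence of nontrivial isotropy. The weights must be divided by the orders $|G_x|$, and one must verify that this bookkeeping is compatible with all morphisms of $\Xx$ and that the oriented boundary contributions at codimension-one branching strata really do cancel. Both of these are built into the definition of a tame oriented branched ep$^+$-subgroupoid \cite[\S9.1, \S9.3]{TheBook} together with the orientation transport formalism reviewed in \S\ref{ss:orient}, so the verification is essentially formal once the simplicial structure is set up. The same remarks apply verbatim one dimension higher in the cobordism step, the only new ingredient being the boundary orientation identities in Remark~\ref{rmk:interval-orient}, which are designed to match precisely the sign hypothesis of the lemma.
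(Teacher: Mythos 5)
Your overall strategy (build a weighted cycle from the local branches, then compare two such cycles via the cylinder $\Theta$) is the right intuition, but the chain-level implementation has a genuine gap at its central step, and it is precisely the step the paper's proof is engineered to avoid. You triangulate the local branches $(f-\s_i)^{-1}(0)$, take a ``common refinement,'' and assert that tameness forces the weighted simplices to cancel ``along each codimension-one stratum,'' so that $\partial c=0$. But in a branched ep$^+$-subgroupoid the branch locus is not a codimension-one stratum, nor any stratified or triangulable set: by \cite[Lem.9.2.7]{TheBook} it is merely a closed, nowhere dense subset of the support, and two local branches $M_{x,i}, M_{x,j}$ may coincide on an arbitrary closed subset with wild frontier. ``Tame'' in \cite[\S 9]{TheBook} refers to the boundary/corner structure coming from the degeneracy index of the M-polyfold, not to any regularity of the branching. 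Consequently there is in general no common triangulation in which the coincidence sets of branches are subcomplexes, no simplicial cancellation argument, and hence no justification that your chain $c$ is a cycle; the same problem propagates to the cylinder chain $C$ and the identity $\partial C=c_1-c_0$. A secondary issue: even if a singular cycle existed, your passage $H_d\to\check H_d$ is asserted to be an isomorphism for ``metrizable compacta admitting locally finite simplicial refinements,'' but $|S_{\Lambda\circ f}|$ is only a compact metrizable space, and singular and \v{C}ech homology need not agree on such spaces.

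The paper circumvents all of this by never producing a chain. It characterizes $[\La\circ f]$ by the local condition \eqref{eq:fundc} on the open set of good points (where the support is an honest oriented $d$-manifold and the class must be the weighted pushforward of its fundamental class), and then constructs and uniquely extends the class over a finite cover by a Mayer--Vietoris induction in the Borel--Moore-type theory $\check H^\infty_*$, using the vanishing $\check H^\infty_q(\Br\cap W_k)=0$ for $q\ge d$, which is proved by duality with compactly supported \v{C}ech cohomology and needs only that the branch locus is a closed proper subset of a finite union of $d$-manifolds. The cobordism identity is likewise obtained from the long exact sequence \eqref{eq:AlesAC} for the pair $\bigl(|\supp\Theta|,\ \{0,1\}\text{-boundary}\bigr)$, with the boundary map computed only over good points, rather than from a chain-level boundary. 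If you want to salvage a chain-level argument you would need an additional (and unavailable) structural result making the branch locus simplicial; otherwise the \v{C}ech/Mayer--Vietoris route is the one that actually closes the argument.
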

\begin{proof}
To explain what is meant by a \lq\lq natural fundamental class" we  first describe the structure of
the support $\supp(\theta)=\{x\in X \ | \ \theta(x)>0\}$ of the functor $\theta: = \La\circ f$.
By  \cite[Def.9.1.2]{TheBook},  each point $x\in \supp(\theta)$ has a local branching structure.
 This means that
there is an open neighbourhood $U(x)$ of $x\in X$ and a finite collection of open
smooth $n$-dimensional submanifolds $M_{x,i}\ni x, i\in I_x$ of $X$, (called local branches) together with weights $\si_{x,i}\in \Q^{>0}$ such that each inclusion $M_{x,i}\to U(x)$ is proper, and
\begin{align}\label{eq:Thetay}
\theta(y) = {\textstyle  \sum_{i\in I_x: y\in M_{x,i}} \si_{x,i} \quad \mbox{ for all }\ y\in U(x).}
\end{align}
In particular $U(x) = \bigcup_{i\in I_x} M_{x,i}$.
We define the {\bf branch locus}  $\Br$ of  $S: =|\supp(\theta)|\subset |\Xx|$ to consist of all $z\in S\subset |\Xx|$ that do not have a neighbourhood in $S$ that is entirely contained in one of the sets $|M_{x,i_0}|$. By \cite[Def.9.2.2,Lem.9.2.3]{TheBook}, the  complement  $S\less \Br$ is the image in $|\supp(\theta)|$ of the set of {\bf good points} in $\supp(\theta)$.   Note that by definition, given a local branching structure $(M_{x,i})_{i\in I_x}$
each good point $y$ has a neighbourhood $V(y) \subset \supp(\theta)$  such that $V(y)\cap M_{x,i} = 
V(y)\cap M_{x,j} $ for all $i,j\in I_x$ for which $y\in M_{x,i}\cap M_{x,j}$.  
Further $\Br$ is closed and nowhere dense in $S$ by \cite[Lem.9.2.7]{TheBook}.

With this understood, we require that the fundamental class $[\theta,\fo]$ has the following property:
for each local branching structure
  $(M_{x,i})_{i\in I_x}$ over $U(x)$ and good point $y\in U(x)$,
the restriction of $[\theta,\fo]$  to  $|V(y)|$ is given by
\begin{align}\label{eq:fundc}
{\textstyle \rho_{S, |V(y)|} ([\theta,\fo]) = \sum_{i\in I_x, y\in M_{x,i}} \si_{x,i}\,\mu_{|V(y)|}\in \check{H}^\infty_n\bigl(|V(y)|\bigr)}
\end{align}
where $\mu_{|V(y)|}$ denotes the pushforward by $\pi_\Xx: V(y)\to |V(y)|\subset |\Xx|$ of the fundamental class $\mu_{V(y)}$ of the oriented $n$-manifold $V(y)$ described in \eqref{eq:Afundc}. To see that
 the condition in \eqref{eq:fundc} makes sense, notice firstly that the map $\pi_\Xx: V(y)\to |V(y)|$ is proper
 because (as in \cite[Prop.9.1.12]{TheBook}) we can choose $V(y)$ to be a subset of a local uniformizer for $\Xx$, and secondly that
the pushforward class $\mu_{|V(y)|}$ is defined  for proper maps
by the discussion before  \eqref{eq:AlesAC}. Another important point about equation~\eqref{eq:Thetay} is that it uses the \c{C}ech homology theory $\check{H}^\infty_n(\cdot)$ that is analogous to  locally finite singular homology and is dual to compactly supported rational \c{C}ech cohomology.   Thus it equals the standard \c{C}ech homology $\check{H}_n(\cdot)$ on compact spaces, but has very different behavior on noncompact spaces.

We next claim that for each local branching structure
  $(M_{x,i})_{i\in I_x}$ over $U(x)$  the condition~\eqref{eq:fundc} determines the restriction of
$[\theta,\fo]$ to $|U(x)|\less\Br$.  Indeed, $|U(x)|\less\Br$ is a (possibly infinite) union of connected components $|C|$,
where $\pi_\Xx^{-1}(|C|)\cap U(x) =:C$ is the union of the submanifolds $V(y)$ with $|y|\in |C|$.
Thus $C$  can be identified with
an open subset of an oriented $n$-manifold.
The set $C$ may have a finite number of connected components $(C_i)_{i\in I_{x,|C|}}$ (where $I_{x,|C|}$ indexes the relevant local branches), but, because $|C|$ is disjoint from the branch locus,   each such component satisfies $|C_i| = |C|$.  This 
allows us to show that  the restriction of
$[\theta,\fo]$ to $|C|$ is simply a multiple of $\mu_{|C|}$. Indeed, because restricting to an open subset $V'\subset V$ takes the fundamental class of $V$ to that of $V'$,
this
 restriction  is   $\sum_{i\in I_{x,|C|}} \si_{x,i}\,\mu_{|C|}$.
%

Next observe that because
 $S$ is compact and the map $\pi_\Xx:X\to |\Xx|$ is open, $S$ has a finite open cover  by sets $|U_i|$, where each $U_i\subset X$ supports a local branching structure.  Assume for some $k\ge 0$ that  we have constructed  the fundamental class $[\theta]_{k}$ over $W_k : = \bigcup_{j=1}^{k} |U_{j}|$ 
so that \eqref{eq:fundc} holds for each $y$ with $|V(y)|\subset W_k$,
and for  $W: = |U_{k+1}|$ define
$[\theta]_{k,W} \in \check{H}_n(W_k\cap W)$ by restriction, $[\theta]_{k,W}: = 
\rho_{W_k, W_k\cap W}([\theta]_k)$. 
As noted above, the requirement \eqref{eq:fundc} 
  specifies the restriction  of $[\theta] $ to 
   $W\less \Br$.
   We will prove the following:\MS
   
\begin{itemlist}\item[{\rm (a)}]  there is a unique class $[\theta]_W\in \check{H}^\infty_{n}(W)$ such that
$\rho_{W, W\less Br}([\theta]) = [\theta]_{W\less \Br}$.
\item[{\rm (b)}]  $\check{H}^\infty_{n+1}(W_k\cap W) = 0$, and
\item[{\rm (c)}] $\rho_{W_k,W_k\cap W}([\theta]_k) = \rho_{W, W_k\cap W}([\theta]_{W}) \in \check{H}^\infty_n(W_k\cap W)$.
\end{itemlist}
\MS

Granted this, the  Mayer--Vietoris sequence  in \eqref{eq:AMV} implies that there is a unique class 
$[\theta]_{k+1}\in \check{H}^\infty_{n}(W_{k+1})$ that restricts to $[\theta]_k \oplus [\theta]_{W} \in 
\check{H}^\infty_{n}(W_{k})\oplus \check{H}^\infty_{n}(W)$.  
Then  $[\theta]_{k+1}$ satisfies \eqref{eq:fundc}  for each $y$ with $|y|\in |W_{k+1}|$, so that the construction of $[\theta]$ is completed by a finite number of these steps.

As a preliminary step towards (a), we prove that the homology of the branch locus  $\Br$ of $S$ vanishes for $q\ge n$; indeed
\begin{align} \label{eq:fundBr}
\check{H}^\infty_{q}(\Br\cap W_k) = 0, \quad  \forall q\ge n,  \ k.
\end{align}
To see this, we first claim that $\check{H}^\infty_{q}(A)=0, q\ge n$ for every closed, proper (i.e.\ $A\ne Y$) subset  $A$ of a connected $n$-manifold $Y$.  This holds because, by the
 universal coefficient theorem \cite[Cor.4.18]{Ma}, we may  identify 
$\check{H}^\infty_{q}(A)$ with the dual  $\Hom(\check{H}_c^{q}(A),\Q) $ of the corresponding cohomology group,
and $\check{H}_c^{q}(A)=0$ for these $A$ by   \cite[Ex.1,p78]{Ma}.
But each intersection
$\Br\cap W_k$
is a finite union of such subsets $A$, since $\Br$ is  closed and nowhere dense in $S$ by \cite[Lem.9.2.7]{TheBook}, and each $W_k$ is a finite union of open subsets of $\R^n$.
Therefore,  \eqref{eq:fundBr} holds  by an  inductive argument using the  Mayer-Vietoris sequence in (iv). 

Claim (a) now follows by applying the exact sequence \eqref{eq:AlesAC} with $i=n$, $A = \Br\cap W$ and $Y=W$.
Further (b) holds because of 
the exactness of $$
\check{H}^\infty_{n+1}\bigl(\Br\cap(W_k\cap W)\bigr)\to \check{H}^\infty_{n+1}(W_k\cap W)\to \check{H}^\infty_{n+1}\bigl((W_k\cap W) \less \Br\bigr),
$$  
and the fact that $(W_k\cap W) \less \Br$ is a disjoint union of connected  $n$-manifolds so that its homology vanishes in dimension $>n$.  
Similarly, to prove (c) it suffices to check that the two classes 
$\rho_{W_k,W_k\cap W}([\theta]_k), \  \rho_{W, W_k\cap W}([\theta]_W)$ in 
$\check{H}^\infty_{n}(W_k\cap W)$  have the same restriction to $(W_k\cap W) \less \Br$.  
 But, just as in the first paragraph of this proof, it follows from \eqref{eq:fundc} that for each connected component $|C|$ of this space 
the class $[\theta]_W$ has the form $\si_C |\mu_C|$, where $\si_C$ is the sum of the weights $\si_{x,i}$ 
of the local branches that map to $|C|$.   A similar claim holds for $[\theta]_k$.  
Thus the restriction to $|C|$ of each of these two classes is $\si_C |\mu_{C}|$. This completes the definition of $[\theta]$
and hence of $[\La\circ f]$. 
\MS

To prove the identity 
$(\phi^0)_* [\La^0\circ f] = (\phi^1)_* [\La^1\circ f] \in \check{H}_d(B;\Q)$ in the second paragraph,  note that  
by \eqref{eq:AlesAC} there is a commutative diagram with exact rows
\begin{align*}
\xymatrix
{
 \check{H}^\infty_{d+1}\bigl(|\supp(\Theta)|\cap \bigr((0,1)\times B\bigr)\bigr)\ar@{->}[d]\ar@{->}[r]^{\delta}&
\check{H}^\infty_{d}\bigl(|\supp(\theta_0)|\sqcup |\supp(\theta_1)| \bigr) \ar@{->}[d]\ar@{->}[r]
 & \check{H}^\infty_{d}\bigl((|\supp(\Theta)|\bigr) \ar@{->}[d] \\
 \check{H}^\infty_{d+1}\bigl((0,1)\times B\bigr) \ar@{->}[r] &
\check{H}^\infty_{d}\bigl(\{0,1\}\times B \bigr)\ar@{->}[r] &
 \check{H}^\infty_{d}\bigl([0,1]\times B\bigr)  
}
\end{align*}
where the vertical maps are induced by the inclusion $\phi:|\supp(\Theta)|\to [0,1]\times B$ and $\theta_i: = \Lambda^i\circ f$.  
The result will follow if we show that the boundary map  $$
\de:\check{H}^\infty_{d+1}\bigl(|\supp(\Theta)|\cap \bigr((0,1)\times B\bigr)\bigr)\to \check{H}^\infty_{d}\bigl(|\supp(\theta_0)|\sqcup |\supp(\theta_1)| \bigr)
$$ 
takes the fundamental class of $|\supp(\Theta)|\cap \bigr((0,1)\times B$ to the pair $(-[\theta_0], [\theta_1])$.  But by the naturality property
\eqref{eq:fundc} and condition (a) above,   it suffices to check this on each component of the set of good points; and it holds in this case  because, for each open subset $U\subset \R^d$, the map $\de:\check{H}_{d+1}^\infty\bigl((0,1)\times U\bigr) \to \check{H}^\infty_{d}\bigl(\{0, 1\}\times U\bigr)$ takes $\mu_{(0,1)\times U}$ to  $(-\mu_{\{0\}\times U}, \mu_{\{0\}\times U}).$  Since $\check{H}^\infty_{d}(Y) = \check{H}_{d}(Y)$ for compact spaces $Y$, this
 completes the proof.
\end{proof}

\begin{theorem}  \label{thm:polyVFC}
Every polyfold model without boundary $\bigl(f:\Xx\to\Ww, \psi \bigr)$ 
for a compact space $\oMm$
induces a well defined {\bf fundamental class} 
\begin{equation}\label{eq:polyVFC}
[\oMm]_f= \psi_* \bigl( \underset{\leftarrow}\lim \, 
(\phi_k)_*[ \Lambda_k\circ f  ] \bigr)
 \; \in \; \check H_d(\oMm;\Q)  . 
\end{equation} 
In particular, this limit exists and is independent of the choice of compactness controlling data $(N,\Uu)$, nested sequence of open subsets $B_k\subset B_{k-1} \subset |\Uu|\subset|\Xx|$ such that $\bigcap_{k\in\N} B_k=|f^{-1}(0)|$, and sc$^+$-multisections $\Lambda_k$ with  the following properties.

\begin{enumilist}
\item
Each $\Lambda_k:\Ww\to\Q^{\geq 0}$ is a sc$^+$-multisection as in \cite[Def.13.2.1]{TheBook} that is transverse, supported in $\Uu\cap\pi_\Xx^{-1}(B_k)$, and bounded $N(\Lambda_k)<1$ in the sense of \cite[Thm.15.3.7]{TheBook}. 
\item 
$\Lambda_k\circ f: X \to\Q^{\geq 0}$ is a compact, oriented, tame branched ep$^+$-subgroupoid of $\Xx$  in the sense of \cite[\S9]{TheBook}.  Its solution set $S_k: = |\supp(\La_k\circ f)|\subset |\Xx|$
carries a  fundamental class $[ \Lambda_k\circ f ] \in \check H_d(S_k;\Q)$,  that is pushed forward to $(\phi_k)_*[ \Lambda_k\circ f ] \in \check H_d(B_k;\Q)$ by the inclusion map $\phi_k: S_k \to B_k\subset|\Xx|$.
 \end{enumilist}
\end{theorem}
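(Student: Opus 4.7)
The plan is to realize $[\oMm]_f$ as an inverse limit using tautness of rational \v{C}ech homology, as explained in Remark~\ref{rmk:cech}. Since $|f^{-1}(0)|\subset|\Xx|$ is compact and the metric space $|\Xx|$ is normal, the nested sequence $B_k\supset B_{k+1}\supset \cdots$ of open neighbourhoods with $\bigcap_k B_k = |f^{-1}(0)|$ gives, by tautness, a natural isomorphism $\check H_d(|f^{-1}(0)|;\Q)\simeq \varprojlim_k \check H_d(B_k;\Q)$. Thus it suffices to produce a compatible family of classes $(\phi_k)_*[\Lambda_k\circ f]\in\check H_d(B_k;\Q)$ and to check its independence of all auxiliary choices; then $\psi_*$ transfers the limit to $\check H_d(\oMm;\Q)$ via the homeomorphism $\psi$.

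First I would verify existence of the multisections $\Lambda_k$ with the listed properties. Given compactness controlling data $(N,\Uu)$ for $f$, which exists by Remark~\ref{rmk:control-compact}, I can shrink $\Uu$ so that $|\Uu|\subset B_1$ (using Remark~\ref{rmk:control-compact} together with normality of $|\Xx|$). Then \cite[Thm.15.3.7]{TheBook} provides, for each $k$, a transverse sc$^+$-multisection $\Lambda_k$ supported in $\Uu\cap \pi_\Xx^{-1}(B_k)$ and with $N(\Lambda_k)<1$, so that $\Lambda_k\circ f$ is a compact, oriented, tame branched ep$^+$-subgroupoid whose solution set $S_k\subset B_k$ is compact. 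The natural fundamental class $[\Lambda_k\circ f]\in\check H_d(S_k;\Q)$ then exists by Lemma~\ref{lem:epfund}, and pushforward under $\phi_k: S_k\hookrightarrow B_k$ gives the desired elements of $\check H_d(B_k;\Q)$.

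The main obstacle is establishing compatibility under the restriction maps $\iota_{k\ell}^*: \check H_d(B_k;\Q)\to \check H_d(B_\ell;\Q)$ for $\ell>k$, namely
\[
(\iota_{k\ell})_*\bigl((\phi_\ell)_*[\Lambda_\ell\circ f]\bigr) \;=\; (\phi_k)_*[\Lambda_k\circ f] \quad\in\quad \check H_d(B_k;\Q).
\]
To prove this I would build a cobordism multisection $\Theta:[0,1]\times\Xx\to\Q^{\ge 0}$ on the constant extension $\Hat f$ of $f$ (oriented as in Remark~\ref{rmk:interval-orient}) that agrees with $\Lambda_k\circ f$ near $\{0\}\times\Xx$ and with $\Lambda_\ell\circ f$ near $\{1\}\times\Xx$. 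Concretely, I would take a smooth cutoff $\beta:[0,1]\to[0,1]$ with $\beta\equiv 0$ near $0$ and $\beta\equiv 1$ near $1$, form the straight-line homotopy $\Lambda_t := (1-\beta(t))\Lambda_k + \beta(t)\Lambda_\ell$ as a sc$^+$-multisection of $[0,1]\times\Ww\to[0,1]\times\Xx$ as in \cite[\S13]{TheBook}, and perturb it further by \cite[Thm.15.3.7]{TheBook} to achieve transversality of $\Theta$ while keeping it supported in $[0,1]\times \pi_\Xx^{-1}(B_k)$ and leaving the restrictions at $t\in\{0,1\}$ untouched. The orientation signs line up by the convention of Remark~\ref{rmk:interval-orient}, so the second part of Lemma~\ref{lem:epfund} gives the compatibility identity in $\check H_d(B_k;\Q)$, proving that the inverse system stabilizes to a well-defined element.

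Finally, independence of the choices $\{(N,\Uu),(B_k),(\Lambda_k)\}$ follows by the same cobordism argument applied between two systems of choices: given a second family $(\Lambda_k')$ for neighbourhoods $(B_k')$, I choose a common nested refinement $B_k''\subset B_k\cap B_k'$ (again using normality) and on $[0,1]\times\Xx$ construct cobordism multisections interpolating $\Lambda_k$ and $\Lambda_k'$ supported in $[0,1]\times\pi_\Xx^{-1}(B_k'')$. Lemma~\ref{lem:epfund} then shows that the two inverse-limit classes project to the same element of each $\check H_d(B_k'';\Q)$, hence agree in $\check H_d(|f^{-1}(0)|;\Q)\simeq\varprojlim_k\check H_d(B_k'';\Q)$. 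Pushing forward by $\psi$ yields the class $[\oMm]_f\in\check H_d(\oMm;\Q)$ of \eqref{eq:polyVFC}, completing the proof.
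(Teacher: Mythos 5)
Your overall architecture matches the paper's: tautness of rational \v{C}ech homology reduces everything to producing a compatible inverse system of classes in $\check H_d(B_k;\Q)$, existence and the listed properties of the $\Lambda_k$ come from \cite[Thm.15.3.7]{TheBook} together with the compactness control, and both compatibility and independence of choices are handled by a cobordism over $[0,1]\times\Xx$ whose oriented boundary restrictions are controlled via Remark~\ref{rmk:interval-orient} and the second part of Lemma~\ref{lem:epfund}. That is exactly the paper's route.

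However, there is a genuine gap in your construction of the interpolating multisection. The ``straight-line homotopy'' $\Lambda_t:=(1-\beta(t))\Lambda_k+\beta(t)\Lambda_\ell$ is not a sc$^+$-multisection in the sense of \cite[Def.13.2.1]{TheBook}: a multisection must take values in $\Q^{\ge 0}$ and be locally represented by finitely many sections with \emph{fixed positive rational} weights, whereas your formula produces irrational values $(1-\beta(t))\sigma_i$ for generic $t$ and, more fundamentally, varies the weights rather than the sections. You cannot repair this by making $\beta$ rational-valued without destroying smoothness. The correct interpolation (and the one the paper uses) keeps the weights fixed and instead rescales the local sections in the \emph{fiber} direction: one sets $\Lambda'(t,w):=\Lambda^{0}(\beta(t)^{-1}w)$ for $t<\tfrac13$ and symmetrically from the other end, so that locally $\Lambda'$ is represented by $(\beta(t)\s_i,\sigma_i)$; this shrinks each branch into the zero section and passes through the \emph{trivial} multisection on the middle interval $[\tfrac13,\tfrac23]$, rather than deforming $\Lambda_k$ directly into $\Lambda_\ell$. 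A second, smaller point: achieving interior transversality of $\Theta$ while leaving the boundary restrictions untouched is an application of the relative extension result \cite[Thm.15.3.9]{TheBook} (with the added perturbations supported in the open interior $(\tfrac14,\tfrac34)\times\Xx$), not of the absolute existence theorem \cite[Thm.15.3.7]{TheBook}; one must also verify that the boundary restrictions are already transverse and in general position, which follows from transversality of $\Lambda_k\circ f$ and $\Lambda_\ell\circ f$ since the reduced tangent space at $\{0,1\}\times X$ kills the $[0,1]$-direction. With these corrections the rest of your argument (compatibility of the inverse system, and independence via a common refinement of the two systems of choices) goes through as in the paper.
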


\begin{proof}
A nested sequence $B_k\subset B_{k-1} \subset |\Uu|$ with $\bigcap_{k\in\N} B_k=|f^{-1}(0)|$ exists since $|\Xx|$ is metrizable, so we can use balls around $|f^{-1}(0)|$ intersected with $|\Uu|$. 
For any choice of nested sequence, denote $\Uu_k:=\pi_\Xx^{-1}(B_k)\subset \Uu \subset X$. Then Remark~\ref{rmk:control-compact} guarantees that $(N,\Uu_k)$ controls compactness of $f$.  
Now \cite[Thm.15.3.7]{TheBook} guarantees the existence of $(N,\Uu_k)$-regular sc$^+$-multisections $\Lambda_k:\Ww\to\Q^{\geq 0}$. Their domain support in $\Uu_k$ \cite[Def.13.2.2.]{TheBook} guarantees that $\Lambda_k(f(x))=0$ for all $x\in X\less \Uu_k$, 
so that the realization $C_k$ of the ``perturbed solution set'' $|S_{\Lambda_k\circ f}|=| \{ x\in X \,|\, \Lambda_k(f(x))>0 \}|$ is compact: indeed, it is closed and contained in $\bigl| \{ x\in \Uu \,|\, f(x)\in W[1], N(f(x))< 1 \} \bigr|\bigr)$ by the bound $N(\Lambda_k)<1$, which is shorthand for $N(\Lambda_k(w))<1$ whenever $\Lambda_k(w)\ne 0$.  Further
 $|S_{\Lambda_k\circ f}|\subset B_k$. 

Next, $|S_{\Lambda_k\circ f}|$ is given a smooth structure by the fact that $\Lambda_k\circ f: X \to\Q^{\geq 0}$ is a compact, tame branched ep$^+$-subgroupoid; see \cite[Thm.15.3.7]{TheBook}. In addition, \cite[Thm.15.4.1]{TheBook} transfers the orientation of $f$ to a canonical orientation of this branched ep$^+$-subgroupoid (by applying the solution set convention in \S\ref{ss:orient} to each branch).  Thus $|S_{\Lambda_k\circ f}|$ has a natural fundamental class $[\La_k\circ f]\in \check{H}_d(|S_{\Lambda_k\circ f}|;\Q)$ by Lemma~\ref{lem:epfund}, which can be pushed forward by the continuous  inclusion map $\phi_k: S_k \to B_k\subset|\Xx|$.
\MS

It remains to prove that the limit $\underset{\leftarrow }\lim\,  (\phi_k)_* [\Lambda_k\circ f ]  \; \in \; \check H_d(|f^{-1}(0)|;\Q)$ exists and is independent of the choices involved.  
Here the main technical work is to construct a cobordism given any choice of compactness controlling data $(N',\Uu')$ and two $(N',\Uu')$-regular sc$^+$-multisections $\Lambda^i:\Ww\to\Q^{\geq 0}$, following the arguments of \cite[Cor.15.3.10]{TheBook}. 

As a first step, notice that 
$\Ti f(t,x):=(t,f(x))$ is a sc-Fredholm section of the strong bundle $[0,1]\times\Ww\to[0,1]\times\Xx$ with compact solution set $|\Ti f^{-1}(0)|=[0,1]\times|f^{-1}(0)|$, and $\Ti N(t,w):=N'(w)$ and $\Ti \Uu:=[0,1]\times \Uu'$ control compactness of $\Ti f$. 
Moreover, the orientation of $f$ and natural orientation of $[0,1]$ induce an orientation of $\Ti f$ by the product convention in \S\ref{ss:orient}. 
Next, instead of the general multisection extensions in \cite[\S14]{TheBook}, we can directly construct a sc$^+$-multisection $\Lambda':[0,1]\times \Ww\to\Q^{\geq 0}$ which extends $\Ti \Lambda|_{\{i\}\times\Ww}=\Lambda^i$ given on the boundary $\partial|[0,1]\times\Xx|=\{0,1\}\times|\Xx|$ and satisfies properties (1) and (2) of $(\Ti N,\Ti \Uu)$-regularity. 
For that purpose let $\beta:[0,1]\to[0,1]$ be a smooth function with $\beta|_{[0,\frac 14]}\equiv 1$, $1>\beta|_{(\frac 14,\frac 13)}>0$, $\beta|_{[\frac 13,1]}\equiv 0$. 
Then we obtain a sc$^+$-multisection $\Lambda':[0,1]\times\Ww\to\Q^{\geq 0}$ by extending
$\Lambda'(t,w):=\Lambda^{0}( \beta(t)^{-1} w)$ for $t<\frac 13$ and 
$\Lambda'(t,w):=\Lambda^{1}( \beta(1-t)^{-1} w)$ for $t>\frac 23$ 
with the trivial multisection for $\frac 13 \leq t \leq \frac 23$, that is $\Lambda'(t,0_x)=1$ and $\Lambda'(t,w)=0$ for all $w\ne 0_{P(w)}$. To check the sc$^+$-smoothness and $(\Ti N,\Ti \Uu)$-regularity, let $(\s_i,\sigma_i)_{i\in I}$ be local sections and weights representing $\Lambda^0$. Then $\Lambda'|_{[0,\frac 23]\times\Ww}$ is locally represented by $(\beta \s_i,\sigma_i)_{i\in I}$ and we can check that (1) $N'(\beta \s_i) < 1$ and (2) $\supp(\beta \s_i)\subset [0,1]\times \Uu'$. 
Analogously, $\Lambda'|_{[\frac 13,0]\times\Ww}$ inherits local section structures from $\Lambda^1$, and both of these section structures for $\Lambda'$ match with a trivial section structure $\s_i(x)=0_x$ with weights $\sum\sigma_i=1$ representing  $\Lambda'$ on $[\frac 13, \frac 23]\times\Ww$. 

Moreover, $\Lambda'$ already satisfies the transversality conditions (3),(4) in \cite[Thm.15.3.7]{TheBook} over the boundary of $[0,1]\times\Xx$. Indeed, for a local section structure $(\s_i,\sigma_i)_{i\in I}$ representing $\Lambda^0$, we have $\Lambda'$ for $t<\frac 14$ represented by the local sections $\s'_i(t,x)=\s_i(x)$ and weights $\sigma_i$. Then $\rT_{(\Ti f,\Lambda')}(t,x)$ is shorthand for the collection of linearized operators ${\rm D}(\Ti f - \s'_i)(t,x): \rT_{(t,x)}[0,1]\times X\to W_x$ for each $i\in I$ with $\Ti f(t,x)= \s'_i(t,x)$. For $0\leq t\leq\frac 14$ those are the operators $\R\times \rT_xX\to W_x, (\delta t, \delta x) \mapsto {\rm D}(f - \s_i)(x) \delta x$ for each $i\in I$ with $f(x)= \s_i(x)$. So surjectivity follows from transversality of $\Lambda^0\circ f$ and the kernels are $\ker {\rm D}(\Ti f - \s'_i)(t,x) = \R\times\ker {\rm D}(f - \s_i)(x)$.  
The latter satisfy the general position requirement \cite[Def.5.3.9]{TheBook} since the reduced tangent space\footnote{For readers tracing references in \cite{TheBook} it might help to note that $E_x\subsetneq E$ in \cite[Def.2.4.7]{TheBook} is defined on the prior page. In our setting, the partial cone is $C=[0,\infty)\oplus W$ for which $E_{(0,w)}=\{0\}\oplus W$.}  is $\rT_{(0,x)} ( [0,1]\times X)  = \{0\}\times \rT_x X$. Hence suitable complements of $\ker {\rm D}(\Ti f - \s'_i)$ are given by complements of $\ker {\rm D}(f - \s_i)(x)$. 
Analogously, transversality and general position for $t\geq\frac 34$ follows from the properties of $\Lambda^1$. 

Now the proof of \cite[Thm.15.3.9]{TheBook} provides an $(\Ti N,\Ti \Uu)$-regular sc$^+$-multisection $\Ti \Lambda:[0,1]\times \Ww\to\Q^{\geq 0}$ which extends $\Ti \Lambda|_{\{i\}\times\Ww}=\Lambda^i$ by adding to $\Lambda'$ finitely many contributions supported in the interior. 
In fact, the additional contributions can be chosen with support over $(\frac 14, \frac 34)\times\Xx$ so that we have
$\Theta|_{[0,\eps)\times\Xx}(t,x)=\La^0(f(x))$, $\Theta|_{(1-\eps,1]\times\Xx}(t,x)=\La^1(f(x))$ for $\eps=\frac 14$.
The resulting functor $\Theta:=\Ti\Lambda\circ\Ti f: [0,1]\times X \to\Q^{\geq 0}$ is a compact, tame branched ep$^+$-subgroupoid, and \cite[Thm.15.4.1]{TheBook} transfers the orientation of $\Ti f$ to a canonical orientation of this branched ep$^+$-subgroupoid by applying the solution set convention in \S\ref{ss:orient} to each branch. 
And as in Remark~\ref{rmk:interval-orient} the orientation of $\Ti f|_{\{1\}\times X}$ induced by boundary restriction of the product orientation of $\Ti f$ agrees with the orientation of $f$, and the orientation of $\Ti f|_{\{0\}\times X}$ is opposite to the orientation of $f$. Thus with the solution set orientation convention we obtain oriented branched ep$^+$-subgroupoid identifications $\La^1\circ f=\Theta|_{\{1\}\times\Xx}$ and $(\La^0\circ f)^-=\Theta|_{\{0\}\times\Xx}$, so that Lemma~\ref{lem:epfund} identifies the resulting fundamental classes 
\begin{equation}\label{eq:sameNU}
(\phi^0)_*[\Lambda^0 \circ f] = (\phi^1)_*[\Lambda^1 \circ f] \; \in \; \check{H}_d(|\Uu'|;\Q) . 
\end{equation}
Here $\phi^i: |S_{\Lambda^i\circ f}| \to B$ denotes the inclusion maps for $i=0,1$.

We will use this to prove existence of the limit 
$\underset{\leftarrow}\lim \, (\phi_k)_*[ \Lambda_k\circ f ]$ for the system of homeomorphisms 
$(\iota_{k\ell})_* : \check{H}_d(B_\ell;\Q)\to \check{H}_d(B_k;\Q)$ induced by the inclusions $\iota_{k\ell}: B_\ell \to B_k\subset |\Uu|$ for $k<\ell$ and any choice of $(N, \Uu_k)$-regular sc$^+$-multisections $\Lambda_k$. 
The inclusion $\Uu_\ell=\pi_\Xx^{-1}(B_\ell)\subset \Uu_k$ guarantees that $\Lambda_\ell$ is also $(N, \Uu_k)$-regular. Comparing it to $\Lambda_k$ in \eqref{eq:sameNU} for $(N',\Uu')=(N,\Uu_k)$ yields
$(\phi_{\ell,k})_*[\Lambda_\ell \circ f] = (\phi_k)_*[\Lambda_k \circ f] \; \in \; \check{H}_d(B_k;\Q)$, where the continuous map $\phi_{\ell,k}: |S_{\Lambda_\ell \circ f}|\to B_k$  is given by composing $\phi_\ell: 
|S_{\Lambda_\ell \circ f}| \to B_\ell$ with $\iota_{k\ell}$. 
So this identity becomes $(\iota_{k\ell})_*(\phi_\ell)_*[\Lambda_\ell \circ f] = (\phi_k)_*[\Lambda_k \circ f]$, as required for the limit. 
\MS

To prove independence of choices we need to identify the inverse limits arising from different choices, indexed by $i=0,1$, of compactness controlling data $(N^i,\Uu^i)$, nested sequence $B^i_k\subset B^i_{k-1}\subset |\Uu^i|$ with $\bigcap_{k\in\N} B^i_k=|f^{-1}(0)|$, and $(N^i,\Uu^i_k)$-regular multisections $\Lambda^i_k$ for $\Uu^i_k:=\pi_\Xx^{-1}(B^i_k)$. 
We can compare both limits to one arising from the compactness controlling data $(\max\{N^0,N^1\},\Uu^0_k\cap \Uu^1_k)$, the nested sequence $B^0_k\cap B^1_k \subset |\Uu^0_k\cap \Uu^1_k|$, and corresponding multisections. 
Thus it suffices to prove equality of the induced fundamental class for choices as above where either just the multisections differ, or we can (after shifting notation) assume $N^0\leq N^1$, $\Uu^1\subset \Uu^0$, and $B^1_k\subset B^0_k$. 
In both situations we can identify the limits by ensuring that the inclusion maps $\iota^{01}_k:B^1_k \hookrightarrow B^0_k$ yield
\begin{equation}\label{eq:wewantpoly}
(\phi^0_k)_*[\Lambda^0_k\circ f] = (\iota^{01}_k)_*(\phi^1_k)_*[\Lambda^1_k\circ f] \; \in \; \check{H}_d(B^0_k;\Q) 
\qquad \text{for sufficiently large}\; k  .
\end{equation} 
To do so first note that for fixed $k$, the multisection $\Lambda^1_k$ is also $(N^0,\Uu^0_k)$-regular, 
so that just the maps $\phi^1_k: |\Lambda^1_k\circ f| \to |\Uu^1_k|$ and $\phi^{1,0}_k: 
|\Lambda^1_k\circ f| \to |\Uu^0_k|$ differ by $\phi^{1,0}_k=\iota^{10}_k\circ\phi^0_k$. Thus we obtain
$$
(\phi^{1,0}_k)_*[\Lambda^1_k\circ f] = (\iota^{10}_k)_*(\phi^1_k)_*[\Lambda^1_k\circ f] \; \in \; \check{H}_d(|\Uu^0_k|;\Q) . 
$$
Now it remains to compare the classes arising from the two $(N^0,\Uu^0)$-regular multisections $\Lambda^1_k, \Lambda^0_k$. Here the prior cobordism construction applies, so that \eqref{eq:sameNU} yields
$$
(\phi^0_k)_*[\Lambda^0_k \circ f] = (\phi^{1,0}_k)_*[\Lambda^1 \circ f] \; \in \; \check{H}_d(|\Uu^0_k|;\Q) . 
$$
Combining both identities proves \eqref{eq:wewantpoly} and thus establishes independence of the fundamental class $[\oMm]_f$ from the choices in this construction. 
\end{proof}

\begin{rmk} \rm The above construction can be adapted to the more general case when the base polyfold $\Xx$ has formal boundary, so that the zero set $\Mm: = |f^{-1}(0)|$ of a polyfold-Fredholm section $f:\Xx\to \Ww$ has formal boundary $\p \oMm= |f^{-1}(0)|\cap |\p \Xx|$.  In this case, as stated in Corollary~\ref{cor:multisection}, there is a relative fundamental class $[\Mm]_f$ constructed as above that is an element of the \c{C}ech homology group  $\check{H}^\infty_d(\oMm\less \p\oMm)$.   Further details are left to the interested reader.
\end{rmk}

\subsection{The construction of \texorpdfstring{$\Xx_\Vv$}{X V} from \texorpdfstring{$\Xx$}{X}} \;\label{ss:Vdata}

Consider an ep-groupoid $\Xx=(X,\bX)$ as in Definition~\ref{def:poly}.
This subsection constructs the ep-groupoid $\Xx_\Vv$ of Theorem~\ref{thm:globstab} which is covered by local uniformizers $X_\Vv = \bigsqcup_J V_J$ and $\Mor_{\Xx_\Vv}(V_J,V_J)=G_J\times V_J$, along with a functor $\psi: \Xx_\Vv \to \Xx|_{V}$ which induces a homeomorphism
$|\psi|: |\Xx_\Vv| \stackrel{\simeq}\to |V|\subset |\Xx|$ to a neighbourhood of a given compact solution set $S\subset |\Xx|$. The construction proceeds in the following steps: 
\begin{itemlist}
\item
In Lemma~\ref{lem:Ui} we find a finite collection of local uniformizers $(U_i,G_i)_{i=1,\dots,N}$ for $\Xx$ (see Definition~\ref{def:preunif}) whose realizations  $\bigcup_i |U_i|\subset |\Xx|$ cover the compact solution set $S\subset|\Xx|$.  
\item
Lemma~\ref{lem:Udata} compiles these local uniformizers into data $(U_J,G_J, \rho_{IJ})_{J\subset A}$ 
which satisfies the conditions (a)-(c) of \S\ref{ss:mainres} for \'etale data, except for the (separation) property. Such data consists of translation groupoids $(U_J, G_J \times U_J)$ for each $J\subset A:=\{1,\ldots,N\}$ and \'etale maps $\rho_{IJ}:U_J\to U_I$ for $I\subset J$. It is related to the original ep-groupoid $\Xx$ by so-called footprint maps $\psi_J:U_J\to |\Xx|$ which lift homeomorphisms $\qu{U_J}{G_J} \simeq \bigcap_{i\in J} |U_i|=:F_J$. The $\rho_{IJ}$ can then be thought of as lifts of the inclusions $F_J\subset F_I$. 
\item
Lemma~\ref{lem:Vdata} constructs subsets $V_J\subset U_J$ from a cover reduction $F'_J\subset F_J$ of $S\subset\bigcup_{J\in A} F'_J$ such that the restrictions $(V_J,G_J, \rho_{IJ}|_{\TV_{IJ}})_{J\subset A}$ with $\TV_{IJ}=\rho_{IJ}^{-1}(V_I)\subset V_J$ form \'etale data as in \S\ref{ss:mainres}. This cover reduction is done to meet the separation properties: $\cl(V_{HI})\cap \cl(V_{HJ} ) = \emptyset$ unless $I\subset J$ or $J\subset I$, and $\cl(\TV_{IJ})\cap \cl(\TV_{HJ} ) = \emptyset$ unless $I\subset H$ or $H\subset I$.
 \item
Theorem~\ref{thm:reduce} then uses Proposition~\ref{prop:MMGa1} to compile the ep-groupoid $\Xx_\Vv$ from the \'etale data $(V_J,G_J, \rho_{IJ})$. Here each $(V_J,G_J)$ will turn out to be a local uniformizer for $\Xx_\Vv$, and the footprint maps $\psi_J: V_J\to |\Xx|$ are lifted to construct the functor $\psi:\Xx_\Vv\to \Xx$.
\end{itemlist}

\MS

Although the main application is to the polyfold case, all constructions in this subsection are valid when $\Xx$ is a general \'etale proper groupoid in the sense of Definition~\ref{def:etale} with a given  compact subset  $S\subset |\Xx|$ that is covered by a finite family of local uniformizers. The proof is a more detailed version of the arguments in \cite{Morb}.

\begin{lemma} \label{lem:Ui} 
Let $\Xx=(X,\bX)$ be an ep-groupoid as in Definition~\ref{def:poly}, and fix a compact subset $S\subset|\Xx|$. 
Then there exists a finite collection of local uniformizers $(U_i,G_i)_{i=1,\dots,N}$ as in Definition~\ref{def:preunif}, whose realizations  $(|U_i|)_{i=1,\dots,N}$ cover $S\subset \bigcup_{i=1}^N |U_i|\subset |\Xx|$.  
\end{lemma}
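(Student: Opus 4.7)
The plan is to build the finite cover pointwise and then pass to a finite subcover using compactness of $S$. First, for each $p\in S$ I would pick a representative object $x\in X$ with $\pi_\Xx(x)=p$; such a lift exists because $S\subset|\Xx|=X/{\sim_\Xx}$. Then Remark~\ref{rmk:locunif}~(i), which records the polyfold existence statement \cite[Prop.7.1.19]{TheBook} that local uniformizers exist around every object of an ep-groupoid with isotropy group $G_x\simeq\Mor_\Xx(x,x)$ (finite by \cite[Prop.7.1.12]{TheBook}), supplies a local uniformizer $(U_x,G_x,\Ga_x)$ centered at $x$ in the sense of Definition~\ref{def:preunif}.

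Second, I would argue that the family $(|U_x|)_{p\in S}$ is an open cover of $S$ in $|\Xx|$. The footprint map $|\Ga_x|:\qu{U_x}{G_x}\to|U_x|\subset|\Xx|$ is a homeomorphism to its image by Lemma~\ref{lem:unif}~(iii), and $|U_x|$ is itself open in $|\Xx|$ by Lemma~\ref{lem:unif}~(ii) (which in turn uses Lemma~\ref{lem:etale1}~(i) that $\pi_\Xx:X\to|\Xx|$ is open). By construction $p=\pi_\Xx(x)\in|U_x|$, so the sets $|U_x|$ form an open cover of $S$.

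Third, because $S$ is compact in the metrizable (and in particular Hausdorff) space $|\Xx|$ --- metrizability of the paracompact realization $|\Xx|$ is part of Definition~\ref{def:poly} via \cite[Thm.7.3.1]{TheBook} --- I would extract a finite subcover $|U_{x_1}|,\dots,|U_{x_N}|$ and set $(U_i,G_i,\Ga_i):=(U_{x_i},G_{x_i},\Ga_{x_i})$ for $i=1,\dots,N$. This yields the desired finite collection of local uniformizers with $S\subset\bigcup_{i=1}^N|U_i|$.

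I do not expect any genuine obstacle here; everything reduces to the existence of local uniformizers (already imported from \cite{TheBook}) plus finite subcover extraction. The only point demanding a moment of care is the openness of $|U_x|\subset|\Xx|$, but this is Lemma~\ref{lem:unif}~(ii) and hence free. One may further shrink the $U_i$ using the neighbourhood basis property noted in Remark~\ref{rmk:locunif}~(i) if later stages of the paper demand smaller domains, but this refinement is not required for the present statement.
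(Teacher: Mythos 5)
Your proposal is correct and follows essentially the same route as the paper: invoke \cite[Prop.7.1.12]{TheBook} and \cite[Prop.7.1.19]{TheBook} (via Remark~\ref{rmk:locunif}) to produce a local uniformizer around each object lying over $S$, note that the footprints are open, and extract a finite subcover by compactness. The extra care you take with openness of $|U_x|$ via Lemma~\ref{lem:unif}~(ii) is consistent with, if slightly more explicit than, the paper's argument.
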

\begin{proof}
Given any object $x\in X$, the isotropy group $G_x=\{g\in\bX \,|\, s(g)=t(g)=x\}$ is finite by \cite[Proposition~7.1.12]{TheBook}, and \cite[Proposition~7.1.19]{TheBook} guarantees the existence of a local uniformizer $(U_x,G_x,\Ga_x)$ around $x$ as follows: 
\begin{itemlist}
\item
$U_x\subset X$ is an open neighbourhood of $x$, equipped with the natural $G_x$-action from \cite[Theorem~7.1.13]{TheBook}. 
\item
$\Ga_x:(U_x,G_x\times U_x)\to(X,\bX)$ is a functor consisting of the inclusion $U_x\to X$ and a bijection $G_x\times U_x \to \Mor_\Xx(U_x,U_x)$, which induces a homeomorphism $\qu{U_x}{G_x}\to |U_x|\subset |\Xx|$.
\end{itemlist}
This data also forms a local uniformizer in the sense of Definition~\ref{def:preunif} since these two notions are the same by Remark~\ref{rmk:locunif}~(ii). 

Since $S$ is compact and $S\subset \bigcup_{x\in \pi^{-1}(S)} |U_x|$,
we can choose a finite subcover $S\subset \bigcup_{i=1}^N |U_{x_i}|$ indexed by finitely many $x_1,\ldots,x_N\in X$. This proves the Lemma, where we rename $U_i:=U_{x_i}$ and $G_i:=G_{x_i}$. 
\end{proof}

The following constructions are formulated for general \'etale proper groupoids. The special case of an ep-groupoid (which represents a polyfold) is obtained by working in the \'etale category whose spaces are M-polyfolds, and whose maps are local sc-diffeomorphisms (that is, sc-smooth maps that have local sc-smooth inverses). 

\begin{lemma} \label{lem:Udata} Let $(U_i,G_i)_{i\in A}$  be a finite
collection of local uniformizers  of an \'etale proper groupoid $\Xx$ 
with footprints $(F_i: = |U_i|)_{i\in A}$, and for every $I\subset A$ define 
$F_I: = \cap_{i\in I} F_i$. 
Choose an identification $A= \{1,\dots,N\}$, which induces an order  $I=\{i_1<\dots<i_k\}$ on each $I\subset A$.
Then the following holds.

\begin{nenumilist}
\item
For every nonempty $I=\{i_1<\dots<i_k\}\subset A$ there is a translation groupoid $(U_I, G_I\times U_I)$ 
with footprint map $\psi_I:U_I\to  F_I\subset |\Xx|$ as follows:
\begin{enumlist}
\item
The domain
\begin{align*}
 U_I: = U_{i_1\dots i_k}&  := \bigl\{\ux= \bigl(x_{i_1}, m_{i_1i_2}, x_{i_2}, \ldots,m_{i_{k-1}i_k},x_{i_k}\bigr) \ \big| \\
&\quad \qquad  x_{i_\ell}\in U_{i_\ell} \, , \, m_{i_ji_\ell} \in \Mor_\Xx(x_{i_j},x_{i_\ell}), \ |x_{i_1}| \in F_I \bigr\}
\end{align*}
carries a natural 
\'etale structure so that 
$s_I: U_I \to U_{i_1}, (x_{i_1}, m_{i_1i_2}, \ldots ,x_{i_k}) \mapsto x_{i_1}$ and
$t_I: U_I \to U_{i_k}, (x_{i_1}, m_{i_1i_2}, \ldots ,x_{i_k}) \mapsto x_{i_k}$ are \'etale. 
\item
The finite group $G_I: = \prod_{i_j\in I} G_{i_j}$ acts by \'etale maps 
 on $U_I$,  with the action determined by $g_{i_\ell} \in G_{i_\ell}$ acting as\footnote{
In case $\ell=1$ resp.\ $\ell=k$ this is to be read as 
$\bigl( g_{i_1} * x_{i_1} \,,\, g_{i_1}^{-1} \circ m_{i_1i_2} \, \ldots  \bigr)$ resp.\
 $\bigl( \ldots  \, m_{i_{k-1}i_k} \circ g_{i_k} \,,\, g_{i_k} * x_{i_k} \bigr)$.
}
\begin{align}\label{eq:U1}
 g_{i_\ell} * (x_{i_1},m_{i_1i_2}, \ldots, x_{i_k} ) = 
   ( \ldots  \, m_{i_{\ell-1}i_ \ell} \circ g_{i_\ell} \,,\, g_{i_\ell} * x_{i_\ell} \,,\, g_{i_\ell}^{-1} \circ m_{i_\ell i_{\ell+1}} \, \ldots ).
\end{align}
Moreover, for any $i\in I$ the action of the subgroup\footnote{ For one element sets $I=\{i\}$ we interpret $G_{I\less\{i\}} = G_\emptyset=\{\id\}$ to be the trivial group.} $G_{I\less\{i\}}\subset G_I$ on $U_I$ is free. 

\item
The footprint map 
$$
\psi_I \;:\; U_I \;\to\; |\Xx|, \qquad (x_{i_1}, m_{i_1i_2}, x_{i_2}, \ldots,m_{i_{k-1}i_ k},x_{i_k}) \;\mapsto\; |x_{i_1}|=\ldots = |x_{i_k}|
$$ 
induces a homeomorphism $|\psi_I| : \qu{U_I}{G_I} \to F_I$.
\end{enumlist}

\item
For $I\subset J$,  define $U_{IJ}: = \psi_I^{-1}(F_J)$.  
Then there is an \'etale map 
$\rho_{IJ}: U_J \to U_I$ with image $U_{IJ}$ given by forgetting the $x_{j_\ell}$ for $j_\ell\in J\less I$ and suitably composing or forgetting morphisms.
That is, for $I = J \less\{j_\ell\}$ we define $\rho_{IJ}$ which forgets the object $x_{j_\ell}$ by 
$$
\rho_{(J\less\{j_\ell\})J}(\ldots, x_{j_{\ell-1}}, m , x_{j_\ell}, m', x_{j_{\ell+1}}, \ldots)  \,:=\,
(\ldots, x_{j_{\ell-1}}, m \circ m', x_{j_{\ell+1}}, \ldots) 
$$ 
for $1<\ell<|J|$, and for $\ell=1$ resp.\ $\ell=|J|$ by 
\begin{align*}
& \rho_{(J\less\{j_1\})J}(x_{j_1}, m , x_{j_2}, \ldots) := (x_{j_2}, \ldots)  ,
\\
& \rho_{(J\less\{j_{|J|}\})J}(\ldots, x_{j_{|J|-1}}, m , x_{j_{|J|}}) := (\ldots, x_{j_{|J|-1}}). 
\end{align*}
Then for $J\less I = (j_{\ell_1} < \dots < j_{\ell_k})$ we construct the map 
$\rho_{IJ}:=\rho_{J_k J_{k-1}} \circ \ldots \circ \rho_{J_1 J_0}$
by denoting $J_0:=J$ and $J_{n}:= J_{n-1} \less \{j_{\ell_n}\}$. 

These maps $\rho_{IJ}$ 
satisfy the following conditions for all $I\subset J \subset K$

\begin{enumlist}
\item $\rho_{II} =\id_{U_I}$;

\item
$\rho_{IJ}: U_J \to  U_I$ is an \'etale map  with closed graph that is the composite of
 the free quotient $U_J\to \qu{U_J}{G_{J\less I}}$ with a $G_I$-equivariant  \'etale homeomorphism 
 $\qu{U_J}{G_{J\less I}}\stackrel{\simeq} \to U_{IJ}\subset U_I$; in particular it is equivariant with respect to the projection $G_J\to G_I$
 and induces a proper map $\rho_{IJ}:U_J\to U_{IJ}\subset U_J$;

\item
$\psi_J = \psi_I\circ\rho_{IJ} $ for all $I\subset J$;

\item for  $H\subset I\subset J$ we have $\rho_{HJ} = \rho_{HI}\circ \rho_{IJ}$.
\end{enumlist}
\end{nenumilist}
\end{lemma}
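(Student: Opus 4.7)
The plan is to build $U_I$ as an iterated fiber product along the étale structure maps of $\bX$, then read off the $G_I$-action and footprint map componentwise, and finally obtain the maps $\rho_{IJ}$ by iteratively ``forgetting'' one object at a time. The local uniformizer property $\Mor_\Xx(U_i,U_i)\simeq G_i\times U_i$ will drive both the freeness claim in (i)(b) and the homeomorphism claims in (i)(c) and (ii)(b).

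For part (i), I would set $M_{i_j i_\ell}:=s^{-1}(U_{i_j})\cap t^{-1}(U_{i_\ell})\subset\bX$ and identify $U_I$ with the iterated fiber product
$$
M_{i_1 i_2}\leftsub{t}{\times}_s M_{i_2 i_3}\leftsub{t}{\times}_s\cdots\leftsub{t}{\times}_s M_{i_{k-1}i_k},
$$
in which $x_{i_1}$ and $x_{i_k}$ are recovered as outermost source and target. Remark~\ref{rmk:etale} then endows $U_I$ with an étale structure for which $s_I,t_I$ are étale, and the condition $|x_{i_1}|\in F_I$ is automatic because the connecting morphisms force $|x_{i_1}|=\cdots=|x_{i_k}|\in F_{i_\ell}$ for all $\ell$. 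The action \eqref{eq:U1} of $G_I$ is well defined and étale because the operations of different factors $g_{i_\ell}$ touch disjoint data and commute, each factor either uses the given étale action on some $U_{i_\ell}$ or an étale composition in $\bX$ (Lemma~\ref{lem:etale}). Freeness of $G_{I\less\{i\}}$ follows from the observation that any nontrivial $g_{i_\ell}$ with $i_\ell\ne i$ alters some morphism adjacent to $x_{i_\ell}$ by nontrivial left or right composition, and morphisms in a groupoid cannot be absorbed. For (i)(c), the footprint map $\psi_I$ factors through $U_I/G_I$; surjectivity onto $F_I$ comes from lifting $p\in F_I$ to each $U_{i_\ell}$ and picking connecting morphisms; injectivity uses the uniformizer bijection $\Mor_\Xx(U_{i_\ell},U_{i_\ell})\simeq G_{i_\ell}\times U_{i_\ell}$ to convert two tuples with common footprint into a single $G_I$-orbit; and the homeomorphism property then follows from openness of $\pi_\Xx$ (Lemma~\ref{lem:etale1}) combined with openness of the finite quotient $U_I\to U_I/G_I$.

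For part (ii), I would first treat the atomic case $I=J\less\{j_\ell\}$: the map $\rho_{(J\less\{j_\ell\})J}$ is built from the étale composition $\bX\leftsub{t}{\times}_s\bX\to\bX$ (or a projection when $j_\ell$ is an endpoint) and is therefore étale. It is $G_{j_\ell}$-invariant because the $g_{j_\ell}$- and $g_{j_\ell}^{-1}$-actions on the two sides of the forgotten object cancel under composition, and factoring through $U_J/G_{j_\ell}$ yields a $G_I$-equivariant étale homeomorphism onto the open subset $U_{IJ}=\psi_I^{-1}(F_J)\subset U_I$. Associativity of composition in $\bX$ then shows that the composite $\rho_{IJ}=\rho_{J_kJ_{k-1}}\circ\cdots\circ\rho_{J_1J_0}$ is independent of the order in which elements of $J\less I$ are eliminated, giving (a) and (d), while (c) is immediate from the construction. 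Iterating the atomic factorizations gives the factorization required in (b). The closed graph of $\rho_{IJ}$ follows from continuity together with the Hausdorff property of the étale space $U_I$, and the induced map $U_J\to U_{IJ}$ is proper because its fibers are the finite $G_{J\less I}$-orbits.

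The most delicate step I anticipate is the homeomorphism $U_J/G_{J\less I}\;\xrightarrow{\simeq}\;U_{IJ}$ in (ii)(b). Surjectivity requires lifting a morphism chain in $\Xx$ through all the uniformizers $U_{j_\ell}$ for $j_\ell\in J\less I$ at once, and injectivity together with $G_{J\less I}$-freeness requires careful telescoping of compositions of the form $m_{j_{\ell-1}j_\ell}\circ g_{j_\ell}\circ g_{j_\ell}^{-1}\circ m_{j_\ell j_{\ell+1}}$ at each forgotten index. Both reductions rest on the single key fact that each $(U_i,G_i)$ is a local uniformizer, i.e.\ that $\Mor_\Xx(U_i,U_i)\simeq G_i\times U_i$, and this is the identity that everything else in the argument ultimately leverages.
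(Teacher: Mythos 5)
Your overall route is the same as the paper's: realize $U_I$ as an iterated fiber product of morphism spaces (the paper does this inductively, appending one index at a time, rather than all at once, but the content is identical), get the footprint homeomorphism from openness of $\pi_\Xx$ and the quotient map, build $\rho_{IJ}$ from atomic forgetful maps, identify the fiber of $U_J\to U_{J\less\{j_\ell\}}$ with a free $G_{j_\ell}$-orbit via the factorization ambiguity $m''=m\circ m'$, and get properness from the factorization ``free finite quotient followed by a homeomorphism''. Your continuity-plus-Hausdorff justification of the closed graph is also fine.

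There is, however, a genuine gap in your freeness argument for (i)(b). You argue that ``any nontrivial $g_{i_\ell}$ with $i_\ell\ne i$ alters some morphism adjacent to $x_{i_\ell}$ by nontrivial left or right composition, and morphisms in a groupoid cannot be absorbed.'' This componentwise reasoning never uses that the $i$-th component of the group element is trivial, so taken literally it would prove that all of $G_I$ acts freely on $U_I$ — which is false whenever there is isotropy: under the full element $\ug$ the morphism $m_{i_{\ell-1}i_\ell}$ transforms to $g_{i_{\ell-1}}^{-1}\circ m_{i_{\ell-1}i_\ell}\circ g_{i_\ell}$, and this equals $m_{i_{\ell-1}i_\ell}$ precisely when $\Ga_{i_\ell}(g_{i_\ell},\cdot)$ is the conjugate of $\Ga_{i_{\ell-1}}(g_{i_{\ell-1}},\cdot)$ by $m_{i_{\ell-1}i_\ell}$, which can happen with both components nontrivial (e.g.\ $|I|=2$ and $x_{i_1}$ with nontrivial isotropy). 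The correct argument must be anchored at the excluded index $i$: since $g_i=\id$, the fixed-morphism equation adjacent to $x_i$ reads $m\circ g_{i'}=m$ (or $g_{i'}^{-1}\circ m=m$), so cancelling $m$ in the groupoid gives $\Ga_{i'}(g_{i'},\cdot)=\id$, and injectivity of $\Ga_{i'}$ forces $g_{i'}=\id$; one then propagates this triviality outward along the chain. This is exactly what the paper's induction does (freeness of right-composition of $G_{j_{k-1}}$ on $\Mor_\Xx(U_{j_{k-1}},U_{j_k})$, then the inductive hypothesis for $J'=J\less\{j_k\}$), and it is the same telescoping you invoke later for (ii)(b) — it just needs to be stated for (i)(b) as well, starting from the trivial component.
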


\begin{rmk}\rm  \label{rmk:2actions}  In the following argument, we will frequently use the fact that the embeddings $\Ga_i(g,\cdot): U_i \to \Mor_\Xx(U_i,U_i)$ that are part of the local uniformizers $(U_i,G_i)$ yield actions of
$G_i$ and $G_j$ on the morphism space $\Mor_\Xx(U_i,U_j)$, 
\begin{align*}
G_i \times \Mor_\Xx(U_i,U_j)\; \to \; \Mor_\Xx(U_i,U_j), \qquad & (g,m) \; \mapsto\;  g\circ m := \Ga_i(g, g^{-1}*s(m) ) \circ m ,\\
G_j \times \Mor_\Xx(U_i,U_j) \; \to\;  \Mor_\Xx(U_i,U_j), \qquad & (g,m)\;  \mapsto\;  m\circ g := m \circ \Ga_j(g, t(m) ) . 
\end{align*}
The case $i=j$ results from the natural inner action of $G_i$ on the local uniformizer $(U_i,G_i,\Ga_i)$ in Remark~\ref{rmk:inngpact}~(ii). 
\hfill$\er$
\end{rmk}

\begin{proof}[Proof of Lemma~\ref{lem:Udata}]
 {\bf Definition of the $\mathbf U_I$:}\;
To understand the construction of the sets $U_I$ in (i), note that for one element sets $I=\{i\}$ the domain $U_{\{i\}}=U_i$ is naturally an \'etale space since it is an open subset of $X$. 
For a two element set $I = \{i<j\}$ the domain
$$
U_{\{i,j\}} =: U_{ij} = \bigl\{ \bigl(s(m), m, t(m)\bigr) \ \big| \  m\in\Mor_\Xx (U_i, U_j), \ |s(m)| \in F_{ij} \bigr\}
$$ 
is naturally identified with the open subset of morphisms $\Mor_\Xx(U_i , U_j )\subset \bX$, and thus also carries an \'etale structure. This identifies the maps $s_I,t_I$ with the source and target maps on $\Mor_\Xx(U_i , U_j )$, which are \'etale  by the  \'etale property of the groupoid $\Xx$. 

For general $I =\{i_1<\dots< i_k\}$ we can identify the space $U_{i_1\dots i_k}:= U_I$ with the set of $(k-1)$-tuples of composable morphisms $U_{i_1}\to U_{i_2}\to \dots \to U_{i_k}$ in $\Xx$.
This in particular implies that the footprint map $\psi_I(x_{i_1}, m_{i_1i_2}, \ldots , x_{i_k}) = |x_{i_1}|=\dots=|x_{i_k}| \in |\Xx|$ is well defined with image $F_I: = \bigcap_{i\in I}|U_i|$. 
\MS

\NI {\bf Proof of (i):}\;
For $k=1$ the properties of local uniformizers assert that $U_{i_1}$ is a $G_{i_1}$-invariant \'etale space with $s_{i_1}=t_{i_1}:U_{i_1}\to U_{i_1}$ the identity map, and $|\psi_{i_1}| : \qu{U_{i_1}}{G_{i_1}} \to |U_{i_1}|$ is a homeomorphism. 
Moreover, $G_{I\less\{i_1\}}=G_\emptyset$ is the trivial group, which does act freely.

To prove the regularity assertions in (i)(a) for a general $U_J=U_{j_1 \dots j_k}$ we may assume by induction that $U_{J'}:= U_{j_1\dots j_{k-1}}$ carries an \'etale structure such that both $s_{J'}$ and $$
t_{J'}: U_{J'} \to U_{j_{k-1}},\quad  (x_{j_1},m_{j_1j_2}, \ldots, x_{j_{k-1}}) \mapsto x_{j_{k-1}}
$$
 are \'etale. 
Then we identify
\begin{equation}\label{eq:UJind}
U_J = \bigl\{ \bigl( \ux , m, t(m) \bigr) \ \big| \   \ux \in U_{J'} \,,\, m\in\Mor_\Xx(U_{j_{k-1}},U_{j_k}) \,,\, t_{J'}(\ux)=s(m) \bigr\} 
\end{equation}
with the fiber product $U_{J'} \, \leftsub{t_{J'}}{\times_{s}} \Mor_\Xx(U_{j_{k-1}}, U_{j_k})$. 
This naturally carries an  \'etale structure since both $t_{J'}: U_{J'} \to X$ 
and $s:\bX\to X$ are \'etale. 
A proof in the polyfold case is given in \cite[Prop.5.2.20]{TheBook} and generalizes to other \'etale categories. 

In particular, we obtain charts from sufficiently small subsets of $ \Mor_\Xx(U_{j_{k-1}}, U_{j_k})\subset \bX$.  
On those charts, the map $t_J: U_J \to U_{j_k}$ is identified with a restriction of the target map $t:\bX\to X$, which is \'etale 
hence also induces an \'etale  structure. 
Note for later that the forgetful map $$
\rho_{J'J}:U_J\to U_{J'},\quad (\ux,m,t(m))\mapsto \ux
$$ 
is  \'etale, and that it is $G_{J'}$-equivariant and constant on $G_{j_k}$-orbits in $U_J$, so also equivariant w.r.t.\ the projection 
$\rho_{J'J}^G: G_J\to G_{J'}$ that forgets $g_{j_k}$.
In these charts, $s_J: U_J \to U_{j_1}$ is identified with $s_{J'}\circ \rho_{J'J}$ and thus also is  \'etale.

To prove the claims in (i)(b) about the action of $G_J$ on $U_J$ we may assume by induction that $G_{J'}:=\prod_{\ell=1}^{k-1} G_{j_\ell}$ acts as claimed on $U_{J'}$.
Then the action of $(\ug, g_{j_k}) \in G_{J'}\times G_{j_k}=G_J$ is
$$
(\ug', g_{j_k}) * (x_{j_1}, m_{j_1j_2},\ldots, x_{j_k} ) = \bigl( \ug' * (x_{j_1}, m_{j_1j_2}, \ldots, x_{j_{k-1}}) \,,\,  g_{j_{k-1}}^{-1}\circ m_{j_{k-1}j_k} \circ g_{j_k} \,,\, g_{j_k} * x_{j_k}  \bigr).
$$
Since $G_J$ is finite, it suffices to check 
that this action is  \'etale by fixing $\ug=(\ug', g_{j_k})$ and considering the map $\ux \mapsto \ug*\ux$ in a local chart given by the map $t_J(\ux)=x_k$. In these coordinates, the action $x_{j_k} \mapsto g_{j_k}*x_{j_k}$ is  \'etale by the properties of the local uniformizer. 
Moreover, this determines an action of $G_J$ on $U_J$ because
\begin{align*}
(\ug, g_{j_k}) * (\uh, h_{j_k}) * (\ux' , m , x_{j_k}  ) 
& = (\ug, g_{j_k}) * \bigl( \uh * \ux' \,,\,  h_{j_{k-1}}^{-1}\circ m \circ h_{j_k} \,,\, h_{j_k} * x_{j_k}  \bigr) \\
& =  \bigl( \ug * \uh * \ux' \,,\,  g_{j_{k-1}}^{-1}\circ \bigl( h_{j_{k-1}}^{-1}\circ m\circ h_{j_k} \bigr) \circ g_{j_k} \,,\, g_{j_k} * h_{j_k} * x_{j_k}  \bigr) \\
& =  \bigl(  (\ug \uh )* \ux' \,,\,  ( g_{j_{k-1}} h_{j_{k-1}} )^{-1} \circ m \circ ( g_{j_k}  h_{j_k} )  \,,\, (g_{j_k} h_{j_k} ) * x_{j_k} \bigr) \\
&= \bigl( (\ug, g_{j_k}) (\uh, h_{j_k})\bigr) * (\ux' , m , x_{j_k} ) .
\end{align*}
Next, freedom of the action of $G_{J\less\{j\}}$ on $U_J$ for $j\neq j_k$ follows directly from freedom of $G_{J'\less\{j\}}$ acting on $U_{J'}$.
For $j=j_k$ the action of $\ug=(g_{j_1},\ldots,g_{j_{k-1}})\in G_{J\less\{j_k\}}$ on $U_J$ is 
$$
(\ug, \id_{G_{j_k}}) * (\ux' , m , x_{j_k}  ) 
 =  \bigl( \ug * \ux' \,,\,  g_{j_{k-1}}^{-1}\circ m \,,\,  x_{j_k}  \bigr). 
$$
Here we use that fact that the action $m\mapsto g_{j_{k-1}}^{-1}\circ m$ of $G_{j_{k-1}}$ on $\Mor_\Xx(U_{j_{k-1}}, U_{j_k})$ is free. To see this, notice that otherwise there is $g \in G_{j_{k-1}}, g\ne \id,$ and $m\in \Mor_\Xx(U_{j_{k-1}}, U_{j_k})$ such that $g\circ m = m$.  In the notation of 
Definition~\ref{def:preunif}(iii) we therefore have $\Ga(g, s(x)) = \id_{s(x)}$.  But by assumption
 $\Ga:G\times U\to \Mor(U,U)$ is injective, so that this is possible only if $g=\id$.
So given any 
$$
(\ug, \id_{G_{j_k}}) * (\ux' , m , x_{j_k}  ) =  (\ux' , m , x_{j_k}  )
$$
 we deduce $g_{j_{k-1}}=\id$ and hence $$
 \ug*\ux'=(g_{j_1}, \ldots, g_{j_{k-2}}, \id) * \ux' = \ux'.
 $$ 
Now freedom of the action of $G_{J'\less\{j_{k-1}\}}$ on $U_{J'}$  implies $\ug=\id$, which confirms the desired freedom of $G_{J\less\{j_k\}}$ acting on $U_J$.

To prove  that $|\psi_J| : \qu{U_J}{G_J} \to F_J\subset |\Xx|$ is a homeomorphism we may assume by induction that $\psi_{J'}(\ux):= |x_{j_1}|=\ldots |x_{j_{k-1}}|$ induces a homeomorphism 
$\qu{U_{J'}}{G_{J'}} \to F_{J'}$. Then using \eqref{eq:UJind} we have 
$$
\psi_J(\ux,m,t(m)):=|x_{j_1}| = \ldots = |x_{j_{k-1}}|=|s(m)|=|t(m)|=|x_{j_k}|.
$$
 This implies $\psi_J(\ux,m,t(m))= \psi_{J'}(\ux)$ so we can factor $$
 \psi_J=\psi_{J'}\circ |\rho_{J'J}|,
 $$
  where $|\rho_{J'J}|: \qu{U_J}{G_J}\to \qu{U_{J'}}{G_{J'}}$ is induced by the forgetful map $$
  \rho_{J'J}: U_J \to U_J',  (\ux,m,t(m)) \mapsto \ux.
  $$ 
As we saw in (a), $\rho_{J'J}$ is \'etale, thus an open and continuous map. Since it is equivariant w.r.t.\ the projection $\rho_{J'J}^G: G_J\to G_{J'}$, it induces an open and continuous map $|\rho_{J'J}|$. The latter is also injective since for $\ux\in U_{J'}$ the preimage $$
\rho_{J'J}^{-1}(\ux)=\{ (\ux,m,t(m)) \in U_J \} \simeq \{ m\in \Mor_\Xx ( t'(\ux) , U_{j_k} )\}
$$
 is a $G_{j_k}$-orbit,
and we have the splitting $G_J=G_{J'}\times G_{j_k}$. 
Finally, the image of $|\psi_J|$ is as claimed:
\begin{align*}
 |\psi_J|\bigl(\qu{U_J}{G_J}\bigr) = \psi_J (U_J) & =  \{ \psi_{J'}(\ux) \,|\, \ux \in U_{J'}, 
\Mor_\Xx(t'(\ux), U_{j_k}) \neq \emptyset  \}  \\
& = \psi_{J'} (U_{J'}) \cap \psi_{j_k}(U_{j_k}) \\
& = F_{J'}\cap F_{j_k} \; = \; F_J .
\end{align*}

\NI {\bf Proof of (ii):}
Next, we check the properties of the projections $\rho_{IJ}$.  
The identity $\rho_{II} =\id_{U_I}$, footprint compatibility $\psi_I\circ\rho_{IJ} = \psi_J : \ux \mapsto |x_i|$ for any $i\in I\subset J$, and \rm cocycle condition $\rho_{HI}\circ\rho_{IJ} = \rho_{HJ}$ hold directly by construction. 
Equivariance of $\rho_{IJ}$ w.r.t.\ $G$ requires $\rho_{IJ}$ to be constant on the orbits of $G_{J\less I}$. This holds since the action of $g_{j_\ell}\in G_{j_\ell}$ for $j_\ell\in J\less I$ in \eqref{eq:U1} yields
$$
\rho_{IJ}( g_{j_\ell} * \ux) = 
 ( \ldots  \, m_{j_{\ell-1}j_\ell} \circ g_{j_\ell}^{-1} \circ g_{j_\ell} \circ m_{j_\ell j_{\ell+1}} \, \ldots ) = \rho_{IJ}( \ux).
$$ 
This also implies $G_I$-equivariance of the induced map $\qu{U_J}{G_{J\less I}} \to U_{IJ}$.

To prove that $\rho_{IJ}$ for $I\subset J$ is \'etale 
and induces an   \'etale  homeomorphism $\qu{U_J}{G_{J\less I}} \to U_{IJ}$, we work by induction over $|J|=k$. The statement holds for $k=1$ since $\rho_{II}=\id_{U_I}$ is the  
identity map on $U_I=U_{II}$ and $G_{I\less I}=\{\rm id\}$. 
Given $|J|\geq 2$ we have again the identity 
 $\rho_{JJ}=\id_{U_J}$ with $G_{J\less J}=\{\rm id\}$ for $I=J$, and for $I\subsetneq J$ can factor $\rho_{IJ}=\rho_{IJ'}\circ \rho_{J'J}$ with $J'=J\less\{j_i\}$ for some choice of $j_i\in J\less I$. 
By the induction hypothesis, $\rho_{IJ'}: U_{J'}\to U_I$ is \'etale  inducing an \'etale homeomorphism $\qu{U_{J'}}{G_{J'\less I}} \to U_{IJ'}= \psi_I^{-1}(|U_{J'}|)$, so it remains to consider the map 
$$
\rho_{J'J} \,:\; U_{J}\;\to\; U_{J'} , \qquad
(\ldots, x_{j_{i-1}}, m , x_{j_i}, m', x_{j_{i+1}}, \ldots) \;\mapsto\; 
(\ldots, x_{j_{i-1}}, m \circ m', x_{j_{i+1}}, \ldots) .
$$ 
For $j_i=j_k$ this is the $G_{J'}$-equivariant \'etale map 
 from (i)(a) that is constant on $G_{j_i}$-orbits.
For $j_i\neq j_k$ it is also \'etale 
 because it intertwines the maps $t_J:U_J\to U_{j_k}$ and $t_{J'}:U_{J'}\to U_{j_k}$ which are  
 \'etale by (i)(a); and it is also $G_{J'}$-equivariant and constant on $G_{j_i}$-orbits.
This already implies that $\rho_{IJ}=\rho_{IJ'}\circ \rho_{J'J}$ is \'etale.  
Moreover, $\rho_{J'J}$ induces a $G_{J'}$-equivariant 
\'etale map $\qu{U_J}{G_{j_i}}\to U_{J'}$, which we claim is injective and thus an \'etale homeomorphism 
 to its image.
Once this is established, we may quotient by the action of $G_{J'\less I}$ -- which is free by (i)(b) --  to see that the map 
$$
\qu{U_J}{G_{J\less I}}= \bigl(\qu{U_J}{G_{j_i}}\bigr) /\, G_{J'\less I} \to \qu{U_{J'}}{G_{J'\less I}}$$
 induced by $\rho_{J'J}$ is an  \'etale homeomorphism  
 to its image.
Composition with  
 the map $\qu{U_J'}{G_{J'\less I}} \to \psi_I^{-1}(F_{J'})$ induced by $\rho_{IJ'}$, which is an  \'etale homeomorphism, 
 shows that the map  $\qu{U_J}{G_{J\less I}} \to U_I$ induced by $\rho_{IJ}$ is also an  \'etale homeomorphism  to its image. Properness of the surjective map $\rho_{IJ}:U_J\to \rho_{IJ}(U_J)\subset U_I$ then follows since it is the composition of a free finite quotient map and a homeomorphism.

To check injectivity of the map $\qu{U_J}{G_{j_i}}\to U_{J'}$ induced by $\rho_{J'J}$, note that the preimage of $(\ldots, x_{j_{i-1}}, m'', x_{j_{i+1}}, \ldots)$ is given by tuples $(\ldots, x_{j_{i-1}}, m , x_{j_i}, m', x_{j_{i+1}}, \ldots)$ with $m\circ m' = m''$ and $x_{j_i}=t(m)=s(m')\in U_{j_i}$.  Due to the groupoid properties, $m''\in\Mor_\Xx(U_{j_{i-1}}, U_{j_{i+1}})$ determines $m,m'\in\Mor_\Xx$ up to $m\mapsto m\circ\gamma, m'\mapsto \gamma^{-1}\circ m'$, and the ambiguity is exactly $\gamma\in\Mor_\Xx(t(m), U_{j_i})\simeq G_{j_i}$
by the properties of local uniformizers. 
Finally, the image of $\rho_{J'J}$ is 
\begin{align*}
&\bigl\{ (\ldots, x_{j_{i-1}}, m'' , x_{j_{i+1}}, \ldots) \in U_{J'} \,\big|\, \exists\, m,m' \in \Mor_\Xx :  
m''=m\circ m', t(m)=s(m')\in U_{j_i}  \bigr\}  \\
& = \bigl\{ (\ldots, x_{j_{i-1}}, m'' , x_{j_{i+1}}, \ldots) \in U_{J'} \,\big|\, |x_{j_{i-1}}| = |x_{j_{i+1}}| \in F_{j_i} \bigr\} \; =\; \psi_{J'}^{-1}(F_{j_i}).
\end{align*}
Here the inclusion $\subset$ follows from $m\in\Mor_\Xx(x_{i-1},U_{j_i})$, $m' \in\Mor_\Xx(U_{j_i},x_{i+1})$, and towards the inclusion $\supset$ we know that 
$|x_{j_{i-1}}| \in |U_{j_i}|$ implies existence of a morphism $m\in\Mor_\Xx(x_{j_{i-1}}, U_{j_i})$;
so then $m':= m^{-1}\circ m''$ gives the required decomposition. 
%
%
\end{proof}

The next steps in the construction of $\Xx_\Vv$ are to choose a cover reduction of the covering $S\subset \bigcup_{i=1}^N F_i$ by the open subsets $F_i: = |U_i|\subset |\Xx|$ and construct the corresponding reduction of the data constructed in Lemma~\ref{lem:Udata}. 
In the context of Theorem~\ref{thm:globstab}, this covering reduction will be chosen compatible with the supports $C_i:=\supp\beta_i \subset F_i$ of a partition of unity $\beta_i:X\to[0,1]$ used in the constructions of \S\ref{ss:stabilize}. We state all cover reduction properties here, though only (i)-(iii) are needed for the construction of $\Xx_\Vv$. The following result applies in our setting since the realization $Y:=|\Xx|$ is metrizable by \cite[Thm.7.3.1]{TheBook}, and metric spaces are normal by \cite[Thm.\,32.2]{Mu}.

\begin{lemma}\label{lem:cov0}   Let $S\subset Y$  be a closed subset of a normal topological space $Y$ with a finite cover by open subsets $S\subset\bigcup_{i\in A} F_i $. 
Then there is a {\bf cover reduction} $\bigl(F'_I\bigr)_{I\subset A}$ by open subsets $F'_I\subset Y$ in the following sense:
\begin{enumilist}
\item
The closures are contained in the given intersections: $\ov{F'_I} \,\subset\, F_I := \bigcap_{i\in I} F_i$.  
\item
The closures intersect, i.e.\  $\ov{F'_I}\cap \ov{F'_J}\ne \emptyset$, only if $I\subset J$ or $J\subset I$. 
\item
The open sets cover the given set $S\,\subset \, \bigcup_{I\subset A} F'_I$.
\end{enumilist}
If, in addition,  $S\subset\bigcup_{i\in A} C_i$ is another cover by closed subsets $C_i\subset F_i$, then the cover reduction $\bigl(F'_I\bigr)_{I\subset A}$ can be chosen such that  
\begin{enumilist}
\item[{\rm (iv)}] 
$F'_I\cap C_j = \emptyset \quad \forall j\in A\less I$.
\end{enumilist}
\end{lemma}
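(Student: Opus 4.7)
My plan is to construct the cover reduction $(F'_I)_{I\subset A}$ by applying the shrinking lemma (for finite open covers of closed sets in a normal space) iteratively, and then defining $F'_I$ in terms of the resulting nested layered open refinements. First, if the closed cover $(C_i)_{i\in A}$ is not given, I apply the shrinking lemma to produce one: open sets $F_i'$ with $\overline{F_i'}\subset F_i$ and $S\subset\bigcup_i F_i'$, then take $C_i:=\overline{F_i'}$. I then iterate the shrinking lemma to obtain, for each $i\in A$ and each $k=1,\ldots,K$ (with $K$ a suitable linear function of $|A|$), open sets $C_i\subset F_i^{(K)}\subset\overline{F_i^{(K)}}\subset F_i^{(K-1)}\subset\cdots\subset F_i^{(1)}\subset\overline{F_i^{(1)}}\subset F_i$.

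For each nonempty $I\subset A$, I define
$$F'_I\,:=\,\Bigl(\bigcap_{i\in I}F_i^{(k_{\mathrm{in}}(|I|))}\Bigr)\setminus\Bigl(\bigcup_{j\in A\setminus I}\overline{F_j^{(k_{\mathrm{out}}(|I|))}}\Bigr),$$
possibly intersected with the open set $\bigcap_{j\notin I}(Y\setminus C_j)$ to guarantee (iv). The integer functions $k_{\mathrm{in}}$ and $k_{\mathrm{out}}$ of $|I|$ must be chosen with enough spacing between inner and outer levels; taking $k_{\mathrm{in}}(m):=3m+2$ and $k_{\mathrm{out}}(m):=3m$ should suffice, so that the layer hierarchy forces disjointness of non-nested closures. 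Property (i) then follows from $\overline{F'_I}\subset\bigcap_{i\in I}\overline{F_i^{(k_{\mathrm{in}}(|I|))}}\subset\bigcap_{i\in I}F_i=F_I$, and (iv) from either the added intersection or the containment $C_j\subset\overline{F_j^{(k_{\mathrm{out}}(|I|))}}$. Property (ii) follows from the hierarchy: for non-nested $I,J$ with (WLOG) $|I|\le|J|$ and $j_0\in J\setminus I$, a hypothetical point $y\in\overline{F'_I}\cap\overline{F'_J}$ would satisfy $y\in\overline{F_{j_0}^{(k_{\mathrm{in}}(|J|))}}\subset F_{j_0}^{(k_{\mathrm{in}}(|J|)-1)}$ (from the $J$-closure, since $j_0\in J$) and $y\notin F_{j_0}^{(k_{\mathrm{out}}(|I|))}$ (from the $I$-closure, since $j_0\notin I$); the inequality $k_{\mathrm{in}}(|J|)-1=3|J|+1>3|I|=k_{\mathrm{out}}(|I|)$ yields the nested inclusion $F_{j_0}^{(k_{\mathrm{in}}(|J|)-1)}\subset F_{j_0}^{(k_{\mathrm{out}}(|I|))}$, producing the required contradiction.

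The main obstacle I anticipate is the cover property (iii): for each $y\in S$, one must find some $I\subset A$ such that the inner condition $y\in F_i^{(3|I|+2)}$ for $i\in I$ and the outer condition $y\notin\overline{F_j^{(3|I|)}}$ for $j\notin I$ hold simultaneously. A naive selection of $I$ from the layer-membership profile $k\mapsto\{j:y\in\overline{F_j^{(k)}}\}$ can skip the required size, because a single step from layer $3m$ to layer $3m+2$ may eliminate more than one index at once. The plan to address this is a discrete intermediate-value argument on the monotone size function $k\mapsto|\{j:y\in\overline{F_j^{(k)}}\}|$, inserting further intermediate shrinkings via additional applications of the shrinking lemma until, for every $y\in S$, there is some scale $k=k(y)$ at which the jump $|\{j:y\in\overline{F_j^{(k)}}\}|-|\{j:y\in F_j^{(k+2)}\}|$ vanishes and both membership sizes equal the desired $|I|$; the fact that $\{C_i\}$ covers $S$ (so $|\{j:y\in\overline{F_j^{(K)}}\}|\ge 1$ for $y\in S$) provides the starting anchor, and the ceiling $|A|$ provides termination.
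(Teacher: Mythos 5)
Your overall strategy --- iterated applications of the shrinking lemma producing nested layers $F_i^{(k)}$, and $F'_I$ defined as an intersection of ``inner-level'' sets over $i\in I$ minus a union of closures of ``outer-level'' sets over $j\notin I$ --- is exactly the paper's (which in turn quotes [MW1, Lemma~5.3.1]), and your verifications of (i), (ii) and (iv) are sound. The genuine gap is in (iii), and it is not merely the unfinished detail you flag: with your level functions $k_{\mathrm{in}}(m)=3m+2$, $k_{\mathrm{out}}(m)=3m$ and the convention that a higher superscript means a \emph{smaller} set, the sets used for larger $|I|$ are \emph{smaller}, and this monotonicity makes the covering property fail outright. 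Concretely, to have $y\in F'_I$ with $|I|=m$ you need $\{j: y\in\ov{F_j^{(3m)}}\}\subset I\subset\{i: y\in F_i^{(3m+2)}\}$, hence $b_m\le m\le a_m$ where $a_m:=\#\{i: y\in F_i^{(3m+2)}\}$ and $b_m:=\#\{j: y\in\ov{F_j^{(3m)}}\}$; since $a_m\le b_m$ this forces $a_m=b_m=m$. But your counts satisfy $b_1\ge a_1\ge b_2\ge a_2\ge\cdots$ (non-increasing in $m$), while the target $m$ increases, so the sequence can jump past the diagonal. For example with $A=\{1,2,3\}$, take $y\in C_1$ with $y\in\ov{F_2^{(3)}}\cap\ov{F_3^{(3)}}$ but $y\notin F_2^{(5)}\cup F_3^{(5)}$: then $(a_1,b_1)=(1,3)$ and $a_m=b_m=1$ for $m\ge 2$, so no $m$ satisfies $a_m=b_m=m$ and $y$ is covered by no $F'_I$. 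Such points cannot be excluded: the shrinking lemma gives no control over where individual points of $S$ sit inside the layers. For the same reason your proposed repair --- inserting more intermediate shrinkings and running a discrete intermediate-value argument --- cannot work: adding layers does not change the fact that the membership count is non-increasing in depth while the required cardinality $|I|$ grows with the depth you assign to it.

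The fix is to reverse the dependence of the levels on $|I|$, which is what the paper does: its chain is $F_i^0\sqsubset G_i^1\sqsubset F_i^1\sqsubset\cdots\sqsubset F_i^N=F_i$ and $F'_I=\bigl(\bigcap_{i\in I}G_i^{|I|}\bigr)\less\bigl(\bigcup_{j\notin I}\ov{F_j^{|I|}}\bigr)$, so the sets used for larger $|I|$ are \emph{larger}. Then $a_m:=\#\{i:y\in G_i^m\}$ and $b_m:=\#\{j:y\in\ov{F_j^m}\}$ satisfy $1\le a_1\le b_1\le a_2\le\cdots\le b_N\le N$, and the intermediate-value/pigeonhole argument genuinely closes: if no $m$ has $a_m=b_m=m$, then $a_m\ge m$ forces $b_m\ge m+1$ and hence $a_{m+1}\ge m+1$, leading to $b_N\ge N+1$, a contradiction. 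Your disjointness argument for (ii) survives this reversal verbatim after swapping the roles of $I$ and $J$ (one now picks $i_0\in I\less J$ for the smaller set $I$ rather than $j_0\in J\less I$), and (i), (iv) are unaffected.
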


\begin{proof}
Since $Y$ is normal, we may choose open subsets $F_i^0\sqsubset F_i$ that still cover $S$, where the notation $A\sqsubset B$ means that the closure of $A$ is contained in $B$. 
If closed subsets $C_i\subset F_i$ are given, we more over choose the $F_i^0$ so that $C_i \subset F_i^0$. 
Now we can follow the proof of \cite[Lemma~5.3.1]{MW1} by choosing nested open sets
\begin{align}\label{eq:cov01}
F_i^0\,\sqsubset\, G_i^1 \,\sqsubset\, F_i^1
\,\sqsubset\, G_i^2 \,\sqsubset\,\ldots \,\sqsubset\,
F_i^{N} = F_i . 
\end{align}
Then \cite[Lemma~5.3.1]{MW1} shows that the required cover reduction is achieved by
\begin{align}\label{eq:cov02}
F'_I \,: =\; \Bigl( {\textstyle\bigcap_{i\in I}} G_i^{|I|} \Bigr) \;\less\; \Bigl(\textstyle {\bigcup_{j\notin I}} \ov{F^{|I|}_j}\Bigr) .
\end{align}
If closed subsets $C_i\subset F_i$ were given, then for $j\notin I$ this construction satisfies $C_j\subset F_j^0 \subset F^{|I|}_j$ and thus $F'_I\cap C_j = \emptyset$.
\end{proof}

\begin{lemma}\label{lem:Vdata}  
Let  $(U_i,G_i)_{i\in A}$  be a finite set of local uniformizers as in Lemma~\ref{lem:Udata} whose footprints $(F_i:=|U_i|)_{i\in A}$ cover a compact subset $S\subset |\Xx|$, 
and let $\bigl(F'_I \subset\bigcap_{i\in I} F_i\bigr)_{I\subset A}$ be a cover reduction as in Lemma~\ref{lem:cov0}.
Choose an order of the set $A$ and for each $I\subset A$ define $V_I: = U_I\cap \psi_I^{-1}(F_I')$, where $U_I$ is as in Lemma~\ref{lem:Udata}.    Then $(V_I, G_I, \rho_{IJ})_{I\subset A}$ 
forms  \'etale data of type $V$ in the sense of \S\ref{ss:mainres} that is compatible with the footprint maps $\psi_I$ as follows:
\begin{itemlist}\item
For each $I\subset A$ the map
$\psi_I|_{V_I}: V_I \to |\Xx|$ induces a homeomorphism $|\psi_I| : \qu{V_I}{G_I}\stackrel{\simeq} \to F_I'$.  
\item
$\psi_J\big| _{\TV_{IJ}} = \psi_I\circ\rho_{IJ} $.
\end{itemlist}
\end{lemma}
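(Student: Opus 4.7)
The plan is to verify the six conditions required for étale data of type $V$ in \S\ref{ss:mainres} by exploiting the inherited structure from Lemma~\ref{lem:Udata} together with the cover reduction properties of Lemma~\ref{lem:cov0}. The natural definition is
$$\TV_{IJ} \,:=\; V_J\cap\rho_{IJ}^{-1}(V_I) \;\subset\; V_J,$$
so that $V_{IJ}:=\rho_{IJ}(\TV_{IJ}) = V_I\cap\rho_{IJ}(V_J)\subset V_I$.
Since $V_J = U_J\cap\psi_J^{-1}(F'_J)$ is open in $U_J$, and $\psi_J$ is $G_J$-invariant, $V_J$ inherits an étale structure and a $G_J$-action from $U_J$. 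The subset $\TV_{IJ}$ is open in $V_J$ because $V_I$ is open and $\rho_{IJ}$ is continuous (in fact étale); it is $G_J$-invariant by equivariance of $\rho_{IJ}$ together with $G_I$-invariance of $V_I$; and $G_{J\less I}\subset G_{J\less\{i\}}$ (for any $i\in I$) acts freely on $\TV_{IJ}$ by restriction of the free action on $U_J$ from Lemma~\ref{lem:Udata}~(i)(b). The structural claim on $\rho_{IJ}: \TV_{IJ}\to V_I$ being a free quotient composed with an injective étale map with open image $V_{IJ}$ is inherited directly from the analogous property on $U_J\to U_I$; openness of $V_{IJ}\subset V_I$ uses Remark~\ref{rmk:open}(ii).

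Among the conditions in \S\ref{ss:mainres}(c), the identity, equivariance, and closed graph properties follow essentially by restriction: $\TV_{II}=V_I$ and $\rho_{II}=\id$ are immediate; equivariance w.r.t.\ the projection $G_J\to G_I$ follows from Lemma~\ref{lem:Udata}~(ii)(b); and $\mathrm{graph}(\rho_{IJ}|_{\TV_{IJ}})$ equals $\mathrm{graph}(\rho_{IJ}: U_J\to U_I)\cap (V_J\times V_I)$, closed in $V_J\times V_I$ since the ambient graph is closed in $U_J\times U_I$. The composition condition for $H\subset I\subset J$ is a routine unwinding: using the cocycle relation $\rho_{HJ}=\rho_{HI}\circ \rho_{IJ}$, the identity $\TV_{HJ}\cap \TV_{IJ} = \rho_{IJ}^{-1}(\TV_{HI}\cap V_{IJ})$ reduces to the observation that $\rho_{IJ}^{-1}(V_{IJ})\cap V_J = \TV_{IJ}$, which holds by definition of $V_{IJ}$ as the image. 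The footprint compatibility $\psi_J|_{\TV_{IJ}}=\psi_I\circ\rho_{IJ}$ is a direct restriction of Lemma~\ref{lem:Udata}~(ii)(c), and $|\psi_I|:\qu{V_I}{G_I}\to F'_I$ being a homeomorphism follows from the corresponding statement on $U_I$ restricted to the $G_I$-invariant open preimage $V_I=\psi_I^{-1}(F'_I)$.

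The main obstacle, and the whole point of introducing the cover reduction, is the separation property. For the first clause, if $H\subset I,J$ with $I,J$ not nested, we want $\cl_{V_H}(V_{HI})\cap \cl_{V_H}(V_{HJ})=\emptyset$. The plan is to push closures forward through the continuous footprint map $\psi_H:V_H\to |\Xx|$: since $\psi_H(V_{HI})\subset F'_I\cap F'_H\subset F'_I$, we have $\cl_{V_H}(V_{HI})\subset \psi_H^{-1}(\cl_{|\Xx|}(F'_I))$, and analogously for $V_{HJ}$. Thus the intersection of closures lies in $\psi_H^{-1}\bigl(\cl(F'_I)\cap\cl(F'_J)\bigr)$, which is empty by Lemma~\ref{lem:cov0}(ii). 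The second clause, $\cl(\TV_{IJ})\cap\cl(\TV_{HJ})=\emptyset$ unless $H,I$ are nested, will be handled identically, using $\psi_J(\TV_{IJ})=\psi_I(V_{IJ})\subset F'_I$ via the footprint compatibility established above. The care needed here — and the reason this is the delicate step — is that the cover reduction in Lemma~\ref{lem:cov0} only controls closures of the $F'_K$ in $|\Xx|$, and one must cleanly transfer this control back to closures in the étale spaces $V_H$ and $V_J$ via the footprint maps.
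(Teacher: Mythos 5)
Your proposal is correct and follows essentially the same route as the paper: your $\TV_{IJ}:=V_J\cap\rho_{IJ}^{-1}(V_I)$ coincides with the paper's $\psi_J^{-1}(F'_I\cap F'_J)$, the identity/equivariance/composition/closed-graph conditions are inherited from Lemma~\ref{lem:Udata} just as in the paper, and the separation property is obtained exactly as the paper does, by pushing closures through the footprint maps and invoking Lemma~\ref{lem:cov0}~(ii). The only (harmless) variations are cosmetic: your closed-graph argument is a direct intersection with $V_J\times V_I$ rather than the paper's sequential argument, and your composition check is a set-theoretic unwinding via the cocycle rather than the paper's inductive identification of $V_{IJ}=\psi_I^{-1}(F'_I\cap F'_J)$.
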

\begin{proof}  We must check that the properties 
(a), (b) and (c) listed in \S\ref{ss:mainres} are satisfied.  
Note that (a) and (b) hold by construction. 
To check (c), 
for $I\subset J$ we define  $$
\TV_{IJ}: = \psi_J^{-1}(F'_I \cap F'_J)\subset V_J, \quad  
V_{IJ}: = \psi_I^{-1}(F'_I\cap F'_J)\subset V_I.
$$
  Then,  $\rho_{IJ}$ restricts to give a map $\TV_{IJ}\to V_{IJ}$ because $\psi_J = \psi_I\circ \rho_{IJ}: U_J\to |\Xx|$.  Further, the other properties listed in Lemma~\ref{lem:Udata}~(ii) readily imply that, for each $I\subset J$,
$\TV_{IJ}\subset V_J$ is a $G_J$-invariant open subset on which $G_{J\less I}$ acts freely, and $\rho_{IJ}: \TV_{IJ}\to V_I$ is a composite $\TV_{IJ}\to \qu{\TV_{IJ}}{G_{J\less I}} \to V_I$ of the quotient map with an injective \'etale map with open image $V_{IJ}:=\rho_{IJ}(\TV_{IJ})\subset V_I$.  Thus the identity and equivariance properties in \S\ref{ss:mainres}~(c) hold.  
To check the closed graph property, consider a sequence $(x_k,y_k) = (x_k, \rho_{IJ}(x_k))\in \TV_{IJ}\times V_{IJ} \subset V_J\times V_I$ with limit $(x_\infty, y_\infty)\in V_J\times V_I$.  Then $|x_k| = |y_k| \in F_J'\cap F_I'$ so that $|x_\infty| = |y_\infty|$.  But $|x_\infty|\in 
|V_J|= F_J'$ and
$|y_\infty|\in |V_I|=F_I'$. Therefore $|x_\infty|, | y_\infty|$ both lie in $ F_J'\cap F_I'$ which implies that $x_\infty\in \TV_{IJ}, y_\infty\in V_{IJ}$ and $(x_\infty, y_\infty)\in {\rm graph} \rho_{IJ}$, as required. 
 \MS

The next step is to check that $\TV_{HJ}\cap \TV_{IJ} = \rho_{IJ}^{-1}(\TV_{HI}\cap V_{IJ})$ when $H\subset I\subset J$.
To prove this, we argue by induction as in the proof of Lemma~\ref{lem:Udata}~(i), assuming this holds when  $J$ is replaced by $J' = J-\{j_i\}$, where $j_i\in J\less I$.  
We first compute the image $V_{IJ}$ in $V_I$ of the  \'etale homeomorphism  induced by $\rho_{IJ}=\rho_{IJ'}\circ \rho_{J'J}$ on $\qu{V_J}{G_{J\less I}}$  by 
 combining the identities $\rho_{J'J} (V_J)=\psi_{J'}^{-1}(F'_{j_i})$ with $\im\rho_{IJ'}= \psi_I^{-1}(F'_{J'})$ and $\psi_{J'}^{-1}=\rho_{I{J'}}^{-1} \circ \psi_I^{-1}$ to obtain
\begin{align*}
V_{IJ}:\; =\; \rho_{IJ'}\bigl( \rho_{J'J} (V_J) \bigr) 
& =\;  \rho_{IJ'}\bigl( \psi_{J'}^{-1}(F'_{j_i}) \bigr)
\; = \; \im \rho_{IJ'} \cap \psi_I^{-1}(F'_{j_i}) \bigr) \\
& 
=\; \psi_{I}^{-1}\bigl(F'_{J'}  \cap F'_{j_i}\bigr) 
\; = \;  \psi_{I}^{-1}(F'_J) \; = \; \psi_{I}^{-1}(F'_J\cap F'_I).
\end{align*}
Therefore 
\begin{align*}
 \rho_{IJ}^{-1}(\TV_{HI}\cap V_{IJ}) & =  \rho_{IJ}^{-1}(\psi_I^{-1}(F'_H) \cap \psi_I^{-1}(F'_J)) \\
 & = \psi_J^{-1}(F'_H\cap F'_J\cap F'_I)  =
\TV_{HJ}\cap \TV_{IJ}.
\end{align*}
It follows that  the domain of the composite 
$\rho_{HI}\circ \rho_{IJ}$, when restricted to the sets $V_{\bullet}\subset U_{\bullet}$, is precisely $\TV_{HJ}\cap \TV_{IJ}$. Hence the identity
$$
\rho_{HJ}|_{\TV_{HJ}\cap \TV_{IJ}} = \rho_{HI}\circ \rho_{IJ}
$$
holds since   $\rho_{HI}\circ \rho_{IJ}=\rho_{HJ}: U_J\to U_H$.  Thus the composition property holds.

Further, the separation conditions follow from property (ii) in Lemma~\ref{lem:cov0} because 
\begin{itemlist}\item[-]  if $H\subset I,J$ then $\cl(V_{HI}), \cl(V_{HJ}) \subset V_H$ and  $\psi_H(\cl(V_{HI})\cap \cl(V_{HJ})) = \ov{F'_H}\cap \ov{F'_I}\cap  \ov{F'_J}$,
 \item[-]  if $H,I\subset J$, then $\cl(\TV_{HI}), \cl(\TV_{HJ}) \subset V_J$ and 
 $\psi_J(\cl(\TV_{HI})\cap \cl(\TV_{HJ})) = \ov{F'_H}\cap \ov{F'_I}\cap \ov{F_J'}$.
 \end{itemlist}
This completes the proof that the tuples $(V_I, G_I, \rho_{IJ})_{I\subset A}$ form  \'etale data of type $V$.

Finally, the compatibility of this data with the footprint maps follows from the  properties (i)(c) and (ii)(c) in Lemma~\ref{lem:Udata}.
\end{proof}

We are now ready to prove part (ii) of Theorem~\ref{thm:globstab}.

\begin{theorem}\label{thm:reduce}  
Let  $(U_i,G_i)_{i\in A}$  be a finite set of local uniformizers as in Lemma~\ref{lem:Udata} whose footprints cover a given compact subset $S\subset\bigcup_{i\in A} |U_i|$, let $\bigl(F'_I \subset\bigcap_{i\in I} |U_i|\bigr)_{I\subset A}$ be a cover reduction as in Lemma~\ref{lem:cov0}, and choose an order $A=\{1,\ldots,N\}$. 
Then the \'etale groupoid $\Xx_\Vv$ constructed by Proposition~\ref{prop:MMGa1} from the \'etale data in Lemma~\ref{lem:Vdata} has the following properties with $V: = \pi_\Xx^{-1}\bigl( \bigcup_{I\subset A} \psi_I(V_I)  \bigr) = \pi_\Xx^{-1}\bigl( \bigcup_{I\subset A} F'_I  \bigr)\subset X$.

\begin{enumilist}\item
 The \'etale category
$\Xx_\Vv = (X_\Vv, \bX_\Vv)$ is covered by the local uniformizers $(V_I,G_I)_{I\subset A}$, and  is proper.
In detail, it is given by  
\begin{align*}
X_\Vv & =  \textstyle\bigsqcup_{I\subset A}  V_I =   \textstyle\bigsqcup_{I\subset A}
 \bigl\{ ( I, \ux ) \,\big|\, \ux=(x_{i_1},\ldots)   \in V_I \bigr\} , \\ 
 \Mor_{\Xx_\Vv}(V_I,V_J) 
&= \begin{cases} 
 \bigl\{ \bigl(I,J, \ux , \ug  \bigr)\phantom{^{-1}} \,\big|\,  \ux\in \TV_{IJ} , \ug\in G_I  \bigr\}  &; I\subset J \\
  \bigl\{ \bigl(J,I, \ux , \ug  \bigr)^{-1} \,\big|\,  \ux\in \TV_{JI} , \ug\in G_J  \bigr\}  &; J\subset I \\ 
  \ \emptyset &; \text{otherwise} 
 \end{cases}
\end{align*}
with the structure maps 
\begin{align*}
& s(I, J, \ux , \ug )  =  \bigl( I, \ug^{-1} * \rho_{IJ}(\ux) \bigr)  , 
\quad t(I, J, \ux , \ug )  = \bigl( J ,  \ux \bigr)  ,\qquad \mbox{ for } I\subseteq J\\ \notag
&\id_{I,\ux} =  (I,I,\ux, \underline{\id} ) , \qquad
(I, J, \ux , \ug)^{-1} = (J,I,\,\ug^{-1}*\ux,\, \ug^{-1} ),  \qquad \mbox{ for } I\supseteq J
\end{align*}
and composition given as in Proposition~\ref{prop:MMGa1}.  

\item
There is an \'etale functor $\psi: \Xx_\Vv\to \Xx$  
given by
\begin{align*}
&X_\Vv\to X, \qquad \bigl( I, (x_{i_1}, m_{i_1i_2}, \ldots , x_{i_k})\bigr) \mapsto x_{i_1}, \\
&\bX_\Vv\to \bX, \qquad\qquad\qquad\qquad\qquad\qquad\qquad\qquad\qquad\qquad\qquad\qquad\qquad\qquad
\end{align*}
$$
 \bigl(I=\{i_1<\cdot\cdot \},J=\{j_1<\cdot\cdot \},\ux, \ug \bigr)   \;\mapsto \;
\begin{cases}
m_{i_1i_2} \circ \ldots \circ m_{i_{\ell-1}i_\ell} &; \mbox{if } i_1 < j_1 = i_\ell, \\ 
g_{i_1} \circ
 \id_{x_{i_1}} &; \mbox{if } i_1 = j_1, \\
g_{j_\ell} \circ \bigl( m_{j_1j_2} \circ \ldots \circ m_{j_{\ell-1}j_\ell}\bigr)^{-1}  &;  \mbox{if }j_1 < i_1= j_\ell. 
\end{cases} 
$$
This is globally finite-to-one on both objects and morphisms, and lifts the footprint maps $\psi_I|_{V_I}$ of Lemma~\ref{lem:Vdata} in the sense that
 \begin{equation}\label{eq:psilift}
\pi_\Xx \circ \psi|_{V_I}   = \psi_I|_{V_I}  : V_I \to F'_I \subset |\Xx| \qquad\text{is surjective for all}\quad I\subset A . 
\end{equation}
Restricting the target to be the full subcategory $\Xx|_V$ of $\Xx$ with objects $V$, the functor $\psi: \Xx_\Vv\to \Xx|_{V}$ is an equivalence in the sense of \cite[Def.10.1.1]{TheBook}, that is $\psi$ restricts to bijections $\Mor_{\Xx_\Vv}(\ux,\ux')\to \Mor_\Xx( \psi(\ux), \psi(\ux'))$ for all $\ux,\ux'\in X_\Vv$, 
and induces a homeomorphism 
$$
|\psi| :|\Xx_\Vv|\to |V|\subset |\Xx|.
$$

\item 
$\Xx_\Vv$ is equipped with an inner action of $G := \prod_{i\in A} G_i$ given by $\ug*(I,\ux) = (I, \ug|_I*\ux)$ and 
$\al: \Obj_{\Xx_\Vv}\times G\to \Mor_{\Xx_\Vv}$, $((I,\ux), \ug)\mapsto (I,I,\ux, \ug|_I)$ with $(g_1,\ldots,g_N)|_I:=(g_{i_1},\ldots,g_{i_{|I|}})$ as in Proposition~\ref{prop:MMGa1}~(i).  
Moreover, $\psi$ is locally $G$-equivariant in the following sense: For $I=\{i_1<\cdot\cdot\}$ we have $\psi(V_I)\subset U_{i_1}\subset \Obj_{\Xx}$ and $\psi: V_I\to U_{i_1}$ is equivariant with respect to the projection $G\to G_{i_1}$ to the isotropy group acting on the local uniformizer $U_{i_1}$. 

\item 
$\Xx_\Vv^{\less G}$ is the \'etale (but usually not proper) groupoid given by Proposition~\ref{prop:Hcomplet}~(iii). 
In detail, this groupoid is given by the structure maps of $\Xx_\Vv$ as in Theorem~\ref{thm:reduce} restricted to
\begin{align*}
\Obj_{\Xx_\Vv^{\less G}}  &=  \ \textstyle
\Obj_{\Xx_\Vv}   \ = \     \bigsqcup_{I\subset A} V_I ,   \\
\Mor_{\Xx_\Vv^{\less G}}(V_I  ,V_J ) &=  \  
 \begin{cases} 
 \bigl\{ (I,J, \ux , \ug)\phantom{^{-1}} \,\big|\, \ux \in \TV_{IJ}, \ug \in G_{I\less H_{\ux}} \bigr\}  &; I\subset J \\
  \bigl\{ (J,I, \ux , \ug )^{-1} \,\big|\,  \ux \in \TV_{JI}, \ug \in G_{J\less H_{\ux}} \bigr\}  &; J\subset I \\ 
  \ \emptyset &; \text{otherwise} 
 \end{cases}
 \end{align*}
where we denote\footnote
{See Remark~\ref{rmk:complet}~(ii).}
 $H_\ux: = \min \{H \,|\, \ux\in \TV_{HJ}\}$. 
Further,  $\Xx_\Vv ^{\less G}$ is equipped with an action of $G$ so that the functor $\io:  \Xx_\Vv ^{\less G}\to  \Xx_\Vv $ given by inclusion is $G$-equivariant in the sense of Definition~\ref{def:gpact} and induces a homeomorphism $|\io|: |\Xx_\Vv ^{\less G}|\,/G 
\stackrel{\simeq}\to 
|\Xx_\Vv|$. 
\end{enumilist}
\end{theorem}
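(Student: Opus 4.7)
The strategy is to apply the abstract constructions of \S\ref{sec:genconstr} to the \'etale data of type $V$ produced by Lemma~\ref{lem:Vdata}, and then to define and analyze the natural functor $\psi: \Xx_\Vv \to \Xx$ that forgets all chain data except the first object. Parts (i), (iii), and (iv) will be largely immediate from Propositions~\ref{prop:MMGa1}, \ref{prop:Hcomplet}, and~\ref{prop:MMGa2}: the explicit descriptions of $\Xx_\Vv$ and $\Xx_\Vv^{\less G}$, the inner $G$-action via the map $\al$, and the homeomorphism $|\Xx_\Vv^{\less G}|/G \simeq |\Xx_\Vv|$ all follow directly from the abstract setup. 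Properness of $\Xx_\Vv$ in (i) will be deferred until part (ii) has identified $|\Xx_\Vv|$ with the open subset $|V|$ of the metrizable space $|\Xx|$, at which point Corollary~\ref{cor:prop1}(ii) gives properness using the finite cover by the local uniformizers $(V_I, G_I)$.

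For part (ii), the functor $\psi$ is \'etale because on objects its restriction to $V_I$ coincides with the \'etale projection $s_I: U_I \to U_{i_1}$ from Lemma~\ref{lem:Udata}(i)(a), while on morphisms it is a composite of \'etale groupoid structure maps of $\Xx$ and the isotropy actions $\Ga_i$ in the local uniformizers $(U_i, G_i)$. Compatibility with source, target, identities, and inverses is verified directly from the explicit formulas and Lemma~\ref{lem:Udata}(ii)(c). For functoriality under composition, Lemma~\ref{lem:compos} reduces the check to two cases: the ``increasing'' subcategory $\Bb_\Vv$ (where $(I, J, \ux, \underline{\id})$ with $I \subset J$ maps to the chain concatenation $m_{i_1 i_2} \circ \cdots \circ m_{i_{\ell-1} i_\ell}$ and both compositions are built from the concatenation of chains in the fiber products $U_I$), and the group-action morphisms within a single uniformizer $(V_I, G_I)$, where compatibility is built into the isotropy representation $\Ga_i$.

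To show $\psi: \Xx_\Vv \to \Xx|_V$ is an equivalence, I must establish a homeomorphism on realizations and bijections on morphism spaces. The footprint identity $\pi_\Xx \circ \psi|_{V_I} = \psi_I|_{V_I}$ together with Lemma~\ref{lem:Vdata} implies that $|\psi|$ induces a continuous bijection $|\Xx_\Vv| = \bigcup_I |V_I| \to \bigcup_I F'_I = |V|$, restricted on each piece to the homeomorphism $|\psi_I|: |V_I| \to F'_I$; openness of $|\psi|$ then follows from openness of the pieces. For bijectivity on $\Mor_{\Xx_\Vv}((I, \ux), (J, \ux'))$ I would argue as follows: morphisms can only exist when $I, J$ are nested (by construction of $\Xx_\Vv$), and for such nested pairs the chain data $\ux, \ux'$ is rigidly determined modulo isotropy by its first-coordinate endpoints using the fiber product description of $U_I, U_J$ in Lemma~\ref{lem:Udata}(i)(a), so morphisms in $\Xx$ between first coordinates correspond bijectively to morphisms in $\Xx_\Vv$ after taking into account the freeness of the $G_{J\less I}$-action on $\TV_{IJ}$ from the \'etale-data hypotheses.

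Parts (iii) and (iv) then follow quickly: the inner $G$-action and its local equivariance with $\psi$ are read off from the explicit formulas provided by Proposition~\ref{prop:MMGa1}(ii), and the description of $\Xx_\Vv^{\less G}$ together with its $G$-equivariant inclusion $\io$ is exactly the content of Propositions~\ref{prop:Hcomplet}(ii) and~\ref{prop:MMGa2}. The principal technical obstacle throughout the argument is the verification that $\psi$ is bijective on morphism sets. This requires combining the separation property (ii) of the cover reduction in Lemma~\ref{lem:cov0} (which forces $I, J$ to be nested when $\cl(F'_I) \cap \cl(F'_J) \neq \emptyset$) with the rigidity of chains in the fiber products $U_I$: without the separation property, morphisms in $\Xx$ between $\psi(\ux) \in V_I$ and $\psi(\ux') \in V_J$ for non-nested $I, J$ could have no lift to $\Xx_\Vv$; without the chain rigidity, the counting could produce too many lifts. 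This is precisely why Lemma~\ref{lem:Vdata} is arranged to guarantee both properties simultaneously.
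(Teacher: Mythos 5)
Your proposal is correct and follows essentially the same route as the paper: parts (i), (iii), (iv) are quoted from Propositions~\ref{prop:MMGa1}, \ref{prop:Hcomplet}, \ref{prop:MMGa2}, properness comes from Corollary~\ref{cor:prop1} once $|\psi|$ identifies $|\Xx_\Vv|$ with the Hausdorff set $|V|$, and functoriality of $\psi$ is reduced via Lemma~\ref{lem:compos} to the subcategory $\Bb_\Vv$, with bijectivity on morphism spaces as the crux. The only places where the paper supplies detail your sketch compresses are the composition check in $\Bb_\Vv$ for morphisms carrying nontrivial group elements (the computation culminating in \eqref{eq:comp1}) and the iterative determination of the group components $g_{i_s}$ via the uniformizer bijections $\Ga_{i_s}$, which is exactly the content of Lemma~\ref{lem:iteration}.
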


\begin{proof} We first prove (ii).
The definition of $\Xx_\Vv$  implies that each object and morphism in $\Xx$ has a finite number of preimages.  Hence $\psi$ is finite-to-one.
 In order to establish the other properties of  $\psi$, it is useful to consider the subcategory $\Bb_\Vv$ of $\Xx_\Vv$ that has the same objects as $\Xx_\Vv$, but has morphisms from $V_I$ to $V_J$ only if $I\subset J$.   
Thus
\begin{align*}
\Mor_{\Bb_\Vv}(V_I,V_J) & = \Mor_{\Xx_\Vv}(V_I,V_J), \quad \mbox{ if } I\subset J, \\
& = \emptyset \quad\mbox{ otherwise}
\end{align*}
 To see that $\psi: \Xx_\Vv \to \Xx$ is a well defined functor,   it will suffice to show that the natural lifts of the footprint maps 
$\psi_J: V_J\to F'_J\subset |\Xx|$
fit together to give maps $\psi: X_\Vv\to  X$ and $\psi:\bX_\Vv\to \bX$
 whose restriction to the subcategory $\Bb_\Vv$ is a functor  that is 
compatible with inverses.  For then we may apply  Lemma~\ref{lem:compos} to conclude
 that $\psi$  must coincide with the unique functorial extension of $\psi|_{\Bb_\Vv}$.

Consider the restriction of $ \psi$ to $\Bb_\Vv$.   Let us denote
$$
 I = \{i_1<i_2<\cdots \}, \;\; J= \{j_1< j_2<\cdot \},\;\;
  \ug = (g_{i_1}, g_{i_2},\cdots) \in G_I.
$$
To prove that $ \psi: \Bb_\Vv\to \Xx$  is a functor we must first check that
the given formulas are compatible with source and target map.  
Suppose first that   $j_1: =  i_\ell$ for some $\ell>1$ so that
$J\subset I$.  Then  $ (I,J,\ux,\ug) \in \Mor(V_I,V_J)$ has $\ug = (g_{j_1},\cdots) \in G_J$
so that $g_{i_1}=\id$.  Hence
$$
 \psi\bigl(s(I,J,\ux,\ug)\bigr) =  \psi (I,\ug^{-1}*\ux) = x_{i_1},
\quad  \psi\bigl(t(I,J,\ux,\ug)\bigr) =  \psi (J,\ux)  = x_{i_\ell} = x_{j_1}.
$$
Thus,  $ \psi( (I,J,\ux,\ug) ) = m_{i_1i_2}\circ \cdots\circ m_{i_{\ell-1}i_\ell} \in \Mor_{\Xx}(x_{i_1}, x_{i_\ell})$ has the same source and target.
If $i_1=j_1$ this is the source and target of $ \psi\bigl((I,J,\ux,\ug)\bigr) = g_{i_1}\circ \id_{x_1}$.  
Therefore the formulas are compatible with source and target maps if $J\subset I$ or if $i_1=j_1$. Finally, note that if $i_1=j_\ell$ for $\ell>1$ then $I\subset J$ so that $\ux\in V_J$, and we have
\begin{align*}
(s\times t)\, \bigl( \psi(I,J,\ux, \ug)\bigr) & = (s\times t)\,\bigl(g_{j_\ell} \circ 
 (m_{j_{\ell-1}j_\ell})^{-1}\cdots \circ  (m_{j_1j_2}) ^{-1}\bigr)\\ 
& = \bigl(g_{j_\ell}^{-1}* x_{j_\ell}, \   x_{j_1}\bigr) =  \bigl(g_{i_1}^{-1}* x_{i_1}, \   x_{j_1}\bigr)
\end{align*}
as required.
\MS

Now consider the composition rule for $I\subset J\subset K$, where we write $K: = \{k_1<k_2<\cdots\}$ so that
 $k_1\le j_1: = k_r \le i_1 = j_\ell = k_{s}$.  We must show that  
$$
 \psi(I,J, \ux,\ug)\circ \psi(J,K, \uy,\uh)  \ = \  \psi(I,K,\uy, \uh|_I \ug),
$$
when $ \ux\in V_J,\ \uy\in V_K, \ug\in G_I,\ \uh\in G_J$.
Since this is true if either  $k_1=j_1$ or $j_1= i_1$, we will assume that
$k_1<j_1: = k_r < i_1 = j_\ell = k_{s}$.
Since $J\subset K$, for any $j_a\in J$ there is $p<q$ so that $j_a = k_p < j_{a+1} = k_q$ and we may write\footnote
{
Here the superscripts $J, K$  indicate that the entries of $m,n$ are indexed by  pairs of  elements from the sets $J,K$ respectively. Thus $m^J_{ab}$ is the composite $m_{j_p j_{p+1}}\circ m_{j_{p+1} j_{p+2}}\circ  \dots \circ m_{j_{q-1} j_{q}}$ where $a = j_p < b = j_q$.}
\begin{align}\label{eq:comp00}
&\ux = (x_{j_1},m_{j_1j_2},\dots) \in V_J, \quad \uy: = (y_{k_1}, n_{k_1k_2},\dots) \in V_K,
\\ \notag
& m^J_{j_1j_\ell}: = m_{j_1j_2}\circ \dots\circ m_{j_{\ell-1} j_\ell},\quad  n^K_{j_aj_{a+1}}: = n^K_{k_p k_q} = n_{k_p k_{p+1}}\circ\dots\circ n_{k_{q-1}k_q}.  
\end{align}
Then the identity $\ux = \uh ^{-1}*\rho_{JK}(\uy)$ gives
$$
(x_{j_1},m_{j_1j_2}, x_{j_2} \dots) = (h_{j_1}^{-1}* y_{j_1},  h_{j_1}\circ n^K_{j_1j_2} \circ h_{j_2}^{-1}, h_{j_2}^{-1}* y_{j_2}, \  \cdots ),
$$
and more generally for each $a$ and $p<t$ we have  
\begin{align}\label{eq:comp0}
m_{j_a j_{a+1}} =  h_{j_a}\circ n^K_{j_aj_{a+1}} \circ h_{j_{a+1}}^{-1}, \quad 
m^J_{k_p k_t} =  h_{k_p}\circ n^K_{k_pk_t} \circ h_{k_t}^{-1}
\end{align}
Further, since $k_1<j_1: = k_r < i_1 = j_\ell = k_{s}$, 
\begin{align*}
& \psi\bigl((I,J, \ux,\ug)\bigr) = g_{j_\ell}\circ (m^J_{j_1 j_\ell})^{-1} =  g_{k_s}\circ (m^J_{k_r k_s})^{-1},\\
&\psi\bigl((J,K, \uy,\uh)\bigr)  = h_{k_r}\circ (n^K_{k_1 k_r})^{-1}, \quad
\psi\bigl((I,K, \uy,\uh|_I \ug)\bigr)  = h_{k_s}g_{k_s}\circ (n^K_{k_1 k_s})^{-1}
\end{align*}
Therefore, we need to show 
\begin{align}\label{eq:comp1}
g_{k_s} \circ 
 (m^J_{k_rk_s})^{-1}\circ
h_{k_r} \circ 
 (n^K_{k_{1}k_r})^{-1} = 
  h_{k_s}g_{k_s} \circ 
 (n^K_{k_{1}k_s})^{-1}.
  \end{align}
   But $ h_{k_s} g_{k_s}=   g_{k_s}\circ h_{k_s}$
 because of the categorical composition convention explained in
\eqref{eq:groupact}.
 Therefore \eqref{eq:comp1} follows from  \eqref{eq:comp0}.
 \MS
 
 Hence the restriction of $ \psi$ to  $\Bb_\Vv$ preserves its structure maps and so defines a functor 
 $\Bb_\Vv\to \Xx$.  Now let us check the compatibility of $ \psi$ with the inverse operation
$$
(I,J,\ux,\ug)\mapsto (I,J,\ux,\ug)^{-1}: = (J,I,\ug^{-1}*\ux, \ug^{-1}).
$$
 If $I\subset J$ with $i_1=j_\ell$, then $\ux\in V_J, \ug\in G_I$ and
 $g_{j_a}= \id$ for $a<\ell$ so that we have
\begin{align*}
  \psi((I,J,\ux,\ug))& = g_{j_\ell} \circ 
 (m_{j_{\ell-1}j_\ell})^{-1}\cdots \circ  (m_{j_1j_2}) ^{-1} = g_{j_\ell} \circ 
 (m^J_{j_1j_\ell})^{-1},\\
    \psi(J,I,\ug^{-1}*\ux, \ug^{-1}) &= 
m_{j_1j_2}\cdots \circ m_{j_{\ell-1}j_\ell}\circ g_{j_\ell}^{-1} = m^J_{j_1j_\ell}\circ g_{j_\ell}^{-1}.
\end{align*}
Hence $ \psi$ preserves the inverse operation.
Therefore $ \psi$ is a functor $\Xx_\Vv\to \Xx$ by 
Lemma~\ref{lem:compos}.   \MS

Next, note that the functor $ \psi:\Xx_\Vv\to\Xx$ is \'etale  on the object space since on each component $V_I\subset \Obj_{\Xx_\Vv}$ it is given by the restriction of the \'etale source map  $s_I$ from Lemma~\ref{lem:Udata}~(i)(a). 
On morphisms, the functor is \'etale because it consists of  \'etale actions of the finite isotropy groups by Remark~\ref{rmk:2actions},
 and the identity map as well as composition of morphisms in $\Xx$ are  \'etale since $\Xx$ is.  
Moreover, given any $(I,\ux),(I',\uy)\in\Obj_{\Xx_\Vv}$, we need to check that $ \psi$ restricts to a bijection $\Mor_{\Xx_\Vv}\bigl((I,\ux),(I',\uy)\bigr)\to \Mor_\Xx(x_1,y_1)$. 
In case $I\subset I'$ this is exactly the content of Lemma~\ref{lem:iteration} below, and composition with the (bijective and $ \psi$-compatible) inverse maps in $\Xx_\Vv$ and $\Xx$ establishes the bijection for the case $I'\subset I$. 

We now show that the induced map
\begin{align}\label{eq:proofinject}
| \psi| :|\Xx_\Vv|\overset{\simeq}{\to} F' :={\textstyle  \bigcup_{I\subset A} F'_I \subset |\Xx|}
\end{align}
is a homeomorphism between the realizations.
This map is surjective because $| \psi(V_I)|$ is the image $\psi_I (V_I) = F'_I$ of  the footprint map in Lemma~\ref{lem:Vdata}. To check that it is injective recall that $\psi_I, \psi_{I'}$ have disjoint images unless $I\subset I'$ or $I'\subset I$. 
Now for w.l.o.g.\ $I\subset I'$ with $|x_{i_1}|=| \psi(I,\ux)|=| \psi(I',\ux')|=|x'_{i'_1}|$ the equivalence $x_{i_1}\sim_\Xx x'_{i'_1}$ implies $\Mor_\Xx(x_{i_1},x'_{i'_1})\neq\emptyset$, which transfers to 
$\Mor_{\Bb_\Vv}((I,\ux), (I',\ux')) \neq\emptyset$
by Lemma~\ref{lem:iteration}. 
This proves $ (I,\ux) \sim_{\Xx_\Vv} (I',\ux')$, as required for injectivity of $| \psi|$. 
This map and its inverse are continuous since its lift $ \psi$ on objects is continuous with continuous local inverses.
This completes the proof of (ii).

To see that $\psi$ has the  local equivariance stated in (iii), note that since
$\psi: \Mor_{\Xx_\Vv}(V_I,V_I) \to \Mor_\Xx(V_{i_1},V_{i_1})$ is given by the formula $(I,I,\ux,\ug) \mapsto g_{i_1}\circ \id_{x_1}$, 
 the $G$ action on $V_I$ given by $\ux\mapsto \ug*\ux$ maps to the $G_{i_1}$ action on $V_{i_1}\subset U_{i_1}$.

It follows that each pair $(V_I,G_I)$ is a local uniformizer for $\Xx_\Vv$. Thus  $|\Xx_\Vv|$ is covered by finitely many uniformizers and hence, by Corollary~\ref{cor:prop1}, is proper because its realization $|\Xx_\Vv|\simeq F'$ is Hausdorff.
Thus (i) also holds.

Finally,  
the claims in (iv) about the relation between the  groupoids $\Xx_\Vv^{\less G}$ and $\Xx_\Vv$ are proved in 
Proposition~\ref{prop:MMGa10}~(iii).
\end{proof}

\begin{lemma} \label{lem:iteration}
In the setting of Lemma~\ref{lem:Vdata}, given 
\begin{align*}
& \qquad  I=(k_{\ell_1}= i_1<\dots) \; \subset\;  K = (k_1 < \ldots ), \quad  \mbox{\; and }\\
& \ux=(x_{i_1}, m_{i_1i_2}, \ldots)\in U_I, 
\ \; \uy = (y_{k_1},n_{k_1k_2},\ldots) 
\in U_K \mbox{\; with } \psi_I(\ux)=\psi_K(\uy)
\end{align*}
 there is  a bijection  $\Mor_{\Bb_\Vv}((I,\ux), (K,\uy)) \to \Mor_\Xx(x_{i_{1}},y_{k_1})$ given by
$$
U_K\times G_I \;\ni \;  \bigl( \uy ,(g_{i_1},\ldots,g_{i_{|I|}}) \bigr)  \mapsto g_{i_1} \circ n_{k_{\ell_1-1}k_{\ell_1}}^{-1} \circ\ldots\circ n_{k_1k_2}^{-1}, 
$$
where $i_s = k_{\ell_s},1\le s \le |I|$.
\end{lemma}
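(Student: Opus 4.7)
The plan is to show that a morphism in $\Mor_{\Bb_\Vv}((I,\ux),(K,\uy))$ is uniquely determined by its first group coordinate $g_{i_1}\in G_{i_1}$, and then to identify the admissible choices of $g_{i_1}$ with $\Mor_\Xx(x_{i_1},y_{k_1})$ by invoking the bijection property of the local uniformizer $(U_{i_1},G_{i_1})$ and composing with the invertible morphism $n_{k_1k_2}\circ\cdots\circ n_{k_{\ell_1-1}k_{\ell_1}}:y_{k_1}\to y_{i_1}$ in the groupoid $\Xx$.

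First I would unpack the morphism space using Theorem~\ref{thm:reduce}~(i): for $I\subset K$, an element of $\Mor_{\Bb_\Vv}((I,\ux),(K,\uy))$ is a tuple $(I,K,\uy,\ug)$ with $\ug=(g_{i_1},\ldots,g_{i_{|I|}})\in G_I$ satisfying $\rho_{IK}(\uy)=\ug*\ux$ in $V_I$. Applying the $G_I$-action formula from Lemma~\ref{lem:Udata}~(i)(b) and the explicit description of $\rho_{IK}$ from Lemma~\ref{lem:Udata}~(ii), this vector equation splits into the object conditions $y_{i_s}=g_{i_s}*x_{i_s}$ for $s=1,\ldots,|I|$, together with the morphism identities
$$ n^K_{i_s i_{s+1}} \;=\; \Ga_{i_s}(g_{i_s},x_{i_s})^{-1}\circ m_{i_s i_{s+1}}\circ\Ga_{i_{s+1}}(g_{i_{s+1}},x_{i_{s+1}}) \quad\text{in }\Mor_\Xx(y_{i_s},y_{i_{s+1}}) $$
for $s=1,\ldots,|I|-1$, where $n^K_{i_s i_{s+1}}:=n_{k_{\ell_s}k_{\ell_s+1}}\circ\cdots\circ n_{k_{\ell_{s+1}-1}k_{\ell_{s+1}}}$.

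Next I would establish by induction on $s$ that this system is equivalent to a free choice of $g_{i_1}\in G_{i_1}$ subject only to $g_{i_1}*x_{i_1}=y_{i_1}$. Since $\psi_I(\ux)=\psi_K(\uy)\in F_I\subset F_{i_s}$, both $x_{i_s}$ and $y_{i_s}$ lie in $U_{i_s}$, so by the bijectivity of $\Ga_{i_s}\colon G_{i_s}\times U_{i_s}\to\Mor_\Xx(U_{i_s},U_{i_s})$ (Definition~\ref{def:preunif}~(iii)) the admissible $g_{i_1}$ are in natural bijection with $\Mor_\Xx(x_{i_1},y_{i_1})$. Given such a $g_{i_1}$, the $s=1$ morphism identity rearranges to
$$ \Ga_{i_2}(g_{i_2},x_{i_2}) \;=\; m_{i_1 i_2}^{-1}\circ\Ga_{i_1}(g_{i_1},x_{i_1})\circ n^K_{i_1 i_2}\;\in\;\Mor_\Xx(x_{i_2},y_{i_2})\;\subset\;\Mor_\Xx(U_{i_2},U_{i_2}), $$
and bijectivity of $\Ga_{i_2}$ then produces a unique $g_{i_2}\in G_{i_2}$, for which the object condition $g_{i_2}*x_{i_2}=y_{i_2}$ is automatic from the target of $\Ga_{i_2}(g_{i_2},x_{i_2})$. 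Iterating determines $g_{i_3},\ldots,g_{i_{|I|}}$ uniquely, yielding the bijection $\Mor_{\Bb_\Vv}((I,\ux),(K,\uy))\stackrel{\simeq}{\to}\Mor_\Xx(x_{i_1},y_{i_1})$, $(\uy,\ug)\mapsto\Ga_{i_1}(g_{i_1},x_{i_1})$.

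Finally I would compose with the tautological isomorphism $\Mor_\Xx(x_{i_1},y_{i_1})\stackrel{\simeq}{\to}\Mor_\Xx(x_{i_1},y_{k_1})$ given by post-composition with the invertible morphism $n_{k_{\ell_1-1}k_{\ell_1}}^{-1}\circ\cdots\circ n_{k_1 k_2}^{-1}:y_{i_1}\to y_{k_1}$. Unraveling the notation $g\circ\phi:=\Ga(g,g^{-1}\!*s(\phi))\circ\phi$ from Remark~\ref{rmk:2actions} and using $g_{i_1}^{-1}*y_{i_1}=x_{i_1}$, this composite is precisely the map $(\uy,\ug)\mapsto g_{i_1}\circ n_{k_{\ell_1-1}k_{\ell_1}}^{-1}\circ\cdots\circ n_{k_1 k_2}^{-1}$ asserted by the lemma, which is therefore a bijection. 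The main bookkeeping obstacle is keeping careful track of source and target conventions throughout the inductive step, so as to ensure that each morphism produced actually lies in $\Mor_\Xx(U_{i_{s+1}},U_{i_{s+1}})$, making the local-uniformizer bijection applicable at that stage.
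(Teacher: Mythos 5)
Your proof is correct and follows essentially the same route as the paper's: both arguments reduce the morphism data to the single free parameter $g_{i_1}$ (constrained by $g_{i_1}*x_{i_1}=y_{i_1}$, hence in bijection with $\Mor_\Xx(x_{i_1},y_{i_1})$ via $\Ga_{i_1}$), determine $g_{i_2},\ldots,g_{i_{|I|}}$ iteratively from the morphism-matching identities using bijectivity of the local uniformizer maps $\Ga_{i_s}$, and then transport to $\Mor_\Xx(x_{i_1},y_{k_1})$ by composing with the chain $n_{k_1k_2}\circ\cdots\circ n_{k_{\ell_1-1}k_{\ell_1}}$. The paper merely runs the same computation in the opposite direction (starting from a given $m\in\Mor_\Xx(x_{i_1},y_{k_1})$ and solving for $\ug$), which is a cosmetic difference.
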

\begin{proof}
The morphism space with fixed source and target in $\Bb_\Vv$ is 
$$
\Mor_{\Bb_\Vv}((I,\ux), (K,\uy)) = \bigl\{ (\uy, \ug) \,\big|\, \ug\in G_I , \ug * \ux = \rho_{IK}(\uy) \bigr\},
$$
 where $\rho_{IK}(\uy)=(y_{k_{\ell_1}},n^K_{i_1i_2}, \ldots, n^K_{i_{|I|-1}i_{|I|}}, y_{k_{\ell_{|I|}}})\in U_I$,
 in the notation of \eqref{eq:comp00}.
This does not capture the morphisms $n_{k_1k_2},\ldots, n_{k_{\ell_1-1}k_{\ell_1}}$ in $\uy\in U_K$, which compose to 
$$
n^K_{k_1k_{\ell_1}}:=n_{k_1k_2}\circ\ldots\circ n_{k_{\ell_1-1}k_{\ell_1}}\in \Mor_\Xx(y_{k_1},y_{k_{\ell_1}}).
$$ 
Here by assumption we have\footnote
{Recall from Lemma~\ref{lem:Vdata} that $\psi_K:V_K\to |\Xx|$ is the footprint map, not to be confused with the functor $\psi: \Xx_\Vv\to \Xx$.} 
$$
\psi_{k_{\ell_1}}(y_{k_{\ell_1}})=\psi_K(\uy)= \psi_I(\ux)=\psi_{i_1}(x_{i_1}) \;\in \; |\Xx|,
$$ 
and since $|\psi_{k_{\ell_1}}|: V_{k_{\ell_1}}/G_{k_{\ell_1}} \to F'_{k_{\ell_1}}\subset |\Xx|$  is a homeomorphism, this implies $y_{k_{\ell_1}}=g_{i_1}* x_{i_1}$ for some $g_{i_1}\in G_{i_1}$ that is unique up to $G_{x_{i_1}}=\Mor_\Xx(x_{i_1},x_{i_1})\subset G_{i_1}$. Any such choice yields $n^K_{k_1k_{\ell_1}}\circ g_{i_1}^{-1}\in \Mor_\Xx(y_{k_1},x_{i_1})$, and taking the inverse in the groupoid $\Xx$ yields $g_{i_1}\circ (n^K_{k_1k_{\ell_1}})^{-1}\in\Mor_\Xx(x_{i_1},y_{k_1})$, which shows that the given map is well defined. 

To show that the map is a bijection, we will show that any $m\in \Mor_\Xx(x_{i_1},y_{k_1})$ has exactly one preimage. More precisely, given $m, \ux, \uy$ we will iteratively determine the entries in $\ug=(g_{i_1}, g_{i_2}, \ldots)\in G_I$ from the conditions $\ug * \ux = \rho_{IK}(\uy)$ and $g_{i_1}\circ (n^K_{k_1k_{\ell_1}})^{-1} = m$. 
The iteration starts with $g_{i_1}\in  G_{i_1}$ being determined uniquely by $$
\id_{x_{i_1}}\circ g_{i_1}= m\circ n^K_{k_1k_{\ell_1}} \in \Mor_\Xx(x_{i_1},y_{k_{\ell_1}}),\quad\mbox{where } k_{\ell_1} = i_1,
$$
 using notation from Remark~\ref{rmk:2actions}.
The properties of the local uniformizer in Definition~\ref{def:preunif} then also guarantees the first object in $\ug^{-1}* \rho_{IK}(\uy)$ to be $g_{i_1}^{-1} * y_{k_{\ell_1}} = x_{i_1}$. 
For the iteration step, assume $g_{i_1},\ldots, g_{i_{s-1}}$ are determined so that the first $(s-1)$ objects and $(s-2)$ morphisms in $\ug^{-1}* \rho_{IK}(\uy)$ agree with those given by $\ux=(x_{i_1},m_{i_1i_2},\ldots, m_{i_{|I|-1}i_{|I|}},x_{i_{|I|}})$. Then we see that $g_s \in G_{i_s}$ is uniquely determined by matching $m_{i_{s-1}i_s}\in\Mor_\Xx(x_{i_{s-1}},x_{i_s})$ with the corresponding morphism in $\ug^{-1}* \rho_{IK}(\uy)$, 
\begin{align*}
m_{i_{s-1}i_s}  \;=\;  g_{i_{s-1}}\circ  n^K_{i_{s-1}i_s} \circ g_{i_s}^{-1} & \quad
\Longleftrightarrow \quad 
\\
&\hspace{-,5in}
\id_{x_{i_s}} \circ g_{i_s} \;=\;   (m_{{i_{s-1}i_s}})^{-1}\circ g_{i_{s-1}} \circ n^K_{i_{s-1}i_s} \in \Mor_\Xx(x_{i_s}, y_{k_{\ell_s}}) .
\end{align*}
This indeed determines $g_{i_s}\in G_{i_s}$ since $x_{i_s}, y_{k_{\ell_s}} \in U_{i_s}$ and the local uniformizer $(U_{i_s},G_{i_s})$ comes with a bijection $\Ga_{i_s}: G_{i_s} \times U_{i_s}\to  \Mor_\Xx(U_{i_s}, U_{i_s}), (g, x) \mapsto \id_x\circ g$. Finally, the local uniformizer property 
$$
(s\times t)\circ\Ga_{i_s} : G_{i_s} \times U_{i_s} \to U_{i_s}\times U_{i_s} , \quad (g,x) \mapsto (x, g* x)
$$
 implies matching for the $s$-th object $y_{k_{\ell_s}}= t ( \id_{x_{i_s}}\circ g_{i_s} ) =  g_{i_s} * x_{i_s}$.
\end{proof}

 \begin{rmk}\rm \label{rmk:wonderful}  
  Let  $C\subset |\Xx_\Vv| = |\Xx|$  be any set such that its topology as a subspace of $|\Xx|$ is second countable.  
 We claim that there is a constant $K$ such that each sufficiently small open subset in $C$ pulls back to a collection of at most $K$ disjoint open sets in $\pi_{\Xx_\Vv}^{-1}(C)$.  This holds because  
 $|X_\Vv|$ is covered by the footprints of the finite number of uniformizers $(V_J, G_J), J\subset A,$, and each $|V_J|$  has this property with $K: = |G_J|$.
Hence  any countable collection of open subsets of $C$ that form a basis for the subspace topology of $C$ pulls back to 
a countable collection of open subsets of $\pi_{\Xx_\Vv}^{-1}(C)$ that  form a
basis for the  topology of $\pi_{\Xx_\Vv}^{-1}(C)$.
 Thus the inverse image $\pi_{\Xx_\Vv}^{-1}(C)$ of any second countable subset  $C\subset |\Xx_\Vv| = |\Xx|$  
  is second countable in the subspace topology.  Note that this might not hold for the map $X\to |\Xx|$.
\hfill$\er$
\end{rmk}

 \begin{rmk}\rm \label{rmk:XU}  
There is an analogous  \'etale groupoid  $\Xx_\Uu$ with objects $\bigsqcup_I U_I$, where $U_I$ is as in Lemma~\ref{lem:Udata}, and whose morphisms are generated by the projections $(\rho_{IJ}: U_J\to U_I)_{I\subset J}$
 and group actions $G_J\times U_J\to U_J$.  However, the full space of morphisms in $\Xx_\Uu$ is rather complicated to describe explicitly because there are morphisms from $U_I$ to $U_J$ whenever $|U_{I\cup J}| = F_{I\cup J} \ne \emptyset.$   (See \cite{MWiso,Morb} for a discussion of  such categories in the finite dimensional case.)  Further, the functor  $\psi$ extends to an
 \'etale equivalence $\psi:\Xx_\Uu\to \Xx$.  
 The category $\Xx_\Uu$  per se plays no role in the current paper.  
 \hfill$\er$
 \end{rmk}

 \subsection{The strong bundle \texorpdfstring{$\Ww_\Vv\to \Xx_\Vv$}{W V to X V}}\label{ss:reduce}

We now explain the structure of the pullback bundle $\Ww_\Vv:=\psi^*\Ww$ in the top row of diagram~\eqref{diag:11}   defined in Lemma~\ref{lem:bundle}.
Since we need to establish the properties of strong bundle and sc-Fredholm section functor, this subsection works explicitly with sc-groupoids and polyfolds.
However, many results extend to general \'etale proper categories.

Throughout we consider a sc-Fredholm section functor $f:\Xx\to \Ww$ of a strong bundle $P:\Ww\to\Xx$ over an ep-groupoid $\Xx$ with compact zero set $|f^{-1}(0)|$ as in Definition~\ref{def:poly}.
The bundle is given by structure maps $(P:W\to X, \mu:\bX\leftsub{s}{\times}_P W\to W)$, where $P:W\to X$ is a strong bundle over the M-polyfold of objects $X$, and $\mu$ defines the action of the morphisms of $\Xx$ on the object space $W$ of $\Ww$.  The  following result describes the structure of $\Ww_\Vv$ and the perturbation theory of $f_\Vv$ in detail.

\begin{theorem} \label{thm:reduceFred}
Let $(U_i,G_i)_{i=1,\ldots, N}$ be a finite collection of local uniformizers whose footprints $F_i:=|U_i|$ cover $S:=|f^{-1}(0)|\subset \bigcup_{i=1}^N F_i$, and let $\bigl(F'_I 
\subset |\Xx| \bigr)_{I\subset \{1,\ldots,N\}}$ be a cover reduction as in Lemma~\ref{lem:cov0}, with corresponding \'etale data $(V_J,G_J,\rho_{IJ})$ as in Lemma~\ref{lem:Vdata}. 
Then pullback with the \'etale functor $\psi:\Xx_\Vv\to\Xx$ from Theorem~\ref{thm:reduce} induces a sc-Fredholm section functor $f_\Vv:\Xx_\Vv\to \Ww_\Vv$ of a strong bundle $(P_\Vv:\Ww_\Vv\to \Xx_\Vv, \mu_\Vv)$ over the reduced ep-groupoid $\Xx_\Vv$, whose perturbation theory is equivalent to that of $f$. 
In detail:  

\begin{enumilist}\item 
The strong bundle $P_\Vv:\Ww_\Vv\to \Xx_\Vv$ is the pullback of 
$P:\Ww\to\Xx$ under $\psi$ as in Lemma~\ref{lem:bundle}. 
Using the specific structure from Theorem~\ref{thm:reduce}, this pullback is given by
\begin{align*}
P_\Vv :&\; \psi^*W:= X_\Vv\, \leftsub{\psi}{\times}_{P} W 
 \to X_\Vv,  \quad P_\Vv(I,\ux,w) = (I,\ux)  , \\
\mu_\Vv : & \;   \bX_\Vv\, \leftsub{s}{\times}_{P_\Vv} \psi^*W  \to \psi^*W,  \quad
 \mu_\Vv (I,J,\uy,\ug,w) =  \bigl( t(I,J, \uy,\ug ), \mu( \psi(I,J,\uy,\ug), w) \bigr), 
\end{align*}
where $(I,J,\uy, \ug, w)$ is an abbreviation for $\bigl( (I,J,\uy,\ug) , s(I,J,\uy,\ug),w) \bigr)\in  \bX_\Vv\, \leftsub{s}{\times}_{P_\Vv} \psi^*W$. 
The resulting groupoid $\Ww_\Vv= \psi^*\Ww$ is given by
\begin{align*}
W_\Vv =\psi^*W & =  \textstyle \bigsqcup_{I\subset A} W_I =  \textstyle\bigsqcup_{I\subset A} 
\bigl\{\bigl( I, \ux=(x_{i_1},\ldots) , w \bigr) \,\big|\, \ux \in V_I, w\in W_{x_{i_1}} \bigr\} , \\ 
\bW_\Vv ( W_I, W_J )  
&= \bigl\{ \bigl(I,J, \ux , \ug , w \bigr) \,\big|\, 
(I,J, \ux , \ug ) \in  \Mor_{\Xx_\Vv} ( V_I, V_J )  ,\;  w\in W_{\psi(s(I,J,\ux,\ug))}\bigr\}
\end{align*}
with the structure maps 
\begin{align}\label{eq:strWV}
s(I, J, \ux , \ug , w) & =  \bigl( s(I,J,\ux,\ug) , w \bigr)  , 
\qquad t(I, J, \ux , \ug , w)  = \bigl( t(I,J,\ux,\ug)  , \Hat\rho_{IJ}(\ux,\ug) w \bigr)  , \\ \notag
(I,J,\ux,\ug,w)&\circ  (J,K,\uy,\uh,w') = \bigl( (I,J,\ux,\ug)\circ  (J,K,\uy,\uh), w \bigr)\\ \notag
\id_{(I,\ux,w)} &= (\id_{(I,\ux)}, w), \qquad
(I, J, \ux , \ug , w)^{-1} = \bigl( (I, J, \ux, \ug)^{-1} , \Hat\rho_{IJ}(\ux,\ug) w).
\end{align}
Here $\Hat\rho_{IJ}$ lifts the morphisms in $\Mor_{\Xx_\Vv}(V_I,V_J)$ to a family of linear maps between 
 the fibers of the bundles $W_I\to V_I$, $W_J\to V_J$. It is given by 
\begin{align} \label{eq:hatrho0}
& \Hat\rho_{IJ}(\ux,\ug) w := \mu(  \psi(I,J,\ux,\ug), w)  \; \in \; W_{x_{j_1}}\; \quad \mbox{ for } \;\; I \subset J, \; w\in W_{(\ug^{-1} * \rho_{IJ}(\ux))_{j_1}} , \\ \notag 
&\Hat\rho_{IJ}(\ug^{-1}*\ux, \ug^{-1})  := (\Hat\rho_{JI}(\ux,\ug))^{-1} \qquad\qquad\qquad\qquad\quad  \mbox{ for } \;\; J\subset I, 
\end{align} 
and satisfies the following identity (with $\uv, \uk$ determined as in \eqref{eq:MMGa3})
\begin{equation} \label{eq:hatrho}
\Hat\rho_{JK}(\uy,\uh)  \, \Hat\rho_{IJ}(\ux,\ug) 
= \; \Hat\rho_{IK}\bigl(\rho_{I\vee K}(\uv) , \uk(\uh \ug )|_{I\wedge K}\bigr) .
\end{equation}

\item
The natural pullback functor $\Psi:\Ww_\Vv\to\Ww$ from Lemma~\ref{lem:bundle} restricts to a strong bundle equivalence $\Psi:\Ww_\Vv\to\Ww|_V$ in the sense of \cite[Def.10.4.1]{TheBook}, where 
$V = \pi_\Xx^{-1}\bigl( \bigcup_{I\subset A} F'_I  \bigr) = \pi_\Xx^{-1}( \im|\psi| )\subset X$ as in Theorem~\ref{thm:reduce}.

In the notation  of Theorem~\ref{thm:reduce}, this functor is given by 
$(I,\ux,w)\mapsto w$ on objects and $( I,J,\uy,\ug,w ) \mapsto (  \psi(I,J,\uy,\ug) , w)$ on morphisms. 

\item 
The sc-Fredholm section functor $f_\Vv:=\psi^*f: \Xx_\Vv\to \Ww_\Vv$ is the pullback of $f$ under $ \psi$ as in Lemma~\ref{lem:bundle}.  
Using the specific structure from Theorem~\ref{thm:reduce}, this functor is given by
\begin{align*}
f_\Vv(I,\ux) &= \bigl( I,\ux, f( \psi(I,\ux)) \bigr) = \bigl( I,\ux, f( x_{i_1}) \bigr) \quad\text{for}\; (I,\ux)\in X_\Vv, \\
f_\Vv(I,J,\uy,\ug) &= 
\bigl( I,J,\uy,\ug ,  f( \psi(I,\ug^{-1}*\rho_I(\uy))) \bigr) \qquad\text{for}\; (I,J,\uy,\ug)\in \bX_\Vv . 
\end{align*}
In particular, $\psi$ restricts to a functor between the full subcategories $\Xx_\Vv\supset f_\Vv^{-1}(0)\to f^{-1}(0)\subset \Xx$ and induces a homeomorphism  $|\psi|:|f_\Vv^{-1}(0)|\to |f^{-1}(0)|$ between the compact zero sets.

The inner action of $G: = \prod_{i\in A} G_i$ on $\Xx_\Vv$ naturally lifts to an inner action on $\Ww_\Vv$ as in Lemma~\ref{lem:actW}, and  $f_\Vv$ is $G$-equivariant.  

If, in addition, $f$ is orientable, then so is $f_\Vv$; any choice of orientation of $f$ induces an orientation of $f_\Vv$ via pullback with $\Psi$ as specified in \S\ref{ss:orient}. This definition is such that if $f,f_\Vv$ are transverse then $|\psi|:|f_\Vv^{-1}(0)|\to |f^{-1}(0)|$ is orientation preserving.

\item The perturbation theories of $f$ and $f_\Vv$ are equivalent as follows.
We can find compactness controlling data $(N,\Uu)$ for $f$ as in Definition~\ref{def:control-compact} with $|{\rm cl}_X(\Uu)|\subset |V|$. 
Then $(N_\Vv:=N\circ\Psi \,,\, \Uu_\Vv:= \psi^{-1}(\Uu))$ is compactness controlling data for $f_\Vv$ and we have a pushforward construction for regular perturbations:

For every $(N_\Vv,\Uu_\Vv)$-regular sc$^+$-multisection functor $\Lambda_\Vv:\Ww_\Vv\to\Q^{\ge 0}$ in the sense of Definition~\ref{def:NUregular} the pushforward $\Psi_*\Lambda_\Vv: \Ww|_V\to\Q^{\ge 0}$ extends to an $(N,\Uu)$-regular sc$^+$-multisection functor $\Ti\Lambda: \Ww\to\Q^{\ge 0}$ that is trivial\footnote{A multisection functor $\Lambda:\Ww\to\Q^{\ge 0}$ is called trivial at $x\in X$ if $\Lambda(0_x)=1$ and $\Lambda(w)=0$ for all $w\in W_x\less\{0_x\}$. } 
over $X\less \Uu\supset X\less \pi_\Xx^{-1}(|V|)$.  Moreover, 
$$
| \psi|\;:\;  \bigl|S_{\Lambda_\Vv\circ f_\Vv}=\{\ux \in X_\Vv | \Lambda_\Vv(f_\Vv(\ux))>0\} \bigr|
\;\to\; 
\bigl|S_{\Ti\Lambda\circ f}=\{q\in X\, |\, \Ti\Lambda(f(q))>0\} \bigr| 
$$ 
is a bijection between the perturbed zero sets, and the corresponding branched ep$^+$-subgroupoids 
$\Theta_\Vv:=\Lambda_\Vv\circ f_\Vv:\Xx_\Vv\to\Q^{\ge 0}$ and $\Theta:=\Ti\Lambda\circ f :\Xx\to\Q^{\ge 0}$
are equivalent in the sense that $\Theta_\Vv = \Theta\circ\psi$ is the \cite[Def.11.3.1]{TheBook} pullback under the equivalence $\psi:\Xx_\Vv\to\Xx|_V$. 
If $f$ is oriented, and $f_\Vv$ is equipped with the orientation from (iii), then the identification $| \psi|:  |S_{\Lambda_\Vv\circ f_\Vv}| \to |S_{\Ti\Lambda\circ f}|$ is orientation preserving in the sense that the \cite[Thm.11.3.5]{TheBook} pullback orientation of the branched ep$^+$-subgroupoid $(\Ti\Lambda\circ f)\circ\psi$
coincides with the orientation of $\Lambda_\Vv\circ f_\Vv$ induced from the orientation of $f_\Vv$ via the solution set convention in \S\ref{ss:orient}.

In the special case of a polyfold model without boundary in the sense of Definition~\ref{def:PolyfoldModel} for a moduli space $\oMm\simeq|f^{-1}(0)|$, that is when $\partial\Xx=\emptyset$ and $f:\Xx\to\Ww$ is oriented, the reduced sc-Fredholm description $\oMm\simeq|f_\Vv^{-1}(0)|$ from (iii) is a polyfold model that induces the same fundamental class $[\oMm]_{f_\Vv}= [\oMm]_f\in \check H_d(\oMm;\Q)$, defined in Theorem~\ref{thm:polyVFC}.

\end{enumilist}
\end{theorem}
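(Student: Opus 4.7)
The overall strategy is to present each claim as an explicit instantiation of the pullback machinery from Lemma~\ref{lem:bundle} and Lemma~\ref{lem:pullback}, using the concrete description of $\Xx_\Vv$ from Theorem~\ref{thm:reduce}. For part~(i), the bundle $\Ww_\Vv=\psi^*\Ww$ is defined by \eqref{eq:bundlemu}, and I would verify the formulas in \eqref{eq:strWV} by substituting the explicit morphism spaces $\Mor_{\Xx_\Vv}(V_I,V_J)$. The lift $\Hat\rho_{IJ}(\ux,\ug)w:=\mu\bigl(\psi(I,J,\ux,\ug),w\bigr)$ then arises directly from the pullback formula, and the cocycle identity \eqref{eq:hatrho} is obtained by combining the composition rule \eqref{eq:MMGa3} in $\Xx_\Vv$, the functoriality $\psi(m\circ m')=\psi(m)\circ\psi(m')$, and the bundle axiom $\mu(m\circ m',w)=\mu(m',\mu(m,w))$ of Definition~\ref{def:bundle}(iii). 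Part~(ii) then follows immediately from Lemma~\ref{lem:bundle}(v) applied to the equivalence $\psi:\Xx_\Vv\to\Xx|_V$ established in Theorem~\ref{thm:reduce}(ii).

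For part~(iii), the sc-Fredholm property of $f_\Vv=\psi^*f$ is exactly the content of Lemma~\ref{lem:pullback}, and the explicit expressions are obtained by instantiating Lemma~\ref{lem:bundle}(iv). The homeomorphism $|\psi|:|f_\Vv^{-1}(0)|\to|f^{-1}(0)|$ is the concluding assertion of Lemma~\ref{lem:bundle}(v). The inner $G$-action on $\Xx_\Vv$ from Theorem~\ref{thm:reduce}(iii) lifts through Lemma~\ref{lem:actW} to an inner $G$-action on $\Ww_\Vv$ with respect to which every section, in particular $f_\Vv$, is automatically $G$-equivariant. Orientability and pullback of orientations follow from the equivalence convention of \S\ref{ss:orient}, which defines $\psi^*\fo:=\psi^*\circ\fo\circ\psi$ as an orientation of $f_\Vv$ whenever $\fo$ is an orientation of $f$.

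For part~(iv), the existence of compactness controlling data $(N,\Uu)$ with $|\cl_X(\Uu)|\subset|V|$ and the fact that $(N_\Vv,\Uu_\Vv)=(N\circ\Psi,\psi^{-1}(\Uu))$ controls compactness of $f_\Vv$ are exactly stated in Lemma~\ref{lem:pullback}. Given a regular sc$^+$-multisection $\Lambda_\Vv$, I construct $\Ti\Lambda$ in two steps: first apply Lemma~\ref{lem:pushLa}(i) via the bundle equivalence $\Psi$ to obtain $\Psi_*\Lambda_\Vv:\Ww|_V\to\Q^{\ge 0}$; then extend over $\Ww$ by setting $\Ti\Lambda$ to be trivial over $X\less\Uu$. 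The domain support condition $\text{dom-supp}(\Lambda_\Vv)\subset\Uu_\Vv$ ensures that the two definitions agree on the overlap, so $\Ti\Lambda$ is a well-defined sc$^+$-multisection functor satisfying the three $(N,\Uu)$-regularity conditions of Definition~\ref{def:NUregular}. The bijection between perturbed zero sets and the identity $\Theta_\Vv=\Theta\circ\psi$ follow since $|\psi|$ is a homeomorphism on $|V|$ and the branched structures are pulled back under the equivalence of ep-groupoids, as in \cite[Def.11.3.1]{TheBook}.

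The step I expect to be the main obstacle is reconciling orientations all the way up to the fundamental class identity. The essential ingredient is Remark~\ref{rmk:equiv-orient}, which shows that the equivalence-pullback of orientations agrees pointwise with the differential-geometric identification via $\rd\psi$ of tangent spaces at smooth perturbed solutions; hence the orientation of $\Theta_\Vv=\Lambda_\Vv\circ f_\Vv$ induced from $f_\Vv$ via the solution set convention agrees branch-by-branch with the pullback orientation of $\Theta\circ\psi$ defined in \cite[Thm.11.3.5]{TheBook}. For the final identity $[\oMm]_{f_\Vv}=[\oMm]_f$, I would apply Theorem~\ref{thm:polyVFC}: choosing a nested sequence $B_k\searrow|f^{-1}(0)|$ inside $|\Uu|$, the sets $B_{\Vv,k}:=|\psi|^{-1}(B_k)$ form a nested sequence inside $|\Uu_\Vv|$ with $\bigcap B_{\Vv,k}=|f_\Vv^{-1}(0)|$; then for each $k$ the correspondence $\Lambda_k:=\Ti{\Lambda_{\Vv,k}}$ produces an orientation-preserving branched isomorphism of perturbed solution sets, which identifies the \v{C}ech classes $(\phi_{\Vv,k})_*[\Theta_{\Vv,k}]$ and $(|\psi|^{-1})_*(\phi_k)_*[\Theta_k]$. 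Passing to the inverse limit and composing with $\psi_f\circ|\psi|=\psi_{f_\Vv}$ gives the desired equality in $\check H_d(\oMm;\Q)$.
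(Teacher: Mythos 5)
Your proposal is correct and follows essentially the same route as the paper: part (i) via the pullback formulas and the bundle axiom $\mu(m\circ m',w)=\mu(m',\mu(m,w))$ for \eqref{eq:hatrho}, parts (ii)--(iii) by instantiating Lemma~\ref{lem:bundle}, Lemma~\ref{lem:pullback}, Lemma~\ref{lem:actW} and the equivalence convention with Remark~\ref{rmk:equiv-orient}, and part (iv) by pushing forward with Lemma~\ref{lem:pushLa}(i), extending trivially, and identifying fundamental classes through Theorem~\ref{thm:polyVFC} with pulled-back compactness data and nested neighbourhoods. The only point where the paper does substantially more work than your sketch is the explicit verification of the $(N,\Uu)$-regularity conditions for $\Ti\Lambda$ — in particular condition (3), where surjectivity of the linearizations must be transported from points $\psi(\uo)$ to arbitrary representatives in the saturation $\pi_\Xx^{-1}(|V|)$ via the structure map $\mu$ — which you assert rather than carry out, but this is a matter of detail rather than of approach.
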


\begin{proof}
Our constructions follow the theory of compatible perturbations in \cite{Wolfgang thesis}, which simplifies in our setting as $ \psi:\Xx_\Vv \to \Xx$ is a local sc-diffeomorphism and induces a topological embedding $| \psi|:|\Xx_\Vv| \to |\Xx|$, thus meets the ``topological pullback condition'' in \cite[Def.4.3]{Wolfgang thesis}. 
\MS

\NI {\bf Proof of (i):}\
Lemma~\ref{lem:bundle} proves the first part of (i):  $(P_\Vv, \mu_\Vv)$ is a strong bundle in the sense of \cite[Def.8.3.1]{TheBook}. The structure maps of $\Ww_\Vv$ result from writing out the construction, 
using the specific form of $\psi$ in Theorem~\ref{thm:reduce}~(ii).   To prove \eqref{eq:hatrho},
we use the fact that the composite $(m,w)\circ(m',\mu(m,w))= (m\circ m', w)$ has target 
\begin{equation}\label{eq:muprop}
\mu(m',\mu(m,w) ) = t(m',\mu(m,w)) = t(m\circ m', w) = \mu(m\circ m', w).
\end{equation}
Since by definition $\Hat\rho_{IJ}(\ux,\ug) w = \mu(\psi(I,J,\ux,\ug), w)$, we have
\begin{align*}
\Hat\rho_{JK}(\uy,\uh) \Hat\rho_{IJ}(\ux,\ug)  
&\;=\; \mu\bigl( \psi(J,K,\uy,\uh),\, \mu (  \psi(I,J,\ux,\ug) , w ) \bigr)  \\
&\;=\; \mu (  \psi(I,J,\ux,\ug) \circ   \psi(J,K,\uy,\uh), w)
\;=\;  
\Hat\rho_{IK}\bigl(\rho_{I\vee K}(\uv) , \uk(\uh \ug )|_{I\wedge K}\bigr),
\end{align*}
where the last expression is explained in \eqref{eq:MMGa3}.
\MS

\NI {\bf Proof of (ii):}
The existence of the strong bundle map $\Psi:\Ww_\Vv\to\Ww$  in (ii)  is established in Lemma~\ref{lem:bundle}, and its formula can be deduced from \eqref{eq:bundlemap}.   Moreover $\Psi:\Ww_\Vv\to \Ww|_V$ is a strong bundle equivalence by Lemma~\ref{lem:bundle}~(v).
\MS

\NI {\bf Proof of (iii):}
The proof that $f_\Vv$ is a sc-Fredholm section functor is given in  Lemma~\ref{lem:pullback}, while Lemma~\ref{lem:bundle}~(v) shows that $|\psi|: |f_\Vv^{-1}(0)| \to f^{-1}(0)|$ is a homeomorphism.  
Next, by Lemma~\ref{lem:actW}, the inner action of $G$ on $\Xx_\Vv$  lifts to $\Ww_\Vv$ and $f_\Vv$ is $G$-equivariant.  
We will denote the induced action on $W_\Vv$ by 
\begin{align}\label{eq:gpactW00}
(I,\ux,w )\mapsto \ug*(I,\ux,w ): & = \bigl(I, \ug|_I*\ux,\, \mu(\psi(I,I, \ug|_I*\ux,\ug|_I), w)\bigr)\\ \notag
&  = \bigl(I, \ug|_I*\ux, \, g_{i_1}*w\bigr)
\end{align}
where the second equality holds because $\psi(I,I, \ug|_I*\ux,\ug|_I) =  g_{i_1}$ by Theorem~\ref{thm:reduce}~(ii).
If, in addition, $f$ is orientable, then so is $f_\Vv$, and any choice of orientation of $f$ induces an orientation of $f_\Vv$ via pullback with $\Psi$, since this is a case of the equivalence convention in \S\ref{ss:orient}.   
It also is a special case of pushforward \cite[Thm.12.5.7]{TheBook} with the diagram $\Ww|_V \overset{\Psi}{\leftarrow}  \Ww_\Vv \overset{\rm Id}{\rightarrow} \Ww_\Vv$ which is a generalized strong bundle isomorphism in the sense of \cite[Def.10.5.2]{TheBook}. 
Moreover, Remark~\ref{rmk:equiv-orient} shows that if $f,f_\Vv$ are transverse then $|\psi|:|f_\Vv^{-1}(0)|\to |f^{-1}(0)|$ is orientation preserving. 
\MS

\NI {\bf Proof of (iv):}
The fact that  compactness controlling data $(N,\Uu)$ for $f$ with $|{\rm cl}_X(\Uu)|\subset |V|$ exists and pulls back to compactness controlling data $(N_\Vv, \Uu_\Vv)$ for $f_\Vv$ is proved in Lemma~\ref{lem:pullback}.  
Next note that when  $\Lambda_\Vv:\Ww_\Vv \to\Q^{\ge 0}$ is a sc$^+$-multisection functor 
 with support in $\Uu_\Vv$ then by Lemma~\ref{lem:pushLa}~(i) it may be pushed forward  to a multisection $\Lambda:=\Psi_*\Lambda_\Vv:\Ww|_{
 \TV}\to\Q^{\ge 0}$ that is defined over the saturation $\TV: = \pi_\Xx^{-1}(|V|)$ of $V$ in $\Xx$ and is sc$^+$-smooth. by construction; also see
 \cite[Lemma~11.5.2]{TheBook}. Since it has support in the saturated subset $\Uu\subset \TV$,
%
  it may be smoothly extended by the trivial multisection of $\Ww|_{X\less \Uu}$  (that equals $1$ along the zero section and is $0$ elsewhere)  to a sc$^+$-multisection of $\Ww\to \Xx$.

Next, suppose that $\Lambda_\Vv$ is $(N_\Vv,\Uu_\Vv)$-regular in the sense of Definition~\ref{def:NUregular}, that is
\begin{itemize}
\item[(1)] 
$N_\Vv(\Lambda_\Vv) = \sup\bigl\{ N_\Vv(\uw) \,\big|\, \uw\in W_\Vv[1], \Lambda_\Vv(\uw)>0 \bigr\}  \;<\; 1$; 

\item[(2)] 
$\text{\rm dom-supp}\,\Lambda_\Vv ={\rm cl}_{X_\Vv}\bigl\{ y \in X_\Vv \,\big|\, \exists\, \uw\in P_\Vv^{-1}(y)\less \{0\} : \Lambda_\Vv(\uw) >0  \bigr\}  \;\subset\; \Uu_\Vv =\psi^{-1}(\Uu)$;

\item[(3)]  
All local sections representing $\Lambda_\Vv$ are transverse to $f_\Vv$.
\end{itemize}

\NI
We now show that $\Lambda=\Psi_*\Lambda_\Vv$ is an $(N, \Uu)$-regular perturbation of $f|_\TV: \Xx|_\TV\to \Ww|_\TV$ (where as above $\TV$ denotes the saturation of $V$ in $\Xx$) because it  satisfies the corresponding versions of these conditions.\MS

\NI
{\bf Proof of (iv)-(1):}  We have
\begin{align*}
1> N_\Vv(\Lambda_\Vv) &=\sup\bigl\{ N_\Vv(\uw) \,\big|\, \uw\in W_\Vv[1],  \Lambda_\Vv(\uw)>0 \bigr\} \\
&= \sup\bigl\{ N(\Psi(\uw)) \,\big|\, \uw\in W_\Vv[1],  \Lambda(\Psi(\uw))>0 \bigr\} \\
& = \sup\bigl\{ N(w) \,\big|\, w \in\Psi (W_\Vv[1] ), \Lambda(w)>0 \bigr\} \\
& = \sup\bigl\{ N(w') \,\big|\, w' \in W[1] , \Lambda(w')>0 \bigr\} \;=\; N(\Lambda). 
\end{align*}
Here the last identity uses the fact that $N$ is invariant under morphisms and $|\Ww|_V|=|\Psi| (|\Ww_\Vv|)$, so every $w'\in W$  with $|P(w)|\in |V|$ is related by a morphism to some $w=\Psi(\uw)$. Moreover, for $w'\in W[1]$ the corresponding object $\uw$ in $N(w')=N(\Psi(\uw))$ lies in $W_\Vv[1]$ since both the morphisms of $\Ww$ and the strong bundle map $\Psi$ preserves the sc-filtrations. 
\MS

\NI
{\bf Proof of (iv)-(2):}   Note that the pullback identity $\Lambda_\Vv(\uo,w)=\Lambda(\Psi(\uo,w))=\Lambda(w)$ identifies
\begin{align*}
& \bigl\{ \uo \in X_\Vv \,\big|\, \exists (\uo,w)\in P_\Vv^{-1}(\uo)\less \{0\} : \Lambda_\Vv(\uo,w) >0  \bigr\}  \\
&\quad =  \bigl\{ \uo \in X_\Vv \,\big|\, \exists\, w\in P^{-1}(\psi(\uo))\less \{0\} : \Lambda(w) >0  \bigr\} \\
&\quad = \psi^{-1} \bigl( \bigl\{ o \in X \,\big|\, \exists\, w\in P^{-1}(o)\less \{0\} : \Lambda(w) >0  \bigr\} \bigr). 
\end{align*}
Now, $\pi_\Xx:\TV\to|V|\subset |\Xx|$ is continuous and $|\psi|:|\Xx_\Vv|\to |V|\subset|\Xx|$ is a homeomorphism, 
and the operations of closure and realization commute for saturated subsets of \'etale proper groupoids  by Lemma~\ref{lem:etale1}.  Further  the set 
$\bigl\{ o \in X \,\big|\,  \exists\, w\in P^{-1}(o)\less \{0\} : \Lambda(w) >0   \bigr\} $  is saturated because $\La$ is a functor.  Therefore
\begin{align*}
\bigl| \text{\rm dom-supp}\,\Lambda \bigr| 
&= 
\bigl|  {\rm cl}_{V} \bigl\{ o \in X \,\big|\,  \exists\, w\in P^{-1}(o)\less \{0\} : \Lambda(w) >0   \bigr\}  \bigr|\\
& = 
  {\rm cl}_{|V|}   \bigl|\bigl\{ o \in X \,\big|\,  \exists\, w\in P^{-1}(o)\less \{0\} : \Lambda(w) >0   \bigr\}  \bigr|\\
&=
{\rm cl}_{|V|} \bigl( |\psi| \bigl( \bigl|   \bigl\{ \uo \in X_\Vv \,\big|\, \exists (\uo,w)\in P_\Vv^{-1}(\uo)\less \{0\} : \Lambda_\Vv(\uo,w) >0  \bigr\}     \bigr|   \bigr) \bigr) \\
&=
 |\psi| \bigl(  {\rm cl}_{|\Xx_\Vv|} \bigl(\bigl|   \bigl\{ \uo \in X_\Vv \,\big|\, \exists (\uo,w)\in P_\Vv^{-1}(\uo)\less \{0\} : \Lambda_\Vv(\uo,w) >0  \bigr\}     \bigr|   \bigr) \bigr) \\
 &=
 |\psi| \bigl(  \bigl| {\rm cl}_{X_\Vv}   \bigl\{ \uo \in X_\Vv \,\big|\, \exists (\uo,w)\in P_\Vv^{-1}(\uo)\less \{0\} : \Lambda_\Vv(\uo,w) >0  \bigr\}     \bigr|   \bigr)   \\
&= |\psi| \bigl(  \bigl| \text{\rm dom-supp}\,\Lambda_\Vv \bigr| \bigr)  \subset |\psi|(|\Uu_\Vv|) = |\Uu|\subset |\Uu|. 
\end{align*}
Since $\Uu$ is a saturated subset of $\pi_\Xx^{-1}(|V|)$ and the operations of  closure and realization commute for saturated subsets of \'etale proper groupoids by Lemma~\ref{lem:etale1}, this implies $\text{\rm dom-supp}\,\Lambda\subset\Uu$ as required. \MS

\NI 
{\bf Proof of (iv)-(3):} 
This is formally stated as ``$\rT_{(f|_V,\Lambda)}$ is surjective on $\supp(\Lambda\circ f|_V)$''. To check this, consider any point in the perturbed solution groupoid, $o\in \TV\subset X$ with $(\Lambda\circ f)(o) >0$. 
Since $\im|\psi|=|V|$ we can find $\uo\in X_\Vv$ with $|\psi(\uo)|=|o|$ and, by construction of the pullback section $f_\Vv=\psi^*f$ and multisection $\Lambda_\Vv=\Lambda\circ\Psi$, we have
$$
(\Lambda_\Vv\circ f_\Vv)(\uo) = (\Lambda\circ\Psi)(\uo, f(\psi(\uo)) )  = \Lambda ( f(\psi(\uo))) > 0. 
$$
Now $|\psi(\uo)|=|o|$ implies the existence of a morphism $\psi(\uo)\to o$ in $\Xx|_V$, functoriality of $f$ lifts this to a morphism $f(\psi(\uo))\to f(o)$ in $\Ww|_V$, and functoriality of $\Lambda$ then implies $\Lambda ( f(\psi(\uo))) = \Lambda ( f(o))  > 0$. 
That means $\uo$ lies in the perturbed solution set $\supp(\Lambda_\Vv\circ f_\Vv)$, so if $\Lambda_\Vv$ is locally represented by sections $(\s_i: N_\uo \to W_\Vv|_{N_\uo})_{i\in\Ii}$, then by assumption ${\rm D}(f_\Vv-\s_i)(\uo): \rT_\uo X_\Vv \to P_\Vv^{-1}(\uo)$ is surjective for each $i\in\Ii$ with $\s_i(\uo)=f_\Vv(\uo)$. 
Now the corresponding pushforward sections $\s'_i(x)=\Psi\bigl(\s_i(\psi_{\uo}^{-1}(x))\bigr)$ near $\psi(\uo)$ have linearizations $$
{\rm D}(f-\s'_i)(\psi(\uo)) = \Psi\circ ({\rm D}(f_\Vv-\s_i)(\uo) ) \circ \rd\psi(\uo)^{-1}.
$$ Since  
$\Psi: P_\Vv^{-1}(\uo) \to P^{-1}(\psi(\uo))$ and $\rd\psi(\uo)$ are isomorphisms, these inherit surjectivity from ${\rm D}(f_\Vv-\s_i)$. 

To check that the surjectivity of the linearized sections at perturbed solutions also transfers to the original point $o$, we need to consider the sections $$
\Ti \s_i: N_o \to W|_{N_o}, x\mapsto\mu\bigl( t_m^{-1}(x) , \s'_i ( s_\Xx( t_m^{-1}(x)) ) \bigr)
$$
 obtained by transfer with morphisms near a choice of $m \in s^{-1}(\psi(\uo))\cap t^{-1}(o)$. To compute their linearization, note the identity $f(x) = \mu\bigl( t_m^{-1}(x) , f ( s_\Xx( t_m^{-1}(x)) ) \bigr)$, which is an expression of the functoriality of $f$ as in Definition~\ref{def:bundlemaps}. 
Since $\mu$ is linear in the second factor and we consider only linearizations of sections with $$
(f-\s'_i) ( s_\Xx( t_m^{-1}(o)) )=  (f-\s'_i) ( s_\Xx( m ) ) =  (f-\s'_i)(\psi(\uo)) = 0,
$$
 the resulting linearization of 
$$
(f-\Ti\s_i)(x) = \mu\bigl( t_m^{-1}(x) , (f-\s'_i) ( s_\Xx( t_m^{-1}(x)) ) \bigr)
$$ at $x=o$ is 
$$
{\rm D}(f-\Ti \s_i)(o) = \mu( m ,  \cdot ) \circ {\rm D}(f- \s'_i)(\psi(\uo)) \circ \rd s_\Xx (m) \circ \rd t_\Xx (m)^{-1}.
$$ 
Here $\rd s_\Xx$ and $\rd t_\Xx$ are isomorphisms as source and target maps are local sc-diffeomorphisms, and $\mu(m, \cdot ): W_{s(m)}\to W_{t(m)}$ is invertible with inverse given by $\mu(m^{-1}, \cdot )$, 
as can be verified from the algebraic properties of $\mu$ in Definition~\ref{def:bundle}~(iii).  
\MS

Thus we have shown that $\Lambda=\Psi_*\Lambda_\Vv$ is an $(N, \Uu)$-regular perturbation of $f|_\TV: \Xx|_\TV\to \Ww|_\TV$ (where as above $\TV: = \pi_\Xx^{-1}(|V|)$ is the saturation of $V$).   Since $\La$ has support in $\Uu\subset \TV$ we may extend it smoothly to a multisection $\Ti{\La}$ of $\Ww\to \Xx$  by defining $\Ti\La(0_x) = 1, x\in X\less \Uu$, and $\Ti\La(w) = 0$ for $w\in W|_{X\less \Uu}, w\ne 0$.  Since $\supp(\Ti\La) = \supp(\La)$ is unchanged, $\Ti\La$ is an $(N, \Uu)$-regular perturbation of $f: \Xx\to \Ww$.

If $\Xx$ has boundary (and corners), then good or general position of the kernels of linearizations at solutions on the boundary (see \cite[Def.3.6.6, 5.3.9]{TheBook}) transfers from $\Lambda_\Vv$ to $\Lambda$ because the kernels are identified via $\rd \psi (\uo)$, which also identifies the reduced tangent spaces of $\Xx_\Vv$ and $\Xx$. 

To check that $|\psi|$ identifies the perturbed zero sets, note that the construction of $f_\Vv$ by pullback and $\Ti\La$ by pushforward yields
$\Lambda_{\Vv}\circ f_\Vv \;=\; \Psi^*\Ti\Lambda\circ \psi^*f \;=\; \Ti\Lambda\circ f \circ \psi$. 
Here $\psi:\Xx_\Vv\to\Xx|_V$ is the equivalence of ep-groupoids from Theorem~\ref{thm:reduce}, so in particular induces a homeomorphism $|\psi| :|\Xx_\Vv|\to |V|\subset |\Xx|$. This homeomorphism restricts to a bijection 
$|\{\ux \in X_\Vv \,|\, \Lambda_\Vv(f_\Vv(\ux))>0\} | \simeq  |\{q\in X\, |\, \Ti\Lambda(f(q))>0\} |$
since $\Lambda_\Vv(f_\Vv(\ux))>0 \Leftrightarrow \Ti\Lambda(f(\psi(\ux)))>0$ and $ |\{q\in X\, |\, \Ti\Lambda(f(q))>0\} |\subset |V|=\im|\psi|$, where the latter inclusion is guaranteed by the facts that the unperturbed zero set $|f^{-1}(0)|$ is contained in $|V|$ and the perturbation $\Ti\Lambda: \Ww\to\Q^{\ge 0}$ is trivial over $X\less \TV$.

Next, if $f$ is oriented then this induces an orientation of the branched ep$^+$-subgroupoid $\Lambda_{\Vv}\circ f_\Vv: \Xx_\Vv\to\Q^{\ge 0}$ by \cite[Thm.15.4.3]{TheBook} -- using the solution set convention in \S\ref{ss:orient}. 
If $f_\Vv$ is equipped with any orientation, then this analogously orients the branched ep$^+$-subgroupoid $\Ti\Lambda\circ f: \Xx\to\Q^{\ge 0}$. 
Now the claim is that pulling back the orientation of $f$ to an orientation of $f_\Vv$ as in (iii) and then orienting the perturbed solution set $\Theta_\Vv:=\Lambda_{\Vv}\circ f_\Vv$ by the solution set convention for each local branch has the same result as orienting the perturbed solution set $\Theta:=\Ti\Lambda\circ f$ with the solution set convention and then pulling back this orientation to $\Theta_\Vv=\Theta\circ\psi$ as in \cite[Thm.11.3.5]{TheBook}. 
The latter pullback construction uses the fact that $\psi$ restricts to diffeomorphisms on all sufficiently small (finite dimensional) local branches. So the identification of these two orientation constructions can be checked at the level of local branches, where this is done in Remark~\ref{rmk:equiv-orient}. 

Finally, we consider the special case $\partial\Xx=\emptyset$ with $f$ oriented and a homeomorphism $\psi_\oMm: |f^{-1}(0)|\to \oMm$  to a compact Hausdorff space. 
Then the composition $\psi_\oMm\circ|\psi|: |f_\Vv^{-1}(0)|\to \oMm$ is also a homeomorphism by (iii), and we can prove equality of the induced fundamental classes $[\oMm]_{f_\Vv}= [\oMm]_f\in \check H_d(\oMm;\Q)$ defined in Theorem~\ref{thm:polyVFC} as follows: 
Given compactness controlling data $(N,\Uu)$, a nested sequence of open subsets $B_k\subset B_{k-1} \subset |U|\subset|\Xx|$ such that $\bigcap_{k\in\N} B_k=|f^{-1}(0)|$, we defined the first fundamental class
$[\oMm]_f := {\psi_\oMm}* \bigl( \underset{\leftarrow}\lim \, 
(\psi_k)_*[ \Lambda_k\circ f  ] \bigr)$, where $\Lambda_k$ is any choice of $(N,\Uu\cap\pi_\Xx^{-1}(B_k))$-regular perturbations of $f$ and $\psi_k: | \Lambda_k\circ f | \to B_k\subset|\Xx|$ are the corresponding continuous maps from the realization of the branched ep$^+$-subgroupoids $\Lambda_k\circ f$, defined as 
Theorem~\ref{thm:polyVFC}.

Since Theorem~\ref{thm:polyVFC} shows that the result is independent of the choices, we can choose $\Uu\subset X$ with  ${\rm cl}_{X}(\Uu)\subset \TV: = \pi_\Xx^{-1}\bigl(\bigcup_I F_I'\bigr)$ so that, as shown above, the pulled back data $(N_\Vv,\Uu_\Vv)$ controls compactness of $f_\Vv$. Since $|\Uu|\subset |\TV|=|V|$ we can also pull back the nested sequence of open subsets $B_k$  with the homeomorphism $|\psi|:|\Xx_\Vv|\to |\Xx|$ to obtain
 $B_{\Vv,k}:=|\psi|^{-1}(B_k)|\subset B_{\Vv,k-1} \subset |\Uu_\Vv|\subset|\Xx_\Vv|$
with intersection  $\bigcap_{k\in\N} B_{\Vv,k}=|f_\Vv^{-1}(0)|$. 
This data induces the  fundamental class
$$
[\oMm]_{f_\Vv} := ({\psi_\oMm} \circ |\psi|)_* \bigl( \underset{\leftarrow}\lim \, 
(\phi_{\Vv,k})_*[ \Lambda_{\Vv,k}\circ f_\Vv  ] \bigr),
$$
 where $\Lambda_{\Vv,k}$ are $(N_\Vv,\Uu_\Vv\cap\pi_{\Xx_\Vv}^{-1}(B_{\Vv,k}))$-regular perturbations of $f_\Vv$ and $\phi_{\Vv,k}: |\Lambda_{\Vv,k}\circ f_\Vv  |  \to B_{\Vv,k}\subset|\Xx_\Vv|$ are the corresponding continuous maps of the  realizations. 
 
Now we can compare the result with the  fundamental class $[\oMm]_{f}$ by choosing $\Lambda_k$ as the pushforward $\Psi_* \Lambda_{\Vv,k}$, trivially extended to $X\less \TV$ as above.
Then we saw above that $|\psi|$ restricts to homeomorphisms $\bigl(|S_{\Lambda_\Vv\circ f_\Vv}|, |\Lambda_\Vv\circ f_\Vv|\bigr) \simeq \bigl(|S_{\Ti\Lambda\circ f}|, |\Ti\Lambda\circ f|\bigr)$ and $B_{\Vv,k}\simeq B_k$, and hence identifies the corresponding
fundamental classes
 giving\footnote{
 Here, as in Remark~\ref{rmk:cech} and Theorem~\ref{thm:polyVFC}, the map $\phi_{\Vv,k}$ has image in the neighbourhood $B_{\Vv,k}\subset |\Xx_\Vv|$ while $\phi_k$ maps to $B_k\subset |\Xx|$.}
  $$
 |\psi|_* (\phi_{\Vv,k})_* [ \Lambda_{\Vv,k}\circ f_\Vv  ]  = (\phi_k)_*[ \Lambda_k\circ f  ] \;\;\in \;\; \check{H}_d(B_{k}),
 $$
With that we obtain as claimed
\begin{align*}
[\oMm]_{f_\Vv} &= (\psi_\oMm)_*  |\psi|_* \bigl( \underset{\leftarrow}\lim \, (\phi_{\Vv,k})_*[ \Lambda_{\Vv,k}\circ f_\Vv  ] \bigr) 
= (\psi_\oMm)_*  \bigl( \underset{\leftarrow}\lim  (\phi_k)_*[ \Lambda_k\circ f  ]\bigr) \;=\;[\oMm]_f . 
\end{align*}
This completes the proof.
\end{proof}

\begin{rmk}\label{rmk:XUW}\rm
Given a collection of $U$-data as in Lemma~\ref{lem:Udata}, one can, as mentioned in Remark~\ref{rmk:XU}, form a category $\Xx_\Uu$ analogous to $\Xx_\Vv$ together with an \'etale equivalence $\psi: \Xx_\Uu\to \Xx|_U$ where $U = \pi^{-1}\bigl(\bigcup_i  U_i\bigr)$. 
Similarly, there is a pullback bundle   $\Pp_\Uu: \Ww_\Uu\to \Xx_\Uu$ with section $f_\Uu$
that satisfies (appropriately modified versions of) the properties 
 in Theorem~\ref{thm:reduceFred}.  However, there are too many morphisms in $\Xx_\Uu$ for 
 the perturbation theory of multisections of $\Ww_\Uu\to \Xx_\Uu$ to be any easier to understand than that of the original bundle $\Ww\to \Xx$. 
 On the other hand the perturbation theory of $\Ww_\Vv\to \Xx_\Vv$ is more transparent. 
 Indeed, as explained in 
 as explained in Theorem~\ref{thm:reduce}~(iii),(iv), (and see also \S\ref{sec:genconstr}), $ \Xx_\Vv$ can be reconstructed from the nonsingular groupoid 
 $\Xx_\Vv^{\less G}$ by adding morphisms given by the $G$ action.  Moreover, $\Xx_\Vv^{\less G}$ is the completion of the poset $\Qq$, and we will see in Theorem~\ref{thm:stabilize}  that, as in Proposition~\ref{prop:globsym},  we can push forward  constant sections of the trivial bundle $\Qq \times E\to \Qq$  and then average over the $G$-action  to obtain  transverse multisections of $\Ww_\Vv\to \Xx_\Vv$.  
\hfill$\er$
\end{rmk}

\subsection{Fredholm stabilization over reduced ep-groupoids} \label{ss:stabilize}

Let $f:\Xx\to \Ww$ be a sc-Fredholm section functor of a strong bundle $(P: W\to X, \mu:\bX \leftsub{s}{\times}_P W\to W)$ over an ep-groupoid $\Xx=(X,\bX)$ with compact solution set $|f^{-1}(0)|\subset|\Xx|$.
Then Theorems~\ref{thm:reduce} and \ref{thm:reduceFred} construct the right hand square in the diagram \eqref{diag:11} and establish parts (i)--(iv) of the global stabilization  Theorem~\ref{thm:globstab}.  
To complete its proof, it remains to construct the stablization functor $\tau: \Xx_\Vv^{\less G}\times E \to \Ww_\Vv$ and establish the properties in part (v).  
Proposition~\ref{prop:MMGaE} showed that to construct such $\tau$ we  only need to construct a suitable functor ${\tau}$ from the trivial bundle $\Qq\times E\to \Qq$ to $\Ww_\Vv\to \Xx_\Vv$.  Before embarking on this construction, we first address the fact that
in order for the corresponding pushed forward multisections $\La_{\Vv,e}$ in Proposition~\ref{prop:globsym} to be both sc$^+$ and compactness controlled, we need to put extra conditions on the initial set of  local stabilizations used in Lemma~\ref{lem:Udata} as the basis for the construction of $\Xx_\Vv$

\begin{lemma} \label{lem:localtau}
Given compactness controlling data $(N,\Uu)$ for $f$ as in Definition~\ref{def:control-compact} there is a finite collection $(U_i,G_i)_{i\in A}$ of local uniformizers of $\Xx$ and $(N,\Uu)$-regular Fredholm stabilizations $\tau_i : X \times E_i \to W[1]$ as follows:
\begin{itemlist}
\item 
The domains of the local uniformizers are open subsets $U_i\subset X$ which cover the zero set $|f^{-1}(0)|\subset \bigcup_{i\in A} |U_i|$. The local morphisms over each $U_i$ are identified with $G_i$-actions on $U_i$ via bijections $\Ga_i: G_i\times U_i\to \Mor_\Xx(U_i,U_i)$ as in Definition~\ref{def:preunif}.  
\item
$E_i$ is a finite rank vector space with a linear action $G_i\times E_i \to E_i, (g,e)\mapsto g*e$. 
\item 
$\tau_i: X \times E_i \to W[1]$ is sc-smooth (thus sc$^+$ to $W$), linear in $E_i$, and a bundle map; in particular   
 $P(\tau_i(x,e))= x$. 
Moreover, $\tau_i|_{U_i\times E_i}$ is $G_i$-equivariant in the sense that $\tau_i(g*x,g *e) = \mu(\Ga_i(g,x) , \tau_i(x,e) )$ for all $g\in G_i, x\in U_i$. 

\item
$\tau_i$ is an $(N,\Uu)$-regular stabilization of the Fredholm section $f|_{U_i} : \Xx|_{U_i}\to \Ww|_{U_i}$ as follows: 
\begin{itemize}
\item[(1)] Each $E_i$ is equipped with a $G_i$-invariant norm $\|\cdot\|_{E_i}$ such that $N(\tau_i(x,e))\leq \| e \|_{E_i}$;   
\item[(2)]
$\supp\tau_i(\cdot,e)\subset\Uu$ for all $e\in E_i$; 
\item[(3)] for any unperturbed solution $y\in f^{-1}(0)\cap U_i$ the linearization of $f - \tau_i : X \times E_i \to W$,  $(y,e) \mapsto f(y) - \tau_i(y,e)$ at $(y,0)$ is surjective. 
If the solution lies in the boundary $y\in\partial X$, we can moreover achieve general (and hence good) position of $f-\tau_i$ at $y$ as in \cite[Def.3.6.6, 5.3.9]{TheBook}.
\end{itemize}
\end{itemlist}
\end{lemma}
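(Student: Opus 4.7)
\NI\textbf{Proof proposal.}\;
The plan is to build each triple $(U_i, G_i, \tau_i)$ by a local construction near finitely many chosen solutions $x_i \in f^{-1}(0)$, taking $E_i$ to be a $G_{x_i}$-invariant lift of $\Coker \rD f(x_i)$ into $W[1]$, and cutting off the resulting local bundle map by a $G_{x_i}$-invariant bump function supported in $\Uu$. \emph{First}, I would use compactness of $|f^{-1}(0)|$ together with the existence of arbitrarily small local uniformizers at any smooth object (\cite[Prop.7.1.19]{TheBook}, cf.~Remark~\ref{rmk:locunif}~(i)) to select finitely many smooth $x_1,\dots,x_N \in f^{-1}(0)$ and local uniformizers $(U_i, G_i, \Ga_i)$ centered at $x_i$ whose realizations cover $|f^{-1}(0)|$. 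By functoriality of $f$, the linear action of $G_i \cong \Mor_\Xx(x_i,x_i)$ on $W_{x_i}$ via $\mu(\Ga_i(g,x_i),\cdot)$ preserves $\im \rD f(x_i)$, so it descends to a finite-dimensional representation of $G_i$ on $\Coker \rD f(x_i)$, which is finite-dimensional by the sc-Fredholm property.

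\emph{Second}, I would pick a $G_i$-invariant finite-dimensional subspace $E_i \subset W_{x_i}[1]$ projecting surjectively onto $\Coker \rD f(x_i)$; such $E_i$ exists since $W_{x_i}[1]$ is sc-dense in $W_{x_i}$, allowing one to lift any basis of the cokernel and then $G_i$-symmetrize. In a local strong bundle retract chart $\Phi: W|_{U_i} \to \bigcup_{u \in O_i} \Ga(u) F$ centered at $x_i$, I would define a sc-smooth bundle map $\wh\tau_i: U_i \times E_i \to W[1]|_{U_i}$ by $(u, e) \mapsto \Phi^{-1}\bigl(\Ga(u) \Phi(e)\bigr)$, then $G_i$-symmetrize using the natural representation on $U_i$ to achieve equivariance. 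This is sc$^+$ because $E_i \subset W[1]$ and $\Ga$ is scale-smooth with respect to the shifted sc-structure. Next, I would fix a $G_i$-invariant sc-smooth cutoff $\beta_i: X \to [0,1]$ with $\supp\beta_i \subset U_i \cap \Uu$ and $\beta_i \equiv 1$ on a smaller $G_i$-invariant open neighborhood $O_i' \ni x_i$, and set $\tau_i := \beta_i \cdot \wh\tau_i$ on $U_i \times E_i$, extended by zero to $X \times E_i$. A $G_i$-invariant norm on $E_i$ can then be rescaled so that $N(\tau_i(x,e)) \le \|e\|_{E_i}$ uniformly, using continuity of $N$ on $W[1]$ together with the finite dimensionality of $E_i$ and compactness of $\supp\beta_i$.

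\emph{The main obstacle} is ensuring both surjectivity of $\rD(f-\tau_i)$ and general position of its kernel relative to the boundary at \emph{all} solutions in $U_i$, not merely at the center $x_i$, while respecting $G_i$-equivariance. Surjectivity at $(x_i,0)$ holds by the cokernel property of $E_i$, and by the upper semi-continuity of $\dim\Coker$ for sc-Fredholm operators (a consequence of the contraction germ normal form, \cite[Def.3.1.16]{TheBook}) it persists at $(y,0)$ for $y$ in a smaller neighborhood of $x_i$. Shrinking $U_i$ so that $U_i \cap f^{-1}(0) \subset O_i'$ (where $\beta_i \equiv 1$) then propagates condition~(3) throughout $U_i \cap f^{-1}(0)$. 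For $x_i \in \p X$, achieving general position of $\ker \rD(f-\tau_i)(x_i,0)$ relative to the partial quadrant in $\rT_{x_i} X \times E_i$ in the sense of \cite[Def.5.3.9]{TheBook} will be the most delicate step: I would enlarge $E_i$ with additional $G_i$-symmetrized vectors of $W_{x_i}[1]$ whose images under $\rD f(x_i)^\ast$ (in a suitable complement) span a subspace transverse to the corners of every boundary stratum meeting $U_i$, then shrink $U_i$ again so that the general position at $x_i$ propagates to all of $U_i \cap f^{-1}(0) \cap \p X$ by continuity of the boundary strata and of the linearization.
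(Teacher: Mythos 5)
Your overall strategy -- finitely many local uniformizers centered at solutions, a finite-dimensional $G_i$-representation spanning a cokernel, a cut-off sc$^+$ bundle map made equivariant by a group construction -- is the same as the paper's (which realizes the stabilization via sc$^+$-sections $\s_{x,l}$ with prescribed values at $x$, support in $U_x\subset\Uu$, and $E_x=(\R^{d_x})^{G_x}$ as an induced representation rather than your averaging; both devices are viable). However, there is a genuine gap at exactly the step you flag as delicate: the boundary case of (3). The paper's key move is to stabilize not $\Coker\rD f(x)$ but the cokernel of the \emph{reduced} linearization $\rD^R f(x)=\rD f(x)|_{\rT^R_xX}$, i.e.\ to choose $C_x$ with $W_x=C_x\oplus\im\rD^R f(x)$. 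Then surjectivity of the \emph{reduced} linearization of $f-\tau_x$ holds at $(x,0)$, and general position follows for free: a complement $P$ of the kernel on which $\rD(f-\tau_x)(x,0)$ is an isomorphism can be taken inside the reduced tangent space, which is precisely what \cite[Def.5.3.9]{TheBook} asks for. Your proposal only covers the full cokernel and then tries to repair the boundary case by adjoining vectors "whose images under $\rD f(x_i)^\ast$ span a subspace transverse to the corners"; this does not match the actual definition of general position (which concerns a complement of the kernel contained in $\rT^R_{x_i}X\times E_i$, not transversality of strata), and as written it is not clear the enlarged $E_i$ achieves it. The clean fix is the reduced-cokernel choice above, after which no extra vectors are needed.

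A second, smaller problem is your propagation argument. You invoke "upper semi-continuity of $\dim\Coker$" and "continuity of the linearization" to spread surjectivity and general position from $x_i$ to all solutions in $U_i$; but in scale calculus the linearizations of a sc-smooth section need \emph{not} vary continuously with the base point (this is exactly the pathology discussed in \cite{counterex}), so openness of surjectivity is not available off the shelf. What is true is that at a transverse solution the sc-Fredholm germ normal form / polyfold implicit function theorem \cite[Thm.3.1.22, 3.1.26]{TheBook} gives transversality and general position at nearby solutions; indeed the paper's own proof of this lemma verifies (3) only at the center point $x$ and defers the spreading to nearby solutions of the stabilized section to the IFT argument in the proof of Theorem~\ref{thm:stabilize}. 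Also note that shrinking $U_i$ after fixing the finite subcover (so that $U_i\cap f^{-1}(0)\subset O_i'$) can destroy the covering of $|f^{-1}(0)|$; do the construction around every solution first and extract the finite subcover at the very end, as the paper does.
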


\begin{proof}
As in the proof of Lemma~\ref{lem:Ui} we can find a local uniformizer $(U_x,G_x,\Ga_x)$ around each $x\in f^{-1}(0)$, and since $\Uu\subset X$ is a saturated neighbourhood of $f^{-1}(0)$, we can shrink their domains to $U_x\subset\Uu$. 
In addition, the nonlinear sc-Fredholm property \cite[Def.3.1.16]{TheBook} of $f$ includes a regularizing property due to which every $x\in f^{-1}(0)$ lies in the ``smooth part'' $X_\infty\subset X$ described in \S\ref{ss:orient}. Moreover, \cite[Thms.3.1.22, 3.1.26]{TheBook} guarantee that each linearization $\rD f(x):\rT_x X \to W_x$ at a solution is a sc-Fredholm operator in the sense of \cite[Def.1.1.9]{TheBook}.  
In fact the restriction to the reduced tangent space $\rD^R f(x):=\rD f(x)|_{\rT_x^R X}:\rT_x^R X \to W_x$ is sc-Fredholm since $\rT_x^R X\subset \rT_x X$ is a sc-subspace -- identical when $x$ lies in the interior of $X$, or of finite codimension given by the degeneracy index $d(x)\geq 1$ when $x\in\partial X$; see \cite[Def.2.4.7]{TheBook}.
The sc-Fredholm operator $\rD^R f(x)$ in particular has a finite dimensional sc-complement $C_x\subset W_x$ of the image, $W_x=C_x\oplus\im\rD^R f(x)$. 
By \cite[Prop.1.1.7]{TheBook}, the notion of a sc-complement guarantees that it lies in the ``smooth part'' of the fiber, that is in a local trivialization $\Phi: W|_{\text{nbhd}(x)} \to \bigcup_{u\in O} \im\Ga(u) \subset O\times F$ as in  \S\ref{ss:scale} we have $\Phi(C_x)\subset F_\infty=\bigcap_{m\in\N_0} F_m$ in the dense subset of smooth points in the sc-Banach space $F=(F_m)_{m\in\N_0}$.  
Now let $w_1,\ldots,w_{d_x}\in C_x$ be a basis, scaled such that $N(w_l)\leq \frac 12$ for each $l=1,\ldots,d_x$. 
Then \cite[Lemma~5.3.3]{TheBook} provides sc$^+$-sections $\s_{x,1},\ldots,\s_{x,d_x} : X \to W[1]$ with $\s_{x,l}(x)=w_l$, $N(\s_{x,l}(y))\leq 1$ for all $y\in X$ and $\supp \s_{x,l} \subset U_x$. 

Now $E_x:= (\R^{d_x})^{G_x} \cong \bigoplus_{h\in G_x} \{h\}\times\R^{d_x}$ is a vector space of dimension $\# G_x \cdot d_x$ with linear $G_x$-action defined on generators by $g*(h, \vec c):=(gh, \vec c)$. 
Then we construct $\tau_x: X \times E_x \to W[1]$ on generators $(h,\vec c)\in E_x$ by 
$$
\tau_x\bigl(y,  \bigl( h , \vec c=(c_1,\ldots,c_{d_x})\bigr) \bigr) :=
\textstyle \sum_{l=1}^{d_x}  c_l  \cdot \mu\bigl(\Ga_x(h , h^{-1}* y) , \s_{x,l}(h^{-1}*y) \bigr) 
$$
and extend linearly to $E_x=\bigl\{ e=\sum_{h\in G_x} (h , \vec c_h) \,|\, \vec c_h \in \R^{d_x} \bigr\}$. 
This map $\tau_x: X \times E_x \to W[1]$ is linear in $E_x$ and satisfies $P(\tau_x(y,e))=t( \Ga_x(h , h^{-1}* y) ) = h*h^{-1}*y= y$ by construction. 
It is sc$^\infty$ with values in $W[1]$ (thus sc$^+$ with values in $W$) since the sections $\s_{x,l}: X\to W$ are sc$^+$ (in particular sc$^\infty$ with values in $W[1]$) and $\mu:\bX \leftsub{s}{\times}_P W\to W$ is a strong bundle map (in particular restricts to a sc$^\infty$-map $\bX \leftsub{s}{\times}_P W[1]\to W[1]$). To check equivariance we use the algebraic properties of $\Ga$ in Definition~\ref{def:preunif} and  $\mu$ in Definition~\ref{def:bundle} on generators:  
\begin{align*}
\tau_x(g*y,  (g h , \vec c) ) 
&= \textstyle \sum_{l=1}^{d_x}  c_l  \cdot \mu\bigl(\Ga_x(  g h , ( g h )^{-1}* g*y) , \s_{x,l}(( g h)^{-1}*g*y) \bigr)  \\
&= \textstyle \sum_{l=1}^{d_x}  c_l  \cdot \mu\bigl(\Ga_x(  g h , h ^{-1}* g^{-1} * g *y) , \s_{x,l}(h^{-1}*g^{-1}*g*y) \bigr)  \\
&= \textstyle \sum_{l=1}^{d_x}  c_l  \cdot \mu\bigl(\Ga_x(  g h , h ^{-1} *y) , \s_{x,l}(h^{-1}*y) \bigr)  \\
&=  \textstyle \sum_{l=1}^{d_x}  c_l  \cdot \mu\bigl(  \Ga_x(h , h^{-1}* y) \circ \Ga_x(g, y) , \s_{x,l}(h^{-1}*y) \bigr)    \\
&=  \textstyle \sum_{l=1}^{d_x}  c_l  \cdot \mu\bigl(\Ga_x(g, y) , \mu\bigl(\Ga_x(h , h^{-1}* y) , \s_{x,l}(h^{-1}*y) \bigr)  \bigr)   \\
&= \mu\bigl(\Ga_x(g, y) , \textstyle \sum_{l=1}^{d_x}  c_l  \cdot \mu\bigl(\Ga_x(h , h^{-1}* y) , \s_{x,l}(h^{-1}*y) \bigr)  \bigr)   \\
&= \mu\bigl(\Ga_x(g, y) , \tau_x(y, (h ,\vec c)) \bigr) . 
\end{align*}
Towards $(N,\Uu)$-regularity note the following properties: 
\begin{itemize}
\item[(1)]
$N(\tau_x(y,e =\sum_{h\in G_x} (h,\vec c_h) )) \leq \sum_{h\in G_x} N(\tau_x(y, (h,\vec c_h) )) \leq \| e \|_{E_x}$ where we equip $E_x$ with the norm $\| \sum_{h\in G_x} (h,\vec c_h)\|_{E_x} := \sum_{h\in G_x} \| \vec c_h \|_{\ell^1}$ with $\| (c_1,\ldots,c_d)\|_{\ell^1} := |c_1|+\ldots+|c_d|$. Note that this norm is $G_x$-invariant by construction. Then $N(\tau_x(y,e))\leq  \| e \|_{E_x}$ follows by summing over the estimates on generators
\begin{align*}
N(\tau_x(y, (h,\vec c_h) ))& \leq 
\textstyle \sum_{l=1}^{d_x}  | c_l |  N \bigl( \mu\bigl(\Ga_x(h , h^{-1}* y) , \s_{x,l}(h^{-1}*y) \bigr) \bigr) \\
&= \textstyle \sum_{l=1}^{d_x}  | c_l |   N \bigl( \s_{x,l}(h^{-1}*y) \bigr) 
\leq \textstyle \sum_{l=1}^{d_x}  | c_l |  
\end{align*}
where we used compatibility of $N$ with the bundle morphisms encoded by $\mu$ as in \cite[Def.12.2.1~(3)]{TheBook}. 

\item[(2)]
$\supp\tau_x(\cdot,e)\subset U_x \subset \Uu$ for all $e\in E_x$ since $\supp \s_{x,l} \subset U_x$, $U_x$ is invariant under $G_x$, and $U_x \subset \Uu$. 

\item[(3)]  
$\rD^R (f-\tau_x) (x,0) : \rT_x^R X \times E_x \to W_x=C_x\oplus\im\rD^R f(x)$ is surjective since the 
derivatives of 
$\tau_x (x,  (\id , \vec c ) ) =  \sum_{l=1}^{d_x}  c_l  \cdot \mu\bigl(\Ga_x(\id  ,  x) , \s_{x,l}( x) \bigr) 
=  \sum_{l=1}^{d_x}  c_l  \cdot w_l$
in the direction of $\vec c$ span $C_x={\rm span}\{w_1,\ldots,w_{d_x}\}$. 
\end{itemize}

Note that (3) implies in particular that $\rD (f-\tau_x) (x,0) : \rT_x X \times E_x \to W_x=C_x + \im\rD f(x)$ is surjective. 
For $x\in\partial X$, we claim that (3) also implies general/good position \cite[Def.3.6.6, 5.3.9]{TheBook}. Towards this, choose a sc-complement of the reduced kernel $\rT_x^R X = P \oplus \ker\rD^R (f-\tau_x) (x,0)$, then the surjectivity in (3) implies that $\rD(f-\tau_x)(x,0): P \to W_x$ is a sc-isomorphism. 
As a result, $P$ is also a complement of the full kernel, $\rT_x X = P \oplus \ker\rD (f-\tau_x) (x,0)$, which confirms general position as the complement $P$ lies in the reduced tangent space. And this is a special case of the complement required by the good position property \cite[Def.3.1.24]{TheBook} for $N=\ker\rD (f-\tau_x) (x,0)$. Indeed, in a local chart around $x\simeq (0,0)\in [0,\infty)^{d(x)}\times E=:C_x$,  we have $P\subset \rT^R_xX\simeq \{0\}\times E$ and hence $n\in C_x \Leftrightarrow n+p\in C_x$ for $(n,p)\in N\times P$.

Finally, note that the local uniformizers provide an open cover of the zero set $|f^{-1}(0)|\subset\bigcup_{x\in f^{-1}(0)} |U'_x|$. Since the latter is compact, there is a finite cover $|f^{-1}(0)|\subset\bigcup_{i \in A} |U_{x_i}|$, and we rename all the structures by replacing the subscripts $x_i$ with $i$ in the finite set $A$. 
\end{proof}

Lemma~\ref{lem:localtau} provides a cover of the zero set $|f^{-1}(0)|\subset \bigcup_{i\in A} |U_i| =: F$, which lifts to a covering $\pi_\Xx^{-1}(F) = \bigcup_{i\in A} \pi_\Xx^{-1}(|U_i|)$ by saturated open sets $\pi_\Xx^{-1}(|U_i|)\subset X$. 
Now \cite[Thm.5.5.10, Cor.5.5.17, Thm.7.5.7]{TheBook}, and our assumptions in Definition~\ref{def:poly} guarantee the existence of a sc-smooth partition of unity $\sum_{i\in A} |\beta_i| \big|_F \equiv 1$ subordinate to this saturated open cover in the sense of  \cite[Def.7.5.5]{TheBook}. Since the index set $A=\{1,\ldots,N\}$ is finite, this notion simplifies to
\begin{align}\label{eq:punity}
\mbox{(a)} &\qquad   \supp\beta_ i \subset \pi_\Xx^{-1}(|U_i|) \mbox{ for } \ i=1,\ldots,N,\\ \notag
\mbox{(b)}  &\qquad  \mbox{each } \beta_i:\Xx\to [0,1] \ \mbox{  is a sc-smooth functor.}
\end{align}
Here the category $[0,1]$ has objects $[0,1]$ and only identity morphisms.

When constructing the left hand square in the diagram \eqref{diag:11}, we need to choose the cover reduction that defines $f_\Vv:\Xx_\Vv\to\Ww_\Vv$ to be compatible with such a partition of unity.

\begin{theorem} \label{thm:stabilize}
Given 
\begin{itemlist}
\item[-] a finite collection 
$(U_i,G_i,\tau_i: X \times E_i \to W)_{i\in A}$
of local uniformizers and Fredholm stabilizations as in Lemma~\ref{lem:localtau}, 
\item[-] a choice of order $A=\{1,\ldots, N\}$, and 
\item [-] a partition of unity $\sum_{i\in A} |\beta_i| \big|_{\bigcup_{i\in A} |U_i|} \equiv 1$ as in (a)-(b) above,
\end{itemlist}
 choose 
\begin{itemlist}\item [-]  a cover reduction $\bigl(F'_I \subset |\Xx| \bigr)_{I\subset A}$ 
of $F_i:=|U_i|$ that is compatible with $C_i:=\supp|\beta_i|$ as in Lemma~\ref{lem:cov0}~(iv), 
\item [-] compactness controlling data $(N,\Uu)$ for $f$ with  
$|\cl_X(\Uu)|\subset |V|= \bigcup_{I\subset A} F'_I $, and
\item [-] a sc-smooth bump function $|\be_\Uu|:|V|\to [0,1]$ that equals $1$ on a neighbourhood of $|f^{-1}(0)|$ and has support in $|\Uu|$,
\end{itemlist}
and let 
\begin{itemlist}\item [-] 
$f_\Vv:\Xx_\Vv\to \Ww_\Vv$ be the sc-Fredholm section functor  constructed in Theorem~\ref{thm:reduceFred}, and 
\item [-] $\Xx_\Vv ^{\less G}$ be the (nonproper) nonsingular subgroupoid  of $\Xx_\Vv$ equipped with a $G$ action so that $|\Xx_\Vv ^{\less G}|\,/G \simeq |\Xx_\Vv|$ as in Theorem~\ref{thm:reduce}~(iv).
\end{itemlist}
Then this data induces a $G$-equivariant Fredholm stabilization functor $\tau :  \Xx_\Vv^{\less G}\times E\  \to \ \Ww_\Vv$ for $f_\Vv$ as follows:
\begin{enumilist}
\item  The finite dimensional vector space $E:= E_1 \times \ldots \times E_N$ is equipped with the diagonal action 
$\ug*\ue:= (g_1*e_1,\ldots, g_N *e_N)$ of the finite group $G:=G_1\times \ldots \times G_N$. 
The  \'etale (but usually not proper) groupoid $\Xx_\Vv^{\less G}\times E$ is the product of $\Xx_\Vv^{\less G}$ from Theorem~\ref{thm:reduce}~(iv) with the category with object space $E$ and only identity morphisms.

\item 
There is a commutative diagram
\begin{align}\label{diag:14}
\xymatrix
{
\Xx_\Vv^{\less G} \times E\ar@{->}[d]_{\pr}\ar@{->}[r]^{\;\;\;\; \tau}  & \Ww_\Vv\ar@{->}[d]^{\Pp} \\
\Xx_\Vv^{\less G}   \ar@{->}[r]^{\io}& \Xx_\Vv 
}
\end{align}
Here $\pr: \Xx_\Vv^{\less G} \times E \to \Xx_\Vv^{\less G}$ is the strong bundle over the \'etale but generally nonproper groupoid $\Xx_\Vv^{\less G}$ given as in Definition~\ref{def:bundle} by the projection $\pr: (I,\ux,\ue) \mapsto (I,\ux)$ on objects and the lift of the target map 
$$
\mu_\pr: \bX_\Vv^{\less G} \times E \to  \Xx_\Vv^{\less G} \times E, \quad  (I,J,\ux,\ug,\ue)\mapsto  \bigl( t(I,J,\ux,\ug) , \ue \bigr).
$$

\NI The inclusion $\io$ is the identity map on objects and the inclusion on morphisms.

\NI The Fredholm stabilization functor $\tau :  \Xx_\Vv^{\less G} \times E  \to \Ww_\Vv$ is the strong bundle map given by\footnote
{
Here we use the abbreviated notation of Proposition~\ref{prop:MMGa1}. Therefore in a tuple $(I,J,\ug,\ux)$ we assume $\ux \in \TV_{(I\wedge J)(I\vee J)}$ and denote by $\rho_I(\ux), \rho_J(\ux)$ its images in $V_I, V_J$ respectively.}  
\begin{align*}
X_\Vv^{\less G} \times E & \to W_\Vv=
 \bigl \{(J,\ux,w)\in 
 X_\Vv\, \leftsub {\psi} {\times}_P W \ \big| \ \psi(J,\ux) = P(w) \bigr\}, 
\\ 
&\qquad \qquad 
( J, \ux , \ue) \mapsto \bigl( J,\ux , \tau_J (\ux, \ue) \bigr), \\
\bX_\Vv^{\less G}\times E & \to \bW_\Vv
= \bigl \{(I,J,\ux,w)\in  \bX_\Vv\, \leftsub {s} {\times}_P W 
 \ \big| \  \psi(I,\rho_I(\ux) ) = P(w) \bigr\},
\\ & \qquad \qquad 
(I, J,  \ux, \ug,  \ue)  \mapsto \bigl(  (I, J, \ux , \ug) , \tau_{IJ} (\ux, \ug, \ue) \bigr), 
\end{align*}
in terms of the maps $\tau_J: V_J  \times E \to W$ and
$\tau_{IJ}:  \Mor_{\Xx_\Vv^{\less G}}(V_I,V_J) \times E 
 \to W$ 
which represent the functor $\Psi\circ\tau$ and are
given as follows when $I=\{i_1<\ldots<i_{|I|}\}$ 
\begin{align}\label{eq:tauI} 
&\tau_I \bigl( (x_{i_1}, m_{i_1i_2}, \ldots, m_{i_{|I|-1}i_{|I|}}, x_{i_{|I|}}) \,,\, (e_1,\ldots,e_N) \bigr)  := \be_\Uu(x_{i_1})\beta_{i_1}(x_{i_1}) \tau_{i_1}(x_{i_1}, e_{i_1}) \\  \notag
&\hspace{1.2in} \textstyle
+ \sum_{n=2}^{|I|}  \be_\Uu(x_{i_1})\mu \bigl(  m_{i_{n-1}i_n}^{-1}\circ\ldots\circ m_{i_1i_2}^{-1}  , \beta_{i_n}(x_{i_n}) \tau_{i_n}(x_{i_n}, e_{i_n}) \bigr) \ \in\  W_{x_{i_1}},
  \\  \notag
&  \tau_{IJ} (\ux,\ug,\ue) :=  \tau_I \bigl( \ug^{-1}*\rho_{I}(\ux), \ue\bigr) \in W_{g_{i_1}^{-1}*x_{i_1}}  . 
\end{align}
This induces a sc-smooth functor $\tau :  \Xx_\Vv^{\less G} \times E  \to \Ww_\Vv[1]$.

\item  The functor $\tau$ is $G$-equivariant w.r.t.\ the product action of $G$ on $\Xx_\Vv^{\less G} \times E$ and the inner action on $\Ww_\Vv$ described in Theorem~\ref{thm:reduceFred}~(iii), 
so that on objects
$$
\tau(\ug*\uo,\ug *\ue) = \ug* \tau(\uo,\ue) = \mu_\Vv\bigl(\alpha(\ug, \ug*\uo) , \tau(\uo,\ue) \bigr)
\qquad
\forall \ug\in G, \uo\in X_\Vv . 
$$ 
\item[{\rm (iv)}] 
The functor $\tau$ is a compactness-controlled Fredholm stabilization in the following sense: 
The pullback $(N_\Vv:=N\circ\Psi,\, \Uu'_\Vv:=\psi^{-1}(\Uu'))$ of the restriction $(N,\Uu')$ of the data $(N,\Uu)$ chosen for Lemma~\ref{lem:localtau} controls compactness of $f_\Vv$  in the sense of \cite[Def.12.2.1,15.3.1]{TheBook}
by Theorem~\ref{thm:reduceFred}~(iv). 
Moreover, the family of maps $\bigl(\tau_\ue: X_\Vv  \to W_\Vv,  \uo\mapsto \tau(\uo,\ue) \bigr)_{\ue\in E}$ is $(N_\Vv,\Uu'_\Vv)$-regular in the sense
\begin{itemlist}
\item[(1)]
$N_\Vv(\tau_\ue(\uo))\leq \|\ue\| $ for $\ue\in E$, $\uo\in  X_\Vv = X_\Vv^{\less G}$, and a $G$-invariant norm on $E$;
\item[(2)]
$\supp\tau_\ue = {\rm cl}\{ \uo\in  X_\Vv \,|\, \tau_\ue(\uo)\ne 0_\uo \} \subset \Uu'_\Vv$ for all $\ue\in E$; 
\item[(3)]
there is a $G$-invariant neighbourhood $O_\tau\subset E$ of $0$ such that
the map $X_\Vv\times O_\tau \to W_\Vv$, $(\uo,\ue) \mapsto f_\Vv(\uo) - \tau(\uo,\ue)$ is transverse to the zero section of $W_\Vv\to X_\Vv$ in the sense that its linearization at every zero is onto. 
Moreover, it is in general (hence good)  position at zeros in the boundary $\partial X_\Vv\times O_\tau$ in the sense that the reduced linearizations are onto. 
\end{itemlist}
\end{enumilist}
\end{theorem}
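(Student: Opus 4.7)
The plan is to construct $\tau$ in two stages via Proposition~\ref{prop:MMGaE}. First I will define $\breve\tau: \Qq \times E \to \Ww_\Vv$ using the formula for $\tau_I$ on objects, together with the forced lift $((I,J,\uy,\id),\, \tau_I(\rho_{IJ}(\uy), \ue))$ on the morphisms $(I,J,\uy)$ of $\Qq$, which have $I \subset J$. Then I verify the three hypotheses of Proposition~\ref{prop:MMGaE}, whereby $\breve\tau$ extends uniquely to a $G$-equivariant strong bundle functor $\tau$ on $\Xx_\Vv^{\less G} \times E$ factoring through $\Xx_\Vv^E$. Each $\tau_I$ is visibly sc-smooth with values in $W[1]$ (hence sc$^+$ into $W$) as a finite combination of the sc$^+$-sections $\tau_{i_n}$, the sc-smooth partition functions $\be_\Uu$ and $\beta_{i_n}$, and the strong bundle action $\mu$ applied to composable morphism chains in $\Xx$.

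The central technical identity is the target compatibility
\begin{equation*}
\tau_J(\uy, \ue) \;=\; \Hat\rho_{IJ}(\uy, \id)\, \tau_I(\rho_{IJ}(\uy), \ue), \qquad I \subset J, \; \uy \in \TV_{IJ},
\end{equation*}
required so that $\breve\tau$ respects the target map \eqref{eq:strWV} of $\Ww_\Vv$. Expanding the right-hand side and using $\mu(a, \mu(b, w)) = \mu(b \circ a, w)$, the morphism $(n^J_{j_1 i_1})^{-1}$ inside $\Hat\rho_{IJ}(\uy, \id)$ fuses with the chain inside $\tau_I$ to produce the composite $(n^J_{j_1 j_{\sigma(n)}})^{-1}$, yielding precisely the subsum of $\tau_J(\uy, \ue)$ indexed by those $k$ with $j_k \in I$. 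The identity thus reduces to the vanishing of the terms in $\tau_J$ with $j_k \in J \less I$, which is where the compatibility of the cover reduction with the partition of unity enters: since $\uy \in \TV_{IJ}$ forces $|\uy| \in F'_I$, Lemma~\ref{lem:cov0}~(iv) gives $F'_I \cap C_{j_k} = \emptyset$ for $j_k \notin I$, so $\beta_{j_k}(y_{j_1}) = 0$, and functoriality of $\beta_{j_k}$ on $\Xx$ propagates this to $\beta_{j_k}(y_{j_k}) = 0$ for every element of the chain. The same argument with $H_\ux \subset I$ replacing $I \subset J$ shows that $\tau_I(\ux, \cdot)$ annihilates $E_{A\less H_\ux}$, verifying the fiber-kernel hypothesis of Proposition~\ref{prop:MMGaE}. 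The remaining hypothesis --- $G$-equivariance of $\breve\tau$ on objects --- follows term-by-term from the $G_i$-equivariance of each $\tau_i$ in Lemma~\ref{lem:localtau}~(iii) together with the composition rules of $\mu$. Invoking Proposition~\ref{prop:MMGaE} then yields parts~(i)--(iii), including the commutative diagram \eqref{diag:14} and the strong bundle map structure via the factorization $\tau = \tau_E \circ \io_E$ through $\Xx_\Vv^E$.

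For part (iv), set $\Uu' := \Uu$; Theorem~\ref{thm:reduceFred}~(iv) guarantees that $(N_\Vv, \Uu'_\Vv)$ controls compactness of $f_\Vv$, and the three $(N_\Vv, \Uu'_\Vv)$-regularity conditions are verified directly from the formula. Condition~(1) holds because $N$ is invariant under $\mu$ (it descends to $|\Ww[1]|$) and each $N(\tau_{i_n}(x_{i_n}, e_{i_n})) \le \|e_{i_n}\|_{E_{i_n}}$ by Lemma~\ref{lem:localtau}~(iii)(1); summing with $\be_\Uu, \beta_{i_n} \le 1$ gives $N_\Vv(\tau_\ue(\uo)) \le \|\ue\|$ under the manifestly $G$-invariant norm $\|\ue\| := \sum_{i \in A} \|e_i\|_{E_i}$. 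Condition~(2) follows because every summand in $\tau_I$ carries the factor $\be_\Uu(x_{i_1})$, forcing $\supp \tau_\ue \subset \psi^{-1}(\supp \be_\Uu) \subset \psi^{-1}(\Uu) = \Uu_\Vv'$.

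Condition~(3) is the principal transversality check. At any zero $\uo = (I, \ux) \in f_\Vv^{-1}(0)$, the partition identity $\sum_i \beta_i(x_{i_1}) = 1$ holds (since $|x_{i_1}| \in |f^{-1}(0)| \subset \bigcup_i |U_i|$, where also $\be_\Uu(x_{i_1}) = 1$), and by the fiber-kernel argument only indices $i \in H_\ux \subset I$ can contribute, so some $i_0 \in I$ has $\beta_{i_0}(x_{i_1}) > 0$. The partial derivative $\partial_{e_{i_0}} \tau(\uo, 0)$ is then, up to the positive scalar $\be_\Uu(x_{i_1}) \beta_{i_0}(x_{i_1})$, the $\mu$-pullback along the morphism chain in $\ux$ of $\tau_{i_0}(x_{i_0}, \cdot) : E_{i_0} \to W_{x_{i_0}}$, which stabilizes $f$ transversally (resp.\ in general position) at $x_{i_0}$ by Lemma~\ref{lem:localtau}~(iii)(3); functoriality of $f$ under $\mu$ transfers this transversality to $x_{i_1}$, and general position at $\partial X$ transfers by the same argument since the reduced tangent spaces are identified. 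Openness of transversality on the compact zero set then provides the $G$-invariant neighborhood $O_\tau \subset E$ of $0$. The chief obstacle is the functorial identity in the second paragraph: the combinatorial match between $\mu$'s composition of morphism chains in $\Hat\rho_{IJ}$ and in $\tau_I$ must align exactly with the way Lemma~\ref{lem:cov0}~(iv) kills the unwanted partition-of-unity terms, with essential use of each $\beta_i$ being a functor on $\Xx$ so that vanishing at $y_{j_1}$ propagates to every $y_{j_k}$ in the chain.
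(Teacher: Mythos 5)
Your construction of $\tau$ follows essentially the same route as the paper: define the object-level maps $\tau_I$, check the morphism compatibility on the poset $\Qq\times E$ by fusing the $\mu$-chains and killing the terms indexed by $J\less I$ via Lemma~\ref{lem:cov0}~(iv) together with the functoriality of the $\beta_i$, deduce the kernel property on $E_{A\less H_\ux}$ and $G$-equivariance from the equivariance of the local $\tau_i$, and invoke Proposition~\ref{prop:MMGaE} for the unique extension to $\Xx_\Vv^{\less G}\times E$. Parts (iv)(1) and (iv)(2), and the pointwise surjectivity of the linearization of $f_\Vv-\tau$ at unperturbed solutions using the partition identity to find $i_0\in I$ with $\beta_{i_0}>0$, also match the paper's argument.

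However, there is a genuine gap in your final step for (iv)(3): the claim that ``openness of transversality on the compact zero set then provides the $G$-invariant neighborhood $O_\tau$.'' First, the zero set $f_\Vv^{-1}(0)\subset X_\Vv$ at object level is not compact (only its realization is), and in scale calculus surjectivity of linearizations is not an open condition for free -- the differentials of a sc-Fredholm section need not vary continuously with the base point -- so one must invoke the sc-Fredholm implicit function theorem (\cite[Thm.3.1.22, 3.1.26]{TheBook}) to obtain surjectivity of the (reduced) linearizations on a neighbourhood of $f_\Vv^{-1}(0)\times\{0\}$ in $X_\Vv\times E$. Second, and more seriously, condition (3) requires transversality at \emph{all} zeros of $(\uo,\ue)\mapsto f_\Vv(\uo)-\tau(\uo,\ue)$ with $\ue\in O_\tau$, and nothing in your argument excludes non-transverse solutions appearing far from $f_\Vv^{-1}(0)$ for arbitrarily small $\ue$. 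The paper closes this by a compactness-control argument: if $\ue_k\to 0$ had solutions $\uo_k$ with non-surjective reduced linearizations, then by (1) and (2) one has $\uo_k\in\Uu_\Vv$ and $N_\Vv(f_\Vv(\uo_k))=N_\Vv(\tau(\uo_k,\ue_k))\le\|\ue_k\|\le 1$, so $|\uo_k|$ lies in the precompact set furnished by the compactness controlling data; a convergent subsequence of $|\uo_k|$ lifts via properness of $\Xx_\Vv$ to $\uo_k\to\uo_\infty$ with $f_\Vv(\uo_\infty)=0$ by continuity, contradicting the surjectivity already established near $f_\Vv^{-1}(0)\times\{0\}$. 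Without this use of $(N_\Vv,\Uu_\Vv)$ and the properness of $\Xx_\Vv$, the existence of a uniform $O_\tau$ is unjustified; the rest of your argument stands.
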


\begin{proof} {\bf Proof of (i):}   The groupoid $\Xx_\Vv^{\less G} \times E $ is well defined and \'etale 
by Lemma~\ref{lem:MMGaE}~(ii),  but, by Remark~\ref{rmk:complet}~(iii), not in general proper.
In particular, its object and morphism spaces inherit an M-polyfold structure from 
the object and morphism spaces of $\Xx_\Vv^{\less G}$ in Theorem~\ref{thm:reduce}~(iv) 
by Cartesian products with the finite dimensional vector space $E$, which has a unique sc-Banach space structure $(E_k=E)_{k\in\N_0}$. 
In this sc-smooth structure, the structure maps of $\Xx_\Vv^{\less G} \times E $ are local sc-diffeomorphisms because they are products of the structure maps of $\Xx_\Vv^{\less G}$ with  the identity. 
This proves (i). 
\MS

\NI {\bf Proof of (ii)--(iii):}
First recall that the finite dimensional vector space $E$ carries a unique sc-structure $(E_k=E)_{k\in\N_0}$, which in particular coincides with its shift that appears in the notion of strong bundle. 
Thus the strong bundle structure of $\pr: \Xx_\Vv^{\less G} \times E \to \Xx_\Vv^{\less G}$ only requires $\mu_\pr: \Mor_{\Xx_\Vv^{\less G}} \times E \to  \Xx_\Vv^{\less G} \times E$ to be linear in $E$ and sc-smooth, which $\mu_\pr(I,J,\ux,\ug,\ue)=\bigl( t(I,J,\ux,\ug) , \ue \bigr)$ is since the target map $t$ is sc-smooth. 

The \'etale functor $\io:\Xx_\Vv^{\less G}\to \Xx_\Vv$ is the identity on objects and an  inclusion on morphisms.

The functor $\tau: \Xx_\Vv^{\less G} \times E  \to \Ww_\Vv$ in \eqref{diag:14} is defined on objects 
by the formulas in \eqref{eq:tauI}.  
The summands for $n\geq 2$ in this expression can be rewritten as
\begin{align*}
& \mu \bigl(  m_{i_{n-1}i_n}^{-1}\circ\ldots\circ m_{i_1i_2}^{-1}  , \beta_{i_n}(x_{i_n}) \tau_{i_n}(x_{i_n}, e_{i_n}) \bigr) \\
&\qquad\quad 
=
\mu( m_{i_{n-1}i_n}^{-1})\bigl( \ldots\bigl( \mu(m_{i_1i_2}^{-1})  (\beta_{i_n}(x_{i_n}) \tau_{i_n}(x_{i_n}, e_{i_n}))\bigr)\ldots\bigr) , 
\end{align*}
where we denote the fiberwise restrictions of the strong bundle map $\mu:\bX\leftsub{s}{\times}_P W\to W$  in Definitions~\ref{def:poly} and \ref{def:bundle} by
\begin{equation}\label{eq:mu}
\mu (m) \,:\; W_{s(m)} \to W_{t(m)}, \quad w \mapsto \mu(m,w) . 
\end{equation}
These summands in \eqref{eq:tauI} are then well defined in $W_{x_{i_1}}[1]$ because we have $$
\beta_{i_n}(x_{i_n})\in [0,1], \quad \tau_{i_n}(x_{i_n}, e_{i_n})\in W_{x_{i_n}}[1], \quad m_{i_ki_{k+1}}\in \Mor_\Xx(x_{i_k}, x_{i_{k+1}})
$$ so that $\mu(m_{i_ki_{k+1}}^{-1}): W_{x_{i_{k+1}}}[1]\to W_{x_{i_k}}[1]$ is a linear map between the appropriate fibers of $W\to X$, preserving the shifted sc-structure. 
Moreover, each of these summands is linear in $e_{i_n}$ since $\tau_{i_n}$ is linear on fibers and each $\mu(m^{-1})$ in \eqref{eq:mu} is linear. This shows that $\tau$ on objects is well defined and linear in $E=E_1\times\ldots\times E_N$. Note also that each $\tau_I$ is trivial (i.e.\ the zero map)  in the factors $E_j$ for $j\notin I$. (For a more precise claim see \eqref{eq:tauHrestrict} below.)
\MS

To check that $\tau$ is a strong bundle map in the sense of Remark~\ref{def:poly} and Definition~\ref{def:bundlemaps} it remains to show that it  is sc-smooth with respect to both sc-structures on $W$ (thus in particular $\tau :  \Xx_\Vv^{\less G} \times E  \to \Ww_\Vv[1]$  should be sc-smooth), and also is compatible with morphisms. 
  
Towards the former, note that the first summand in \eqref{eq:tauI}, $\beta_{i_1}(x_{i_1}) \tau_{i_1}(x_{i_1}, e_{i_1})\in E$ varies sc-smoothly with $$
\bigl(
\ux = (x_{i_1}, m_{i_1i_2}, \ldots, m_{i_{|I|-1}i_{|I|}}, x_{i_{|I|}}) \,,\, 
\ue = (e_1,\ldots,e_N) \bigr)\in V_I\times E
$$
 since by construction of the sc-smooth structure on $V_I\subset U_I$  in Lemma~\ref{lem:Udata} the projection $U_I\ni\ux=(x_{i_1}, m_{i_1i_2}, \ldots, )\mapsto x_{i_1}\subset X$ is a local sc-diffeomorphism, and $\tau_{i_1}$ is sc-smooth as map to $W[1]$ by construction in Lemma~\ref{lem:localtau}, and hence also sc-smooth after composition with the inclusion $W[1]\to W$. 
The further summands 
$$
\bigl( \ux \,,\, \ue \ \bigr) \mapsto 
\mu( m_{i_{n-1}i_n}^{-1}) \ldots \mu(m_{i_1i_2}^{-1})  \beta_{i_n}(x_{i_n}) \tau_{i_n}(x_{i_n}, e_{i_n})
$$
are sc-smooth to $W[1]$ and hence $W$, since  $\tau_{i_n}$ has this property by construction, the maps in \eqref{eq:mu} are strong bundle maps, and each of the maps
\begin{align*}
V_I &\to X, \qquad 
\ux  = (x_{i_1}, m_{i_1i_2}, \ldots, m_{i_{|I|-1}i_{|I|}}, x_{i_{|I|}}) \mapsto x_{i_n} \\
V_I &\to \bX , \qquad 
\ux = (x_{i_1}, m_{i_1i_2}, \ldots, m_{i_{|I|-1}i_{|I|}}, x_{i_{|I|}}) \mapsto m_{i_{\ell-1}i_\ell}^{-1} 
\end{align*}
is sc-smooth as follows: In a chart for $U_I\supset V_I$ these maps are represented by $x_{i_1}\mapsto x_{i_n}$ resp. $x_{i_1}\mapsto m_{i_{\ell-1}i_{\ell}}$, which can be expressed as composition of target maps, local inverses of source maps, and inversion --- all of which are sc-smooth since they are restrictions of the \'etale structure maps of $\Xx$. 
\MS

To see that $\tau$ is compatible with morphisms, it suffices to check that it satisfies the conditions in
Proposition~\ref{prop:MMGaE}, namely
\begin{enumilist}\item[{\rm (a)}] it is compatible with the morphisms in the poset $\Qq\times E$;
\item[{\rm (b)}] the induced map on object spaces is $G$ equivariant;
\item[{\rm (c)}]  its restriction to each fiber $(I,y)\times E$ is linear with kernel  that includes the subspace $E_{A\less H_y}$, where $H_y = \min \{H\ \big| \ y\in \TQ_{HI}\}$;  
\item[{\rm (d)}]
 $\Pp\circ {\tau} = \io\circ \pr: \Qq\times E\to \Xx_\Vv$, where $\io:  \Xx_\Vv^{\less G} \to \Xx_\Vv$ is the inclusion.
\end{enumilist}
For (a), we must  check that $\tau$ is compatible with the morphisms  from $V_I\times E\to V_J\times E$ with $I\subset J$. 
  Further, by 
 Lemma~\ref{lem:bundlemaps}~(ii), it suffices  to check the condition in Definition~\ref{def:bundlemaps}~(ii). Namely, we are given the map $\tau: Q\times E\to W_\Vv$ on objects and must check that
$$
 {\tau}(\mu_{\Qq}(m,e)) = \mu_\Vv(\io(m), w), \quad w: = {\tau}(s(m),e)\in W_{\psi(s(m)},
$$
  where $\mu_{\Qq}, \mu_\Vv$ are the structure maps of 
 the two bundles as in  Definition~\ref{def:bundle}. 
 Since $\Ww_\Vv = \Psi^*(\Ww)$ is the pullback of
 the original bundle $\Ww\to \Xx$ over the equivalence $\psi:\Xx_\Vv\to \Xx$, it suffices to check that
 $$
\psi\circ {\tau}(\mu_{\Qq}(m,\ue)) = \mu(\psi(\io(m)), w), 
 $$
 where $\mu$ is the structure map of $\Ww\to \Xx$ and $w$ is as before.
We next simplify the left hand side using the fact that the structure map $\mu_{\Qq}$
of the trivial bundle $\Qq\times E\to \Qq$  is simply\footnote
  {
  Recall from Lemma~\ref{lem:complet} that the morphisms in the poset $\Qq$ do not involve any group elements, though those in its groupoid completion $\Qq$ do.}
 $$
 \mu_{\Qq}: {\bQ}\, \leftsub{s}{\times}_P  (\Qq\times E) \to   \Qq\times E,  \
 (I,J,\ux, \ue)\mapsto (J,\ux,\ue) = \bigl( t(I,J,\ux), \ue\bigr).
 $$
  In other words, it is the product of the target map in $\Qq$ with $\id_{E}$. 
 Thus, we must check that
  for every morphism $m = (I,J,\ux, \id) \in \Qq$ with $I\subset J$ we have
\begin{align}\label{eq:checkfunct}
\psi\bigl(\tau (J,\ux, \ue)\bigr) & = \mu\bigl(\psi(I,J,\ux, \id), \tau_I(\ux,\ue)\bigr)
\end{align}
where $\tau_I$ is as in \eqref{eq:tauI}.

If we write $I = (i_1<i_2<\dots) \subset J = (j_1<j_2\dots)$ where $i_1 = j_{\ell_1}$, and  $(J,\ux) = (x_{j_1},m_{j_1j_2}\dots )$, then \eqref{eq:tauI} defines 
$$
\psi\bigl(\tau (J,\ux, \ue)\bigr)  = \tau_J(\ux,\ue)
$$ 
as an element in $W_{x_{j_1}}$.
Further, because the reduction $(F'_I)_{I\subset A}$ is  compatible with the supports $(C_i)_{i\in A}$ of the  partition of unity $(|\be_{j}|)_{j\in A}$
 as in
Lemma~\ref{lem:cov0}~(iv),
$\be_{j_k}(x_{j_k}) = 0$ if $\ux\in \TV_{IJ}\subset V_J$ and $j_k\in J\less I$.    Indeed, because $|x_{j_1}| =
|x_{j_2}| = \dots = |x_{j_{|J|}}|$ for all $\ux\in V_J$, $\be_{j_k}(x_{j_k}) = 0$ for all $j_k\notin H_{|\ux|}  =
\min  \{H\ | \ \ux\in \TV_{HJ}\}$.  Moreover, this set $H_{|\ux|} = H_{x_{j_1}}$ as defined in Proposition~\ref{prop:MMGaE0}.  Thus, if we denote the projection $E \to \prod_{i\in H_{|\ux|}}E_i$ by $\ue\mapsto \ue|_{H}$ we have
\begin{align}\label{eq:tauHrestrict}
\tau_J(\ux,\ue) = \tau_J(\ux,\ue|_{H_{|\ux|}}), \quad H_{|\ux|}: = H_{x_{j_1}}.
\end{align}
Using this, we check that 
\begin{align}\label{eq:checkfunct1}
 \tau_J(\ux,\ue) = \mu\bigl((m^J_{j_1j_{\ell_1}})^{-1}, \tau_I(\ux, \ue)\bigr)
\end{align}
where $m^J_{j_1j_{\ell_1}} = m_{j_1j_{2}}\circ\dots\circ m_{j_{\ell_1-1}j_{\ell_1}}$ as in \eqref{eq:comp00}.
Further, when $\ug = \id$ we have
$\tau_{IJ}(\ux,\id,\ue) = \tau_I(\rho_I(\ux), \ue)$, where $\rho_I(\ux)=
\rho_{IJ}(\ux)=
(x_{j_{\ell_1}}, m^J_{j_{\ell_1}j_{\ell_2}},\dots)$  as in \eqref{eq:comp00}.  
On the other hand,   when $I\subset J$, 
  $$
   \mu\bigl(\psi(I,J,\ux,\id), w\bigr) =  \mu\bigl((m^J_{j_1 j_{\ell_1}})^{-1}, w\bigr), \quad w\in W_{x_{j_{\ell_1}}}
   $$
by the definition of $\psi:\Xx_\Vv\to \Xx$ in Theorem~\ref{thm:reduce}~(ii).
Therefore, when $w = \tau_I(\ux,\ue)$ we obtain
\begin{align*}
  \mu\bigl(\psi(I,J, \ux,\id), \tau_I(\ux,\ue)\bigr) & = \mu\bigl((m^J_{j_1 j_{\ell_1}})^{-1}, \tau_I(\ux,\ue)\bigr)\\
  & = \tau_J(\ux,\ue) \quad\mbox{ by } \eqref{eq:checkfunct1}\\
  & = \psi\bigl(\tau (J,\ux, \ue)\bigr),
\end{align*}
which proves \eqref{eq:checkfunct}.  Thus (a) holds.  

The above proof also establishes condition (c), since this follows from \eqref{eq:tauHrestrict}.  Further (d)
holds since $\tau:\Xx_\Vv^{\less G} \times E \to \Ww_\Vv$ is given by E$(J,\ux,\ue)\mapsto (J,\ux,\tau_J(\ux,\ue))$ and hence lies over the inclusion  $\Xx_\Vv^{\less G}\to \Xx_\Vv$.
Thus it remains to check (b), namely that the induced map on objects $\tau_J: V_J\times E\to W$  is $G$-equivariant for each $J$. 
 
The following calculation shows that  this follows from the equivariance of the local  stabilizations $\tau_i$ and the functoriality of the  construction.
 Indeed, in the notation of \eqref{eq:tauI} we have for $\ug\in G_{J}$ and $\ux\in V_J$  
\begin{align}\label{eq:Gequivtau}
&\tau_J(\ug*\ux,\ug*\ue)  \\ \notag
& \quad = \tau_J \bigl(g_{j_1}* x_{j_1}, (g_{j_1})^{-1}\circ m_{j_1j_2}\circ g_{j_2}, \ldots , g_{j_{|J|}}*x_{j_{|J|}}) \,,\, (g_{j_1}*e_{j_1},\ldots, g_{j_{|J|}}*e_{j_{|J|}}) \bigr) \\ \notag
& \quad 
= \be_\Uu(x_{j_1}) \textstyle{
 \sum_{n=1}^{|H_\ux|} \mu \Bigl( \bigl((g_{j_1})^{-1}\circ m^J_{j_1 j_{h_{n}}}\circ g_{j_{h_n}}\bigr)^{-1}, \beta_{h_n}(x_{h_n}) \,\tau_{h_n}(g_{h_n}*x_{h_n}, g_{h_n}*e_{j_{h_n}}) \Bigr) 
   }\\\notag
 & \quad 
= \be_\Uu(x_{j_1}) \textstyle {  \sum_{n=1}^{|H_\ux|} \mu \Bigl(\bigl( (g_{j_1})^{-1}\circ m^J_{1 h_{n}}\circ g_{h_n}\bigr)^{-1}, \beta_{h_n}(x_{h_n}) \,g_{h_n}* \tau_{h_n}(x_{h_n}, e_{j_{h_n}}) \Bigr) 
}
\\\notag
 & \quad 
= \be_\Uu(x_{j_1}) \textstyle {  \sum_{n=1}^{|H_\ux|} \mu \bigl(  (m^J_{1 h_{n}})^{-1}\circ g_{j_1}, \beta_{h_n}(x_{h_n})\, \tau_{h_n}(x_{h_n}, e_{j_{h_n}}) \bigr)
\ = \  g_{j_1}* \tau_J(\ux, \ue),   
}
 \end{align}
 where the last equality holds by \eqref{eq:muprop}.
On the other hand, if $\ug \in G_{A\less J}$ then $\ug*\ue -\ue \in E_{ G_{A\less H_{\ux}}}$ and $\ug*\ux=\ux$  so that
 $$
 \tau_J(\ug*\ux, \ug*\ue)= \tau_J(\ux,\ug*\ue - \ue) + \tau_J(\ux,\ue) = \tau_J(\ux,\ue).$$
 This completes the proof of condition (b).  
 
 Therefore all conditions in Proposition~\ref{prop:MMGaE} hold, and $\tau$ has a unique $G$-equivariant extension to a functor $\tau:\Xx_\Vv^{\less G}\times E\to \Ww_\Vv$.  This proves (ii) and (iii).
\MS 

\NI {\bf Proof of (iv):} It remains to check the $(N_\Vv,\Uu_\Vv)$-regularity properties (1)--(3) in (iv). 
\MS

\NI 
To check (1) we estimate for $\uo=(I,\ux=(x_1, m_{12}, \ldots, m_{(|I|-1)|I|}, x_{|I|}))\in X_\Vv$ and $\ue=(e_1,\ldots,e_N)\in E=E_1\times\ldots\times E_N$
\begin{align*} 
 N_\Vv(\tau_\ue(\uo))
&=  N( \Psi ( \tau_\ue(\uo)))  =  N( \tau_I(\ux,\ue)))  \\
&\leq N\bigl(  \be_\Uu(x_{1}) \beta_{i_1}(x_1) \tau_{i_1}(x_1, e_{i_1})  \bigr) + \textstyle \sum_{n=2}^{|I|}  N \bigl( \mu \bigl(  m_{(n-1)n}^{-1}\circ\ldots\circ m_{12}^{-1}  , \be_\Uu(x_{1}) \beta_{i_n}(x_n) \tau_{i_n}(x_n, e_{i_n}) \bigr) \,
\bigr) \\
&\leq  \textstyle \sum_{n=1}^{|I|}  \be_\Uu(x_{1})  \beta_{i_n}(x_n) N \bigl( \tau_{i_n}(x_n, e_{i_n}) \bigr)
\;\leq\;  \textstyle \sum_{n=1}^{|I|}   \| e_{i_n}\|_{E_i} , 
\end{align*}
where we used the fact that $N$ is compatible with the bundle morphisms encoded in $\mu$, and that $0\leq\beta_i\leq 1$. This proves (1) with the $G$-invariant norm $\|\ue\|:=\sum_{i=1}^n  \| e_i\|_{E_i}$ on $E$. Moreover, $\ue\in O'_\tau:= \{ (e_1,\ldots,e_N) \,\|\, \sum_{i=1}^N \| e_{i}\|_{E_i} <1 \}$ guarantees the usual compactness control estimate $N_\Vv(\tau_\ue(\uo))<1$. 
\MS

\NI 
Note that (2) holds, namely $\supp\tau_\ue  \subset \Uu_\Vv=\psi^{-1}(\Uu)$, because  the bump function $\be_\Uu$ that occurs in the formula for $\tau_I$ was chosen to have support in $|\cl_X(\Uu)|\subset |V|$.
%
\MS

\NI 
Checking (3)  asks us to establish the transversality (and general position along $\partial\Xx_\Vv$) of the map $F_\tau: X_\Vv\times E \to W_\Vv$, $(\uo,\ue) \mapsto f_\Vv(\uo) - \tau(\uo,\ue)$ on a product neighbourhood of  $X_\Vv\times\{0\}$. For that purpose we will first establish that the differential, restricted to the reduced tangent space, $\rD^R F_\tau (\uo,\underline 0):=\rD F_\tau (\uo,\underline 0)|_{\rT^R_\uo X_\Vv \times E }$ is surjective for every unperturbed solution $f_\Vv(\uo) = \tau(\uo,\underline 0)=0_\uo$. (See \cite[p.46 and Def.2.4.7]{TheBook} for the notion of reduced tangent space at boundary points. In the interior, we denote the tangent space by $\rT^R:=\rT$.) 
To check this at $\uo=(I,\ux)$, note that $$
\rD^R F_\tau (\uo,\underline 0) : \rT^R_\uo X_\Vv \times E = \rT^R_\ux V_I \times E \to  W_{\psi(\uo)} \simeq (\Ww_\Vv)_\uo$$
 is the linear operator  
\begin{align*}
(\underline{X}, \ue) &\;\mapsto\;  \rD^R f (\psi(\uo))(\rd\psi (\uo) \underline{X} ) - \rD^R \tau_I (\cdot, \underline 0) (\ux) \underline{X} - \rD \tau_I (\ux, \cdot) (\underline 0) \ue  \notag \\
&\;=\; \rD^R f (\psi(\uo))(\rd\psi(\uo) \underline{X} ) - \tau_I (\ux, \ue)   
\end{align*}
since $\tau_I$ is linear in the second factor, in particular $\tau_I (\ux, \underline 0)=0$. 
Since $\psi_I$ is a local sc-diffeomorphism to a neighbourhood of $o:=\psi(\uo)$ (and at boundary points restricts to local sc-diffeomorphisms between the boundary and corner strata), 
surjectivity of $\rD^R F_\tau (\uo,\underline 0)$ is equivalent to surjectivity of 
\begin{equation}
\rT_o X \times E  \;\to\;   W_o , \quad
(Y, \ue) \;\mapsto\;  \rD^R f (o) Y - \tau_I (\ux, \ue)   . 
\label{eq:DF}
\end{equation}
To check surjectivity of this operator we will use of the compatibility between cover reduction and partition of unity achieved in Lemma~\ref{lem:cov0}: Since $|o=\psi(\uo)|\in F'_I$ and $F'_I\cap \supp |\beta_j|=\emptyset$ for $j\notin I$ we have $\sum_{i\in I} |\beta_i|(|o|)=1$  and can thus choose an index $i_n\in I$ with $|\beta_{i_n}|(|o|)>0$. 
Now recall that we are working at a solution $\uo=(I,\ux)$ given by some tuple $\ux=(x_1, m_{12}, \ldots, m_{(|I|-1)|I|}, x_{|I|})$ of objects $x_n\in U_{i_n}$ and morphisms $m_{(n-1)n}\in\Mor_{\Xx}(x_{n-1},x_n)$ between them, so that $|\psi(\uo)=o=x_1|=\ldots=|x_{|I|}|$. In this notation we found an index $1\leq n \leq |I|$ with $\beta_{i_n}(x_n)=|\beta_{i_n}|(|x_n|)>0$.
For this index we also know from Lemma~\ref{lem:localtau}~(3) that the reduced linearization of 
$f - \tau_{i_n} \,: \; X \times E_{i_n} \to W$ at $(x_n,0)$ is surjective. 
Here we used functoriality of $f$ to deduce $f(x_n)=0$ from $f(x_1=\psi(\uo))=0$. 
More precisely, there is a sc-diffeomorphism $m:=t \circ s_{x_n}^{-1} : \text{Nbhd}(x_n)\to\text{Nbhd}(x_1)$ between neighbourhoods of $x_n,x_1\in X$, which is given by a local inverse $s_{x_n}^{-1}$ of the source map near $s\bigl(  m_{(n-1)n}^{-1}\circ\ldots\circ m_{12}^{-1}\bigr)=x_n$. It lifts to sc-isomorphisms $M_x:= \mu( s_{x_n}^{-1}(x), \,\cdot\, ) : W_{x}\to W_{m(x)}$  between the fibers of the bundle $W\to X$. 
With this structure, local functoriality of $f$ can be expressed by $M_x(f(x)) = f ( m(x))$. This specifies to $M_{x_n}(f(x_n)) = f(x_1)=0$, which implies $f(x_n)=0$ since $M_{x_n}$ is invertible, and the relationship between linearized operators $M_{x_n}\circ\rD f(x_n) = \rD f(x_1) \circ \rd m (x_n)$. 
With that we can compute the image of the operator \eqref{eq:DF} applied to $\rT_{o=x_1}^R X=\rd m (x_n)(\rT_{x_n}^R X)$ and the subspace $\ldots \{0\} \times E_{i_n} \times \{0\} \ldots \subset E$ 
\begin{align*}
& \rD f (x_1)(\rT_o^R X) - \tau_I (\ux, \ldots \{0\} \times E_{i_n} \times \{0\} \ldots ) \\
& =  M_{x_n}  \bigl( \rD f(x_n) (\rT_{x_n}^R X)  \bigr) - M_{x_n}\bigl( \beta_{i_n}(x_n) \tau_{i_n}(x_n, E_{i_n}) \bigr)  \\
& =  M_{x_n}  \bigl( \rD f(x_n) (\rT_{x_n}^R X)  -  \tau_{i_n}(x_n, \beta_{i_n}(x_n) E_{i_n}) \bigr)  \\
&= \bigl( M_{x_n} \circ \rD  ( f - \tau_{i_n} ) (x_n,0) \bigr) (\rT_{x_n}^R X \times E_{i_n})  \;=\; W_{x_n}. 
\end{align*}
Here we used the fact that $M$ is an isomorphism, $\beta_{i_n}(x_n)\ne 0$, and $ \rD^R  ( f - \tau_{i_n} ) (x_n,0)$ is surjective. This prove surjectivity for \eqref{eq:DF} and thus for the linearizations of $F_\tau=f_\Vv-\tau_I$ at any point in the unperturbed solution set $f_\Vv^{-1}(0)\times\{0\} \subset X_\Vv\times E$. 

Now the implicit function theorem \cite[3.1.22, 3.1.26]{TheBook} guarantees transversality of solutions of $F_\tau$ in a neighbourhood of $f_\Vv^{-1}(0)\times\{0\}$. We can moreover apply \cite[Thm~3.1.22]{TheBook} on all local boundary and corner strata intersecting $f_\Vv^{-1}(0)\times\{0\}$, where $F_\tau$ restricts to a transverse section since its reduced linearizations are surjective, and we can conclude that the reduced linearizations (the linearizations of restrictions to boundary and corner strata) are surjective in a neighbourhood of $f_\Vv^{-1}(0)\times\{0\}$. 

In fact, our claim is that we can find a neighbourhood $O_\tau\subset E$ of $0$ so that the reduced linearizations of $F_\tau: X_\Vv\times O_\tau \to W_\Vv$ are surjective at all solutions -- and thus all solutions are transverse and in general (hence good) position, as explained in the proof of Lemma~\ref{lem:localtau}.
We will prove this by contradiction, assuming that $E\ni \ue_k\to 0$ as $k\to\infty$ has solutions $f_\Vv(\uo_k)=\tau(\uo_k,\ue_k)$ at which $\rD^R F_\tau(\uo_k,\ue_k)$ fails to be surjective. 
From the already established controls (1) of the size $N_\Vv(\tau)$ and (2) the support of $\tau$ we can deduce that $\uo_k\in\Uu_\Vv$ and $N_\Vv(f_\Vv(\uo_k))=N_\Vv(\tau(\uo_k,\ue_k))\leq 1$ for sufficiently large $k$, and thus $|\uo_k|$ lie in the set
$|\{ \uo \in \Uu_\Vv \,|\, N_\Vv(f_\Vv(\uo))\leq 1 \} |$ whose closure in $|\Xx_\Vv|$ is compact, since $(N_\Vv,\Uu_\Vv)$ control compactness as in Definition~\ref{def:control-compact}. So we can find a convergent subsequence $|\uo_k|\to |\uo_\infty|$. And since the groupoid $\Xx_\Vv$ is proper by Theorem~\ref{thm:reduce}~(i), we can in fact find a further subsequence with limit $\uo_k\to\uo_\infty\in X_\Vv$. This satisfies $f_\Vv(\uo_\infty)=\lim_{k\to\infty} f_\Vv(\uo_k)=\lim_{k\to\infty} \tau(\uo_k,\ue_k) = \tau(\uo_\infty, \underline{0})= 0$ by continuity of $f_\Vv$ and $\tau$.  
Now the subsequence $(\uo_k,\ue_k)\to (\uo_\infty,0)$ for sufficiently large $k$ must lie in the neighbourhood of $f_\Vv^{-1}(0)\times\{0\}$ on which surjectivity of $\rD^R F_\tau$ was already established above -- in contradiction to the assumption. 

This argument provides a neighbourhood $O_\tau\subset E$ of $0$ with the claimed transversality properties. To make it $G$-invariant, we replace it by $\bigcap_{g\in G} g*O_\tau$.
\end{proof}

\begin{proof}[Proof of Corollary~\ref{cor:multisection} ]
To simplify notation, we will denote $X_\Vv:=\Obj_{\Xx_\Vv}=\Obj_{\Xx_\Vv^{\less G}}$ and $W_\Vv:=\Obj_{\Ww_\Vv}=\psi^* W$. 
Then here is the first statement to be proved: 

\MS\NI
(i) {\it Each $\ue\in E$ induces sc$^+$-multisection functors $\La_{\Vv,\ue} : \Ww_\Vv\to \Q^{\ge 0}$ given by 
$$
\La_{\Vv,\ue}(w):= \tfrac{1}{\# G} \ \# \{ \uh\in G \,|\, \tau(P_\Vv(w) ,\uh*\ue)= w  \}
$$
and $\La_{\ue}:= \Psi_*\La_{\Vv,\ue}: \Ww|_{V}\to \Q^{\ge 0}$ given by pushforward \cite[Lemma~11.5.2]{TheBook}.  
These multisections are structurable in the sense of \cite[\S13.3]{TheBook}, and $\La_{\Vv,\ue}$ is globally structured in the sense of 
Definition~\ref{def:multisdef0}.
}

\MS
\NI {\bf Proof of (i):}   It follows from Theorem~\ref{thm:stabilize} that we are in the situation of
Proposition~\ref{prop:globsym0}.   Hence there are multisection functors $\La_{\Vv,\ue}$ that have all the required properties, except possibly that they may not be sc$^+$. However, they are sc$^+$ because
 the maps $\tau_i$ that define the local stabilizations as in Lemma~\ref{lem:localtau} have been  chosen to be sc$^+$.  This proves (i).
\MS

\MS\NI
(ii) {\it
There is a comeagre (and thus dense) subset $O^\pitchfork_\tau\subset O_\tau$ such that 
for all $\ue\in O^\pitchfork_\tau$ the functors 
$\Theta_{\Vv,\ue}: =\La_{\Vv,\ue}\circ f_\Vv : \Xx_\Vv\to\Q^{\ge 0}$ and $\Theta_\ue: =\La_{\ue}\circ f : \Xx|_V\to\Q^{\ge 0}$ are transversal and in general position over the boundary in the sense of \cite[\S15.2]{TheBook}. 
This gives both the structure of compact, tame branched ep$^+$-subgroupoids 
in the sense of \cite[\S9.1]{TheBook}. 
}

The following is a version of the proof of \cite[Thm.15.3.7]{TheBook} which is simplified by our prior construction of a global equivariant Fredholm stabilization. 
Recall from Theorem~\ref{thm:stabilize}~(iv) that we found compactness controlling data $(N_\Vv,\Uu_\Vv)$ for $f_\Vv$ and a $G$-invariant neighbourhood $O_\tau\subset \{\ue \in E\,|\, \|\ue\|<1\}$ of $0$ such that, formulated with the global section structure $\bigl(\s_h(\ux)=\tau(\ux,h*\ue)\bigr)_{h\in G}$ in (i), 
\begin{itemlist}
\item[(1)] $N_\Vv(\tau(\ux,\ue))\leq \|\ue\|$ is bounded by a $G$-invariant norm on $E$. In particular, 
$\ue\in O_\tau$ guarantees $N_\Vv(\s_h(\ux))<1$ for all $h\in G$ and $\ux\in X_\Vv$; 
\item[(2)]  $\tau|_{X_\Vv\less\Uu_\Vv\times E} \equiv 0$ and in fact (taking closures)  $\supp \s_h \subset \Uu_\Vv$ for all $h\in G$; 
\item[(3)]
$X_\Vv\times O_\tau \to W_\Vv,  (\ux,\ue) \mapsto f_\Vv(\ux) - \tau(\ux,\ue)$ is transverse and in general position on the boundary, in the sense that the reduced linearizations are onto at all solutions.  
\end{itemlist}

\MS\NI
If $\partial X_\Vv=\emptyset$, then $O^\pitchfork_\tau\subset O_\tau$ will be the set of regular values of the projection
\begin{align}\label{eq:univmod}
\Pi \,: \; \Ti S:= \bigl\{ (\ux,\ue) \in X_\Vv\times O_\tau \,\big|\, f_\Vv(\ux) = \tau(\ux,\ue) \bigr\}  \;\to\; O_\tau, \qquad (\uo,\ue)\mapsto \ue . 
\end{align}
If $\partial X_\Vv\ne\emptyset$, then we will need to intersect with the regular values of $\Pi$ restricted to boundary and corner strata. 
In either case, we need to make sure that $\Ti S$ is second countable, so we can apply Sard's Theorem in countably many charts. This will follow from the compactness controls in (1),(2), and the fact that the realization map $X_\Vv\to |\Xx_\Vv|$ is globally finite-to-one.
While $\Ti S \subset X_\Vv\times O_\tau$ is not closed under morphisms in $\Mor_{\Xx_\Vv}\less \Mor_{\Xx_\Vv^{\less G}}$, we can still consider its realization $|\Ti S|=\pi_{\Xx_\Vv\times E}(\Ti S)\subset |\Xx_\Vv\times E|= |\Xx_\Vv| \times E$. 
Since $\tau$ vanishes outside of $\Uu_\Vv\times E$, we have $\Ti S \less (\Uu_\Vv\times O_\tau) \subset f_\Vv^{-1}(0)\times O_\tau \subset \Uu_\Vv\times O_\tau$ and thus
\begin{align*}
\bigl| \Ti S \bigr| &= \bigl| \bigl\{ (\ux,\ue) \in \Uu_\Vv\times O_\tau \,\big|\, f_\Vv(\ux) = \tau(\ux,\ue) \bigr\} \bigr| \\
&\subset \bigl| \bigl\{ \ux \in \Uu_\Vv \,\big|\, f_\Vv(\ux)\in W_\Vv[1],   N_\Vv(f_\Vv(\ux))\leq 1  \bigr\} \bigr| \times O_\tau  \; =: \,  B_{f_\Vv}\times O_\tau
\end{align*}
where we used $f_\Vv(\ux)=\tau(\ux,\ue)\in W_\Vv[1]$ and $N_\Vv( f_\Vv(x))  = N_\Vv(\tau(\ux,\ue)) \leq \|\ue\| <1$.  
Now the closure ${\rm cl}(B_{f_\Vv})\subset |\Xx_\Vv|$ is compact by the compactness control property in Definition~\ref{def:control-compact}. Since $|\Xx_\Vv|$ is metric, this makes ${\rm cl}(B_{f_\Vv})$ a compact metric space and hence second countable. 
Its preimage $\pi_{\Xx_\Vv}^{-1}({\rm cl}(B_{f_\Vv})\subset X_\Vv$ inherits second countability by 
Remark~\ref{rmk:wonderful}.
Finally, we can conclude that $\Ti S \subset \pi_{\Xx_\Vv}^{-1}({\rm cl}(B_{f_\Vv})\times E$ is a subset of a product of second countable spaces and thus itself second countable. 

Moreover, $\Ti S$ inherits the structure of a smooth manifold with boundary and corners by the implicit function theorem \cite[3.1.22, 3.1.26]{TheBook}. And its boundary and corner strata, identified by the degeneracy index $d_{\Ti S}: \Ti S \to \N_0$ are given by the boundary strata of the M-polyfold $X_\Vv\times O_\tau$, that is $d_{\Ti S}(\ux,\ue)=d_{X_\Vv}(\ux)$ (see the transversality part of \cite[Thm.5.3.10]{TheBook}, which does not require a compact solution set).

Next, $\Ti S$ has finite dimension given by the Fredholm index of $f$ plus $\dim E$. So $\Pi:\Ti S \to O_\tau$ is smooth since it is the restriction of a sc-smooth map to a sub-M-polyfold of finite dimension, where sc-smoothness coincides with smoothness. We can now apply Sard's Theorem to any restriction of $\Pi$ to the domain $F\subset\Ti S$ of a local chart for $\Ti S$ or its boundary and corner strata. 
Here the corner strata -- connected components of $\{s\in \Ti S \,|\, d_{\Ti S}(s) = l \}$ for $l\geq 2$ -- might not strictly be submanifolds, but can still be covered with countably many local charts to $\R^{\dim\Ti S - l}$ (given by intersections of local faces as in \cite[Prop.2.5.16]{TheBook}).  
Sard's Theorem now assures that the regular values of each restriction $\Pi|_F$ are a comeagre set\footnote{The complement of a comeagre set is of Lebesgue measure $0$, or, equivalently, a countable union of nowhere dense subsets.} 
and we define $O^\pitchfork_\tau$ to be the intersection of all these comeagre sets. 
This proves that $O^\pitchfork_\tau\subset O_\tau$ is comeagre since countable intersections preserve the comeagre property. In addition, $O_\tau$ is a Baire space by the Baire category theorem for locally compact Hausdorff spaces, and this ensures that every comeagre subset is dense. 

Now $\ue\in E$ is a regular value of $\Pi : \Ti S\to O_\tau$ if and only if for all $\ux\in f_\Vv^{-1}(\ue)$ the projection $\ker \rD(f_\Vv-\tau) (\uo,\ue) \supset \rT_{(\ux,\ue)} F  \to E$ is onto, i.e.\ if for every $Y\in E$ there exists $X\in\rT_\ux X_\Vv$ so that $\rD(f_\Vv-\tau(\cdot,\ue))(\ux)X=\tau(\ux,Y)$, or equivalently
$\im \tau(\ux,\cdot)\subset \im\rD(f_\Vv-\tau(\cdot,\ue))(\ux)$. 
 This implies $$
 \im\rD(f_\Vv-\tau)(\ux,\ue)=\im\rD(f_\Vv-\tau(\cdot,\ue))(\ux)+\im\tau(\ux,\cdot)=\im\rD(f_\Vv-\tau(\cdot,\ue))(\ux),
 $$ and thus surjectivity of $\rD(f_\Vv-\tau)(\ux,\ue)$ (which holds at all points $(\ux,\ue)\in\Ti S$) implies surjectivity of $\im\rD(f_\Vv-\tau(\cdot,\ue))(\ux)$ for regular values $\ue$ of $\Pi$. 

To establish general position at solutions $\ux\in f_\Vv^{-1}(\ue)\cap \partial X_\Vv$ it is important to recall that, because $\Ti S$ is the solution of an equation in general position, the boundary and corner strata $F\subset\partial\Ti S$ are given by intersection with the boundary and corner strata of $X_\Vv\times E$.  Therefore, since $E$ has no boundary, each boundary or corner stratum $F$ has the form 
 $\Ti S \cap ( F_{X_\Vv}\times E)$ for some local boundary or corner stratum $F_{X_\Vv}\subset \partial X_\Vv$.  
Now our notion of regularity $\ue\in O^\pitchfork_\tau$ also guarantees that $\ue$ is a regular value of $\Pi |_{F} : F\to O_\tau$ for each boundary or corner stratum (or chart thereof) $F\subset\Ti S$. That means for all $\ux\in f_\Vv^{-1}(\ue)\cap F$ the projection $\ker \rD(f_\Vv-\tau) (\ux,\ue) \supset \rT_{(\ux,\ue)} F  \to E$ is onto, where $\rT_{(\ux,\ue)} F = \ker \rD(f_\Vv-\tau) (\ux,\ue) \cap (\rT_\ux F_{X_\Vv}\times E)$. 
As above, this is equivalent to  
$\im \tau(\ux,\cdot)\subset \im\rD(f_\Vv-\tau(\cdot,\ue))(\ux) |_{\rT_\ux F_{X_\Vv}}$, so that surjectivity of  $\rD(f_\Vv-\tau)(\ux,\ue) |_{\rT_\ux F_{X_\Vv}\times E}$ (which holds at all points $(\ux,\ue)\in\partial\Ti S$) implies surjectivity of $\im\rD(f_\Vv-\tau(\cdot,\ue))(\ux) |_{\rT_\ux F_{X_\Vv}}$ for regular values $\ue$ of $\Pi|_F$. 

Summarizing, we have defined $O^\pitchfork_\tau\subset O_\tau$ and shown that it is comeagre (and hence dense) via Sard's Theorem. Then we identified $O^\pitchfork_\tau$ with the set of $\ue\in O_\tau$ for which 
$f_\Vv-\tau(\cdot,\ue)$ is transverse and in general position. 
The latter perspective implies $G$-invariance of $O^\pitchfork_\tau$, since the $G$-equivariance of $f_\Vv$ and $\tau$ near a solution $(\ux,\ue)$ and the fact that the action of $G$ is \'etale  imply that
$$
\bigl(\rD ( f_\Vv-\tau (\cdot,g*\ue) ) (g*x)\bigr) \circ L_g = M_g \circ  \rD ( f_\Vv-\tau (\cdot,e) ) (x),
$$
where $L_g:\rT_x X_\Vv\to \rT_{g*\ux} X_\Vv$  is the linearized action  and the sc-isomorphism $M_g$ is given by
 $$M_g:= \mu_\Vv \bigl(\al(g*\ux,g) ,  \cdot \bigr) : (W_\Vv)_x \to  (W_\Vv)_{g*\ux}.
 $$ 
Here $L_g$ intertwines the tangent spaces of boundary and corner strata since the action of $G$ is given by local sc-diffeomorphisms.  Thus, given $g\in G$, the regularity of $\ue$ is equivalent to regularity of $g*\ue$. 
Thus for $\ue\in O^\pitchfork_\tau$ all elements in the global section structure $\bigl( \s_h= \tau(\cdot ,h*\ue) \bigr)_{h\in G}$ for $\La_{\Vv,\ue}$ in (i), when subtracted from $f_\Vv$, are transverse and in general position. This makes the perturbation $(f,\La_{\Vv,\ue})$ transverse and in general position in the sense of \cite[\S15.2]{TheBook}, so that \cite[Thm.15.2.22, 15.2.25]{TheBook} imply that  $\Theta_{\Vv,\ue}: =\La_{\Vv,\ue}\circ f_\Vv : \Xx_\Vv\to\Q^{\ge 0}$ is a tame branched ep$^+$-subgroupoid in the sense of \cite[Def.9.1.2]{TheBook}. 
In fact, $$
\supp\Theta_{\Vv,\ue}=\bigcup_{h\in G} (f_\Vv-\s_h)^{-1}(0)$$
 has a global branching structure given by the submanifolds $M_h=(f_\Vv-\s_h)^{-1}(0)\subset (X_\Vv)_\infty$ with weights $\sigma_h=\frac 1{\# G}$. 
Here the inclusion in the smooth part $(X_\Vv)_\infty$ follows from the regularization property \cite[Def.3.1.16]{TheBook} of the Fredholm section $f_\Vv$ and the sc$^+$ property of $\s_h$ by iterating the argument
$$
\ux\in (X_\Vv)_k \cap (f_\Vv-\s_h)^{-1}(0) \quad\Rightarrow\quad f_\Vv(\ux)=\s_h(\ux)\in (W_\Vv)_{k+1}  \quad\Rightarrow\quad \ux\in (X_\Vv)_{k+1}. 
$$
Finally, $\Theta_{\Vv,\ue}$ is compact in the sense of \cite[Def.9.1.6]{TheBook} since 
$| \supp\Theta_{\Vv,\ue} | =\bigcup_{h\in G} | M_h | \subset |\Xx_\Vv|$ is a closed subset of the precompact subset
$\bigl| \bigl\{ \ux \in \Uu_\Vv \,\big|\, f_\Vv(\ux)\in W_\Vv[1],   N_\Vv(f_\Vv(\ux))\leq 1  \bigr\} \bigr|$. 
Here we again used compactness control by $(\Uu_\Vv,N_\Vv)$ and properties (1),(2) to deduce
$\ux\in M_h \Rightarrow f_\Vv(\ux) = \s_h(\ux) \in W_\Vv[1]$ with $N(f_\Vv(\ux))=N(\s_h(\ux))<1$, and $M_h\subset \Uu_\Vv$ since $\ux\notin\Uu_\Vv \Rightarrow f_\Vv(\ux)\ne 0 , \s_h(\ux)=0  \Rightarrow \ux\notin M_h$.

These results transfer to the pushforward $\La_{e}$  by Theorem~\ref{thm:reduceFred}~(iv) as follows: We proved above that for each $\ue\in O^\pitchfork_\tau$ the sc$^+$-multisection $\La_{\Vv,\ue}$ is an $(N_\Vv,\Uu_\Vv)$-regular perturbation of $f_\Vv$. 
This is what gives $\Theta_{\Vv,\ue}$ the structure of a compact, tame branched ep$^+$-subgroupoid. 
Now Theorem~\ref{thm:reduceFred}~(iv) guarantees that $\La_{\ue}:= \Psi_*\La_{\Vv,\ue}:\Ww|_V\to\Q^{\geq 0}$ (as well as its trivial extension to $\Ww$) is an $(N,\Uu)$-regular perturbation of $f$. 
This in turn gives $\Theta_\ue$ the structure of a compact, tame branched ep$^+$-subgroupoid. 
Alternatively, we could observe that 
$$
\Theta_{\Vv,\ue} \;=\; \Lambda_{\Vv,\ue}\circ f_\Vv \;=\; \Psi^*\Lambda_\ue\circ \psi^*f \;=\; \Lambda_\ue\circ f \circ \psi \;=\;  \Theta_\ue\circ\psi \;:\;  \Xx_\Vv \to \Q^{\geq 0}
$$ 
is given by composition with the equivalence $\psi$ in the sense of \cite[Def.10.1.1]{TheBook} from Theorem~\ref{thm:reduce}~(ii). Then one can check in \cite[\S9.1]{TheBook} that the conditions for a compact, tame branched ep$^+$-subgroupoid are preserved by $\psi$ since it is a local sc-diffeomorphism on objects and induces a homeomorphism $|\psi| :|\Xx_\Vv|\to |V|\subset |\Xx|$. 

This completes the proof of (ii).  
With that, (iii) and (iv) are special cases of Theorem~\ref{thm:reduceFred}~(iv). 
\end{proof}

\begin{appendix}
\numberwithin{theorem}{section}
\numberwithin{equation}{section}

\section{A note on rational \v{C}ech homology}\label{app:Cech}

The rational \v{C}ech cohomology $\check{H}^*(X,\Q)$ of a paracompact Hausdorff space $X$ can be defined as the direct limit of the cohomology groups of a sequence of simplicial complexes given by the nerves of appropriate open covers of $X$.\footnote{
See \cite[p.257]{Hat} for a sketch of this construction.} As is shown for example in Eilenberg--Steenrod~\cite[Ch~IX]{EilS},
the resulting cohomology theory satisfies all the standard axioms together with a continuity property sometimes called tautness: If $X\subset Y$ is contained in an ambient paracompact Hausdorff space $Y$ and $U_{i+1}\subset U_i\subset Y$ is a nested sequence of open sets such that
$X = \bigcap_i U_i$, then $\check{H}^*(X,\Q) = \underset{\to}{\lim }\, \check{H}^*(U_i,\Q)$ is the direct limit. 

Correspondingly,  \cite[Ch~IX]{EilS} defines the \v{C}ech homology groups to be the inverse limit of the homology groups of the nerves.  However, these groups fail the exactness axiom in general, because inverse limits even of vector spaces are not well behaved unless these spaces are finite dimensional.  More precisely, if
$(i_j: W_{j+1}\to W_j)_{j\ge 1}$  is a sequence of linear maps between $\Q$-vector spaces, consider the map
$$ \textstyle
d: \prod_{j\ge 1} W_j \to \prod_{j\ge 1} W_j,\quad (w_j) \mapsto (w_j-i_j(w_{j+1})),
$$
and define
\begin{align}\label{eq:A2}
\underset{\leftarrow}{\lim }\, W_j: = \ker d,\qquad \underset{\leftarrow}{\lim }^1\, W_j = \coker d.
\end{align}
Then the resulting homology groups 
 fail exactness because $ \underset{\leftarrow}{\lim }^1\, W_j $ does not vanish for an arbitrary system 
 $(i_j: W_{j+1}\to W_j)_{j\ge 1}$ of $\Q$-vector spaces and linear maps. (For example, consider the case when $W_j =\Q \oplus \Q\oplus\cdots$  is the sum of countably many copies of $\Q$ and $i_j$ is the shift operator $i_j(r_1,r_2,\dots)= (0,r_1,r_2,\dots)$ for each $j$.)  
 This is not a problem if we restrict to (locally) compact spaces, since then we can assume that the nerves are finite, so that their homology groups are finite dimensional and  $\underset{\leftarrow}{\lim }\,^1$ vanishes.  In this case, there are other various definitions of rational \v{C}ech (or Alexander--Spanier) homology (see for example Massey~\cite{Ma} or Spanier~\cite[Ch~6.6]{Span}) which give the same result by the uniqueness theorem in Spanier~\cite[Ch~6,~Ex~D]{Span}.

Since polyfolds are not locally compact,  we cannot use this version of \v{C}ech homology in the proof of
Theorem~\ref{thm:polyVFC} since here we take an inverse limit  in a polyfold setting.  
 Most refinements of this theory (such as Steenrod homology) are concerned with dealing with problems arising from the use of more general coefficient groups and are defined in the locally compact setting.  The solution of this problem in the case of  $\Q$ coefficients, is apparently  \lq well-known' but not easy to chase down in the literature.  The idea is to avoid the problems with
$\underset{\leftarrow}{\lim }^1\,$ 
by 
\begin{quote}
{\it  defining the rational \v{C}ech homology  of a paracompact Hausdorff space $X$ to be the dual of its rational cohomology.}
\end{quote}
With this definition, it is still true that the homology of $X$ is the inverse limit of the homology of an appropriate sequence of nerves.
Indeed, it follows from the definitions that given any direct system $ (i_j:V_j\to V_{j+1})_{j\ge 1}$ of linear maps between $\Q$-vector spaces with corresponding  inverse dual  sequence
\begin{align}\label{eq:ij*}
i_j^*: V_{j+1}^*\to V_j^*\qquad V^*_j: = \Hom(V_j,\Q),
\end{align}
there is an isomorphism
$$
\underset{\leftarrow}{\lim }\, V_j^*: = \underset{\leftarrow}{\lim }\, \Hom(V_j,\Q)   \stackrel{\simeq}\to \Hom(\underset{\to}{\lim }\, V_j, \Q).
$$
The key point here is that, if the $V_j$ are vector spaces, then $\underset{\to}{\lim }\, V_j = (\oplus_j V_j)/K$ where  $\oplus _j V_j$ has elements that are finite sums
$$
\sum \la_i (j_i,v_i),\quad\mbox{  with }\;\;  v_i\in V_{j_i}, \ \la_i\in \Q,
$$
 and 
$K$ is the vector subspace generated by the elements $
(j,v) - (j+1, i_{j}(v)).
$
Thus the elements  in $\underset{\leftarrow}{\lim }\,  V_j^*$ are sequences $(\al_j\in V_j^*)$ such that $$
i_j^*(\al_{j+1}) = \al_{j},\quad\forall\ j, \quad\mbox{ or equivalently }\;\; \al_{j+1} \circ i_j = \al_j.
$$

Further, the argument in \cite[Ch~IX,Thm7.6]{EilS} shows that this homology theory does satisfy the exactness axiom because of the following fact.

\begin{lemma} \label{lem:app1}  Let $(i_j^*: V_{j+1}^*\to V_j^*)_{j\le 1}$ be the dual of  a direct system 
$ (i_j:V_j\to V_{j+1})_{j\ge 1}$ of  $\Q$-vector spaces and linear maps as in \eqref{eq:ij*}.  Then
$\underset{\leftarrow}{\lim }^1\,  V_j^* = 0$.\end{lemma}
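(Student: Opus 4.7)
My plan is to split the inverse system $(V_j^*, i_j^*)$ into two pieces by first factoring the canonical maps $V_j \to V_\infty := \underset{\to}{\lim}\, V_j$ as a surjection followed by an injection. Set $W_j := \mathrm{image}(V_j \to V_\infty)$ and $K_j := \ker(V_j \to V_\infty)$, so that we obtain a short exact sequence of direct systems of $\Q$-vector spaces
\[
0 \to K_\bullet \to V_\bullet \to W_\bullet \to 0.
\]
Since all the objects involved are $\Q$-vector spaces, the functor $\Hom(-,\Q)$ is exact (every short exact sequence of vector spaces splits), and dualizing produces a short exact sequence of inverse systems $0 \to W_\bullet^* \to V_\bullet^* \to K_\bullet^* \to 0$. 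The standard six-term exact sequence for $\underset{\leftarrow}{\lim}$ and $\underset{\leftarrow}{\lim}^1$ then reduces the claim to showing that both $\underset{\leftarrow}{\lim}^1\, W_j^* = 0$ and $\underset{\leftarrow}{\lim}^1\, K_j^* = 0$.

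For the first vanishing, the transition maps $W_j \hookrightarrow W_{j+1}$ are by construction injective; dualizing, the maps $W_{j+1}^* \twoheadrightarrow W_j^*$ are surjective (any functional on $W_j$ extends to $W_{j+1}$ because $W_j$ is a direct summand of $W_{j+1}$ in the category of vector spaces). A surjective inverse system automatically satisfies the Mittag--Leffler condition, so $\underset{\leftarrow}{\lim}^1\, W_j^* = 0$.

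For the second vanishing, the crucial observation is that every element of $K_j$ dies in the direct limit, so if $v \in K_j$ then $i_{j,N}(v) = 0$ in $V_N$ for some $N = N(v) > j$, where $i_{j,N}:= i_{N-1}\circ\cdots\circ i_j$. Given any sequence $(\beta_j) \in \prod_j K_j^*$, I propose to set
\[
\alpha_j(v) \,:=\, \sum_{n\ge 0} \beta_{j+n}\bigl(i_{j,j+n}(v)\bigr), \qquad v\in K_j,
\]
where the sum terminates after $N(v)-j$ terms, and then to verify (a) linearity of $\alpha_j$, which holds because for any two elements the sum is finite and linearity follows termwise, and (b) the cocycle identity $\alpha_j(v) - \alpha_{j+1}(i_j(v)) = \beta_j(v)$, which is immediate from a reindexing of the series. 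This produces a preimage of $(\beta_j)$ under the map $d$ in \eqref{eq:A2}, showing surjectivity of $d$ and hence $\underset{\leftarrow}{\lim}^1\, K_j^* = 0$.

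The only mildly delicate point is confirming that the explicit formula for $\alpha_j$ really does define a linear functional on all of $K_j$, but since $K_j$ is the union of the subspaces $K_{j,N} := \ker(V_j\to V_N)$ and the formula is finite and linear on each $K_{j,N}$ with compatible values, the resulting $\alpha_j$ is well-defined and linear on $K_j$. Combining the two vanishings via the long exact sequence then yields $\underset{\leftarrow}{\lim}^1\, V_j^* = 0$.
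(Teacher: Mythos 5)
Your proof is correct, and its skeleton coincides with the paper's: both arguments split the direct system by the canonical map to the colimit $V_\infty=\underset{\to}{\lim}\,V_j$ (your $K_j$ is the paper's $W_j=\{v\in V_j\mid \exists k\geq j:\ i_{k,j}(v)=0\}$, and your $W_j\cong$ the paper's $\ov V_j=V_j/W_j$), dualize, and run the six-term $\underset{\leftarrow}{\lim}$--$\underset{\leftarrow}{\lim}^1$ sequence; the quotient/image piece is handled identically in both (injective transition maps dualize to surjections, which force $\underset{\leftarrow}{\lim}^1=0$ -- the paper constructs the preimage by hand where you invoke Mittag--Leffler). The genuine difference is in the eventually-vanishing piece: the paper decomposes $W_j=\oplus_{k>j}W_j^k$ according to the stage at which elements die, choosing the summands compatibly so that $i_j=\oplus_k i_j^k$, and then splits the dual inverse system into a product of finite-length systems for each of which surjectivity of $d$ is a downward induction. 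You instead solve $d(\alpha)=\beta$ directly by the telescoping formula $\alpha_j(v)=\sum_{n\ge 0}\beta_{j+n}(i_{j,j+n}(v))$, which is finite on each $\ker(V_j\to V_N)$ and visibly satisfies $\alpha_j(v)-\alpha_{j+1}(i_j(v))=\beta_j(v)$. Your route is slightly cleaner: it avoids the mildly delicate choice of compatible direct-sum decompositions (the paper's ``choose the summands $W_j^k$ in turn so that $i_j=\oplus\,i_j^k$''), at the cost of checking well-definedness of the infinite-looking sum, which you do correctly via exhaustion of $K_j$ by the subspaces $\ker(V_j\to V_N)$. Both proofs are complete; yours could be substituted for the paper's treatment of the kernel piece without loss.
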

\begin{proof}
Denote by  $i_{k,j}$ the composite  $i_k \circ\ldots\circ i_{j}: V_j\to V_{k+1}$ and
let $W_j: = \{ v\in V_j \ | \ \exists\ k\geq j :  i_{k,j}(v) = 0\} \subset V_j$ and let $\ov V_i: = V_i/W_i$.
Then $i_j$ restricts to a map $i_j:W_j\to W_{j+1}$, and evidently $\underset{\to}{\lim }\, W_i = 0$.  Therefore $\underset{\to}{\lim }\, V_i  \simeq \underset{\to}{\lim }\, \ov V_i$.

Since the maps $\ov V_i\to \ov V_{i+1}$ are injective, their duals $\ov V_{i+1}^*\to \ov V_{i}^*$ are surjective.  Hence  $\underset{\leftarrow}{\lim }^1\,  \ov V_j^* = 0$. Indeed, given $(\al_j)\in  \prod \ov V_j^*$, choose $\be_1=0$ and then inductively choose $\be_{j+1}\in \ov V_{j+1}^*$ so that 
$\al_j = \be_j-i_j^*(\be_{j+1})$.  
Then $d((\be_j)) = (\al_j)$ so that $\underset{\leftarrow}{\lim }^1\,  \ov V_j^* = 0$ by \eqref{eq:A2}.

The exact sequence
$
0 \to   \ov V_j^*  \to  V_j^* \to W_j^*\to 0
$
induces
$$
0 \to  \underset{\leftarrow}{\lim }\, \ov V_j^*  \to  \underset{\leftarrow}{\lim }\,  V_j^* \to \underset{\leftarrow}{\lim }\, W_j^*\to
\underset{\leftarrow}{\lim }^1\, \ov V_j^* =0\to  \underset{\leftarrow}{\lim }^1\,  V_j^* \to \underset{\leftarrow}{\lim }^1\, W_j^* \to 0.
$$
But $\underset{\leftarrow}{\lim }\, W_j^*=0$.  To see this, note that if $(\al_j)\in \underset{\leftarrow}{\lim }\, W_j^*$,
and $\exists \ w_j, \al_j(w_j)\ne 0$ then, given $k$ such that $i_{k,j}(w_j) = 0$, we have
$$
\al_j(w_j) =i_{k,j}^*( \al_{k}) (w_j) =\al_{k} \circ i_{k,j}(w) = 0,
$$
a contradiction.    Hence $\underset{\leftarrow}{\lim }\, V_j^* \simeq \underset{\leftarrow}{\lim }\,  \ov V_j^*$.

It remains to check that $ \underset{\leftarrow}{\lim }^1\, W_j^*=0$, which holds iff 
$$
{\textstyle 
d:\prod_j W_j^*\to \prod_j W_j^*, \quad (\be_j) }\mapsto (\be_j- i_j^*(\be_{j+1}))\quad\mbox{ is surjective.} 
$$
We can write $W_j = \oplus_{j<k} W_{j}^k$ 
where $$
w\in W_{j}^k\less \{0\}\  \Longrightarrow\ i_{\ell,j}(w)\ne 0, j<\ell< k,\ \mbox{ and } \  i_{k,j}(w)=0.
$$
Moreover we may choose the summands $W_{j}^k$ for $j = 1,2,3$ in turn,  so that  $i_j = \oplus\, i_j^k$, where  $i_j^k: W_j^k\to W_{j+1}^k$ 
and $W_{j}^k=0$ for $j\ge k$.
Note that  $i_{j}^k: W_j^k\to W_{j+1}^k$ is injective for $j+1<k$, and is the zero map for $j+1=k$. 
In other words,  the sequence $(i_j: W_j\to W_{j+1})_{j\ge 1}$ decomposes into a direct sum of sequences,
$(i_{j}^k:W_j^k\to W_{j+1}^k)_{1\le j< k}$,
 each of finite length.
 
Hence the dual sequence $(i_j^*: W_{j+1}^*\to W_j^*)$ is a direct product of sequences of finite length
$$
0 = (W_{k}^k)^*\to (W_{k-1}^k)^* \to \cdots \to (W_{1}^k)^*,\qquad k\ge 1.
$$
But for each of these sequences the operator $d^k$ is surjective: indeed given $(\al_j^k)_{1\le j< k}$ with $\al_j^k \in (W_j^k)^*$
we may choose  $\be_j^k\in (W_j^k)^*$ for $j=k-1, k-2,\dots$ by setting $\be_{k-1}^k = \al_{k-1}^k$, and inductively,
$$
\be_j^k = \al_j^k + (i_j^k)^*(\be_{j+1}^k).
$$
Then $d^k(\prod_j\be_j^k) = \prod_j \al^k_j$, so that $d = \prod d^k$ is surjective.
\end{proof}

The above discussion justifies the claims made about \v{C}ech homology in Remark~\ref{rmk:cech}.  However the proof of Lemma~\ref{lem:epfund} requires detailed knowledge about the version of this homology theory that holds for locally compact spaces and is the dual of cohomology with compact supports.  
 This theory, denoted $\check{H}_*^\infty$,  is carefully developed in \cite[Ch.4]{Ma}.  
 Here are some of its important properties  that are established in \cite[Ch4]{Ma}; many are similar to those of  locally finite singular homology.  We assume throughout that $Y$ is 
 a locally compact Hausdorff space.

\begin{itemlist}\item[(i)]
 If $Y$ is a connected orientable $n$-manifold, then 
 \begin{align}\label{eq:Afundc}  \check{H}^\infty_i(Y) = 0, i>n,\;\;\mbox {  and } \;\;\check{H}_n^\infty(Y) = \Q
 \end{align}
   with  generator $\mu_Y$ given by the fundamental class;
 further  
 $\check{H}^\infty_i(\R^n) = 0$ unless $i=n$;
\item[(ii)] 
if $U\subset Y$ is open, there is an induced restriction 
\begin{align}\label{eq:ArhoYU}
\rho_{Y,U}: \check{H}^\infty_i(Y)\to \check{H}^\infty_i(U)
\end{align}
that is compatible with composition: for $V\subset U\subset Y, \ \rho_{Y,V} = \rho_{U,V}\circ \rho_{Y,U}$.
\item[(iii)]   If  $f: A\to Y$ is continuous and proper, then there is an induced pushforward  $f_*: \check{H}^\infty_i(A)\to \check{H}^\infty_i(Y)$;  moreover, given a   proper inclusion $\io: A\to Y$,  there is a functorial long exact sequence
 \begin{align}\label{eq:AlesAC}
 \cdots\to \check{H}^\infty_i(A)\stackrel{\io_*}\to \check{H}^\infty_i(Y) \stackrel{\rho_{Y,Y\less A}}\longrightarrow 
 \check{H}^\infty_i(Y\less A) \stackrel{\p }\to \check{H}^\infty_{i-1}(A) \to \cdots
 \end{align} 
 \item[(iv)]    if $Y=U\cup V$ where $U,V$ are open, there is an exact Mayer--Vietoris sequence of the form: 
\begin{align}\label{eq:AMV}
 \cdots \to  \check{H}^\infty_{i+1}(U\cap V) \to  \check{H}^\infty_i(Y) \to  \check{H}^\infty_i(U)\oplus  \check{H}^\infty_i(V) \to  \check{H}^\infty_i(U\cap V) \to\cdots
\end{align}
 \end{itemlist}

\end{appendix}


\begin{thebibliography}{CCCCC}


\bibitem[BDSVW]{WISCON}  F.~Beckschulte, I.\ Datta, I., I.\ Seifert, A.M.\ Vocke, K.\ Wehrheim, A Polyfold Proof of Gromov's Non-squeezing Theorem, in: \emph{Research Directions in Symplectic and Contact Geometry and Topology, Association for Women in Mathematics Series}, Vol 27, Springer, 2021

\bibitem[ES]{EilS}   S.\ Eilenberg, N.\ Steenrod,  \emph{Foundations of Algebraic Topology}, Princeton University Press, 1952.



\bibitem[FFGW]{usersguide}
 O.~Fabert, J.~Fish, R.~Golovko, K.~Wehrheim, Polyfolds: A first and second look, 
\emph{EMS Surveys in Mathematical Sciences} 3(2), 131--208.

\bibitem[FH]{FHsft} J.\ Fish, H.\ Hofer, \emph{Lectures on Polyfolds and Symplectic Field Theory}, arXiv:1808.07147. 



\bibitem[FOOO]{fooo}
K.\ Fukaya, Y-G.\ Oh, H.\ Ohta, K.\ Ono, 
\emph{Lagrangian intersection Floer theory: anomaly and obstruction},
AMS/IP Studies in Advanced Mathematics 46, American Mathematical Society, 2009.

\bibitem[FWZ]{counterex}
B.~Filippenko, K.~Wehrheim, Z.~Zhou,
Counterexamples in Scale Calculus, \emph{Proc.\ Nat.\ Acad. Sci. U.S.A.} 2019;116(18), 8787--8797.

\bibitem[H]{Hat} A.~Hatcher, \emph{Algebraic Topology}, Cambridge University Press, 2002.

\bibitem[HWZ]{TheBook} 
H.~Hofer, K.~Wysocki, and  E.~Zehnder,
\emph{Polyfold and Fredholm Theory}, Springer, 2021.

\bibitem[HWZ2]{HWZgw} 
H.~Hofer, K.~Wysocki, and  E.~Zehnder,
Applications of Polyfold Theory I: Gromov-Witten Theory, 
{\it Memoirs of the AMS} Vol 248 (1179), 2017. 

\bibitem[L]{jiayong}
J.\ Li, \emph{A polyfold set up for moduli spaces of Morse trees with holomorphic disks}, Ph.D.Thesis, MIT. 

\bibitem[Ma]{Ma} W.\ Massey, {\it Homology and Cohomology theory}, Dekker, 1978.

\bibitem[Mc1]{Morb} D.\ McDuff, Strict orbifold atlases and weighted branched manifolds,  
{\it J.\ Symp.\ Geom.} Vol 15 (2017), no 2, 507--540.

\bibitem[Mc2]{Mfund} D.\ McDuff, Constructing the virtual fundamental class of a Kuranishi atlas,    {\it Algebr. Geom. Topol.} 19 (2019), no. 1, 151--238.

\bibitem[MS]{MS} D.\ McDuff and D.A.\ Salamon,  {\it $J$-holomorphic curves
        and symplectic topology}, Colloquium Publications Vol 52, American
        Mathematical Society, Providence, RI, 2nd edition (2012).

\bibitem[Mu]{Mu}  J.\ Munkres, {\it Topology}, 2nd edition, Prentice Hall, 2000.

\bibitem[MW1]{MW1}   D. McDuff and K. Wehrheim, The topology of Kuranishi atlases, 
{\it Proceedings of the London Mathematical Society} Vol 115 (2017), no 2,  221--292.

\bibitem[MW2]{MW2}  D. McDuff and K. Wehrheim, The fundamental class of smooth Kuranishi atlases with trivial isotropy,  {\it Journal of Topology and Analysis}  Vol 10 (2018), no 1, 71--243.

\bibitem[MW3]{MWiso}  D. McDuff and K. Wehrheim, Smooth Kuranishi atlases
with  isotropy,  \emph{Geometry \& Topology} Vol 21 (2017), no 5, 2725--2809.


\bibitem[Pa]{pardon}  J.\ Pardon,  An algebraic approach to virtual fundamental cycles on 
moduli spaces of $J$-holomorphic curves,
{\it Geom.\ Topol.} Vol 20 (2016), 779-1034.



\bibitem[Sc]{Wolfgang thesis}
W.\ Schmaltz, 
{\it Naturality of polyfold invariants and pulling back abstract perturbations}, arXiv:1912.13370. 

\bibitem[Si]{Sieb} B.\ Siebert, Symplectic Gromov--Witten invariants, in:
         {\it New Trends in Algebraic Geometry, 
         London Math Soc.\ Lecture Notes} Vol 264 (1999), 375--424.

\bibitem[Sp]{Span}  E.\  Spanier, {\it Algebraic Topology}, Springer, 1966.

\bibitem[W]{w:fred} K.~Wehrheim, Fredholm notions in scale calculus
  and Hamiltonian Floer theory, arXiv:2009.4040
  
\bibitem[Y]{Yang}  Dingyu Yang, Ph.D.Thesis
 https://webusers.imj-prg.fr/$\sim$dingyu.yang/thesis.pdf


\bibitem[Z]{zinger} A.~Zinger, The determinant line bundle for Fredholm operators: construction, properties, and classification, {\it Math.\ Scand.} Vol 118 (2016), no 2, 203--268.   

\end{thebibliography}
\end{document}